\documentclass[10pt]{article}

\usepackage{fullpage, amsmath}
\usepackage{amsthm,amsfonts,url,amssymb}
\usepackage{mathrsfs}
\usepackage{amscd}
\usepackage{color}
\usepackage{chemarrow}
\usepackage{pictexwd,dcpic}
\usepackage[textwidth=18cm,centering]{geometry}
\usepackage{extarrows}
\usepackage[bookmarks=false,linkcolor=red,anchorcolor=green,citecolor=blue]{hyperref}
\usepackage{enumitem}
\usepackage{lipsum}
\usepackage[all,cmtip]{xy}
\usepackage{leftidx}
\usepackage[version=3]{mhchem}
\usepackage{makeidx}
\usepackage{bm} 
\usepackage{stmaryrd}

\makeindex
\newtheorem{thm}{Theorem}[section]
\newtheorem{cor}[thm]{Corollary}
\newtheorem{lem}[thm]{Lemma}
\newtheorem{pro}[thm]{Proposition}
\newtheorem{dfn}[thm]{Definition}
\newtheorem{hypothesis}[thm]{Hypothesis}
\newtheorem{rmk}[thm]{Remark}

\newtheorem{exmp}[thm]{Example}

\newtheorem{expect}[thm]{Expectation}

\DeclareMathOperator{\Gal}{Gal}
\DeclareMathOperator{\GL}{GL}

\DeclareMathOperator{\Spf}{Spf}

\newcommand{\ccyc}{{\epsilon}}

\DeclareMathOperator{\coker}{coker}

\DeclareMathOperator{\diag}{diag}

\DeclareMathOperator{\im}{Im}

\DeclareMathOperator{\rank}{rank}

\newcommand{\rig}{{\rm rig}}
\DeclareMathOperator{\rk}{rk}

\newcommand{\gal}{{\rm Gal}}

\newcommand{\fil}{{\rm Fil}}

\newcommand{\ind}{{\rm Ind}}
\newcommand{\homo}{{\rm Hom}}
\newcommand{\EndO}{{\rm End}}
\newcommand{\ext}{{\rm Ext}}
\newcommand{\GLN}{{\rm GL}}

\newcommand{\ana}{{\rm an}}
\newcommand{\st}{{\rm St}}

\newcommand{\op}{{\overline{\mathbf{P}}}}

\newcommand{\ob}{{\overline{\mathbf{B}}}}

\newcommand{\unr}{{\rm unr}}

\newcommand{\hH}{{\mathrm{H}}}

\newcommand{\ra}{{\longrightarrow}}
\DeclareMathOperator{\val}{val}
\newcommand{\pdr}{{\rm pdR}}

\newcommand{\soc}{{\rm soc}}

\newcommand{\sm}{{\rm sm}}

\newcommand{\spf}{{\rm Spf}}

\newcommand{\df}{{\mathrm{DF}}}
\newcommand{\wdre}{{\textbf{r}}}

\newcommand{\rec}{{\rm rec}_L}
\newcommand{\lalg}{{\rm lalg}}

\newcommand{\Art}{{\rm Art}}

\newcommand{\wt}{{\rm wt}}

\newcommand{\pr}{{\rm pr}}
\newcommand{\dr}{{\rm dR}}

\newcommand{\gr}{{\rm gr}}
\newcommand{\hpi}{{{\mathbf{h}}}}
\newcommand{\Dpik}{\mathbf{D}}

\newcommand{\univ}{{\rm univ}}
\newcommand{\loc}{{\rm loc}}

		\newcommand{\BOne} {{\mathchoice{\hbox{\rm1\kern-2.7pt l\kern.9pt}}
				{\hbox{\rm1\kern-2.7pt l\kern.9pt}}
				{\hbox{\scriptsize\rm1\kern-2.3pt l\kern.4pt}}
				{\hbox{\scriptsize\rm1\kern-2.4pt l\kern.5pt}}}}
		
		\newcommand{\BA}{{\mathbb{A}}}

		\newcommand{\BE}{{\mathbb{E}}}
		
		\newcommand{\BG}{{\mathbb{G}}}

		\newcommand{\BP}{{\mathbb{P}}}
		\newcommand{\BQ}{{\mathbb{Q}}}

		\newcommand{\BU}{{\mathbb{U}}}
		
		\newcommand{\BW}{{\mathbb{W}}}

		\newcommand{\BZ}{{\mathbb{Z}}}
		
		\newcommand{\ba}{{\mathbf{a}}}
		\newcommand{\bA}{{\mathbf{A}}}
		\newcommand{\bB}{\mathbf{B}}
		\newcommand{\bC}{{\mathbf{C}}}

		\newcommand{\be}{{\mathbf{e}}}
		\newcommand{\bF}{{\mathbf{F}}}
		\newcommand{\bG}{{\mathbf{G}}}

		\newcommand{\bL}{\mathbf{L}}
		\newcommand{\bM}{{\mathbf{M}}}
		\newcommand{\bN}{\mathbf{N}}
		
		\newcommand{\bP}{\mathbf{P}}
		\newcommand{\bQ}{{\mathbf{Q}}}

		\newcommand{\bT}{{\mathbf{T}}}

		\newcommand{\bW}{{\mathbf{W}}}

		\newcommand{\bZ}{{\mathbf{Z}}}

		\newcommand{\bh}{{\mathbf{h}}}
		
		\newcommand{\bd}{{\mathbf{d}}}

		\newcommand{\cL}{\mathcal L}
		\newcommand{\co}{\mathcal O}

		\newcommand{\cR}{\mathcal R}
		\newcommand{\cH}{\mathcal H}
		\newcommand{\cC}{\mathcal C}
		\newcommand{\cS}{\mathcal S}
		
		\newcommand{\cI}{\mathcal I}
		
		\newcommand{\cT}{\mathcal T}
		\newcommand{\cM}{\mathcal M}
		\newcommand{\cF}{\mathcal F}
		
		\newcommand{\cE}{\mathcal E}
		\newcommand{\cG}{\mathcal G}
		
		\newcommand{\cO}{\mathcal O}

		\newcommand{\FX}{{\mathfrak{X}}}

		\newcommand{\fa}{{\mathfrak{a}}}
		\newcommand{\fb}{{\mathfrak{b}}}

		\newcommand{\fg}{{\mathfrak{g}}}

		\newcommand{\fl}{{\mathfrak{l}}}
		\newcommand{\fm}{{\mathfrak{m}}}
		\newcommand{\fn}{{\mathfrak{n}}}
		
		\newcommand{\fp}{{\mathfrak{p}}}

		\newcommand{\ft}{{\mathfrak{t}}}

		\newcommand{\fz}{{\mathfrak{z}}}

		\newcommand{\sW}{\mathscr W}
		\newcommand{\sL}{\mathscr L}

\begin{document}	
			
			\title{\textbf{Towards the $p$-adic Hodge parameters in semistable representations of $\GLN_n(\bQ_p)$}}

			\author{Yiqin He
				\thanks{Morningside Center of Mathematics, Chinese Academy of Sciences,\;No. 55, Zhongguancun East Road, Haidian District, Beijing 100190, P.R. China,\;E-mail address:\texttt{\;heyiqin@amss.ac.cn}
			}}
			
			\date{}
			\maketitle

			\begin{abstract}
				Let $\rho_p$ be an $n$-dimensional non-critical semistable $p$-adic Galois representation of the absolute Galois group of $\bQ_p$ with regular Hodge--Tate weights.\;Let $\Dpik$ be the associated $(\varphi,\Gamma)$-module over the Robba ring.\;By combining Ding's and Breuil--Ding's methods for the crystalline case with Qian's computation of higher extension groups of locally analytic generalized Steinberg representations,\;we capture the full information of the $p$-adic Hodge parameters of $\rho_p$ on the automorphic side by considering several Steinberg subquotients of $\Dpik$ and the ``crystalline'' Hodge parameters between them.\;These results also admit geometric and Lie-algebraic reformulations on flag varieties related to the moduli space of Hodge parameters.\;We then construct an explicit locally analytic representation $\pi_{1}(\rho_p)$ and explicitly describe which Hodge-parameters information of $\rho_p$ it determines.\;In particular,\;if the monodromy rank of $\rho_p$ is at most $1$,\;$\pi_{1}(\rho_p)$ determines $\rho_p$.\;When $\rho_p$ comes from a $p$-adic automorphic representation,\;we show that $\pi_{1}(\rho_p)$ is a subrepresentation of the $\GLN_n(\bQ_p)$-representation globally associated to $\rho_p$,\;under mild hypotheses.\;Although it is still difficult to construct an explicit representation $\pi_{1}(\rho_p)$ that determines $\rho_p$,\;our results provide new evidence for the $p$-adic Langlands program in general semistable cases and demonstrate the broad applicability of Ding's,\;Breuil--Ding's,\;and Qian's methods.\;

			\end{abstract}

			{\hypersetup{linkcolor=black}
				\tableofcontents}
			
			\numberwithin{equation}{section}
			
			\numberwithin{thm}{section}

			\setlength{\baselineskip}{13pt}
			
			\section{Introduction}
			
			This paper aims to extend the discussion of crystabelline (resp.,\;crystalline) $p$-adic Galois representations in \cite{ParaDing2024} (resp.,\;the recent work \cite{BDcritical25}) to the semistable case.\;
	
				Let $\rho_p:\gal_{\bQ_p}\rightarrow \GLN_n(E)$ be a de Rham $p$-adic Galois representation,\;where $\gal_{\bQ_p}$ is the absolute Galois group of $\bQ_p$ and $E$ is a finite extension of $\bQ_p$.\;By Fontaine's theory,\;we can attach an $n$-dimensional Weil--Deligne representation $\wdre(\rho_p)$ and thus an irreducible smooth representation $\pi_{\mathrm{sm}}(\rho_p)$ of $\GLN_n(\bQ_p)$ over $E$ (via the classical local Langlands correspondence).\;Assume that $\rho_p$ has regular Hodge--Tate weights $\bh=(\bh_1>\cdots>\bh_n)$.\;Then the locally algebraic representation $\pi_{\lalg}(\rho_p):=\pi_{\mathrm{sm}}(\rho_p)\otimes_EL(\bh-\theta)$ is expected to be the locally algebraic subrepresentation of the conjectural locally analytic representation $\pi_{\ana}(\rho_p)$ via the $p$-adic local Langlands correspondence,\;where $\theta=(0,-1,\cdots,1-n)$ and $L(\bh-\theta)$ is the algebraic representation of $\GLN_n(\bQ_p)$ with highest weight $\bh-\theta$.\;The representation $\pi_{\lalg}(\rho_p)$ only encodes the information in the $F$-semisimplification of $\wdre(\rho_p)$ and $\bh$,\;and therefore misses the Hodge-filtration information of $\rho_p$.\;A basic problem (and starting point) in the locally analytic $p$-adic local Langlands program is to recover the Hodge filtration from locally ${\BQ}_p$-analytic representations of $\GLN_n({\bQ}_p)$.\;

			In this paper,\;we study this question for a non-critical semistable Galois representation $\rho_p$.\;By Fontaine's theory,\;$\rho_p$ is determined by the associated filtered Deligne--Fontaine module $(\varphi,N,D,\fil_H^{\bullet}(D))$,\;where $D:=D_{\mathrm{st}}(\rho_p)\cong D_{\dr}(\rho_p)$ and $\fil^{\bullet}_{H}(D)$ is  the Hodge filtration.\;There exist integers $s,l_1,\cdots,l_s$ and scalars $\alpha_1,\cdots,\alpha_s\in E^{\times}$,\;such that the semisimplification of the underlying Deligne--Fontaine module $(\varphi,N,D)$ is
			$\oplus_{i=1}^s\oplus_{j=1}^{l_i}Ee_{i,j}$,\;where $\varphi(e_{i,j})=\alpha_ip^{j-1}$ for $1 \leq i\leq s$ and $N(e_{i,j})=e_{i,j-1}$ (resp.,\;$N(e_{i,j})=0$) when $1<j\leq l_i$ (resp.,\;$j=1$).\;Thus 
			\[\underline{\phi}:=(\alpha_1,\alpha_1p,\cdots,\alpha_1p^{l_1-1},\cdots,\alpha_s,\alpha_sp,\cdots,\alpha_sp^{l_s-1})=(\phi_i)_{1\leq i\leq n}\]
			are the $\varphi$-eigenvalues on $D$ (the ordering satisfies the condition that if $\alpha_j=\alpha_ip^{l_i}$,\;then $j=i+1$).\;We assume that $\phi_i\neq \phi_j$ for any $i\neq j$.\;We relabel the  basis $\{e_{1,j}\}_{1\leq j\leq l_1},\cdots,\{e_{s,j}\}_{1\leq j\leq l_s}$ as
			$e_1,\cdots,e_n$ (so that $\varphi(e_i)=\phi_ie_i$).\;Let $S_0\subseteq \Delta$ be the set of $i\in S_0$ such that $N(e_{i+1})=e_{i}$,\;which describes the monodromy type of $D$.\;
			
				Let $\bT$ (resp.,\;$\bB$) be the torus of diagonal matrices (resp.,\;the Borel subgroup of upper triangular matrices) in $\GLN_n$.\;Let $\Delta:=\{1,\cdots,n-1\}$ be the set of simple roots of $\GLN_n$.\;For $I\subseteq \Delta$,\;we attach the standard Levi subgroup (resp.,\;parabolic subgroup) $\bL_I\supseteq \bT$ (resp.,\;$\bP_I\supseteq \bL_I\bB$).\;Let $\bZ_{I}$ be the center of $\bL_{I}$,\;$\bN_I$ be the unipotent radical of $\bP_I$ (so that $\bP_I=\bL_I\bN_I$).\;Let $\sW_n\cong S_n$ be the Weyl group of $\GLN_n$.\;For $I\subset \Delta$,\;define $\sW_{I}$ to be the subgroup of $\sW_{n}$ generated by simple reflections $s_{i}$ with $i\in I$.\;For $I\subseteq \Delta$,\;let $\sW^{I}_n$ be the set of minimal length representatives in $\sW_{I}\backslash \sW_n$.\;For $A\in\{E,E[\epsilon]/\epsilon^2\}$,\;let $\cR_A$ be the Robba ring over $\bQ_p$ with coefficients in $A$.\;We write $\cR_{A,L}(\delta_A)$ for the rank-one $(\varphi,\Gamma)$-module over $\cR_{A,L}$ associated to a continuous character $\delta_A:L^\times\rightarrow A^\times$.\;

			\subsection{Galois side}

			Let $\Dpik:=D_{\rig}(\rho_p)$ be the associated $(\varphi,\Gamma)$-module over $\cR_E$ of rank $n$.\;Under the basis $e_{1},e_{2},\cdots,e_{n}$ of $D$, $\fil^{\bullet}_{H}(D)$ is parameterized by an element $\underline{\sL}(\Dpik)$ in $\bZ_{S_0}\backslash\GLN_{n}/\bB$, which we call the \textit{$p$-adic Hodge parameters} of $\Dpik$. The $(\varphi,N)$-stable complete flags in $D_{\mathrm{st}}(\rho_p)$ are
			$\cF_u:Ee_{u^{-1}(1)}\subseteq Ee_{u^{-1}(1)}\oplus Ee_{u^{-1}(2)}\subseteq \cdots\subseteq \oplus_{i=1}^nEe_{u^{-1}(n)}=D_{\mathrm{st}}(\rho_p)$ for $u\in \sW_n^{S_0}$. We say that $\rho_p$ is \textit{non-critical} if $\fil^{\bullet}_{H}(D)$ is in relative general position with respect to all $|\sW_n^{S_0}|$ $(\varphi,N)$-stable flags; this is equivalent to saying that (note that $w_{0}$ is the longest element in $\sW_n$)
			\[\underline{\sL}(\Dpik)\in \Phi_{\mathrm{nc},\Delta}(S_0):=\bigcap_{u\in \sW_n^{S_0}} \bZ_{S_0}\backslash u^{-1}\bB w_0\bB/\bB,\]
			$\Phi_{\mathrm{nc},\Delta}(S_0)$ is the moduli space of non-critical $p$-adic Hodge parameters with monodromy type $S_0$.\;Our goal is to detect $\Phi_{\mathrm{nc},\Delta}(S_0)$ on the automorphic side.\;There are two extreme cases in the semistable setting.
			
			\begin{itemize}
				\item[(1)] (\textit{Crystalline case:\;$S_0=\emptyset$}) Put $\Phi_{\mathrm{nc},n}^{\mathrm{{cr}}}:=\Phi_{\mathrm{nc},\Delta}(\emptyset)$. When $\rho_p$ is generic (i.e.,\;$\phi_i\phi_j^{-1}\neq p$ for $i\neq j$), this problem was first solved by Ding \cite{ParaDing2024}, and further developed in the recent work of Breuil--Ding \cite{BDcritical25}. Ding constructs an explicit locally analytic $\GLN_n(\bQ_p)$-representation $\pi_1(\Dpik)$ that determines $\underline{\sL}(\Dpik)\in\Phi_{\mathrm{nc},n}$.\;In \cite{HEparaforsemitable}, we extend this theory to study the non-critical and generic potentially crystalline case.
				\item[(2)] (\textit{Steinberg case:\;$S_0=\Delta$}) $D$ has maximal monodromy rank (i.e.,\;$N^{n-1}\neq 0$),\;and  $\Phi^{\mathrm{ST}}_{\mathrm{nc},n}:=\Phi_{\mathrm{nc},\Delta}(\Delta)= \bG_m\backslash \bB w_0\bB/\bB$.\;This case was first discussed by Breuil for $\GLN_2(\bQ_p)$, then by Ding \cite{2015Ding} for $\GLN_2(L)$, by \cite{schraen2011GL3}, \cite{HigherLinvariantsGL3(Qp)}, \cite{breuil2019ext1} for $\GLN_3(\bQ_p)$, and by \cite{2019DINGSimple} in general for $\GLN_{n}(\bQ_p)$.\;Qian's recent work \cite{wholeLINV} computes the higher extension groups of locally analytic generalized Steinberg representations in detail.\;He studied the cup-product structure and constructed a Coxeter filtration indexed by Coxeter elements in $\sW_n$ on these higher extension groups. He then defined the full version of the so-called Breuil--Schraen $\sL$-invariants, which capture $\Phi^{\mathrm{ST}}_{\mathrm{nc},n}$.
			\end{itemize}

			For general semistable case, we prove that $\Phi_{\mathrm{nc},\Delta}(S_0)$ can be captured on the automorphic side by combining and further developing the methods of Ding \cite{ParaDing2024}, Breuil-Ding \cite{BDcritical25} in $(1)$ and Qian \cite{wholeLINV} in $(2)$. More precisely, our key new observation is that \textit{we can recover $\underline{\sL}(\Dpik)$ through suitable choices of Steinberg subquotients of $\Dpik$ and ``crystalline'' Hodge parameters between different Steinberg subquotients or crystalline subquotients of $\Dpik$}.\;We refer to Section \ref{philoforParameter} for a typical example  for $\GLN_3(\bQ_p)$.\;See Theorem \ref{capturetype0intro}, Theorem \ref{capturetype1intro} and Theorem \ref{capturetype2intro} for the  precise statements.\;In the sequel,\;we use $\bullet-\bullet$ to denote an extension of two objects (for example,\;$(\varphi,\Gamma)$-modules or $G$-representations),\;where the first (resp.,\;second) object is the subobject (resp.,\;quotient).\;
			
			For $\alpha\in E^\times$,\;denote by $\unr(\alpha)$ the unramified character of $\bQ_p^\times$ sending $p$ to $\alpha$.\;For any $u\in \sW_n^{S_0}$,\;let $\cF_u:=[\cR_E(\unr(\phi_{u^{-1}(1)})z^{\bh_1})-\cdots-\cR_E(\unr(\phi_{u^{-1}(n)})z^{\bh_n})]$ be the triangulation of $\Dpik$ associated to the $(\varphi,N)$-stable complete flag $\cF_u$ on $D_{\mathrm{st}}(\rho_p)$.\;Let $S_0(u)\subseteq \Delta$ be the set of $i\in \Delta$ such that the rank-$2$ subquotient $[\cR_E(\phi_{u^{-1}(i)}z^{\bh_i})-\cR_E(\phi_{u^{-1}(i+1)}z^{\bh_{i+1}})]$ of $\Dpik$ is semistable non-crystalline.\;There is a $\bP_{S_0(u)}$-parabolic filtration by saturated $(\varphi,\Gamma)$-submodules of $\Dpik$ over $\cR_{E}$ ($f_u:=|\Delta\backslash S_0(u)|+1$):
			\[\cF_{S_0(u)}=\fil_{\bullet}^{\cF_{S_0(u)}}\Dpik: \ 0 =\fil_0^{\cF_{S_0(u)}}\Dpik \subsetneq \fil_1^{\cF_{S_0(u)}}\Dpik \subsetneq \cdots \subsetneq \fil_{f_u}^{\cF_{S_0(u)}} \Dpik=\Dpik\]
			 such that $E_{u,i}:=\gr_i^{\cF_{S_0(u)}}\Dpik$ are all Steinberg for $1\leq i\leq f_u$.\;Equivalently,\;we identity $\cF_{S_0(u)}$ with  the following successive  extension of $\{E_{u,i}\}$ for  $1\leq i\leq f_u$:
						\begin{equation}
				\Dpik=[E_{u,1}-E_{u,2}-\cdots-E_{u,f_u}],
			\end{equation}
			 In particular, if $u=1$, we write the $\bP_{S_0}$-parabolic filtration  with
			\begin{equation}
				\Dpik=[E_{1}-E_{2}-\cdots-E_{s}],\;E_i:=E_{1,i}.
			\end{equation}
			for simplicity.\;For $1\leq r<q\leq s$, let $E_r^q$ be the unique subquotient $[E_{r}-E_{r+1}-\cdots-E_{q}]$. 
			For $1\leq q\leq s$ and $1\leq t\leq l_q$,\;let $E_q^{(t)}$ be the unique rank-$t$ (Steinberg) $(\varphi,\Gamma)$-submodule of $E_q$,\;and $F^{(t)}_q=E_q/E_q^{(t)}$ be the unique (Steinberg) quotient of $E_q$ of rank-$(l_q-t)$.\;For $1\leq i\leq s$, put $t_0=0$ and $t_i=\sum_{j=1}^il_{j}$ (so that $t_s=n$).\;Note that $\rank(E_j)=l_j$ and $\rank(E_r^q)=t_q-t_{r-1}$.\;
			
			For $1\leq r<q\leq s$,\;$1\leq t\leq l_q$ and  $1\leq t'\leq l_r$,\;consider the following different parabolic filtrations of $\Dpik$ (for any subquotient $B$ of $\Dpik$, we use $B'$ to denote another $(\varphi,\Gamma)$-module such that $B[1/t]=B'[1/t]$.):
			\begin{equation}\label{lqErefinementIntro}
				\begin{aligned}
					&E_r^q=[E_r-E_{r+1}^{q-1}-E_{q}]=[(E_q^{(t)})'-E_r'-(E_{r+1}^{q-1})'-(E_q/E_q^{(t)})]=[E^{(t')}_r-(E_{r+1}^{q-1})'-E_q'-(F^{(t')}_r)'].
				\end{aligned}
			\end{equation}
			We denote $E_r'$  by $E_r^{[q,t]}$ (resp.\;$E_q'$  by $E_q^{[r,t']}$) to indicate the choice of $t$ (resp.\;$t'$). By the non-critical assumption, the Hodge--Tate weights of $E_r$ (resp., $E_r^{[q,t]}$) are $(\bh_{t_{r-1}+1},\cdots,\bh_{t_r})$ (resp., $(\bh_{t_{r-1}+1+t},\cdots,\bh_{t_r+t})$).\;The Hodge--Tate weights of $E_q$ (resp., $E_q^{[r,t']}$) are $(\bh_{t_{q-1}+1},\cdots,\bh_{t_q})$ (resp., $(\bh_{t_{q-1}+1-(l_r-t')},\cdots,\bh_{t_q-(l_r-t')})$).
			
			For $1\leq r\leq s$, put $\cC_{r}^r(\Dpik)=\{E_r\}$. Put
			\[\cC_{\mathrm{ST}}(\Dpik):=\bigcup_{1\leq r\leq q\leq s}\cC_{r}^q(\Dpik),\quad \cC_{r}^q(\Dpik):=\left\{E_r^{[q,t]},E_q^{[r,t']}\right\}_{1\leq t\leq l_q,1\leq t'\leq l_r}\]
			
			\begin{thm} [Theorem \ref{twoblocksPare1} and $\GLN_{3}(\bQ_p)$-case of Proposition \ref{twoblocksPare2}]\label{capturetype0intro} If $s=2$, $\cC_{\mathrm{ST}}(\Dpik)$ determines $\underline{\sL}(\Dpik)$. If $s=3$, $\cC_{\mathrm{ST}}(\Dpik)$ determines $\underline{\sL}(\Dpik)$ up to one Hodge parameter.
			\end{thm}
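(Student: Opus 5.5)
The plan is to reduce the statement to a dimension count on the moduli space $\bZ_{S_0}\backslash\GLN_n/\bB$, and then to show that the Hodge parameters of the pieces in $\cC_{\mathrm{ST}}(\Dpik)$ supply enough independent coordinates on it. Here a piece is a Steinberg module $E_r^{[q,t]}$ or $E_q^{[r,t']}$, and "the Hodge parameters of the pieces" means both their internal Steinberg (Breuil--Schraen) $\sL$-invariants and the "crystalline" parameters encoding how the pieces sit relative to one another. Throughout I work with the filtered Deligne--Fontaine module $D$ with basis $e_1,\dots,e_n$. Each filtration in (\ref{lqErefinementIntro}) corresponds to a $(\varphi,N)$-stable flag, and non-criticality lets me read off the induced Hodge filtration on each Steinberg subquotient.

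\textbf{Case $s=2$.} Here $\Dpik=[E_1-E_2]$ with ranks $l_1,l_2$. The Hodge filtration is a point of $\bZ_{S_0}\backslash\GLN_n/\bB$, and in the open Bruhat cell $\bB w_0\bB$ I write it as $\bB w_0 b$ modulo the center of the Levi. Its coordinates split into three groups:
\begin{itemize}
\item the diagonal Steinberg blocks, which give the parameters of $E_1$ and of $E_2$ separately (these are determined by Qian's $\sL$-invariants, as in the Steinberg case (2));
\item the off-diagonal $l_1\times l_2$ block, which measures the position of $\fil_H$ relative to the decomposition $D=D_1\oplus D_2$.
\end{itemize}
The goal for the off-diagonal block is to recover each entry from some $E_1^{[2,t]}$ or $E_2^{[1,t']}$. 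The module $E_1^{[2,t]}$ is the subquotient of $D$ spanned by $e_1,\dots,e_{l_1}$ together with $e_{l_1+1},\dots,e_{l_1+t}$, taken modulo the latter. Its Hodge filtration is therefore the image of $\fil_H\cap(D_1\oplus E_2^{(t)})$, and its Steinberg parameters are explicit rational functions in the minors of $b$ involving the first $t$ columns of block $2$.

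I would proceed by induction on $t$. Knowing the parameters of $E_1^{[2,t-1]}$ and of $E_1^{[2,t]}$ determines the new column $t$ of the off-diagonal block, up to the gauge ambiguity coming from $\bZ_{S_0}$. The symmetric family $E_2^{[1,t']}$ handles the rows, with the unipotent part of the Steinberg structure relating columns through $N$. Concretely, I would verify that the map
\[\underline{\sL}\longmapsto\bigl(\sL(E_1^{[2,t]}),\ \sL(E_2^{[1,t']})\bigr)_{t,t'}\]
is injective on $\Phi_{\mathrm{nc},\Delta}(S_0)$ by exhibiting triangular formulas.

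\textbf{Case $s=3$ (the $\GLN_3$ case).} Here every $l_i=1$ and $S_0=\emptyset$, but the statement restricts to what the $\GLN_3$ computation gives. I would write $\fil_H$ in the open cell as $\bB w_0 u$ with $u$ upper unipotent, which has three coordinates $x_{12},x_{13},x_{23}$ modulo the torus, giving $\dim=3$. The available pieces are the rank-$2$ pairwise subquotients $E_r^q$, together with the crystalline parameters between $E_1$ and $E_3$ through the middle piece. The pairs $(1,2)$ and $(2,3)$ give two independent ratios, while the pair $(1,3)$ only sees a combination. The remaining coordinate is a genuinely "triple" invariant, mixing $x_{13}$ with $x_{12}x_{23}$, and no two-block piece can detect it. So the family $\cC_{\mathrm{ST}}(\Dpik)$ determines $\underline{\sL}(\Dpik)$ up to exactly one parameter.

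\textbf{Main obstacle.} The hardest step is the $s=2$ injectivity: checking that the collection of Steinberg invariants of the $E_1^{[2,t]}$, $E_2^{[1,t']}$ separates all off-diagonal coordinates modulo $\bZ_{S_0}$, and that non-criticality guarantees the relevant minors are nonzero. My first attempt would be to use Plücker coordinates of the partial flags and do induction on $t$, peeling off one column at a time.
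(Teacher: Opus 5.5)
\textbf{Gap in the case $s=3$.} Your treatment of $s=3$ does not address the statement. Here $s$ is the number of Steinberg blocks and $l_1,l_2,l_3$ are arbitrary; the ``$\GLN_3$'' in the label refers to the three-block case of Proposition~\ref{twoblocksPare2}, not to $n=3$.

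In the special case $l_i=1$ that you consider, every member of $\cC_{\mathrm{ST}}(\Dpik)$ has rank one and carries no Hodge information. Moreover $\bT\backslash\bB w_0\bB/\bB$ for $\GLN_3$ is one-dimensional, since the torus normalizes $x_{12}$ and $x_{23}$. So the claim is vacuous there, and your count ``three coordinates, two detected by the pairs $(1,2),(2,3)$'' is wrong.

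Also, the rank-two subquotients $E_r^q$ and the crystalline parameters between $E_1$ and $E_3$ are not elements of $\cC_{\mathrm{ST}}(\Dpik)$. If you admitted them, Theorem~\ref{higherdetercry} would recover the last parameter as well, contradicting your conclusion.

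What the general case needs is the following, none of which is in your proposal:
\begin{itemize}
\item Apply the $s=2$ result to $E_1^2$ and $E_2^3$, whose data lie in $\cC_{\mathrm{ST}}(\Dpik)$. This gives every block except the $l_1\times l_3$ corner $\underline{\sL}(\Dpik)^3_1$.
\item Column-reduce the minor governing $E_1^{[3,t]}$; the already-known middle block now enters. Inductively in $t$, this expresses every entry of the corner, with known coefficients, through the bottom-row entries $\cL_{t_1,t_2+t}$.
\item Pass to $\Dpik^\vee$ and use the block formula for $\underline{\sL}(\Dpik)^{-1}$. A member of the dual family $E_3^{[1,t']}$ then expresses that bottom row through the single entry $\cL_{t_1,t_2+1}$, which is the one parameter left free.
\end{itemize}

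\textbf{Gap in the case $s=2$.} Your setup is the paper's: the filtration on $E_1^{[2,t]}$ is given by the minor of $\underline{\sL}(\Dpik)$ on rows $1,\dots,l_1$ and columns $t+1,\dots,l_1+t$. The inductive step, however, is misstated.

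Given $E_1^{[2,t-1]}$, the module $E_1^{[2,t]}$ contributes exactly the line spanned by column $l_1+t$ of the off-diagonal block $\underline{\sL}'$, i.e.\ the ratios $\cL_{j,l_1+t}/\cL_{l_1,l_1+t}$. Each column keeps its own unknown scale, whereas $\bZ_{S_0}$ rescales all of $\underline{\sL}'$ by one common factor. So $l_2-1$ scalars remain undetermined, not ``up to the gauge''. For $l_1=1$ this family contains no information at all.

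Fixing these scalars is the real content, and it is not a symmetric row version of the same computation. The filtrations on $E_2^{[1,t']}$ are cut out by intersections. The paper reads them off from $\Dpik^\vee$, whose Hodge matrix is $w_0(\underline{\sL}(\Dpik)^{-1})^Tw_0$, and obtains the rows of $A^{-1}\underline{\sL}'B^{-1}$ up to scalars, where $A,B$ are the known diagonal blocks.

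Write $\underline{\sL}'=CD_1$ and $A^{-1}\underline{\sL}'B^{-1}=D_2R$, with $C,R$ known and $D_1,D_2$ unknown diagonal. The identity $A^{-1}CD_1=D_2RB$, read entrywise where entries are non-zero, together with $\cL_{l_1,l_1+1}=1$, determines $D_1$. Peeling off one column at a time via Plücker coordinates cannot produce this, and $N$ plays no role in relating the columns.
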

			In general, when $s>2$, besides $\cC_{\mathrm{ST}}(\Dpik)$, we need extra parameters coming from crystalline subquotients of $\Dpik$ or ``crystalline'' Hodge parameters between different Steinberg subquotients to recover  $\underline{\sL}(\Dpik)$.\;\textit{This phenomenon is parallel to the generic crystalline case in \cite{ParaDing2024} (a block version of \cite{ParaDing2024}).}\;In precise, for $1\leq r<q\leq s$,  the two parabolic filtrations $[E^{q-1}_{r}-E_q]=[E_q'-(E^{q-1}_{r})']$ of $E_r^q$ induce a map:
			\[\iota^{r,q}_{\Dpik}:E^{q-1}_{r}\hookrightarrow E_r^q\twoheadrightarrow (E^{q-1}_{r})',\]
Secondly, $\Dpik$ admits a unique maximal crystalline $(\varphi,\Gamma)$-quotient
			\[\Dpik^{\mathrm{cr}}:=[\cR_E(\unr(\alpha_1p^{l_1-1})z^{\bh_{n-s+1}})-\cR_E(\unr(\alpha_2p^{l_2-1})z^{\bh_{n-s+2}})-\cdots-\cR_E(\unr(\alpha_sp^{l_s-1})z^{\bh_{n}})]. \]
			Now we state our main theorems.
			
			\begin{thm}[Theorem \ref{capturetype1}]\label{capturetype1intro} $\Dpik$ is uniquely determined by $\cC_{\mathrm{ST}}(\Dpik)$ and $\Dpik^{\mathrm{cr}}$.
			\end{thm}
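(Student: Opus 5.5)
The plan is to translate the statement into a statement about Hodge parameters. Since $\Dpik$ is non-critical semistable with fixed $\underline{\phi}$, $S_0$ and $\bh$, it is determined by $\underline{\sL}(\Dpik)\in\Phi_{\mathrm{nc},\Delta}(S_0)$, that is, by the Hodge filtration on $D=D_{\mathrm{st}}(\rho_p)$ up to the action of $\bZ_{S_0}$. I will therefore show that the coordinates of $\underline{\sL}(\Dpik)$ in a suitable affine chart of the big cell $\bZ_{S_0}\backslash \bB w_0\bB/\bB$ (non-criticality for $u=1$ puts us there) can be read off from the data. Write the flag in chart form: a lower-triangular unipotent matrix $X$ whose entries $x_{ij}$, with $i>j$, record the position of $\fil_H^{\bullet}$ relative to $e_1,\dots,e_n$. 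Organize $X$ in $l_r\times l_q$ blocks $X_{q,r}$, with $r\le q$, according to the $\bP_{S_0}$-decomposition. The diagonal blocks $X_{r,r}$ are the Hodge parameters of the Steinberg piece $E_r=E_r^{[r,\cdot]}\in\cC_r^r(\Dpik)$. By the Steinberg case (Qian's Breuil--Schraen $\sL$-invariants) these are equivalent data, modulo the one-dimensional scaling of $\bZ_{S_0}$ on each block.

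The off-diagonal blocks are then handled by induction on $q-r$, working inside $E_r^q$. For $r<q$, the modules $E_r^{[q,t]}$ and $E_q^{[r,t']}$ are the $\varphi$-twists of the Steinberg blocks obtained by moving $E_q^{(t)}$ to the bottom, or $F_r^{(t')}$ to the top. Their Hodge filtrations are the Hodge filtration of $D$ restricted to, or projected onto, the $\varphi$-stable subspaces spanned by $e$'s from blocks $r$ and $q$. Computing $\fil_H$ on $\mathrm{span}(e_{r,\cdot})\oplus$(images of $e_{q,1},\dots,e_{q,t}$) shows that, once the lower-order data are known, the Steinberg parameters of $E_r^{[q,t]}$ are rational functions of $X_{r,r}$, $X_{q,q}$, the intermediate blocks, and the entries of $X_{q,r}$ in the first $t$ columns. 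Varying $t$ and $t'$ recovers all of $X_{q,r}$ except one normalizing direction. This is the same loss that, for $s=2$, is absent (Theorem \ref{capturetype0intro}) and, for $s=3$, leaves exactly one parameter.

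The missing directions are exactly the ``crystalline'' Hodge parameters linking consecutive Steinberg blocks through their top $\varphi$-eigenvectors $e_{r,l_r}$, that is, the entries pairing $\alpha_rp^{l_r-1}$ with $\alpha_qp^{l_q-1}$. These are precisely the Hodge parameters of the crystalline quotient $\Dpik^{\mathrm{cr}}$, which is spanned by the images of $e_{1,l_1},\dots,e_{s,l_s}$ and whose filtration is the induced one. By Ding's crystalline theory (non-critical and generic, since $\phi_i\ne\phi_j$), $\Dpik^{\mathrm{cr}}$ determines its parameters in $\Phi^{\mathrm{cr}}_{\mathrm{nc},s}$. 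The step I expect to be the main obstacle is checking that these parameters, together with those from $\cC_{\mathrm{ST}}(\Dpik)$, really fill in the missing coordinates. This means verifying that the Jacobian of the map $\underline{\sL}\mapsto(\cC_{\mathrm{ST}},\Dpik^{\mathrm{cr}})$ is triangular with nonzero diagonal with respect to the ordering by $(q-r,t)$. I would do this by an explicit minor computation in the chart, using non-criticality to guarantee that the relevant minors, namely the denominators, are nonzero. With injectivity on parameters in hand, Fontaine's equivalence gives that $\Dpik$ is determined.
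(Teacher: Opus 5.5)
Your reduction to $\underline{\sL}(\Dpik)$ is fine, and so is the block-by-block step: it is exactly Theorem~\ref{twoblocksPare1} and Proposition~\ref{twoblocksPare2}. The diagonal blocks are read directly off $E_r\in\cC_{\mathrm{ST}}(\Dpik)$, so no Breuil--Schraen invariants are needed. The gap is the step you postpone, which carries the whole content of the theorem, and the heuristic behind it is wrong.

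$D_{\mathrm{cris}}(\Dpik^{\mathrm{cr}})=D/N(D)$ has basis the images of $e_{t_1},\dots,e_{t_s}$, and its filtration is the image of $\fil^{\bullet}_H(D)$. Its parameters are therefore torus-invariant rational functions of the entries $\cL_{t_r,j}$, $n-s+1<j\le n$, after column reduction. These mix entries from several off-diagonal blocks and are not the ``top--top'' entries $\cL_{t_r,t_q}$. Meanwhile, the entry left over by Proposition~\ref{twoblocksPare2} is $\cL_{t_r,t_{q-1}+1}$: the top of block $r$ against the bottom of block $q$. Whether the former pin down the latter is a real question, not a relabelling. Also, a triangular Jacobian would only give local injectivity, not that $\Dpik$ is uniquely determined.

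In fact the step fails when $\cC_{\mathrm{ST}}(\Dpik)$ is taken literally. Take $(l_1,l_2,l_3)=(1,1,2)$, so $N(e_4)=e_3$ and $\bZ_{S_0}=\{\diag(a,b,c,c)\}$. The moduli space is $4$-dimensional, with coordinates $\cL_{34}$, $B=\cL_{24}/\cL_{23}$, $C=\cL_{14}/\cL_{13}$ and $w=\cL_{12}\cL_{23}/\cL_{13}$. The members of $\cC_{\mathrm{ST}}(\Dpik)$ of rank $\ge 2$ are $E_3$, $E_3^{[2,1]}$ and $E_3^{[1,1]}$, even if you allow all of $E_r$ to be moved above $E_q$. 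Their invariants are $\cL_{34}$, $\cL_{34}-B$ and $\cL_{34}-C$. The quotient $D/Ee_3$ has the single invariant $B/C$, which is already determined. So $w$, the parameter of the crystalline submodule $\Dpik_{\mathrm{cr}}$, is invisible to your data.

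The paper argues differently, by induction on the rank.
\begin{itemize}
\item $\Dpik_1^{n-1}$ and $\Dpik_2^{n}$ fix everything except $\cL_{1,n}$.
\item The crystalline case is Theorem~\ref{higherdetercry}.
\item Otherwise, $\cL_{1,n}$ is read off linearly from the last Hodge line $E(\sum_{l\le l_1}\cL_{l,n}e_l)$ of the Steinberg quotient $E_1'$ of $\Dpik=[(E_2^s)'-E_1']$.
\end{itemize}
That $E_1'$ moves the whole string $E_2^s$ past $E_1$. For $s\ge 3$ it is not an $E_r^{[q,t]}$ but a graded piece of some $\cF_{S_0(u)}$, i.e.\ part of the data $p_{\mathrm{ST}}$ of Theorem~\ref{capturetype1flag}. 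In the example, its mirror image is the Steinberg submodule on $Ee_3\oplus Ee_4$. Its invariant is $\cL_{34}-\frac{C-Bw}{1-w}$, which recovers $w$ since $B\neq C$ by non-criticality. What your proposal is missing, then, is these multi-block Steinberg pieces together with a successive linear elimination of the corner entries, not a Jacobian check against $\Dpik^{\mathrm{cr}}$.
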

			\begin{thm}[Theorem \ref{capturetype1diamond}]\label{capturetype2intro} $\Dpik$ is uniquely determined by $\cC_{\mathrm{ST}}(\Dpik)$ and $\{\iota^{r,q}_{\Dpik}\}_{1\leq r<q\leq s}$.
			\end{thm}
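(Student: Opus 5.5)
We argue by induction on the number $s$ of Steinberg blocks, reconstructing $\underline{\sL}(\Dpik)\in\Phi_{\mathrm{nc},\Delta}(S_0)$ block by block. The strategy parallels Ding's treatment of the generic crystalline case in \cite{ParaDing2024}, with the rank-one pieces replaced by the Steinberg blocks $E_i$.

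\textbf{Reduction of coordinates.} Since $\bZ_{S_0}\backslash\GLN_n/\bB$ parametrizes Hodge filtrations, the Hodge filtration is described block-wise with respect to the $\bP_{S_0}$-decomposition. Fix a non-critical representative $g\in\GLN_n$ in the big cell; since $\underline{\sL}(\Dpik)\in \bB w_0\bB/\bB$, it can be normalized to be lower unipotent modulo $\bZ_{S_0}$. Write $g$ in $s\times s$ block form with blocks $g_{rq}$ for $r\ge q$:
\begin{itemize}
\item the diagonal block $g_{rr}$ is the Hodge parameter of the Steinberg subquotient $E_r$, modulo the center $\bG_m$ of that block;
\item the off-diagonal blocks $g_{rq}$ with $r>q$ measure how the Hodge filtration of $E_r^q$ mixes $E_r$ and $E_q$.
\end{itemize}

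\textbf{Base case $s=2$.} We use the case $s=2$ of Theorem \ref{capturetype0intro}, i.e.\ Theorem \ref{twoblocksPare1}, which is assumed known: for each pair $r<q$, the collection $\cC^q_r(\Dpik)$ together with $\cC_{\mathrm{ST}}$ of the blocks determines the Hodge parameters of the two-block module $[E_r-E_q]$. Applied to the two-block subquotients, this gives the blocks $g_{rr}$ together with the adjacent mixing $g_{r+1,r}$.

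\textbf{Inductive step via $\iota^{r,q}_{\Dpik}$.} Suppose $E_r^{q-1}$ and $E_{r+1}^{q}$ are already determined by induction on $q-r$. Then $E_r^q$ is an extension class in
\[
\ext^1_{(\varphi,\Gamma)}(E_q,E_r^{q-1})
\]
lying over fixed classes of the smaller subquotients. The remaining ambiguity is the single "corner" block $g_{qr}$, modulo what is already fixed.

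The map $\iota^{r,q}_{\Dpik}:E_r^{q-1}\to (E_r^{q-1})'$ comes from comparing the two parabolic filtrations $[E_r^{q-1}-E_q]=[E_q'-(E_r^{q-1})']$. Its effect on $D_{\dr}$, equivalently on $\fil_H$, is the projection of the flag of $E_r^q$ onto the complement of the reordered first piece. In coordinates this is a Schur-complement-type expression involving $g_{qr}$ and the known blocks.

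We then show that, in the non-critical case, this Schur-complement map is injective in the corner block. This follows the crystalline argument of Ding, where the corner entry is read off from a "crystalline" Hodge parameter between the first and last pieces. Iterating over $q-r=1,\dots,s-1$ recovers $g$, hence $\underline{\sL}(\Dpik)$. By the Colmez--Fontaine equivalence, $\underline{\sL}(\Dpik)$ determines $\Dpik$.

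\textbf{Main obstacle.} The hard step is the injectivity in the corner block. The space $\ext^1(E_q,E_r^{q-1})$ is typically larger than the contribution visible from the Hodge filtration, so we must show that $\iota^{r,q}$ (its cokernel or its Hodge image) separates precisely those classes with the given graded pieces.

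First I would try passing to $D_{\mathrm{st}}$. There $\iota^{r,q}$ becomes a $(\varphi,N)$-equivariant identity on underlying spaces, and the two filtered modules differ only by their Hodge filtrations. The claim then reduces to linear algebra on flags: two lower-block matrices with the same smaller subquotients and the same induced map must agree in the corner block. Non-criticality enters to ensure that the relevant minors are invertible.
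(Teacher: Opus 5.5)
Your induction skeleton (once $E_r^{q-1}$ and $E_{r+1}^q$ are known, only the corner block $\underline{\sL}(\Dpik)^q_r$ remains) matches the paper. The inductive step, however, has a genuine gap. The claim that $\iota^{r,q}_{\Dpik}$ alone is ``injective in the corner block'' is false, because $\iota^{r,q}_{\Dpik}$ carries far too little information. The corner block has $l_rl_q$ free entries. In contrast, $\iota^{r,q}_{\Dpik}$ only sees the quotient Hodge filtration on $D_{\mathrm{st}}(E_r^q)/D_{\mathrm{st}}(E_q)$, which depends only on the projections of the last $t_r^{q-1}$ columns of $\underline{\sL}(\Dpik)$.

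Concretely, take $s=3$, $l_1=l_2=1$, $l_3=L\geq 2$, so $n=L+2$.
\begin{itemize}
\item The target $(E_1^2)'=\Dpik/E_3'$ has as Hodge filtration the image of $\fil_H^{\bullet}(D)$ in the $2$-dimensional space $D/D_{\mathrm{st}}(E_3)=Ee_1\oplus Ee_2$.
\item That image is the single line $E(\cL_{1,L+2}e_1+\cL_{2,L+2}e_2)$, coming from the last column only.
\item Given $\cL_{12}$ and $\cL_{2,L+2}$ (known from $E_1^2$ and $E_2^3$), $\iota^{1,3}_{\Dpik}$ therefore yields only $\cL_{1,L+2}$, even if you remember the target module.
\item The entries $\cL_{1,3},\ldots,\cL_{1,L+1}$ do not enter at all, yet they vary freely in $\Phi_{\mathrm{nc},\Delta}(S_0)$.
\end{itemize}
So the ``linear algebra on flags'' statement you propose to prove cannot hold. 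This is consistent with the paper's remark that $\iota^{r,q}_{\Dpik}$ encodes only one additional parameter, since $\dim_E\homo_{(\varphi,\Gamma)}(E_r^{r+1},(E_r^{r+1})')=2$.

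What is missing is any use of $\cC_r^q(\Dpik)$ for $q>r+1$; you invoke the $\cC$-data only for adjacent blocks. In the paper this is Proposition \ref{twoblocksPare2}. Assume the blocks $\underline{\sL}(\Dpik)^{q'}_{r'}$ with $(r',q')\neq(r,q)$ are known. The Steinberg modules $E_r^{[q,t]}$ and $E_q^{[r,t']}$ arise from the partial reorderings $E_r^q=[(E_q^{(t)})'-E_r'-(E_{r+1}^{q-1})'-F_q^{(t)}]$ and $E_r^q=[E_r^{(t')}-(E_{r+1}^{q-1})'-E_q'-(F_r^{(t')})']$. Their Hodge filtrations give the last columns of suitably normalized minors of $\underline{\sL}(\Dpik)$ and of the dual matrix $w_0(\underline{\sL}(\Dpik)^{-1})^Tw_0$. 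These express every entry of $\underline{\sL}(\Dpik)^q_r$ as an affine function of the single lower-left entry $\cL_{t_{r-1}+l_r,t_{q-1}+1}$. Only that one remaining scalar is then fixed by $\iota^{1,s}_{\Dpik}$, by comparing the two Hodge filtrations on the common $(\varphi,N)$-module $D_{\mathrm{st}}(E_1^2)=D_{\mathrm{st}}((E_1^2)')$. Your Schur-complement computation is the right tool for this last step, but it has to be combined with the $\cC_r^q(\Dpik)$ data rather than replace it.
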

			
			By quotienting $\bN_{S_0(u)}$, we get a natural map:
			\[p_{S_0(u)}:\bB w_0\bB/\bB\cong \bN_{\emptyset} \rightarrow (\bB\cap {\bL}_{S_0(u)}) w_{S_0(u),0}(\bB\cap {\bL}_{S_0(u)})/(\bB\cap {\bL}_{S_0(u)})\cong \bN_{\emptyset}\cap {\bL}_{S_0(u)},\]
			where $w_{S_0(u),0}$ is the longest element in $\sW_{S_0(u)}$. We thus obtain a well-defined morphism:
			\begin{equation*}
				\begin{aligned}
					p_{\mathrm{ST}}:\Phi_{\mathrm{nc},\Delta}(S_0)&\rightarrow \prod_{u\in \sW_n^{S_0}}\Phi_{\mathrm{nc},S_0(u)}(S_0(u)),\\
					[g\bB]&\mapsto ([p_{S_0(u)}(ug\bB)])_{u\in \sW_n^{S_0}},\;\Big(\text{equivalently},\;\underline{\sL}(\Dpik)\mapsto \prod_{u\in \sW_n^{S_0}}(\underline{\sL}(E_{u,1}),\cdots,\underline{\sL}(E_{u,f_u}))\Big).
				\end{aligned}
			\end{equation*}
			Note that $\Phi_{\mathrm{nc},S_0(u)}(S_0(u))=\bZ_{S_0(u)}\backslash (\bB\cap {\bL}_{S_0(u)}) w_{0,S_0(u)}(\bB\cap {\bL}_{S_0(u)})/(\bB\cap {\bL}_{S_0(u)})$.\;Extracting the $p$-adic Hodge parameters of $\Dpik^{\mathrm{cr}}$  defines another morphism $p_{\mathrm{cr}}:\Phi_{\mathrm{nc},\Delta}(S_0)\rightarrow \Phi_{\mathrm{nc},s}^{\mathrm{{cr}}}$.\;In the language of flag varieties,\;an equivalent description of Theorem \ref{capturetype1intro} is (see Remark \ref{rmkforlastthmintro}-$(3)$ for the reinterpretation of Theorem \ref{capturetype2intro} in terms of $E$-Lie algebras):
			
			\begin{thm} \label{parametersmaininjintro} The morphism $\Phi_{\mathrm{nc},\Delta}(S_0)\xrightarrow{(p_{\mathrm{ST}},p_{\mathrm{cr}})} \prod_{u\in \sW_n^{S_0}}\Phi_{\mathrm{nc},S_0(u)}(S_0(u)) \times \Phi_{\mathrm{nc},s}^{\mathrm{{cr}}}$ is injective.\;

			\end{thm}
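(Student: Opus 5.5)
This is the flag-variety form of Theorem \ref{capturetype1intro}, so the plan is to set up the dictionary between points of $\Phi_{\mathrm{nc},\Delta}(S_0)$ and non-critical semistable $(\varphi,\Gamma)$-modules $\Dpik$ of fixed $(\underline{\phi},\bh,S_0)$, and then prove injectivity directly. Fontaine's equivalence says $\Dpik$ is determined by $(\varphi,N,D,\fil_H^\bullet D)$, and $(\varphi,N,D)$ is fixed. Hence two points $\underline{\sL},\underline{\sL}'$ coincide if and only if the corresponding $\Dpik,\Dpik'$ are isomorphic by an isomorphism compatible with the fixed Deligne--Fontaine data. So it suffices to show two things: that $p_{\mathrm{ST}}$ and $p_{\mathrm{cr}}$ really extract $(\underline{\sL}(E_{u,i}))_{u,i}$ and $\underline{\sL}(\Dpik^{\mathrm{cr}})$, and that these data recover $\fil_H^\bullet D$.

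\textbf{Step 1 (compatibility).} For $u\in\sW_n^{S_0}$, write $\fil_H^\bullet$ in the basis $e_{u^{-1}(1)},\dots,e_{u^{-1}(n)}$; this gives the point $ug\bB$. Non-criticality means $ug\bB\in\bB w_0\bB/\bB\cong\bN_\emptyset$. The graded pieces $E_{u,i}$ of $\cF_{S_0(u)}$ correspond to the blocks of $\bL_{S_0(u)}$, and the Hodge filtration induced on each block is exactly the image under $p_{S_0(u)}$. To see this, note that the induced filtration on a subquotient $D_{I}/D_{I'}$ is computed from the $\bB$-normal form by dropping the rows and columns outside the block. Modulo $\bZ_{S_0(u)}$ this is $\underline{\sL}(E_{u,i})$. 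In the same way, $\Dpik^{\mathrm{cr}}$ is spanned by the images of the top vectors $e_{t_i}$, and its parameter is the corresponding $s\times s$ minor data. This realises $p_{\mathrm{cr}}$.

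\textbf{Step 2 (coordinates).} Represent $\underline{\sL}$ by $g\in\bN_\emptyset$, so that $\fil$ is the flag spanned by the columns of $w_0$-twisted unipotent data, normalised modulo $\bZ_{S_0}$. The entries within diagonal $l_i\times l_i$ blocks are recorded by $p_{S_0(1)}$, i.e.\ by $\underline{\sL}(E_i)$. The off-diagonal block entries between the $r$-th and $q$-th blocks are the unknowns. I would proceed by induction on $s$, using $E_1^{s-1}$ and $E_2^s$. By induction, the data attached to the permutations $u$ that preserve these subquotients determine $\underline{\sL}(E_1^{s-1})$ and $\underline{\sL}(E_2^{s})$. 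The remaining unknowns are then the entries coupling block $1$ with block $s$.

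\textbf{Step 3 (the corner block), the main obstacle.} Here I would use the modules $E_1^{[s,t]}$ and $E_s^{[1,t']}$, i.e.\ the permutations $u$ that move part of block $s$ ahead of block $1$ (or the reverse). Their Steinberg graded pieces mix rows of block $1$ with rows of block $s$ after conjugating by $u$. The induced parameters are rational functions in the corner entries whose leading terms are linear in those entries, with coefficients given by already-known quantities, via Bruhat/minor expansions of $ug$. I would check that, as $t,t'$ vary, these determine all $l_1l_s$ corner entries except a single scalar, namely the one invariant under the $\bZ_{S_0}$-normalisation and invisible to every Steinberg subquotient. This matches the "up to one Hodge parameter" defect in Theorem \ref{capturetype0intro}. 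That last scalar is then detected by $p_{\mathrm{cr}}$: the $(1,s)$ crystalline parameter of $\Dpik^{\mathrm{cr}}$ is, by Step 1, the relevant minor of the top rows, which is affine-linear in the missing scalar with a nonzero coefficient. Here non-criticality guarantees that the coefficient minors are nonzero.

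I expect the main difficulty to be this explicit minor computation, specifically showing that the Jacobian is triangular and nonzero. I would first carry it out for $s=2$ and $s=3$, following the $\GLN_3$ example of Section \ref{philoforParameter}, and then organise the general case by ordering the corner entries by their distance from the antidiagonal.
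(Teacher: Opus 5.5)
\textbf{Main gap: Step 3.} The gap sits exactly where you expected the difficulty: $p_{\mathrm{cr}}$ cannot in general supply the last corner scalar.

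The reason is structural. $D_{\mathrm{st}}(\Dpik^{\mathrm{cr}})$ is $D$ modulo the non-top vectors, with the image filtration. The image of $\fil_H^{-\bh_j}(D)$ is everything for $j\leq n-s+1$. Hence $\Dpik^{\mathrm{cr}}$ depends only on the last $s-1$ columns of $\underline{\sL}(\Dpik)$, and only modulo $\bT_s$. So ``the $(1,s)$ crystalline parameter'' is not an invariant. Your $\bZ_{S_0}$-normalisation of $\underline{\sL}(\Dpik)$ uses entries outside those columns, so it cannot be transported to $\Dpik^{\mathrm{cr}}$.

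Concretely, take $n=4$, $(l_1,l_2,l_3)=(1,1,2)$ and $Ne_4=e_3$. Then $\bZ_{S_0}=\{\mathrm{diag}(a,b,c,c)\}$, and $\Phi_{\mathrm{nc},\Delta}(S_0)$ has the four invariants $\cL_{34}$, $x=\cL_{14}/\cL_{13}$, $y=\cL_{24}/\cL_{23}$, $z=\cL_{12}\cL_{23}/\cL_{13}$. Your data give the following:
\begin{itemize}
\item $\cL_{34}$, from $E_3$;
\item $\cL_{34}-y$, from $E_3^{[2,\cdot]}=\langle e_1,e_3,e_4\rangle/\langle e_1\rangle$;
\item $\cL_{34}-x$, from $E_3^{[1,\cdot]}=\langle e_2,e_3,e_4\rangle/\langle e_2\rangle$;
\item nothing from the rank-one $E_1^{[3,t]}$;
\item from $\Dpik^{\mathrm{cr}}$ (basis $\bar e_1,\bar e_2,\bar e_4$, parameters $(\cL_{13}/\cL_{23},\cL_{14},\cL_{24})$), only the single $\bT_3$-invariant $\cL_{13}\cL_{24}/(\cL_{23}\cL_{14})=y/x$.
\end{itemize}
So $z$ stays free. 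For generic fixed $\cL_{34},x,y$, varying $z$ gives non-isomorphic non-critical $\Dpik$ with the same $E_1^{s-1}$, $E_2^s$, $E_1^{[s,t]}$, $E_s^{[1,t']}$ and $\Dpik^{\mathrm{cr}}$. In particular the data of Theorem \ref{capturetype1} do not determine $\Dpik$ here.

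Nor is $z$ ``invisible to every Steinberg subquotient'', as you assert. Consider the refinement ordering the basis as $e_3,e_4,e_1,e_2$. Its block $\langle e_3,e_4\rangle$ has parameter $\cL_{34}-\frac{\cL_{14}-\cL_{12}\cL_{24}}{\cL_{13}-\cL_{12}\cL_{23}}=\cL_{34}-\frac{x-zy}{1-z}$. This determines $z$, since $x\neq y$ and $z\neq 1$ by non-criticality for that refinement. The block is part of $p_{\mathrm{ST}}$, which runs over all of $\sW_n^{S_0}$, but it is not among the modules you use. A correct argument therefore has to exploit refinements in which $E_s$, or a piece of it, jumps over several blocks at once.

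Your Steps 2--3 are the paper's proof of Theorem \ref{capturetype1diamond} with $\iota^{1,s}_{\Dpik}$ replaced by $p_{\mathrm{cr}}$. That substitution is what breaks: $\iota^{1,3}_{\Dpik}$ compares $E_1^2$ with the quotient $D/\langle e_3,e_4\rangle$ of that same refinement, and it does see $z$.

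\textbf{Second gap: the induction in Step 2 does not close.} The statement for $E_1^{s-1}$ needs $p_{\mathrm{cr}}(E_1^{s-1})$. For $l_s>1$ the module $(E_1^{s-1})^{\mathrm{cr}}$ is not a subquotient of $\Dpik^{\mathrm{cr}}$: its Hodge--Tate weights are $\bh_{t_{s-1}-s+2},\dots,\bh_{t_{s-1}}$, and its filtration is obtained by restricting to $D_{\mathrm{st}}(E_1^{s-1})$ before projecting. Only $(E_2^s)^{\mathrm{cr}}$, a quotient of $\Dpik^{\mathrm{cr}}$, is available.

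For example, take $(l_1,\dots,l_4)=(1,1,1,2)$. Then $E_1^3$ is crystalline with invariant $\cL_{12}\cL_{23}/\cL_{13}$. Both $\Dpik^{\mathrm{cr}}$ and every Steinberg block, except those of refinements starting with $e_4,e_5$, depend only on columns $3,4,5$. So only the latter blocks can supply this invariant.

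The paper's own argument does not avoid this. It inducts on the rank through $\Dpik_1^{n-1}$ and $\Dpik_2^n$, then reads off $\cL_{1,n}$ from $E_1'$ in $[(E_2^s)'-E_1']$. It applies its induction hypothesis to $\Dpik_1^{n-1}$ in the same way; in the $(1,1,2)$ example, $\Dpik_1^3$ is crystalline with invariant $z$. So imitating it will not repair this. You need either a strengthened induction statement, or an explicit extraction of these crystalline invariants from $p_{\mathrm{ST},u}$ for refinements $u$ that move the last block to the front.
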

			
			\subsection{Automorphic side}
			To the best of the author's knowledge, there is no general method to construct an explicit locally analytic representation $\pi_{1}(\Dpik)$ that uniquely determines $\Dpik$. Thus, we do not investigate that problem here and instead aim to capture $\underline{\sL}(\Dpik)$ through locally analytic methods. We organize into two parts.
			\begin{itemize}
				\item[(\textbf{I})] (Theorem \ref{decompforextgpsintro} and Proposition \ref{BSinv}) We compute higher extension groups of parabolic inductions of locally analytic generalized Steinberg representations, and use them to encode the information of $p_{\mathrm{ST}}(\underline{\sL}(\Dpik))$ or determine $\cC_{\mathrm{ST}}(\Dpik)$.
				\item[(\textbf{II})] (Theorem \ref{thmintor2}) We construct several explicit locally analytic representations $\pi_{1}^{\flat}(\Dpik)\hookrightarrow \pi_{1}^{\sharp}(\Dpik)\hookrightarrow \pi_{1}^{\Delta,-}(\Dpik)\hookrightarrow \pi_{1}^{\Delta}(\Dpik)\hookrightarrow \pi_{1}^{+}(\Dpik)$ (the last one is  conjectural,\;see Expectation \ref{constforfull}) and describe what information of $\sL(\Dpik)$ of equivalently the Hodge filtration $\fil^{\bullet}_{H}(D)$  they determine (Theorem \ref{deterHodgefilintro}).\;
			\end{itemize}
		
		\subsubsection{Breuil-Schraen $\sL$-invarians for the  monodromy type $S_0$}
Let $d$ be an integer,\;and let $\bT_d$ (resp.,\;$\bB_{d}$,\;resp.,\;$\bP_{d,I}$) be the standard diagonal torus (resp.,\;Borel subgroup of upper triangular matrices,\;resp.,\;parabolic subgroup associate to $I\subseteq \Delta_d:=\{1,\cdots,d-1\}$) of $\GLN_d$ and let $\overline{\bB}_{d}$ (resp.,\;$\op_{d,I}$) be the Borel subgroup opposite to $\bB_{d}$ (resp.,\;$\bP_{d,I}$).\;For $I\subseteq \Delta_d$, set
			\[v^{\ana}_{I,\Delta_{d}}:=i^{\ana}_{I,\Delta_{d}}/\sum_{I\subsetneq J}i^{\ana}_{J,\Delta_{d}},\quad i^{\ana}_{I,\Delta_{d}}:=\left(\ind^{\GLN_d(\bQ_p)}_{\op_{d,I}(\bQ_p)}1_{\op_{d,I}(\bQ_p)}\right)^{\ana}.\]
			$\{v^{\ana}_{I,\Delta_{d}}\}_{I\subseteq \Delta_d}$ are the so-called locally analytic generalized Steinberg representations of $\GLN_d(\bQ_p)$.
			
				Let $J\subseteq \Delta$, and assume that $\bL_J\cong \prod_{i=1}^m\GLN_{n_i}:=\prod_{i=1}^m\bL_{J,i}$. Let $\Delta_{J,i}\subseteq \Delta$ be the simple roots of $\bL_{J,i}$ for $1\leq i\leq m$ (thus $J=\sqcup_{i=1}^m \Delta_{J,i}$).\;Fix $\ba:=(a_1,a_2,\cdots,a_m)\in \BZ^{m}$. For $I\subseteq J$, consider the locally analytic parabolic induction ($\overline{\bP}_{J}$ is the parabolic subgroup opposite to $\bP_{J}$):
			\begin{equation}
				v^{\ana}_{I,J}(\ba)=\left(\ind^G_{\overline{\bP}_{J}(\bQ_p)}\big(\widehat{\boxtimes}_{j=1}^m|\mathrm{det}_{\bL_{J,i}}|^{a_j}v^{\ana}_{\Delta_{J,j}\cap I,\Delta_{J,j}}\big)\right)^{\ana}.
			\end{equation}
			Consider the following higher extension group of admissible locally analytic representations:
			\[\BE_{I_1,I_2}(\ba):=\ext_{G,Z}
			^{|I_2\backslash I_1|}\big(v^{\ana}_{I_1, J}(\ba),v^{\ana}_{I_2, J}(\ba)\big).\]
			Based on the results of \cite{wholeLINV}, we show that
			\begin{thm}[Theorem \ref{decompforextgps}]\label{decompforextgpsintro} $\BE_{I_1,I_2}(\ba)\cong \bigotimes_{j=1}^m\ext_{\bL_{J,j}(\bQ_p),Z}
				^{|I_2'\backslash I_1'|}\big(v^{\ana}_{I_1\cap \Delta_{J,j},\Delta_{J,j}},v^{\ana}_{I_2\cap \Delta_{J,j},\Delta_{J,j}}\big)$.\;If $|I_1\backslash I_2|=1$, we have a canonical isomorphism
				$\BE_{I_1,I_2}(\ba)\cong \homo(\bQ_p^{\times},E)$.
				
			\end{thm}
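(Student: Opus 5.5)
We argue by reduction to the Levi $\bL_J$ via a Frobenius-reciprocity / Künneth argument, and then invoke Qian's computation \cite{wholeLINV} on each factor $\GLN_{n_j}$.

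\textbf{Step 1: the product decomposition.} Write $\pi_{I}:=\widehat{\boxtimes}_{j=1}^m|\det_{\bL_{J,j}}|^{a_j}v^{\ana}_{\Delta_{J,j}\cap I,\Delta_{J,j}}$, so that $v^{\ana}_{I,J}(\ba)=(\ind^G_{\op_J(\bQ_p)}\pi_I)^{\ana}$. By the locally analytic version of second adjunction (Emerton's Jacquet functor and its derived form, as used by Schraen and Qian), we have
\[\ext^k_{G,Z}\big((\ind^G_{\op_J}\pi_{I_1})^{\ana},(\ind^G_{\op_J}\pi_{I_2})^{\ana}\big)\cong \ext^k_{\bL_J(\bQ_p),Z}\big(\pi_{I_1},\pi_{I_2}\big)\]
in the relevant range, provided the other Bruhat strata contribute nothing. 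To get this, I would filter $J_{\bP_J}$ (or the restriction to $\op_J$) of the induced representation by Bruhat cells $\op_J w\bP_J$ with $w\in\sW_J\backslash\sW_n/\sW_J$. For $w\neq 1$, the $\bZ_J$-action on the pieces has characters that differ from those of $\pi_{I_2}$: the twist by $w$ of $\ba$ together with the modulus characters produces a nontrivial central character difference (here the genericity comes from the $\bZ_J$-weights $\delta_{\op_J}$, which are nonzero), and a central-character argument kills those $\ext$ groups. I expect this vanishing of the non-identity Bruhat contributions, in all degrees up to $|I_2\backslash I_1|$, to be the main obstacle. I would try to prove it exactly as in \cite[\S\S on parabolic induction]{wholeLINV} and in Schraen's $\GLN_3$ computation, using the Hochschild–Serre spectral sequence for $\bZ_J$ and the fact that $\fz_J$ acts by distinct infinitesimal characters.

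\textbf{Step 2: Künneth on the Levi.} Next, $\bL_J(\bQ_p)=\prod_j\GLN_{n_j}(\bQ_p)$, and the twists $|\det|^{a_j}$ cancel between source and target. A Künneth formula for locally analytic $\ext$ of completed tensor products (via relative Lie algebra cohomology and the compact-open subgroup, as in Schraen and Qian) then gives
\[\ext^{k}_{\bL_J,Z}(\pi_{I_1},\pi_{I_2})\cong \bigoplus_{\sum k_j=k}\bigotimes_j \ext^{k_j}_{\bL_{J,j},Z}\big(v^{\ana}_{I_1\cap\Delta_{J,j}},v^{\ana}_{I_2\cap\Delta_{J,j}}\big).\]
Here we have to take care of the centre: $Z$ is only the centre of $G$, so the extra central directions of $\bZ_J$ contribute an exterior-algebra factor. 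Qian's results show that the groups $\ext^{k_j}$ vanish for $k_j<|I_2\cap\Delta_{J,j}\backslash I_1\cap\Delta_{J,j}|$, with the appropriate convention for the central part. Degree counting with $\sum_j|I_2'\backslash I_1'|=|I_2\backslash I_1|$ then leaves only the single term with $k_j$ equal to the minimal degree for each $j$. This yields the claimed tensor product. For the factors with $I_1\cap\Delta_{J,j}=I_2\cap\Delta_{J,j}$, the factor is $\homo=E$, since $v^{\ana}$ has scalar endomorphisms.

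\textbf{Step 3: the case of a one-element difference.} When $I_2\backslash I_1=\{i\}$ (I read the condition in the statement as $|I_2\backslash I_1|=1$), exactly one factor $j$ is nontrivial. Its group is $\ext^1_{\GLN_{n_j},Z}(v^{\ana}_{I_1',\Delta},v^{\ana}_{I_1'\cup\{i\},\Delta})$, which \cite{wholeLINV} (generalizing Ding's simple $\sL$-invariants \cite{2019DINGSimple}) identifies canonically with $\homo(\bQ_p^\times,E)$ via $\psi\mapsto$ the extension obtained by deforming the character $\psi\circ\det$ on the $i$-th block of the Levi. Composing with the isomorphisms from Steps 1 and 2, which are canonical because they come from adjunction and Künneth, gives the canonical isomorphism $\BE_{I_1,I_2}(\ba)\cong\homo(\bQ_p^\times,E)$.
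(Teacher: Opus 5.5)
Your route is genuinely different from the paper's. The paper never reduces all of $G$ to $\bL_J$ at once and never uses a Künneth formula. It first removes the twist, identifying $M^\bullet_{I_1,I_2}(\ba)$ with $M^\bullet_{I_1,I_2}$ as in \cite[Lemma 3.3.2, Proposition 3.3.3]{wholeLINV}. It then works inside $G$ with cup products. In the convention $I_2\subseteq I_1$, one inserts a chain $I_2=T_0\subseteq T_1\subseteq\cdots\subseteq T_d=I_1$ with $T_l\backslash T_{l-1}=(I_1\backslash I_2)\cap\Delta_{J,l}$. Distinct blocks $\Delta_{J,l}$ are separated by a simple root outside $J$, so the two differences in each cup product $\BE_{T_l,T_{l-1}}(\ba)\otimes_E\BE_{T_{l-1},T_0}(\ba)\to\BE_{T_l,T_0}(\ba)$ are not connected, and Qian's Theorem 5.3.6 makes it an isomorphism. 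Each single-block factor is then identified with the corresponding $\GLN_{n_l}(\bQ_p)$-group by \cite[Theorem 1.3.5(i)]{wholeLINV}, and the one-element case is \cite[Lemmas 5.4.1, 5.4.4]{wholeLINV}.

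What the paper's route buys is the tensor decomposition as a cup-product decomposition. That is exactly the structure used afterwards ($\BE^{\infty}_{I_1,I_2}(\ba)$, $\widetilde{\BE}_{I_1,I_2}(\ba)$, condition (3) of Definition \ref{BDlinvgeneral}), and it needs a Levi comparison only one block at a time. Your adjunction-plus-Künneth route could in principle give more: a Levi comparison in all degrees, with the twists $|\det|^{a_j}$ cancelling automatically on the Levi. But its two pillars are precisely what you leave unproved:
\begin{itemize}
\item the vanishing of the non-identity Bruhat cells when the induced representation is not a character;
\item a Künneth theorem for locally analytic $\ext$ of completed tensor products over $\prod_j\GLN_{n_j}(\bQ_p)$.
\end{itemize}
You would also still have to check compatibility with cup products to use the decomposition as the paper does.

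Two points need actual repair.

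\textbf{The separation mechanism in Step 1.} As stated, it does not work: smooth characters (the $w$-conjugate of $\ba$ times modulus characters) need not distinguish the cells. Already for $G=\GLN_2(\bQ_p)$, $J=\emptyset$, $\chi_{\ba}=|\cdot|^{a_1}\otimes|\cdot|^{a_2}$ and $a_1-a_2=\pm1$ (sign depending on normalizations), the open cell of $(\ind^G_{\ob(\bQ_p)}\chi_{\ba})^{\ana}$ contributes to the $\overline{\bN}_{\emptyset}(\bQ_p)$-homology a character whose smooth part equals $\chi_{\ba}$; only its algebraic weight differs. The theorem is asserted for every $\ba$, so you cannot assume genericity. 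What kills these contributions is the algebraic $\fz_J$-weight created by $\overline{\fn}_J$-homology along the cell, which you mention but do not set up. When the source $v^{\ana}_{I_1,J}(\ba)$ is not a character, its own $\fn$-homology carries nonzero algebraic weights, and you must show these never cancel the cell shift. The natural fix is to replace $v^{\ana}_{I_1,J,\Delta}(\ba)$ by its Bruhat--Tits resolution $\bC_{I_1,J}(\ba)$. Its terms are inductions of $\chi_{\ba}$ from $\op_K(\bQ_p)$ with $I_1\subseteq K\subseteq J$, so the weights can be computed cell by cell.

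\textbf{The direction in Step 3.} Step 3, and the vanishing range you quote from Qian in Step 2, have the arrows reversed. The consistent convention is $I_2\subseteq I_1$, with $\ext^{|I_1\backslash I_2|}$ from $v_{I_1}$ to $v_{I_2}$. This is the convention used for the definition of $\BE_{I_1,I_2}(\ba)$ and its cup products in Section \ref{TYPE1higherext}, and it is forced by $\BE_J(\ba)=\BE_{J,\emptyset}(\ba)$ even in the introduction. So the misprint is the introduction's exponent $|I_2\backslash I_1|$, not the condition $|I_1\backslash I_2|=1$. The one-element group is therefore $\ext^1(v^{\ana}_{I\cup\{i\},\Delta},v^{\ana}_{I,\Delta})$, for example $\ext^1_{\GLN_2(\bQ_p),Z}(1,\st^{\ana}_2)$. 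It is this group, not $\ext^1(v^{\ana}_{I,\Delta},v^{\ana}_{I\cup\{i\},\Delta})$ as you wrote, that Ding's and Qian's construction (deforming $\psi\circ\det$) identifies canonically with $\homo(\bQ_p^{\times},E)$.
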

			This theorem reduces $\BE_{I_1,I_2}(\ba)$ to the higher extension groups of its Steinberg Levi blocks, thus inhert many properties and structures of the higher extension groups in \cite{wholeLINV}. 
			
			Put $\BE_{J}(\ba):={\BE}_{J,\emptyset}(\ba)$. A Breuil-Schraen $\sL$-invariant is a codimension-$1$ subspace $\BW(\ba)\subseteq \BE_J(\ba)$ that satisfies many transversality conditions (see Definition \ref{BDlinvgeneral}). Let $\mathcal{BS}_J(\ba)$ be the moduli space of Breuil-Schraen $\sL$-invariants inside $\BE_J(\ba)$. Let $\Phi_J^+$ be the set of positive roots of $\bL_J$, then for $\alpha\in \Psi_{J}^+$, we can attach a unique $\sL_{\alpha}(\ba)\in E$, which induces an isomorphism
			\begin{equation}
				\begin{aligned}
					\mathcal{BS}_J(\ba)\xrightarrow{\sim} \fl_{J}\cap \fn_{\emptyset}, \BW(\ba)\mapsto (\sL_{\alpha}(\ba))_{\alpha\in \Psi_{J}^+},
				\end{aligned}
			\end{equation}
			where $\fl_{I}$ (resp., $\fn_{I}$) is the $E$-Lie algebra of $\bL_{I}$ (resp., the unipotent radical of $\bP_{I}$). The symbol $(\sL_{\alpha}(\ba))_{\alpha\in \Psi_{J}^+}$ indicates that there exists a way to match Breuil-Schraen $\sL$-invariants inside $\BE_J(\ba)$ with the $p$-adic Hodge parameters or Fontaine-Mazur $\sL$-invariants of Steinberg $(\varphi,\Gamma)$-modules.
			
				\begin{pro}\label{BSinv}For any $u\in \sW_n^{S_0}$, $\{\underline{\sL}(E_{u,i})\}_{1\leq i\leq f_u}$ are encoded by a Breuil-Schraen $\sL$-invariants $\BW_{\Dpik}(u)\subseteq \BE_{S_0(u)}(\ba(u))$ for certain $\ba(u)$. In particular, $\{\BW_{\Dpik}(u)\}_{u\in \sW_n^{S_0}}$ determines $p_{\mathrm{ST}}(\underline{\sL}(\Dpik))$.
			\end{pro}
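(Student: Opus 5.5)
We fix $u\in\sW_n^{S_0}$ and work with the $\bP_{S_0(u)}$-parabolic filtration $\Dpik=[E_{u,1}-\cdots-E_{u,f_u}]$. Each $E_{u,i}$ is a Steinberg $(\varphi,\Gamma)$-module of rank $n_i$, where $\bL_{S_0(u)}\cong\prod_{i=1}^{f_u}\GLN_{n_i}$. After twisting by a suitable unramified character and a power of the cyclotomic character, each $E_{u,i}$ becomes a standard Steinberg module. These twists are what determine the integers $\ba(u)=(a_1,\dots,a_{f_u})$: $a_i$ is read off from $\alpha$ and from the Hodge--Tate weights of the $i$-th block, so that $|\det_{\bL_{S_0(u)},i}|^{a_i}$ matches the central character of $E_{u,i}$.

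For a single Steinberg block we use Qian's theory \cite{wholeLINV} as input. For the Steinberg module $E_{u,i}$ of rank $n_i$, its Hodge parameter $\underline{\sL}(E_{u,i})\in\Phi^{\mathrm{ST}}_{\mathrm{nc},n_i}$ corresponds bijectively to a Breuil--Schraen $\sL$-invariant. This is a codimension-one subspace $\BW_i\subseteq\ext^{n_i-1}_{\GLN_{n_i}(\bQ_p),Z}(v^{\ana}_{\Delta_{n_i},\Delta_{n_i}},v^{\ana}_{\emptyset,\Delta_{n_i}})$, equivalently a point $(\sL_\alpha)_{\alpha}$ of $\fl_{\Delta_{n_i}}\cap\fn_\emptyset$ modulo scalars. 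Concretely, the Fontaine--Mazur $\sL$-invariants attached to the rank-$2$ subquotients and the higher ones along Coxeter filtrations are matched with the coordinates $\sL_\alpha$.

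Next we glue the blocks. By Theorem \ref{decompforextgpsintro}, $\BE_{S_0(u)}(\ba(u))\cong\bigotimes_{i=1}^{f_u}\ext^{n_i-1}_{\GLN_{n_i},Z}(\cdots)$. We define $\BW_{\Dpik}(u)$ as the codimension-one subspace cut out, in the sense of Definition \ref{BDlinvgeneral}, by the block invariants $\BW_i$. Concretely, we check that under the isomorphism $\mathcal{BS}_{S_0(u)}(\ba(u))\xrightarrow{\sim}\fl_{S_0(u)}\cap\fn_\emptyset$ the tuple $(\sL_\alpha)_{\alpha\in\Psi^+_{S_0(u)}}$ decomposes as the product over blocks. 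This works because positive roots of $\bL_{S_0(u)}$ lie in a single block, and the transversality conditions factor through the tensor decomposition.

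The main obstacle is this compatibility: that the Künneth-type isomorphism of Theorem \ref{decompforextgpsintro} respects the cup products and Coxeter filtrations used to define $\sL_\alpha$, so that $\mathcal{BS}_{S_0(u)}(\ba(u))$ really identifies with $\prod_i\mathcal{BS}_{\Delta_{n_i}}$. We would prove this by computing the relevant Ext groups via parabolic induction and the spectral sequence for $\bN_{S_0(u)}$-cohomology, reducing to \cite{wholeLINV} block by block. Granting this, $\{\underline{\sL}(E_{u,i})\}_i$ is recovered from $\BW_{\Dpik}(u)$.

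The last claim then follows from the definition of $p_{\mathrm{ST}}$. Its $u$-component is exactly $(\underline{\sL}(E_{u,1}),\dots,\underline{\sL}(E_{u,f_u}))=p_{S_0(u)}(u\,\underline{\sL}(\Dpik))$, so the collection $\{\BW_{\Dpik}(u)\}_{u\in \sW_n^{S_0}}$ determines $p_{\mathrm{ST}}(\underline{\sL}(\Dpik))$.
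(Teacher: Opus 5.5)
Your proposal follows the same route as the paper, which gives no separate proof of this proposition and implicitly combines four ingredients: Qian's Steinberg dictionary for each block $E_{u,i}$, the decomposition of Theorem \ref{decompforextgps}, the isomorphism $\mathcal{BS}_{S_0(u)}(\ba(u))\cong\fl_{S_0(u)}\cap\fn_{\emptyset}=\bigoplus_i(\fl_{\Delta_{S_0(u),i}}\cap\fn_{\emptyset})$, and the definition of $p_{\mathrm{ST}}$. The compatibility you flag as the main obstacle is essentially built in, since that decomposition is itself produced by cup products between non-connected pieces and each $I_\alpha$ lies in a single block; there are also two small slips: Breuil--Schraen invariants correspond to points of $\fl\cap\fn_{\emptyset}$ itself, not modulo scalars (which would break the dimension match with $\Phi^{\mathrm{ST}}_{\mathrm{nc},n_i}$), and $\ba(u)$ is read off from $\unr(\underline{\phi}^u)\eta$ alone, with no Hodge--Tate weights involved.
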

			
			
			\subsubsection{Locally analytic representations and $p$-adic Hodge parameters }
 For $u\in \sW_n^{S_0}$, consider the locally analytic (resp.,\;locally algebraic) principal series:
			\[\mathrm{PS}_{u}(\underline{\phi},\bh):=\left(\ind^G_{\ob(\bQ_p)}\unr(\underline{\phi}^u)\eta\chi_{\lambda}\right)^{\ana},\quad\text{resp.,\;}\mathrm{PS}_{u}^{\lalg}(\underline{\phi},\bh):=\left(\ind^G_{\ob(\bQ_p)}\unr(\underline{\phi}^u)\eta\right)^{\infty}\otimes_EL(\lambda)\]
			where $\chi_{\lambda}$ (resp.,\;$L(\lambda)$) is the character of $\bT(\bQ_p)$ (resp.,\;algebraic representation of $G$) with weight (resp.,\; highest weight) ${\lambda}=\bh-\theta$  and $\eta$ is the square root of the modulus character of $\ob$.\;
			
			Let $\pi_{\sharp}^{\lalg}(\underline{\phi},\bh)=\mathrm{PS}^{\lalg}_{1}(\underline{\phi},\bh)$.\;The unique generic irreducible constituent $\mathrm{ST}^{\lalg}(\underline{\phi},\bh)$ of $\pi_{\sharp}^{\lalg}(\underline{\phi},\bh)$ is its unique cosocle.\;Let $\mathrm{ST}_{u}(\underline{\phi},\bh)$ be the unique maximal quotient of $\mathrm{PS}_{u}(\underline{\phi},\bh)$ with socle $\mathrm{ST}^{\lalg}(\underline{\phi},\bh)$ and $\mathrm{ST}^{\lalg}_u(\underline{\phi},\bh)$ be the locally algebraic vectors in $\mathrm{ST}_u(\underline{\phi},\bh)$.\;Let $\mathrm{ST}_{u,1}(\underline{\phi},\bh)$ be the ``first two layers" of the socle filtration of
			$\mathrm{ST}_u(\underline{\phi},\bh)$,\;i.e.,\;the unique subrepresentation of $\mathrm{ST}_u(\underline{\phi},\bh)$ satisfying the form:
			\[\Big[\mathrm{ST}^{\lalg}_u(\underline{\phi},\bh)-\soc_G\mathrm{ST}_u(\underline{\phi},\bh)/\mathrm{ST}^{\lalg}_u(\underline{\phi},\bh)\Big].\]
			For $j\in \Delta$ and $u\in \sW_n^{S_0}$, let $\widehat{j}:=\Delta\backslash \{j\}$  and  $[u]_j$ be the minimal length representative in $\sW_{\widehat{j}}u$.\;We put
			\[\cI_j^{\sharp}:=\{[u]_j: u\in \sW_n^{S_0}\},\quad \cI^{\flat}_{j}:=\{u\in \cI_j^{\sharp}:j\notin S_0(u) \}.\]
			For $\star\in\{\flat,\sharp\}$, there exists a locally analytic representation $\pi^{\star}_{1}(\underline{\phi},\bh)$ which lies in the short exact sequence:
			\begin{equation}\label{firstwholeintro}
				0\rightarrow \pi^{\lalg}_{1}(\underline{\phi},\bh) \rightarrow \pi^{\star}_{1}(\underline{\phi},\bh)\rightarrow \bigoplus_{u\in \cI^{\star}_{j},j\in \Delta}C_{j,u}\rightarrow 0,
			\end{equation}
			where $C_{j,u}$ are explicit locally analytic representations given by the Orlik-Strauch functor (see \cite[The main theorem]{orlik2015jordan}).\;$\pi^{\sharp}_{1}(\underline{\phi},\bh)$ is the unique quotient of  certain "amalgamated sum" of $\{\mathrm{ST}_{u,1}(\underline{\phi},\bh)\}_{u\in \sW^{\emptyset,S_0}_{n}}$ with socle $\mathrm{ST}^{\lalg}(\underline{\phi},\bh)$.\;

			\begin{pro}\label{thmintor2}	(Proposition \ref{construext1}) Suppose $\star\in\{\flat,\sharp\}$.\;Let $\homo_{\fil}(D,D)$ be the subspace of endomorphisms of $D$ that respect $\fil^{H}_{\bullet}(D)$.\;There is a map (which depends only on the choice of $\log_p(p)\in E$):
					\begin{equation}\label{reconsfortDpikcirc}
						{t}^{\star}_{D}:\ext^1_{G}\big(\pi_{\sharp}^{\lalg}(\underline{\phi},\bh),\pi^{\star}_{1}(\underline{\phi},\bh)\big)\twoheadrightarrow
						\ext^{1,\flat}_{\gr}(\Dpik,\Dpik)\oplus\homo^{\star}_{\fil}(D,D),
					\end{equation}
					where $\ext^{1,\flat}_{\gr}(\Dpik,\Dpik)\cong \homo(\bZ_{S_0}(\bQ_p),E)$ and $\homo^{\star}_{\fil}(D,D)\subseteq \homo_{\fil}(D,D)$ is  given  in Remark \ref{rmkforlastthmintro} $(2)$ explicitly.\;

			\end{pro}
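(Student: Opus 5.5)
The plan follows the strategy of Ding's construction in the crystalline case \cite{ParaDing2024}, adapted blockwise to the monodromy type $S_0$. Everything is computed on the extension \eqref{firstwholeintro} defining $\pi^{\star}_1(\underline{\phi},\bh)$.

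\textbf{Step 1 (d\'evissage).} Apply $\ext^\bullet_G(\pi_{\sharp}^{\lalg}(\underline{\phi},\bh),-)$ to \eqref{firstwholeintro}. This gives an exact sequence
\[\ext^1_G\big(\pi_{\sharp}^{\lalg},\pi^{\lalg}_1\big)\rightarrow \ext^1_G\big(\pi_{\sharp}^{\lalg},\pi^{\star}_1\big)\rightarrow \bigoplus_{u\in\cI^\star_j,\,j\in\Delta}\ext^1_G\big(\pi_{\sharp}^{\lalg},C_{j,u}\big)\rightarrow \ext^2_G\big(\pi_{\sharp}^{\lalg},\pi^{\lalg}_1\big).\]
The first term is computed by smooth theory (Bernstein centre and Casselman's Ext computations twisted by $L(\lambda)$). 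It should be $\homo(\bZ_{S_0}(\bQ_p),E)$, that is, the deformations of the smooth unramified parameters compatible with the Steinberg blocks. Under the Galois-side identification $\ext^1_{(\varphi,\Gamma)}$ of trianguline deformations along $\cF_{S_0}$, this is exactly $\ext^{1,\flat}_{\gr}(\Dpik,\Dpik)$.

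For the terms $\ext^1_G(\pi_{\sharp}^{\lalg},C_{j,u})$, I use that $C_{j,u}$ comes from the Orlik--Strauch functor. Adjunction (Emerton's Jacquet module / Schraen's spectral sequence for $\ext$ of parabolic inductions) then reduces each term to a one-dimensional space, or a $(1+1)$-dimensional space in the semistable directions $j\in S_0(u)$. The extra line there comes from $\homo(\bQ_p^\times,E)$, as in Theorem \ref{decompforextgpsintro}.

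\textbf{Step 2 (the map).} The map $t^\star_D$ is built in two pieces.

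On the locally algebraic part, it is the isomorphism with $\homo(\bZ_{S_0}(\bQ_p),E)$ from Step 1.

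On the quotient part, each class in $\ext^1_G(\pi_{\sharp}^{\lalg},C_{j,u})$ is sent to an endomorphism of $D$. The rule is as follows. The $(j,u)$-direction corresponds to the rank-$2$ subquotient of $\cF_u$ at position $j$. One sends the class to an infinitesimal change of the relative position of $\fil_H^\bullet$ with respect to $\cF_u$ there. In the $\GLN_2$ picture this is the map relating $\ext^1$ to the $\sL$-invariant. The choice of $\log_p(p)$ enters in the semistable directions, to split $\homo(\bQ_p^\times,E)$ into its smooth and $\log$ parts.

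These assignments glue into a well-defined element of $\homo_{\fil}(D,D)$. The reason is that the amalgamated-sum description of $\pi^\sharp_1$ via $\mathrm{ST}_{u,1}$ forces the compatibilities, namely that overlapping $u$ share the same $C_{j,u}$ exactly when $[u]_j$ coincide. The subspace $\homo^\star_{\fil}(D,D)$ is then defined as the image. For $\star=\flat$, only the crystalline directions $j\notin S_0(u)$ appear.

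\textbf{Step 3 (surjectivity).} Surjectivity onto the direct sum needs the connecting map to $\ext^2_G(\pi_{\sharp}^{\lalg},\pi_1^{\lalg})$ to vanish on the relevant subspace. Equivalently, every extension of $\pi^{\lalg}_\sharp$ by $C_{j,u}$ must lift to an extension by $\pi_1^\star$.

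I expect this to be the main obstacle. I would first try to handle it by showing that these extensions already occur inside $\mathrm{ST}_u(\underline{\phi},\bh)$ (pulled back from $\mathrm{PS}_u$), which gives explicit lifts. The remaining cup-product obstructions in the semistable directions would then be controlled by Qian's results on cup products in \cite{wholeLINV}, transported via Theorem \ref{decompforextgpsintro}.

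Finally, a dimension count identifies the image with $\ext^{1,\flat}_{\gr}(\Dpik,\Dpik)\oplus\homo^\star_{\fil}(D,D)$. The count uses the non-critical hypothesis, which makes the tangent space of $\Phi_{\mathrm{nc},\Delta}(S_0)$ have the expected dimension at $\underline{\sL}(\Dpik)$.
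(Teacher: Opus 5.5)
\textbf{Main gap: Step~2.} You never actually define $t^\star_D$ beyond the locally algebraic part, and your heuristic attaches the wrong geometry to $C_{j,u}$. The representation $C_{j,u}=\cF^G_{\op_{\widehat{j}}}(\overline{L}(-s_j\cdot\lambda),\pi_{\widehat{j},u}(\underline{\phi}))$ comes from the maximal parabolic with Levi $\GLN_j\times\GLN_{n-j}$, and it depends only on $\{u^{-1}(1),\dots,u^{-1}(j)\}$. So it records the position of $\fil_H^{-\bh_{j+1}}(D)$ relative to the $j$-th step of $\cF_u$. It does not see the rank-$2$ subquotient at $j$ or any $\sL$-invariant; in the paper those directions go into $\pi^{\Delta,-}_1$, not into $\pi^\star_1$.

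The construction that makes the map exist, be canonical, and have a computable image is Breuil--Ding's \cite{BDcritical25}. It runs as follows.
\begin{itemize}
\item $\ext^1_G(\pi^{\lalg}_\sharp,C_{i,u})$ and $\ext^1_G(C_{i,u},\pi^{\lalg}_1)$ are lines, perfectly paired into $\homo(\GLN_{n-i}(\bZ_p),E)\cong E$ as in (\ref{dualpairing}).
\item One fixes $\epsilon_{i,u}:\ext^1_G(C_{i,u},\pi^{\lalg}_1)\xrightarrow{\sim}Ee_{I_i(u)^c}\subset\bigwedge^{n-i}_ED$.
\item One twists by the line $\fil_i^{\max}(D)=\bigwedge^{n-i}\fil_H^{-\bh_{i+1}}(D)$ to build $\pi^\star_{s_i}(\underline{\phi},\bh)$. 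This gives $\ext^1_G(\pi^{\lalg}_\sharp,\pi^\star_{s_i}/\pi^{\lalg}_1)\cong\homo_E\big((\bigwedge^{n-i}_ED)^\star,\fil_i^{\max}(D)^\star\big)$.
\item One pushes into $\homo_{\fil}(D,D)$ via the maps $g_i$.
\end{itemize}
Your gluing sentence does not replace this. Declaring $\homo^\star_{\fil}(D,D)$ to be ``the image'' makes surjectivity tautological but says nothing about the explicit form the statement requires.

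That form comes from Proposition~\ref{imageofhomostar}. Through Proposition~\ref{reinterforextgp} and $f_i$, the $(i,u)$-piece is $\homo^i_{\fil,\bF_u}(D,D)\cong\mathrm{Ad}_{u^{-1}}(\tau_{\widehat{i}})\cap\mathrm{Ad}_g(\fb)$, i.e.\ endomorphisms that are scalar on $\fil_i^{\bF_u}(D)$ and on the quotient. Summing over $i\in\Delta$ gives $\mathrm{Ad}_{u^{-1}}(\fb)\cap\mathrm{Ad}_g(\fb)$, and summing over $i\in\Delta\backslash S_0(u)$ gives $\mathrm{Ad}_{u^{-1}}(\tau_{S_0(u)})\cap\mathrm{Ad}_g(\fb)$. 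Your rank-$2$ picture does not produce these spaces.

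\textbf{Step~1 dimension counts.} These are also off, so your closing dimension count cannot work.
\begin{itemize}
\item By Lemma~\ref{parameterline}, $\ext^1_G(\pi^{\lalg}_\sharp,\pi^{\lalg}_1)\cong\homo_{\sm}(\bT(\bQ_p),E)\oplus_{\homo_{\sm}(\bQ_p^\times,E)}\homo(\bQ_p^\times,E)$ has dimension $n+1$. (Here $\pi^{\lalg}_\sharp=\pi^{\lalg}_\natural$ in the body.) It is not $\homo(\bZ_{S_0}(\bQ_p),E)$, and your locally algebraic piece cannot be just an isomorphism onto $\ext^{1,\flat}_{\gr}(\Dpik,\Dpik)$. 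In the body that space is $\prod_{i=1}^s\ext^1_g(E_i[1/t],E_i[1/t])$.
\item The $\log_p\circ\det$ direction must land in $\homo_{\fil}(D,D)$. For example, when $n=2$ and $S_0=\Delta$, one has $\cI^\flat_1=\emptyset$, so $\pi^\flat_1=\pi^{\lalg}_1$, while $\homo^\flat_{\fil}(D,D)=\fz_\Delta\neq0$. Under your assignment, $t^\flat_D$ could not be surjective.
\item $\ext^1_G(\pi^{\lalg}_\sharp,C_{j,u})$ is one-dimensional for every $(j,u)$. There is no extra $\homo(\bQ_p^\times,E)$-line when $j\in S_0(u)$; compare the count in Proposition~\ref{extpi1repn}.
\item The dependence on $\log_p(p)$ enters in every direction, not only the semistable ones. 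It comes from the splittings $\homo(\bQ_p^\times,E)=\homo_{\sm}(\bQ_p^\times,E)\oplus E\log_p$ and $\homo(\bL_{\widehat{i}}(\bQ_p),E)\cong\homo_{\sm}(\bL_{\widehat{i}}(\bQ_p),E)\oplus\homo(\bL_{\widehat{i}}(\bZ_p),E)$.
\end{itemize}

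\textbf{Step~3 is not the obstacle.} Classes in $\ext^1_G(\pi^{\lalg}_\sharp,C_{i,u})$ lift explicitly to $\ext^1_G(\pi^{\lalg}_\sharp,V_{i,u})$. This is done by deforming $\cF^G_{\op_{\widehat{i}}}(\overline{M}_{\widehat{i}}(-\lambda),\pi^{\infty}_{\widehat{i},u}(\underline{\phi}))$ along $\homo(\bL_{\widehat{i}}(\bQ_p),E)$ (Proposition~\ref{reinterforextgp}). Your idea of lifting inside $\mathrm{ST}_u$ also works; it is Proposition~\ref{extpi1repn}(1). No $\ext^2$ or Qian-type cup-product input is needed for $\pi^\star_1$.
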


			For $i\in\Delta$,\;define
			\[D^{\star}_{(i)}:=E\langle e_{j}:j\in \{1,\cdots,n\}\backslash \{u^{-1}(1),\cdots,u^{-1}(i)\},u\in \cI^{\star}_i\rangle\subseteq D.\]
			Put $\bd^{\star}_{i}=n-\dim_ED^{\star}_{(i)}$.\;By definition,\;$D^{\sharp}_{(n-1)}\subseteq D^{\sharp}_{(n-2)}\subseteq\cdots \subseteq D^{\sharp}_{(1)}=D$ and $0=\bd^{\sharp}_{1}\leq \bd^{\sharp}_{2}\leq \cdots\leq \bd^{\sharp}_{n}$.\;
			
				\begin{thm}\label{deterHodgefilintro} (Theorem \ref{Hodgeparadeter} $\&$ Corollary  \ref{Hodgeparadetercor}) For $1\leq t\leq n-1$,\;$\ker(t^{\star}_{D})$ determines
				\begin{equation}
					\begin{aligned}
						\Big\{\fil_H^{-\bh_{{\max\{j,\bd^{\star}_{t}\}}+1}}(D)\oplus D/\fil_H^{-\bh_{\bd^{\star}_{t}}}(D)\Big\}_{1\leq j\leq t}
					\end{aligned}
				\end{equation}	
				In particular,\;if $\bd^{\star}_{t}=0$,\;then $\ker(t^{\star}_{D})$ determines $\{\fil_H^{-\bh_{j+1}}(D)\}_{1\leq j\leq t}$.\;
			\end{thm}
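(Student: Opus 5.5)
The plan is to follow Ding's strategy for the crystalline case, which recovers the Hodge filtration from the kernel of a map of the form ${t}^{\star}_{D}$, and adapt it to the semistable block structure. First I will describe $\ker(t^{\star}_D)$ through the surjection of Proposition \ref{thmintor2}. By construction, $\pi^{\star}_1(\underline{\phi},\bh)$ is glued from the pieces $C_{j,u}$ with $u\in\cI^{\star}_j$. For each $j\in\Delta$ and $u\in\cI^{\star}_j$, restricting an extension class to the corresponding layer gives a component
\[
t_{D,j,u}:\ext^1_G\big(\pi_{\sharp}^{\lalg}(\underline{\phi},\bh),\pi^{\star}_{1}(\underline{\phi},\bh)\big)\to \ext^1_{(\varphi,\Gamma)}\big(\wedge^{j}\fil^{\cF_u}_{j}\Dpik,\wedge^{j}\fil^{\cF_u}_{j}\Dpik\big).
\]
This map factors through the filtered endomorphisms via the Bloch--Kato/Fontaine identification $\ext^1_g\cong \homo_{\fil}$ modulo $(\varphi,N)$-endomorphisms. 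The key point to establish is that the kernel of $t^{\star}_D$ is the common kernel of these components. Through the identification with $\homo^{\star}_{\fil}(D,D)$, this kernel becomes the subspace of $\homo_{\fil}(D,D)$ cut out by the conditions coming from the $u\in\cI^\star_j$, modulo the part $\ext^{1,\flat}_{\gr}(\Dpik,\Dpik)\cong\homo(\bZ_{S_0}(\bQ_p),E)$. That part is insensitive to the filtration, so it can be discarded.

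Second, for fixed $t$ I will interpret the $t$-th layer geometrically. The component indexed by $u\in\cI^\star_t$ sees the $(\varphi,N)$-stable subspace $D_u^{(t)}:=E\langle e_{u^{-1}(1)},\dots,e_{u^{-1}(t)}\rangle$ together with the induced filtration on $\wedge^t$. By non-criticality (relative general position of $\fil_H^\bullet$ with respect to $\cF_u$), knowing this component is equivalent to knowing the position of $\fil_H^{-\bh_{j+1}}(D)$ relative to $D_u^{(t)}$ for $j\le t$. Concretely, it gives the projections $\fil_H^{-\bh_{j+1}}(D)\to D/D^{(t)}_u$ as graphs of maps, following the crystalline computation in \cite{ParaDing2024}. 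Intersecting over all $u\in\cI^\star_t$ shows that only the part of the filtration transverse to
\[
D^\star_{(t)}=\sum_{u\in \cI^\star_t}E\langle e_j: j\notin\{u^{-1}(1),\dots,u^{-1}(t)\}\rangle
\]
is visible. The dimension count $\bd^\star_t=n-\dim D^\star_{(t)}$ then explains the truncation at $\max\{j,\bd^\star_t\}$. The first $\bd^\star_t$ steps of the filtration are seen only modulo $\fil_H^{-\bh_{\bd^\star_t}}$, which yields the stated expression $\fil_H^{-\bh_{\max\{j,\bd^\star_t\}+1}}(D)\oplus D/\fil_H^{-\bh_{\bd^\star_t}}(D)$. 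When $\bd^\star_t=0$ this truncation disappears, and we get all $\fil_H^{-\bh_{j+1}}(D)$ with $j\le t$.

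Finally, I will assemble the layers $1\le t'\le t$ inductively. For each $t$, the data from $\ker(t^\star_D)$ at layer $t$, combined with the lower layers, should pin down the listed subspaces.

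The main obstacle is the second step: making precise that the layer-$t$ kernel conditions coming from the various $u\in\cI^\star_t$ cut out exactly the stated graded pieces. This requires that distinct $u$ with the same $[u]_t$ impose no extra conditions, and that the semistable gluing does not introduce monodromy corrections. The monodromy term $N$ in $\homo_{\fil}$ is exactly where this could fail, and the $\flat$ versus $\sharp$ choice (whether $t\in S_0(u)$) is where I expect it to matter. I would first test this on the $\GLN_3(\bQ_p)$ example of Section \ref{philoforParameter}.
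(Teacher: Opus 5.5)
There is a genuine gap, and it starts with the mechanism: your proposal never uses how $t^{\star}_{D}$ is actually built. On the $i$-th layer, $t^{\star}_{D}$ is the composite
\[\ext^1_G\big(\pi^{\lalg}_{\natural}(\underline{\phi},\bh),\pi^{\star}_{s_i}(\underline{\phi},\bh)/\pi^{\lalg}_{1}(\underline{\phi},\bh)\big)\cong\homo_E\Big(\big(\bigwedge\nolimits^{n-i}_E D\big)^{\star},\fil^{\max}_i(D)^{\star}\Big)\xrightarrow{\;g_i\;}\homo_{\fil}(D,D).\]
It is obtained from the fixed isomorphisms $\epsilon_{i,u}:\ext^1_G(C_{i,u},\pi^{\lalg}_1(\underline{\phi},\bh))\cong Ee_{I_i(u)^c}$ and the Pl\"ucker line $\fil^{\max}_i(D)=\bigwedge\nolimits^{n-i}_E\fil_H^{-\bh_{i+1}}(D)$. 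So layer $i$ depends on the Hodge filtration only through $\fil_H^{-\bh_{i+1}}(D)$: through its Pl\"ucker coordinates at the $e_{I_i(u)^c}$, $u\in\cI^{\star}_i$, and through the tangent data encoded in $g_i$.

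Your components with values in $\ext^1_{(\varphi,\Gamma)}(\wedge^j\fil^{\cF_u}_j\Dpik,\wedge^j\fil^{\cF_u}_j\Dpik)$ do not exist as maps out of the source. The nearest object is the description of $\ext^1_G(\pi^{\lalg}_{\natural},V_{i,u})$ through $\homo(\bL_{\widehat{i}}(\bZ_p),E)$, i.e.\ the scalars $a,b$ in the definition of $\homo^i_{\fil,\bF_u}(D,D)$. But it lives on a subspace of the source. Since the source is an amalgamated sum over $\ext^1_G(\pi^{\lalg}_{\natural},\pi^{\lalg}_1)$, there is no canonical projection onto that subspace. Also, $\ext^1_g(\Dpik,\Dpik)$ is $\ker\nu$; it is not identified with $\homo_{\fil}(D,D)$. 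Finally, a single $(t,u)$-piece $\ext^1_G(\pi^{\lalg}_{\natural},C_{t,u})$ is a line, so by itself it carries no information. In particular it does not give the position of every $\fil_H^{-\bh_{j+1}}(D)$, $j\le t$, relative to $D^{(t)}_u$.

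Your ``key point'' also fails. The natural layer-wise maps out of $\ext^1_G(\pi^{\lalg}_{\natural},\pi^{\star}_1)$ are the projections to the lines $\ext^1_G(\pi^{\lalg}_{\natural},C_{j,u})$. Their common kernel is $\ext^1_G(\pi^{\lalg}_{\natural},\pi^{\lalg}_1)$, which is independent of $D$. Moreover, $\ker(t^{\star}_D)$ lies in this fixed source, not in $\homo_{\fil}(D,D)$. The Hodge data are recorded by how $\ker(t^{\star}_D)$ sits, as a graph, over $\bigoplus_{j,u}\ext^1_G(\pi^{\lalg}_{\natural},C_{j,u})$ relative to $\ext^1_G(\pi^{\lalg}_{\natural},\pi^{\lalg}_1)$. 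Its elements are sums over layers whose images cancel in the common target, and this coupling is essential. Already for $n=3$ in the crystalline case, a dimension count ($10$ against $9$ overall, $7$ against $7$ on each single layer) shows that $t_D$ is injective on each layer while $\ker(t_D)$ is a line. So the kernel necessarily mixes both layers.

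The computation you need, which your second step replaces by matching the answer, is this. Every $I_t(u)^c$ with $u\in\cI^{\star}_t$ avoids the $\bd^{\star}_t$ indices spanning $D^{\star,(t)}$. Hence the Pl\"ucker data entering layer $t$ are maximal minors of the image of $\fil_H^{-\bh_{t+1}}(D)$ in $D/D^{\star,(t)}$, and this is where the truncation by $\bd^{\star}_t$ comes from. You must then show what these data, combined through the $g_i$ with the layers $i<t$, actually pin down.

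Your monodromy worry is misplaced. $\homo_{\fil}(D,D)$ has no $N$-term. The semistable structure enters only through which $u$ occur in $\cI^{\star}_t$, with $\flat$ additionally requiring $t\notin S_0(u)$. And $u$'s in the same coset give the same $C_{t,u}$ by construction.
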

			
			Let $\pi^{\star}_{\min}(\Dpik)$ be the unique quotient of the tautological extension of $\ker(t^{\star}_{D})\otimes_E\pi_{\natural}^{\lalg}(\underline{\phi},\bh)$ by $\pi^{\star}_{1}(\underline{\phi},\bh)$ with socle $\mathrm{ST}^{\lalg}(\underline{\phi},\bh)$.\;Moreover,\;similar to the constructions of locally analytic representations in \cite{2019DINGSimple},\;we construct a locally analytic representation $\pi^{\Delta,-}_{1}(\underline{\phi},\bh)\supseteq\pi^{\star}_{1}(\underline{\phi},\bh)$
			that carries exactly the information of $\{\sL^{\Delta}(E_{u,i})\}_{u\in \sW_n^{S_0},1\leq i\leq f_u}$ (equivallently,\;$p^{\Delta}_{\mathrm{ST}}(\underline{\sL}(\Dpik))$ in Remark \ref{rmkforlastthmintro}-(4)).\;Put
			\[\pi^{\star,\Delta,-}_{\min}(\Dpik):=\pi^{\star}_{\min}(\Dpik)\oplus_{\pi^{\star}_{1}(\underline{\phi},\bh)}\pi^{\Delta,-}_{1}(\underline{\phi},\bh).\]
			
			\begin{thm}\label{thmintor3}(Theorem \ref{mainthmglobal} $\&$ Corollary \ref{thmforamosttwoLGC}: local and global compatibility for the patched setting) Let $\Pi_{\infty}$ be the patched $R_{\infty}$-admissible unitary representation of $G$, where $R_{\infty}$ is the patched Galois deformation ring.\;If $\rho_p$ comes from a maximal ideal $\fm_{\rho}$ of $R_{\infty}[1/p]$,\;then $\pi^{\star,\Delta,-}_{\min}(\Dpik)\hookrightarrow \Pi_{\infty}[\fm_{\rho}]$.\;In particular,\;when $ \max_{1\leq l\leq s}l_i\leq 2$, $\pi^{\star,\Delta,-}_{\min}(\Dpik)$ determines $\Dpik$.
			\end{thm}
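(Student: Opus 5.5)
The proof has two parts: an embedding $\pi^{\star,\Delta,-}_{\min}(\Dpik)\hookrightarrow \Pi_{\infty}[\fm_{\rho}]$, obtained by building it up piece by piece through the patched eigenvariety, followed by a purely local argument showing that this representation recovers $\Dpik$ when all $l_i\leq 2$. All steps take place in the patched setting, where $\Pi_\infty[\fm_\rho]$ is a locally analytic representation of $G$ with action of $R_\infty$.

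\textbf{Locally algebraic and principal series layers.} By classicality and non-criticality of $\rho_p$ together with the usual local--global compatibility for $\Pi_\infty$, we have $\pi^{\lalg}_1(\underline{\phi},\bh)\hookrightarrow\Pi_\infty[\fm_\rho]^{\lalg}$. Next, for each $u\in\sW_n^{S_0}$ the triangulation $\cF_u$ is non-critical. Hence the point $(\rho_p,\unr(\underline{\phi}^u)z^{\bh})$ lies on the patched eigenvariety, and the adjunction of the Jacquet--Emerton functor gives nonzero maps $\mathrm{PS}_u(\underline{\phi},\bh)\to\Pi_\infty[\fm_\rho]$. These factor through $\mathrm{ST}_u(\underline{\phi},\bh)$, because the socle of the image must be $\mathrm{ST}^{\lalg}(\underline{\phi},\bh)$ (the genericity of $\pi_{\sm}(\rho_p)$ together with multiplicity one). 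Since the constituents $C_{j,u}$ appear in $\mathrm{ST}_{u,1}$, amalgamating over $u$ gives $\pi^{\sharp}_1(\underline{\phi},\bh)\hookrightarrow\Pi_\infty[\fm_\rho]$.

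\textbf{The $\sL$-invariant layer.} To attach $\pi^{\Delta,-}_1(\underline{\phi},\bh)$, I would follow \cite{2019DINGSimple} and \cite{wholeLINV}. One considers the Bernstein-type ideal and the Steinberg blocks $E_{u,i}$, and uses Theorem \ref{decompforextgpsintro} to reduce the relevant $\ext^1$-classes to Levi blocks. Then one shows, via a local-global argument on the partially de Rham or parabolic eigenvariety attached to $\bP_{S_0(u)}$, that the extension realized inside $\Pi_\infty$ matches the Fontaine--Mazur $\sL$-invariants $\sL^{\Delta}(E_{u,i})$ (Proposition \ref{BSinv}).

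\textbf{The Hodge-filtration layer.} For $\pi^\star_{\min}(\Dpik)$ I would follow Ding's crystalline strategy. Deforming $\rho_p$ over $E[\epsilon]/\epsilon^2$ inside the trianguline or paraboline deformation spaces, together with the $R_\infty$-action, produces a map $\ext^1_{(\varphi,\Gamma)}(\Dpik,\Dpik)\to\ext^1_G(\pi^{\lalg}_\sharp,\Pi_\infty[\fm_\rho])$. One then checks that its restriction to the $\pi^\star_1$-part is compatible with $t^\star_D$ from Proposition \ref{thmintor2}. I expect this compatibility, i.e.\ identifying the kernel of $t^\star_D$ with the subspace of extensions that actually occur in $\Pi_\infty$, to be the main obstacle. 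It requires a careful comparison of the Galois cup product and pairing with the automorphic $\ext$ groups in the presence of monodromy, and I would attempt it by reducing to crystalline and Steinberg subquotients as in Theorem \ref{capturetype1intro}. With this identification, $\ker(t^\star_D)\otimes\pi^{\lalg}_\natural$ embeds, the quotient $\pi^\star_{\min}(\Dpik)$ embeds, and gluing along $\pi^\star_1$ gives $\pi^{\star,\Delta,-}_{\min}(\Dpik)\hookrightarrow\Pi_\infty[\fm_\rho]$.

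\textbf{Recovering $\Dpik$ when all $l_i\leq 2$.} Each Steinberg block then has rank at most two, so its Hodge parameter is a single $\sL$-invariant, which is encoded in $\pi^{\Delta,-}_1$. Combined with Theorem \ref{deterHodgefilintro}, the crystalline-type parameters between blocks are determined by $\ker(t^\star_D)$. Theorem \ref{parametersmaininjintro}, or directly Theorem \ref{capturetype2intro}, then shows that these data recover $\underline{\sL}(\Dpik)$, and hence $\Dpik$.
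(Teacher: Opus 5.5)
The genuine gap is in your Hodge-filtration layer, which you leave as the ``main obstacle'' with a plan that cannot work.

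First, the global mechanism has the wrong shape. A first-order deformation of $\rho_p$ produces vectors in $\Pi_\infty^{R_\infty-\ana}[\fa_x]$, not in $\Pi_\infty[\fm_\rho]$, so there is no natural map $\ext^1(\Dpik,\Dpik)\to\ext^1_G(\pi^{\lalg}_\natural,\Pi_\infty[\fm_\rho])$ of the kind you describe. The paper proceeds as follows.
\begin{itemize}
\item It chooses $\fa$ killing the framing directions, so that $A_{\Dpik}\cong R^{\Box}_{\Dpik}/\fa$.
\item It uses that $\cM_\infty$ is locally free of rank one at each smooth non-critical point $y_u$. Hence $\widetilde{\cM}_{y_u}\cong R^{\Box}_{\Dpik,u}/\fa$ and $\widetilde{\cM}_{y_u}^\vee\cong\widetilde{\delta}_u^{\univ,-}\delta_{\bB}$.
\item The balanced adjunction then gives an $A_{\Dpik,u}\times G$-equivariant map from the whole universal extension $I^G_{\ob(\bQ_p)}\widetilde{\delta}_u^{\univ,-}$ over $\homo_u(\bT(\bQ_p),E)$ to $\Pi_\infty^{R_\infty-\ana}[\fa_x]$, factoring through the quotient by $U_u$.
\item The representation $\widetilde\pi$ generated by these images is $\pi_1(\underline{\phi},\bh)^{\univ,-}$. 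Its $R_\infty$-action is forced by Proposition~\ref{actionADonuniversal}: there is a unique $A_{\Dpik}$-action restricting to the $A_{\Dpik,u}$-actions on the $u$-pieces.
\item Taking $\fm_x$-torsion then cuts out exactly the classes $\gamma_{\Dpik}(\ker\overline{g}_{\Dpik})=\ker(t^\sharp_D)\cap\mathrm{Im}(\gamma_{\Dpik})$. These are linear combinations of $\cF_u$-trianguline directions that cancel in $\overline{\ext}^1(\Dpik,\Dpik)$, and this gives $\pi^\sharp_{\min}(\Dpik)\hookrightarrow\widetilde\pi[\fm_x]$.
\end{itemize}

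Second, the local input is the identity $t^\sharp_D\circ\gamma_{\Dpik}=f_{\Dpik}\circ\overline{g}_{\Dpik}$ of Theorem~\ref{descibeimagetD}. It is checked triangulation by triangulation, as in Breuil--Ding. On each $\overline{\ext}^1_u(\Dpik,\Dpik)$ one compares $t^\sharp_D\circ\zeta_u\circ\kappa_u$ with the inclusion into $\overline{\ext}^1(\Dpik,\Dpik)$ followed by $f_{\Dpik}$. This uses the description of $\ext^1_G(\pi^{\lalg}_\natural,V_{i,u})$ through $\homo(\bL_{\widehat{i}}(\bQ_p),E)$ and $\homo^i_{\fil,\bF_u}(D,D)$. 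Your proposed reduction to crystalline and Steinberg subquotients via Theorem~\ref{capturetype1intro} cannot replace this. That theorem is a Galois-side determination statement, and no patched object is attached to a subquotient of $\Dpik$. So it gives no control over which extensions occur in $\Pi_\infty[\fm_\rho]$.

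Two further points.

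For the $\sL$-invariant layer you need no parabolic or partially de Rham eigenvariety, whose local--global theory is not set up here. You also do not need the higher $\ext$-groups or Breuil--Schraen invariants of Theorem~\ref{decompforextgpsintro} and Proposition~\ref{BSinv}; they play no role. The representation $\pi^{\Delta,-}_1(\underline{\phi},\bh)$ only involves the simple invariants $\sL^{\Delta}(E_{u,i})$. These are already built into $\homo_u(\bT(\bQ_p),E)=\mathrm{Im}(\kappa_u)$ by the Colmez--Greenberg--Stevens formula (Proposition~\ref{profordimofdefor}). So Ding's argument, run blockwise on the same Borel universal extension, gives $\mathrm{ST}^{\Delta,-}_{\cF_u}(\underline{\phi},\bh)\hookrightarrow\mathrm{Im}(\iota_u)[\fm_x]$ (Proposition~\ref{imiotawmx}). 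Also, in your first step, excluding non-locally-algebraic constituents from the socle of the image of $\mathrm{PS}_u(\underline{\phi},\bh)$ requires the absence of companion points at $y_u$, not genericity alone.

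In the final step, Theorem~\ref{deterHodgefilintro} only yields partial flags governed by $\bd^\star_t$. It does not by itself give $\Dpik^{\mathrm{cr}}$ or the maps $\iota^{r,q}_{\Dpik}$ that Theorem~\ref{parametersmaininjintro} or Theorem~\ref{capturetype2intro} require. The paper argues via Corollary~\ref{thmforamosttwo} instead:
\begin{itemize}
\item $\pi^{\star,\Delta,-}_{\min}(\Dpik)$ determines $p^{\Delta}_{\mathrm{ST}}([\underline{\sL}(\Dpik)])$ and $\Dpik^{\mathrm{cr}}$;
\item $p^{\Delta}_{\mathrm{ST}}=p_{\mathrm{ST}}$ when all $l_i\leq 2$ (Corollary~\ref{captureparameters2});
\item injectivity of $(p_{\mathrm{ST}},p_{\mathrm{cr}})$ (Theorem~\ref{capturetype1flag}) concludes.
\end{itemize}
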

				\begin{rmk}\label{rmkforlastthmintro}
				\begin{itemize}
					\item[(1)] $\ker(t^{\star}_{D})$ captures ``crystalline'' Hodge parameters between the Steinberg subquotients.\;
					\item[(2)] For $I\subseteq \Delta$,\;let $\fn_I$ and $\fz_{I}$ be the $E$-Lie algebra of $\bN_I$ and $\bZ_{I}$ respectively.\;Put $\tau_{I}=\fz_{I}\ltimes\fn_{I}$.\;Under the basis $\{e_i\}_{1\leq i\leq n}$,\;we identify the space $\homo_{E}(D,D)$ with $\fg$, so that $\homo_{\fil}(D,D)\cong \mathrm{Ad}_g(\fb)$.\;Then 
					\begin{equation*}
						\begin{aligned}
							&\homo^{\flat}_{\fil}(D,D)\cong\sum_{u \in \sW_n^{S_0}}\mathrm{Ad}_{u^{-1}}(\tau_{S_0(u)})\cap \mathrm{Ad}_{g}(\fb)\hookrightarrow\homo^{\sharp}_{\fil}(D,D)\cong\sum_{u \in \sW_n^{S_0}}\mathrm{Ad}_{u^{-1}}(\fb)\cap \mathrm{Ad}_{g}(\fb).
						\end{aligned}
					\end{equation*}
					\item[(3)] Consider the morphism $g_{S_0}^{\circ}:\bigoplus_{u\in \sW_n^{S_0}}\mathrm{Ad}_{u^{-1}}(\tau_{S_0(u)})\cap \mathrm{Ad}_{g}(\fb)\rightarrow \mathrm{Ad}_{g}(\fb)$.\;Theorem \ref{capturetype2intro} is equivalent to the following statements on $E$-Lie algebras: the element $[g\bB]\in \Phi_{\mathrm{nc},\Delta}(S_0)$ is determined by $\ker(g_{S_0}^{\circ})$ and $\big\{(\fl_{S_0(u)}\cap \mathrm{Ad}_{ug}(\fb))_{_{/\mathrm{Ad}(\bZ_{S_0(u)})}}\big\}_{u\in \sW_n^{S_0}}$.
					\item[(4)]Denoted by $p^{\Delta}_{\mathrm{ST}}(\Phi_{\mathrm{nc},\Delta}(S_0))$ the ``simple'' part of $p_{\mathrm{ST}}(\Phi_{\mathrm{nc},\Delta}(S_0))$,\;i.e.,\;$p^{\Delta}_{\mathrm{ST}}(\underline{\sL}(\Dpik))$ encodes the information $\{\sL^{\Delta}(E_{u,i})\}_{u\in \sW_n^{S_0},1\leq i\leq f_u}$,\;where $\sL^{\Delta}(E_{u,i})$ denotes the Fontaine-Mazur simple $\sL$-invariants of Steinberg $E_{u,i}$ (see \cite[Page 7994]{2019DINGSimple}).\;In particular,\;when $\max_{1\leq l\leq s}l_i\leq 2$,\;$p^{\Delta}_{\mathrm{ST}}(\Phi_{\mathrm{nc},\Delta}(S_0))=p_{\mathrm{ST}}(\Phi_{\mathrm{nc},\Delta}(S_0))$.\;We use it to prove the last assertion in Theorem \ref{thmintor3}.
					\item[(5)] 	See Appendix \ref{exampleGL23} for explicit examples of such locally analytic representations for the cases $n=2,3$.
				\end{itemize}
			\end{rmk}


			\begin{expect} \label{constforfull}(Expectation \ref{descibeimagetDGL2})
				For each $u\in \sW_n^{S_0}$ and $1\leq i\leq f_u$, assume that we can associate a locally analytic representation $\pi_{\min}(E_{u,i})$ that determines $E_{u,i}$.\;Consider the locally analytic parabolic induction
				\begin{equation*}
					\mathrm{PS}_{\cF_{u}
					}(\underline{\phi},\bh):=\left(\ind^G_{\op_{S_0(u)}(\bQ_p)}\pi_{\ana}(\underline{\phi}^u)\eta_{S_0(u)}\right)^{\ana}, \pi_{\ana}(\underline{\phi}^u):=\boxtimes_{j=1}^{f_u}\pi_{\min}(E_{u,j}).
				\end{equation*}
				Let $\mathrm{ST}_{\cF_{u}}(\underline{\phi},\bh)$ be the unique maximal quotient of $\mathrm{PS}_{\cF_{u}}(\underline{\phi},\bh)$ with socle $\mathrm{ST}^{\lalg}(\underline{\phi},\bh)$. We have injections of $G$-representations $\mathrm{PS}_{S_0(u)}(\underline{\phi},\bh)\hookrightarrow\mathrm{PS}_{\cF_{u}}(\underline{\phi},\bh)$ and $\mathrm{ST}_{S_0(u)}(\underline{\phi},\bh)\hookrightarrow\mathrm{ST}_{\cF_{u}}(\underline{\phi},\bh)$.\;Taking  ``amalgamated sum" of $\{\mathrm{ST}_{\cF_{u}}(\underline{\phi},\bh)\}_{u\in \sW_n^{S_0}}$ leads  to a locally analytic representation $\pi_{1}^+(\underline{\phi},\bh)$ with socle  $\mathrm{ST}^{\lalg}(\underline{\phi},\bh)$.\;It is possible to extend (\ref{reconsfortDpikcirc})  to a surjection (such map is predicted in \cite[(6)]{BDcritical25} for generl de Rham $\Dpik$)
				\[t^+_{D}:\ext^1_{G}\left(\pi_{\natural}^{\lalg}(\underline{\phi},\bh),\pi^+_{1}(\underline{\phi},\bh)\right)\twoheadrightarrow\ext^{1,\flat}_{\gr}(\Dpik,\Dpik)\oplus\homo_{\fil}(D,D).\]
			\end{expect}
			Moreover,\;Theorem \ref{capturetype2intro} thus  indicates that 
			\begin{expect}
				Then $\ker(t^+_{D})$ determines $\Dpik$.\;
			\end{expect}
			Let $\pi_{\min}(\Dpik)$ be the unique quotient of the tautological extension of $\ker(t^{+}_{D})\otimes_E\pi_{\natural}^{\lalg}(\underline{\phi},\bh)$ by $\pi^{+}_{1}(\underline{\phi},\bh)$ with socle $\mathrm{ST}^{\lalg}(\underline{\phi},\bh)$.\;It gives a possible way to construct a locally analytic representation which determines $\Dpik$.

           We  relate Theorem \ref{thmintor2} to trianguline deformations of $\Dpik$.\;For $u\in \sW_n^{S_0}$, let $\ext^1_u(\Dpik,\Dpik)$ 
			be the subspace of $ \ext^1(\Dpik,\Dpik)$ of trianguline deformations that respect to $\cF_u$,\;i.e.,\;$\widetilde{D}\in \ext^{1}_{u}(\Dpik,\Dpik)$ if and only if $\widetilde{D}=[\widetilde{R}_{1}-\widetilde{R}_{2}-\cdots-\widetilde{R}_{n}]$, where $\widetilde{R}_l\cong \cR_{E[\epsilon]/\epsilon^2}(\unr(\phi_{u^{-1}(l)})z^{\bh_l}(1+\psi_l\epsilon))$ for $\psi_l\in \homo(\bQ_p^{\times},E)$.\;We have natural maps:
			\begin{equation}
				\begin{aligned}
					\kappa_{u}&:\ext^{1}_{u}(\Dpik,\Dpik)\rightarrow \homo(\bT(\bQ_p),E),\widetilde{D}\mapsto (\psi_i)_{1\leq i\leq n}.\;
				\end{aligned}
			\end{equation}
			See Section \ref{generalforparedefor} for a study of the kernel and image of $\kappa_{u}$ (the images are described by the Fontaine-Mazur simple $\sL$-invariants).\;Consider a natural morphism:
			\begin{equation}
				\begin{aligned}
					\homo(\bT(\bQ_p),E)&\rightarrow\ext^1_{G}(\mathrm{PS}_{u}(\underline{\phi},\bh),\mathrm{PS}_{u}(\underline{\phi},\bh)), \\
					\psi&\mapsto \left(\ind^G_{\ob(\bQ_p)}\unr(\underline{\phi}^u)\eta\chi_{\lambda}\otimes_E(1+\psi\epsilon)\right)^{\ana}.\;
				\end{aligned}
			\end{equation}
			It induces a map (push forward via $\mathrm{PS}_{u}(\underline{\phi},\bh)\twoheadrightarrow \mathrm{ST}_{u}(\underline{\phi},\bh)\hookrightarrow \pi^{\sharp}_{1}(\underline{\phi},\bh)$ and pullback via a natural map $\pi_{\sharp}^{\lalg}(\underline{\phi},\bh)\rightarrow \mathrm{PS}_{u}(\underline{\phi},\bh)$)
			\[\zeta_{u}:\homo(\bT(\bQ_p),E)\rightarrow \ext^1_{G}\left(\pi_{\sharp}^{\lalg}(\underline{\phi},\bh),\pi^{\sharp}_{1}(\underline{\phi},\bh)\right).\]
			Let $\ext^{1}_{0}(\Dpik,\Dpik)=\ker \kappa_{1}$.\;For any subspace $V\subseteq \ext^{1}(\Dpik,\Dpik)$,\;put $\overline{V}:=V/V\cap \ext^{1}_0(\Dpik,\Dpik)$.\;Thus,\;the composition $\oplus_{u\in\sW_n^{S_0}}(\zeta_u\circ \kappa_u)$ gives a map
			\begin{equation}\label{gammDpikintro}
				\begin{aligned}
					\gamma_{\Dpik}:\bigoplus_{u\in\sW_n^{S_0}}\overline{\ext}^{1}_{u}(\Dpik,\Dpik)\rightarrow \ext^1_{G}\left(\pi_{\sharp}^{\lalg}(\underline{\phi},\bh),\pi_{1}^{\sharp}(\underline{\phi},\bh)\right).
				\end{aligned}
			\end{equation}
			Let $\iota_u:\overline{\ext}^{1}_{u}(\Dpik,\Dpik)\hookrightarrow \overline{\ext}^{1}(\Dpik,\Dpik)$ be the natural inclusion.\;Put $g_{\Dpik}:\bigoplus_{u\in \sW_n^{S_0}}\overline{\ext}^{1}_{u}(\Dpik,\Dpik)\rightarrow \overline{\ext}^{1}(\Dpik,\Dpik).$

			\begin{pro}\label{compareintrodeformations}
				\begin{itemize}
					\item[(1)](Proposition \ref{splitingforext1gps}) Using the theory of almost de Rham, there exists a natural morphism $\nu:{\ext}^{1}(\Dpik,\Dpik)\rightarrow \homo_{\fil}(D,D)$.\;We have a splitting (which depends only on the choice of $\log_p(p)\in E$):
					\begin{equation}\label{splittingintro}
						f_{\Dpik}:\overline{\ext}^{1}(\Dpik,\Dpik)\xrightarrow{\sim}\ext^{1,\flat}_{\gr}(\Dpik,\Dpik)\oplus\mathrm{Im}(\nu).
					\end{equation}
					\item[(2)]	(Theorem \ref{descibeimagetD})
					Under the splitting (\ref{splittingintro}),\;
					$t^{\sharp}_{D}\circ\gamma_{\Dpik}|_{\overline{\ext}^{1}_{u}(\Dpik,\Dpik)}=f_{\Dpik}\circ\iota_u$.\;In particular,\;$f_{\Dpik}\circ g_{\Dpik}=t^{\sharp}_{D}\circ\gamma_{\Dpik}$.
				\end{itemize}
			\end{pro}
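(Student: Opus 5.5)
The proposition has two parts. Part (1) is a splitting of $\overline{\ext}^1(\Dpik,\Dpik)$. Part (2) says that the automorphic map $t^{\sharp}_D\circ\gamma_{\Dpik}$ agrees with the Galois-side map $f_{\Dpik}\circ\iota_u$ on each trianguline piece.

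\textbf{Part (1).} I would build $\nu$ via almost de Rham $B_{\dr}$-representations, following Ding and Breuil--Ding in the crystalline case.

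A first-order deformation $\widetilde{D}\in\ext^1(\Dpik,\Dpik)$ over $E[\epsilon]/\epsilon^2$ has $W_{\dr}(\widetilde{D})$ almost de Rham. Fontaine's functor $D_{\pdr}$ then gives a free $E[\epsilon]/\epsilon^2$-module $D_{\pdr}(\widetilde{D})$ with a nilpotent operator $\nu_{\widetilde D}$ and a Hodge filtration deforming $\fil^\bullet_H(D)$. Since $\Dpik$ is de Rham, $D_{\pdr}(\widetilde D)$ is an extension of $D$ by $D$. The associated Sen/Fontaine operator gives an element of $\homo_E(D,D)$, and this element preserves $\fil^\bullet_H$ because the filtration deforms. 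This defines $\nu$.

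On the other side, $\ext^{1,\flat}_{\gr}(\Dpik,\Dpik)\cong\homo(\bZ_{S_0}(\bQ_p),E)$ records the deformations of the Weil--Deligne part via twisting the Steinberg blocks $E_i$ by characters of $\bZ_{S_0}(\bQ_p)$. Using the decomposition $\homo(\bQ_p^\times,E)=E\,\mathrm{ord}_p\oplus E\log_p$ (which is where the choice of $\log_p(p)$ enters), I would split off the smooth directions.

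The plan for the splitting is then:
\begin{itemize}
\item show that $\ker\nu$ modulo $\ext^1_0(\Dpik,\Dpik)$ is exactly the image of the smooth/Weil--Deligne deformations, i.e.\ the semistable deformations with fixed Hodge filtration;
\item check the dimension count $\dim\overline{\ext}^1=\dim\homo(\bZ_{S_0}(\bQ_p),E)+\dim\mathrm{Im}(\nu)$, using the Euler characteristic formula and the non-critical hypothesis, which forces $\dim\ext^1_0$ to be controlled.
\end{itemize}

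\textbf{Part (2).} For fixed $u$, both sides are linear in $\widetilde D\in\overline{\ext}^1_u$, and by construction they factor through $\kappa_u$. It therefore suffices to compute them on $\psi=(\psi_i)\in\mathrm{Im}(\kappa_u)\subseteq\homo(\bT(\bQ_p),E)$. I would write $\psi=\psi^{\sm}+\psi^{\mathrm{wt}}$, separating $\mathrm{ord}_p$-parts from $\log_p$-parts.

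On the automorphic side, Proposition~\ref{construext1}, through its explicit description of $t^\sharp_D$, should say how the extension $\zeta_u(\psi)$ maps:
\begin{itemize}
\item the smooth part maps to $\ext^{1,\flat}_{\gr}$ through restriction to $\bZ_{S_0}$;
\item the weight part maps to the element of $\mathrm{Ad}_{u^{-1}}(\fb)\cap\mathrm{Ad}_g(\fb)$ whose diagonal in the $u$-twisted basis is $d\psi$.
\end{itemize}

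On the Galois side, a $\cF_u$-trianguline deformation has $\nu(\widetilde D)$ preserving the flag $\cF_u$, so $\nu(\widetilde D)\in\mathrm{Ad}_{u^{-1}}(\fb)$. Its graded pieces are the weight derivatives $d\psi_l$, by the computation of $D_{\pdr}$ of a rank-one $\cR_{E[\epsilon]/\epsilon^2}(\unr(\phi)z^{\bh}(1+\psi\epsilon))$, where $\nu$ acts by the scalar $d\psi$.

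A $\fb$-element preserving both $\cF_u$ and the Hodge filtration is determined by its diagonal, by genericity/non-criticality ($g\in u^{-1}\bB w_0\bB$, so $\mathrm{Ad}_{u^{-1}}(\fb)\cap\mathrm{Ad}_g(\fb)$ maps isomorphically onto its diagonal). Hence matching the diagonals forces equality. Summing over $u$ then gives $f_{\Dpik}\circ g_{\Dpik}=t^\sharp_D\circ\gamma_{\Dpik}$.

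\textbf{Main obstacle.} The hard step is matching normalizations on the $\homo(\bZ_{S_0},E)$-component in the non-crystalline blocks. There, $\kappa_u$ is not surjective: its image is cut out by Fontaine--Mazur simple $\sL$-invariants, so the smooth components of $\psi$ are constrained. One has to check that $t^\sharp_D$ restricted to $\zeta_u(\mathrm{Im}\,\kappa_u)$ sees only the $\bZ_{S_0(u)}$-averages, consistently for all $u$. I would handle this with the rank-two Steinberg computation, comparing Breuil's $\sL$-invariant extension on the $\mathrm{GL}_2$ side with the monodromy deformation, and then propagate it blockwise.
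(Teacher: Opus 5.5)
Your construction of $\nu$ via $D_{\pdr}$ is the one the paper uses. (Note, though, that $\nu$ preserves $\fil^\bullet_H$ because $v_{\pdr}$ preserves $t^iB^+_{\pdr}$, not merely because the filtration deforms.) In (2), your treatment of the $\homo_{\fil}$-component is the right mechanism: both sides lie in $\mathrm{Ad}_{u^{-1}}(\fb)\cap\mathrm{Ad}_g(\fb)=\mathrm{Ad}_{u^{-1}b_u}(\ft)$ (Remark~\ref{explainHomasLiealg}), and this space is detected by the diagonal. That is consistent with the paper's Breuil--Ding route for Theorem~\ref{descibeimagetD}.

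The genuine gap is in (1), and it resurfaces in (2). First, $\ext^{1,\flat}_{\gr}(\Dpik,\Dpik)=\prod_i\ext^{1,\flat}_g(E_i,E_i)\cong\homo_{\sm}(\bZ_{S_0}(\bQ_p),E)$ is $s$-dimensional; it is not $\homo(\bZ_{S_0}(\bQ_p),E)$, despite what the introduction suggests. The $\log_p$-twists of the blocks are not de Rham and sit on the $\mathrm{Im}(\nu)$ side, so the dimension identity you propose to verify is off by $s$. The exact sequence $0\to\ext^{1,\flat}_{\gr}\to\overline{\ext}^1(\Dpik,\Dpik)\to\mathrm{Im}(\nu)\to0$ needs no Euler-characteristic count. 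It follows from $\ker\nu=\ext^1_g(\Dpik,\Dpik)$ together with $\kappa_1$ identifying $\ext^1_g/\ext^1_0$ with the smooth characters of $\bT(\bQ_p)$ that are constant on each block.

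Second, and more seriously, you never construct the splitting, and its specific form is exactly what (2) depends on. A general deformation does not respect $\cF_{S_0}$, so ``reading off block characters and taking $\val_p$-parts'' has no meaning for it, and an abstract complement is not canonical. The missing idea, which is how the paper gets Proposition~\ref{splitingforext1gps}, is to invert $t$. Since $\cM_{\Dpik}=\Dpik[1/t]\cong\bigoplus_iE_i[1/t]$, the space $\ext^1(\cM_{\Dpik},\cM_{\Dpik})$ decomposes into blocks $\ext^1(E_i[1/t],E_j[1/t])$. One splits the bottom row of (\ref{diagDtoML'}) by choosing, via $\log_p(p)$, a complement of $\ext^1_g(E_i[1/t],E_i[1/t])\cong\homo_{\sm}(\bQ_p^\times,E)$ in each diagonal block. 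One then pulls back along $\overline{\ext}^1(\Dpik,\Dpik)\to\ext^1(\cM_{\Dpik},\cM_{\Dpik})$.

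In (2), the automorphic half of your diagonal argument is asserted (``should say'') rather than derived, and Theorem~\ref{construext1} alone does not give it. By linearity it suffices to treat smooth $\psi$ and $\psi$ factoring through $\bL_{\widehat{j}}(\bQ_p)$, since these together span $\homo(\bT(\bQ_p),E)$. For the latter you need that $\zeta_u$ agrees with the parametrisation of $\ext^1_G(\pi^{\lalg}_{\natural},V_{j,[u]_j})$ by $\homo(\bL_{\widehat{j}}(\bQ_p),E)$ from Proposition~\ref{reinterforextgp}; this is transitivity of parabolic induction, the input the paper imports from Breuil--Ding. With it, the $\homo_{\fil}$-component of $t^\sharp_D(\zeta_u(\psi))$ is the element of $\mathrm{Ad}_{u^{-1}}(\tau_{\widehat{j}})\cap\mathrm{Ad}_g(\fb)$ acting on the graded pieces of $\bF_u$ by the weights of $\psi$. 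Summing over $j$ then gives your claim.

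Your ``main obstacle'' is real, but it is the unconstructed splitting resurfacing, not an $\sL$-invariant problem. With the splitting defined on $\cM_{\Dpik}$, which has forgotten the Hodge filtration, the $\ext^{1,\flat}_{\gr}$-component of $f_{\Dpik}\circ\iota_u$ is read off from the $(i,i)$-blocks of $\widetilde{\Dpik}[1/t]$. These are trianguline deformations of $E_i[1/t]$ with parameters $\psi_{u(k)}$, where $k$ runs over the indices of $E_i$. The smooth component of $t^\sharp_D\circ\zeta_u$ is likewise defined without reference to the filtration. So the matching is a compatibility between the complements chosen above and the normalisation of $t^\sharp_D$ on $\ext^1_G(\pi^{\lalg}_{\natural},\pi^{\lalg}_1)$; for instance, both could be given by $\frac{1}{l_i}$ times the $\val_p$-part of $\sum_k\psi_{u(k)}$. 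A rank-two $\sL$-invariant computation does not control this. Note also that the relevant blocks are the $E_i$ of $\Dpik$, not the finer $\cF_u$-blocks $E_{u,k}$, so $\bZ_{S_0(u)}$ is the wrong torus.
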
			
			
			\begin{rmk}
				For $u\in \sW_n^{S_0}$, let $\ext^{1}_{S_0(u)}(\Dpik,\Dpik)$
				be the $\bP_{S_0(u)}$-parabolic deformations of $\Dpik$,\;i.e.,\;$\widetilde{D}\in \ext^{1}_{S_0(u)}(\Dpik,\Dpik)$ if and only if 
				$\widetilde{D}=[\widetilde{E}_{u,1}-\widetilde{E}_{u,2}-\cdots-\widetilde{E}_{u,f_u}]$,\;where $\widetilde{E}_{u,i}$ is a deformation of $E_{u,i}$ over $E[\epsilon]/\epsilon^2$.\;Then (\ref{gammDpikintro}) extends to $	\gamma^+_{\Dpik}:\bigoplus_{u\in\sW_n^{S_0}}\overline{\ext}^{1}_{S_0(u)}(\Dpik,\Dpik)\rightarrow\ext^1_{G}\left(\pi_{\natural}^{\lalg}(\underline{\phi},\bh),\pi^+_{1}(\underline{\phi},\bh)\right)$.\;Similar to Proposition \ref{compareintrodeformations} $(2)$,\;we expect that  $f_{\Dpik}\circ \overline{g}^{+}_{\Dpik}=t^+_{D}\circ\gamma^+_{\Dpik}$ (see Expectation \ref{constforfull}  for $t^+_{D}$).\;
			\end{rmk}

			\section*{Acknowledgment}
			The author thanks Yiwen Ding and Zicheng Qian for discussions and for answering questions.\;The author is especially grateful to Zicheng Qian.\;
			
			\section{Preliminaries}
			
			\subsection{General notation}

			\noindent Fix a prime $p$.\;Let $E$ be a finite extension of $\bQ_p$ with ring of integers $\co_E$ and uniformizer $\varpi_E$.\;Let $\gal_{\bQ_p}:=\gal(\overline{\bQ}_p/\bQ_p)$ be the absolute Galois group of $\bQ_p$.\;
			
			Let $\GL_n$ be the general linear group over $\bQ_p$.\;Let $\Delta:=\Delta_n$ be the set of simple roots of $\GL_n$ (with respect to the Borel subgroup $\bB$ of upper triangular matrices), and identify $\Delta$ with $\{1,\cdots,n-1\}$ so that $i\in\{1,\cdots,n-1\}$ corresponds to the simple root $\alpha_i:\;(x_1,\cdots,x_n)\in\ft\mapsto x_i-x_{i+1}$, where $\ft$ is the Lie algebra of the (standard) diagonal torus $\bT$.\;Each $I\subseteq\Delta$ gives rise to the standard Levi subgroup $\bL_I\subseteq\GL_n$ with $\bT\subseteq\bL_I$, and $I$ is exactly the set of simple roots of $\bL_I$.\;Let $\bP_I:=\bL_I\bB$ be the standard parabolic subgroup of $\GL_n$ containing $\bB$.\;In particular,\;$\bP_{\Delta}=\GL_n$ and $\bP_{\emptyset}=\bB$.\;Let $\overline{\bP}_{I}$ be the parabolic subgroup opposite to $\bP_I$.\;Let $\bN_{I}$ (resp.\;$\overline{\bN}_{I}$) be the unipotent radical of $\bP_{I}$ (resp.\;$\overline{\bP}_{I}$).\;We have Levi decompositions $\bP_I=\bL_I\bN_I$ and $\overline{\bP}_I=\bL_I\overline{\bN}_I$, and $\Delta\backslash I$ is precisely the set of simple roots of $\bN_I$.\;In particular,\;if $I=\{i\}$ for $i\in\Delta$,\;we write the subscript $i$ instead of $\{i\}$.\;Let $\bZ$ (resp.,\;$\bZ_I$) be the center of $\GL_n$ (resp.,\;$\bL_I$).\;Let $\fg$,\;$\fb$,\;$\fp_I$,\;$\fl_I$,\;$\fn_I$, and $\fz_I$ be the Lie algebras of $\GL_n$,\;$\bB$,\;$\bP_I$,\;$\bL_I$,\;$\bN_I$, and $\bZ_I$ over $E$, respectively.\;We put $G:=\GL_n(\bQ_p)$.\;

			Let $m\in\BZ_{\geq 1}$, and let $\pi$ be an irreducible smooth admissible representation of $\GLN_m(\bQ_p)$.\;Let $\rec(\pi)$ be the $m$-dimensional absolutely irreducible $F$-semisimple Weil--Deligne representation of the Weil group $W_{\bQ_p}$ via the normalized classical local Langlands correspondence (normalized as in \cite{scholze2013local}).\;We normalize the reciprocity isomorphism $\rec:\bQ_p^\times\rightarrow W_{\bQ_p}^{\mathrm{ab}}$ from local class field theory so that the uniformizer $p$ is sent to geometric Frobenius,\;where $W_{\bQ_p}^{\mathrm{ab}}$ is the abelianization of the Weil group $W_{\bQ_p}\subset \gal_{\bQ_p}$.\;
			
			Let ${\ccyc}:\gal_{\bQ_p}\rightarrow \bZ_p^\times$ be the $p$-adic cyclotomic character.\;Then, by local class field theory,\;$\ccyc\circ\mathrm{rec}_{\bQ_p}=x|x|$.\;For $k\in \BZ$,\;we denote by $z^{k}$ the character sending $z$ to $z^k$.\;Let $A$ be an affinoid $E$-algebra.\;For $\alpha\in A^\times$,\;denote by $\unr(\alpha)$ the unramified character of $\bQ_p^\times$ sending $p$ to $\alpha$.\;A locally $\bQ_p$-analytic character $\delta:\bQ_p^\times\rightarrow A^\times$ induces a $\bQ_p$-linear map $\bQ_p\rightarrow A$,\;$x\mapsto \frac{d}{dt}\delta(\exp(tx))|_{t=0}$, and hence an $E$-linear map $E\rightarrow A$.\;There exists $\wt(\delta)$ (which we call the weight of $\delta$) such that this map is given by $a\mapsto a\cdot\wt(\delta)$.\;

			Let $\Art_E$ be the category of local Artinian $E$-algebras with residue field isomorphic to $E$.\;Let $\cR_{\bQ_p}:=B_{\rig}^{\dagger}$ be the Robba ring.\;For $A\in \Art_E$,\;let $\cR_{A}:=\cR_{\bQ_p}\widehat{\otimes}_{\bQ_p}A$,\;the Robba ring over $\bQ_p$ with coefficients in $A$.\;We write $\cR_{A,L}(\delta_A)$ for the rank-one $(\varphi,\Gamma)$-module over $\cR_{A,L}$ associated to a continuous character $\delta_A:L^\times\rightarrow A^\times$.\;Let $M$ be a $(\varphi,\Gamma)$-module over $\cR_{A,L}$;\;for simplicity we write $M(\delta_A):=M\otimes_{\cR_{A,L}}\cR_{A,L}(\delta_A)$.\;For $A=E[\epsilon]/\epsilon^2$ and a $(\varphi,\Gamma)$-module $M$ over $\cR_E$,\;we identify elements of $\ext^1_{(\varphi,\Gamma)}(M,M)$ with deformations of $M$ over $\cR_{E[\epsilon]/\epsilon^2}$.\;We define the Hodge--Tate weights of a de Rham representation as the opposites of the jumps in the filtration on the covariant de Rham functor, so that the Hodge--Tate weight of ${\ccyc}$ is $1$.\;

			Let ${\lambda}:=(\lambda_{1},\cdots,\lambda_{n})$ be a weight of $\ft$.\;For $I\subseteq\Delta$,\;we say that ${\lambda}$ is $I$-dominant if $\lambda_{i}\geq\lambda_{i+1}$ for all $i\in I$.\;Denote by $X_{I}^+$ the set of $I$-dominant integral weights of $\ft$.\;For ${\lambda}\in X_{I}^+$, there exists a unique irreducible algebraic representation $L({\lambda})_{I}$ of $\bL_{I}$ with highest weight ${\lambda}$,\;so that $\overline{L}(-{\lambda})_{I}:=(L({\lambda})_{I})^\vee$ is the irreducible algebraic representation of $\bL_{I}$ with highest weight $-{\lambda}$.\;Denote $\chi_{{\lambda}}:=L({\lambda})_{\emptyset}$.\;If ${\lambda}\in X_{\Delta}^+$, let $L({\lambda}):=L({\lambda})_{\Delta}$.\;For an integral weight ${\lambda}\in X_I^+$, denote by $M_I({\lambda}):=\text{U}(\fg)\otimes_{\text{U}(\fp_{I})}L({\lambda})_{I}$ (resp.\;$\overline{M}_I({\lambda}):=\text{U}(\fg)\otimes_{\text{U}(\overline{\fp}_{I})}L({\lambda})_{I}$) the corresponding Verma module with respect to $\fp_{I}$ (resp.\;$\overline{\fp}_{I}$).\;Let $L({\lambda})$ (resp.\;$\overline{L}({\lambda})$) be the unique simple quotient of $M({\lambda}):=M_{\emptyset}({\lambda})$ (resp.\;of $\overline{M}_{\emptyset}({\lambda})$).\;

			Denote by $\sW_n$ ($\cong S_n$) the Weyl group of $\GL_n$, and by $s_{i}$ the simple reflection corresponding to $i\in\Delta$.\;For any $I\subset\Delta$,\;define $\sW_{I}$ to be the subgroup of $\sW_{n}$ generated by the simple reflections $s_{i}$ with $i\in I$ (so that $\sW_I$ is the Weyl group of $\bL_I$).\;For $w\in\sW_n$,\;we identify $w$ with the corresponding permutation matrix.\;Recall that $\sW_{I}\backslash\sW_n$ has a canonical set of representatives, denoted by $\sW^{I}_n$, obtained by taking the minimal-length element in each coset.\;Let $w_0:=w_{0,n}$ (resp.,\;$w_{I,0}$) be the longest element in $\sW_n$ (resp.,\;$\sW_I$).\;
			
			If $V$ is a continuous representation of $G$ over $E$,\;we denote by $V^{\ana}$ its locally $\bQ_p$-analytic vectors.\;If $V$ is a locally $\bQ_p$-analytic representation of $G$,\;we denote by $V^{\mathrm{lalg}}$ the locally $\bQ_p$-algebraic subrepresentation of $V$ consisting of its locally $\bQ_p$-algebraic vectors (see \cite{Emerton2007summary} for details).\;Let $P$ be a parabolic subgroup of $G$,\;and $\pi_P$ be a locally $\bQ_p$-analytic representation of $P$ on a locally convex $E$-vector space of compact type (resp.,\;a smooth representation of $P$ over $E$);\;we denote by
			\begin{equation}\label{smoothadj2}
				\begin{aligned}
					&(\mathrm{Ind}_{P}^{G}\pi_P)^{\bQ_p-\ana}:=\{f:G\rightarrow \pi_P \text{\;locally $\bQ_p$-analytic},\;f(pg)=pf(g)\},\\
					&\text{resp.,\;}(\mathrm{Ind}_{P}^{G}\pi_P)^{\infty}=\{f:G\rightarrow \pi_P \text{\;smooth},\;f(pg)=pf(g)\}
				\end{aligned}
			\end{equation}
			the locally $\bQ_p$-analytic parabolic induction (resp.,\;the un-normalized smooth parabolic induction) of $G$.\;It is a locally $\bQ_p$-analytic representation (resp.,\;a smooth representation) of $G$ over $E$ on a locally convex $E$-vector space of compact type, where the left action of $G$ is given by right translation on functions: $(gf)(g')=f(g'g)$.\;
			
			Throughout the paper,\;we use $\bullet-\bullet$ to denote an extension of two objects (for example,\;$(\varphi,\Gamma)$-modules or $G$-representations),\;where the first (resp.,\;second) object is the subobject (resp.,\;quotient).\;

			\subsection{\texorpdfstring{Semistable $(\varphi,\Gamma)$-module over $\cR_{E}$}{Lg}}\label{Omegafil}
			
			Let $\df_{k,\alpha}$ be the following absolutely indecomposable Deligne-Fontaine module over $E$:
			\begin{equation}\label{dfnindecompDFmod}
				(\varphi,N,\df_{k})=\bigoplus_{i=1}^k\;(p^{i-1}\unr(\alpha),0,Ee_i)
			\end{equation}
			such that the monodromy operator $N$ sends $(p^{i}\unr(\alpha),0,Ee_{i+1})$ to $(p^{i-1}\unr(\alpha),0,Ee_i)$ for $i\geq 1$, and sends $(\unr(\alpha),0,Ee_1)$ to zero.\;
			
			Let $\Dpik$ be a semistable $(\varphi,\Gamma)$-module over $\cR_{E}$ of rank $n$,\;and let $\df_{\Dpik}=(\varphi,N,D_{\mathrm{st}}(\Dpik))$ be the associated Deligne--Fontaine module,\;where $D_{\mathrm{st}}(\Dpik):=(\cR_E[\log(X),1/t]\otimes_{\cR_E}\Dpik)^{\Gamma}$ is a finite free $E$-module of rank $n$, and the $(\varphi,N)$-action on $D_{\mathrm{st}}(\Dpik)$ is induced from the $(\varphi,N)$-action on $\cR_E[\log(X),1/t]$.\;There exist integers $\{l_i\}_{1\leq i\leq s}$ and $\{\alpha_i\}_{1\leq i\leq s}$ with $\alpha_i\in E^{\times}$ such that the semisimplification $(\df_{\Dpik})^{\mathrm{ss}}$ of $\df_{\Dpik}$ is $\oplus_{i=1}^s\df_{l_i,\alpha_i}$.\;Thus
			\[\underline{\phi}:=\underline{\phi}_{\Dpik}=(\alpha_1,\alpha_1p,\cdots,\alpha_1p^{l_1-1},\cdots,\alpha_s,\alpha_sp,\cdots,\alpha_sp^{l_s-1}):=(\phi_1,\cdots,\phi_n),\]
			are the $\varphi$-eigenvalues on $D_{\mathrm{st}}(\Dpik)$.\;In the sequel,\;we assume that $\phi_i\neq \phi_j$ for $i\neq j$, and that the ordering satisfies the condition: if $\alpha_j=\alpha_ip^{l_i}$,\;then $j=i+1$.\;

			For $1\leq i\leq s$,\;we write $\df_{l_i,\alpha_i}=\oplus_{j=1}^{l_i}(p^{j-1}\unr(\alpha_i),N=0,Ee_{i,j})$, so that $N(e_{i,j})=e_{i,j-1}$ (resp., $N(e_{i,1})=0$) when $j>1$ (resp.,\;$j=1$).\;We relabel the basis $\{e_{1,j}\}_{1\leq j\leq l_1},\cdots,\{e_{s,j}\}_{1\leq j\leq l_s}$ as $e_1,\cdots,e_n$ (so that $\varphi(e_i)=\phi_ie_i$).\;We define two subsets $S_0:=S_0(\Dpik)\subseteq I_0:=I_0(\Dpik)\subseteq \Delta$ as follows:\;$i\in S_0$ if and only if $N(e_{i+1})=e_{i}$, and $i'\in I_0(\Dpik)$ if and only if $\phi_{i'+1}=\phi_{i'}p$.\;By definition,\;$S_0$ describes the monodromy type of $\df_{\Dpik}$ and $I_0$ describes the non-generic relations among $\underline{\phi}$.\;We say that $\Dpik$ is \textit{generic} if $S_0=I_0$.\;In particular,\;$\Dpik$ is crystalline (resp.,\;has maximal monodromy rank) if and only if $S_0=\emptyset$ (resp.,\;$S_0=\Delta$).\;Then all $(\varphi,N)$-stable complete
			flags in $\df_{\Dpik}$ are $\cF_u:Ee_{u^{-1}(1)}\subseteq Ee_{u^{-1}(1)}\oplus Ee_{u^{-1}(2)}\subseteq \cdots\subseteq \oplus_{i=1}^nEe_{u^{-1}(n)}=\df_{\Dpik}$ for $u\in \sW_n^{S_0}$.\;By Fontaine's theory,\;the $(\varphi,N)$-stable complete
			flag $\cF_u$ on $\df_{\Dpik}$ corresponds to a triangulation (or say $u$-refinement) on $\cM_\Dpik:=\Dpik[1/t]$ (resp.,\;$\Dpik$):
			\begin{equation}
				\begin{aligned}
					\fil_{\bullet}^{\cF_u}\cM_\Dpik: &\ 0 =\fil_0^{\cF_u}\cM_\Dpik \subsetneq \fil_1^{\cF_u}\cM_\Dpik \subsetneq \cdots \subsetneq \fil_{n}^{\cF_u} \cM_\Dpik=\cM_\Dpik,\\
					\text{resp.,\;}\fil_{\bullet}^{\cF_u}\Dpik: &\ 0 =\fil_0^{\cF_u}\Dpik \subsetneq \fil_1^{\cF_u}\Dpik \subsetneq \cdots \subsetneq \fil_{n}^{\cF_u} \Dpik=\Dpik,\;\fil_i^{\cF_u}\Dpik=(\fil_{i}^{\cF_u}\cM_\Dpik)\cap \Dpik
				\end{aligned}
			\end{equation}
			given by saturated $(\varphi,\Gamma)$-submodules of $\cM_\Dpik$ (resp.,\;$\Dpik$) over $\cR_{E}[1/t]$ (resp.,\;$\cR_{E}$).\;
			
		Since $\Dpik$ is semistable,\;it is de Rham.\;Hence we have $D:=D_{\mathrm{st}}(\Dpik)\cong D_{\dr}(\Dpik)$.\;The triangulation $\cF_u$ on $D$
			induces a flag $\fil_{\bullet}^{\cF_u}(D)$ on $D$.\;Moreover,\;the module $D$ is equipped with a natural Hodge filtration $\fil^{\bullet}_{H}(D)$.\;We assume that $\Dpik$ has regular Hodge--Tate weights $\bh:=\bh_{\Dpik}=(\hpi_{1}>\hpi_{2}>\cdots>\hpi_{n})$,\;so $\fil^{\bullet}_{H}(D)$ can be expressed by the following complete flag:
			\[\fil^{\bullet}_{H}(D): \ 0 \subsetneq \fil^{-\hpi_{n}}_{H}(D) \subsetneq \fil^{-\hpi_{n-1}}_{H}(D) \subsetneq \cdots \subsetneq \fil^{{-\hpi_{1}}}_{H}(D)=D.\]
			In this paper,\;we assume that $\Dpik$ is \textit{non-critical}  for all $u\in \sW_n^{S_0}$,\;i.e.,\;the Hodge--Tate weights of $\fil_{i}^{\cF_u}\Dpik$ (resp.,\;$\gr_{i}^{\cF_u} \Dpik$) are $\{\bh_{1},\cdots,\bh_{i}\}$ (resp.,\;$\bh_{i}$).\;For $i=1,\cdots,n$,\;we have 
			\[R_{u,i}:=\gr_{i}^{\cF_u} \Dpik \cong \cR_{E}(\phi_{u^{-1}(i)}{z^{\bh_{i}}}),\;\quad\gr_{i}^{\cF_u} \cM_\Dpik \cong \cR_{E}(\phi_{u^{-1}(i)})[1/t].\]
		We usually write the triangulation $\cF_{u}$ as the following successive (non-split)  extension of $\{R_{u,i}\}$ :
			\[\Dpik=[R_{u,1}-R_{u,2}-\cdots-R_{u,n}],\]
			where $R_{u,i}:=\cR_{E}(\phi_{u^{-1}(i)}z^{\bh_i})$ for $1\leq i\leq n$.\;

			Let $\cT$ be the character space of $\bT(\bQ_p)$ over $E$,\;i.e.,\;the rigid space over $E$ parameterizing continuous characters of $\bT(\bQ_p)$.\;For $u\in \sW_n^{S_0}$,\;put
			\[\unr(\underline{\phi}^u):=\boxtimes_{j=1}^n\unr(\phi_{u^{-1}(j)})\in \cT.\]
			Then $\delta_{\underline{\phi},u}:=\unr(\underline{\phi}^u)z^{\bh}:=(\delta_{\underline{\phi},u,i}:=\unr(\phi_{u^{-1}(i)}){z^{\bh_{i}}})_{1\leq i\leq n}\in \cT$ the parameter of $\cF_u$.\;
			
			\begin{dfn}
				We say that $\Dpik$ is Steinberg if $S_0=\Delta$ (i.e.,\;$\df_{\Dpik}$ has maximal monodromy).\;Thus,\;by definition,\;any semistable $(\varphi,\Gamma)$-module is a successive extension of Steinberg $(\varphi,\Gamma)$-modules.\;
			\end{dfn}

			Let $S_0(u)^c$ be the set of $i\in \Delta$ such that the rank-$2$ subquotient $\cR_{E}(\phi_{u^{-1}(i)}z^{\bh_i})-\cR_{E}(\phi_{u^{-1}(i+1)}z^{\bh_{i+1}})$ of $\Dpik$ is crystalline.\;Let $S_0(u)=\Delta\backslash S_0(u)^c$.\;Let $S_0(u)^c=\{l_1,\cdots,l_{f_u-1}\}$ and put $l_0:=0$, $l_{f_u}:=n$, where $f_u:=|S_0(u)^c|+1$.\;For $1\leq i\leq f_u$,\;put
			\[E_{u,i}:=[R_{u,l_{i-1}+1}-\cdots-R_{u,l_{i}}],\]
			which is a subquotient of $\Dpik$ and a Steinberg $(\varphi,\Gamma)$-module of rank $r_{u,i}:=l_i-l_{i-1}$.\;Therefore,\;
			\begin{equation}\label{parafils0u}
				\Dpik=[E_{u,1}-E_{u,2}-\cdots-E_{u,f_u}],
			\end{equation}
			which induces an increasing $\bP_{S_0(u)}$-parabolic filtration $\cF_{S_0(u)}$ on $\Dpik$ by saturated $(\varphi,\Gamma)$-submodules of $\Dpik$ over $\cR_{E}$:
			\[\fil_{\bullet}^{\cF_{S_0(u)}}\Dpik: \ 0 =\fil_0^{\cF_{S_0(u)}}\Dpik \subsetneq \fil_1^{\cF_{S_0(u)}}\Dpik \subsetneq \cdots \subsetneq \fil_{f_u}^{\cF_{S_0(u)}} \Dpik=\Dpik,\]
			such that $\gr_{i}^{\cF_{S_0(u)}}\Dpik=E_{u,i}$ for $1\leq i\leq f_u$.\;In particular,\;if $u=1$,\;we write $\cF_{S_0}:=\cF_{S_0(1)}$  with the  form:
			\begin{equation}
				\Dpik:=[E_{1}-E_{2}-\cdots-E_{s}],\;E_{s}:=E_{1,s}.\;
			\end{equation}

%
			
			\begin{rmk}\label{dfnforRii1}
				For $i\in \Delta$ and $u\in\sW_n^{S_0}$,\;consider the $\bL_{i}$-parabolic filtration $\cF_{u,[i]}$ associated to $\cF_u$:
				\[\Dpik=[R_{u,1}-\cdots-R_{u,i-1}-R_{u,i}^{i+1}-R_{u,i+2}-\cdots-R_{u,n}],\]
where $R_{u,i}^{i+1}:=[R_{u,i}-R_{u,i+1}]$ is the rank-$2$ subquotient of $\Dpik$.\;
			\end{rmk}

		\section{Reinterpretation of $p$-adic Hodge parameters}

		\subsection{Notations and precise statements for general philosophy}\label{philoforParameter}

		A typical example is given by $n=3$ and $S_0=\{1\}$.\;Let $\underline{\phi}_{\Dpik}=(\alpha,\alpha p,\beta)=(\phi_1,\phi_2,\phi_3)$ for some $\alpha,\beta\in E^{\times}$, and let $(e_1,e_2,e_3)$ be the corresponding $\varphi$-eigenvectors respectively.\;Note that $\sW_{3}^{\{1\}}=\{1,s_2,s_2s_1\}$.\;Let $\Dpik=[\cR_E(\unr(\phi_1)z^{h_1})-\cR_E(\unr(\phi_2)z^{h_2})-\cR_E(\unr(\phi_3)z^{h_3})]$.\;The triangulation $\cF_{s_1s_2}$ has the form $\Dpik=[\cR_E(\unr(\phi_3)z^{h_1})-\cR_E(\unr(\phi_1)z^{h_2})-\cR_E(\unr(\phi_2)z^{h_3})]$.\;Put
		\begin{equation}
			\begin{aligned}
				&D_1:=[\cR_E(\unr(\phi_1)z^{h_1})-\cR_E(\unr(\phi_2)z^{h_2})]\hookrightarrow \Dpik,\\
				&\Dpik\twoheadrightarrow C_1:=[\cR_E(\unr(\phi_1)z^{h_2})-\cR_E(\unr(\phi_2)z^{h_3})].
			\end{aligned}
		\end{equation}
		We obtain an injection $\iota_{\Dpik}:D_1\hookrightarrow \Dpik\twoheadrightarrow C_1$ (note that $\homo(D_1,C_1)$ is $1$-dimensional and generated by this $\iota_{\Dpik}$).\;We list the Hodge filtration $\fil^{\bullet}_H(D)$ on $D$ and the two $p$-adic Hodge parameters $\cL_{01}$ and $\cL_{02}$ of $\Dpik$:
		\begin{equation}\label{fil1}
			\fil^{i}_H(D)=\left\{
			\begin{array}{ll}
				D,\;&i\leq -h_1\\
				E(e_3+e_2+\cL_{02}e_1)\oplus E(e_2+\cL_{01}e_1),\;&-h_1<i\leq -h_2\\
				E(e_3+e_2+\cL_{02}e_1),\;& -h_2<i\leq -h_3,\\
				0,\;& i>-h_3.
			\end{array}
			\right.
		\end{equation}
		In the language of \cite{2019DINGSimple},\;the Hodge parameter $\cL_{01}$ is a simple $\cL$-invariant of the Steinberg block $D_1$,\;and $\cL_{02}$ is a Hodge parameter between the blocks $D_1$ and $\cR_E(\unr(\phi_2)z^{h_3})$ (the so-called higher $\cL$-invariant in \cite{HigherLinvariantsGL3(Qp)}).\;Quotienting by $e_3$ in (\ref{fil1}),\;we see that $\cL_{02}$ becomes the simple $\cL$-invariant of the Steinberg block $C_1$.\;In conclusion,\;$\Dpik$ is uniquely determined by $D_1$ and $C_1$.\;This example indicates that we can "translate" the higher Hodge parameters between different Steinberg blocks  into Hodge parameters inside other Steinberg subquotients of $\Dpik$.\;We give precise statements as follows.\;
		
		Fix a non-critical semistable $(\varphi,\Gamma)$-module $\Dpik$ and keep the notation of Section \ref{Omegafil}.\;Recall the two subsets $S_0\subseteq I_0$ of $\Delta$.\;Under the basis $e_{1},e_{2},\cdots,e_{n}$ of $D$,\;the Hodge filtration $\fil^{\bullet}_{H}(D)$ is parameterized by an element $\underline{\sL}(\Dpik)$ in $\GLN_{n}/\bB$, and by its class $[\underline{\sL}(\Dpik)]\in \bZ_{S_0}\backslash\GLN_{n}/\bB$,\;which we call the $p$-adic parameter of $\Dpik$.\;For any $u\in \sW_n^{S_0}$,\;the non-critical assumption implies that
		\[(\underline{\sL}(\Dpik)\bB(E),u^{-1}\bB(E))\in\GLN_n(E)(1,w_0)(\bB\times \bB)(E)\subset (\GLN_{n}/\bB\times\GLN_{n}/\bB)(E).\]
		Therefore,\;$\underline{\sL}(\Dpik)\in u\bB w_0\bB/\bB$ for all $u\in \sW_n^{S_0}$,\;so
		\[[\underline{\sL}(\Dpik)]\in \bigcap_{u\in \sW_n^{S_0}} \bZ_{S_0}\backslash u^{-1}\bB w_0\bB/\bB:=\Phi_{\mathrm{nc},\Delta}(S_0),\]
		where $\Phi_{\mathrm{nc},\Delta}(S_0)$ is the moduli space of non-critical $p$-adic Hodge parameters with monodromy type $S_0$.\;More precisely,\;the Hodge filtration on $D$ is
		\begin{equation}\label{parameterfilwrite}
			\begin{aligned}
				\fil^{j}_{H}(D)=\left\{
				\begin{array}{ll}
					D=\fil^{-\bh_{2}}_{H}(D)\oplus E(e_1),\;&j\in (-\infty,-\bh_{1}],\\
					\fil^{-\bh_{3}}_{H}(D)\oplus E(e_2+\cL_{12}e_1),\;&j\in (-\bh_{1},-\bh_{2}],\\
					\cdots&\cdots\\
					\fil^{-\bh_{n}}_{H}(D)\oplus E(e_{n-1}+\sum_{l=1}^{n-2}\cL_{l,n-1}e) ,\;& j\in (-\bh_{n-2},-\bh_{n-1}]\\
					E(e_n+\sum_{l=1}^{n-1}\cL_{l,n}e),\;&j\in (-\bh_{n-1},-\bh_n]\\
					0,\;& j\in (-\bh_n,+\infty).
				\end{array}
				\right.
			\end{aligned}
		\end{equation}
		Thus,\;$\underline{\sL}(\Dpik)=(\cL_{ij})_{i<j}\in \bN_{\emptyset}(E)$ in terms of the coordinates in $\bN_{\emptyset}\cong \bB w_0\bB/\bB$.\;Recall that $\Dpik=[E_{1}-E_{2}-\cdots-E_{s}]$ with each $E_{i}$ Steinberg,\;then
		\[\underline{\sL}(\Dpik)=\left(\begin{array}{ccccc}
			\underline{\sL}(\Dpik)_1^1	& \underline{\sL}(\Dpik)_1^2 & \cdots & \underline{\sL}(\Dpik)_1^{s-1} & \underline{\sL}(\Dpik)_1^{s} \\
			0	& \underline{\sL}(\Dpik)_2^2  & \cdots & \underline{\sL}(\Dpik)_2^{s-1} &  \underline{\sL}(\Dpik)_2^{s} \\
			\vdots	& \vdots  & \ddots & \vdots & \vdots \\
			\vdots	& \ddots & 0  & \underline{\sL}(\Dpik)_{s-1}^{s-1}  & \underline{\sL}(\Dpik)_{s-1}^{s} \\
			0	& \cdots & \cdots & 0 & \underline{\sL}(\Dpik)_s^s
		\end{array}\right),\]
		where $\underline{\sL}(\Dpik)_r^q$ denotes the Hodge parameters between the $r$-th block $E_r$ and the $q$-th block $E_q$ for $1\leq r<q\leq s$, and $\underline{\sL}(\Dpik)_r^r$ denotes the Hodge parameters of $E_r$ for $1\leq r\leq s$.\;In general,\;we normalize $[\underline{\sL}(\Dpik)]$ so that $\cL_{t_l,t_l+1}=1$ for $1\leq l\leq s$.\;Thus,\;our goal is to capture the information of $\{\underline{\sL}(\Dpik)_r^q\}_{1\leq r\leq q\leq s}$.\;
		
		For $u\in\sW_n^{S_0}$,\;recall $\Dpik=[E_{u,1}-E_{u,2}-\cdots-E_{u,f_u}]$ such that $\{E_{u,i}\}_{1\leq i\leq f_u}$ are all Steinberg.\;Then $D_{\mathrm{st}}(E_{u,i})$ inherits an induced Hodge filtration $\fil_H^{\bullet}D_{\mathrm{st}}(E_{u,i})$ from $\Dpik$.\;We may ask how much information in $\{\underline{\sL}(\Dpik)_r^q\}_{1<r<q<s}$ can be captured or recovered through $\{\fil_H^{\bullet}D_{\mathrm{st}}(E_{u,i})\}_{u\in \sW_n^{S_0}}$.\;After answering such a question,\;the results in \cite{wholeLINV} show that the $p$-adic Hodge parameters of $\{E_{u,i}\}_{u\in \sW_n^{S_0}}$ can be captured through higher extension groups among locally analytic generalized Steinberg representations.\;
		
		We have the following reformulations in terms of the language of flag varieties.\;For $J\subseteq \Delta$,\;we write $\bL^{\bB}_{J}:=\bL_{J}\cap \bB$.\;For any $u\in \sW_n^{S_0}$,\;we see that $ug\bB\in \bB w_0\bB$.\;We have a natural map:
		\[p_{\mathrm{ST},u}:\bB w_0\bB\cong \bN_{\emptyset}\twoheadrightarrow \bN_{\emptyset}/\bN_{S_0(u)}\cong \bL^{\bB}_{S_0(u)}w_{S_0(u),0}\bL^{\bB}_{S_0(u)}/\bL^{\bB}_{S_0(u)}\]
		which induces a natural map
		\[\bZ_{S_0}\backslash \bB w_0\bB\rightarrow \Phi_{\mathrm{nc},S_0(u)}(S_0(u))= \bZ_{S_0(u)}\backslash\bL^{\bB}_{S_0(u)}w_{S_0(u),0}\bL^{\bB}_{S_0(u)}/\bL^{\bB}_{S_0(u)}.\]
		By definition,\;we have a natural morphism 
		\begin{equation}\label{dfnofpreftoLevi}
			\begin{aligned}
				p_{\mathrm{ST}}:\Phi_{\mathrm{nc},\Delta}(S_0)&\rightarrow \prod_{u\in \sW_n^{S_0}}\Phi_{\mathrm{nc},S_0(u)}(S_0(u)),\\
				[g\bB]&\mapsto \Big([p_{\mathrm{ST},u}(ug\bB)]\Big)_{u\in \sW_n^{S_0}},\;\\\Big(\text{equivalently},\;[\underline{\sL}(\Dpik)]&\mapsto\prod_{u\in \sW_n^{S_0}}\big([\underline{\sL}(E_{u,1})],\cdots,[\underline{\sL}(E_{u,f_u})]\big)\Big).
			\end{aligned}
		\end{equation}
		This map is injective when $s=2$,\;i.e.,\;$\{\fil_H^{\bullet}D_{\mathrm{st}}(E_{u,i})\}_{u\in \sW_n^{S_0}}$ determines $\underline{\sL}(\Dpik)$,\;see Theorem \ref{twoblocksPare1}. 
		
		However,\;when $s>2$,\;besides
		$\{\fil_H^{\bullet}D_{\mathrm{st}}(E_{u,i})\}_{u\in \sW_n^{S_0}}$,\;there are extra ``crystalline'' Hodge parameters between different Steinberg blocks.\;Such phenomena are parallel to the generic crystalline case in \cite{ParaDing2024} and the potentially crystalline case in \cite{HEparaforsemitable}.\;See Theorem \ref{capturetype1},\;Theorem \ref{capturetype1diamond},\;Theorem \ref{capturetype1flag} and Theorem   \ref{capturetype1Liealg} for precise statements of the main theorems.\;
		
		Indeed,\;we only need to consider some special elements in $\sW_n^{S_0}$ (for instance,\;the ``cycle elements" in $\sW_n^{S_0}$).\;The following discussion generalizes the argument in \cite[Section 2.2]{ParaDing2024} to the non-critical semistable case.\;Recall $\Dpik=[E_{1}-E_{2}-\cdots-E_{s}]$ with each $E_{i}$ Steinberg.\;For $1\leq r<q\leq s$,\;let $E_r^q$ be the unique subquotient $[E_{r}-E_{r+1}-\cdots-E_{q}]$ of $\Dpik$ of rank $t_r^q$,\;where $t_r^q:=l_r+\cdots+l_q$.\;For $1\leq i\leq s$, put $t_0=0$ and $t_i=\sum_{j=1}^il_{j}$ (so that $t_s=n$).

		For $1\leq q\leq s$ and $1\leq t\leq l_q$,\;let
		\[E_q^{(t)}=[\cR_E(\unr(\alpha_q)z^{\bh_{t_{q-1}+1}})-\cdots-\cR_E(\unr(\alpha_q p^{t-1})z^{\bh_{t_{q-1}+t}})]\]
		be the unique Steinberg rank-$t$ $(\varphi,\Gamma)$-submodule of $E_q$,\;and
		\[F^{(t)}_q=E_q/E_q^{(t)}=[\cR_E(\unr(\alpha_rp^{t})z^{\bh_{t_{q-1}+t+1}})-\cdots-\cR_E(\unr(\alpha_r p^{l_q-1})z^{\bh_{t_{q-1}+l_q}})]\]
		be the unique Steinberg quotient of $E_q$ of rank-$(l_q-t)$.\;Consider the following parabolic filtrations of $\Dpik$ (recall	for any subquotient $B$ of $\Dpik$, we use $B'$ to denote another $(\varphi,\Gamma)$-module such that $B[1/t]=B'[1/t]$):
		\begin{equation}\label{lqErefinement}
			E_r^q=[E_r-E_{r+1}^{q-1}-E_{q}]=[(E_q^{(t)})'-E_r'-(E_{r+1}^{q-1})'-(E_q/E_q^{(t)})]=[E^{(t')}_r-(E_{r+1}^{q-1})'-E_q'-(F^{(t')}_r)'].\;
		\end{equation}
		We rewrite $E_r'$ as $E_r^{[q,t]}$ (resp.,\;$E_q'$ as $E_q^{[r,t']}$ ) to indicate the choice of $q$ and $1\leq t\leq l_q$ (resp.,\;$r$ and $1\leq t'\leq l_r$).\;By the non-critical assumption,\;the Hodge-Tate weights of $E_r$ (resp.,\;$E_r^{[q,t]}$) are
		\begin{equation}
			\bh_{E_r}:=(\bh_{t_{r-1}+1},\cdots,\bh_{t_r})\; (\text{resp.,\;}\bh_{E_r^{[q,t]}}:=(\bh_{t_{r-1}+1+t},\cdots,\bh_{t_r+t}))
		\end{equation} 
and the Hodge-Tate weights of $E_q$ (resp.,\;$E_q^{[r,t']}$) are
		\begin{equation}
			\bh_{E_q}:=(\bh_{t_{q-1}+1},\cdots,\bh_{t_q})\; (\text{resp.,\;}\bh_{E_q^{[r,t']}}:=(\bh_{t_{q-1}+1-(l_r-t'+1)},\cdots,\bh_{t_q-(l_r-t'+1)}))
		\end{equation} 
		\begin{rmk}By construction,\;we get a map $\iota^{r,q,-}_{\Dpik}(t):E_r\rightarrow E_r^{[q,t]}$.\;We have $\dim_E\homo_{(\varphi,\Gamma)}(E_r,E_r^{[q,t]})=1$ so that
			$\iota^{r,q,-}_{\Dpik}(t)$ generates $\homo_{(\varphi,\Gamma)}(E_r,E_r^{[q,t]})$. The map $\iota^{r,q,-}_{\Dpik}(t)$ induces an injection of filtered structures $\fil_H^{\bullet}D_{\mathrm{st}}(E_r)\rightarrow \fil_H^{\bullet}D_{\mathrm{st}}(E_r^{[q,t]})$.\;The same holds for $E_q^{[r,t']}$.\;
		\end{rmk}
		
		For $1\leq r< q\leq s$,\;define
		\[\cC_{r}^q(\Dpik):=\left\{E_r^{[q,t]},E_q^{[r,t']}\right\}_{1\leq t\leq l_q,1\leq t'\leq l_r}\]
		and $\cC_{r}^r(\Dpik):=\{E_r\}$.\;Put
		\[\cC_{\mathrm{ST}}(\Dpik):=\bigcup_{1\leq r\leq q\leq s}\cC_{r}^q(\Dpik),\]
	Such data can be captured completely through the so-called Breuil-Schraen $\sL$-invariants (which are defined through higher extensions of certain locally analytic representations) in Section \ref{BSINVtype1}.\;
		
		Fix $1\leq r<q\leq s$.\;We next explain explicitly how $\cC_{r}^q(\Dpik)$ captures \& recover the information $\underline{\sL}(\Dpik)^q_{r}$.\;We divide into two cases.\;
		\begin{itemize}
			\item[(1)] When $q=r+1$,\;this is Theorem \ref{twoblocksPare1}.\;We show that $\cC_{r}^{r+1}$ determines $\underline{\sL}(\Dpik)^{r+1}_{r}$.\;
			\item[(2)] Suppose that $q>r+1$,\;this is Proposition \ref{twoblocksPare2}.\;We recover the information in $\underline{\sL}(\Dpik)^q_{r}$ inductively.\;If $\underline{\sL}(\Dpik)^{q'}_{r'}$ is already determined for all $r\leq r'<q'\leq q$ and $(r',q')\neq (r,q)$,\;then $\cC_{r}^q(\Dpik)$ determines $\underline{\sL}(\Dpik)^q_{r}$ up to one parameter.\;The remaining parameters can be viewed as the so-called ``crystalline'' Hodge parameters between Steinberg blocks;\;see Section \ref{cryparablocks} and the proof of Theorem \ref{capturetype1diamond} for more details.\;
		\end{itemize}

		\begin{thm}\label{twoblocksPare1} For $1\leq r<s$,\;$E^{r+1}_r$ (equivalently,\;$\underline{\sL}(\Dpik)^{r+1}_{r}$) is uniquely determined by $\cC_{r}^{r+1}$.
		\end{thm}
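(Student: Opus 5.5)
We may assume $s=2$, $r=1$ by passing to the subquotient $E_r^{r+1}$. Its Hodge filtration is the one induced from $D$. Its $p$-adic Hodge parameters are the blocks $\underline{\sL}(\Dpik)_r^r$, $\underline{\sL}(\Dpik)_{r}^{r+1}$, $\underline{\sL}(\Dpik)_{r+1}^{r+1}$, and non-criticality is inherited from $\Dpik$.

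\textbf{Setup.} So let $\Dpik=[E_1-E_2]$ with $a:=l_1$, $b:=l_2$ and $n=a+b$. We write the Hodge filtration as in (\ref{parameterfilwrite}), with $\underline{\sL}(\Dpik)=(\cL_{ij})\in\bN_\emptyset(E)$, modulo $\bZ_{S_0}$, normalized by $\cL_{a,a+1}=1$. The diagonal blocks $\underline{\sL}(\Dpik)_1^1$ and $\underline{\sL}(\Dpik)_2^2$ are the parameters of $E_1,E_2\in\cC_1^2(\Dpik)$. Hence the task is to recover the off-diagonal $a\times b$ block $(\cL_{i,a+j})_{1\le i\le a,\,1\le j\le b}$, up to the residual scalar acting on it.

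\textbf{Step 1: the induced filtrations.} I will compute the filtered modules of the remaining members of $\cC_1^2(\Dpik)$ explicitly in the basis $e_1,\dots,e_n$.
\begin{itemize}
\item $D_{\mathrm{st}}(E_1^{[2,t]})$ is $W_t/W'_t$, where $W_t:=E\langle e_1,\dots,e_a,e_{a+1},\dots,e_{a+t}\rangle$ and $W'_t:=E\langle e_{a+1},\dots,e_{a+t}\rangle$. Its Hodge filtration is $(\fil_H^\bullet(D)\cap W_t)$ projected to $E\langle e_1,\dots,e_a\rangle$, with Hodge--Tate weights $\bh_{1+t},\dots,\bh_{a+t}$.
\item $D_{\mathrm{st}}(E_2^{[1,t']})$ is $E\langle e_1,\dots,e_{t'},e_{a+1},\dots,e_n\rangle/E\langle e_1,\dots,e_{t'}\rangle$ with the induced filtration.
\end{itemize}
Non-criticality of the refinements $\cF_u$ defining these filtrations gives exactly the transversality needed for the intersections $\fil_H^\bullet(D)\cap W_t$ to have the expected dimensions. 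It also guarantees that the pivot coefficients appearing below are nonzero.

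\textbf{Step 2: induction on the columns.} I will argue by induction on $t=1,\dots,b$. Assume the columns $a+1,\dots,a+t-1$ of the off-diagonal block are known up to the common scalar. Consider the smallest nonzero step of the filtration of $E_1^{[2,t]}$, namely the line through $\fil_H^{-\bh_{a+t}}(D)\cap W_t$. Gaussian elimination using the known diagonal block $\underline{\sL}(\Dpik)_2^2$ and the known earlier columns shows that this line is spanned by the image of a vector of the form $\sum_{i\le a}(\cL_{i,a+t}+\text{known})e_i$. Since $\underline{\sL}(E_1^{[2,t]})$ is a point of $\bZ\backslash\bB w_0\bB/\bB$ for $\GLN_a$, only ratios are visible. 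So $E_1^{[2,t]}$ recovers the column $(\cL_{i,a+t})_{i\le a}$ \emph{up to a scalar $c_t$}.

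\textbf{Step 3: fixing the scalars.} Symmetrically, the modules $E_2^{[1,t']}$ recover each row $(\cL_{t',a+j})_{j\le b}$ up to a scalar $d_{t'}$. For the case $t'=a$, which gives the row through $e_a$, this row is pinned by the normalization $\cL_{a,a+1}=1$. Comparing the column and row descriptions (each entry $\cL_{i,a+j}$ equals $c_j x_{ij}=d_i y_{ij}$, with $x,y$ known) then fixes all $c_j$ in terms of the row through $e_a$. Here one uses that the entries of that row are nonzero, which again comes from non-criticality. This determines the whole block, hence $E_1^{2}$.

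\textbf{Main obstacle.} The hard part is Step 1/2: writing the induced filtration on the subquotient precisely enough to see that the new column enters \emph{linearly with a nonzero coefficient}, with all other terms already known. I would first verify the small cases $(a,b)=(1,1),(2,1),(1,2)$ by hand, using the $\GLN_3$ example of Section \ref{philoforParameter} as a template, and then organize the general computation via Bruhat coordinates $p_{\mathrm{ST},u}(ug\bB)$ for the cycle elements $u$ corresponding to the filtrations (\ref{lqErefinement}).
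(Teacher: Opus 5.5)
Your overall plan matches the paper's: reduce to $s=2$, write $\underline{\sL}(\Dpik)=\begin{pmatrix}A&\underline{\sL}'(\Dpik)\\0&B\end{pmatrix}$ as in the paper, read each column of $\underline{\sL}'(\Dpik)$ up to a scalar from $E_1^{[2,t]}$, get row information from the $E_2^{[1,t']}$, and glue using the normalization and non-criticality. Steps 1--2 are correct and easier than you fear. With $v_m:=e_m+\sum_{l<m}\cL_{l,m}e_l$, the leading term of $v_m$ is $e_m$, so $\fil_H^{-\bh_{a+t}}(D)\cap W_t=Ev_{a+t}$. Hence the smallest step of $\fil_H^\bullet D_{\mathrm{st}}(E_1^{[2,t]})$ is exactly $E\sum_{i\le a}\cL_{i,a+t}e_i$. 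No induction and no ``known'' correction terms are needed. As written, your inductive hypothesis ``up to a common scalar'' is not even what the step returns; it only gives a separate scalar $c_t$ for each column.

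\textbf{The gap is Step 3.} The $E_2$-side is \emph{not} symmetric to the $E_1$-side in the coordinates $\cL_{ij}$.
\begin{itemize}
\item $E_2^{[1,t']}$ is a submodule of a quotient. Its filtration is cut out by intersecting the image of $\fil_H^\bullet(D)$ in $D/\langle e_1,\dots,e_{t'}\rangle$ with $\langle\bar e_{a+1},\dots,\bar e_n\rangle$, and this forces an elimination, i.e.\ an inversion.
\item Passing to annihilators (this is the paper's use of $\Dpik^\vee$, whose parameter is $w_0(\underline{\sL}(\Dpik)^{-1})^Tw_0$), for $b\ge2$ and $t'<a$ the hyperplane $\fil_H^{-\bh_{t'+2}}D_{\mathrm{st}}(E_2^{[1,t']})$ is the kernel of row $t'+1$ of $\underline{\sL}(\Dpik)^{-1}$ restricted to the last $b$ coordinates. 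Up to sign, this is row $t'+1$ of $A^{-1}\underline{\sL}'(\Dpik)B^{-1}$, not a row of $\underline{\sL}'(\Dpik)$.
\item Example: for $a=b=2$, $E_2^{[1,1]}$ gives the line $E\big((\cL_{24}-\cL_{23}\cL_{34})e_3-\cL_{23}e_4\big)$. It sees $\cL_{23},\cL_{24},\cL_{34}$, not $(\cL_{13}:\cL_{14})$.
\item Worse, by your own Step 1, $D_{\mathrm{st}}(E_2^{[1,a]})=D/\langle e_1,\dots,e_a\rangle$. So $E_2^{[1,a]}=E_2$ carries no information about the off-diagonal block, and it cannot pin the row through $e_a$.
\end{itemize}

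\textbf{How to repair it.} Since $A^{-1}$ is upper unipotent, row $a$ of $A^{-1}\underline{\sL}'(\Dpik)B^{-1}$ equals $(\cL_{a,a+j})_jB^{-1}$. So $E_2^{[1,a-1]}$, together with $B$ (known from $E_2$), gives $(\cL_{a,a+j})_j$ up to a scalar, and $\cL_{a,a+1}=1$ pins it. Each $\cL_{a,a+j}$ is non-zero. Indeed, the minor of $\underline{\sL}(\Dpik)$ on rows $\{a,a+j+1,\dots,n\}$ and columns $\{a+j,\dots,n\}$ equals $\cL_{a,a+j}$. Its non-vanishing is equivalent to $\fil_H^{-\bh_{a+j}}(D)\cap\langle e_1,\dots,e_{a-1},e_{a+1},\dots,e_{a+j}\rangle=0$. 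That is non-criticality for a $(\varphi,N)$-stable flag beginning with these vectors. This determines every $c_j$.

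This repaired route uses only one row. The paper instead combines all columns of $\underline{\sL}'(\Dpik)$ with all rows of $A^{-1}\underline{\sL}'(\Dpik)B^{-1}$, each up to scalars.

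\textbf{Edge case $a=1$.} Here the object you need is $t'=0$, the submodule in the swapped refinement $[E_2'-E_1']$. It must be counted in $\cC_1^2(\Dpik)$, as the Hodge--Tate weight formula for $E_q^{[r,t']}$ in Section~\ref{philoforParameter} implicitly does. Otherwise every $E_1^{[2,t]}$ has rank $1$, $E_2^{[1,1]}=E_2$, and nothing in $\cC_1^2(\Dpik)$ sees the row $(\cL_{1,2},\dots,\cL_{1,n})$.
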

		\begin{proof}To simplify notation,\;assume $s=2$.\;Write $\Dpik=E_1-E_2$.\;Suppose that $E_1$ has rank $l$.\;For $l+1\leq q\leq n$, recall that
			$E_0:=E_2^{(q-l)}$ is the unique $(\varphi,\Gamma)$-submodule of $E_2$ of rank $q-l$.\;Thus
			\[\Dpik=[\Dpik_1^{q}-(E_2/E_0)],\quad \Dpik_1^{q}:=[E_0'-E_1'].\]
			Quotienting by the basis $e_{l+1},\cdots,e_{q}$ of $D_{\mathrm{st}}(E_0)$ in the induced Hodge filtration on $D_{\mathrm{st}}(\Dpik_1^{q})$ (see (\ref{parameterfilwrite})) yields an induced Hodge filtration on $D_{\mathrm{st}}(E_1')$:
			\begin{equation}
				\begin{aligned}
					\fil^{j}_{H}D_{\mathrm{st}}(E_{1}')=\left\{
					\begin{array}{ll}
						D_{\mathrm{st}}(E_{1}'),\;&j\in (-\infty,-\bh_{q-l+1}],\\
						\fil^{-\bh_{q-l+3}}_{H}D_{\mathrm{st}}(E_{1}')\oplus E(\sum_{j=1}^{l}\cL_{j,q-l+1}e_{j}),\;&j\in (-\bh_{q-l+1},-\bh_{q-l+2}],\\
						\cdots&\cdots\\
						\fil^{-\bh_{q}}_{H}D_{\mathrm{st}}(E_{1}')\oplus E(\sum_{j=1}^{l}\cL_{j,q-1}e_{j}) ,\;& j\in (-\bh_{q-2},-\bh_{q-1}]\\
						E(\sum_{j=1}^{l}\cL_{j,q}e_{j}),\;&j\in (-\bh_{q-1},-\bh_q]\\
						0,\;& j\in (-\bh_q,+\infty).
					\end{array}
					\right.
				\end{aligned}
			\end{equation}
			In what follows,\;we use $\underline{\sL}(\Dpik)^{[l'_1,l'_2]}_{[l_1,l_2]}$ to denote the minor of $\underline{\sL}(\Dpik)$ formed by the $l_1$-th to $l_2$-th rows and the $l_1'$-th to $l_2'$-th columns.\;
			
			We now describe $\fil^{\bullet}_{H}D_{\mathrm{st}}(E_{1}')$ explicitly in terms of the matrix $\underline{\sL}(\Dpik)$.\;From $\underline{\sL}(\Dpik)^{[q-l+1,q]}_{[1,l]}$, via elementary column operations, we obtain an upper triangular matrix $\underline{\sL}(\Dpik)^{\urcorner}_{q}$ whose diagonal entries are all equal to $1$.\;Indeed,\;we first use the $q$-th column to make the $l$-th row equal to $(0,0,\cdots,0,1)$,\;then use the $(q-1)$-th column to make the $(l-1)$-th row equal to $(0,0,\cdots,1,\ast)$, and so on.\;This procedure is possible by the non-critical assumption.\;Then $(\underline{\sL}(\Dpik)^{\urcorner}_{q})w_{0,l}\in \bZ_l\backslash\bB_lw_{0,l}\bB_l/\bB_l$ determines the complete flag $\fil^{\bullet}_{H}D_{\mathrm{st}}(E_{1}')$ of $D_{\mathrm{st}}(E_{1}')$.\;

			On the other hand,\;for $1\leq t\leq l$,\;consider the unique quotient $E_1\twoheadrightarrow F_0=F_1^{(t)}=E_1/E_1^{(t)}$.\;Thus
			\[\Dpik=[E_1^{(t)}-\Dpik_{t+1}^{n}],\quad\Dpik_{t+1}^{n}=[E_2'-F_0'].\]
			Quotienting by the basis $e_{1},\cdots,e_{t}$ of $D_{\mathrm{st}}(E_1^{(t)})$,\;we get from (\ref{parameterfilwrite}) the induced Hodge filtration on $D_{\mathrm{st}}(\Dpik_{t+1}^{n})$,\;which restricts to the Hodge filtration on $D_{\mathrm{st}}(E_2')$:\;
			\begin{equation*}
				\begin{aligned}
					\fil^{j}_{H}D_{\mathrm{st}}(E_{2}')=\left\{
					\begin{array}{ll}
						D_{\mathrm{st}}(E_{2}'),\;&j\in (-\infty,-\bh_{t+1}],\\
						\fil^{-\bh_{t+3}}_{H}D_{\mathrm{st}}(E_{2}')\oplus E(\sum_{j=1}^{l}\cL_{j,t+1}e_{j}),\;&j\in (-\bh_{t+1},-\bh_{t+2}],\\
						\cdots&\cdots\\
						\fil^{-\bh_{t+(n-l)}}_{H}D_{\mathrm{st}}(E_{2}')\oplus E(\sum_{j=1}^{l}\cL_{j,t+(n-l)-1}e_{j}) ,\;& j\in (-\bh_{t+(n-l)-2},-\bh_{t+(n-l)-1}]\\
						E(\sum_{j=1}^{l}\cL_{j,t+(n-l)}e_{j}),\;&j\in (-\bh_{t+(n-l)-1},-\bh_{t+(n-l)}]\\
						0,\;& j\in (-\bh_{t+(n-l)},+\infty).
					\end{array}
					\right.
				\end{aligned}
			\end{equation*}
			Consider the $(n-l)$-order minors $\underline{\sL}(\Dpik)^{[t,t+(n-l)]}_{[t+1,t+(n-l)]}$ and $\underline{\sL}(\Dpik)^{[t,t+(n-l)]}_{[t+1,n]}$.\;Using the $[t+(n-l)+1,n]$-columns of $\underline{\sL}(\Dpik)^{[t,t+(n-l)]}_{[t+1,n]}$ and elementary column operations,\;we obtain a lower triangular matrix $\underline{\sL}(\Dpik)^{\llcorner}_{t}$ of $\underline{\sL}(\Dpik)^{[t+1,t+(n-l)]}_{[t+1,t+(n-l)]}$ whose diagonal entries are all $1$.\;Then $(\underline{\sL}(\Dpik)^{\llcorner}_{t})w_{0,n-l}\in \bZ_{n-l}\backslash\bB_{n-l}w_{0,n-l}\bB_{n-l}/\bB_{n-l}$ determines the complete flag $\fil^{\bullet}_{H}D_{\mathrm{st}}(E_{2}')$ of $D_{\mathrm{st}}(E_{2}')$.\;

			It remains to show that $\{\underline{\sL}(\Dpik)^{\urcorner}_{q}\}_{l+1\leq q\leq n}$ and 
			$\{\underline{\sL}(\Dpik)^{\llcorner}_{t}\}_{1\leq t\leq l}$ determine $\underline{\sL}(\Dpik)$.\;Indeed,\;for $l+1\leq q\leq n$,\;using the last column of each matrix $\underline{\sL}(\Dpik)^{\urcorner}_{q}$,\;we obtain the information of $\{\cL_{j,q}/\cL_{l,q}\}_{1\leq j<l,l+1\leq q\leq n}$.\;We then apply above discussion to  $(\Dpik^{\vee},F_0^{\vee})$,\;and obtain an explicit expression for $\underline{\sL}(\Dpik)^{\llcorner}_{t}$.\;Indeed, with respect to the dual basis of $D_{\mathrm{st}}(\Dpik^{\vee})\cong D_{\mathrm{st}}(\Dpik)^{\vee}$,\;the Hodge filtration on $D_{\mathrm{st}}(\Dpik^{\vee})$ is given by the matrix $w_0(\underline{\sL}(\Dpik)^{-1})^Tw_0$.\;Let $\underline{\sL}(\Dpik)^{-1}=(\cL'_{ij})$,\;then $w_0(\underline{\sL}(\Dpik)^{-1})^Tw_0=(\cL'_{n+1-j,n+1-i})_{i<j}$.\;In particular,\;$\{\underline{\sL}(\Dpik)^{\llcorner}_{t}\}_{1\leq t\leq l}$ encodes the information of $\{\cL'_{n-q,j}/\cL'_{n-q,n-l}\}_{\substack{1\leq j<n-l\\n-l+1\leq q\leq n}}$.\;Write 
			\[\underline{\sL}(\Dpik)=\left(\begin{array}{cc}
				A  & \underline{\sL}'(\Dpik)  \\
				0  & B
			\end{array}\right),\]
			where $A$ (resp.,\;$B$) is a matrix of size $l\times l$ (resp.,\;$(n-l)\times (n-l)$).\;Therefore,\;\[\underline{\sL}(\Dpik)^{-1}=\left(\begin{array}{cc}
				A^{-1}  & -A^{-1}\underline{\sL}'(\Dpik)B^{-1}  \\
				0  & B^{-1}
			\end{array}\right).\]
			Therefore,\;$\{\underline{\sL}(\Dpik)^{\llcorner}_{t}\}_{1\leq t\leq l}$ gives the matrix $\diag\{\cL'_{1,n-l},\cdots,\cL'_{l,n-l}\}^{-1}A^{-1}\underline{\sL}'(\Dpik)B^{-1}$, and $\{\underline{\sL}(\Dpik)^{\urcorner}_{q}\}_{l+1\leq q\leq n}$ gives the matrix $\underline{\sL}'(\Dpik)\diag\{\cL'_{l,l+1},\cdots,\cL'_{l,n}\}^{-1}$.\;The result follows.\;
		\end{proof}

		\begin{pro}\label{twoblocksPare2}
			Assume that $q>r+1$.\;Suppose that $\underline{\sL}(\Dpik)^{q'}_{r'}$ is already determined for all $r\leq r'<q'\leq q$ with $(r',q')\neq (r,q)$.\;Then $\cC_{r}^q(\Dpik)$ together with the lower-left entry $\cL_{t_{r-1}+l_r,t_{q-1}+1}$ of $\underline{\sL}(\Dpik)^q_{r}$ determines $\underline{\sL}(\Dpik)^q_{r}$ (equivalently,\;$\cC_{r}^q(\Dpik)$ determines $\underline{\sL}(\Dpik)^q_{r}$ up to one entry).\;
		\end{pro}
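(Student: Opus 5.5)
Reduce to the subquotient and mimic the argument of Theorem \ref{twoblocksPare1}, now with the middle block $E_{r+1}^{q-1}$ present. First replace $\Dpik$ by $E_r^q$; by (\ref{parameterfilwrite}) the Hodge filtration of $D_{\mathrm{st}}(E_r^q)$ is given by the principal minor of $\underline{\sL}(\Dpik)$ on rows and columns $t_{r-1}+1,\dots,t_q$. Its off-diagonal blocks are $\underline{\sL}(\Dpik)^{q'}_{r'}$ for $r\leq r'\leq q'\leq q$. By hypothesis every block is known except the upper-right block $X:=\underline{\sL}(\Dpik)^q_r$, an $l_r\times l_q$ matrix. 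So it suffices to show that $\cC_r^q(\Dpik)$ together with the entry $X_{l_r,1}=\cL_{t_{r-1}+l_r,t_{q-1}+1}$ determines $X$.

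\textbf{Step 1: the data $E_r^{[q,t]}$.} For $1\leq t\leq l_q$, use the filtration $[(E_q^{(t)})'-E_r'-(E_{r+1}^{q-1})'-F_q^{(t)}]$ of (\ref{lqErefinement}). Quotient $D_{\mathrm{st}}(E_r^q)$ by the span of the basis vectors of $D_{\mathrm{st}}(E_q^{(t)})$ and intersect with the lines in degrees $-\bh_{t_{r-1}+1+t},\dots,-\bh_{t_r+t}$. This describes $\fil_H^\bullet D_{\mathrm{st}}(E_r^{[q,t]})$ as the flag spanned by certain columns, projected to the $e_{t_{r-1}+1},\dots,e_{t_r}$-coordinates. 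The first $l_r-t$ of these columns lie inside $E_r$ itself. The remaining $t$ columns are the last $t$ columns of the block row $r$, and they involve both the middle blocks $\underline{\sL}(\Dpik)^{q'}_r$ ($r<q'<q$) and the first $t$ columns of $X$. The key point is that these $t$ columns must first be reduced modulo the span of the columns indexed by $E_{r+1}^{q-1}$, whose lower parts are known.

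After that reduction, the column-reduction procedure of Theorem \ref{twoblocksPare1} (producing $\underline{\sL}^{\urcorner}$) applies. It shows that $E_r^{[q,t]}$, up to $\bZ$-scaling, determines the $t$-th column of $X$, corrected by a known affine term, up to a scalar $\lambda_t$. The non-critical assumption supplies the required invertibility of the minors.

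\textbf{Step 2: the dual data $E_q^{[r,t']}$.} Apply the same argument to $(E_r^q)^\vee$, which is again of the required form, with matrix $w_0(\underline{\sL}^{-1})^Tw_0$ as in the proof of Theorem \ref{twoblocksPare1}. The block inverse formula for the $3\times3$ block upper-triangular matrix with diagonal $(A,M,B)$ gives the upper-right block of the inverse:
\[
-A^{-1}XB^{-1}+A^{-1}Y M^{-1}ZB^{-1},
\]
with $Y,Z$ the known middle blocks. The affine correction is therefore known, so each $E_q^{[r,t']}$ determines a row of $A^{-1}XB^{-1}$, plus a known term, up to a scalar $\mu_{t'}$.

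\textbf{Step 3: combining, and the main obstacle.} We now know the columns of $X$ up to scalars $\lambda_t$ and the rows of $A^{-1}XB^{-1}$ up to scalars $\mu_{t'}$, both modulo known affine shifts. Equating the two descriptions entrywise gives a bilinear system in $(\lambda,\mu)$. In Theorem \ref{twoblocksPare1} the shifts vanish, and the torus $\bZ_{S_0}$-normalization $\cL_{t_l,t_l+1}=1$ absorbs the single global scalar. Here the affine shift breaks that homogeneity, so one genuine scalar remains undetermined; it is fixed precisely by prescribing the corner entry $X_{l_r,1}$, which is the entry linking the last vector of $E_r$ to the first of $E_q$.

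The main obstacle is to verify that, once $X_{l_r,1}$ is fixed, the system has a unique solution, i.e.\ that the relevant coefficient matrices are invertible. I would establish this by reducing to the case $l_r=l_q=1$, where everything is explicit, using the triangular structure coming from the non-critical conditions (all leading minors nonzero). I would then run an induction on $t$ and $t'$, solving successively for the $\lambda_t$ and $\mu_{t'}$ starting from the corner.
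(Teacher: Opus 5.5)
\textbf{There is a genuine gap, in Step 3.} That step is where the proof has to happen. You assert that once $X_{l_r,1}$ is fixed the combined system has a unique solution, but you give only a plan, and the plan does not work.

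\emph{The proposed reduction is vacuous.} For $l_r=l_q=1$ the modules in $\cC_r^q(\Dpik)$ have rank one and carry no Hodge data, while $X$ is its own corner. So there is nothing to reduce to.

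\emph{The heuristic behind ``one genuine scalar remains'' is also wrong.} The blocks $\underline{\sL}(\Dpik)^{q'}_{r'}$ with $(r',q')\neq(r,q)$ are already normalized. Hence $z_r/z_q=\prod_{l=r}^{q-1}z_l/z_{l+1}$ is fixed, and no $\bZ_{S_0}$-freedom acts on $X$. Your system in $(\lambda,\mu)$ is linear, not bilinear. An inhomogeneous term removes a scaling direction rather than creating one. Depending on which refinements you admit, the data may even over-determine $X$. What must be shown is only that fixing the corner leaves no freedom, and nothing in your argument shows this.

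\emph{Step 1 is inaccurate once a middle block is present.} Only the middle columns of index $\geq t_r+t$ lie in $\fil_H^{-\bh_{t_r+t}}$. So $E_r^{[q,t]}$ does not give the $t$-th column of $X-YM^{-1}Z$ up to scalar. It gives the line spanned by $(X-YM^{-1}Z)c^{(t)}$, where $c^{(t)}$ is a known vector supported on the first $t$ coordinates. For instance, if $q=r+2$ and $l_{r+1}=1$, the line for $t=2$ is spanned by $Z_{1,2}X_{\cdot,1}-Z_{1,1}X_{\cdot,2}$. Your ``successive solving for $\lambda_t$'' therefore does not go through as written.

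\textbf{The missing idea is a triangular elimination, which uses much less data.}
\begin{enumerate}
\item Do not carry free scalars. Normalize each line coming from $E_r^{[q,t]}$ by its last coordinate (row $l_r$), which is nonzero by non-criticality.
\item The resulting relations $H_{j,t}=\rho_{j,t}H_{l_r,t}$, with $\rho_{j,t}$ known, are affine in $(X_{j,1},\dots,X_{j,t})$ and in $(X_{l_r,1},\dots,X_{l_r,t})$. By induction on $t$ they express every entry of $X$ as an affine function of the last row $X_{l_r,\cdot}$.
\item One dual datum then suffices: the refinement $[E_r^{(l_r-1)}-(E_{r+1}^q)'-(F_r^{(l_r-1)})']$, which moves only $e_{t_r}$ past $E_q$. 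A direct computation gives $\fil_H^{-\bh_{t_{q-1}+1}}D_{\mathrm{st}}(E_q')=\ker(X_{l_r,\cdot}B^{-1})$. The whole induced flag is determined by this hyperplane together with $B$.
\item The middle blocks do not enter this datum. The correction term $A^{-1}YM^{-1}ZB^{-1}$ in your Step 2 is exactly cancelled by the dual of the Step 1 correction. So this datum fixes the row $X_{l_r,\cdot}B^{-1}$ up to a scalar.
\item Since $B^{-1}$ is unipotent upper triangular, the first entry of $X_{l_r,\cdot}B^{-1}$ is $X_{l_r,1}$. So the scalar is determined by the corner.
\end{enumerate}
The last row, and with it all of $X$, is then affine in $X_{l_r,1}$. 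This is exactly the statement, and no rank analysis of a coupled system is needed.
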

		\begin{proof}
			
			For each $1\leq t\leq l_q$,\;recall that $E_q^{(t)}\subseteq E_q$ is the unique $(\varphi,\Gamma)$-submodule of rank $t$.\;Then
			\[\Dpik=[(E_q^{(t)})'-(E_r^{q-1})'-F_q^{(t)}].\]
			By quotienting the basis of $D_{\mathrm{st}}((E_q^{(t)})')$ in (\ref{parameterfilwrite}),\;we obtain an induced Hodge filtration on $D_{\mathrm{st}}((E_r^{q-1})')$, and hence on $D_{\mathrm{st}}(E_r')$.\;Note that $\rk E_r^{q-1}=t_r^{q-1}$.\;Consider the $t_r^{q-1}$-order minor $\underline{\sL}(\Dpik)^{[t_{r-1}+1+t,t_{q-1}+t]}_{[t_{r-1}+1,t_{q-1}]}$.\;Using elementary column operations,\;we obtain an upper triangular matrix $\underline{\sL}(\Dpik)^{\urcorner}_{t}$ with all diagonal entries equal to $1$, supported on the $(t_{r-1}+1)$-th to $t_{r}$-th rows and the $(t_{r-1}+t+1)$-th to $(t_{r}+t)$-th columns.\;This determines the complete flag $\fil^{\bullet}_{H}D_{\mathrm{st}}(E_r')$ on $E_r'$.\;
			
			On the other hand,\;for each $1\leq t\leq l_r$,\;consider the quotient map $E_r\twoheadrightarrow F_r^{(t)}$ of $(\varphi,\Gamma)$-modules.\;Then
			\[\Dpik=[E_r^{(t)}-(E_{r+1}^q)'-(E_r^{(t)})'].\]
			By quotienting the basis of $D_{\mathrm{st}}(E_r^{(t)})$ in (\ref{parameterfilwrite}),\;we obtain an induced Hodge filtration $\fil^{\bullet}_{H}D_{\mathrm{st}}((E_{r+1}^q)')$ on $D_{\mathrm{st}}((E_{r+1}^q)')$.\;Consider the $t_{r}^q\times(t_{r}^q-t)$-order minor $\underline{\sL}(\Dpik)^{[t_{r-1}+1,t_q]}_{[t_{r-1}+t+1,t_{q}]}$.\;Using the $[t_q-(l_r-t)+1,t_q]$-columns of $\underline{\sL}(\Dpik)^{[t_{r-1}+1,t_q]}_{[t_{r-1}+t+1,t_{q}]}$ and elementary column operations,\;we transform $\underline{\sL}(\Dpik)^{[t_q-(l_r-t)+1,t_q]}_{[t_{r-1}+t,t_{r}]}$ (resp.,\;$\underline{\sL}(\Dpik)^{[t_{r-1}+1,t_q-(l_r-t)]}_{[t_{r-1}+t,t_{q}]}$) into an upper triangular matrix $\underline{\sL}(\Dpik)^{\llcorner}_{t}$ with unit diagonal entries (resp.,\;the zero matrix).\;Hence the induced Hodge filtration on $D_{\mathrm{st}}(E_q')$ is encoded by $\underline{\sL}(\Dpik)^{\llcorner}_{t}$.\;
			
			It remains to check that $\{\underline{\sL}(\Dpik)^{\urcorner}_{t}\}_{1\leq t\leq l_q}$ and 
			$\{\underline{\sL}(\Dpik)^{\llcorner}_{t}\}_{1\leq t\leq l_r}$ encode enough information.\;Let $\BQ_{\pm 1}^{(r,q)}$ be the space of rational functions in the matrix entries of $\{\underline{\sL}(\Dpik)_{r'}^{q'}\}_{r\leq r'<q'\leq q,(r',q')\neq (r,q)}$ with coefficients in $\{\pm1\}$.\;Then, for $1\leq t\leq l_q$,\;a careful inspection of the elementary column operations shows that the last column of $\underline{\sL}(\Dpik)^{\urcorner}_{t}$ equals $(H_{1,t}/H_{l_r,t},\cdots,H_{l_r,t}/H_{l_r,t})^T$,\;where 
			\[H_{j,t}:=H_{j,t}\big(\{\cL_{t_{r-1}+j,t_{q-1}+l}\}_{1\leq l\leq t}\big)\]
			are certain linear forms in the variables $\{\cL_{t_{r-1}+j,t_{q-1}+l}\}_{1\leq l\leq t}$ with coefficients in $\BQ_{\pm 1}^{(r,q)}$,\;for each $1\leq j<l_r$.\;In particular,\;if we regard $\{\cL_{t_{r-1}+l_r,t_{q-1}+t}\}_{1\leq t\leq l_q}$ as free variables,\;then each $\cL_{t_{r-1}+j,t_{q-1}+t}$ can be expressed as a linear combination of these free variables with known coefficients.\;
			
		With respect to the dual basis of $D_{\mathrm{st}}(\Dpik^{\vee})\cong D_{\mathrm{st}}(\Dpik)^{\vee}$,\;the Hodge filtration on $D_{\mathrm{st}}(\Dpik^{\vee})$ is given by the matrix $w_0(\underline{\sL}(\Dpik)^{-1})^Tw_0$,\;i.e.,\;if $\underline{\sL}(\Dpik)^{-1}=(\cL'_{ij})$,\;then $w_0(\underline{\sL}(\Dpik)^{-1})^Tw_0=(\cL''_{ij})_{i<j}$ with $\cL''_{ij}:=\cL'_{n+1-j,n+1-i}$.\;If we write
			\[\underline{\sL}(\Dpik)=\left(\begin{array}{cc}
				A  & \underline{\sL}''(\Dpik)  \\
				0  & B
			\end{array}\right),\]
			where $A$ (resp.,\;$B$) is a matrix of order $t_r^{q-1}\times t_r^{q-1}$ (resp.,\;$l_q\times l_q$).\;Then we see that
			\begin{equation}\label{inverseofLD}
				\begin{aligned}
					\underline{\sL}(\Dpik)^{-1}&=\left(\begin{array}{cc}
						A^{-1}  & -A^{-1}\underline{\sL}''(\Dpik)B^{-1}  \\
						0  & B^{-1}
					\end{array}\right).
				\end{aligned}
			\end{equation}
			Thus the last column of $\underline{\sL}(\Dpik)^{\llcorner}_{t}$ equals $(H'_{1,t}/H'_{l_q,t},\cdots,H'_{l_q,t}/H'_{l_q,t})^T$,\;where
			\[H'_{j,t}:=H'_{j,t}\big(\{\cL''_{t_{q+1}^s+j,t^s_{r-1}+l}\}_{1\leq l\leq l_r-t+1}\big)=H'_{j,t}\big(\{\cL'_{t_r+1-l,t_q+1-j}\}_{1\leq l\leq l_r-t+1}\big)\]
			are certain linear forms in the variables $\{\cL'_{t_r+1-l,t_q+1-j}\}_{1\leq l\leq l_r-t}$ with coefficients in $\BQ_{\pm 1}^{(r,q)}$ (using (\ref{inverseofLD})),\;for each $1\leq l\leq l_r$.\;Furthermore,\;using (\ref{inverseofLD}) again,\;we see that
			\[H'_{j,t}\big(\{\cL'_{t_r+1-l,t_q+1-j}\}_{1\leq l\leq l_r-t+1}\big)=H''_{j,t}\big(\{\cL_{t_r+1-l,t_q+1-j}\}_{1\leq l\leq l_r-t+1}\big)\]
			for another family of linear forms $H''_{j,t}$ in the variables $\{\cL_{t_r+1-l,t_q+1-j}\}_{1\leq l\leq l_r-t}$ with coefficients in $\BQ_{\pm 1}^{(r,q)}$.\;In particular,\;taking $t=l_r$,\;the collection
			\[\Big\{\frac{H''_{j,t}(\cL_{t_{r-1}+l_r,t_q+1-j})}{H''_{l_r,q}(\cL_{t_{r-1}+l_r,t_q+1-l_q})}\Big\}_{1\leq j<l_q}\]
			is captured by $\underline{\sL}(\Dpik)^{\llcorner}_{l_r}$,\;so we can express $\cL_{t_{r-1}+l_r,t_q+1-j}$ in terms of $\cL_{t_{r-1}+l_r,t_q+1-l_q}$ and $\underline{\sL}(\Dpik)^{\llcorner}_{l_r}$.\;Since $\cL_{t_{r-1}+j,t_{q-1}+t}$ is a linear combination of $\{\cL_{t_{r-1}+l_r,t_{q-1}+t}\}_{1\leq t\leq l_q}$ with known coefficients,\;we obtain
			\[\cL_{t_{r-1}+j,t_{q-1}+t}=A_{j,t}\cL_{t_{r-1}+l_r,t_q+1-l_q}+B_{j,t},\]
			where $A_{j,t}$ and $B_{j,t}$ are known coefficients.\;This completes the proof.\;
		\end{proof}

			\subsection{``Crystalline'' Hodge parameters inside $\Dpik$}\label{cryparablocks}

			This section follows the strategy in \cite[Sections 2.2-2.4]{ParaDing2024} and develops a block version.\;Our goal is to extract the Hodge parameters of $\Dpik$ that arise from crystalline subquotients of $\Dpik$ or the ``crystalline'' Hodge parameters between Steinberg subquotient of $\Dpik$ .\;

			Let $1\leq r<q\leq s$.\;Consider $E_r^q=[E_{r}^{q-1}-E_{q}]=[E_q'-(E_{r}^{q-1})']$.\;Then the composition $E_r^{r+1}\hookrightarrow E_r^q\twoheadrightarrow (E_{r}^{q-1})']$ factors through $(E_r^{r+1})'$,\;we obtain a map 
			\[\iota^{r,q}_{\Dpik}:
			E_r^{r+1}\rightarrow (E_r^{r+1})'.\]
			Similar to \cite[Proposition 2.1]{ParaDing2024},\;if $\dim_E\homo_{(\varphi,\Gamma)}(E_r^{r+1},(E_r^{r+1})')=2$ (hence $q>r+1$),\;then $\iota^{r,q}_{\Dpik}$ encodes one additional parameter.\;

			On the other hand,\;any maximal crystalline subquotient $\Dpik_0$ of $\Dpik$ has the form:
			\begin{equation}\label{formforcrysubquo}
				\begin{aligned}
					\Dpik_0=[\cR_E(\unr(\alpha_1p^{j_1})z^{\bh_{b+1}})-\cR_E(\unr(\alpha_2p^{j_2})z^{\bh_{b+2}})-\cdots-\cR_E(\unr(\alpha_sp^{j_s})z^{\bh_{b+s}})].\;
				\end{aligned}
			\end{equation}
			for integers $0\leq j_i\leq l_i-1$ ($1\leq i\leq s$) and some $0\leq b\leq n-s$.\;In particular,\;$\Dpik$ admits a unique (maximal) crystalline $(\varphi,\Gamma)$-submodule (resp.,\;quotient) $\Dpik_{\mathrm{cr}}$ (resp.,\;$\Dpik^{\mathrm{cr}}$).\;More precisely,\;
			\begin{equation}
				\begin{aligned}
					\Dpik_{\mathrm{cr}}=&\;[\cR_E(\unr(\alpha_1)z^{\bh_{1}})-\cR_E(\unr(\alpha_2)z^{\bh_{2}})-\cdots-\cR_E(\unr(\alpha_s)z^{\bh_{s}})],\\
					\Dpik^{\mathrm{cr}}=&\;[\cR_E(\unr(\alpha_1p^{l_1-1})z^{\bh_{n-s+1}})-\cR_E(\unr(\alpha_2p^{l_2-1})z^{\bh_{n-s+2}})-\cdots-\cR_E(\unr(\alpha_sp^{l_s-1})z^{\bh_{n}})].\;
				\end{aligned}
			\end{equation}
			
			\begin{rmk}
				We explain why we consider crystalline subquotients.\;First,\;the Hodge parameters in $\cC_{\mathrm{ST}}(\Dpik)$ and the ``crystalline'' Hodge parameters are tightly intertwined, and crystalline subquotients provide a way to isolate the purely crystalline part.\;Second,\;the Hodge parameters of crystalline subquotients can be captured by explicit locally analytic representations, which suggests an internal structure for the conjectural locally analytic representation $\pi(\Dpik)$.\;
			\end{rmk}
			
			We apply the above discussion to a crystalline subquotient $\Dpik_0$ of $\Dpik$ of the form (\ref{formforcrysubquo}) (i.e.,\;$\rk(E_i)=1$ for all $1\leq i\leq s$).\;For $1\leq r<q\leq s$,\;recall
			\[(\Dpik_0)_r^q:=[\cR_E(\unr(\alpha_rp^{j_r})z^{\bh_{b+r}})-\cdots-\cR_E(\unr(\alpha_qp^{j_q})z^{\bh_{b+q}})].\]
			If  $q>r+1$,\;the following refinements of $\Dpik_0$
			\begin{equation}\label{Dcrlqpermutation}
				\begin{aligned}
					(\Dpik_0)_r^q=[(\Dpik_0)_r^{q-1}-\cR_E(\unr(\alpha_qp^{j_q})z^{\bh_{b+q}})]=[\cR_E(\unr(\alpha_qp^{j_q})z^{\bh_{b+r}})-((\Dpik_0)_r^{q-1})']\end{aligned}
			\end{equation}
		induces a natural map
			\[\iota^{r,q}_{\Dpik_0}:(\Dpik_0)_r^{r+1}\rightarrow (\Dpik_0)^{r+1}_{r,[q]}:=((\Dpik_0)_r^{r+1})'.\]
			By \cite[Remark 2.3]{ParaDing2024},\;we have 
			\begin{lem}
				We have $\dim_E\homo_{(\varphi,\Gamma)}((\Dpik_0)_r^{r+1},((\Dpik_0)_r^{r+1})')=2$.\;
			\end{lem}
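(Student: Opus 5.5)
The plan is to reduce the computation to linear algebra on filtered $\varphi$-modules, using Berger's equivalence between crystalline $(\varphi,\Gamma)$-modules over $\cR_E$ and filtered $\varphi$-modules over $E$. That functor $M\mapsto (D_{\mathrm{cris}}(M),\varphi,\fil^{\bullet})$ is an equivalence of categories, with no weak admissibility required on the Robba side. In particular it is fully faithful, so
\[\homo_{(\varphi,\Gamma)}(M_1,M_2)\cong \homo_{\varphi,\fil}\big(D_{\mathrm{cris}}(M_1),D_{\mathrm{cris}}(M_2)\big)\]
for crystalline $M_1,M_2$. I will apply this with $M_1=(\Dpik_0)_r^{r+1}$ and $M_2=((\Dpik_0)_r^{r+1})'$. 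Both are crystalline, since they are subquotients of $\Dpik_0$ or of a modification of it with the same $[1/t]$. Consequently $D_{\mathrm{cris}}(M_1)=D_{\mathrm{cris}}(M_2)=:D_0$ as $\varphi$-modules, and the two differ only in their Hodge filtrations.

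First I identify the data. $D_0$ has a $\varphi$-eigenbasis $f_1,f_2$ with eigenvalues $\alpha_rp^{j_r}$ and $\alpha_{r+1}p^{j_{r+1}}$. These are distinct, because all $\phi_i$ are assumed pairwise distinct. Hence $\homo_{\varphi}(D_0,D_0)$ consists of the maps diagonal in $(f_1,f_2)$ and is $2$-dimensional.

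Next I read off the Hodge--Tate weights from the refinements (\ref{Dcrlqpermutation}). $M_1$ has weights $(\bh_{b+r},\bh_{b+r+1})$. In the second refinement $((\Dpik_0)_r^{q-1})'$ carries the weights $\bh_{b+r+1},\dots,\bh_{b+q}$, so its rank-$2$ submodule $M_2$ has weights $(\bh_{b+r+1},\bh_{b+r+2})$. This uses the non-critical assumption, which forces the weights of a refinement to appear in order. Therefore $\fil^iD_{\mathrm{cris}}(M_1)=0$ for $i>-\bh_{b+r+1}$, while $\fil^iD_{\mathrm{cris}}(M_2)=D_0$ for $i\le -\bh_{b+r+1}$.

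Then I check the filtration condition for an arbitrary $\varphi$-equivariant $f$. For $i\le -\bh_{b+r+1}$ the target $\fil^iD_{\mathrm{cris}}(M_2)$ is all of $D_0$, and for $i>-\bh_{b+r+1}$ the source $\fil^iD_{\mathrm{cris}}(M_1)$ is zero. So $f(\fil^i)\subseteq\fil^i$ holds for every $i$, because the weights are strictly decreasing and the two filtrations do not overlap in any nontrivial degree. It follows that every $\varphi$-morphism is filtered, and the Hom space is $2$-dimensional.

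I expect the main obstacle to be the bookkeeping of the weights of $((\Dpik_0)_r^{r+1})'$, in particular making sure the shift by one index is correct rather than some other relabelling. I will justify it via the non-critical hypothesis, which applies to the triangulation attached to the cycle-type refinement, exactly as in \cite[Remark 2.3]{ParaDing2024}. A secondary point is to confirm that full faithfulness holds for non-weakly-admissible filtered modules, which is part of Berger's theorem. Finally, the possible non-generic relation $\alpha_{r+1}p^{j_{r+1}}=\alpha_rp^{j_r+1}$ plays no role here, since only the distinctness of the eigenvalues is used.
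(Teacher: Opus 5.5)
Your argument is correct: the paper's proof consists only of a citation of \cite[Remark 2.3]{ParaDing2024}, and your computation supplies exactly the reasoning behind that remark. Via Berger's equivalence, every $\varphi$-equivariant endomorphism of $D_0$ is automatically filtered, because all weights of $((\Dpik_0)_r^{r+1})'$ are at most $\bh_{b+r+1}$, the smallest weight of $(\Dpik_0)_r^{r+1}$; and the distinct eigenvalues make this commutant $2$-dimensional. The bookkeeping you flagged as the main obstacle is also less delicate than you feared: you only need that the first piece of the second refinement in (\ref{Dcrlqpermutation}) has weight $\bh_{b+r}$, not that $((\Dpik_0)_r^{r+1})'$ has weights precisely $(\bh_{b+r+1},\bh_{b+r+2})$.
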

			
			\begin{thm}\label{higherdetercry}
				The Hodge parameters of $\Dpik_0$ are determined by the data $\{\iota^{r,q}_{\Dpik_0}\}_{1\leq r<r+1<q\leq s}$.\;
			\end{thm}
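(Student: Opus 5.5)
The plan is to reduce Theorem \ref{higherdetercry} to the generic crystalline results of \cite[Sections 2.2--2.4]{ParaDing2024}. First we pin down the setup. $\Dpik_0$ is a rank-$s$ crystalline $(\varphi,\Gamma)$-module with $\varphi$-eigenvalues $\alpha_ip^{j_i}$, which are pairwise distinct by assumption. Its Hodge--Tate weights $\bh_{b+1}>\cdots>\bh_{b+s}$ are regular. It is non-critical with respect to the refinements it inherits from $\Dpik$; in particular the refinements appearing in (\ref{Dcrlqpermutation}) are non-critical, since they are induced by $(\varphi,N)$-stable flags of $D$. Its Hodge filtration is therefore given by a unipotent upper triangular matrix $(\cL^0_{ij})_{1\le i<j\le s}$, with all the relevant minors nonzero. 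We normalize it by the diagonal torus action, as in (\ref{parameterfilwrite}), so that $\cL^0_{i,i+1}=1$.

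We then argue by induction on $q-r$, showing that $\{\iota^{r,q'}_{\Dpik_0}\}_{r+1<q'\le q}$ determines the normalized block $(\cL^0_{ij})_{r\le i<j\le q}$. The case $q=r+1$ is automatic after normalization, since a rank-$2$ non-critical crystalline block has no free parameter modulo the torus.

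For the inductive step, take $q>r+1$. By the lemma just above, $\homo_{(\varphi,\Gamma)}((\Dpik_0)_r^{r+1},((\Dpik_0)_r^{r+1})')$ is $2$-dimensional. As in \cite[Proposition 2.1]{ParaDing2024}, we identify it, via $D_{\mathrm{cris}}$, with the space of $\varphi$-equivariant maps $Ee_r\oplus Ee_{r+1}\to Ee_r\oplus Ee_{r+1}$, i.e.\ diagonal maps. These maps are subject to the condition that they respect the Hodge filtrations, which shift weights from $(\bh_{b+r},\bh_{b+r+1})$ to $(\bh_{b+r+1},\bh_{b+r+2})$. Since the target weights are smaller, this condition is vacuous, which explains the dimension $2$. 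The line spanned by $\iota^{r,q}_{\Dpik_0}$ in $\mathbb{P}^1$ is then given by a ratio $[a:b]$. We compute this ratio using the filtration of $((\Dpik_0)_r^{q-1})'$, obtained by quotienting by $e_q$ in the filtration matrix of $(\Dpik_0)_r^q$. The result is a ratio of two $2\times 2$-type minors: an explicit rational function in the entries $\cL^0_{ij}$ with $r\le i<j\le q$, exactly as in Theorem \ref{twoblocksPare1}, where the last column is read off after elementary column operations. The key claim is that this ratio is an affine-linear (Möbius) function of the single new corner entry $\cL^0_{r,q}$, with coefficients depending only on the entries already determined by induction (those in $(\Dpik_0)_r^{q-1}$ and $(\Dpik_0)_{r+1}^q$). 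Moreover, the leading coefficient is a minor that is nonzero by non-criticality.

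Granting the claim, $\iota^{r,q}_{\Dpik_0}$ determines $\cL^0_{r,q}$. Induction over all pairs $r<q$, ordered by $q-r$, then recovers the whole matrix, proving the theorem.

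The main obstacle is this explicit computation: one must verify that $\iota^{r,q}_{\Dpik_0}$, viewed on $D_{\mathrm{cris}}$, is the composite of the inclusion $Ee_r\oplus Ee_{r+1}\hookrightarrow D_r^q$ with the projection onto a complement dictated by the Hodge line of weight $\bh_{b+r}$. One must also check that the resulting ratio depends genuinely, and injectively, on $\cL^0_{r,q}$. I would first do $s=3$ by hand, where the ratio should be $\cL^0_{13}/(\cL^0_{12}\cL^0_{23})$ up to sign and normalization, and then mimic the column-operation bookkeeping of Proposition \ref{twoblocksPare2} to handle general $q-r$. If needed, I would fall back on quoting \cite[Theorem 2.4]{ParaDing2024} directly, since $\Dpik_0$ is generic crystalline.
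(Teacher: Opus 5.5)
Your outline is the paper's proof. The paper inducts on $s$, which is the same as your induction on $q-r$; note that the hypothesis for the block $(r,q)$ must use all $\iota^{r',q'}_{\Dpik_0}$ with $r\le r'<r'+1<q'\le q$, not only those with first index $r$. Like you, it uses the blocks $(r,q-1)$ and $(r+1,q)$ to reduce to the single corner entry $\cL^0_{r,q}$. It then reads that entry off from $\iota^{r,q}_{\Dpik_0}$ by comparing the Hodge line of $(\Dpik_0)_r^{r+1}$ with the one induced on $((\Dpik_0)_r^{r+1})'$ after quotienting by $e_q$. Your deferred ``key claim'' is exactly what the paper's successive translation of lines is meant to establish, and it does hold.

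Here is the verification.
\begin{itemize}
\item On $D_{\mathrm{cris}}$, the map $\iota^{r,q}_{\Dpik_0}$ is simply $e_r,e_{r+1}\mapsto \bar e_r,\bar e_{r+1}$: it is the projection along the $\varphi$-eigenline $Ee_q$, not along anything dictated by a Hodge line. So its whole content is the pair of Hodge lines.
\item By Laplace expansion, the Hodge line of $((\Dpik_0)_r^{r+1})'$ has $\bar e_r$- and $\bar e_{r+1}$-coordinates equal to the $(q-r-1)$-minors of $(\cL^0_{ij})$ on columns $r+2,\dots,q$ and rows $\{r,r+2,\dots,q-1\}$, resp.\ $\{r+1,r+2,\dots,q-1\}$.
\item $\cL^0_{r,q}$ enters only the first minor, with cofactor $\pm1$ (a unitriangular minor). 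The remaining entries lie in the blocks $(r,q-1)$ and $(r+1,q)$.
\item The second minor involves only the block $(r+1,q)$. It is nonzero because $((\Dpik_0)_r^{r+1})'$ is non-critical, so its Hodge line is not $E\bar e_r$.
\end{itemize}
Hence the invariant of $\iota^{r,q}_{\Dpik_0}$ is affine in $\cL^0_{r,q}$ with known nonzero slope, as you predicted.

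Drop the fallback to \cite{ParaDing2024}, though: $\Dpik_0$ need not be generic. For instance, if $\alpha_{i+1}=\alpha_ip^{l_i}$, $j_i=l_i-1$ and $j_{i+1}=0$, two of its $\varphi$-eigenvalues differ by a factor $p$. The direct argument never uses genericity.
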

			\begin{proof}The case $s=2$ is clear.\;Assume the theorem holds for every crystalline $(\varphi,\Gamma)$-module of rank $s-1$.\;We prove it for rank $s$.\;Note that $\Dpik_0=[(\Dpik_0)_{1}^{s-1}-\cR_E(\unr(\alpha_sp^{j_s})z^{\bh_{b+s}})]=[\cR_E(\unr(\alpha_1p^{j_1})z^{\bh_{b+1}})-(\Dpik_0)_{2}^{s}]$.\;By the induction hypothesis,\;the $p$-adic Hodge parameters of $(\Dpik_0)_{1}^{s-1}$ are determined by $\{\iota^{r,q}_{\Dpik_0}\}_{1\leq r<r+1<q\leq s-1}$.\;Hence it remains to determine the line $e_{b+s}+\cL_{b+s-1,b+s}e_{b+s-1}+\cdots+\cL_{b+1,b+s}e_{b+1}$ in $\fil^{-\bh_{b+s}}_{H}D_{\mathrm{cr}}(\Dpik_0)$.\;Applying the induction hypothesis to $(\Dpik_0)_{2}^{s}$,\;we know that $\{\iota^{r,q}_{\Dpik_0}\}_{2\leq r<r+1<q\leq s}$ determines the vector $e_{b+s}+\cL_{b+s-1,b+s}e_{b+s-1}+\cdots+\cL_{b+2,b+s}e_{b+2}$.\;Therefore it suffices to determine the remaining parameter $\cL_{b+1,b+s}$.\;Write $(D_0)_1^{s-1}:=D_{\mathrm{cr}}(((\Dpik_0)_1^{s-1})')$.\;The induced Hodge filtration on $(D_0)_1^{s-1}$ is:\;
				\begin{equation}
					\begin{aligned}
						\fil^{j}_{H}((D_0)_1^{s-1})=\left\{
						\begin{array}{ll}
							(D_0)_1^{s-1},\;&j\in (-\infty,-\bh_{b+1}],\\
							\fil^{-\bh_{b+3}}_{H}((D_0)_1^{s-1})\oplus E(e_{b+2}+\cL_{b+1,b+2}e_{b+1}),\;&j\in (-\bh_{b+1},-\bh_{b+2}],\\
							\cdots&\cdots\\
							\fil^{-\bh_{b+s}}_{H}((D_0)_1^{s-1})\oplus E(e_{b+s-1}+\sum_{l=1}^{s-2}\cL_{b+l,b+s-1}e_{b+l}) ,\;& j\in (-\bh_{b+s-2},-\bh_{b+s-1}]\\
							E(e_{b+s-1}+\sum_{l=1}^{s-2}\cL_{b+l,b+s}e_{b+l}),\;&j\in (-\bh_{b+s-1},-\bh_{b+s}]\\
							0,\;& j\in (-\bh_{b+s},+\infty).
						\end{array}
						\right.
					\end{aligned}
				\end{equation}
				Using $\fil^{-\bh_{b+s}}_{H}((D_0)_1^{s-1})$,\;we can translate the line $E(e_{b+s-1}+\sum_{l=1}^{s-2}\cL_{b+l,b+s-1}e_{b+l})$ in $\fil^{-\bh_{b+s-1}}_{H}((D_0)_1^{s-1})$ into the $E$-line
				\[E\Big(e_{b+s-2}+\sum_{l=1}^{s-3}\frac{\cL_{b+l,b+s-1}-\cL_{b+l,b+s}}{\cL_{b+s-2,b+s-1}-\cL_{b+s-2,b+s}}e_{b+l}\Big).\]
				Using this new $E$-line,\;we can translate the line $E(e_{b+s-2}+\sum_{l=1}^{s-3}\cL_{b+l,b+s-2}e_{b+l})$ in $\fil_{\bh_{b+s-1}}^{H}((D_0)_1^{s-1})$ into an $E$-line $E(e_{b+s-3}+\sum_{l=1}^{s-3}\cL'_{b+l,b+s-3}e_{b+l})$.\;Note that $\cL_{b+1,b+s}$ appears in these coefficients $\cL'_{b+l,b+s-3}$.\;Iterating this procedure,\;we eventually obtain an $E$-line $E(e_{b+2}+\cL'_{b+1,b+2}e_{b+1})$ in which $\cL_{b+1,b+s}$ appears in $\cL'_{b+1,b+2}$.\;Since $\cL'_{b+1,b+2}$ is captured by the morphism $\iota^{1,s}_{\Dpik_0}$,\;the parameter $\cL_{b+1,b+s}$ is determined by $\iota^{1,s}_{\Dpik_0}$.\;This completes the induction step.\;
			\end{proof}
			Theorem \ref{higherdetercry} is equivalent to the following statement.\;The moduli space of non-critical crystalline $p$-adic Hodge parameters is:
			\[\Phi_{\mathrm{nc},s}^{\mathrm{{cr}}}:=\Phi_{\mathrm{nc},s}(\emptyset)=
			\bigcap_{u\in \sW_{s}}\bT_{s}\backslash u^{-1}\bB_{s} w_{0,s}\bB_{s}/\bB_{s} \subseteq \bT_{s}\backslash \bB_{s} w_{0,s}\bB_{s}/\bB_{s}.\] 
			
			\begin{cor}
				The following morphism is injective:
				\begin{equation}
					\begin{aligned}
						\Phi_{\mathrm{nc},s}^{\mathrm{{cr}}}&\rightarrow\prod_{1\leq r<r+1<q\leq s}\bT_2\backslash\left(\GLN_{2}/\bB_2\times \GLN_{2}/\bB_2\right),\\
						[\underline{\sL}(\Dpik_0)]&\mapsto \Big[\Big(\fil^{\bullet}_{H}D_{\mathrm{cris}}((\Dpik_0)_r^{r+1}),\fil^{\bullet}_{H}D_{\mathrm{cris}}((\Dpik_0)^{r+1}_{r,[q]})\Big)\Big],
					\end{aligned}
				\end{equation}
				where $\GLN_{2}/\bB_2\times \GLN_{2}/\bB_2$ carries the diagonal action of $\bT_2$.\;
			\end{cor}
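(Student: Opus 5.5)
The corollary should be a translation of Theorem \ref{higherdetercry} into the language of flag varieties. My plan is to make the dictionary between the two formulations explicit and then apply that theorem directly.

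First, I will identify each factor of the target. For $1\leq r<r+1<q\leq s$ the rank-$2$ modules $(\Dpik_0)_r^{r+1}$ and $(\Dpik_0)^{r+1}_{r,[q]}=((\Dpik_0)_r^{r+1})'$ satisfy $(\Dpik_0)_r^{r+1}[1/t]=((\Dpik_0)_r^{r+1})'[1/t]$. Hence their $D_{\mathrm{cris}}$ are canonically the same filtered $\varphi$-module $E e_{b+r}\oplus Ee_{b+r+1}$, and only the Hodge filtrations differ. Each Hodge filtration is a line in a $2$-dimensional space, i.e.\ a point of $\GLN_2/\bB_2$ in the basis $(e_{b+r},e_{b+r+1})$. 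Rescaling the $\varphi$-eigenbasis by $\bT_2$ acts diagonally on the pair. So the pair of filtrations up to $\bT_2$ is exactly the isomorphism class of the pair of modules together with their common $\cR_E[1/t]$-module. Reading off from (\ref{parameterfilwrite}) and the shape of (\ref{Dcrlqpermutation}), each coordinate of the map is a morphism, expressible rationally in the $\cL_{ij}$, and it is well defined on the $\bT_s$-quotient.

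Next, I will show that the pair $[(\fil^{\bullet}_H D_{\mathrm{cris}}((\Dpik_0)_r^{r+1}),\fil^{\bullet}_H D_{\mathrm{cris}}((\Dpik_0)^{r+1}_{r,[q]}))]$ carries the same information as $\iota^{r,q}_{\Dpik_0}$ up to scalar. By the preceding Lemma, $\homo_{(\varphi,\Gamma)}((\Dpik_0)_r^{r+1},((\Dpik_0)_r^{r+1})')$ is $2$-dimensional. Its nonzero elements correspond, after inverting $t$, to $\varphi$-equivariant endomorphisms of $Ee_{b+r}\oplus Ee_{b+r+1}$, i.e.\ diagonal matrices in $\bT_2$, subject to the filtration-compatibility condition. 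A given $\iota$ is then determined up to scalar by the position of the second filtration relative to the first, modulo $\bT_2$. Conversely, $\iota^{r,q}_{\Dpik_0}$ induces a map of filtered modules, and this recovers $\fil^{\bullet}_H D_{\mathrm{cris}}((\Dpik_0)^{r+1}_{r,[q]})$ as the image filtration twisted by $\iota$. I expect this to be a short computation following \cite[Remark 2.3]{ParaDing2024}.

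Finally, injectivity follows. Suppose two points $[\underline{\sL}(\Dpik_0)]$ and $[\underline{\sL}(\Dpik_0')]$ of $\Phi^{\mathrm{cr}}_{\mathrm{nc},s}$ have the same image. Then the data $\{\iota^{r,q}\}$ agree, and by the induction in Theorem \ref{higherdetercry} the normalized parameters $\cL_{ij}$ coincide. One point needs attention: the induction in that theorem recovers the parameters only after normalizing, with the $\cL_{i,i+1}$ for the adjacent blocks fixed by the $\bT_s$-action. I must check that this normalization is compatible with the $\bT_2$-quotients in each factor. The main obstacle is exactly this bookkeeping: the $\bT_s$-action on the source should map to the product of $\bT_2$-actions on the target, and the entries $\cL'_{b+1,b+2}$ produced by the iteration in the proof of Theorem \ref{higherdetercry} should be $\bT_2$-invariant functions of the pair. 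I would handle it by working with cross-ratio-type invariants, namely ratios of the two lines' slopes, which are visibly $\bT_2$-invariant.
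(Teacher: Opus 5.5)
Your proposal is correct and follows the paper, which records the corollary simply as the flag-variety reformulation of Theorem \ref{higherdetercry}. Your dictionary makes that equivalence precise: the pair of Hodge lines on the common $\varphi$-module $Ee_{b+r}\oplus Ee_{b+r+1}$ modulo the diagonal $\bT_2$, whose invariant is the ratio of slopes, corresponds to the class of $\iota^{r,q}_{\Dpik_0}$, and the $\bT_s$-action on the source induces the diagonal $\bT_2$-action on each factor.

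One minor correction: the filtration-compatibility condition on $\homo_{(\varphi,\Gamma)}((\Dpik_0)_r^{r+1},((\Dpik_0)_r^{r+1})')$ is vacuous. The Hodge--Tate weights of the target are those of the source shifted by one index, so every diagonal matrix occurs, which is why the Lemma gives dimension $2$.
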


			\subsection{Capture all the Hodge parameters via different refinements}\label{mainthmforcapturePara}
			
			In this section,\;we prove the following theorems.\;
			\begin{thm}\label{capturetype1} $\Dpik$ is uniquely determined by the data $\cC_{\mathrm{ST}}(\Dpik)$ and $\{\iota^{r',q'}_{\Dpik^{\mathrm{cr}}}\}_{1\leq r'<q'\leq s}$ (equivalently,\;$\Dpik^{\mathrm{cr}}$).\;
			\end{thm}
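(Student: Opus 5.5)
We argue by induction on the block distance $q-r$, determining the blocks $\underline{\sL}(\Dpik)^q_r$ of the normalized matrix $\underline{\sL}(\Dpik)$ in the order of increasing $q-r$.

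\emph{Diagonal and adjacent blocks.} Each diagonal block $\underline{\sL}(\Dpik)^r_r$ is the Hodge parameter of $E_r\in\cC_r^r(\Dpik)$. By Theorem \ref{twoblocksPare1}, each block $\underline{\sL}(\Dpik)^{r+1}_r$ is determined by $\cC_r^{r+1}(\Dpik)$. This settles all pairs with $q-r\leq 1$.

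\emph{Inductive step.} Fix $q>r+1$ and assume $\underline{\sL}(\Dpik)^{q'}_{r'}$ is known for every $r\leq r'<q'\leq q$ with $(r',q')\neq(r,q)$. By Proposition \ref{twoblocksPare2}, $\cC_r^q(\Dpik)$ determines $\underline{\sL}(\Dpik)^q_r$ up to the single entry $x:=\cL_{t_{r-1}+l_r,t_{q-1}+1}$. More precisely, every entry of $\underline{\sL}(\Dpik)^q_r$ has the form $A_{j,t}x+B_{j,t}$ with $A_{j,t},B_{j,t}$ already known. So the task reduces to recovering one scalar for each pair $(r,q)$ with $q>r+1$ from $\Dpik^{\mathrm{cr}}$.

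\emph{Recovering $x$ from $\Dpik^{\mathrm{cr}}$.} The quotient $\Dpik^{\mathrm{cr}}$ is obtained by quotienting $D$ by the span of the vectors $e_{t_{i-1}+1},\dots,e_{t_i-1}$ for all $i$. Its $D_{\mathrm{cris}}$ has basis the images of $e_{t_1},\dots,e_{t_s}$, and it carries the induced filtration with weights $\bh_{n-s+1},\dots,\bh_n$. Its Hodge matrix is computed from $\underline{\sL}(\Dpik)$ by column reduction. Using the top $n-s$ filtration steps, we eliminate all components along the killed basis vectors; non-criticality guarantees the required pivots are nonzero. The resulting $(t_r,t_q)$-entry $\cL^{\mathrm{cr}}_{r,q}$ is a rational function of the entries of $\underline{\sL}(\Dpik)$.

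I would order the pairs $(r,q)$ by $q-r$. At the stage $(r,q)$, all other entries entering this expression are either already known by induction or are affine in $x$. Hence $\cL^{\mathrm{cr}}_{r,q}$ becomes an explicit fractional-linear function of $x$ with known coefficients. By Theorem \ref{higherdetercry}, the maps $\{\iota^{r',q'}_{\Dpik^{\mathrm{cr}}}\}$ determine all Hodge parameters of $\Dpik^{\mathrm{cr}}$, so $\cL^{\mathrm{cr}}_{r,q}$ is known. We can therefore solve for $x$, provided the dependence on $x$ is non-degenerate. This completes the induction, and $\underline{\sL}(\Dpik)$, hence $\Dpik$, is determined.

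\emph{Main obstacle.} The hard step is proving this non-degeneracy: the fractional-linear map $x\mapsto\cL^{\mathrm{cr}}_{r,q}$ must be non-constant, i.e.\ its determinant must not vanish. I would first try to localize to the subquotient $E_r^q$ and restrict to the rank-$2$ situation. There $x$ is the single free entry linking the last basis vector of $E_r$ to the first of $E_q$. The plan is to exhibit the determinant as a product of minors of $\underline{\sL}(\Dpik)$ that are nonzero because $\underline{\sL}(\Dpik)\in u^{-1}\bB w_0\bB/\bB$ for a suitable $u\in\sW_n^{S_0}$.

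A natural choice of $u$ is the one realizing a refinement in which $\cR_E(\unr(\alpha_qp^{l_q-1}))$ is moved past $E_{r+1}^{q-1}$, in the spirit of (\ref{Dcrlqpermutation}). If the direct minor computation becomes unwieldy, I would fall back on a more conceptual argument. Under that refinement, the filtered module of the crystalline subquotient $(\Dpik^{\mathrm{cr}})_r^q$ determines the line in $\fil_H$ linking $e_{t_r}$ and $e_{t_{q-1}+1}$. One would then verify that varying $x$ alone genuinely moves this line, which again uses non-criticality.
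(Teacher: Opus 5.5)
\textbf{There is a genuine gap, at the one step that carries the theorem's content.} Once Theorem~\ref{twoblocksPare1} and Proposition~\ref{twoblocksPare2} are applied, all that remains is to show that $\Dpik^{\mathrm{cr}}$ pins down the leftover scalars. You call this the main obstacle and give only a plan. Even for $s=3$, where there is a single unknown and your scheme is coherent, you do not establish the non-degeneracy.

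Moreover, the one-variable-at-a-time structure you assume does not exist for $s\geq 4$. Since $\Dpik^{\mathrm{cr}}$ is a \emph{quotient} of $\Dpik$ with Hodge--Tate weights $\bh_{n-s+1},\dots,\bh_n$, its Hodge filtration is simply the image of $\fil_H^{-\bh_{n-s+j}}(D)=\langle e_k+\sum_{l<k}\cL_{l,k}e_l:\ n-s+j\leq k\leq n\rangle$; no elimination is involved. So its parameters are functions of the $s\times s$ submatrix of $\underline{\sL}(\Dpik)$ with rows $t_1,\dots,t_s$ and columns $n-s+1,\dots,n$, and these columns lie in the last block(s). For instance, if $s=4$ and $l_4\geq 4$, all these columns lie in block $4$, so $\Dpik^{\mathrm{cr}}$ only sees $\underline{\sL}(\Dpik)^4_1,\dots,\underline{\sL}(\Dpik)^4_4$. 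Then:
\begin{itemize}
\item the unknown $x_{(1,3)}=\cL_{l_1,t_2+1}$ does not occur in $\Dpik^{\mathrm{cr}}$ at all;
\item any crystalline parameter that could depend on it must involve row $t_1$, i.e.\ $\underline{\sL}(\Dpik)^4_1$;
\item that block is not yet known at stage $(1,3)$ in your ordering, and its affine description from Proposition~\ref{twoblocksPare2} presupposes block $(1,3)$.
\end{itemize}

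Hence your claim that at stage $(r,q)$ every entry is either known or affine in $x$ is false. What you actually face is a coupled system in all $\binom{s-1}{2}$ scalars left by Proposition~\ref{twoblocksPare2}. This is as many as $\dim\Phi^{\mathrm{cr}}_{\mathrm{nc},s}=\binom{s}{2}-(s-1)$, so you need a genuine injectivity statement for this square system, and that is exactly what has to be proved.

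The paper's argument is arranged so that this never arises. It inducts on the rank, writing $\Dpik=[\Dpik_1^{n-1}-\cR_E(\unr(\phi_n)z^{\bh_n})]=[\cR_E(\unr(\phi_1)z^{\bh_1})-\Dpik_2^n]$, so that only the corner entry $\cL_{1,n}$ remains. For $l_1>1$ it reads $\cL_{1,n}$ off linearly from the line $E(\sum_{l=1}^{l_1}\cL_{l,n}e_l)$. This line is the image of $\fil_H^{-\bh_n}(D)$ in $D_{\mathrm{st}}(E_1')$ for the refinement $[(E_2^s)'-E_1']$, and the $\cL_{l,n}$ with $l\geq 2$ are already known from $\Dpik_2^n$. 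In the crystalline case it invokes Theorem~\ref{higherdetercry}. No non-degeneracy of a fractional-linear map is ever needed.
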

			
			\begin{thm}\label{capturetype1diamond} $\Dpik$ is uniquely determined by the data $\cC_{\mathrm{ST}}(\Dpik)$ and $\{\iota^{r,q}_{\Dpik}\}_{1\leq r\leq r+1<q\leq s}$.\;
			\end{thm}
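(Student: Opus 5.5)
The plan is an induction on the number of blocks, following the scheme of Theorem \ref{higherdetercry} but in block form. The strategy is to show that every entry of $\underline{\sL}(\Dpik)$ is recovered. We use the normalization $\cL_{t_l,t_l+1}=1$, so the target is the family of blocks $\{\underline{\sL}(\Dpik)_r^q\}_{1\leq r\leq q\leq s}$. We argue by induction on $q-r$.

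\emph{Base cases.} For $q=r$, the block $\underline{\sL}(\Dpik)_r^r$ is the Hodge parameter of $E_r\in\cC_r^r(\Dpik)$, so it is known. For $q=r+1$, Theorem \ref{twoblocksPare1} applied to the subquotient $E_r^{r+1}$ shows that $\cC_r^{r+1}(\Dpik)$ determines $\underline{\sL}(\Dpik)_r^{r+1}$.

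\emph{Inductive step.} Let $q>r+1$ and assume that all blocks $\underline{\sL}(\Dpik)^{q'}_{r'}$ with $r\leq r'<q'\leq q$ and $(r',q')\neq(r,q)$ are already known. Proposition \ref{twoblocksPare2} then shows that $\cC_r^q(\Dpik)$ determines $\underline{\sL}(\Dpik)_r^q$ up to the single entry $\cL:=\cL_{t_{r-1}+l_r,t_{q-1}+1}$. Its proof also gives more: every entry of this block has the form $A_{j,t}\cL+B_{j,t}$ with known coefficients. So everything reduces to pinning down the one scalar $\cL$ from the map $\iota^{r,q}_{\Dpik}$.

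\emph{Recovering $\cL$ from $\iota^{r,q}_{\Dpik}$.} Consider the two filtrations $E_r^q=[E_r^{q-1}-E_q]=[E_q'-(E_r^{q-1})']$. First I pass to $D_{\mathrm{st}}((E_r^{q-1})')$, whose Hodge filtration is obtained from (\ref{parameterfilwrite}) by quotienting by $E\langle e_{t_{q-1}+1},\dots,e_{t_q}\rangle$. Then I restrict to the rank-$(l_r+l_{r+1})$ piece $(E_r^{r+1})'$. The image of $\iota^{r,q}_{\Dpik}$ is the composition $E_r^{r+1}\hookrightarrow E_r^q\twoheadrightarrow(E_r^{q-1})'$. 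The Hodge filtration on $D_{\mathrm{st}}((E_r^{r+1})')$, computed from the full matrix, therefore depends on $\cL$. This is exactly the mechanism in the proof of Theorem \ref{higherdetercry}: translate lines down successively using the last filtration step, until the coefficient of $e_{t_r}$ in the line paired with $e_{t_r+1}$ appears. By construction this coefficient is affine in $\cL$, say of the form $a\cL+b$ with $a,b$ known from the induction hypothesis.

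\emph{Why $\iota^{r,q}_{\Dpik}$ suffices.} Choose a basis of the two-dimensional space $\homo_{(\varphi,\Gamma)}(E_r^{r+1},(E_r^{r+1})')$. I would show this space is two-dimensional by adapting \cite[Proposition 2.1]{ParaDing2024}: one dimension comes from the map through the crystalline edge $\cR_E(\unr(\alpha_rp^{l_r-1}))\to\cR_E(\unr(\alpha_{r+1}))$ and the other from the identity on $[1/t]$. In this basis, $\iota^{r,q}_{\Dpik}$ corresponds to a point of $\BP^1$, and that point is exactly the "crystalline" Hodge parameter between the last graded piece of $E_r$ and the first of $E_{r+1}$ inside $(E_r^{r+1})'$. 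Hence $\iota^{r,q}_{\Dpik}$ determines $a\cL+b$, and so $\cL$, provided $a\neq0$.

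\emph{Main obstacle.} The hard step is proving $a\neq0$, that is, that the free entry genuinely moves the crystalline parameter of $(E_r^{r+1})'$. I would verify it by tracking the column operations as in the proof of Proposition \ref{twoblocksPare2}. The coefficient $a$ should be a ratio of minors of $\underline{\sL}(\Dpik)$, and these minors are nonzero precisely by the non-critical condition $\underline{\sL}(\Dpik)\in u^{-1}\bB w_0\bB/\bB$ for the relevant $u\in\sW_n^{S_0}$ (a block cycle permutation). If this direct computation gets messy, the fallback is to reduce to the crystalline quotient: pass to $\Dpik^{\mathrm{cr}}$, or to the maximal crystalline subquotient through the relevant rows, where Theorem \ref{higherdetercry} already shows that the corresponding $\iota$ detects the analogous parameter. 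Then lift the conclusion using the affine relations $A_{j,t}\cL+B_{j,t}$. Running the induction over all $q-r$ determines every block, which proves the theorem.
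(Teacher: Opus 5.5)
Your proof has the same architecture as the paper's. You induct on $q-r$, which is the paper's induction on $s$ applied to $E_1^{s-1}$ and $E_2^s$. You use Theorem \ref{twoblocksPare1} for adjacent blocks and Proposition \ref{twoblocksPare2} to cut $\underline{\sL}(\Dpik)_r^q$ down to the single entry $\cL_{t_{r-1}+l_r,t_{q-1}+1}$. You then read that entry off the Hodge filtration of $(E_r^{r+1})'$.

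What fails is your justification of why $\iota^{r,q}_{\Dpik}$ gives access to that filtration. You assert $\dim_E\homo_{(\varphi,\Gamma)}(E_r^{r+1},(E_r^{r+1})')=2$ and treat $\iota^{r,q}_{\Dpik}$ as a point of $\BP^1$; this is false in general for blocks. Your ``crystalline edge'' direction does not exist: there is no nonzero map $\cR_E(\unr(\alpha_rp^{l_r-1})z^{h})\to\cR_E(\unr(\alpha_{r+1})z^{h'})$, since the $\phi_i$ are distinct. The $(\varphi,N)$-endomorphisms of $D_{\mathrm{st}}(E_r^{r+1})$ are only the block scalars $E\times E$. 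A block scalar extends to $E_r^{r+1}\to(E_r^{r+1})'$ if and only if it is a filtered map, and once $l_r+l_{r+1}\geq l_q+2$ this is a genuine constraint.

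For example, take $(l_1,l_2,l_3)=(2,1,1)$, $r=1$, $q=3$, normalized by $\cL_{23}=\cL_{34}=1$. A block scalar $(x,x,y)$ must send $e_3+e_2+\cL_{13}e_1$ into $\fil_H^{-\bh_3}D_{\mathrm{st}}((E_1^2)')=E\langle e_3+e_2+\cL_{13}e_1,\ (\cL_{24}-1)e_2+(\cL_{14}-\cL_{13})e_1\rangle$. This forces $x=y$ unless $\cL_{14}=\cL_{13}\cL_{24}$. Non-criticality excludes that equality, by general position with respect to the $(\varphi,N)$-stable plane $Ee_3\oplus Ee_4$. So the Hom space is a line, and your $\BP^1$-point carries no information. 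The paper states two-dimensionality only as a hypothesis at the start of Section \ref{cryparablocks}, and its proof does not use it.

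The repair is the mechanism the paper's proof actually uses, and it is independent of the Hom dimension. $\iota^{r,q}_{\Dpik}$ is the identity on $D_{\mathrm{st}}(E_r^{r+1})=D_{\mathrm{st}}((E_r^{r+1})')$ and is a filtered injection. So the datum $\iota^{r,q}_{\Dpik}$ amounts to the pair of Hodge filtrations on one $(\varphi,N)$-module, modulo the diagonal action of $E^\times\times E^\times$. The source filtration is known by induction and normalized by $\cL_{t_r,t_r+1}=1$, so only the centre stabilizes it. Hence the target filtration is determined in the fixed basis $e_1,\dots,e_n$, and with it your quantity $a\cL+b$. In the example above, $\fil_H^{-\bh_4}D_{\mathrm{st}}((E_1^2)')=E(e_3+\cL_{24}e_2+\cL_{14}e_1)$ gives the missing entry $\cL_{24}$ directly.

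The non-vanishing $a\neq 0$ is asserted in the paper only ``by tracing the elementary column operations''. Your plan to derive it from non-critical minors is the right thing to supply. Your fallback through $\Dpik^{\mathrm{cr}}$ is not available, though. The maps $\{\iota^{r',q'}_{\Dpik^{\mathrm{cr}}}\}$ are the input of Theorem \ref{capturetype1}, not of this theorem. Using them would require first showing that $\cC_{\mathrm{ST}}(\Dpik)$ and the $\iota^{r,q}_{\Dpik}$ determine them.
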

				\begin{proof}[Proof of Theorem \ref{capturetype1}] We prove Theorem \ref{capturetype1} by induction on $s$.\;There is nothing to prove for $s=1$, and the case $s=2$ is Theorem \ref{twoblocksPare1}.\;Assume the statement holds for every semistable $(\varphi,\Gamma)$-module of rank $n-1$, and prove it for rank $n$.\;Write
				$\Dpik=[\Dpik_{1}^{n-1}-\cR_E(\unr(\phi_n)z^{\bh_n})]=[\cR_E(\unr(\phi_1)z^{\bh_1})-\Dpik_{2}^{n}]$.\;
				In particular,\;there is a surjection $E_s\twoheadrightarrow \cR_E(\unr(\phi_n)z^{\bh_n})$ (resp.,\;an injection $\cR_E(\unr(\phi_1)z^{\bh_1})\hookrightarrow E_1$) of $(\varphi,\Gamma)$-modules.\;By the induction hypothesis,\;$\Dpik_{1}^{n-1}$ (resp.,\;$\Dpik_{2}^{n}$) is determined by the corresponding data.\;Using $\Dpik_1^{n-1}$,\;it remains to determine the line
				\[\fil^{-\bh_{n}}_{H}D_{\mathrm{st}}(\Dpik)=E(e_n+\cL_{n-1,n}e_{n-1}+\cdots+\cL_{1,n}e_1).\]
				On the other hand,\;the vector $e_n+\cL_{n-1,n}e_{n-1}+\cdots+\cL_{2,n}e_2$ is already captured by $D_{2}^{n}$, so only the parameter $\cL_{1,n}$ remains.\;If $\Dpik$ is crystalline, this is exactly Theorem \ref{higherdetercry}.\;Hence we may assume that $\Dpik$ is semistable non-crystalline and that $l_1>1$.\;Write $\Dpik=[(E_2^s)'-E_1']$ and consider the following induced Hodge filtration on $D_{\mathrm{st}}(E_1')$ (obtained by quotienting the basis $e_{l_1+1},\cdots,e_{n}$ of $D_{\mathrm{st}}((E_2^s)')$ in (\ref{parameterfilwrite})):\;$\fil^{j}_{H}D_{\mathrm{st}}(E_{1}')=$
				\begin{equation}
					\begin{aligned}
						\left\{
						\begin{array}{ll}
							D_{\mathrm{st}}(E_{1}'),\;&j\in (-\infty,-\bh_{n-l_1+1}],\\
							\fil^{-\bh_{n-l_{1}+3}}_{H}D_{\mathrm{st}}(E_{1}')\oplus E(\sum_{l=1}^{l_1}\cL_{l,n-l_{1}+2}e),\;&j\in (-\bh_{n-l_{1}+1},-\bh_{n-l_{1}+2}],\\
							\cdots&\cdots\\
							\fil^{-\bh_{n}}_{H}D_{\mathrm{st}}(E_{1}')\oplus E(\sum_{l=1}^{l_{1}}\cL_{l,n-1}e) ,\;& j\in (-\bh_{n-2},-\bh_{n-1}]\\
							E(\sum_{l=1}^{l_{1}}\cL_{l,n}e),\;&j\in (-\bh_{n-1},-\bh_n]\\
							0,\;& j\in (-\bh_n,+\infty).
						\end{array}
						\right.
					\end{aligned}
				\end{equation}
				In particular,\;the line $E(\sum_{l=1}^{l_{1}}\cL_{l,n}e)$ is captured by $E_1'\in \cC_{\mathrm{ST}}(\Dpik)$.\;Since $\{\cL_{l,n}\}_{2\leq l\leq l_1}$ is already determined by the induction hypothesis,\;$\cL_{1,n}$ is also determined.\;This completes the induction step.\;
			\end{proof}
			
			\begin{proof}[Proof of Theorem \ref{capturetype1diamond}]
				We prove the theorem by induction on $s$.\;The statement is trivial for $s=1$, and the case $s=2$ is Theorem \ref{twoblocksPare1}.\;Assume it holds for $s-1$.\;Applying the induction hypothesis to $E_{1}^{s-1}$ and $E_{2}^s$,\;it remains to determine $\underline{\sL}(\Dpik)_1^s$.\;By Proposition \ref{twoblocksPare2},\;it is enough to determine the lower-left entry $\cL_{l_1,t_{s-1}+1}$ of $\underline{\sL}(\Dpik)^s_{1}$.\;We show that $\cL_{l_1,t_{s-1}+1}$ is determined by $\iota^{1,s}_{\Dpik}$.\;Write
				\[\Dpik=[E_1^{s-1}-E_s]=[E_s'-(E_1^{s-1})'].\]
				By quotienting the basis of $D_{\mathrm{st}}(E_q)$ in (\ref{parameterfilwrite}),\;we obtain an induced Hodge filtration $\fil^{\bullet}_{H}D_{\mathrm{st}}((E_r^{q-1})')$ on $D_{\mathrm{st}}((E_r^{q-1})')$.\;Consider the $t_{s-1}$-order minor $\underline{\sL}(\Dpik)^{[1+l_s,n]}_{[1,t_{s-1}]}$.\;Using elementary column operations,\;we transform $\underline{\sL}(\Dpik)^{[1+l_s,n]}_{[1,t_{s-1}]}$ into an upper triangular matrix $\underline{\sL}(\Dpik)^{\urcorner}$ with unit diagonal entries.\;Then $(\underline{\sL}(\Dpik)^{\urcorner})^{[l_s+1,l_s+l_1+l_2]}_{[1,l_1+l_2]}$ determines the complete flag $\fil_H^{\bullet}D_{\mathrm{st}}((E_1^2)')$.\;Hence the map $\iota^{1,s}_{\Dpik}:E_1^{2}\rightarrow (E_1^{2})'$ induces an injection of filtered $(\varphi,N)$-modules
				\[\iota^{1,s}_{\Dpik}:\fil_H^{\bullet}D_{\mathrm{st}}(E_1^2)\rightarrow\fil_H^{\bullet}D_{\mathrm{st}}((E_1^2)'),\]
				whose underlying map of $(\varphi,N)$-modules is the identity on $D_{\mathrm{st}}(E_1^2)\rightarrow D_{\mathrm{st}}((E_1^2)')$.\;In particular,\;$\iota^{1,s}_{\Dpik}$ compares the two matrices $(\underline{\sL}(\Dpik))^{[1,l_1+l_2]}_{[1,l_1+l_2]}$ and $(\underline{\sL}(\Dpik)^{\urcorner})^{[l_s+1,l_s+l_1+l_2]}_{[1,l_1+l_2]}$.\;This comparison determines $\cL_{l_1,t_{s-1}+1}$,\;since $\cL_{l_1,t_{s-1}+1}$ appears in $\fil_H^{\bh_{l_s+l_1}}D_{\mathrm{st}}((E_1^2)')$ by tracing the elementary column operations.\;
			\end{proof}
			
			Let $u_{0}\in \sW_n^{S_0}$  be an element satisfying $\Dpik^{\mathrm{cr}}=[R_{u_0,n-s+1}-\cdots-R_{u_0,n}]$.\;Note that ${\bL}_{\widehat{n-s-1}}\cong \GLN_{n-s}\times \GLN_s$.\;Let $\bT_s\subseteq \bB_s\subseteq \GLN_s$ be the corresponding torus and standard Borel subgroup.\;We have the following natural maps:
			\begin{equation}
				\begin{aligned}
					p_{\widehat{n-s-1}}:\bB w_0\bB\cong \bN_{\emptyset}\twoheadrightarrow \bN_{\emptyset}/\bN_{\widehat{n-s-1}}\cong \bL^{\bB}_{\widehat{n-s-1}}w_{\widehat{n-s-1},0}\bL^{\bB}_{\widehat{n-s-1}}/\bL^{\bB}_{\widehat{n-s-1}}
					\twoheadrightarrow \bB_s w_{0,s}\bB_s/\bB_s,\;
				\end{aligned}
			\end{equation}
			where $w_{0,s}$ is the longest element in $\sW_s$.\;Therefore,\;we obtain a map $\Phi_{\mathrm{nc},\Delta}(S_0) \rightarrow \bT_{s}\backslash \bB_s w_{0,s}\bB_s/\bB_s$ that sends$[g\bB]$ to $[p_{\widehat{n-s-1}}(u_0^{-1}g\bB)]$.\;By the non-critical assumption,\;this map factors through
			\begin{equation}
				p_{\mathrm{cr}}:\Phi_{\mathrm{nc},\Delta}(S_0)\rightarrow \Phi_{\mathrm{nc},s}^{\mathrm{{cr}}}.\;
			\end{equation}
			Recall the natural morphism $p_{\mathrm{ST}}:\Phi_{\mathrm{nc},\Delta}(S_0)\rightarrow \prod_{u\in \sW_n^{S_0}}\Phi_{\mathrm{nc},S_0(u)}(S_0(u))$ in (\ref{dfnofpreftoLevi}).\;Then Theorem \ref{capturetype1} is equivalent to the following theorem.\;
			\begin{thm}\label{capturetype1flag} The following morphism is injective:
				\begin{equation}\label{parametersmaininj}
					\Phi_{\mathrm{nc},\Delta}(S_0)\xrightarrow{(p_{\mathrm{ST}},p_{\mathrm{cr}})}  \prod_{u\in \sW_n^{S_0}}\Phi_{\mathrm{nc},S_0(u)}(S_0(u)) \times \Phi_{\mathrm{nc},s}^{\mathrm{{cr}}}.\;
				\end{equation}
			\end{thm}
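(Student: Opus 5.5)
The plan is to deduce Theorem \ref{capturetype1flag} from Theorem \ref{capturetype1} by translating both sides of the morphism (\ref{parametersmaininj}) into the language of $(\varphi,\Gamma)$-modules. Every point of $\Phi_{\mathrm{nc},\Delta}(S_0)$ arises as $[\underline{\sL}(\Dpik)]$ for a non-critical semistable $\Dpik$ with fixed $(\varphi,N)$-module $D$ and weights $\bh$: choose the Hodge filtration given by $g$, and use the Colmez--Fontaine theorem that weakly admissible implies admissible (or work purely with filtered $(\varphi,N)$-modules, where admissibility plays no role). Two such $\Dpik_1,\Dpik_2$ are isomorphic, compatibly with $D$, exactly when their classes in $\bZ_{S_0}\backslash \GLN_n/\bB$ agree, because $\bZ_{S_0}(E)$ is the group of $(\varphi,N)$-automorphisms of $D$ that are diagonal in the basis $e_i$. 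So injectivity of (\ref{parametersmaininj}) is equivalent to the following: if $\Dpik_1$ and $\Dpik_2$ have the same image under $(p_{\mathrm{ST}},p_{\mathrm{cr}})$, then $[\underline{\sL}(\Dpik_1)]=[\underline{\sL}(\Dpik_2)]$.

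First I will identify the components of $p_{\mathrm{ST}}$. For $u\in\sW_n^{S_0}$, left multiplication by $u$ moves the basis so that $\cF_u$ becomes the standard flag, and then $ug\bB\in \bB w_0\bB$ by non-criticality. Writing this element in coordinates $\bN_\emptyset$ as in (\ref{parameterfilwrite}), the projection $p_{\mathrm{ST},u}$ kills $\bN_{S_0(u)}$ and keeps only the block-diagonal $\bL_{S_0(u)}$-part. This is precisely the induced Hodge filtration on each graded piece $E_{u,i}$ of the parabolic filtration (\ref{parafils0u}), modulo $\bZ_{S_0(u)}$. This matches the equivalent description recorded after (\ref{dfnofpreftoLevi}).

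Second, I will check that the objects in $\cC_{\mathrm{ST}}(\Dpik)$ are among the $E_{u,i}$. Each refinement in (\ref{lqErefinement}) is the parabolic filtration $\cF_{S_0(u)}$ for a suitable $u\in\sW_n^{S_0}$, namely the permutation that moves the block $E_q^{(t)}$ (resp.\ $F_r^{(t')}$) past the intermediate blocks. The modules $E_r^{[q,t]}$ and $E_q^{[r,t']}$ then appear as graded pieces $E_{u,i}$, since they are Steinberg and sit between crystalline-type jumps. Hence $p_{\mathrm{ST}}$ determines $\cC_{\mathrm{ST}}(\Dpik)$ up to the same $\bZ$-ambiguity.

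Third, in the same way, $p_{\mathrm{cr}}$, defined via $u_0$ and $p_{\widehat{n-s-1}}$, records exactly the Hodge filtration of $\Dpik^{\mathrm{cr}}=[R_{u_0,n-s+1}-\cdots-R_{u_0,n}]$ modulo $\bT_s$. By Theorem \ref{higherdetercry}, this is equivalent to the data $\{\iota^{r',q'}_{\Dpik^{\mathrm{cr}}}\}$. Theorem \ref{capturetype1} then gives $\Dpik_1\cong\Dpik_2$, and therefore equal classes.

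The main obstacle is the second step. Concretely, I must exhibit for each refinement in (\ref{lqErefinement}) an explicit $u\in\sW_n^{S_0}$, check that it is a minimal length representative, and verify that $S_0(u)$ cuts out exactly the block $E_r^{[q,t]}$ or $E_q^{[r,t']}$. This requires that the adjacent rank-$2$ subquotients at the boundaries are crystalline, which follows from the genericity-type ordering assumption on $\underline{\phi}$. I also need to confirm that the torus normalizations are compatible, i.e.\ that $\bZ_{S_0}$ maps into $\bZ_{S_0(u)}$ after conjugation by $u$. I would first try the cycle-type elements, following \cite[Section 2.2]{ParaDing2024}.
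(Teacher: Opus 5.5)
Your proposal is correct and follows the paper's route: the paper gives no separate argument and simply presents Theorem \ref{capturetype1flag} as the flag-variety reformulation of Theorem \ref{capturetype1}. What you add is the translation the paper leaves implicit, namely that the block-diagonal projections $p_{\mathrm{ST},u}$ recover every member of $\cC_{\mathrm{ST}}(\Dpik)$ as a graded piece of some $\cF_{S_0(u)}$, that $p_{\mathrm{cr}}$ records $\Dpik^{\mathrm{cr}}$, and that $u\bZ_{S_0}u^{-1}\subseteq \bZ_{S_0(u)}$ (which does hold). Two small corrections: the step you call the main obstacle is routine, because a rank-$2$ subquotient of a $(\varphi,N)$-stable flag is non-crystalline exactly when its two basis vectors are consecutive in one $N$-string, which comes from the monodromy structure and not from the ordering assumption on $\underline{\phi}$; and you should realize points of $\Phi_{\mathrm{nc},\Delta}(S_0)$ through Berger's equivalence between filtered $(\varphi,N)$-modules and semistable $(\varphi,\Gamma)$-modules over $\cR_E$ rather than through Colmez--Fontaine, since such points need not be weakly admissible.
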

			
				We reinterpret (\ref{dfnofpreftoLevi}) in terms of $E$-Lie algebras.\;For each $u\in \sW_n^{S_0}$,\;we deduce from the non-critical assumption that $g\bB=u^{-1}b_uw_{0}\bB$ for some $b_u\in \bB(E)$ (of course $b_u$ depends on $u$).\;Note that $\overline{\fb}=\mathrm{Ad}_{w_0}(\fb)$ coincides with the Borel algebra of lower triangular matrices.\;For $u\in \sW_n^{S_0}$,\;note that
				\[\mathrm{Ad}_{u^{-1}}(\fl_{S_0(u)})\cap \mathrm{Ad}_{g}(\fb)= \mathrm{Ad}_{u^{-1}}(\fl_{S_0(u)}\cap \mathrm{Ad}_{p_{\mathrm{ST},u}(b_u)}(\fl_{S_0(u)}\cap \overline{\fb})).\]
				Then $p_{\mathrm{ST}}$ is equivalent to the following map:
				\[\mathrm{Ad}_{g}(\fb)_{/\mathrm{Ad}(\bZ_{S_0})}\mapsto \Big((\fl_{S_0(u)}\cap \mathrm{Ad}_{u^{-1}g}(\fb))_{/\mathrm{Ad}(\bZ_{S_0(u)})}\Big)_{u\in \sW_n^{S_0}}.\]
					Moreover,\;consider the natural morphism:
			\begin{equation}
				g_{S_0}^{\circ}:\bigoplus_{u\in \sW_n^{S_0}}\mathrm{Ad}_{u^{-1}}(\tau_{S_0(u)})\cap \mathrm{Ad}_{g}(\fb)\rightarrow \mathrm{Ad}_{g}(\fb).
			\end{equation}
			Denote the image of $g_{S_0}^{\circ}$ by
			\[\mathrm{Ad}_{g}(\fb)^{\circ}_{S_0}:=\sum_{u\in \sW_n^{S_0}}\mathrm{Ad}_{u^{-1}}(\tau_{S_0(u)})\cap \mathrm{Ad}_{g}(\fb).\]
			Then we have an exact sequence:
			\begin{equation}
				0\rightarrow \ker(g_{S_0}^{\circ})\rightarrow \bigoplus_{u\in \sW_n^{S_0}}\mathrm{Ad}_{u^{-1}}(\tau_{S_0(u)})\cap \mathrm{Ad}_{g}(\fb)\rightarrow \mathrm{Ad}_{g}(\fb)^{\circ}_{S_0}\rightarrow 0
			\end{equation}
			Theorem \ref{capturetype1diamond} can be reformulated as the following Lie-algebras statement:
			\begin{thm}\label{capturetype1Liealg}
				$\ker(g_{S_0}^{\circ})$ and $\big\{(\fl_{S_0(u)}\cap \mathrm{Ad}_{u^{-1}g}(\fb))_{_{/\mathrm{Ad}(\bZ_{S_0(u)})}}\big\}_{u\in \sW_n^{S_0}}$ determine $[g\bB]\in \Phi_{\mathrm{nc},\Delta}(S_0).\;$
			\end{thm}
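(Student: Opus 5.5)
The plan is to reduce Theorem \ref{capturetype1Liealg} to Theorem \ref{capturetype1diamond} by translating each piece of Galois-theoretic data into Lie-algebra data. Throughout, we fix a representative $g\bB$ with $g\bB=u^{-1}b_uw_0\bB$ for every $u\in\sW_n^{S_0}$, which is possible by non-criticality. Under the identification $\homo_E(D,D)\cong\fg$ given by the basis $\{e_i\}$, the subspace $\mathrm{Ad}_g(\fb)$ is exactly $\homo_{\fil}(D,D)$, the endomorphisms preserving $\fil^\bullet_H(D)$. Since $\fb$ is self-normalizing, $\mathrm{Ad}_g(\fb)$ determines $g\bB$. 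It therefore suffices to show that the given data recover the Hodge filtration, i.e.\ the matrix $\underline{\sL}(\Dpik)$ up to $\bZ_{S_0}$.

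\textbf{Step 1: the Levi data recover $\cC_{\mathrm{ST}}(\Dpik)$.} For $u\in\sW_n^{S_0}$, the Lie algebra $\mathrm{Ad}_{u^{-1}}(\fl_{S_0(u)})$ is the algebra of endomorphisms of $D$ preserving the $(\varphi,N)$-stable grading by the blocks $D_{\mathrm{st}}(E_{u,i})$. Intersecting it with $\mathrm{Ad}_g(\fb)$ gives $\prod_i\homo_{\fil}(D_{\mathrm{st}}(E_{u,i}),D_{\mathrm{st}}(E_{u,i}))$ for the induced filtrations; this is the displayed identity $\mathrm{Ad}_{u^{-1}}(\fl_{S_0(u)})\cap\mathrm{Ad}_g(\fb)=\mathrm{Ad}_{u^{-1}}(\fl_{S_0(u)}\cap\mathrm{Ad}_{p_{\mathrm{ST},u}(b_u)}(\ldots))$. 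Taking this modulo $\mathrm{Ad}(\bZ_{S_0(u)})$ recovers each $[\underline{\sL}(E_{u,i})]$, i.e.\ $p_{\mathrm{ST}}([g\bB])$. We then check that the elements $u$ realizing the refinements in (\ref{lqErefinement}) lie in $\sW_n^{S_0}$, and that $E_r^{[q,t]}$ and $E_q^{[r,t']}$ occur among the blocks $E_{u,i}$ (or are determined by them through $E_q^{(t)}$ and $F_r^{(t')}$). This gives all of $\cC_{\mathrm{ST}}(\Dpik)$.

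\textbf{Step 2: $\ker(g_{S_0}^{\circ})$ recovers the maps $\iota^{r,q}_{\Dpik}$.} An element of $\mathrm{Ad}_{u^{-1}}(\tau_{S_0(u)})\cap\mathrm{Ad}_g(\fb)$ is a filtration-preserving endomorphism that acts by scalars on the blocks of $\cF_{S_0(u)}$ and is nilpotent modulo $\fz_{S_0(u)}$. For the two filtrations $E_r^q=[E_r^{q-1}-E_q]=[E_q'-(E_r^{q-1})']$, take the corresponding $u,u'$. A relation $x_u+x_{u'}=0$ in $\ker(g_{S_0}^\circ)$, with the component of $x_u$ along the projection $D_{\mathrm{st}}(E_r^q)\to D_{\mathrm{st}}(E_r^{q-1})$, composed with the inclusion $D_{\mathrm{st}}(E_r^{r+1})\hookrightarrow D_{\mathrm{st}}(E_r^{q-1})$, is precisely an element of $\homo_{\fil}(D_{\mathrm{st}}(E_r^{r+1}),D_{\mathrm{st}}((E_r^{r+1})'))$ compatible with $(\varphi,N)$. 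By the argument of \cite[Section 2.3]{ParaDing2024} in the crystalline case, such elements single out the line spanned by $\iota^{r,q}_{\Dpik}$ inside the $2$-dimensional space $\homo_{(\varphi,\Gamma)}(E_r^{r+1},(E_r^{r+1})')$. Our approach is to carry out this block version of Ding's dimension count: write down explicitly which tuples lie in the kernel and show that the kernel's projection onto the relevant $(r,q)$-components is cut out by the single linear condition encoding $\iota^{r,q}_{\Dpik}$.

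\textbf{Step 3: conclude.} With $\cC_{\mathrm{ST}}(\Dpik)$ and $\{\iota^{r,q}_{\Dpik}\}$ in hand, Theorem \ref{capturetype1diamond} determines $\Dpik$, hence $[g\bB]$. Concretely, we induct on $s$ and use Proposition \ref{twoblocksPare2}, so the only quantity left to pin down at each stage is the corner entry $\cL_{l_1,t_{s-1}+1}$, and this entry is read off from $\iota^{1,s}_{\Dpik}$.

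The main obstacle is Step 2. The kernel of $g^\circ_{S_0}$ mixes contributions from all $u$ simultaneously, so one must show that the extra parameter can be extracted using only the pair of refinements $u,u'$ attached to $(r,q)$ and that the other summands do not interfere. Our first attempt will be to restrict to the subquotient $E_r^q$, where only $u$ and $u'$ contribute to the relevant root spaces, and to verify there by a direct root-space computation that this kernel is $1$-dimensional modulo the Levi parts.
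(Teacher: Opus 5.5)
\textbf{Step 1 rests on a false identity.} Your route is the one the paper intends: the statement is presented as a reformulation of Theorem \ref{capturetype1diamond}. A block-diagonal endomorphism preserving $\fil^{\bullet}_H(D)$ does preserve the induced flags on the blocks. Hence $\mathrm{Ad}_{u^{-1}}(\fl_{S_0(u)})\cap\mathrm{Ad}_g(\fb)\subseteq\prod_i\homo_{\fil}(D_{\mathrm{st}}(E_{u,i}),D_{\mathrm{st}}(E_{u,i}))$, but equality fails. Write $b_u=m_un_u$ with $m_u\in\bL_{S_0(u)}\cap\bB$ and $n_u\in\bN_{S_0(u)}$. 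The intersection is $\mathrm{Ad}_{u^{-1}m_u}\big(\fl_{S_0(u)}\cap\mathrm{Ad}_{n_u}(\overline{\fb})\big)$, whereas the product of block Borels is $\mathrm{Ad}_{u^{-1}m_u}(\fl_{S_0(u)}\cap\overline{\fb})$. The display you quote (also stated in the paper) identifies the intersection with this Levi projection.

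The discrepancy is fatal already for $n=4$, $S_0=\{1\}$. There every $\mathrm{Ad}_{u^{-1}}(\fl_{S_0(u)})$ is $\ft$ or $\mathrm{End}_E(Ee_1\oplus Ee_2)\times E\times E$. Let $f=(A,c,d)$ preserve the Hodge flag $F_1=Ev\subset F_2\subset F_3$; write $v_i$ for the coordinates of $v$ and $x'$ for the $Ee_1\oplus Ee_2$-component of a vector $x$.
\begin{itemize}
\item Transversality with the $(\varphi,N)$-stable hyperplanes $Ee_1\oplus Ee_2\oplus Ee_3$ and $Ee_1\oplus Ee_2\oplus Ee_4$ gives $v_3v_4\neq 0$. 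Hence $c=d=\lambda$ and $Av'=\lambda v'$.
\item For $w\in F_2$ one gets $fw-\lambda w\in F_2\cap(Ee_1\oplus Ee_2)=0$.
\item Using the stable flag $e_3,e_4,e_1,e_2$, one has $F_2\cap(Ee_3\oplus Ee_4)=0$, so the $w'$ span $Ee_1\oplus Ee_2$ and $A=\lambda$.
\end{itemize}
Thus all your Levi data equal $\fz$ for every $[g\bB]\in\Phi_{\mathrm{nc},\Delta}(S_0)$. Yet $p_{\mathrm{ST}}$ contains the line $\fil_H^{-\bh_2}(D)\cap(Ee_1\oplus Ee_2)$, i.e.\ $\underline{\sL}(E_1)$, which varies. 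So Step 1 does not produce $\cC_{\mathrm{ST}}(\Dpik)$, and Step 3 cannot invoke Theorem \ref{capturetype1diamond}.

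What does encode $p_{\mathrm{ST},u}$ is the image of $\mathrm{Ad}_{u^{-1}}(\fp_{S_0(u)})\cap\mathrm{Ad}_g(\fb)=\mathrm{Ad}_{u^{-1}b_u}(\fl_{S_0(u)}\cap\overline{\fb})$ under projection onto $\mathrm{Ad}_{u^{-1}}(\fl_{S_0(u)})$, namely $\mathrm{Ad}_{u^{-1}m_u}(\fl_{S_0(u)}\cap\overline{\fb})$. You must either work with that object and say the statement is to be read that way, or show that $\ker(g_{S_0}^{\circ})$ alone recovers the Steinberg parameters. Your plan does neither.

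\textbf{Step 2 is not yet an argument either.} A two-term relation $x_u+x_{u'}=0$ only says $x_u\in Z_u\cap Z_{u'}$, where $Z_u:=\mathrm{Ad}_{u^{-1}}(\tau_{S_0(u)})\cap\mathrm{Ad}_g(\fb)=\mathrm{Ad}_{u^{-1}b_u}(\fz_{S_0(u)})$. The induced map $D_{\mathrm{st}}(E_r^{r+1})\to D_{\mathrm{st}}((E_r^{r+1})')$ is just some block-scalar $(\varphi,N)$-map, and nothing in your description singles out $\iota^{r,q}_{\Dpik}$. Indeed $\iota^{r,q}_{\Dpik}$ induces the identity on $D_{\mathrm{st}}$, as used in the proof of Theorem \ref{capturetype1diamond}. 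The extra parameter it carries is the relative position of the two induced Hodge flags on $D_{\mathrm{st}}(E_r^{r+1})$, modulo simultaneous rescaling of the two blocks.

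What must be proved is that the linear relations among the $Z_u$ recover this relative position, without interference from the other summands. Here $Z_u\cong\fz_{S_0(u)}$ via the scalars on the graded pieces of $\cF_{S_0(u)}$. You flag this as the main obstacle but give no computation. The paper offers nothing beyond calling the statement a reformulation, so this step genuinely has to be carried out before the proposal is a proof.
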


			Moreover,\;consider the following natural surjection:
			\begin{equation}
				\begin{aligned}
					\Phi_{\mathrm{nc},S_0(u)}(S_0(u))=\;\bZ_{S_0(u)}\backslash (\bL^{\bB}_{S_0(u)}) &w_{S_0(u),0}(\bL^{\bB}_{S_0(u)})/(\bL^{\bB}_{S_0(u)})\\
					&\;	\twoheadrightarrow \prod_{j\in S_0(u)}\bG_m\backslash(\bL^{\bB}_{j}) w_{\{j\},0}(\bL^{\bB}_{j})/(\bL^{\bB}_{j}),
				\end{aligned}
			\end{equation}
			which extracts the Hodge parameters of each $2\times 2$ block in $\Phi_{\mathrm{nc},S_0(u)}(S_0(u))$ (in the language of \cite{2019DINGSimple},\;these are called simple Hodge parameters).\;For $u\in \sW_n^{S_0}$,\;we obtain maps:
			\begin{equation}\label{parametersmaininjtosimple}
				\begin{aligned}
					p^{\Delta}_{\mathrm{ST},u}&:	\Phi_{\mathrm{nc},\Delta}(S_0)\rightarrow  \prod_{j\in S_0(u)}\bG_m\backslash(\bL^{\bB}_{j}) w_{\{j\},0}(\bL^{\bB}_{j})/(\bL^{\bB}_{j}),\\
					p^{\Delta}_{\mathrm{ST}}:=\prod_{u\in \sW_n^{S_0}}p^{\Delta}_{\mathrm{ST},u}&:\Phi_{\mathrm{nc},\Delta}(S_0)\rightarrow  \prod_{u\in \sW_n^{S_0}}\prod_{j\in S_0(u)}\bG_m\backslash(\bL^{\bB}_{j}) w_{\{j\},0}(\bL^{\bB}_{j})/(\bL^{\bB}_{j}).
				\end{aligned}
			\end{equation}
			\begin{cor}\label{captureparameters2} 
				If $\max_{1\leq i\leq s}l_i\leq 2$,\;then $p^{\Delta}_{\mathrm{ST}}=p_{\mathrm{ST}}$.\;
			\end{cor}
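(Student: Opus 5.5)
The plan is to show that, when $\max_{1\leq i\leq s}l_i\leq 2$, every Steinberg block $E_{u,i}$ in the parabolic filtration $\cF_{S_0(u)}$ has rank at most $2$. Granting this, the surjection
\[\Phi_{\mathrm{nc},S_0(u)}(S_0(u))\twoheadrightarrow \prod_{j\in S_0(u)}\bG_m\backslash(\bL^{\bB}_{j}) w_{\{j\},0}(\bL^{\bB}_{j})/(\bL^{\bB}_{j})\]
is an isomorphism for every $u\in\sW_n^{S_0}$. Composing with $p_{\mathrm{ST}}$ then gives $p^{\Delta}_{\mathrm{ST}}=p_{\mathrm{ST}}$.

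\textbf{Step 1: components of $S_0(u)$ have size at most one.} Fix $u\in \sW_n^{S_0}$. By definition, $j\in S_0(u)$ means the rank-$2$ subquotient $R_{u,j}^{j+1}=[R_{u,j}-R_{u,j+1}]$ is semistable non-crystalline. In that case the monodromy $N$ on $D_{\mathrm{st}}(R_{u,j}^{j+1})$ is nonzero. Since $D_{\mathrm{st}}(R_{u,j}^{j+1})$ is a $(\varphi,N)$-subquotient of $D$ spanned by the images of $e_{u^{-1}(j)}$ and $e_{u^{-1}(j+1)}$, this forces $N(e_{u^{-1}(j+1)})=e_{u^{-1}(j)}$. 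Hence $u^{-1}(j)=t_{k-1}+1$ and $u^{-1}(j+1)=t_{k-1}+2$ for some block index $k$ with $l_k=2$; here we use $l_k\leq 2$ and the fact that $N$ acts on $D$ only through the Jordan chains $e_{k,1}\leftarrow e_{k,2}$.

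Now suppose $j,j+1\in S_0(u)$. The same argument applied to $j+1$ gives $u^{-1}(j+1)=t_{k'-1}+1$, the first basis vector of some block. But $u^{-1}(j+1)$ is also the second vector of block $k$, a contradiction. Equivalently, the rank-$3$ subquotient $[R_{u,j}-R_{u,j+1}-R_{u,j+2}]$ would then be Steinberg, forcing $N^2\neq 0$ on $D$, which is impossible when all $l_i\leq 2$. Therefore $S_0(u)$ contains no two adjacent simple roots, every $E_{u,i}$ has rank $1$ or $2$, and $\bL_{S_0(u)}\cong\prod \GLN_1\times\prod\GLN_2$ with one $\GLN_2$-factor for each $j\in S_0(u)$.

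\textbf{Step 2: the Levi moduli space factorizes.} With the Levi structure from Step 1,
\[\Phi_{\mathrm{nc},S_0(u)}(S_0(u))=\bZ_{S_0(u)}\backslash \bL^{\bB}_{S_0(u)}w_{S_0(u),0}\bL^{\bB}_{S_0(u)}/\bL^{\bB}_{S_0(u)}\]
is the product over $j\in S_0(u)$ of $\bG_m\backslash \bL^{\bB}_jw_{\{j\},0}\bL^{\bB}_j/\bL^{\bB}_j$. The $\GLN_1$-factors contribute a point, and the central tori split compatibly. For each $j$ the factor is $\bG_m\backslash(\bB_2w_{0,2}\bB_2/\bB_2)\cong \BA^1$, and its coordinate is the single entry of $\bN_\emptyset\cap \bL_j$. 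This entry is exactly the simple Hodge parameter ($\sL$-invariant) of the rank-$2$ Steinberg block. So the map extracting the $2\times 2$ blocks is the identity under these identifications, and the two morphisms agree for every $u$; taking the product over $u$ gives the claim.

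\textbf{Expected obstacle.} The only genuinely non-formal point is Step 1: translating \emph{semistable non-crystalline} for the rank-$2$ subquotient into the statement that $N$ links $e_{u^{-1}(j)}$ and $e_{u^{-1}(j+1)}$. I would handle this via Fontaine's equivalence applied to subquotients of $\cF_u$, using that $D_{\mathrm{st}}$ of the subquotient is the corresponding $(\varphi,N)$-subquotient of $\cF_u$ on $D$. The rest is bookkeeping with Levi subgroups.
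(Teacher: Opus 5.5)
Your proof is correct and follows the route the paper intends; the paper states this corollary without a written proof, as an immediate consequence of the definitions. The key point is your criterion that $j\in S_0(u)$ holds exactly when $u^{-1}(j)\in S_0$ and $u^{-1}(j+1)=u^{-1}(j)+1$: when all $l_i\leq 2$ it rules out two adjacent simple roots in $S_0(u)$, so every block $E_{u,i}$ has rank at most $2$ and the map from $\Phi_{\mathrm{nc},S_0(u)}(S_0(u))$ extracting the $2\times 2$ simple parameters is an isomorphism.
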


			\section{Deformations of semistable $(\varphi,\Gamma)$-modules}

			\subsection{Deformations of $\cF_{u}$ and $\cF_{S_0(u)}$ for $u\in \sW_n^{S_0}$}\label{generalforparedefor}
			
			For $u\in \sW_n^{S_0}$,\;let $\ext^1_u(\Dpik,\Dpik)$ (resp.,\;$\ext^{1}_{S_0(u)}(\Dpik,\Dpik)$) be the subspace of  $ \ext^1(\Dpik,\Dpik)$ of trianguline deformations (resp.,\;$\bP_{S_0(u)}$-parabolic deformations) of $\Dpik$ with respect to $\cF_u$ (resp.,\;$\cF_{S_0(u)}$),\;i.e.,\;$\widetilde{D}\in \ext^{1}_{u}(\Dpik,\Dpik)$ (resp.\;$\ext^{1}_{S_0(u)}(\Dpik,\Dpik)$) if and only if
			\[\widetilde{\Dpik}=[\widetilde{R}_{u,1}-\widetilde{R}_{u,2}-\cdots-\widetilde{R}_{u,n}],\;\text{resp.,\;}\widetilde{\Dpik}=[\widetilde{E}_{u,1}-\widetilde{E}_{u,2}-\cdots-\widetilde{E}_{u,f_u}]\]
			where $\widetilde{R}_{u,l}\cong {R}_{u,l}\otimes_{\cR_{E}}\cR_{E[\epsilon]/\epsilon^2}(1+\psi_l\epsilon)$ for $\psi_l\in \homo(\bQ_p^{\times},E)$ (resp.,\;$\widetilde{E}_{u,i}$ is a deformation of ${E}_{u,i}$ over $\cR_{E[\epsilon]/\epsilon^2}$).\;We further define a subspace $\ext^{1,\flat}_{S_0(u)}(\Dpik,\Dpik)$ of $\ext^{1}_{S_0(u)}(\Dpik,\Dpik)$, i.e.,\;$\widetilde{D}=[\widetilde{E}_{u,1}-\widetilde{E}_{u,2}-\cdots-\widetilde{E}_{u,f_u}]\in \ext^{1}_{S_0(u)}(\Dpik,\Dpik)$ belongs to $\ext^{1,\flat}_{S_0(u)}(\Dpik,\Dpik)$ if and only if $\widetilde{E}_{u,i}\cong {E}_{u,i}\otimes_{\cR_{E}}\cR_{E[\epsilon]/\epsilon^2}(1+\psi_i'\epsilon)$ for $\psi'_i\in \homo(\bQ_p^{\times},E)$.\;We have natural maps:
			\begin{equation}\label{dfnforkappau}
				\begin{aligned}
					\kappa_{u}&:\ext^{1}_{u}(\Dpik,\Dpik)\rightarrow \homo(\bT(\bQ_p),E),\widetilde{D}\mapsto (\psi_i)_{1\leq i\leq n}\\
					\kappa_{S_0(u)}&:\ext^{1,\flat}_{S_0(u)}(\Dpik,\Dpik)\rightarrow \homo(\bZ_{S_0(u)}(\bQ_p),E),\;\widetilde{D}\mapsto (\psi'_i)_{1\leq i\leq f_u}.\\
				\end{aligned}
			\end{equation}
			Note that $\ext^{1,\flat}_{S_0(u)}(\Dpik,\Dpik)\subseteq \ext^{1}_{u}(\Dpik,\Dpik)\subseteq \ext^{1}_{S_0(u)}(\Dpik,\Dpik)$.\;
			
			For $u\in \sW_n^{S_0}$ and $i\in \Delta$,\;we further define a subspace $\ext^{1}_{u,[i]}(\Dpik,\Dpik)$ of $\ext^{1}(\Dpik,\Dpik)$,\;i.e.,\;$\widetilde{D}\in \ext^{1}(\Dpik,\Dpik)$ belongs to $\ext^{1}_{u,[i]}(\Dpik,\Dpik)$ if $\widetilde{D}=[\widetilde{R}_{u,1}-\cdots-\widetilde{R}_{u,i-1}-\widetilde{R_{u,i}^{i+1}}-\widetilde{R}_{u,i+2}-\cdots-\widetilde{R}_{u,n}]$,\;where $\widetilde{R}_{u,l}\cong {R}_{u,l}\otimes_{\cR_{E}}\cR_{E[\epsilon]/\epsilon^2}(1+\psi_l\epsilon)$ for $\psi_l\in \homo(\bQ_p^{\times},E)$ and $l\neq i,i+1$,\;and $\widetilde{R_{u,i
				}^{i+1}}$ is a deformation of $R_{u,i}^{i+1}$ over $E[\epsilon]/\epsilon^2$ (see Remark \ref{dfnforRii1}).\;Note that $\ext^{1}_{u}(\Dpik,\Dpik)\subseteq \ext^{1}_{u,[i]}(\Dpik,\Dpik)$.\;

			Let $R^+_u:=\{(i,j):1\leq i<j\leq n,u^{-1}(j)=u^{-1}(i)+1,u^{-1}(i)\in I_0\}$.\;Put
			\begin{equation}\label{dfnforimagehomou}
				\homo_u(\bT(\bQ_p),E):=\left\{(\psi_i)\in \homo(\bT(\bQ_p),E)\;|\;
				\psi_j-\psi_i\in \sL^u_{i,j},\forall\;(i,j)\in R^+_u \right\}.
			\end{equation}
			where $\sL^u_{i,j}$ is the so-called simple $\sL$-invariant in \cite{2019DINGSimple}; it is a codimension-$1$ subspace of $\homo(\bQ_p^{\times},E)$ and depends on $u,i,j$.\;Moreover,\;$E\val_p\neq\sL^u_{i,j}$ for $u^{-1}(i)\in S_0$, while $E\val_p=\sL^u_{i,j}$ for $u^{-1}(i)\in I_0\backslash S_0$.\;See Proposition \ref{imageofkappaproof} in Appendix-$\mathrm{II}$ for its inductive definition.\;In what follows,\;we do not need the precise description.\;
			
			
			\begin{pro}\label{profordimofdefor}
				$\dim_E\ext^{1}_{u}(\Dpik,\Dpik)=1+\frac{n(n+1)}{2}$ and $\kappa_{u}$ is a surjection onto $\homo_u(\bT(\bQ_p),E)$.\;
				
			\end{pro}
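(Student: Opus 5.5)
We argue by induction along the triangulation $\cF_u=[R_{u,1}-\cdots-R_{u,n}]$, following the standard computation for trianguline deformations (as in Bellaïche--Chenevier and \cite[Appendix]{2019DINGSimple}). For $1\leq k\leq n$ put $\fil_k:=\fil_k^{\cF_u}\Dpik$. We consider the space $\ext^1_{u,\leq k}$ of deformations of $\fil_k$ over $\cR_{E[\epsilon]/\epsilon^2}$ together with a deformation of the induced triangulation, and the map to $\homo(\bT_k(\bQ_p),E)$ recording the characters $\psi_1,\dots,\psi_k$.

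\textbf{Step 1: injectivity of the forgetful map.} The non-critical assumption gives $\homo_{(\varphi,\Gamma)}(R_{u,i},R_{u,j})=0$ for $i<j$, and more generally no nonzero map from a graded piece to a later one. Hence a triangulation of a deformation, if it exists, is unique. So $\ext^1_u(\Dpik,\Dpik)$, the space of triangulated deformations, injects into $\ext^1(\Dpik,\Dpik)$, and it suffices to count triangulated deformations.

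\textbf{Step 2: the dimension count.} Passing from $k-1$ to $k$ amounts to choosing a deformation $\widetilde R_{u,k}=R_{u,k}(1+\psi_k\epsilon)$ and an extension of it by the already chosen $\widetilde{\fil}_{k-1}$ lifting the given extension class. The obstruction and ambiguity are controlled by the long exact sequence of $\ext^\bullet(R_{u,k},\fil_{k-1})$. By the Euler--Poincaré formula,
\[\dim\ext^1(R_{u,k},\fil_{k-1})-\dim\homo(R_{u,k},\fil_{k-1})-\dim\ext^2(R_{u,k},\fil_{k-1})=k-1.\]
Here $\homo(R_{u,k},\fil_{k-1})=0$ by non-criticality, and $\ext^2(R_{u,k},\fil_{k-1})$ is dual to $\homo(\fil_{k-1},R_{u,k}(\ccyc))$, which vanishes because the $\phi_i$ are distinct and the Hodge--Tate weights are regular. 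The step therefore contributes $1$ (from $\psi_k$; $\ext^1(R_{u,k},R_{u,k})$ has dimension $2$ but one direction is used by the lifting) plus $(k-1)$ new extension directions.

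To make the bookkeeping exact, we use the tangent-space computation for the trianguline variety: the fibre of $\kappa_u$ has dimension $\frac{n(n-1)}{2}+1$ and the image has dimension $n-|R^+_u|$. Summing over $k$ should give $1+\frac{n(n+1)}{2}$, with the constraints from $R^+_u$ absorbed by the $\ext^1$ directions that fail to lift. We expect the cleanest way to phrase this is $\dim\ext^1_u=\dim\ker\kappa_u+\dim\homo_u(\bT(\bQ_p),E)$, where $\dim\ker\kappa_u=1+\frac{n(n+1)}{2}-n+|R^+_u|$ is computed directly, since $\ker\kappa_u$ is built from successive $\ext^1(R_{u,k},\fil_{k-1})$ modulo the image of $\homo$.

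\textbf{Step 3: the image (the main obstacle).} We first reduce to rank two. For each $k$, a pair $(\psi_1,\dots,\psi_k)$ lifts at step $k$ if and only if the class of $\fil_k$ lies in the image of $\ext^1(\widetilde R_{u,k},\widetilde{\fil}_{k-1})\to\ext^1(R_{u,k},\fil_{k-1})$. Through the connecting map, this condition is a pairing of $\psi_k-\psi_j$ against the component of the extension class in $\ext^1(R_{u,k},R_{u,j})$. That component is nonzero only when the two characters satisfy $\phi_{u^{-1}(k)}=\phi_{u^{-1}(j)}p$, i.e.\ exactly for $(j,k)\in R^+_u$; for all other pairs, $\ext^2$ vanishes and the condition is empty. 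For $(j,k)\in R^+_u$ we have a rank-$2$ subquotient, semistable or crystalline non-generic, and the classical computation gives $\psi_k-\psi_j\in\sL^u_{j,k}$. Here $\sL^u_{j,k}$ is the Fontaine--Mazur line when $u^{-1}(j)\in S_0$, and $E\val_p$ in the crystalline case, matching Proposition \ref{imageofkappaproof}.

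The delicate point is showing that these conditions are independent and sufficient when several pairs in $R^+_u$ interact through a longer subquotient. We would handle this inductively, using the parabolic filtration $\cF_{u,[i]}$ of Remark \ref{dfnforRii1}, so that at each step only the rank-$2$ block $R^{i+1}_{u,i}$ contributes a constraint. Surjectivity onto $\homo_u(\bT(\bQ_p),E)$ then follows by constructing lifts block by block.
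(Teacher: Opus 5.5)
Step 1 is fine, but Steps 2 and 3 each have a genuine gap.

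\textbf{Step 2 rests on a false vanishing.} You claim $\ext^2(R_{u,k},\fil_{k-1})=0$, i.e.\ $\homo(\fil_{k-1},R_{u,k}(\ccyc))=0$. But $R_{u,k}(\ccyc)\cong\cR_E(\unr(p^{-1}\phi_{u^{-1}(k)})z^{\bh_k+1})$. So whenever $(k-1,k)\in R^+_u$, composing $\fil_{k-1}\twoheadrightarrow R_{u,k-1}$ with the nonzero map $R_{u,k-1}\to R_{u,k}(\ccyc)$ (which exists as $\bh_{k-1}\geq\bh_k+1$) gives a nonzero element. Already for $n=2$ and $\Dpik$ semistable one has $\ext^2(R_{u,2},R_{u,1})\cong E$ and $\dim_E\ext^1(R_{u,2},R_{u,1})=2$, not $1$. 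This non-vanishing is exactly what your Step 3 relies on, so the two steps contradict each other.

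The per-step count $k$ you arrive at is nevertheless correct, but for a different reason. From the long exact sequence for $0\to\fil_{k-1}\to\fil_k\to R_{u,k}\to 0$, the count equals $k$ precisely when $\ext^1(R_{u,k},R_{u,k})\to\ext^2(R_{u,k},\fil_{k-1})$ is onto, i.e.\ when $\ext^2(R_{u,k},\fil_k)=0$, with the top piece included. That is the vanishing the paper proves. A nonzero map $\fil_k\to R_{u,k}(\ccyc)$ would kill $\fil_{j-1}$ and be nonzero on $\fil_j$ for the unique $j$ with $\phi_{u^{-1}(k)}=p\phi_{u^{-1}(j)}$. For $u^{-1}(j)\in S_0$ this is impossible, because after inverting $t$ the induced map on $D_{\mathrm{st}}$ commutes with $N$ and so kills $e_{u^{-1}(j)}=N(e_{u^{-1}(k)})$. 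For $u^{-1}(j)\in I_0\backslash S_0$ the paper refers back to \cite[Proposition 3.6]{2019DINGSimple}.

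With this vanishing, the dévissage $0\to\homo_{\cR_E}(R_{u,n},\Dpik)\to\EndO_{\cF_u}(\Dpik)\to\EndO_{\cF_u}(\fil_{n-1})\to 0$ gives $\hH^0=E$ and $\hH^2=0$. Euler--Poincaré for the rank-$\frac{n(n+1)}{2}$ module $\EndO_{\cF_u}(\Dpik)$ then gives $1+\frac{n(n+1)}{2}$, with no reference to the image of $\kappa_u$.

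Your fallback bookkeeping should be dropped:
\begin{itemize}
\item $\dim_E\homo_u(\bT(\bQ_p),E)=2n-|R^+_u|$, not $n-|R^+_u|$.
\item Neither of your splittings sums to $1+\frac{n(n+1)}{2}$; the correct value is $\dim_E\ker\kappa_u=1+\frac{n(n+1)}{2}-2n+|R^+_u|$.
\item Computing $\dim\ext^1_u$ as $\dim\ker\kappa_u+\dim\homo_u$ presupposes the surjectivity that is the second half of the statement.
\end{itemize}

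\textbf{Step 3 only sees adjacent pairs, but $R^+_u$ contains non-adjacent ones.} Take $n=3$, $S_0=I_0=\{1\}$, and the triangulation attached to the $(\varphi,N)$-stable flag $Ee_1\subset Ee_1\oplus Ee_3\subset D$, i.e.\ $\Dpik=[\cR_E(\unr(\phi_1)z^{\bh_1})-\cR_E(\unr(\phi_3)z^{\bh_2})-\cR_E(\unr(\phi_2)z^{\bh_3})]$. Then $R^+_u=\{(1,3)\}$, and by the statement $\kappa_u$ has one-dimensional cokernel. Yet both rank-$2$ subquotients $R^{2}_{u,1}$ and $R^{3}_{u,2}$ are generic and impose nothing. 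There is no subquotient $[R_{u,1}-R_{u,3}]$ and no ``component of the extension class in $\ext^1(R_{u,3},R_{u,1})$''. So a block-by-block argument through the $\cF_{u,[i]}$ would return the wrong image. Moreover, your claim that the obstruction depends only on $\psi_k-\psi_j$ is unjustified: a priori it depends on the whole deformation $\widetilde{\fil}_{k-1}$.

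Proposition \ref{imageofkappaproof} supplies the missing argument, in four steps:
\begin{itemize}
\item For the unique $q$ with $(q,n)\in R^+_u$, it writes $D''=[R_{u,q}-\cdots-R_{u,n}]=[N_2'-R_{u,n}]$.
\item It defines $\sL^u_{q,n}$ as the image in $\ext^1(R_{u,q},R_{u,q})$ of the orthogonal $l_{\mathrm{FM}}(D'':N_2')$ of $[D'']$ under the cup product to $\ext^2(R_{u,n},R_{u,q})$.
\item It shows the directions from $\ext^1(N_3',R_{u,q})$ lie in $l_{\mathrm{FM}}(D'':N_2')$ (the vanishing of $\eta$), so that only $\psi_q-\psi_n$ matters.
\item It applies the Colmez--Greenberg--Stevens criterion for deforming $[N_2'-R_{u,n}]$. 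This is an ``if and only if'', so it also yields surjectivity.
\end{itemize}
You need an argument of this kind; ``constructing lifts block by block'' does not cover non-adjacent pairs.
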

			\begin{proof}
				We first compute $\dim_E\ext^{1}_{u}(\Dpik,\Dpik)$.\;We follow the route in the proof of
				\cite[Proposition 3.6]{2019DINGSimple}.\;Let $\EndO_{\cF_u}(\Dpik)$ be the saturated $(\varphi,\Gamma)$-submodule of $\EndO(\Dpik)$ that preserves the triangulation $\cF_u$.\;Then we have a natural exact sequence of 
				$(\varphi,\Gamma)$-modules over $\cR_E$:
				\[0\rightarrow \homo_{\cR_E}(R_{u,n},\Dpik
				)\rightarrow \EndO_{\cF_u}(\Dpik)\rightarrow \EndO_{\cF_u|_{\Dpik_1^{n-1}}}(\Dpik_1^{n-1})\rightarrow 0.\]
				Note that $\hH^0_{(\varphi,\Gamma)}(\homo_{\cR_E}(R_{u,n},\Dpik))=0$ by non-split assumption.\;By induction on $n$,\;we show that $\hH^0_{(\varphi,\Gamma)}(\EndO_{\cF_u}(\Dpik))\xrightarrow{\sim}\hH^0_{(\varphi,\Gamma)}(\EndO_{\cF_u|_{\Dpik_1^{n-1}}}(\Dpik_1^{n-1}))\cong E$.\;It suffices to show that $\hH^2_{(\varphi,\Gamma)}(\EndO_{\cF_u}(\Dpik))=0$.\;By induction on $n$,\;we only need to show that $\hH^2_{\cR_E}(\homo_{\cR_E}(R_{u,n},\Dpik
				))=0$.\;By Tate duality,\;we have isomorphism $\hH^2_{(\varphi,\Gamma)}(\homo_{\cR_E}(R_{u,n},\Dpik
				))\cong \hH^0_{(\varphi,\Gamma)}(\homo_{\cR_E}(\Dpik,
				R_{u,n}))$.\;If $i-1\not\in I_0$,\;the result is obvious.\;If $i-1\in I_0$,\;we still return to the case in \cite[Proposition 3.6]{2019DINGSimple}.\;See Proposition \ref{imageofkappaproof} for the proof  the second assertion.\;
			\end{proof}
			
			
			For $u\in \sW_n^{S_0}$,\;let $\ext^{1}_{g,u}(\Dpik,\Dpik)\subseteq\ext^{1}_{u}(\Dpik,\Dpik)$ be the preimage of $\homo_{\sm,u}(\bT(\bQ_p),E)$  under $\kappa_u$,\;where
			\[\homo_{\sm,u}(\bT(\bQ_p),E):=\homo_{\sm}(\bT(\bQ_p),E)\cap \homo_{u}(\bT(\bQ_p),E).\]

			\begin{pro}\label{studyforkerkappau}Let $\ext^1_g(\Dpik,\Dpik)$ be the subspace of de Rham deformations of $\Dpik$.\;
				\begin{itemize}
					\item[(1)] $\ext^1_g(\Dpik,\Dpik)=\ext^{1}_{g,1}(\Dpik,\Dpik)$ and 
					$\dim_E{\ext}^{1}_g(\Dpik,\Dpik)=1+\frac{n(n-1)}{2}+|I_0\backslash S_0|$.
					\item[(2)] For $u\in \sW_n^{S_0}$,\;$\ext^{1}_{g,u}(\Dpik,\Dpik)=\ext^{1}_{u}(\Dpik,\Dpik)\cap \ext^1_g(\Dpik,\Dpik)$.\;
				\end{itemize}
			\end{pro}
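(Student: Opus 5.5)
We need to prove Proposition \ref{studyforkerkappau}: (1) $\ext^1_g=\ext^1_{g,1}$ with $\dim_E\ext^1_g(\Dpik,\Dpik)=1+\frac{n(n-1)}{2}+|I_0\backslash S_0|$, and (2) $\ext^1_{g,u}=\ext^1_u\cap\ext^1_g$.

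The plan is to reduce everything to the two facts already available: the structure of $\ext^1_u$ from Proposition \ref{profordimofdefor}, and the standard comparison between de Rham deformations and $\hH^1_g$ of $\EndO(\Dpik)$.

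\textbf{Step 1: the dimension of $\ext^1_g$.} Write $\ext^1_g(\Dpik,\Dpik)=\hH^1_g(\EndO(\Dpik))$ and use the Bloch--Kato/Nakamura dimension formula for a de Rham $(\varphi,\Gamma)$-module $M$:
\[\dim\hH^1_g(M)=\dim\hH^0(M)+\dim D_{\dr}(M)/\fil^0+\dim D_{\mathrm{cris}}(M^\vee(\ccyc))^{\varphi=1}.\]
Take $M=\EndO(\Dpik)$, so $D_{\mathrm{st}}(M)=\EndO(D)$.

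For the first term, $\hH^0(M)$ consists of endomorphisms commuting with $\varphi$ and $N$ and preserving the Hodge filtration. By non-criticality (general position) these are the scalars, so the term is $1$.

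For the second term, regular Hodge--Tate weights give $\dim D_{\dr}(M)/\fil^0=\frac{n(n-1)}{2}$.

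For the third term, $M^\vee(\ccyc)\cong\EndO(\Dpik)(\ccyc)$, so we count endomorphisms $f$ of $D_{\mathrm{cris}}$ with $\varphi f=p^{-1}f\varphi$, i.e.\ $f(e_i)\in Ee_j$ with $\phi_j=p^{-1}\phi_i$, among those lying in $D_{\mathrm{cris}}=D_{\mathrm{st}}^{N=0}$. Concretely, these are the maps $f$ with $\phi_j\phi_i^{-1}=p^{-1}$ that commute with $N$ up to the twist. One checks pair by pair $i\in I_0$ (where $\phi_{i+1}=p\phi_i$) that:
\begin{itemize}
\item if $i\in I_0\backslash S_0$, exactly one dimension survives, because the two blocks are distinct Steinberg pieces;
\item if $i\in S_0$, the $N$-compatibility across the Jordan chain kills the contribution except for a globally counted correction.
\end{itemize}
This Jordan-block bookkeeping is the main obstacle. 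I would handle it by decomposing $\EndO(D)=\oplus_{a,b}\homo(\df_{l_a,\alpha_a},\df_{l_b,\alpha_b})$ and computing $\ker N$ on each twisted piece. The outcome should be $|I_0\backslash S_0|$.

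\textbf{Step 2: $\ext^1_g\subseteq\ext^1_1$ and the identification in (1).} Every de Rham deformation $\widetilde{D}$ is semistable over $E[\epsilon]/\epsilon^2$. Its $D_{\mathrm{st}}$ is a deformation of the $(\varphi,N)$-module $D$, and the flag $\cF_1$ deforms because $\cF_1$ is the canonical $N$-compatible flag; it is stable under the deformed $\varphi$ since the eigenvalues $\phi_i$ are distinct. Fontaine's theory then yields a triangulation of $\widetilde{D}$ lifting $\cF_1$, so $\widetilde{D}\in\ext^1_1$. Its parameters $\psi_i$ are smooth (de Rham rank-one objects have constant weights), hence $\ext^1_g\subseteq\ext^1_{g,1}$.

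For the reverse inclusion, an element of $\ext^1_{g,1}$ is a successive extension of de Rham rank-one deformations. I would show it is de Rham by a dimension count: $\dim\ext^1_{g,1}=\dim\ker\kappa_1+\dim\homo_{\sm,1}$. By Proposition \ref{profordimofdefor}, $\dim\ker\kappa_1=1+\frac{n(n+1)}{2}-\dim\homo_1$. The constraint $\psi_j-\psi_i\in\sL_{i,j}$ cuts $\homo_{\sm}\cong E^n$ (spanned by $\val_p$ components) by one condition exactly when $\val_p\notin\sL$, i.e.\ for $S_0$. This gives $\dim\homo_{\sm,1}=n-|S_0|$ and $\dim\homo_1=2n-|I_0|$. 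The total is then
\[1+\frac{n(n+1)}{2}-2n+|I_0|+n-|S_0|=1+\frac{n(n-1)}{2}+|I_0\backslash S_0|,\]
which matches Step 1, so the inclusion is an equality. This only requires $\ker\kappa_1\subseteq\ext^1_g$, which I would argue via the fact that deformations with trivial parameters are de Rham: a Sen-weight argument on each graded piece, which is the same induction used in \cite{2019DINGSimple}.

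\textbf{Step 3: part (2).} Apply the same argument with $\cF_u$ in place of $\cF_1$. Any de Rham deformation lying in $\ext^1_u$ has smooth $\psi_i$, and conversely $\ker\kappa_u$ consists of de Rham deformations. This gives $\ext^1_{g,u}=\ext^1_u\cap\ext^1_g$.
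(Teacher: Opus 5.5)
\textbf{Part (1).} Your argument works, by a route different from the paper's.
\begin{itemize}
\item You compute $\dim_E\ext^1_g(\Dpik,\Dpik)$ directly from the Bloch--Kato formula for $\EndO(\Dpik)$.
\item You prove $\ext^1_g\subseteq\ext^1_{g,1}$ by lifting $\cF_1$ inside $D_{\mathrm{st}}$ of the (necessarily semistable) deformation.
\item You conclude by comparing with $\dim\ker\kappa_1+\dim\homo_{\sm,1}(\bT(\bQ_p),E)$.
\end{itemize}
The paper instead gets $\ext^1_g\subseteq\ext^1_1$ by d\'evissage from $\hH^1_g(R_{1,i}^\vee\otimes R_{1,j})=0$ for $i<j$. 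It obtains the reverse inclusion from the induction it uses for (2), and reads the dimension off Proposition \ref{profordimofdefor}. Your count is an independent check of that dimension.

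Three corrections inside (1):
\begin{itemize}
\item In Step 1 the contributions from $S_0$ vanish outright; there is no ``correction''. Writing $f(e_k)=c_ke_{k+1}$, the condition $Nf=fN$ gives $c_k=0$ whenever $k\in S_0$ and $k-1\notin S_0$, and $c_k=c_{k-1}$ for consecutive $k-1,k\in S_0$. So every $c_k$ with $k\in S_0$ is zero.
\item The lift of $\cF_1$ is not a formal consequence of $N$-compatibility. It holds because $\widetilde N$ divides $\widetilde\varphi$-eigenvalues by $p$, and $\phi_j=p\phi_i$ forces $j=i+1$. Hence $\widetilde N\widetilde e_{i+1}\in E[\epsilon]\widetilde e_i$, possibly a nonzero multiple of $\epsilon$ when $i\in I_0\backslash S_0$.
\item Once the dimensions agree you are done, so $\ker\kappa_1\subseteq\ext^1_g$ is not needed.
\end{itemize}

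\textbf{Part (2): the gap.} The problem is the inclusion $\ext^1_{g,u}\subseteq\ext^1_g$ in Step 3. The ``same argument'' does not transfer, for two reasons.
\begin{itemize}
\item For $u\neq 1$ one has in general $\ext^1_g\not\subseteq\ext^1_u$. Take $n=2$, $S_0=\emptyset$, $I_0=\{1\}$, $u=s_1$. A semistable non-crystalline de Rham deformation with $\widetilde N\widetilde e_2=\epsilon\widetilde e_1$ admits no $(\widetilde\varphi,\widetilde N)$-stable lift of $Ee_2$.
\item You have no independent formula for $\dim(\ext^1_u\cap\ext^1_g)$ to count against.
\end{itemize}
Knowing $\ker\kappa_u\subseteq\ext^1_g$ only gives $\ker\kappa_u\subseteq\ext^1_u\cap\ext^1_g\subseteq\ext^1_{g,u}=\kappa_u^{-1}(\homo_{\sm,u}(\bT(\bQ_p),E))$. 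You would still need every smooth $\psi\in\homo_{\sm,u}(\bT(\bQ_p),E)$ to be the parameter of some de Rham $\cF_u$-trianguline deformation.

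\textbf{The missing idea.} Apply the paper's induction to an arbitrary $\widetilde{\Dpik}\in\ext^1_{g,u}$, not just to $\ker\kappa_u$. Suppose $\widetilde{\Dpik}_1^j$ is de Rham. The extension producing $\widetilde{\Dpik}_1^{j+1}$ is a class in $\hH^1$ of $\widetilde{\Dpik}_1^j\otimes\widetilde R_{u,j+1}^\vee$. This object is de Rham because the $\psi_l$ are smooth. By non-criticality of $\cF_u$, all its Hodge--Tate weights $\bh_l-\bh_{j+1}$ are positive. For such objects $\hH^1_g=\hH^1$: all of $D_{\dr}$ lies outside $\fil^0$, and $\hH^0$ of the Tate dual equals its $D_{\mathrm{cris}}^{\varphi=1}$ (cf. \cite[Proposition A.3]{Dingsocle}). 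Hence every such extension is de Rham. A Sen-weight argument alone cannot give de Rhamness; it is this positivity that does. The induction handles every $u$ and every smooth parameter at once.
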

			\begin{proof}
				By \cite[Appendix,\;Proposition A.3]{Dingsocle},\;for any $i<j$,\;we deduce  that  $\dim_E\hH^1_g(\gal_{\bQ_p},W(R_{1,i}^{\vee}\otimes_{\cR_{E}}{R_{1,j}}))=0$,\;where $W(R_{1,i}^{\vee}\otimes_{\cR_{E}}{R_{1,j}})$ means the $E$-$B$ pair associated to $R_{1,i}^{\vee}\otimes_{\cR_{E}}{R_{1,j}}$.\;The d\'{e}vissage argument shows the inclusion $\ext^1_g(\Dpik,\Dpik)\subseteq \ext^{1}_{1}(\Dpik,\Dpik)$.\;Therefore,\;$\ext^1_g(\Dpik,\Dpik)=\ext^{1}_{g,1}(\Dpik,\Dpik)$ by definition.\;Since $\homo_{\sm,1}(\bT(\bQ_p),E)$ has codimension $2n-|I_0|-s=n-|I_0|-|S_0|$ in $\homo_{1}(\bT(\bQ_p),E)$,\;we get that $\dim_E{\ext}^{1}_g(\Dpik,\Dpik)=1+\frac{n(n-1)}{2}+|I_0\backslash S_0|$.\;It remains to prove $(2)$.\;Since $\cR_{E[\epsilon]/\epsilon^2}(1+\psi_i\epsilon)$ is de Rham if and only if $\psi_i\in \homo_{\sm}(\bQ^{\times}_p,E)$,\;we deduce that $\ext^{1}_{u}(\Dpik,\Dpik)\cap \ext^1_g(\Dpik,\Dpik)\subseteq \ext^{1}_{g,u}(\Dpik,\Dpik)$.\;On the other hand,\;for $(\psi_i)\in \homo_{\sm}(\bT(\bQ_p),E)$,\;we will show that \[\widetilde{\Dpik}=[\widetilde{R}_{u,1}-\widetilde{R}_{u,2}-\cdots-\widetilde{R}_{u,n}],\;\]
				with $\widetilde{R}_{u,i}\cong R_{u,i}\otimes_{\cR_{E}}\cR_{E[\epsilon]/\epsilon^2}(1+\psi_i\epsilon)$ and $\psi\in \homo_{\sm}(\bQ^{\times}_p,E)$ de Rham.\;It suffices to prove, by induction on $j$, that $\widetilde{\Dpik}_1^j:=\widetilde{R}_{1}-\widetilde{R}_{2}-\cdots-\widetilde{R}_{j}$ is de Rham.\;The case $j=1$ is obvious.\;Assume $\widetilde{\Dpik}_1^j$ is de Rham, and prove that $\widetilde{\Dpik}_1^{j+1}$ is de Rham.\;It suffices to show that each element in
				\[\hH^1\left(\gal_{\bQ_p},W(\widetilde{\Dpik}')\right),\;\widetilde{\Dpik}':=\widetilde{\Dpik}_1^j\otimes_{\cR_{E}}R_{u,j+1}^{\vee}\otimes_{\cR_{E}}\cR_{E[\epsilon]/\epsilon^2,L}(1-\psi_{j+1}\epsilon)\]
				is de Rham.\;Since all the Hodge-Tate weights of $\widetilde{\Dpik}'$ are positive,\;we deduce from \cite[Proposition A.3]{Dingsocle}  that $\hH_g^1(\gal_{\bQ_p},W(\widetilde{\Dpik}'))=\hH^1(\gal_{\bQ_p},W(\widetilde{\Dpik}'))$ (note that $\widetilde{\hH}^2(\gal_{\bQ_p},W(\widetilde{\Dpik}'))={\hH}^2(\gal_{\bQ_p},W(\widetilde{\Dpik}'))$).\;We thus obtain $\ext^{1}_{g,u}(\Dpik,\Dpik)\subseteq\ext^{1}_{u}(\Dpik,\Dpik)\cap \ext^1_g(\Dpik,\Dpik)$.\;
			\end{proof}
			The following Lemma is an analogue of \cite[Lemma 2.11]{ParaDing2024}.\;
			\begin{lem}\label{smhomoindepent}For any $u_1,u_2\in \sW_n^{S_0}$,\;put $\ext^{1}_{g,\{u_1,u_2\}}(\Dpik,\Dpik):=\ext^{1}_{g,u_1}(\Dpik,\Dpik)\cap \ext^{1}_{g,u_2}(\Dpik,\Dpik)$.\;Then the following diagram commutes:
				\begin{equation}
					\xymatrix{ \ext^{1}_{g,\{u_1,u_2\}}(\Dpik,\Dpik)\ar@{=}[d] \ar[r]_{\kappa_{u_1}\hspace{15pt}} & \homo_{\sm,\{u_1,u_2\}}(\bT(\bQ_p),E) \ar[d]^{\sim}_{u_2u_1^{-1}}\\
						\ext^{1}_{g,\{u_1,u_2\}}(\Dpik,\Dpik) \ar[r]_{\kappa_{u_2}\hspace{15pt}} & \homo_{\sm,\{u_1,u_2\}}(\bT(\bQ_p),E) },
				\end{equation}
				where   $\homo_{\sm,\{u_1,u_2\}}(\bT(\bQ_p),E):=\homo_{\sm,u_1}(\bT(\bQ_p),E)\cap \homo_{\sm,u_2}(\bT(\bQ_p),E)$.\;
			\end{lem}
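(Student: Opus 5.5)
The idea is to characterize both maps $\kappa_{u_1}$ and $\kappa_{u_2}$ on de Rham deformations through a single intrinsic invariant, namely the Sen/Weil--Deligne data of the deformed $D_{\mathrm{st}}$. I will follow the strategy of \cite[Lemma 2.11]{ParaDing2024}.

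Let $\widetilde{\Dpik}\in \ext^{1}_{g,\{u_1,u_2\}}(\Dpik,\Dpik)$. By Proposition \ref{studyforkerkappau}, $\widetilde{\Dpik}$ is de Rham over $E[\epsilon]/\epsilon^2$, and hence potentially semistable. Since every $\psi_i$ is smooth, the trianguline parameters differ from those of $\Dpik$ only by smooth, hence unramified, characters. So $\widetilde{\Dpik}$ is actually semistable, and $\widetilde{D}:=D_{\mathrm{st}}(\widetilde{\Dpik})$ is a free $E[\epsilon]/\epsilon^2$-module of rank $n$ carrying $(\varphi,N)$ and deforming $D$.

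Next I translate the triangulation $\cF_{u_k}$ of $\widetilde{\Dpik}$, for $k=1,2$, into a $(\varphi,N)$-stable complete flag of $\widetilde{D}$ lifting $\cF_{u_k}$ on $D$. This uses Berger's equivalence between $\cR[1/t]$-modules and $(\varphi,N)$-modules, applied in families over $E[\epsilon]/\epsilon^2$. On the $i$-th graded piece, $\varphi$ acts by the scalar $\phi_{u_k^{-1}(i)}(1+\psi^{(k)}_i(p)\epsilon)$, where $\psi^{(k)}=\kappa_{u_k}(\widetilde{\Dpik})$. Here I use that a smooth $\psi$ on $\bQ_p^\times$ is determined by $\psi(p)$, because $\psi|_{\bZ_p^\times}$ is smooth and additive, hence zero since it is locally constant with values in a torsion-free group.

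Because the $\phi_i$ are pairwise distinct, $\varphi$ on $\widetilde{D}$ is semisimple with a unique eigenline $\widetilde{e}_j$ lifting each $e_j$, with eigenvalue $\phi_j(1+c_j\epsilon)$. The numbers $c_j$ are intrinsic to $\widetilde{\Dpik}$. The flag from $\cF_{u_k}$ has $i$-th graded piece spanned by the image of $\widetilde{e}_{u_k^{-1}(i)}$, so $\psi^{(k)}_i(p)=c_{u_k^{-1}(i)}$ for both $k=1,2$. Therefore $\psi^{(2)}_i=\psi^{(1)}_{u_1u_2^{-1}(i)}$, which is exactly the permutation action $u_2u_1^{-1}$ on $\homo(\bT(\bQ_p),E)$. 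This gives commutativity of the diagram. It also shows that this permutation carries $\kappa_{u_1}$ of the intersection into $\homo_{\sm,\{u_1,u_2\}}$, which yields the right vertical isomorphism.

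The hard step is the compatibility between the trianguline parameters and the eigenvalues of $\varphi$ on $\widetilde{D}$ over the Artinian base. The concern is that the triangulation pieces over $\cR_{E[\epsilon]/\epsilon^2}$ might not be saturated in the $[1/t]$-sense, or might not correspond to $(\varphi,N)$-stable submodules. To handle this, I will argue inductively on the rank, as in the proof of Proposition \ref{studyforkerkappau}: each $\widetilde{R}_{u_k,l}$ is de Rham of rank one, so $\widetilde{R}_{u_k,l}[1/t]\cong\cR_{E[\epsilon]/\epsilon^2}(\unr(\phi_{u_k^{-1}(l)}(1+\psi_l(p)\epsilon)))[1/t]$. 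Exactness of $D_{\mathrm{st}}$ on de Rham (hence semistable) objects in families then gives the flag. The check that $N$ is compatible is not needed, since only $\varphi$-eigenvalues enter.
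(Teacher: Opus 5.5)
Your proof that $\kappa_{u_2}=(u_2u_1^{-1})\circ\kappa_{u_1}$ on $\ext^1_{g,\{u_1,u_2\}}(\Dpik,\Dpik)$ is correct. It is essentially the intended argument: the paper gives no proof beyond calling the lemma an analogue of \cite[Lemma 2.11]{ParaDing2024}, and the natural proof of that lemma is yours. A de Rham deformation with smooth trianguline parameters is semistable, and $\kappa_u$ just lists the $\varphi$-eigenvalues of $D_{\mathrm{st}}(\widetilde{\Dpik})$ in the order given by $\cF_u$. You are right that $N$ plays no role.

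The real problem is your last sentence, and it is inherited from the printed statement. Your computation gives only $(u_2u_1^{-1})\kappa_{u_1}(\widetilde{\Dpik})=\kappa_{u_2}(\widetilde{\Dpik})\in\homo_{\sm,u_2}(\bT(\bQ_p),E)$. It does not show that $\kappa_{u_1}(\widetilde{\Dpik})\in\homo_{\sm,u_2}$, nor that $u_2u_1^{-1}$ preserves $\homo_{\sm,\{u_1,u_2\}}$, and both fail in general.

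Unwinding the definitions, $\homo_{\sm,u}(\bT(\bQ_p),E)$ is the set of smooth $\psi$ with $\psi_{u(m)}=\psi_{u(m+1)}$ for all $m\in S_0$. The conditions indexed by $I_0\backslash S_0$ are vacuous on smooth characters since $\sL^u_{i,j}=E\val_p$ there. Those indexed by $S_0$ force equality since $\sL^u_{i,j}\neq E\val_p$.

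Now take $n=3$, $S_0=I_0=\{1\}$, $\underline{\phi}=(\alpha,\alpha p,\beta)$, $u_1=1$, and $u_2$ with $\cF_{u_2}\colon Ee_1\subseteq Ee_1\oplus Ee_3\subseteq D$. Let $\widetilde{\Dpik}$ be the semistable deformation whose filtered $(\varphi,N)$-module is $D\otimes_E E[\epsilon]/\epsilon^2$, with $N$ and the Hodge filtration unchanged and $\varphi=\diag(\phi_1,\phi_2,\phi_3(1+\epsilon))$. It is weakly admissible, being an extension of $D$ by $D$. Both flags, base-changed to $E[\epsilon]/\epsilon^2$, are $(\varphi,N)$-stable, so $\widetilde{\Dpik}\in\ext^1_{g,\{u_1,u_2\}}(\Dpik,\Dpik)$. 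But $\kappa_{u_1}(\widetilde{\Dpik})=(0,0,\val_p)$, which does not lie in $\homo_{\sm,u_2}(\bT(\bQ_p),E)=\{\psi\ \text{smooth}:\psi_1=\psi_3\}$.

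With $n=4$, $S_0=\{1\}$ and the flags ordered $e_1,e_3,e_2,e_4$ and $e_3,e_4,e_1,e_2$, one checks similarly that $u_2u_1^{-1}$ does not map $\homo_{\sm,\{u_1,u_2\}}$ into itself. So once $S_0\neq\emptyset$ the right-hand column of the diagram is not well-posed, and you reproduced this without checking.

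What your argument actually proves, and what is true, is the identity $\kappa_{u_2}=(u_2u_1^{-1})\circ\kappa_{u_1}$ on $\ext^1_{g,\{u_1,u_2\}}(\Dpik,\Dpik)$, with right vertical arrow the isomorphism $u_2u_1^{-1}\colon\homo_{\sm,u_1}(\bT(\bQ_p),E)\xrightarrow{\sim}\homo_{\sm,u_2}(\bT(\bQ_p),E)$. It is an isomorphism because both spaces are identified, via $\psi_{u_k(m)}=c_m\val_p$, with $\{(c_m)_m : c_m=c_{m+1}\ \forall m\in S_0\}$, and $u_2u_1^{-1}$ is exactly this relabelling. In the setting of \cite{ParaDing2024} there is no monodromy, all $\homo_{\sm,u}$ coincide, and the two formulations agree.
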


			For any $u\in\sW_n^{S_0}$,\;let $\ext^{1}_{0,u}(\Dpik,\Dpik):=\ker\kappa_{u}\subseteq\ext^{1}_{g,u}(\Dpik,\Dpik)$ and $\ext^{1}_0(\Dpik,\Dpik):=\ker\kappa_{1}$.\;Then we have $\ext^{1}_0(\Dpik,\Dpik)\subseteq\ext^{1}_g(\Dpik,\Dpik)$.\;As a corollary,\;we obtain
			\begin{cor}\label{dimforext1g}
				For any $u\in\sW_n^{S_0}$,\;we have $\ext^{1}_{0,u}(\Dpik,\Dpik)=\ext^{1}_{u}(\Dpik,\Dpik)\cap \ext^{1}_0(\Dpik,\Dpik)$.\;Moreover,\;we have $\dim_E\ext^{1}_{0}(\Dpik,\Dpik)=1+\frac{n(n-1)}{2}+|I_0\backslash S_0|-n$.\;
			\end{cor}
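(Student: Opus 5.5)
The corollary has two parts: the equality $\ext^{1}_{0,u}(\Dpik,\Dpik)=\ext^{1}_{u}(\Dpik,\Dpik)\cap \ext^{1}_0(\Dpik,\Dpik)$, and a dimension count. Both will be deduced from Proposition \ref{studyforkerkappau} and Lemma \ref{smhomoindepent}.

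\emph{Reduction to de Rham deformations.} Since $0\in\homo_{\sm,u}(\bT(\bQ_p),E)$, we have $\ker\kappa_u\subseteq\ext^1_{g,u}(\Dpik,\Dpik)$. By Proposition \ref{studyforkerkappau}(2) this space equals $\ext^1_u(\Dpik,\Dpik)\cap\ext^1_g(\Dpik,\Dpik)$. Applying the same remark with $u=1$, together with Proposition \ref{studyforkerkappau}(1), gives $\ext^1_0(\Dpik,\Dpik)\subseteq\ext^1_g(\Dpik,\Dpik)=\ext^1_{g,1}(\Dpik,\Dpik)$.

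\emph{Inclusion $\supseteq$.} Take $\widetilde{\Dpik}\in\ext^1_u(\Dpik,\Dpik)\cap\ext^1_0(\Dpik,\Dpik)$. It is de Rham, so it lies in $\ext^1_{g,u}\cap\ext^1_{g,1}=\ext^1_{g,\{1,u\}}$. Lemma \ref{smhomoindepent} with $(u_1,u_2)=(1,u)$ gives $\kappa_u(\widetilde{\Dpik})=u\cdot\kappa_1(\widetilde{\Dpik})=0$, hence $\widetilde{\Dpik}\in\ker\kappa_u$.

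\emph{Inclusion $\subseteq$.} Take $\widetilde{\Dpik}\in\ker\kappa_u$. It lies in $\ext^1_u(\Dpik,\Dpik)$, and by the first step it is de Rham, so it lies in $\ext^1_{g,1}(\Dpik,\Dpik)$ and therefore in $\ext^1_{g,\{1,u\}}$. The lemma, applied in the reverse direction, gives $\kappa_1(\widetilde{\Dpik})=u^{-1}\kappa_u(\widetilde{\Dpik})=0$. This proves the equality.

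\emph{Dimension count.} Use $\ext^1_0=\ker(\kappa_1|_{\ext^1_{g}})$ and $\ext^1_g=\ext^1_{g,1}=\kappa_1^{-1}(\homo_{\sm,1}(\bT(\bQ_p),E))$. By Proposition \ref{profordimofdefor}, $\kappa_1$ surjects onto $\homo_1(\bT(\bQ_p),E)$, so $\kappa_1|_{\ext^1_g}$ surjects onto $\homo_{\sm,1}(\bT(\bQ_p),E)$. It remains to show that this space is $n$-dimensional. The space $\homo_{\sm}(\bT(\bQ_p),E)$ has dimension $n$, spanned by the $\val_p$ in each coordinate. We must check that the constraints $\psi_j-\psi_i\in\sL^1_{i,j}$ for $(i,j)\in R^+_1$ impose nothing on smooth characters. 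For $u=1$, $R^+_1$ consists of the pairs $(i,i+1)$ with $i\in I_0$, and there are two cases:
\begin{itemize}
\item for $i\in I_0\backslash S_0$, we have $\sL^1_{i,i+1}=E\val_p$, which contains $\psi_{i+1}-\psi_i$ automatically;
\item for $i\in S_0$, $E\val_p\not\subseteq\sL^1_{i,i+1}$, so the constraint forces $\psi_{i+1}=\psi_i$ on smooth characters.
\end{itemize}
The $S_0$ case is the main obstacle: it would cut the dimension to $n-|S_0|=s$, not $n$. I would resolve it by recomputing directly from the count in Proposition \ref{studyforkerkappau}(1). There, $\homo_{\sm,1}$ is stated to have codimension $n-|I_0|-|S_0|$ in $\homo_1$, and the dimension of $\homo_1$ follows from $\dim\ext^1_1$ and $\dim\ker\kappa_1$. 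Combining these, $\dim\ext^1_0=\dim\ext^1_g-\dim\homo_{\sm,1}(\bT(\bQ_p),E)$. I expect the correct value is $\dim\homo_{\sm,1}=n$, since the proof of Proposition \ref{studyforkerkappau} uses that each $\psi_i$ is individually smooth. Subtracting from $\dim\ext^1_g=1+\frac{n(n-1)}{2}+|I_0\backslash S_0|$ then gives $1+\frac{n(n-1)}{2}+|I_0\backslash S_0|-n$.

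If the $S_0$ constraints do bite, the subtracted quantity would be $s$, and the formula would need correcting. In that case I would verify against the $\GLN_2$ Steinberg case: there the de Rham deformations have dimension $2$, and the twists by smooth characters of $\det$ span only a $1$-dimensional image under $\kappa_1$.
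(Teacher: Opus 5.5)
Your proof of the first assertion is correct, and it is the derivation the paper intends; the paper states the corollary as an immediate consequence of Lemma \ref{smhomoindepent} and Proposition \ref{studyforkerkappau}. Since $\ker\kappa_u\subseteq\ext^1_{g,u}(\Dpik,\Dpik)=\ext^1_u(\Dpik,\Dpik)\cap\ext^1_g(\Dpik,\Dpik)$, both sides lie in $\ext^1_{g,\{1,u\}}(\Dpik,\Dpik)$. On that space Lemma \ref{smhomoindepent} makes $\kappa_u$ a permutation of $\kappa_1$, so the two kernels agree.

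The dimension step is where the proposal breaks down. Your bullet computation is right: for $i\in S_0$ the lines $\sL^1_{i,i+1}$ and $E\val_p$ are distinct, so on smooth characters the condition forces $\psi_{i+1}=\psi_i$. Hence $\dim_E\homo_{\sm,1}(\bT(\bQ_p),E)=n-|S_0|=s$.

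Your proposed way out does not follow. Individual smoothness only gives $\psi_i\in E\val_p$; it does not remove the equalities imposed by $S_0$. Recomputing from Proposition \ref{studyforkerkappau}(1) gives $s$ again: $\dim_E\homo_1(\bT(\bQ_p),E)=2n-|I_0|$, and the codimension quoted there is $2n-|I_0|-s$ (its rewriting as $n-|I_0|-|S_0|$ is a sign slip). So, directly from Proposition \ref{profordimofdefor},
\[\dim_E\ext^1_0(\Dpik,\Dpik)=\Big(1+\tfrac{n(n+1)}{2}\Big)-(2n-|I_0|)=1+\tfrac{n(n-1)}{2}+|I_0|-n=1+\tfrac{n(n-1)}{2}+|I_0\backslash S_0|-s.\]
This agrees with the displayed formula only when $S_0=\emptyset$. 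For $S_0\neq\emptyset$ the displayed value is false. For $\GLN_2$ Steinberg it gives $0$, whereas $\ker\kappa_1$ contains the nonzero first-order deformation that moves the Hodge filtration (the $\cL$-invariant) with constant parameters. That is exactly what your own $\GLN_2$ check shows.

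The paper itself uses the corrected value: the proof of Lemma \ref{lemforexactforextgropus}(1) relies on $\dim_E\ext^1_g(\Dpik,\Dpik)/\ext^1_0(\Dpik,\Dpik)=s$. So rather than trying to force $\dim_E\homo_{\sm,1}(\bT(\bQ_p),E)=n$, you should conclude that the statement contains a misprint: $-s$ in place of $-n$, equivalently $|I_0|$ in place of $|I_0\backslash S_0|$. Your fallback argument proves the corrected formula.
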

			For any subspace $V\subseteq \ext^1(\Dpik,\Dpik)$, we put $\overline{V}:=V/V\cap \ext^{1}_{0}(\Dpik,\Dpik)$.\;

			\subsection{Reinterpretation and supplements for deformations}\label{reinterfor33}
			
			The discussion in this section is similar to \cite[Section 2.4]{BDcritical25} and \cite[Section 3.3]{HEparaforsemitable}.
			
			We briefly recall Fontaine's theory of almost de Rham representations.\;Let $B_{\pdr}^+:=B_{\dr}^+[\log t]$ and $B_{\pdr}:=B_{\pdr}^+[1/t]$.\;The $\gal_{\bQ_p}$-action on $B_{\dr}$ extends uniquely to an action on $B_{\pdr}$ with $g(\log t)=\log t+\log(\ccyc(g))$.\;Let $v_{\pdr}$ denote the unique $B_{\dr}$-linear derivation of $B_{\pdr}$ such that $v_{\pdr}(\log t)=-1$.\;Note that $v_{\pdr}$ commutes with $\gal_{\bQ_p}$ and that both preserve $B_{\pdr}^+$.\;Let $W$ (resp.,\;$W^+$) be a $B_{\dr}$-representation (resp.,\;$B^+_{\dr}$-representation) of $\gal_{\bQ_p}$ that is free of finite rank.\;Let $D_{\pdr}(W):=(B_{\pdr}\otimes_{B_{\dr}}W)^{\gal_{\bQ_p}}$, which is a finite-dimensional $L$-vector space of dimension at most $\dim_{B_{\dr}}W$.\;The $B_{\dr}$-representation $W$ is called \textit{almost de Rham} if $\dim_LD_{\pdr}(W)=\dim_{B_{\dr}}W$.\;$W^+$ is called \textit{almost de Rham} if $W^+[1/t]$ is almost de Rham.\;

			Keep the notation of Section \ref{Omegafil}.\;In this section,\;we use the language of $E$-$B$-pairs (see \cite{nakamura2009classification}).\;Recall that the $(\varphi,\Gamma)$-module $\cM_\Dpik:=\Dpik[1/t]$ over $\cR_{E}[1/t]$ admits a triangulation $\cF_u$ with graded pieces $R_{u,i}[1/t]=\cR_E(\unr(\phi_{u^{-1}(i)}))[1/t]$ for $1\leq i\leq n$.\;Let $\bW_{\Dpik}=W_{\dr}(\cM_\Dpik)$ (resp.,\;$\bW^+_{\Dpik}:=W_{\dr}^+(\Dpik)$) be the $\bB_\dr\otimes_{\bQ_p}E$-representation (resp.,\;$\bB_\dr^+\otimes_{\bQ_p}E$-representation) of $\gal_{\bQ_p}$ associated to $\cM_\Dpik$ (resp.\;$\Dpik$).\;Note that $D\cong D_{\pdr}(\bW_\Dpik)$.\;
			
			For $u\in \sW_n^{S_0}$,\;the filtration $\cF_u$ on $\cM_{\Dpik}$ induces a complete flag:
			\[\bF_{u}=\fil_{\bullet}^{\bF_{u}}\bW_\Dpik: \ 0 =\fil_0^{\bF_{u}}\bW_\Dpik \subsetneq \fil_1^{\bF_{u}}\bW_\Dpik \subsetneq \cdots \subsetneq \fil_{n}^{\bF_{u}}\bW_\Dpik=\bW_\Dpik\]
			on $\bW_\Dpik$ by $(\bB_\dr\otimes_{\bQ_p}E)$-subrepresentations of $\bW_{\Dpik}$.\;Applying the functor $D_{\pdr}(-)$,\;we obtain the corresponding complete flag $\fil_{\bullet}^{\bF_{u}}(D)$ on $D$:
			\begin{equation}
				\begin{aligned}
					\fil_{\bullet}^{\bF_{u}}(D)&: \ 0 =\fil_0^{\bF_{u}}(D) \subsetneq \fil_1^{\bF_{u}}(D) \subsetneq \cdots \subsetneq \fil_{n}^{\bF_{u}}(D)=D\\
				\end{aligned}
			\end{equation}
			Moreover,\;the $B_{\dr}^+$-lattice $\bW^+_\Dpik$ induces the Hodge filtration $\fil^{\bullet}_{H}(D)$:
			\begin{equation}
				\begin{aligned}
					\fil^{\bullet}_{H}(D)&: \ 0 \subsetneq \fil^{-\hpi_{n}}_{H}(D) \subsetneq \fil^{-\hpi_{n-1}}_{H}(D)  \subsetneq \cdots \subsetneq \fil^{{-\hpi_{1}}}_{H}(D)=D
				\end{aligned}
			\end{equation}
			with $\fil^{-\bh_{n+1-i}}_{H}(D):=\big(t^{-\bh_{n+1-i}}D_{\pdr}(\bW^+_\Dpik)\big)^{\gal_{\bQ_p}}$.\;

			For $u\in \sW_n^{S_0}$,\;let $\homo_{\fil}(D,D)$ (resp.,\;$\homo_{\bF_u}(D,D)$) be the subspace of endomorphisms of the $E$-vector space $D$ that preserve the filtration $\fil_{H}^{\bullet}(D)$ (resp.,\;$\fil_{\bF_{u}}^{\bullet}(D)$).\;Put
			\[\homo_{\fil,\bF_u}(D,D):=\homo_{\fil}(D,D)\cap \homo_{\bF_u}(D,D).\]
			By the same argument as in the discussion before \cite[(112)]{BDcritical25},\;we obtain a canonical $E$-linear morphism:
			\begin{equation}\label{maptofilDsigma}
				\begin{aligned}
					\nu:&\;\ext^{1}(\Dpik,\Dpik)\rightarrow \homo_{\fil}(D,D).
				\end{aligned}
			\end{equation}
			By \cite[Lemma 2.4.1]{BDcritical25},\;$\nu$ has kernel $\ext^{1}_g(\Dpik,\Dpik)$.\;Moreover,\;for $u\in\sW_n^{S_0}$,\;the map (\ref{maptofilDsigma}) induces
			\begin{equation}\label{sendtofilD}
				\begin{aligned}
					\nu_{u}:&\;\ext^{1}_u(\Dpik,\Dpik)/\ext^{1}_{g,u}(\Dpik,\Dpik)\rightarrow\homo_{\fil,\bF_u}(D,D).
				\end{aligned}
			\end{equation}
			For $1\leq i\leq s$,\;put $D^i:=D_{\pdr}(\bW_{E_i})$.\;Similar to \cite[Lemma 3.13]{HEparaforsemitable},\;we obtain
			\begin{lem}
				\begin{itemize}
					\item[(1)]
					For $1\leq i\leq s$,\;we have the following commutative diagram:
					\begin{equation}
						\xymatrix{
							0   \ar[r]  & \ext^{1,\flat}_g(E_i,E_i) \ar@{^(->}[d]\ar[r]  & \ext^{1,\flat}_1(E_i,E_i) \ar@{^(->}[d]\ar[r]  &  E \ar@{^(->}[d] \ar[r] & 0  \\
							0   \ar[r]  & \ext^{1}_g(E_i,E_i) \ar@{=}[d]\ar[r]  & \ext^{1}_1(E_i,E_i) \ar@{^(->}[d]\ar[r]  &  \homo_{\fil,\bF_1}(D^i,D^i) \ar@{^(->}[d] \ar[r] & 0  \\
							0    \ar[r] & \ext^1_g(E_i,E_i)  \ar[r] & \ext^{1}(E_i,E_i)\ar[r]   & \homo_{\fil}(D^i,D^i) \ar[r] & 0 }  
					\end{equation}	
					where $\dim_E\ext^1_g(E_i,E_i)=1+\frac{r_i(r_i-1)}{2}$ and $\homo_{\fil,\bF_1}(D^i,D^i)\cong E^{\oplus r_i}$.\;
					\item[(2)] For $1\leq i\leq s$,\;$\ext^{1}_g(E_i[1/t],E_i[1/t])\cong \homo_{\sm}(\bQ_p^{\times},E)$,\;and we have a natural isomorphism (obtained by inverting $t$) $\ext^{1,\flat}_g(E_i,E_i)\xrightarrow{\sim}\ext^{1}_g(E_i[1/t],E_i[1/t])$.\;
				\end{itemize}
			\end{lem}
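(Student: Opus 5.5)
The plan is to deduce everything from the results of Section \ref{generalforparedefor}, applied to the Steinberg module $E_i$ viewed as a non-critical semistable $(\varphi,\Gamma)$-module of rank $r_i$ in its own right. For such a module $S_0=I_0=\Delta_{r_i}$ and $\sW_{r_i}^{S_0}=\{1\}$, so only the refinement $\cF_1$ occurs. I then check exactness of the three rows by dimension counting; commutativity of the diagram is automatic, because every vertical map is an inclusion or is induced by $\nu$.

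\emph{Middle row.} Applying Proposition \ref{profordimofdefor} to $E_i$ gives
\[\dim_E\ext^1_1(E_i,E_i)=1+\frac{r_i(r_i+1)}{2}.\]
Applying Proposition \ref{studyforkerkappau}(1), with $|I_0\backslash S_0|=0$, gives
\[\dim_E\ext^1_g(E_i,E_i)=1+\frac{r_i(r_i-1)}{2},\qquad \ext^1_g(E_i,E_i)\subseteq \ext^1_1(E_i,E_i).\]
Since $\ker\nu=\ext^1_g$ by \cite[Lemma 2.4.1]{BDcritical25}, the map $\nu_1$ is injective on $\ext^1_1/\ext^1_g$. Its target $\homo_{\fil,\bF_1}(D^i,D^i)$ consists of the endomorphisms preserving two complete flags. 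By non-criticality these flags are in general position, so their stabilisers meet in a torus, and $\homo_{\fil,\bF_1}(D^i,D^i)\cong E^{\oplus r_i}$. The source has dimension $\frac{r_i(r_i+1)}{2}-\frac{r_i(r_i-1)}{2}=r_i$, so $\nu_1$ is an isomorphism and the middle row is exact.

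\emph{Bottom row.} I will show $\hH^2_{(\varphi,\Gamma)}(\EndO(E_i))=0$. By Tate duality this is equivalent to $\homo_{(\varphi,\Gamma)}(E_i,E_i(\ccyc))=0$.
\begin{itemize}
\item Any nonzero map would have to send a Steinberg quotient of $E_i$ onto a Steinberg submodule of $E_i(\ccyc)$.
\item Matching $\varphi$-eigenvalues then forces the Hodge--Tate weights to violate non-criticality.
\end{itemize}
This is the step I expect to need the most care; I would model it on the analogous vanishing in \cite[Proposition 3.6]{2019DINGSimple}. Once it holds, the Euler characteristic formula gives $\dim\ext^1(E_i,E_i)=r_i^2+1$. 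Subtracting $\dim\ext^1_g$ leaves $\frac{r_i(r_i+1)}{2}=\dim_E\fb=\dim\homo_{\fil}(D^i,D^i)$, so $\nu$ is surjective.

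\emph{Top row.} Elements of $\ext^{1,\flat}_1(E_i,E_i)$ are the twists $E_i\otimes\cR_{E[\epsilon]/\epsilon^2}(1+\psi\epsilon)$. The de Rham ones, forming $\ext^{1,\flat}_g$, are exactly those with $\psi$ smooth. I send such a twist to $\wt(\psi)\in E$; this map is surjective, and its kernel is $\ext^{1,\flat}_g$. Under $\nu$, a twist by $\psi$ maps to $\wt(\psi)\cdot\mathrm{id}$, which sits in $\homo_{\fil,\bF_1}$ as the scalars. Hence the right vertical arrow is the inclusion of scalars and the top square commutes.

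\emph{Part (2).} By Berger's equivalence, de Rham deformations of $E_i[1/t]$ correspond to deformations of the Deligne--Fontaine module $\df_{r_i,\alpha_i}$. I will compute the $(\varphi,N)$-extension complex of this module with itself: deformations of $N$ can be absorbed by conjugation, because $N$ is regular nilpotent, and what remains is a $1$-dimensional space given by unramified twists. This yields $\homo_{\sm}(\bQ_p^\times,E)$. The map $\ext^{1,\flat}_g(E_i,E_i)\to\ext^1_g(E_i[1/t],E_i[1/t])$ sends a twist by smooth $\psi$ to the same twist of $E_i[1/t]$. It is injective, because an unramified twist is detected by the deformed $\varphi$-eigenvalue on $D_{\mathrm{st}}$. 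Since both sides are $1$-dimensional, it is an isomorphism.
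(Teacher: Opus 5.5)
Your proposal is correct and follows essentially the route the paper intends. The paper gives no argument of its own beyond pointing to the analogous \cite[Lemma 3.13]{HEparaforsemitable}, and your dimension count is exactly that kind of argument: Propositions \ref{profordimofdefor} and \ref{studyforkerkappau} applied to $E_i$, $\ker\nu=\ext^1_g$, Tate duality with the Euler characteristic, and the $(\varphi,N)$-computation for part (2).

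One small correction concerns the step you flagged. In the paper's normalisation, twisting by $\ccyc$ multiplies the $\varphi$-eigenvalues on $D_{\mathrm{st}}$ by $p^{-1}$. So the eigenvalues of a Steinberg quotient of $E_i$ never match those of a Steinberg submodule of $E_i(\ccyc)$; equivalently, $D_{\mathrm{st}}(\EndO(E_i)(\ccyc))^{\varphi=1,N=0}=0$. Hence $\homo_{(\varphi,\Gamma)}(E_i,E_i(\ccyc))=0$ follows from this eigenvalue mismatch alone, with no appeal to Hodge--Tate weights or non-criticality.
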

			In the sequel,\;define 
			\begin{equation}\label{extgrmap}
				\begin{aligned}
				\ext^{1,\flat}_{\gr}({\Dpik},{\Dpik})&:=\prod_{i=1}^s\ext^{1,\flat}_g(E_i,E_i)\xrightarrow{\sim}\prod_{i=1}^s\ext^{1}_g(E_i[1/t],E_i[1/t]),\\
			\ext^{1}_{\gr}({\Dpik},{\Dpik})&:=\prod_{i=1}^s\ext^{1}_g(E_i,E_i).\;
				\end{aligned}
			\end{equation}
			\begin{lem}\label{lemforexactforextgropus}
				\begin{itemize}
					\item[(1)]	There is an isomorphism
					\begin{equation}\label{expianextphi}
						{\ext}^{1}_g(\Dpik,\Dpik)/{\ext}^{1}_0(\Dpik,\Dpik)\xrightarrow{\sim}\ext^{1,\flat}_{\gr}({\Dpik},{\Dpik}).
					\end{equation}
					\item[(2)] For $u\in \sW_n^{S_0}$,\;the following diagram commutes:
					\begin{equation}\label{commutativefortriandwhole}
						\xymatrix{
							0   \ar[r]  & \ext^{1,\flat}_{\gr}({\Dpik},{\Dpik}) \ar@{=}[d]\ar[r]  &\overline{\ext}^{1}_{u}(\Dpik,\Dpik) \ar@{^(->}[d]\ar[r]  &  \mathrm{Im}(\nu_{u}) \ar@{^(->}[d] \ar[r] & 0  \\
							0    \ar[r] & \ext^{1,\flat}_{\gr}({\Dpik},{\Dpik}) \ar[r] & \overline{\ext}^{1}(\Dpik,\Dpik)\ar[r]   &  \mathrm{Im}(\nu) \ar[r] & 0  }
					\end{equation}
					Moreover,\;$\dim_E\mathrm{Im}(\nu)=\frac{n(n+1)}{2}-|I_0\backslash S_0|$.\;
				\end{itemize}
			\end{lem}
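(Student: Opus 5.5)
The plan is to build both rows of (\ref{commutativefortriandwhole}) from the maps already at hand, $\kappa_1$ and $\nu$, and then get the dimension of $\mathrm{Im}(\nu)$ by counting.

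\textbf{Part (1).} By Proposition \ref{studyforkerkappau}(1), $\ext^1_g(\Dpik,\Dpik)=\ext^1_{g,1}(\Dpik,\Dpik)$. Its image under $\kappa_1$ is $\homo_{\sm,1}(\bT(\bQ_p),E)$, by Proposition \ref{profordimofdefor} together with the de Rham criterion $\psi_i\in\homo_{\sm}(\bQ_p^\times,E)$. Since $\ext^1_0(\Dpik,\Dpik)=\ker\kappa_1$, we get
\[\ext^1_g(\Dpik,\Dpik)/\ext^1_0(\Dpik,\Dpik)\cong \homo_{\sm,1}(\bT(\bQ_p),E).\]
Next I identify the right-hand side blockwise with $\prod_i\ext^{1,\flat}_g(E_i,E_i)$.
\begin{itemize}
\item Inside the $i$-th Steinberg block $E_i$, the simple $\sL$-invariants $\sL^1_{j,j+1}$ for $j\in S_0$ are different from $E\val_p$. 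A smooth $\psi$ lies in $E\val_p$, so the condition $\psi_{j+1}-\psi_j\in\sL^1_{j,j+1}$ forces $\psi_{j+1}=\psi_j$. Hence $\psi$ is constant on each block, equal to some $\psi'_i$.
\item For pairs in $I_0\setminus S_0$, the $\sL$-invariant is $E\val_p$, so no condition is imposed on smooth $\psi$.
\item Therefore $\homo_{\sm,1}(\bT(\bQ_p),E)\cong\homo_{\sm}(\bZ_{S_0}(\bQ_p),E)\cong\prod_i\homo_{\sm}(\bQ_p^\times,E)$.
\end{itemize}
By part (2) of the previous lemma, the last product is $\prod_i\ext^{1}_g(E_i[1/t],E_i[1/t])\cong\ext^{1,\flat}_{\gr}$. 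To make the map canonical, I send a deformation $\widetilde D$ to its graded pieces $\widetilde E_i$ for the $\cF_{S_0}$-filtration (after inverting $t$). The computation above is then exactly the statement that this map is an isomorphism. As a consistency check, $\dim\homo_{\sm,1}=s$, and $1+\frac{n(n-1)}2+|I_0\backslash S_0|-n+s$ agrees with Proposition \ref{studyforkerkappau}(1) once one uses $s=n-|S_0|$.

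\textbf{Bottom row of (2).} Since $\ext^1_0\subseteq\ext^1_g$, the space $\overline{\ext}^1(\Dpik,\Dpik)$ surjects onto $\ext^1/\ext^1_g\cong\mathrm{Im}(\nu)$, because $\ker\nu=\ext^1_g$ by \cite[Lemma 2.4.1]{BDcritical25}. The kernel of this surjection is $\ext^1_g/\ext^1_0\cong\ext^{1,\flat}_{\gr}$ by part (1).

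\textbf{Top row of (2).} I argue the same way with $\ext^1_u$. Here $\ext^1_u\cap\ext^1_0=\ext^1_{0,u}$ by Corollary \ref{dimforext1g}, and $\ext^1_u\cap\ext^1_g=\ext^1_{g,u}$ by Proposition \ref{studyforkerkappau}(2). So the kernel of the top row is $\ext^1_{g,u}/\ext^1_{0,u}\cong\homo_{\sm,u}(\bT(\bQ_p),E)$.

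The main obstacle is showing that this kernel maps isomorphically onto the same $\ext^{1,\flat}_{\gr}$, i.e.\ that $\ext^1_{g,u}\to\ext^1_g/\ext^1_0$ is surjective. My approach is as follows.
\begin{itemize}
\item Apply the same block argument to $\cF_u$: smooth $\psi$ compatible with $\cF_u$ are constant on the blocks $E_{u,j}$. This gives $\dim\homo_{\sm,u}=f_u$.
\item This count is too large compared with $s$, so I need a different route: every element of $\ext^1_g/\ext^1_0$ should already be represented inside $\ext^1_{g,u}$.
\item Twist $\Dpik$ by the $E[\epsilon]$-deformations $E_i\otimes(1+\psi'_i\epsilon)$ while keeping the extension classes between blocks. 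Equivalently, use the fact that de Rham deformations correspond to deformations of the filtered $(\varphi,N)$-module with the filtration fixed.
\item Deform only $\varphi$ by scalars on each indecomposable $\df_{l_i,\alpha_i}$. This deformation preserves every $(\varphi,N)$-stable flag $\cF_u$, hence lies in each $\ext^1_{g,u}$.
\end{itemize}
These deformations realize all of $\ext^{1,\flat}_{\gr}$. Compatibility of the resulting classes across different $u$ is supplied by Lemma \ref{smhomoindepent}.

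\textbf{Commutativity and dimension.} The vertical maps in (\ref{commutativefortriandwhole}) are inclusions, so the diagram commutes by construction. Finally, using Proposition \ref{studyforkerkappau}(1),
\[\dim\mathrm{Im}(\nu)=\dim\ext^1-\dim\ext^1_g=(1+n^2)-\Big(1+\tfrac{n(n-1)}2+|I_0\backslash S_0|\Big)=\tfrac{n(n+1)}2-|I_0\backslash S_0|,\]
where $\dim\ext^1(\Dpik,\Dpik)=1+n^2$. That dimension holds because $\hH^2(\EndO\Dpik)$ is dual to $\hH^0(\EndO\Dpik(\ccyc))=0$ by the weights, $\hH^0(\EndO\Dpik)=E$, and the Euler characteristic formula applies.
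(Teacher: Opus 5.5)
Your part (1), the bottom row, and the formula for $\dim_E\mathrm{Im}(\nu)$ are fine. Your identification $\ext^1_g/\ext^1_0\cong\homo_{\sm,1}(\bT(\bQ_p),E)$ via $\kappa_1$ actually makes explicit what the paper only asserts by comparing dimensions.

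\textbf{The failing step is in the top row.} The claim $\dim_E\homo_{\sm,u}(\bT(\bQ_p),E)=f_u$ is false.
\begin{itemize}
\item The smoothness constraints for $\cF_u$ come from every $(i,j)\in R^+_u$ with $u^{-1}(i)\in S_0$, that is, from the positions in $\cF_u$ of $e_k$ and $e_{k+1}$ for each $k\in S_0$.
\item These positions need not be adjacent, so the blocks $E_{u,j}$ do not see them.
\item For example, take $n=3$, $S_0=I_0=\{1\}$ and the refinement $e_1,e_3,e_2$. Then $S_0(u)=\emptyset$ and $f_u=3$. Yet $\psi_1=\psi_3$ is forced, because $\sL^u_{1,3}\neq E\val_p$.
\item In general $\psi$ must be constant on the positions occupied by each $\df_{l_r,\alpha_r}$, so the dimension is $s$ for every $u$.
\end{itemize}
This is not a cosmetic slip. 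Since $\ext^1_0\subseteq\ext^1_g$ and $\ext^1_u\cap\ext^1_0=\ext^1_{0,u}$, the map $\ext^1_{g,u}/\ext^1_{0,u}\to\ext^1_g/\ext^1_0$ is injective. A dimension $f_u>s$ would therefore contradict the very isomorphism you need for the left vertical equality. Calling the count ``too large'' and moving on leaves the argument internally inconsistent.

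\textbf{The fix is to drop that bullet.} With the correct count $s$, injectivity plus equality of dimensions gives the top row immediately. This is what the paper's one-line ``(2) follows from Corollary \ref{dimforext1g}'' amounts to.

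Your explicit deformations are still useful: scaling $\varphi$ by $1+a_r\epsilon$ on each $\df_{l_r,\alpha_r}$, keeping $N$ and $\fil^{\bullet}_H$, preserves every $(\varphi,N)$-stable flag. So they give an independent proof of surjectivity. The appeal to Lemma \ref{smhomoindepent} is not needed.

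\textbf{Two smaller corrections.}
\begin{itemize}
\item Your ``consistency check'' does not hold. The quantity $1+\frac{n(n-1)}{2}+|I_0\backslash S_0|-n+s$ equals $\dim_E\ext^1_g(\Dpik,\Dpik)$ only when $s=n$. The formula printed in Corollary \ref{dimforext1g} should have $-s$ in place of $-n$, since $\dim_E\ker\kappa_1=\big(1+\frac{n(n+1)}{2}\big)-(2n-|I_0|)$.
\item The vanishing $\hH^2_{(\varphi,\Gamma)}(\EndO(\Dpik))=0$, i.e. $\homo(\Dpik,\Dpik(\ccyc))=0$, does not follow from the Hodge--Tate weights alone. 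It follows from comparing Frobenius eigenvalues and then using non-criticality.
\end{itemize}
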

			\begin{proof}Recall the dimensions of ${\ext}^{1,\flat}_0(\Dpik,\Dpik)$ and ${\ext}^{1}_{g,1}(\Dpik,\Dpik)={\ext}^{1}_g(\Dpik,\Dpik)$ from Proposition (\ref{studyforkerkappau}) and Corollary \ref{dimforext1g}.\;By Proposition 		\ref{profordimofdefor},\;these subspaces yield a natural morphism
				\[{\ext}^{1}_g(\Dpik,\Dpik)\rightarrow \ext^{1,\flat}_{\gr}({\Dpik},{\Dpik}).\]
				Since $\dim_E{\ext}^{1}_g(\Dpik,\Dpik)/{\ext}^{1,\flat}_0(\Dpik,\Dpik)=s$, which equals the $E$-dimension of the right-hand side of (\ref{expianextphi}), we obtain the isomorphism in $(1)$.\;Moreover,\;we deduce that $\dim_E\mathrm{Im}(\nu)=\frac{n(n+1)}{2}-|I_0\backslash S_0|$.\;$(2)$ follows from Corollary \ref{dimforext1g}.\;
			\end{proof}
			
			\begin{lem}
				We have $\cM_{\Dpik}\cong \oplus_{i=1}^sE_i[1/t]$.\;
			\end{lem}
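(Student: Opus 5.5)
The plan is to prove the splitting $\cM_{\Dpik}\cong \oplus_{i=1}^sE_i[1/t]$ by induction on $s$, via the parabolic filtration $\Dpik=[E_1-E_2-\cdots-E_s]$. After inverting $t$, this becomes a filtration of $\cM_{\Dpik}$ with graded pieces $E_i[1/t]$. It therefore suffices to show that every extension class of $E_j[1/t]$ by $E_i[1/t]$ occurring in $\cM_{\Dpik}$ vanishes. More precisely, the relevant groups
\[\ext^1_{(\varphi,\Gamma)}\big(E_j[1/t],E_i[1/t]\big)\quad (i<j)\]
should be zero, or at least the extension classes that actually arise should be trivial.

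The first step is to understand $E_i[1/t]$. Since $E_i$ is semistable, $E_i[1/t]$ is the $(\varphi,\Gamma)$-module over $\cR_E[1/t]$ attached to the Deligne--Fontaine module $\df_{l_i,\alpha_i}$, with the Hodge filtration forgotten. Indeed, by Berger's equivalence, $(\varphi,\Gamma)$-modules over $\cR_E[1/t]$ that are semistable, meaning $\cR_E[1/t,\log X]$-trivialisable, correspond to $(\varphi,N)$-modules, with no filtration. Concretely, $\cM_{\Dpik}\cong (\cR_E[1/t,\log X]\otimes_E D)^{N=0}$ with the induced $\varphi,\Gamma$ actions. Under this equivalence, $\cM_{\Dpik}$ corresponds to $(\varphi,N,D)$ itself. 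The statement is therefore reduced to the corresponding statement for the Deligne--Fontaine module: $(\varphi,N,D)\cong\oplus_i \df_{l_i,\alpha_i}$.

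The second step is to check this splitting on the Deligne--Fontaine side. By the very setup in Section~\ref{Omegafil}, we chose a basis $e_{i,j}$ with $\varphi(e_{i,j})=\alpha_ip^{j-1}e_{i,j}$ and $N(e_{i,j})=e_{i,j-1}$. The phrase ``semisimplification'' there is used loosely, and I would verify it directly. Since the $\phi_k$ are pairwise distinct, $\varphi$ is semisimple, so $D$ decomposes into $\varphi$-eigenlines. The relation $N\varphi=p\varphi N$ forces $N$ to send the $\phi_k$-eigenline into the $\phi_kp^{-1}$-eigenline. Hence $N$ acts by chains along the $\varphi$-eigenlines. By the definition of $S_0$ (namely $i\in S_0$ iff $N(e_{i+1})=e_i$), after rescaling the basis, $D$ is exactly $\oplus_{i}\df_{l_i,\alpha_i}$, with each block spanned by $\{e_{i,j}\}_j$. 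Each summand is $(\varphi,N)$-stable. Applying the equivalence to this direct sum decomposition gives $\cM_{\Dpik}\cong\oplus_i \cM_i$. Here $\cM_i$ is the submodule generated by $\df_{l_i,\alpha_i}$.

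The third step is to identify $\cM_i$ with $E_i[1/t]$. The parabolic filtration $\cF_{S_0}$ comes from the $(\varphi,N)$-stable flag $\oplus_{i'\le i}\df_{l_{i'},\alpha_{i'}}$. Its $i$-th graded piece, after inverting $t$, is the module attached to $\df_{l_i,\alpha_i}$, which is $\cM_i$. Hence $E_i[1/t]\cong\cM_i$.

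The main obstacle is being careful that the equivalence between $\cR_E[1/t]$-modules and filtration-free $(\varphi,N)$-modules really holds over $\bQ_p$ for semistable objects. In particular, it must be fully faithful, so that a $(\varphi,N)$-stable direct sum decomposition lifts. I would cite Berger's theorem, or argue directly as follows. Any $(\varphi,\Gamma)$-stable lattice change by powers of $t$ disappears after inverting $t$, and $D_{\mathrm{st}}(-)=(\cR_E[\log X,1/t]\otimes-)^{\Gamma}$ is an exact tensor functor on such modules with quasi-inverse as above.
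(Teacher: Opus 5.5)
Your proof is correct, but it takes a different route from the paper.

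\textbf{The paper's route.} The paper argues Galois-cohomologically. Since $\Dpik$ is de Rham, the extension classes between the rank-one graded pieces of $\cM_{\Dpik}$ lie in $\hH^1_g$ of rank-one modules $\cR_E(\unr(\alpha))[1/t]$. Nakamura's computations show these groups vanish except in the special case $\alpha=p$ (up to normalization), where $\hH^1_g$ is a line. So after inverting $t$, the only extensions that can remain non-split are the monodromy ones inside each Steinberg block.

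\textbf{Your route.} You pass through Berger's functor $D\mapsto(\cR_E[1/t,\log X]\otimes_E D)^{N=0}$. This identifies $\cM_{\Dpik}$ with the module attached to the unfiltered $(\varphi,N)$-module $D$. You then reduce to the essentially tautological decomposition $D\cong\oplus_i\df_{l_i,\alpha_i}$, which is forced by the distinctness of the $\phi_k$.

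\textbf{Comparison.} Your route is more transparent in the non-generic case $I_0\neq S_0$. When $\alpha_{i+1}=\alpha_ip^{l_i}$, the relevant $\hH^1_g$ between the last line of $E_i$ and the first line of $E_{i+1}$ is non-zero. So cohomological vanishing alone does not split that extension; one must also use the fact that $N$ does not link those two basis vectors. Your approach reads this off directly from $D$, whereas the paper's terse proof leaves it implicit. The paper's route, in turn, stays within the $\hH^1_g$-d\'evissage toolkit used throughout and avoids the equivalence over $\cR_E[1/t]$.

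\textbf{Two small remarks.}
First, the heuristic in your opening paragraph, that $\ext^1_{(\varphi,\Gamma)}(E_j[1/t],E_i[1/t])$ vanishes for $i<j$, is false in general, for the same non-generic reason. Your actual argument never uses it.
Second, you do not need full faithfulness of Berger's functor. It suffices to have:
\begin{itemize}
\item additivity of the functor;
\item the identification $\Dpik[1/t]\cong(\cR_E[1/t,\log X]\otimes_E D_{\mathrm{st}}(\Dpik))^{N=0}$;
\item exactness of inverting $t$, which gives $E_i[1/t]\cong$ the module attached to $\df_{l_i,\alpha_i}$.
\end{itemize}
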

			\begin{proof} Note that $\hH^1_{e}(\cR_E(\unr(\alpha))[1/t])=\hH^1_{g}(\cR_E(\unr(\alpha))[1/t])=0$ if $\alpha\neq 1$,\;and $\hH^1_{g}(\cR_E(\unr(\alpha))[1/t])\cong E$ if $\alpha=p$,\;by \cite[Lemmas 4.2 \& 4.3]{nakamura2009classification}.\;
			\end{proof}
			
			Similar to \cite[(121)]{BDcritical25} and the commutative diagram below \cite[(123)]{BDcritical25},\;we have the natural maps
			\[\ext^{1}(\Dpik,\Dpik)\rightarrow \ext^{1}(\cM_{\Dpik},\cM_{\Dpik})\rightarrow \homo_{E}(D,D).\]
			Similar to the argument in \cite[(122)-(123)]{BDcritical25},\;we have a commutative diagram of short exact sequences: 
			\begin{equation}\label{diagDtoML'}
				\begin{aligned}
					\xymatrix{
					0   \ar[r]  &  \ext^{1,\flat}_{\gr}({\Dpik},{\Dpik}) \ar[d]^{(\ref{extgrmap})}\ar[r]  & \overline{\ext}^{1}(\Dpik,\Dpik) \ar[d]\ar[r]  &  \mathrm{Im}(\nu) \ar@{^(->}[d] \ar[r] & 0  \\
					0    \ar[r] &  \ext^{1,\flat}_{\gr}({\Dpik},{\Dpik}) \ar[r] & \ext^{1}(\cM_{\Dpik},\cM_{\Dpik})\ar[r]   & \homo_{E}(D,D) \ar[r]  & 0 }
				\end{aligned}
			\end{equation}
			As in \cite[Proposition 2.4.4]{BDcritical25} and \cite[Proposition 3.16]{HEparaforsemitable},\;we have:
			\begin{pro}\label{splitingforext1gps}
				There is a splitting of the bottom exact sequence in (\ref{diagDtoML'}) which only depends on the choice of $\log_p(p)\in E$.\;Therefore,\;for $\ast\in \{\emptyset,u\}$,\;we have 
				\begin{equation}\label{splittingforextgps}
					\begin{aligned}
						&f_{\Dpik,\ast}:\overline{\ext}_{\ast}^{1}(\Dpik,\Dpik)\xrightarrow{\sim}\ext^{1,\flat}_{\gr}({\Dpik},{\Dpik})\oplus\mathrm{Im}(\nu_{\ast}),\\
					\end{aligned}
				\end{equation}
			\end{pro}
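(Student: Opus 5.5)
The plan is to construct the splitting of the bottom row of (\ref{diagDtoML'}) directly on $\cM_{\Dpik}$, and then pull it back along the left and middle vertical maps.

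\textbf{Step 1: reduce to the blocks.} By the preceding lemma, $\cM_{\Dpik}\cong\oplus_{i=1}^sE_i[1/t]$. Hence $\ext^1(\cM_{\Dpik},\cM_{\Dpik})$ decomposes as $\bigoplus_{i,j}\ext^1(E_j[1/t],E_i[1/t])$. Correspondingly, $\homo_E(D,D)=\bigoplus_{i,j}\homo_E(D^j,D^i)$. The map to $\homo_E(D,D)$ is defined via $D_{\pdr}$ and the derivation $v_{\pdr}$, as in \cite[(121)]{BDcritical25}. It is compatible with this block decomposition, and the kernel $\ext^{1,\flat}_{\gr}$ lives only in the diagonal blocks $i=j$.

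\textbf{Step 2: off-diagonal blocks.} For $i\neq j$, the generic assumption $\phi_a\neq\phi_b$ forces the $\varphi$-eigenvalues of $D^j$ and $D^i$ to be distinct. Using Nakamura's computation \cite[Lemmas 4.2, 4.3]{nakamura2009classification}, as in the proof of the preceding lemma, $\ext^1_g(E_j[1/t],E_i[1/t])=0$. The map $\ext^1(E_j[1/t],E_i[1/t])\to\homo_E(D^j,D^i)$ is therefore an isomorphism, so no choice is needed in these blocks.

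\textbf{Step 3: diagonal blocks.} For each $i$, we have $0\to\homo_{\sm}(\bQ_p^\times,E)\to\ext^1(E_i[1/t],E_i[1/t])\to\homo_E(D^i,D^i)\to0$. I will split it by writing an explicit section, following \cite[Proposition 2.4.4]{BDcritical25} and \cite[Proposition 3.16]{HEparaforsemitable}. Given $f\in\homo_E(D^i,D^i)$, form the deformation of $E_i[1/t]$ over $\cR_{E[\epsilon]/\epsilon^2}[1/t]$ obtained by twisting the $\Gamma$-action through $\exp(\epsilon\log(\chi(\gamma))f)$ on $D^i\otimes\cR_E[\log X,1/t]$. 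Here $\log(\chi)$ is extended to $\bQ_p^\times$ using the chosen $\log_p(p)$, so this is the only dependence on choices.

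One must check three things:
\begin{itemize}
\item the construction commutes with $\varphi$ and $N$ up to the correct normalization, which holds because $f$ is only required to be $E$-linear after inverting $t$, where the $(\varphi,N)$-structure can be untwisted;
\item it is linear in $f$;
\item its image under $v_{\pdr}$ recovers $f$.
\end{itemize}
The last point comes from $g(\log t)=\log t+\log\ccyc(g)$ and $v_{\pdr}(\log t)=-1$.

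\textbf{Main obstacle.} I expect the main difficulty to be the compatibility with $N\neq0$ inside the Steinberg block $E_i$. The naive twist by $f$ need not commute with the monodromy-induced extension structure. I would first try to work purely with the $\bB_{\dr}$-representation $\bW_{E_i}$ and almost de Rham theory, where $D^i=D_{\pdr}(\bW_{E_i})$ and $N$ corresponds to $v_{\pdr}$. There the deformation $\bW_{E_i}\otimes(1+\epsilon\,\log\ccyc\cdot f)$ is canonical, and it lifts uniquely to $\cM$ since $\ext^1$ of $\cR[1/t]$-modules is detected on $W_{\dr}$ modulo $\homo_{\sm}$.

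\textbf{Step 4: conclusion.} Summing the block splittings gives a section $s$ of $\ext^1(\cM_{\Dpik},\cM_{\Dpik})\to\homo_E(D,D)$. Since the left vertical arrow in (\ref{diagDtoML'}) is the isomorphism (\ref{extgrmap}), the snake lemma shows that the middle vertical map is injective. Restricting $s$ to $\mathrm{Im}(\nu)$ then lands in $\overline{\ext}^1(\Dpik,\Dpik)$, which yields $f_{\Dpik}$.

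For $\ast=u$, apply the same restriction to $\mathrm{Im}(\nu_u)\subseteq\mathrm{Im}(\nu)$, using the top row of (\ref{commutativefortriandwhole}). This gives $f_{\Dpik,u}$, compatibly with $f_{\Dpik}$.
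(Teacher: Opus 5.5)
Your Steps 1 and 4 are the right frame, and this is all the paper's one-line appeal to Breuil--Ding amounts to. The pull-back argument in Step 4, including the passage to $\mathrm{Im}(\nu_u)$, is fine. The problem is Step 3, which is the only point that is new compared with the crystalline case.

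\textbf{Step 3 fails for non-scalar $f$.} Twisting $\Gamma$ by $\exp(\epsilon\log\ccyc(\gamma)f)$ on $\cR_E[\log X,1/t]\otimes_E D^i$ has two requirements. It must commute with $\varphi\otimes\varphi_{D^i}$, and it must preserve the kernel of $N$. Both hold only if $f$ commutes with $\varphi_{D^i}$ and with $N$, i.e.\ only for $f\in\EndO_{(\varphi,N)}(D^i)=E$.
\begin{itemize}
\item Inverting $t$ does not remove $\varphi$. So for $l_i\geq 2$ no non-scalar $f$ is reached, and the obstruction is already $\varphi$, not $N$.
\item The fallback through $\bW_{E_i}$ does not help. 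Lifts of a class of $\ext^1(\bW_{E_i},\bW_{E_i})\cong\homo_E(D^i,D^i)$ to $\ext^1(E_i[1/t],E_i[1/t])$ are unique only modulo $\homo_{\sm}(\bQ_p^\times,E)$. That is exactly the ambiguity a splitting must remove. Moreover, unramified twists are invisible on $\bW_{E_i}$, so $\log_p(p)$ cannot enter there.
\end{itemize}
The missing idea is the trace decomposition $\EndO(E_i[1/t])=\cR_E[1/t]\cdot\mathrm{id}\oplus\EndO^0(E_i[1/t])$. The de Rham classes $\ext^1_g(E_i[1/t],E_i[1/t])=\homo_{\sm}(\bQ_p^\times,E)$ are twists, so they lie in the scalar summand $\hH^1_{(\varphi,\Gamma)}(\cR_E[1/t])=\homo(\bQ_p^\times,E)$. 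Hence $\hH^1_{(\varphi,\Gamma)}(\EndO^0(E_i[1/t]))\to\homo^0_E(D^i,D^i)$ (trace-zero part) is injective. It is bijective by your own exact sequence: $(l_i^2+1)-2=l_i^2-1$. On the scalar summand, $\val_p$ spans the kernel, and $\log_p$ maps onto $E\cdot\mathrm{id}$. The choice of $\log_p$ is the only choice made, and this gives the section.

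\textbf{Step 2 needs genericity, not just distinct eigenvalues.} Distinctness of the $\phi_a$ is weaker than the paper's genericity $I_0=S_0$, and this section explicitly allows $I_0\neq S_0$.
\begin{itemize}
\item Suppose $\alpha_{i+1}=\alpha_ip^{l_i}$. Setting $N(e_{t_i+1})=e_{t_i}$ gives a non-split extension of $(\varphi,N)$-modules of $D^{i+1}$ by $D^i$. By Berger, this is a nonzero class in $\ext^1_g(E_{i+1}[1/t],E_i[1/t])$. It is the exceptional eigenvalue ratio $p^{\pm1}$ in the Nakamura computation you quote.
\item Both $\ext^1(E_{i+1}[1/t],E_i[1/t])$ and $\homo_E(D^{i+1},D^i)$ have dimension $l_il_{i+1}$. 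So that block map is then neither injective nor surjective.
\item Consequently the bottom row of (\ref{diagDtoML'}) is not exact as written; compare $\dim_E\mathrm{Im}(\nu)=\frac{n(n+1)}{2}-|I_0\backslash S_0|$.
\end{itemize}
So your argument only covers $I_0=S_0$.

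To get $f_{\Dpik,\ast}$ in general, use a retraction instead of a section:
\begin{enumerate}
\item send a class to its diagonal blocks in $\prod_i\ext^1(E_i[1/t],E_i[1/t])$;
\item take traces in $\prod_i\homo(\bQ_p^\times,E)$;
\item project onto $E\val_p$ along $E\log_p$, and divide by $l_i$.
\end{enumerate}
This map kills $\ext^1_0(\Dpik,\Dpik)$, because on an $\cF_1$-trianguline class the $i$-th trace is the sum of the $\psi_a$ in that block. It also kills the off-diagonal de Rham classes, and it is the identity on $\ext^{1,\flat}_{\gr}(\Dpik,\Dpik)$. Its kernel in $\overline{\ext}^1_\ast(\Dpik,\Dpik)$ then maps isomorphically onto $\mathrm{Im}(\nu_\ast)$.
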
			
In the sequel,\;we identify $\homo_{E}(D,D)$ (resp.,\;$\homo_{\fil}(D,D)$) with $\fg$ (resp.\;$\mathrm{Ad}_{g}(\fb)$) under the basis $e_{1},\cdots,e_{n}$ of $D$.\;Then for each $u\in \sW_n^{S_0}$,\;
			\begin{equation}
				\begin{aligned}
					&\homo_{\bF_u}(D,D)\cong\mathrm{Ad}_{u^{-1}}(\fb),\; &\homo_{\fil,\bF_u}(D,D)=\mathrm{Ad}_{u^{-1}}(\fb)\cap \mathrm{Ad}_{g}(\fb).\;
				\end{aligned}
			\end{equation}
			\begin{rmk}\label{explainHomasLiealg} The non-critical assumption implies that $g\bB=u^{-1}b_uw_{0}\bB$ for some $b_u\in \bB$.\;Note that $\overline{\fb}=\mathrm{Ad}_{w_0}(\fb)$ is the Borel algebra of lower triangular matrices.\;Then $\mathrm{Ad}_{u^{-1}}(\fb)\cap \mathrm{Ad}_{g}(\fb)\cong \mathrm{Ad}_{u^{-1}b_u}(\fb\cap \overline{\fb})=\mathrm{Ad}_{u^{-1}b_u}(\ft)$.\;
			\end{rmk}
			\begin{lem}\label{desofimage}$\mathrm{Im}(\nu_{u})\cap \mathrm{Ad}_{u^{-1}}(\fn_{\emptyset})=\mathrm{Ad}_{u^{-1}}(\fn_{\emptyset})$.\;
			\end{lem}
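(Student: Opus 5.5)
The plan is to read $\mathrm{Im}(\nu_u)$ through the bottom row of (\ref{diagDtoML'}), i.e.\ via the natural map $\ext^1_u(\Dpik,\Dpik)\rightarrow\ext^1_u(\cM_\Dpik,\cM_\Dpik)\rightarrow\homo_{\bF_u}(D,D)\cong\mathrm{Ad}_{u^{-1}}(\fb)$. With this reading, the claimed equality $\mathrm{Im}(\nu_u)\cap\mathrm{Ad}_{u^{-1}}(\fn_{\emptyset})=\mathrm{Ad}_{u^{-1}}(\fn_{\emptyset})$ says that every strictly $\cF_u$-upper-triangular endomorphism of $D$ is hit by a trianguline deformation. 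So the task is to produce, for each $X\in\mathrm{Ad}_{u^{-1}}(\fn_{\emptyset})$, an element of $\ext^1_u(\Dpik,\Dpik)$ mapping to $X$.

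\textbf{Step 1 (the $\cM_\Dpik$-level).} Consider deformations $\widetilde{\cM}$ of $\cM_\Dpik$ over $\cR_{E[\epsilon]/\epsilon^2}[1/t]$ that respect $\cF_u$ and have trivial deformations on all graded pieces $R_{u,i}[1/t]$. Since each graded piece is of the form $\cR_E(\unr(\phi_{u^{-1}(i)}))[1/t]$, Fontaine's equivalence for almost de Rham $B_{\dr}$-representations applies via $D_{\pdr}$. It identifies such deformations with deformations of the pair $(D,\fil^{\bF_u}_\bullet(D))$ together with its $v_{\pdr}$-nilpotent operator. Concretely, the fibre of $\ext^1_u(\cM_\Dpik,\cM_\Dpik)\rightarrow\homo_E(D,D)$ over $\mathrm{Ad}_{u^{-1}}(\fn_{\emptyset})$ is all of $\mathrm{Ad}_{u^{-1}}(\fn_{\emptyset})$. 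Indeed, one simply perturbs the flag-preserving endomorphism by $\epsilon X$, which keeps $\bF_u$ stable and leaves all graded pieces unchanged. This uses the splitting of Proposition \ref{splitingforext1gps} to separate off the $\ext^{1,\flat}_{\gr}$-component.

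\textbf{Step 2 (lifting to $\Dpik$).} Next one shows that every such $\widetilde{\cM}$ comes from some $\widetilde{\Dpik}\in\ext^1_u(\Dpik,\Dpik)$. Since the deformation preserves $\cF_u$, we build $\widetilde{\Dpik}$ inductively as $[\widetilde{R}_{u,1}-\cdots-\widetilde{R}_{u,n}]$ with $\widetilde{R}_{u,i}=R_{u,i}\otimes\cR_{E[\epsilon]/\epsilon^2}$ trivial. At each stage the obstruction to lifting an extension class from $\cR_E[1/t]$ to $\cR_E$ lies in the $\hH^2$ (equivalently, by Tate duality, the $\hH^0$) of $\homo(R_{u,j},\Dpik_1^{j-1})$. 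This is exactly the group shown to vanish in the proof of Proposition \ref{profordimofdefor}, and the inverse-$t$ map on $\hH^1$ of $R_{u,i}^{\vee}\otimes R_{u,j}$ is surjective for $i<j$ because all Hodge--Tate weights of $R_{u,i}^\vee\otimes R_{u,j}$ are positive (compare \cite[Proposition A.3]{Dingsocle} as used in Proposition \ref{studyforkerkappau}). We then lift the $B_{\dr}^+$-lattice $\bW^+_\Dpik$ along $1+\epsilon X$. This is possible because $X$ preserves $\bF_u$, and the non-critical hypothesis (Remark \ref{explainHomasLiealg}, $g\bB=u^{-1}b_uw_0\bB$) guarantees that the lattice determined by $\bF_u$ and the Hodge--Tate weights is stable.

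\textbf{Step 3 (dimension check).} As a consistency check and to conclude, we would count dimensions. By Proposition \ref{profordimofdefor}, $\dim\ext^1_u=1+\frac{n(n+1)}{2}$. By Proposition \ref{studyforkerkappau}, the quotient by $\ext^1_{g,u}$ has dimension equal to $\frac{n(n+1)}{2}+1-\dim\ext^1_{g,u}$. The contribution of the torus part $\kappa_u$, computed via $\homo_u(\bT(\bQ_p),E)$, accounts for the diagonal, so the remaining $\frac{n(n-1)}{2}$ dimensions must fill $\mathrm{Ad}_{u^{-1}}(\fn_{\emptyset})$.

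The main obstacle is Step 2 at the pairs $(i,j)\in R^+_u$. Here $\phi_{u^{-1}(j)}=p\,\phi_{u^{-1}(i)}$, the relevant $\hH^1$ jumps, and the simple $\sL$-invariants $\sL^u_{i,j}$ constrain the lift. For these pairs I would argue as in \cite[Proposition 3.6]{2019DINGSimple}: the extra dimension is absorbed into the $\kappa_u$-component, so the $\fn$-component is still free.
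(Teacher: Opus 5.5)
Your Step 2 is where the argument breaks, and it cannot be repaired: the lattice $\bW^+_{\Dpik}$ does not deform along $1+\epsilon X$ for any nonzero $X\in\mathrm{Ad}_{u^{-1}}(\fn_{\emptyset})$. A deformation of $\cM_{\Dpik}$ with $\nu_{\pdr}=\epsilon X$ comes from a deformation of $\Dpik$ only if $X$ preserves the filtration $\fil^{\bullet}_H(D)$ cut out by $\bW^+_{\Dpik}$, since $v_{\pdr}$ preserves $B^+_{\pdr}$ and hence the deformed lattice. This is exactly why $\nu$ in (\ref{maptofilDsigma}) lands in $\homo_{\fil}(D,D)$. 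It is also why the right vertical arrow of (\ref{diagDtoML'}) is the inclusion $\mathrm{Im}(\nu)\hookrightarrow\homo_E(D,D)$. So reading $\mathrm{Im}(\nu_u)$ ``through the bottom row'' changes nothing: the image of $\ext^1_u(\Dpik,\Dpik)$ lies in $\homo_{\fil,\bF_u}(D,D)=\mathrm{Ad}_{u^{-1}}(\fb)\cap\mathrm{Ad}_g(\fb)$.

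Non-criticality works against you here, not for you. From $g\bB=u^{-1}b_uw_0\bB$ (Remark \ref{explainHomasLiealg}) you get $\mathrm{Ad}_g(\fb)=\mathrm{Ad}_{u^{-1}b_u}(\overline{\fb})$. Hence $\homo_{\fil,\bF_u}(D,D)=\mathrm{Ad}_{u^{-1}b_u}(\ft)$ and $\mathrm{Ad}_{u^{-1}}(\fn_{\emptyset})\cap\mathrm{Ad}_g(\fb)=\mathrm{Ad}_{u^{-1}b_u}(\fn_{\emptyset}\cap\overline{\fb})=0$. In other words, no nonzero strictly $\bF_u$-upper-triangular endomorphism respects the Hodge filtration.

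Your Step 3 should have caught this. Since $\ext^1_{g,u}(\Dpik,\Dpik)=\kappa_u^{-1}(\homo_{\sm,u}(\bT(\bQ_p),E))$, the source $\ext^1_u/\ext^1_{g,u}$ of $\nu_u$ embeds into $\homo(\bT(\bQ_p),E)/\homo_{\sm}(\bT(\bQ_p),E)\cong E^n$. Everything modulo $\ext^1_{g,u}$ is therefore already detected by $\kappa_u$, and there are no ``remaining $\frac{n(n-1)}{2}$ dimensions''. Separately, the $\hH^2$-vanishing and the positivity of Hodge--Tate weights you invoke concern de Rham-ness of extensions, not the passage from $\cR_E[1/t]$ to $\cR_E$.

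Consequently, with $\nu_u$ as defined in (\ref{sendtofilD}) and $\Dpik$ non-critical, the left-hand side of the lemma is $0$. The displayed equality therefore fails as soon as $n\geq 2$, and no version of your Steps 1--3 can prove it literally. If you intended the image of $\ext^1_u(\cM_{\Dpik},\cM_{\Dpik})$ in $\homo_{\bF_u}(D,D)$ instead, that is a different and much weaker statement than one about $\nu_u$, and Step 2 would be superfluous.

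What the paper's one-line argument actually supports is a statement about the torus projection. That argument is the definition of the $\sL^u_{i,j}$ plus the Colmez--Greenberg--Stevens formula, giving ``the cokernel of $\nu_u$ only comes from the torus''. Compose $\nu_u$ with the isomorphism $\homo_{\fil,\bF_u}(D,D)=\mathrm{Ad}_{u^{-1}b_u}(\ft)\xrightarrow{\sim}\mathrm{Ad}_{u^{-1}}(\fb)/\mathrm{Ad}_{u^{-1}}(\fn_{\emptyset})\cong\mathrm{Ad}_{u^{-1}}(\ft)$. This records the $\nu_{\pdr}$-eigenvalues on the graded pieces $\widetilde{R}_{u,i}$, i.e.\ (up to sign) the weights $\wt(\psi_i)$ of $\kappa_u(\widetilde{\Dpik})=(\psi_i)_i$.

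So $\mathrm{Im}(\nu_u)$ corresponds to a subspace $T_u\subseteq\mathrm{Ad}_{u^{-1}}(\ft)$, namely the image of $\homo_u(\bT(\bQ_p),E)$ under $(\psi_i)_i\mapsto(\wt(\psi_i))_i$. A pair $(i,j)\in R^+_u$ constrains $T_u$ only when $\sL^u_{i,j}=E\val_p$, which forces $\wt(\psi_i)=\wt(\psi_j)$. Equivalently, $\mathrm{Im}(\nu_u)=(T_u+\mathrm{Ad}_{u^{-1}}(\fn_{\emptyset}))\cap\mathrm{Ad}_g(\fb)$, which is the form in which the lemma is invoked in the proof of Corollary \ref{surforgD}. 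That is the statement to aim for. The unipotent directions enter only formally, before intersecting with $\mathrm{Ad}_g(\fb)$, and are never in the image of $\nu_u$ itself.
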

			\begin{proof}
				The Lemma follows from the definition of the Fontaine-Mazur simple $\sL$-invariants $\{\sL^u_{i,j}\}_{(i,j)\in R_u^+}$ and the Colmez--Greenberg--Stevens formula (see \cite[Proposition 3.5]{2019DINGSimple}),\;which implies that $\mathrm{coker}(\nu_{u})$ only comes from $\mathrm{Ad}_{u}(\ft)$.\;
			\end{proof}
			
			Let ${\ext}^{1}_{\mathrm{tri}}(\Dpik,\Dpik)$ be the image of the following map:
			\begin{equation}\label{natmorphismforGDpikfern}
				g_{\Dpik}:\bigoplus_{u\in \sW_n^{S_0}}{\ext}^{1}_{u}(\Dpik,\Dpik)\rightarrow {\ext}^1(\Dpik,\Dpik).\;
			\end{equation}
			Recall in \cite[Section,\;Lemma 2.1]{DensityHMS} that $\sum_{u\in \sW_{n}}\mathrm{Ad}_{u^{-1}}(\fb)\cap \mathrm{Ad}_{g}(\fb)=\mathrm{Ad}_{g}(\fb)$.\;Put
			\begin{equation}
				\mathrm{Ad}_{g}(\fb)_{/S_0}:=\sum_{u\in \sW_n^{S_0}}\mathrm{Ad}_{u^{-1}}(\fb)\cap \mathrm{Ad}_{g}(\fb)\subseteq \mathrm{Ad}_{g}(\fb).\;
			\end{equation}
			
			\begin{thm}[Infinite-fern for the semistable case]\label{infinitePoten}
				\begin{equation}
					\begin{aligned}
						&{\ext}_{\mathrm{tri}}^{1}(\Dpik,\Dpik)/\ext^{1}_{g}({\Dpik},{\Dpik})\cong \mathrm{Ad}_{g}(\fb)_{/S_0}\cap \mathrm{Im}(\nu),\;
					\end{aligned}
				\end{equation}
				where the latter intersection is taken in $\mathrm{Ad}_{g}(\fb)$.\;Moreover,\;$\dim_E\mathrm{Im}(\nu)=\dim_E\mathrm{Ad}_{g}(\fb)_{/S_0}-|I_0\backslash S_0|$.\;
			\end{thm}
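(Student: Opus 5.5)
We will prove the isomorphism by comparing both sides inside $\homo_{\fil}(D,D)\cong\mathrm{Ad}_g(\fb)$ via the map $\nu$ of (\ref{maptofilDsigma}). The kernel of $\nu$ is $\ext^1_g(\Dpik,\Dpik)$, and $\ext^1_g(\Dpik,\Dpik)=\ext^1_{g,1}(\Dpik,\Dpik)\subseteq\ext^1_1(\Dpik,\Dpik)\subseteq\ext^1_{\mathrm{tri}}(\Dpik,\Dpik)$ by Proposition \ref{studyforkerkappau}. Hence $\nu$ induces an injection
\[
\ext^1_{\mathrm{tri}}(\Dpik,\Dpik)/\ext^1_g(\Dpik,\Dpik)\hookrightarrow \mathrm{Im}(\nu),
\]
whose image is $\sum_{u\in\sW_n^{S_0}}\mathrm{Im}(\nu_u)$. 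Since $\mathrm{Im}(\nu_u)\subseteq\homo_{\fil,\bF_u}(D,D)=\mathrm{Ad}_{u^{-1}}(\fb)\cap\mathrm{Ad}_g(\fb)$, this image lies in $\mathrm{Ad}_g(\fb)_{/S_0}\cap\mathrm{Im}(\nu)$. The whole content of the theorem is therefore the reverse inclusion
\[
\mathrm{Ad}_g(\fb)_{/S_0}\cap\mathrm{Im}(\nu)\subseteq\sum_u\mathrm{Im}(\nu_u),
\]
together with the dimension formula.

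\textbf{Step 1: describe $\mathrm{Im}(\nu_u)$ exactly.} By Remark \ref{explainHomasLiealg}, $\homo_{\fil,\bF_u}(D,D)=\mathrm{Ad}_{u^{-1}b_u}(\ft)$ has dimension $n$. Lemma \ref{desofimage} and the Colmez--Greenberg--Stevens formula show that the cokernel of $\nu_u$ is cut out only by the simple $\sL$-invariant conditions at $(i,j)\in R^+_u$ with $u^{-1}(i)\in I_0\backslash S_0$, where $\sL^u_{i,j}=E\val_p$. Each such condition forces the corresponding smooth direction, i.e.\ a linear condition on the torus part. Counting with Proposition \ref{profordimofdefor} and Proposition \ref{studyforkerkappau}(2), we get
\[
\dim\mathrm{Im}(\nu_u)=\dim\ext^1_u-\dim\ext^1_{g,u}=n-\#\{(i,j)\in R^+_u:\ u^{-1}(i)\in I_0\backslash S_0\}.
\]
So $\mathrm{Im}(\nu_u)$ is the subspace of $\mathrm{Ad}_{u^{-1}b_u}(\ft)$ killed by explicit linear forms indexed by the non-generic pairs. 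These forms come from the Lie algebra description $\mathrm{Ad}_g$ of the nilpotent-free part of the monodromy relation $\phi_{i+1}=p\phi_i$ without $N$.

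\textbf{Step 2: identify $\mathrm{Im}(\nu)$ inside $\mathrm{Ad}_g(\fb)$.} By Lemma \ref{lemforexactforextgropus}, $\mathrm{Im}(\nu)$ has codimension $|I_0\backslash S_0|$ in $\mathrm{Ad}_g(\fb)$. We claim it is the common kernel of $|I_0\backslash S_0|$ linear forms $\lambda_k$, one for each $k\in I_0\backslash S_0$. Each $\lambda_k$ is the obstruction to lifting: it is the component of $\homo_{\fil}(D,D)\to\homo(\bZ$-part$)$ measuring the $\val_p$-direction on the rank-2 crystalline non-generic subquotient with $\varphi$-eigenvalues $\phi_k,\phi_{k+1}$. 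We then verify that on each summand $\mathrm{Ad}_{u^{-1}}(\fb)\cap\mathrm{Ad}_g(\fb)$ the restrictions of the $\lambda_k$ cut out exactly $\mathrm{Im}(\nu_u)$. This matching is the key compatibility, checked via the commutative diagram (\ref{commutativefortriandwhole}) and the splitting of Proposition \ref{splitingforext1gps}.

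\textbf{Step 3: conclude.} Given Step 2, every $x\in\mathrm{Ad}_g(\fb)_{/S_0}\cap\mathrm{Im}(\nu)$ can be written as $\sum_u x_u$ with $x_u\in\mathrm{Ad}_{u^{-1}}(\fb)\cap\mathrm{Ad}_g(\fb)$ and $\sum_u\lambda_k(x_u)=0$ for all $k$. We must adjust the decomposition so that each $\lambda_k(x_u)=0$ individually, by moving elements lying in intersections of two summands (using $\lambda_k$ vanishing on nilpotent parts). This is the main obstacle. I would first try to show that for each $k\in I_0\backslash S_0$ some $u$ has $\lambda_k|_{\mathrm{Ad}_{u^{-1}}(\fb)\cap\mathrm{Ad}_g(\fb)}$ non-zero while the other forms vanish on a suitable complement, mimicking \cite[Theorem 3.20]{HEparaforsemitable} and the crystalline argument in \cite{BDcritical25}. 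Alternatively, show that the $\lambda_k$ are linearly independent on $\mathrm{Ad}_g(\fb)_{/S_0}$ and compare dimensions of $\sum_u\mathrm{Im}(\nu_u)$ and of the intersection directly.

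The dimension statement then follows. The $\lambda_k$ remain linearly independent when restricted to $\mathrm{Ad}_g(\fb)_{/S_0}$, which we check using a single $u$ per $k$, namely the identity and the simple transposition-type elements in $\sW_n^{S_0}$. Hence $\dim\big(\mathrm{Ad}_g(\fb)_{/S_0}\cap\mathrm{Im}(\nu)\big)=\dim\mathrm{Ad}_g(\fb)_{/S_0}-|I_0\backslash S_0|$. The formula as stated, $\dim\mathrm{Im}(\nu)=\dim\mathrm{Ad}_g(\fb)_{/S_0}-|I_0\backslash S_0|$, would additionally require $\mathrm{Ad}_g(\fb)_{/S_0}=\mathrm{Ad}_g(\fb)$. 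This does not follow from \cite[Lemma 2.1]{DensityHMS}, which sums over all of $\sW_n$. So either the statement is meant for the intersection, or one must prove $\mathrm{Im}(\nu)\subseteq\mathrm{Ad}_g(\fb)_{/S_0}$, which I would attempt by showing that the $\sW_n^{S_0}$-sum already contains $\mathrm{Im}(\nu)$ using $N$-compatibility.
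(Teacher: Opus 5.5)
The paper states Theorem~\ref{infinitePoten} without proof, so your argument has to stand on its own, and as written it establishes only the easy inclusion. That part is correct. Since $\ker\nu=\ext^1_g(\Dpik,\Dpik)\subseteq\ext^1_1(\Dpik,\Dpik)$, the map $\nu$ identifies $\ext^1_{\mathrm{tri}}(\Dpik,\Dpik)/\ext^1_g(\Dpik,\Dpik)$ with $\sum_u\mathrm{Im}(\nu_u)\subseteq\mathrm{Ad}_g(\fb)_{/S_0}\cap\mathrm{Im}(\nu)$. Your count $\dim\mathrm{Im}(\nu_u)=n-\#\{(i,j)\in R^+_u:u^{-1}(i)\in I_0\backslash S_0\}$ also follows from Propositions~\ref{profordimofdefor} and \ref{studyforkerkappau}(2).

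The reverse inclusion is the whole content of the theorem, and it is not proved. Two ingredients are missing.

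First, Step 2 claims $\mathrm{Im}(\nu)\cap T_u=\mathrm{Im}(\nu_u)$, with $T_u:=\homo_{\fil,\bF_u}(D,D)$. This does not follow from the diagram (\ref{commutativefortriandwhole}) or from Proposition~\ref{splitingforext1gps}, which only give $\mathrm{Im}(\nu_u)\subseteq\mathrm{Im}(\nu)\cap T_u$. The converse says that a deformation whose $\nu$-image stabilizes $\bF_u$ is $\cF_u$-trianguline modulo $\ext^1_g(\Dpik,\Dpik)$. That is a genuine deformation-theoretic statement. You would have to show, e.g.\ via the splitting of $\ext^1(\cM_\Dpik,\cM_\Dpik)$, that the corresponding deformation of $\cM_\Dpik$ acquires an $\cR_{E[\epsilon]/\epsilon^2}[1/t]$-triangulation lifting $\cF_u$. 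You would then need non-criticality to descend it to $\widetilde{\Dpik}$. In addition, your forms $\lambda_k$ are never actually defined, so the claimed matching cannot be checked.

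Second, and more seriously, even granting this compatibility you only obtain $\sum_u\big(\mathrm{Im}(\nu)\cap T_u\big)$. The theorem asserts equality with $\mathrm{Im}(\nu)\cap\sum_uT_u$. Since the lattice of subspaces is not distributive, this needs a real argument. One option is an explicit description of $\mathrm{Im}(\nu)$ inside $\mathrm{Ad}_g(\fb)$; another is a direct lower bound $\dim\sum_u\mathrm{Im}(\nu_u)\geq\dim\mathrm{Ad}_g(\fb)_{/S_0}-|I_0\backslash S_0|$. Step 3 names this obstacle and lists strategies you would try, but carries none of them out. What you have is a reduction, not a proof.

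On the dimension clause, you correctly note that, combined with $\dim\mathrm{Im}(\nu)=\frac{n(n+1)}{2}-|I_0\backslash S_0|$ from Lemma~\ref{lemforexactforextgropus}, it would force $\mathrm{Ad}_g(\fb)_{/S_0}=\mathrm{Ad}_g(\fb)$. You should conclude that the clause is false as written, so your fallback of proving $\mathrm{Im}(\nu)\subseteq\mathrm{Ad}_g(\fb)_{/S_0}$ cannot work. For $S_0=\Delta$ and $n\geq 2$ one has $\sW_n^{S_0}=\{1\}$. Hence $\mathrm{Ad}_g(\fb)_{/S_0}=T_1$ has dimension $n$ by Remark~\ref{explainHomasLiealg}, while $\mathrm{Im}(\nu)=\homo_{\fil}(D,D)$ has dimension $\frac{n(n+1)}{2}$.

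The tenable statement is $\dim\big(\mathrm{Ad}_g(\fb)_{/S_0}\cap\mathrm{Im}(\nu)\big)=\dim\mathrm{Ad}_g(\fb)_{/S_0}-|I_0\backslash S_0|$, equivalently $\mathrm{Ad}_g(\fb)_{/S_0}+\mathrm{Im}(\nu)=\mathrm{Ad}_g(\fb)$. This does not need any transposition-type elements. Suppose the compatibility above is known for $u=1$ alone. Then $\mathrm{Im}(\nu)\cap T_1=\mathrm{Im}(\nu_1)$ has codimension $\#\{(i,i+1):i\in I_0\backslash S_0\}=|I_0\backslash S_0|$ in $T_1\subseteq\mathrm{Ad}_g(\fb)_{/S_0}$. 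So the $|I_0\backslash S_0|$ linear forms cutting out $\mathrm{Im}(\nu)$ are already independent on $T_1$. Without that compatibility, however, this part is unsupported as well.
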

			
			For $u\in \sW_n^{S_0}$,\;the $\bP_{S_0(u)}$-parabolic filtration $\cF_{S_0(u)}$ on $\cM_{\Dpik}$ induces a partial flag $\fil_{\bullet}^{\bF_{S_0(u)}}(D)$ on $D$:
			\begin{equation}
				\begin{aligned}
					\fil_{\bullet}^{\bF_{S_0(u)}}(D)&: \ 0 =\fil_0^{\bF_{S_0(u)}}(D) \subsetneq \fil_1^{\bF_{S_0(u)}}(D) \subsetneq \cdots \subsetneq \fil_{s}^{\bF_{S_0(u)}}(D)=D\\
				\end{aligned}
			\end{equation}
			Let $\homo_{\bF_{S_0(u)}}(D,D)$ be the subspace of endomorphisms of the $E$-vector space $D$ that preserve the filtration $\fil^{\bF_{S_0(u)}}_{\bullet}(D)$.\;Put $\homo_{\fil,\bF_{S_0(u)}}(D,D):=\homo_{\fil}(D,D)\cap \homo_{\bF_{S_0(u)}}(D,D)$.\;By (\ref{maptofilDsigma}),\;we obtain a natural map
			\begin{equation}\label{sendtofilDSzero}
				\begin{aligned}
					\nu_{S_0(u)}:&\;\ext^{1}_{S_0(u)}(\Dpik,\Dpik)/\ext^{1}_{S_0(u)}(\Dpik,\Dpik)\cap \ext^{1}_{g}({\Dpik},{\Dpik})\rightarrow\homo_{\fil,\bF_{S_0(u)}}(D,D)
				\end{aligned}
			\end{equation}
			In this case,\;$\homo_{\fil,\bF_{S_0(u)}}(D,D)\cong \mathrm{Ad}_{u^{-1}}(\fp_{S_0(u)})\cap \mathrm{Ad}_{g}(\fb)$.\;We have 
			\begin{pro}\label{infinitefernforsemi} 
				$\sum_{u\in\sW_n^{S_0}}\mathrm{Ad}_{u^{-1}}(\fp_{S_0(u)})\cap \mathrm{Ad}_{g}(\fb)=\mathrm{Ad}_{g}(\fb)$.\;
			\end{pro}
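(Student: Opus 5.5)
The plan is to argue by induction on $n$, in the spirit of the proof of \cite[Lemma 2.1]{DensityHMS}, but with the complete flags $\bF_w$ ($w\in\sW_n$) replaced by the $(\varphi,N)$-stable parabolic flags $\bF_{S_0(u)}$ ($u\in\sW_n^{S_0}$). Write $V:=\sum_{u\in\sW_n^{S_0}}\mathrm{Ad}_{u^{-1}}(\fp_{S_0(u)})\cap \mathrm{Ad}_{g}(\fb)$. The inclusion $V\subseteq\mathrm{Ad}_g(\fb)$ is trivial, so only the reverse inclusion needs proof. Here $\mathrm{Ad}_g(\fb)=\homo_{\fil}(D,D)$ and $\mathrm{Ad}_{u^{-1}}(\fp_{S_0(u)})\cap\mathrm{Ad}_g(\fb)=\homo_{\fil,\bF_{S_0(u)}}(D,D)$. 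Since $\fp_{S_0(u)}\supseteq\fb$, we already have $\mathrm{Ad}_g(\fb)_{/S_0}\subseteq V$, so it suffices to produce the extra directions not reached by the triangular pieces.

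\emph{A naive reduction fails.} One might try to show that every $\mathrm{Ad}_{w^{-1}}(\fb)$, $w\in\sW_n$, lies in some $\mathrm{Ad}_{u^{-1}}(\fp_{S_0(u)})$. This would mean the complete flag $\bF_w$ refines some $\bF_{S_0(u)}$. It is false in general. For example, take a chain $\{1,2\}$, a crystalline index $3$, and the order $(2,3,1)$: no coarsening of this order has Steinberg blocks. So a genuine Hodge-filtration argument is needed.

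\textbf{Step 1 (filtration by the top Hodge line).} Let $\ell:=\fil_H^{-\bh_n}(D)$, which is a line, and let $\fb_\ell\subseteq\mathrm{Ad}_g(\fb)$ be the subspace of endomorphisms $f$ with $f(D)\subseteq\ell$; it has dimension $n$. Every $f\in\mathrm{Ad}_g(\fb)$ preserves $\ell$ and induces a filtration-preserving endomorphism $\bar f$ of $D/\ell$. The map $f\mapsto(\bar f,f|_\ell)$ fits into a short exact sequence whose kernel is $\fb_\ell$ and whose quotient is $\homo_{\fil}(D/\ell,D/\ell)\oplus E$.

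\textbf{Step 2 (the quotient part, by induction).} Consider $D/\ell$, or better, by duality, the corank-one pieces $D_1^{n-1}$ and $D_2^{n}$ used in the proof of Theorem \ref{capturetype1}. These are again non-critical semistable, and their $(\varphi,N)$-stable parabolic flags are exactly the flags induced by those $\bF_{S_0(u)}$ for which $u^{-1}(n)$, respectively $u^{-1}(1)$, is an end of a Steinberg chain. By the induction hypothesis on rank, the images of these $\homo_{\fil,\bF_{S_0(u)}}(D,D)$ surject onto $\homo_{\fil}$ of the smaller module.

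I would lift using the restriction map $\homo_{\fil,\bF_{S_0(u)}}(D,D)\to\homo_{\fil,\bF_{S_0(u)}|}(D',D')$. Surjectivity of this map should follow from non-criticality: the relevant complement is $\mathrm{Ad}_{u^{-1}b_u}$ of a unipotent piece, as in Remark \ref{explainHomasLiealg}. The scalar $E$ on $\ell$ is supplied by the identity.

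\textbf{Step 3 (the kernel $\fb_\ell$) — the main obstacle.} It remains to show $\fb_\ell\subseteq V$. An element of $\fb_\ell$ is determined by a linear form on $D$, and it lies in $\homo_{\fil,\bF_{S_0(u)}}(D,D)$ exactly when that form vanishes on the appropriate piece of $\bF_{S_0(u)}$ meeting $\ell$ in the right place. By non-criticality, $\ell$ is in general position with respect to every $\bF_u$.

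The space $\fb_\ell$ is $n$-dimensional, so I will look for $n$ linearly independent elements. The candidates are, for each $u$, the rank-one maps $D\twoheadrightarrow D/\fil^{\bF_{S_0(u)}}_{f_u-1}(D)\to\ell$. Since $\ell$ is transverse to every $\fil^{\bF_{S_0(u)}}_{f_u-1}(D)$, each such map lies in $\homo_{\fil,\bF_{S_0(u)}}(D,D)$.

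The linear forms involved are precisely the annihilators of the last Steinberg blocks $E_{u,f_u}$ as $u$ varies. The last block can end with any chain, and truncating the chains gives all quotients $F_q^{(t)}$. So the span of these annihilators should be all of $D^\vee$: each $e_j^\vee$ appears as a top vector of some last block, and the forms are triangular in a suitable ordering.

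I expect the delicate point to be exactly this spanning claim, since it requires a careful choice of $u$ for each $j$. If it fails, my fallback is a dimension count. I would compare $\dim V$ with $\dim\mathrm{Ad}_g(\fb)=n(n+1)/2$, using Theorem \ref{infinitePoten}: it gives $\dim\mathrm{Ad}_g(\fb)_{/S_0}=\dim\mathrm{Im}(\nu)+|I_0\backslash S_0|$, and by Lemma \ref{lemforexactforextgropus} $\dim\mathrm{Im}(\nu)=\tfrac{n(n+1)}{2}-|I_0\backslash S_0|$, so $\dim\mathrm{Ad}_g(\fb)_{/S_0}=n(n+1)/2$.
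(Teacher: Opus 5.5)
Your route differs from the paper's. The paper keeps only the $s!$ block permutations $u=j_{S_0}(w)$, $w\in\sW_s$. For these, $\bF_{S_0(u)}$ is the partial flag obtained by permuting whole Steinberg blocks, with Levi $\prod_i\GLN_{l_{w^{-1}(i)}}$. The paper then quotes the parabolic infinite-fern result \cite[Theorem 6.1]{HEparaforsemitable}. That proves a stronger statement (fewer $u$), but all the content is outsourced to the citation.

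Your induction on $n$ through the top Hodge line $\ell=\fil_H^{-\bh_n}(D)$ is self-contained and elementary, and its skeleton is sound: it suffices to show $\fb_\ell\subseteq V$ and that $V$ surjects onto $\homo_{\fil}(D/\ell,D/\ell)$. It does pass through non-block orders: extending an order of $\Dpik_1^{n-1}$ that ends with a chain $q\neq s$ by $e_n$ splits the last chain. So it proves exactly the stated sum over $\sW_n^{S_0}$, not the paper's sharper version. As written, three steps need repair.

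\textbf{Step 1.} The kernel of $f\mapsto(\bar f,f|_\ell)$ is $\homo_E(D/\ell,\ell)$, of dimension $n-1$, not $\fb_\ell$; as stated, your sequence overcounts by one. Use instead $0\to\fb_\ell\to\mathrm{Ad}_g(\fb)\to\homo_{\fil}(D/\ell,D/\ell)\to0$.

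\textbf{Step 2.} Duality plays no role, and $D/\ell$ has no $(\varphi,N)$-structure of its own. The key point is that $\ell\not\subseteq D':=\langle e_1,\dots,e_{n-1}\rangle=D_{\mathrm{st}}(\Dpik_1^{n-1})$ by non-criticality. Hence $D'\hookrightarrow D\twoheadrightarrow D/\ell$ is an isomorphism carrying $\fil_H^{\bullet}(D)\cap D'$ onto $\fil_H^{\bullet}(D)/\ell$, since $(\fil_H^{-\bh_k}(D)\cap D')\oplus\ell=\fil_H^{-\bh_k}(D)$ for all $k$.

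Every $(\varphi,N)$-stable order $u'$ of $D'$ extends to $u\in\sW_n^{S_0}$ by putting $e_n$ last. The proper steps of $\bF_{S_0(u')}$ are proper steps of $\bF_{S_0(u)}$, so $f\mapsto\bar f$ maps $\homo_{\fil,\bF_{S_0(u)}}(D,D)$ into $\homo_{\fil,\bF_{S_0(u')}}(D',D')$.

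You must use $f\mapsto\bar f$, not restriction. If $n-1\in S_0$ and $u'$ ends with $e_{n-1}$, the last block of $u'$ absorbs $e_n$. Then $D'$ is not a step of $\bF_{S_0(u)}$, and elements of $\homo_{\fil,\bF_{S_0(u)}}(D,D)$ need not preserve $D'$. Such $u'$ cannot be discarded: for chains $\{e_1\},\{e_2,e_3\}$, the crystalline $D'$ needs both of its orders.

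Surjectivity is a dimension count, not a statement about unipotent complements. By non-criticality the source and target have dimensions $n+\sum_i r_{u,i}(r_{u,i}-1)/2$ and $n-1+\sum_i r_{u',i}(r_{u',i}-1)/2$. Since $\ell$ meets every proper step of $\bF_{S_0(u)}$ trivially, the kernel is $\homo_E(D/\fil^{\bF_{S_0(u)}}_{f_u-1}(D),\ell)$, of dimension $r_{u,f_u}$. This is exactly the difference of dimensions, whether or not $e_n$ is absorbed.

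\textbf{Step 3.} The spanning claim is immediate, so drop the fallback. The space $\fb_\ell\cap\homo_{\fil,\bF_{S_0(u)}}(D,D)$ consists of the maps $D\to\ell$ whose linear form kills $\fil^{\bF_{S_0(u)}}_{f_u-1}(D)$. That is, the form lies in the span of the $e_i^\vee$ for $e_i$ in the last block. These are coordinate forms, so no triangularity is involved. For each chain $q$, the order listing the other chains first and then $e_{q,1},\dots,e_{q,l_q}$ is $(\varphi,N)$-stable, and its last block is exactly that chain. Hence the union of last blocks is everything. Only chain tops can be the top vector of a last block, so your ``top vector'' formulation is wrong, but it is not needed.

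The fallback is unsound. Combining Theorem \ref{infinitePoten} with Lemma \ref{lemforexactforextgropus} as you do forces $\mathrm{Ad}_g(\fb)_{/S_0}=\mathrm{Ad}_g(\fb)$. This already fails for $S_0=\Delta$, $n\geq2$, where $\sW_n^{S_0}=\{1\}$ and $\mathrm{Ad}_g(\fb)_{/S_0}=\mathrm{Ad}_{b_1}(\ft)$ is only $n$-dimensional.
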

			\begin{proof}For $1\leq i\leq s$ and $w\in \sW_s$,\;put $t_0^w=0$ and $t_i^w=\sum_{j=1}^il_{w^{-1}(j)}$ (so that $t^w_s=n$).\;We have an inclusion $j_{S_0}:\sW_s\rightarrow \sW_n^{S_0}$ sending $w\in \sW_s$ to the unique element $j_{S_0}(w)$ in $\sW_n^{S_0}$ that satisfies $j_{S_0}(w)(t_r+l)=t^w_r+l$ for $1\leq l\leq l_r$ and $1\leq r\leq s$.\;In this case,\;$\bL_{S_0(j_{S_0}(w))}:=\prod_{i=1}^s\GLN_{l_{w^{-1}(i)}}$.\;The subspace $\sum_{u\in j_{S_0}(\sW_s)}\mathrm{Ad}_{u}(\fp_{S_0(u)})$ of the left-hand side is already equal to $\mathrm{Ad}_{g}(\fb)$ by \cite[Theorem 6.1]{HEparaforsemitable}.\;
			\end{proof}
			\begin{cor}\label{surforgD}
				The natural map $\overline{g}^+_{\Dpik}:\bigoplus_{u\in\sW_n^{S_0}}\overline{\ext}^{1}_{S_0(u)}(\Dpik,\Dpik)\rightarrow \overline{\ext}^{1}(\Dpik,\Dpik)$  is surjective.\;
			\end{cor}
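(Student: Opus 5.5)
The plan is to deduce Corollary \ref{surforgD} from Proposition \ref{infinitefernforsemi} by a two-step diagram chase. We compare $\overline{\ext}^1_{S_0(u)}(\Dpik,\Dpik)$ with $\overline{\ext}^1(\Dpik,\Dpik)$ through the exact sequences (\ref{commutativefortriandwhole}) and (\ref{diagDtoML'}). Under these sequences, $\overline{\ext}^{1}(\Dpik,\Dpik)$ is an extension of $\mathrm{Im}(\nu)$ by $\ext^{1,\flat}_{\gr}(\Dpik,\Dpik)$. Surjectivity therefore splits into two pieces: (a) the subspace $\ext^{1,\flat}_{\gr}(\Dpik,\Dpik)$ lies in the image of $\overline{g}^+_{\Dpik}$; (b) the composite of $\overline{g}^+_{\Dpik}$ with $\nu$ surjects onto $\mathrm{Im}(\nu)$.

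For (a), note that $\ext^{1}_{g}(\Dpik,\Dpik)=\ext^1_{g,1}(\Dpik,\Dpik)\subseteq \ext^1_1(\Dpik,\Dpik)\subseteq\ext^1_{S_0(1)}(\Dpik,\Dpik)$ by Proposition \ref{studyforkerkappau}(1). Lemma \ref{lemforexactforextgropus}(1) identifies $\ext^1_g(\Dpik,\Dpik)/\ext^1_0(\Dpik,\Dpik)$ with $\ext^{1,\flat}_{\gr}(\Dpik,\Dpik)$. Hence the single summand $u=1$ already hits all of $\ext^{1,\flat}_{\gr}(\Dpik,\Dpik)$ inside $\overline{\ext}^1(\Dpik,\Dpik)$.

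For (b), first I will show that for each $u$ the map $\nu_{S_0(u)}$ in (\ref{sendtofilDSzero}) surjects onto $\homo_{\fil,\bF_{S_0(u)}}(D,D)\cap\mathrm{Im}(\nu)$. The hard part is that Proposition \ref{infinitefernforsemi} gives $\sum_u \mathrm{Ad}_{u^{-1}}(\fp_{S_0(u)})\cap\mathrm{Ad}_g(\fb)=\mathrm{Ad}_g(\fb)$, but $\mathrm{Im}(\nu)$ is in general a proper subspace of $\mathrm{Ad}_g(\fb)$, of codimension $|I_0\backslash S_0|$ by Lemma \ref{lemforexactforextgropus}(2). So I need a local statement rather than just a sum of ambient subspaces.

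To handle this, I will first establish the local statement: a parabolic deformation $[\widetilde E_{u,1}-\cdots-\widetilde E_{u,f_u}]$ realizes exactly those elements of $\mathrm{Ad}_{u^{-1}}(\fp_{S_0(u)})\cap\mathrm{Ad}_g(\fb)$ lying in $\mathrm{Im}(\nu)$. I would prove this by d\'evissage on the parabolic filtration $\cF_{S_0(u)}$, using the following inputs:
\begin{itemize}
\item the Levi pieces are handled by the lemma computing $\ext^1(E_i,E_i)\twoheadrightarrow\homo_{\fil}(D^i,D^i)$, which is surjective for Steinberg blocks;
\item the unipotent-radical part $\mathrm{Ad}_{u^{-1}}(\fn_{S_0(u)})$ is handled by the vanishing of $\hH^2$ of the relevant $\homo$-modules, as in Proposition \ref{profordimofdefor}, together with Lemma \ref{desofimage}, which gives all nilpotent directions.
\end{itemize}

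Second, I will check that the sum over $u$ of these intersections is all of $\mathrm{Im}(\nu)$. My first attempt: restrict to $u\in j_{S_0}(\sW_s)$ as in the proof of Proposition \ref{infinitefernforsemi}, where $\bL_{S_0(u)}$ contains the full Steinberg blocks. The missing $|I_0\backslash S_0|$ directions then come only from the torus directions in the crystalline-type non-generic positions. These are constrained uniformly, since $E\val_p=\sL^u_{i,j}$ for $u^{-1}(i)\in I_0\backslash S_0$, independently of $u$ by Lemma \ref{smhomoindepent}.

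If this direct argument does not go through, the fallback is a dimension count. Theorem \ref{infinitePoten} already gives ${\ext}^1_{\mathrm{tri}}/\ext^1_g\cong\mathrm{Ad}_g(\fb)_{/S_0}\cap\mathrm{Im}(\nu)$, and $\ext^1_u\subseteq\ext^1_{S_0(u)}$. So it suffices to add the Levi deformations of the Steinberg blocks in the $j_{S_0}(\sW_s)$-summands, and to show that they fill the complement of $\mathrm{Ad}_g(\fb)_{/S_0}\cap\mathrm{Im}(\nu)$ inside $\mathrm{Im}(\nu)$. That complement lies in $\sum_u\mathrm{Ad}_{u^{-1}}(\fl_{S_0(u)})\cap\mathrm{Ad}_g(\fb)$ by Proposition \ref{infinitefernforsemi}.

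Combining (a) and (b) via the diagram (\ref{diagDtoML'}), under the splitting $f_{\Dpik}$ of Proposition \ref{splitingforext1gps}, gives surjectivity of $\overline{g}^+_{\Dpik}$.
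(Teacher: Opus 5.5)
Your reduction to (a) and (b) is sound, and (a) is correct. Since $\ext^1_g(\Dpik,\Dpik)=\ext^1_{g,1}(\Dpik,\Dpik)\subseteq\ext^1_{S_0(1)}(\Dpik,\Dpik)$, Lemma \ref{lemforexactforextgropus}(1) puts $\ext^{1,\flat}_{\gr}(\Dpik,\Dpik)$ in the image; the paper uses this implicitly via (\ref{commutativefortriandwhole}). The gap is (b), which is the whole content of the corollary once $I_0\neq S_0$, and neither half of it is established.

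\emph{The local statement.} Non-criticality gives $\mathrm{Ad}_{u^{-1}}(\fn_{S_0(u)})\cap\mathrm{Ad}_g(\fb)=\mathrm{Ad}_{u^{-1}b_u}(\fn_{S_0(u)}\cap\overline{\fb})=0$. So an element of $\homo_{\fil,\bF_{S_0(u)}}(D,D)$ is determined by what it induces on the graded pieces $D_{\mathrm{st}}(E_{u,i})$. The nilpotent-radical directions your second bullet is meant to produce are therefore not in the target at all. The real issue is the obstruction to lifting $\prod_i\ext^1(E_{u,i},E_{u,i})$ to $\ext^1_{S_0(u)}(\Dpik,\Dpik)$. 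That obstruction lives in $\hH^2$ of the off-diagonal Hom-modules $\homo_{\cR_E}(E_{u,j},E_{u,i})$, and this $\hH^2$ does \emph{not} vanish in the non-generic case. Already for $n=2$, $S_0=\emptyset$, $\phi_2=p\phi_1$, one has $\hH^2_{(\varphi,\Gamma)}(\cR_E(\unr(p^{-1})z^{\bh_1-\bh_2}))\neq0$. This is exactly why $\mathrm{Im}(\kappa_1)$ carries the constraint $\psi_2-\psi_1\in E\val_p$. Your plan never computes the image of this lifting, so it gives no reason that image equals $\homo_{\fil,\bF_{S_0(u)}}(D,D)\cap\mathrm{Im}(\nu)$.

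\emph{The global step.} You rightly note that $\sum_u V_u=\mathrm{Ad}_g(\fb)$ does not imply $\sum_u(V_u\cap\mathrm{Im}(\nu))=\mathrm{Im}(\nu)$, but you offer no substitute. Lemma \ref{smhomoindepent} concerns only de Rham deformations, i.e.\ elements of $\ker\nu=\ext^1_g(\Dpik,\Dpik)$, so it cannot control any direction of $\mathrm{Im}(\nu)$. The fallback just restates the required claim. Proposition \ref{infinitefernforsemi} is about $\mathrm{Ad}_{u^{-1}}(\fp_{S_0(u)})\cap\mathrm{Ad}_g(\fb)$ and says nothing about which directions of $\mathrm{Im}(\nu)$ are attained by parabolic deformations.

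What you are missing is an explicit description of the $\nu$-images of enough pieces. You need it because you know $\mathrm{Im}(\nu)$ only through its dimension $\frac{n(n+1)}{2}-|I_0\backslash S_0|$. The paper gets this from the trianguline subspaces $\ext^1_u(\Dpik,\Dpik)\subseteq\ext^1_{S_0(u)}(\Dpik,\Dpik)$. It first reduces to the Lie-algebra identity $\sum_u\mathrm{Im}(\nu_u)\cap\mathrm{Ad}_g(\fb)=\mathrm{Im}(\nu)$ and settles the generic case by Proposition \ref{infinitefernforsemi}. In the non-generic case it uses Lemma \ref{desofimage} (Colmez--Greenberg--Stevens) to confine the defect of $\nu_u$ to torus directions. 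It then uses Proposition \ref{profordimofdefor} ($\mathrm{Im}(\kappa_u)=\homo_u(\bT(\bQ_p),E)$) to see that this torus part is constrained only at pairs $(i,j)\in R^+_u$ with $u^{-1}(i)\in I_0\backslash S_0$, where $\sL^u_{i,j}=E\val_p$. A suitable choice of $u$ then supplies the remaining torus directions. The paper's write-up of this last step is terse, but this explicit control of the images via $\kappa_u$ is the ingredient your argument needs and does not have.
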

			\begin{proof}It suffices to show the following assertion for Lie-algebras:
				\[\sum_{u\in \sW_n^{S_0}}\mathrm{Im}(\nu_{u})\cap \mathrm{Ad}_{g}(\fb)=\mathrm{Im}(\nu).\]
				When $\Dpik$ is generic,\;we have $\mathrm{Im}(\nu_{u})=\mathrm{Ad}_{u^{-1}}(\fp_{S_0(u)})$ and $\mathrm{Im}(\nu)=\mathrm{Ad}_{g}(\fb)$,\;this is Proposition \ref{infinitefernforsemi}.\;It remains to treat the general case.\;By Lemma \ref{desofimage},\;we have 
				\[\mathrm{Im}(\nu_{u})\cap \mathrm{Ad}_{g}(\fb)=(\mathrm{Im}(\nu_{u})\cap \mathrm{Ad}_{u^{-1}}(\ft)+\mathrm{Ad}_{u^{-1}}(\fn_{\emptyset}))\cap \mathrm{Ad}_{g}(\fb)\supseteq \mathrm{Im}(\nu_{u})\cap \mathrm{Ad}_{u^{-1}}(\ft)\cap \mathrm{Ad}_{g}(\fb).\]
				On the other hand,\;Proposition \ref{profordimofdefor} gives
				\[\sum_{u\in\sW_n^{S_0}}\mathrm{Im}(\nu_{u})\cap \mathrm{Ad}_{u^{-1}}(\ft)\cap \mathrm{Ad}_{g}(\fb)=\sum_{u\in\sW_n^{S_0}} \mathrm{Ad}_{u^{-1}}(\ft)\cap \mathrm{Ad}_{g}(\fb)\]
				by choosing $u$ carefully.\;This show that $\sum_{u\in\sW_n^{S_0}}\mathrm{Im}(\nu_{u})\cap \mathrm{Ad}_{g}(\fb)$ contains $\mathrm{Im}(\nu)\cap \mathrm{Ad}_{g}(\fb)=\mathrm{Im}(\nu)$.\;We complete the proof.\;
			\end{proof}	
			
			Moreover,\;for $i\in \Delta$,\;by replacing $\cF_{S_0(u)}$ with the $\bL_i$-parabolic filtration $\cF_{u,[i]}$,\;we get a partial flag $\bF_{u,[i]}$ on $D$.\;In this case,\;$\homo_{\bF_{u,[i]}}(D,D)\cong \mathrm{Ad}_{u}(\fl_{i})$ and  $\homo_{\fil,\bF_{u,[i]}}(D,D)\cong \mathrm{Ad}_{u}(\fl_{i})\cap \mathrm{Ad}_{g}(\fb)$.\;Put
			\begin{equation}\label{dfnforDeltaS0}
				\mathrm{Ad}_{g}(\fb)^{\Delta}_{/S_0}:=\sum_{u\in \sW_n^{S_0},i\in S_0(u)}\mathrm{Ad}_{u^{-1}}(\fl_i)\cap \mathrm{Ad}_{g}(\fb)\subseteq \mathrm{Ad}_{g}(\fb).\;
			\end{equation}

			\section{Higher extension groups and Breuil-Schraen $\sL$-invariants}
			
			In \cite{wholeLINV},\;we see that the higher extension groups between locally analytic generalized Steinberg representations play a crucial role in identifying the automorphic counterparts of $p$-adic Hodge parameters for Steinberg $(\varphi,\Gamma)$-modules.\;In this section,\;we first use them to study the $\ext^{\bullet}$ between parabolic inductions of locally analytic generalized Steinberg representations,\;and them capture the $p$-adic Hodge parameters of Steinberg subquotients of $\Dpik$ through Breuil-Schraen $\sL$-invariants.\;
			
			Let $D(G,E)$ be the locally $\bQ_p$-analytic distribution algebra.\;For admissible locally $\bQ_p$-analytic representations $V_1,V_2$,\;the continuous strong duals $V_1^{\vee}$ and $V_2^{\vee}$ are coadmissible modules over $D(G,E)$.\;Let $\ext^1_{G}(V_1,V_2):=\ext^1_{D(G,E)}(V_2^{\vee},V_1^{\vee})$,\;where the latter extension group is defined in the abelian category of abstract $D(G,E)$-modules.\;In this case,\;$\ext^1_{G}(V_1,V_2)$ coincides with the extension group in the category of admissible locally $\bQ_p$-analytic representations.\;Suppose that $V_1,V_2$ have the same central character $\chi$.\;Let $\ext^1_{G,Z}(V_1,V_2)$ be the subspace of $\ext^1_{G}(V_1,V_2)$ consisting of locally $\bQ_p$-analytic extensions with central character $\chi$.\;
			
			For any $I\subseteq \Delta$,\;write $\bL_I=\GLN_{h_1}\times\cdots\times \GLN_{h_r}$.\;For $1\leq j\leq r$,\;write $\bL_{I,j}:=\GLN_{h_j}$.\;Let $\Delta_{I,j}\subseteq \Delta$ be the simple roots of $\bL_{I,j}$.\;Then $I=\sqcup_{1\leq j\leq r}\Delta_{I,j}$.\;Let 
			\[\delta_{\op_I}=\prod_{i=1}^r|\mathrm{det}_{\GLN_{h_i}}|^{-\sum_{j=i+1}^rh_j+\sum_{j=1}^{i-1}h_j}=\prod_{i=1}^r|\mathrm{det}_{\GLN_{h_i}}|^{-n-h_i+2\sum_{j=1}^{i}h_j}\]
			be the  modulus character of $\op_I$.\;Let $\eta_I=\delta_{\op_I}^{1/2}$ and $\eta:=\eta_{\emptyset}$.\;
			
			
			Let $d$ be an integer.\;Recall that $\bT_d$ (resp.,\;$\bB_{d}$,\;resp.,\;$\bP_{d,I}$) is the standard diagonal torus (resp.,\;Borel subgroup of upper triangular matrices,\;resp.,\;parabolic subgroup associate to $I\subseteq \Delta_d$) of $\GLN_d$ and $\overline{\bB}_{d}$ (resp.,\;$\op_{d,I}$) is the Borel subgroup opposite to $\bB_{d}$ (resp.,\;$\bP_{d,I}$).\;For $I\subseteq \Delta_d$ and $?\in\{\infty,\ana\}$,\;set (\cite[Section 2.1.2]{2019DINGSimple})
			\[v^{?}_{I,\Delta_{d}}:=i^{?}_{I,\Delta_{d}}/\sum_{I\subsetneq J}i^{?}_{J,\Delta_{d}},\;i^{?}_{I,\Delta_{d}}:=\left(\ind^{\GLN_d(\bQ_p)}_{\op_{d,I}(\bQ_p)}1_{\op_{d,I}(\bQ_p)}\right)^{?}.\]
			$\{v^{\ana}_{I,\Delta_{d}}\}_{I\subseteq \Delta_d}$ (resp.,\;$\{v^{\infty}_{I,\Delta_{d}}\}_{I\subseteq \Delta_d}$) are the so-called locally analytic (resp.,\;smooth) generalized Steinberg representations of $\GLN_d(\bQ_p)$.\;We write $\st^{\ana}_{d}:=v^{\ana}_{\emptyset,\Delta_{d}}$ (resp.,\;$\st^{\infty}_{d}:=v^{\infty}_{\emptyset,\Delta_{d}}$), which we call the locally analytic (resp.,\;smooth) Steinberg representation.\;

			\subsection{On $\ext^{\bullet}$ between parabolic inductions of locally analytic generalized Steinberg representations}\label{TYPE1higherext}

			Fix $J\subseteq \Delta$.\;Write $\bL_J\cong \bL_{J,1}\times \bL_{J,2}\times\cdots\times \bL_{J,d}$ and $J=\sqcup_{1\leq j\leq d}\Delta_{J,j}$ (so that $d=|\Delta\backslash J|+1$).\;Fix $\ba:=(a_1,a_2,\cdots,a_d)\in \BZ^{d}$.\;For $I\subseteq J$,\;put
				\begin{equation}
				i^{\ana}_{I, J}(\ba):=\widehat{\boxtimes}_{j=1}^d|\mathrm{det}_{\bL_{J,j}}|^{a_j}i^{\ana}_{\Delta_{J,j}\cap I,\Delta_{J,j}},\;v^{\ana}_{I, J}(\ba):=\widehat{\boxtimes}_{j=1}^d|\mathrm{det}_{\bL_{J,j}}|^{a_j}v^{\ana}_{\Delta_{J,j}\cap I,\Delta_{J,j}}
			\end{equation}
			Then the locally analytic representation (parabolic inductions of locally analytic generalized Steinberg representations)
			\begin{equation}
				v^{\ana}_{I, J,\Delta}(\ba):=\left(\ind^G_{\op_J}v^{\ana}_{I, J}(\ba)\right)^{\ana}
			\end{equation}
			admits the so-called Bruhat-Tits resolution $\bC_{I, J}(\ba)$ (see \cite[(463)]{wholeLINV}):
			\begin{equation}
				\begin{aligned}
					\left(\ind^G_{\op_J(\bQ_p)}i^{\ana}_{J, J}(\ba)\right)^{\ana}\rightarrow\cdots\rightarrow\bigoplus_{I\subseteq{K}\subseteq  J,|K|=l}&\left(\ind^G_{\op_K(\bQ_p)}i^{\ana}_{K, J}(\ba)\right)^{\ana}\rightarrow\\ &\cdots\rightarrow\left(\ind^G_{\op_I(\bQ_p)}i^{\ana}_{I, J}(\ba)\right)^{\ana}\rightarrow v^{\ana}_{I, J,\Delta}(\ba),
				\end{aligned}
			\end{equation}
			where the middle term lies in $-l$ degree.\;Thus,\;for any $I_2\subseteq I_1\subseteq  J$,\;we have
			\begin{equation}
				M^{\bullet}_{I_1,I_2}(\ba):=\ext^\bullet_{G,Z}(\bC_{I_1, J}(\ba),\bC_{I_2, J}(\ba))=\ext^{\bullet-|I_1\backslash I_2| }_{G,Z}(v^{\ana}_{I_1, J,\Delta}(\ba),v^{\ana}_{I_2, J,\Delta}(\ba)).\;
			\end{equation}
			By the same argument as in \cite[Lemma 3.3.2 $\&$ Proposition 3.3.3]{wholeLINV},\;$M^{\bullet}_{I_1,I_2}(\ba)$ can be identified with (see \cite[(593)]{wholeLINV} and keep its notation)
			\begin{equation}
				M^{\bullet}_{I_1,I_2}:=\ext^\bullet_{G,Z}(\bC_{I_1,J},\bC_{I_2, J}).\;
			\end{equation}

			Applying \cite[Theorem 4.5.2]{wholeLINV} to the case $(I_0,I_1)=(I_1,J)$ and $(I_2,I_3)=(I_2,J)$,\;we deduce (keep the notation in  \cite[Theorem 4.5.2]{wholeLINV})
			\begin{thm}
				Set $J'=I_2\cup (J\backslash I_1)\subseteq J$.\;We have the following results.\;
				\begin{itemize}
					\item[(i)] If $\ext^{h}_{G,Z}\big(v^{\ana}_{I_1, J}(\ba),v^{\ana}_{I_2, J}(\ba)\big)\neq 0$,\;then $|I_1\backslash I_2|\leq h\leq n^2-n+|J|-| I_1\backslash I_2|$.\;
					\item[(ii)] The bottom extension group $\BE_{I_1,I_2}(\ba):=\ext_{G,Z}
					^{|I_1\backslash I_2|}\big(v^{\ana}_{I_1, J,\Delta}(\ba),v^{\ana}_{I_2, J,\Delta}(\ba)\big)$ admits a canonical decreasing filtration
					\begin{equation}
						\begin{aligned}
							0=\fil^{-|J'|+1}(\BE_{I_1,I_2}(\ba))\subseteq \fil^{-|J'|}(\BE_{I_1,I_2}(\ba))\subseteq
							\cdots\subseteq \fil^{-|J|}(\BE_{I_1,I_2}(\ba))=\BE_{I_1,I_2}(\ba)
						\end{aligned}
					\end{equation}
					with graded piece $\fil^{-l}(\BE_{I_1,I_2}(\ba))/\fil^{-l+1}(\BE_{I_1,I_2}(\ba))\cong E_{2,J',J}^{-l,l+|J|-2|J'|}$ admitting a basis induced from $\Psi_{J',J,l}$ for $|J'|\leq l\leq |J|$.\;
				\end{itemize}
			\end{thm}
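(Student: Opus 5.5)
This is a transfer of \cite[Theorem 4.5.2]{wholeLINV}, so the plan is to reduce $M^{\bullet}_{I_1,I_2}(\ba)$ to the complex $M^{\bullet}_{I_1,I_2}$ of \cite{wholeLINV} and read off both assertions.

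\emph{Step 1: remove the twists by $\ba$.} Using the Bruhat--Tits resolutions $\bC_{I_1,J}(\ba)$ and $\bC_{I_2,J}(\ba)$, we have
\[
M^{\bullet}_{I_1,I_2}(\ba)=\ext^{\bullet-|I_1\backslash I_2|}_{G,Z}\big(v^{\ana}_{I_1,J,\Delta}(\ba),v^{\ana}_{I_2,J,\Delta}(\ba)\big).
\]
Every term of both resolutions is a parabolic induction $(\ind^G_{\op_K}i^{\ana}_{K,J}(\ba))^{\ana}$. On each Levi block $\bL_{J,j}$ the same character $|\det_{\bL_{J,j}}|^{a_j}$ appears on both sides. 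We then compute each term $\ext^\bullet_{G,Z}$ by the usual spectral sequence: first Frobenius reciprocity / Bernstein adjunction, then Bruhat filtration, then $\fn$-cohomology on the Levi. The twist enters only as a smooth character of $\bZ_J(\bQ_p)$, and it is the same on source and target. So it cancels in every $\ext^\bullet_{\bL_K,Z}$ term. The identifications are canonical, so they commute with the differentials of the double complex. This gives $M^{\bullet}_{I_1,I_2}(\ba)\cong M^{\bullet}_{I_1,I_2}$, as in \cite[Lemma 3.3.2, Proposition 3.3.3]{wholeLINV}.

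\emph{Step 2: apply \cite[Theorem 4.5.2]{wholeLINV}.} We take $(I_0,I_1)=(I_1,J)$ and $(I_2,I_3)=(I_2,J)$. That theorem describes the vanishing range of $M^{\bullet}_{I_1,I_2}$ and its lowest nonzero degree. It gives the lowest degree as a spectral-sequence abutment $E_{2,J',J}^{-l,\,l+|J|-2|J'|}$, for $|J'|\le l\le |J|$, with basis indexed by $\Psi_{J',J,l}$, where $J'=I_2\cup(J\backslash I_1)$. Shifting degrees by $|I_1\backslash I_2|$ turns the vanishing range into (i). In that range we use $\dim\fg-\dim\fz=n^2-1$ and correct by the $|J|$ coming from $\dim\fn$-type contributions, matching Qian's bound. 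The abutment filtration becomes the filtration in (ii). Its length is bounded by $|J'|$ and $|J|$ because the $E_2$-page vanishes outside that window.

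\emph{Main obstacle.} The one real check is the degree bookkeeping, namely that the upper bound in (i) is exactly $n^2-n+|J|-|I_1\backslash I_2|$. We must also confirm that the index sets $\Psi_{J',J,l}$ from Qian's setting, where both pairs share the outer set $J$, really coincide with those appearing here. I would settle this by checking the two extreme cases $I_1=I_2$ and $I_2=\emptyset, I_1=J$ against the Steinberg computations in \cite{wholeLINV}.
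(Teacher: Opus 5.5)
Your proposal follows the paper's argument: it identifies $M^{\bullet}_{I_1,I_2}(\ba)$ with Qian's $M^{\bullet}_{I_1,I_2}$ by the argument of \cite[Lemma 3.3.2 and Proposition 3.3.3]{wholeLINV}, then applies \cite[Theorem 4.5.2]{wholeLINV} with $(I_0,I_1)=(I_1,J)$ and $(I_2,I_3)=(I_2,J)$, shifting degrees by $|I_1\backslash I_2|$. One correction to your Step 1 heuristic: for Bruhat cells $w\notin\sW_J$ the character $\prod_j|\mathrm{det}_{\bL_{J,j}}|^{a_j}$ on the source is $w$-conjugated, so it does not literally cancel against the target; these cells (like all $w\neq 1$ cells) vanish with or without the twist by a weight argument in the $\overline{\fn}$-homology (a nonzero algebraic weight on the centre of the Levi), and that argument is what makes the smooth twist irrelevant.
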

			For $I_3\subseteq I_2\subseteq I_1$,\;we have a natural cup product map:
			\begin{equation}
				\kappa_{I_1,I_3}^{I_2}:	\BE_{I_1,I_2}(\ba)\otimes_E \BE_{I_2,I_3}(\ba)\xrightarrow{\cup} \BE_{I_1,I_3}(\ba).
			\end{equation}
			Using \cite[Theorem 5.3.6]{wholeLINV},\;we get that:
			\begin{thm}\label{decompforextgps}
			\begin{itemize}
				\item[(1)]	The cup product map $\kappa_{I_1,I_3}^{I_2}$ is injective.\;If $I_1\backslash I_2$ and $I_2\backslash I_3$ are not connected (i.e.,\;for any $i\in I_1\backslash I_2$ and $i'\in I_2\backslash I_3$,\;we have $|i-i'|\geq 2$),\;then $\kappa_{I_1,I_3}^{I_2}$ is an isomorphism.\;
				\item[(2)]For $1\leq j\leq d$ and $I_2'\subseteq I_1'\subseteq \Delta_{J,j}$,\;we put
				\[\BE_{I_1',I_2'}(\ba)_j:=\ext_{\bL_{J,j}(\bQ_p),Z}
				^{|I_2'\backslash I_1'|}(v^{\ana}_{I'_1,\Delta_{J,j}},v^{\ana}_{I_2',\Delta_{J,j}}).\]
			\end{itemize}
			There is an isomorphism
				\[\BE_{I_1,I_2}(\ba)=\bigotimes_{j=1}^d \BE_{I_1\cap \Delta_{J,j},I_2\cap \Delta_{J,j}}(\ba)_j.\]
			\end{thm}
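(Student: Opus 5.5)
The plan is to reduce everything to the Levi factors through the Bruhat--Tits resolutions $\bC_{I,J}(\ba)$ and the identification $M^{\bullet}_{I_1,I_2}(\ba)\cong M^{\bullet}_{I_1,I_2}$ recorded above. That identification says the twist $\ba$ plays no role, so I will work with $\ba=0$ throughout.

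\emph{Step 1: compute the $\ext$ between the terms of the resolutions.} The terms of $\bC_{I_1,J}$ and $\bC_{I_2,J}$ are locally analytic parabolic inductions $(\ind^G_{\op_K}i^{\ana}_{K,J})^{\ana}$ with $I\subseteq K\subseteq J$. For each pair of such terms I would apply the locally analytic Frobenius reciprocity/Shapiro-type spectral sequence (as in \cite[Section 4]{wholeLINV}). This expresses $\ext^\bullet_{G,Z}$ between two terms as
\begin{itemize}
\item[] Lie algebra cohomology of $\fn$-pieces,
\item[] tensored with $\ext$ groups over $\bL_K$, which are external tensor products over the blocks $\bL_{J,j}$.
\end{itemize}
The point is that every $K$ lies inside $J$, so every such term decomposes blockwise along $\bL_J=\prod_j\bL_{J,j}$. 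By the Künneth formula for $D(\bL_J(\bQ_p),E)$-modules, which holds here because the distribution algebras are Fréchet--Stein and all the modules are coadmissible, the double complex computing $M^\bullet_{I_1,I_2}$ then becomes the tensor product of the double complexes computing the blockwise groups $\ext^\bullet_{\bL_{J,j}(\bQ_p),Z}(\bC_{I_1\cap\Delta_{J,j}},\bC_{I_2\cap\Delta_{J,j}})$. One extra ingredient is needed: the unipotent radical $\overline{\bN}_J$ contributes a common factor $\hH^\bullet(\fn_J,\cdot)$. By the vanishing range in part (i) of the preceding theorem, this factor only enters in degrees above the bottom degree.

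\emph{Step 2: read off the bottom degree.} Taking bottom degree $|I_1\backslash I_2|=\sum_j|(I_1\backslash I_2)\cap\Delta_{J,j}|$, the Künneth decomposition gives
\[
\BE_{I_1,I_2}(\ba)\cong\bigotimes_{j=1}^d \BE_{I_1\cap\Delta_{J,j},I_2\cap\Delta_{J,j}}(\ba)_j .
\]
This is assertion (2). It should also be compatible with the filtration in part (ii) of the preceding theorem: the index sets $\Psi_{J',J,l}$ split as products over the blocks, and I would check this dimension equality to confirm that the isomorphism is exact rather than only an injection.

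\emph{Step 3: cup products.} For (1), I would first transport the cup product through the isomorphism of (2). It becomes the tensor product of the blockwise cup products. Each block is a $\GLN_{n_j}$ generalized Steinberg situation, where \cite[Theorem 5.3.6]{wholeLINV} gives injectivity, and gives an isomorphism when the two difference sets are disconnected. A tensor product of injective maps is injective. Disconnectedness of $I_1\backslash I_2$ and $I_2\backslash I_3$ in $\Delta$ implies disconnectedness inside each $\Delta_{J,j}$, so the isomorphism statement also follows blockwise.

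The main obstacle is Step 1: checking that the Künneth isomorphism is compatible with the Bruhat--Tits differentials and with the cup product, and that the $\overline{\bN}_J$-cohomology does not contribute in the bottom degree. For this I would rely on the explicit spectral sequence of \cite[Theorem 4.5.2]{wholeLINV} and its degeneration.
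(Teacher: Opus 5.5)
Your argument runs in the opposite direction from the paper's, and the step that carries all the content is the one you leave open.

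\textbf{The paper's route.} The paper takes (1) directly from \cite[Theorem 5.3.6]{wholeLINV}, applied to the induced groups $\BE_{I_1,I_2}(\ba)$ after the identification $M^{\bullet}_{I_1,I_2}(\ba)\cong M^{\bullet}_{I_1,I_2}$. Part (2) is then almost formal. Filter block by block: $I_2=T_0\subseteq T_1\subseteq\cdots\subseteq T_d=I_1$ with $T_l\backslash T_{l-1}=(I_1\backslash I_2)\cap\Delta_{J,l}$. Consecutive blocks $\Delta_{J,l}$ are separated by a simple root of $\Delta\backslash J$, so $T_l\backslash T_{l-1}$ and $T_{l-1}\backslash T_0$ are never connected. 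The isomorphism case of (1) therefore gives $\BE_{I_1,I_2}(\ba)\cong\bigotimes_{l}\BE_{T_l,T_{l-1}}(\ba)$. Each factor has its difference set inside one block, and \cite[Theorem 1.3.5 (i)]{wholeLINV} identifies it with $\BE_{I_1\cap\Delta_{J,l},I_2\cap\Delta_{J,l}}(\ba)_l$. You used that disconnectedness in $\Delta$ passes to each block. You missed the converse: pieces lying in different blocks are automatically disconnected. That fact is what makes (2) a corollary of (1) and removes any need for a K\"unneth argument.

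\textbf{The gap in your Step 1.} Step 1 needs a chain-level statement: a Shapiro/K\"unneth quasi-isomorphism for $R\homo_G$ between inductions from $\op_J$. It must be natural in the resolution terms, so that it respects the Bruhat--Tits differentials. It must also be multiplicative, so that Step 3 can transport cup products. Nothing in the proposal supplies this, and the two justifications you offer do not work.
\begin{itemize}
\item Being Fr\'echet--Stein with coadmissible modules does not by itself give a K\"unneth formula for $\ext$ over $D(\bL_J(\bQ_p),E)$.
\item The vanishing range in part (i) is a statement about the total group. It cannot show that the $\overline{\bN}_J$-contribution is just $E$ in degree $0$, nor that the Bruhat cells other than the identity contribute nothing.
\item You also have to account for a factor coming from $\bZ_J(\bQ_p)/\bZ(\bQ_p)$. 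The block groups fix the central characters of the $\bL_{J,j}$, whereas $\BE_{I_1,I_2}(\ba)$ fixes only that of $G$.
\end{itemize}

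As written, the proposal proves neither (2) nor, through Step 3, (1). Completing Step 1 would amount to redoing the relevant part of \cite[Section 4]{wholeLINV} blockwise. The simpler fix is to drop Step 1, cite \cite[Theorem 5.3.6]{wholeLINV} for (1) in the induced setting, and derive (2) from its isomorphism case as above.
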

			\begin{proof}
				The first statement is \cite[Theorem 5.3.6]{wholeLINV}.\;For the second statement,\;choose a sequence
				\[I_1=T_0\subseteq T_1\subseteq \cdots \subseteq T_d=I_2\]
				such that $T_l\backslash T_{l-1}=(I_2\backslash I_1)\cap \Delta_{J,l}$ for $1\leq l\leq d$.\;Then for any $1\leq l\leq d$,\;the map
				\[\kappa_{T_0,T_l}^{T_{l-1}}:\BE_{T_0,T_{l-1}}(\ba)\otimes_E \BE_{T_{l-1},T_l}(\ba)\xrightarrow{\cup} \BE_{T_0,T_l}(\ba)\]
				is an isomorphism.\;By \cite[Theorem 1.3.5 (i)]{wholeLINV},\;we have $\BE_{T_{l-1},T_l}\cong \BE_{I_1\cap \Delta_{J,j},I_2\cap \Delta_{J,j}}(\ba)_j$.\;By applying this argument step by step,\;we get the desired tensor product decomposition.\;
			\end{proof}
			
			This theorem reduces $\BE_{I_1,I_2}(\ba)$ to the higher extension groups of its Steinberg Levi  blocks.\;In particular,\;we can describe the
			cup product structure of $\BE_{I_1,I_2}(\ba)$.\;We collect (and review) them in the following proposition,\;see \cite[Lemmas  5.4.1 $\&$ 5.4.4]{wholeLINV}.\;
			
			\begin{pro}
				\begin{itemize}
					\item[(1)]$\BE_{I_1,I_2}(\ba)\cong \BE_{I_1\backslash I_2,\emptyset}(\ba)$.\;
					\item[(2)] For each $I_2\subseteq I_1\subseteq J$ satisfying $\#I_1\backslash I_2=1$,\;there is a canonical isomorphism
					\[\BE_{I_1,I_2}(\ba)\cong \homo(\bQ_p^{\times},E).\;\]
					\item[(3)] For each $I_2\subseteq I_1\subseteq J$ satisfying $\#I_1\backslash I_2>1$,\;we set
					\[\BE^{<}_{I_1,I_2}(\ba):=\sum_{I_3\subsetneq I_2\subsetneq I_1}\im(\kappa_{I_1,I_3}^{I_2})\subseteq \BE_{I_1,I_2}(\ba).\]
					Then $\dim_E\BE_{I_1,I_2}(\ba)/\BE^{<}_{I_1,I_2}(\ba)=t$,\;where $t:=|\{1\leq l\leq d:(I_1\backslash I_2)\cap \Delta_{J,l} \text{\;is an interval}\}|$.\;
				\end{itemize}
			\end{pro}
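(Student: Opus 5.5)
The plan is to reduce all three statements to the Steinberg blocks, using the tensor product decomposition of Theorem \ref{decompforextgps}(2), and then to import the corresponding single-block statements of \cite[Lemmas 5.4.1 \& 5.4.4]{wholeLINV} for each $\GLN_{h_j}$.

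For (1), we write $\BE_{I_1,I_2}(\ba)\cong\bigotimes_{j=1}^d\BE_{I_1\cap\Delta_{J,j},I_2\cap\Delta_{J,j}}(\ba)_j$. For a single $\GLN_{h_j}$, the group $\ext^{|I_1'\backslash I_2'|}$ between $v^{\ana}_{I_1',\Delta_{J,j}}$ and $v^{\ana}_{I_2',\Delta_{J,j}}$ depends only on the difference $I_1'\backslash I_2'$; this is the single-block statement \cite[Lemma 5.4.1]{wholeLINV}, which comes from the $E_2$-page description of Qian's spectral sequence. The input there is that the relevant terms $E_{2,J',J}$ only see $J'=I_2\cup(J\backslash I_1)$ up to translation. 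Applying this blockwise, and using $(I_1\backslash I_2)\cap\Delta_{J,j}=(I_1\cap\Delta_{J,j})\backslash(I_2\cap\Delta_{J,j})$, recombining the tensor product gives $\BE_{I_1\backslash I_2,\emptyset}(\ba)$. The twist $\ba$ is harmless because of the identification $M^\bullet_{I_1,I_2}(\ba)\cong M^\bullet_{I_1,I_2}$ recalled above.

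For (2), if $I_1\backslash I_2=\{i\}$, then exactly one block $\Delta_{J,j}$ contains $i$. All other tensor factors are $\ext^0$ of a generalized Steinberg with itself, which is $E$ by irreducibility. The remaining factor is $\ext^1_{\GLN_{h_j},Z}(v^{\ana}_{\{i\}\cup I',\Delta_{J,j}},v^{\ana}_{I',\Delta_{J,j}})$, and by (1) this reduces to $I'=\emptyset$. That group is canonically $\homo(\bQ_p^\times,E)$ by \cite[Theorem 1.3.5 (i)]{wholeLINV}, or by Ding's computation \cite{2019DINGSimple}, via the Bruhat--Tits resolution. The isomorphism is canonical because it is built from the canonical identification $\homo(\bL_i(\bQ_p)\text{-center},E)\cong\homo(\bQ_p^\times,E)$ by $\psi\mapsto\psi\circ(\det\text{ ratio})$.

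For (3), the tensor decomposition is compatible with cup products, since the cup product on each factor is computed on the corresponding Levi block. Hence $\BE^{<}_{I_1,I_2}(\ba)$ can be written in terms of the blocks. Set $K_j:=(I_1\backslash I_2)\cap\Delta_{J,j}$, and let $V_j$ be the $j$-th factor and $V_j^{<}$ its sub-span of decomposable cup products. Then
\[
\BE_{I_1,I_2}(\ba)=\bigotimes_j V_j,\qquad
\BE^{<}_{I_1,I_2}(\ba)=\sum_j\Big(V_j^{<}\otimes\bigotimes_{k\neq j}V_k\Big)+(\text{cross terms}).
\]
Here the cross terms come from splitting $I_1\backslash I_2$ across different blocks. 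By Theorem \ref{decompforextgps}(1), such non-connected splittings make the cup product map an isomorphism. So whenever at least two $K_j$ are nonempty, the cross terms already fill out most of the space, and the cokernel has to be analysed carefully. Qian's single-block result says that $\dim V_j/V_j^{<}$ is $1$ when $K_j$ is an interval and $0$ otherwise. The main obstacle is the counting across blocks: identifying the quotient $\BE/\BE^{<}$ with $\bigoplus_{j:\,K_j\text{ interval}}V_j/V_j^{<}$ (tensored with fixed lines), so that the dimension is $t$.

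I would first attempt this in the case where only one $K_j$ is nonempty, where it is exactly \cite[Lemma 5.4.4]{wholeLINV}. Then I would induct on the number of nonempty $K_j$, using the injectivity and isomorphism statements of Theorem \ref{decompforextgps}(1) to control which cup products lie in $\BE^{<}$. If the resulting count disagrees with $t$, I would re-examine the definition of $\BE^{<}$, in particular whether it should only range over $I_3$ with $I_2\backslash I_3$ inside a single block.
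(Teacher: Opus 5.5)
For (1) and (2) you follow the paper's own route. The paper gives no argument beyond the tensor decomposition of Theorem \ref{decompforextgps} and a citation of \cite[Lemmas 5.4.1 \& 5.4.4]{wholeLINV}. Your blockwise reduction (the untouched factors are $\ext^0=E$, and the remaining factor is Qian's single-block group) is exactly that.

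The gap is in (3), and it cannot be closed. The count $t$ is wrong as soon as $I_1\backslash I_2$ meets two blocks, and Theorem \ref{decompforextgps}(1) shows this directly. Suppose $(I_1\backslash I_2)\cap\Delta_{J,j}\neq\emptyset$ for two indices $j$, and let $j_1$ be one of them. Put $I'=I_2\cup\big((I_1\backslash I_2)\backslash\Delta_{J,j_1}\big)$. Then $I_1\backslash I'$ and $I'\backslash I_2$ are nonempty and lie in different maximal intervals of $J$, which are separated by an element of $\Delta\backslash J$. So they are not connected, and the cup product through $I'$ is an isomorphism onto $\BE_{I_1,I_2}(\ba)$. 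Hence $\BE^{<}_{I_1,I_2}(\ba)=\BE_{I_1,I_2}(\ba)$ and the quotient is $0$.

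Here is a concrete case: $n=4$, $J=\{1,3\}$ (the paper's own example $\bL_{S_0}\cong\GLN_2\times\GLN_2$), $I_1=J$, $I_2=\emptyset$. Then $\BE_{I_1,I_2}(\ba)\cong\homo(\bQ_p^{\times},E)^{\otimes 2}$ is $4$-dimensional and entirely spanned by cup products, while $t=2$.

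So the identification you aim for, $\BE/\BE^{<}\cong\bigoplus_{j:\,K_j\text{ interval}}V_j/V_j^{<}$, is false. The "cross terms" do not fill out most of the space, they fill out all of it, and your induction on the number of nonempty $K_j$ will return $0$, not $t$.

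What your argument actually proves is this: the quotient has dimension $1$ if $I_1\backslash I_2$ is an interval contained in a single $\Delta_{J,l}$, and $0$ otherwise. In the single-block case every strictly intermediate set differs from $I_1$ and $I_2$ only inside that block, so this is \cite[Lemma 5.4.4]{wholeLINV} transported through the tensor decomposition. It agrees with $t$ only in that case, and only if empty intersections are not counted as intervals.

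Your fallback of redefining $\BE^{<}$ does not rescue $t$ either. Keeping only splittings inside one block gives $\BE/\BE^{<}\cong\bigotimes_j V_j/V_j^{<}$, whose dimension is a product, not the additive count $t$. Moreover, your single-block input ``$1$ if interval, $0$ otherwise'' holds only for $\#K_j>1$. For a singleton $K_j$ one has $V_j^{<}=0$, so that factor contributes dimension $2$.
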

			
			Let $\Phi_J^+$ be the set of positive roots of $\bL_J$ with respect to $(\bL_J\cap \bB,\bL_J\cap \bT)$.\;For $\alpha=x_i-x_j\in \Phi^+_J$,\;put $I_{\alpha}=\{i,\cdots,j-1\}\subseteq J$.\;If $\alpha\in \Phi^+_J\backslash J$ (i.e.,\;$|I_{\alpha}|\geq 2$),\;we fix a choice of $x_{\alpha}\in \BE_{I_{\alpha},\emptyset}(\ba)$ such that its image in $\BE_{I_{\alpha},\emptyset}(\ba)/\BE^{<}_{I_{\alpha},\emptyset}(\ba)$ is non-zero,\;and we set $\overline{X}_{\alpha}:=\{x_{\alpha}\}$.\;If $\alpha\in J$ (i.e.,\;$|I_{\alpha}|=1$),\;we write $x^{\infty}_{\alpha}$ (resp.,\;$x_{\alpha}$) for the elements in $\BE_{I_{\alpha},\emptyset}(\ba)\cong \homo(\bQ_p^{\times},E)$ corresponding to $\val_p$ (resp.,\;$\log_p$).\;In this case,\;we put
			$\overline{X}_{\alpha}:=\{x^{\infty}_{\alpha},x_{\alpha}\}$.\;Moreover,\;for $\alpha=x_i-x_j$,\;we set $x^{\infty}_{\alpha}:=x^{\infty}_{\alpha_i}\cup\cdots\cup x^{\infty}_{\alpha_{j-1}}\in \BE_{I_{\alpha},\emptyset}(\ba)$.\;
			
			For $I\subseteq J$,\;we write $\cS_I$ for the set of subsets $S\subseteq \Phi^+_J$ such that $\sum_{\alpha\in S}\alpha=\alpha_I$.\;It is clear that there exists a bijection between $\cS_I$ and the set of partitions of $I$ into non-empty subintervals (i.e.,\;sending $S$ to the partition $I=\sqcup_{\alpha\in S}I_{\alpha}$).\;For $S\subseteq \cS_I$,\;we set $\overline{X}_{S}:=\prod_{\alpha\in S}\overline{X}_{\alpha}$.\;Then we have (similar to \cite[Proposition 5.4.3]{wholeLINV}):
			\begin{pro}For $I_2\subseteq I_1\subseteq  J$,\;$\BE_{I_1,I_2}(\ba)$ admits a basis of the form $X_{I_1,I_2}:=\sqcup_{S\in \cS_{I_1\backslash I_2}}\overline{X}_{S}$.\;
			\end{pro}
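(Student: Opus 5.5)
We reduce the statement to its single-block analogue and take a tensor product, using the decomposition of Theorem \ref{decompforextgps}(2). By that theorem,
\[\BE_{I_1,I_2}(\ba)\cong\bigotimes_{j=1}^d\BE_{I_1\cap\Delta_{J,j},I_2\cap\Delta_{J,j}}(\ba)_j,\]
and the isomorphism is realised by iterated cup products $\kappa^{T_{l-1}}_{T_0,T_l}$ along the chain $T_l$ used in its proof. Part (1) of the preceding proposition gives $\BE_{I_1,I_2}(\ba)\cong\BE_{I_1\backslash I_2,\emptyset}(\ba)$, so we may assume $I_2=\emptyset$ and $I:=I_1$. Set $I^{(j)}:=I\cap\Delta_{J,j}$.

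\emph{Single block.} For one Steinberg Levi block $\GLN_{h_j}$, the group $\BE_{I^{(j)},\emptyset}(\ba)_j$ is exactly the one studied in \cite{wholeLINV}. By \cite[Proposition 5.4.3]{wholeLINV}, it has basis $\sqcup_{S_j\in\cS^{(j)}_{I^{(j)}}}\prod_{\alpha\in S_j}\overline{X}_\alpha$. Here $\cS^{(j)}$ runs over the partitions of $I^{(j)}$ into subintervals, and the $\overline{X}_\alpha$ are those chosen above; they are transported to the block via the identification $\BE_{I_\alpha,\emptyset}(\ba)\cong\BE_{I_\alpha,\emptyset}(\ba)_j$ of \cite[Theorem 1.3.5(i)]{wholeLINV}, valid since $I_\alpha\subseteq\Delta_{J,j}$ for $\alpha\in\Phi^+_J$. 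The product $\prod_{\alpha\in S_j}$ means an iterated cup product. One must check that this product is well defined (independent of the ordering of factors) and lands in the correct group. This follows from Theorem \ref{decompforextgps}(1), together with the fact that cup products of classes supported on disjoint intervals commute up to sign.

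\emph{Gluing.} A subset $S\subseteq\Phi^+_J$ with $\sum_{\alpha\in S}\alpha=\alpha_I$ is the same as a partition of $I$ into subintervals. Since every root of $\Phi^+_J$ lies in a single block, any such interval lies inside some $\Delta_{J,j}$. Hence $\cS_I=\prod_j\cS^{(j)}_{I^{(j)}}$ and $\overline{X}_S=\prod_j\overline{X}_{S_j}$. A tensor product of bases is a basis, so it only remains to check that the isomorphism of Theorem \ref{decompforextgps}(2) sends $\otimes_j(\text{cup products in block }j)$ to the corresponding cup product in $\BE_{I,\emptyset}(\ba)$. This is built into that isomorphism: it is itself given by cup products, and cup product is associative. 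Consequently the images of the elements of $\overline{X}_S$ under the decomposition are exactly the tensor products of the block basis elements, and $X_{I_1,I_2}$ is a basis.

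\emph{Main obstacle.} The delicate point is compatibility of the transport of the chosen elements $x_\alpha$ and $x^\infty_\alpha$, which are chosen in $\BE_{I_\alpha,\emptyset}(\ba)$ for the whole group $G$, with their single-block counterparts. Specifically, the image of $x_\alpha$ modulo $\BE^{<}$ must remain nonzero in the block. I would verify this using part (3) of the preceding proposition: it shows the quotient $\BE/\BE^{<}$ is one-dimensional for an interval and matches the block quotient under \cite[Theorem 1.3.5(i)]{wholeLINV}, since both $\BE^{<}$ subspaces are generated by the same cup products. As a consistency check, a dimension count also confirms the conclusion: both sides have dimension $\prod_j\dim\BE_{I^{(j)},\emptyset}(\ba)_j$.
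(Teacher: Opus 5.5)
\textbf{Verdict: the route differs from the paper's and is viable, but the transport step has a genuine gap.}

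The paper gives no argument beyond ``similar to \cite[Proposition 5.4.3]{wholeLINV}''. That is, it asserts that Qian's proof can be rerun for the induced groups $\BE_{I_1,I_2}(\ba)$. You instead try to \emph{deduce} the statement from Qian's result for each Levi block, via Theorem \ref{decompforextgps}(2). Several parts of your reduction are sound:
\begin{itemize}
\item Iterating Theorem \ref{decompforextgps}(1) along $T_\bullet$ shows that the $G$-level cup product $\bigotimes_l\BE_{T_{l-1},T_l}(\ba)\to\BE_{I,\emptyset}(\ba)$ is an isomorphism, since distinct blocks are never connected.
\item $\cS_I=\prod_l\cS_{I^{(l)}}$ holds, because every $I_\alpha$ with $\alpha\in\Phi_J^+$ lies in a single block.
\item Associativity writes each element of $\overline{X}_S$ as the image of a tensor of $G$-level products, one per block.
\end{itemize}
This reduces the statement to the case $I_1\backslash I_2\subseteq\Delta_{J,l}$, but \emph{still for the $G$-level groups}.

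The gap is the next step. To invoke \cite[Proposition 5.4.3]{wholeLINV}, you need two families of identifications to intertwine $G$-level cup products with $\bL_{J,l}(\bQ_p)$-level cup products:
\begin{itemize}
\item the identifications $\BE_{K,K'}(\ba)\cong\BE_{K\cap\Delta_{J,l},K'\cap\Delta_{J,l}}(\ba)_l$ of \cite[Theorem 1.3.5(i)]{wholeLINV}, for all $K'\subseteq K$ with $K\backslash K'\subseteq\Delta_{J,l}$;
\item the transports $\BE_{I_\alpha,\emptyset}(\ba)\cong\BE_{K,K'}(\ba)$ used to place $x_\alpha$ in the middle of a chain.
\end{itemize}
Associativity does not give this; it only regroups $G$-level factors. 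Without this compatibility, Qian's proposition yields a basis of the Levi group consisting of Levi-level products of Levi-level choices. There is then no reason these are the images of your $G$-level products $\overline{X}_S$. Nor does it follow that the transported $x_\alpha$ is nonzero modulo the Levi-level $\BE^{<}$.

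Your ``main obstacle'' paragraph assumes exactly this compatibility when it says both $\BE^{<}$'s are generated by ``the same cup products'', so as written it is circular. Only your dimension count survives, since it needs no compatibility. To close the gap you have two options:
\begin{itemize}
\item show that Qian's identification is multiplicative, for instance that it is induced by a functor respecting Yoneda products; or
\item prove the single-block $G$-level case directly by rerunning Qian's inductive argument with the $G$-level inputs: injectivity of cup products, part (3) of the preceding proposition for intervals, and the dimension from the filtration theorem.
\end{itemize}
The second option is what the paper's one-line proof implicitly does, and it makes the block reduction largely superfluous.

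Separately, your claim that the iterated product is independent of the order of the factors is unsupported. Theorem \ref{decompforextgps}(1) says nothing about commutativity. For adjacent intervals $\{i\},\{i+1\}$, the two orders use different cup-product maps, namely on $\BE_{\{i,i+1\},\{i+1\}}\otimes\BE_{\{i+1\},\emptyset}$ and on $\BE_{\{i,i+1\},\{i\}}\otimes\BE_{\{i\},\emptyset}$. You do not need this claim: fix the left-to-right convention, which is automatically compatible with the block chain $T_\bullet$.
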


			For each $I_2\subseteq I_1\subseteq J$,\;if $|I_1\backslash I_2|=1$,\;we set ${\BE}^{\infty}_{I_1,I_2}(\ba)=E\val_p\subseteq \homo(\bQ_p^{\times},E)$.\;For general $I_2\subseteq I_1$,\;we choose a sequence $I_2=J_0\subseteq {J_1}\subseteq \cdots \subseteq {J_t}=I_1$ such that $t:=|I_1\backslash I_2|$ and $|{J_j}\backslash {J_{j-1}}|=1$ for all $1\leq j\leq t$.\;We define ${\BE}^{\infty}_{I_1,I_2}(\ba)$ as the image of the composition:
			\[{\BE}^{\infty}_{{J_t},{J_{t-1}}}(\ba)\otimes_E\cdots\otimes_E{\BE}^{\infty}_{{J_1},{J_{0}}}(\ba)\hookrightarrow {\BE}_{{J_t},{J_{t-1}}}(\ba)\otimes_E\cdots\otimes_E{\BE}_{{J_1},{J_{0}}}(\ba)\xrightarrow{\cup }{\BE}_{I_1,I_2}(\ba),\]
			which gives a canonical $E$-line in ${\BE}_{I_1,I_2}(\ba)$ and is independent of the choice of the sequence $J_0\subseteq {J_1}\subseteq \cdots \subseteq {J_t}$.\;We define $\widetilde{\BE}_{I_1,I_2}(\ba)$ as the image of the injection
			\[{\BE}^{\infty}_{J,I_1}(\ba)\otimes_E{\BE}_{I_1,I_2}(\ba)\otimes_E{\BE}^{\infty}_{I_2,\emptyset}(\ba)\xrightarrow{\cup}{\BE}_{J,\emptyset}(\ba):=\BE_{J}(\ba).\]
			By choosing a non-zero element in ${\BE}^{\infty}_{J,I_1}(\ba)$ (resp.,\;${\BE}^{\infty}_{I_2,\emptyset}(\ba)$),\;we obtain a non-canonical isomorphism 
			\[\iota_{I_1,I_2}(\ba):{\BE}_{I_1,I_2}(\ba)\xrightarrow{\sim }\widetilde{\BE}_{I_1,I_2}(\ba).\]
	
		Put $\BE_J(\ba):={\BE}_{J,\emptyset}(\ba)$ for simplicity.\;Similar to \cite[Definition 8.1.1]{wholeLINV},\;we define Breuil-Schraen $\sL$-invariants in $\BE_J(\ba)$.\;
			
			\begin{dfn}\label{BDlinvgeneral}
				A Breuil-Schraen $\sL$-invariant $\BW(\ba)$ in $\BE_J(\ba)$ is a subspace of $\BE_J(\ba)$ such that:
				\begin{itemize}
					\item[(1)] $\BW(\ba)\subseteq \BE_J(\ba)$ is co-dimensional $1$ in   $\BE_J(\ba)$.\;
					\item[(2)] For each $I_2\subseteq I_1\subseteq J$,\;$\BW(\ba)\cap \widetilde{\BE}_{I_1,I_2}(\ba)\subsetneq \widetilde{\BE}_{I_1,I_2}(\ba)$.\;Let $\BW_{I_1,I_2}(\ba):=\iota^{-1}_{I_1,I_2}(\ba)(\BW(\ba)\cap \widetilde{\BE}_{I_1,I_2}(\ba))$,\;then this condition implies $\dim_E\BE_{I_1,I_2}/\BW_{I_1,I_2}(\ba)=1$.\;
					\item[(3)] For each $I_3\subseteq I_2\subseteq I_1\subseteq J$,\;the composition $\BE_{I_1,I_2}\otimes_E \BE_{I_2,I_3} \xrightarrow{\cup} \BE_{I_1,I_3}\twoheadrightarrow \BE_{I_1,I_3}/\BW_{I_1,I_3}(\ba)$ factors through an isomorphism between $E$-lines
					\[(\BE_{I_1,I_2}/\BW_{I_1,I_2}(\ba))\otimes_E(\BE_{I_2,I_3}/\BW_{I_2,I_3}(\ba))\xrightarrow{\sim}(\BE_{I_1,I_3}/\BW_{I_1,I_3}(\ba)).\]
				\end{itemize}
				
			\end{dfn}
			
			Let $\mathcal{BS}_J(\ba)$ be the moduli space of Breuil-Schraen $\sL$-invariants inside $\BE_J(\ba)$,\;which is a closed subvariety of a Zariski-open subvariety of the projective space $\BP(\BE_J(\ba))$.\;Similar to the argument in \cite[Theorem 8.1.4]{wholeLINV},\;for $\BW(\ba)\subseteq \BE_J(\ba)$ and $\alpha\in \Psi_{J}^+$,\;there exists a unique $\sL_{\alpha}(\ba)\in E$ such that $x_{\alpha}-\sL_{\alpha}(\ba)x^{\infty}_{\alpha}\in \BW(\ba)_{I_{\alpha},\emptyset}$.\;This gives an element $(\sL_{\alpha}(\ba))_{\alpha\in \Psi_{J}^+}\in \fl_{J}\cap \fn_{\emptyset}$.\;Indeed,\;the map
			\begin{equation}
				\begin{aligned}
					\mathcal{BS}_J(\ba)\xrightarrow{\sim} \fl_{J}\cap \fn_{\emptyset},\;\BW(\ba)\mapsto  (\sL_{\alpha}(\ba))_{\alpha\in \Psi_{J}^+}.\;
				\end{aligned}
			\end{equation}
			is an isomorphism.\;The symbol $(\sL_{\alpha}(\ba))_{\alpha\in \Psi_{J}^+}$  indicatea that there exists a way to match Breuil-Schraen $\sL$-invariants inside $\BE_J(\ba)$ with the classical Fontaine-Mazur $\sL$-invariants defined for Steinberg $(\varphi,\Gamma)$-modules.\;

			\subsection{Capture $p$-adic Hodge parameters through Breuil-Schraen $\sL$-invariants}\label{BSINVtype1}
			
			Keep the notation in Section \ref{Omegafil}.\;Recall $\underline{\phi}=(\phi_1,\cdots,\phi_n)=(\alpha_1,\alpha_1p,\cdots,\alpha_1p^{l_1-1},\cdots,\alpha_s,\alpha_sp,\cdots,\alpha_sp^{l_s-1})$ in Section \ref{Omegafil}.\;For $u\in \sW_n^{S_0}$,\;put
			\[\unr(\underline{\phi}^u):=\boxtimes_{j=1}^n\unr(\phi_{u^{-1}(j)}).\]
			For any $u\in \sW_n^{S_0}$,\;there is  a $\bP_{S_0(u)}$-parabolic filtration $\cF_{S_0(u)}$ of $\Dpik$ with graded Steinberg pieces $\{E_{u,i}\}_{1\leq i\leq f_u}$.\;Note that $\bL_{S_0(u)}=\prod_{1\leq i\leq f_u}\bL_{S_0(u),i}:=\prod_{1\leq i\leq f_u}\GLN_{r_{u,i}}$.\;For $1\leq i\leq f_u$,\;we see that
			\[\unr(\underline{\phi}^u)\eta|_{\bL_{S_0(u),i}(\bQ_p)}=\unr(a_i(u))\otimes \mathrm{1}_{\bT(\bQ_p)\cap \bL_{S_0(u),i}(\bQ_p)}.\]
			We apply the results in  Section \ref{TYPE1higherext} to $\ba=\ba(u):=(a_1(u),\cdots,a_{f_u}(u))$ and $J=S_0(u)$.\;For any  $I_2\subseteq I_1\subseteq \Delta_{S_0(u)}$,\;we rewrite $\BE_{I_1,I_2}(\ba(u))$ (resp.,\;$\BE(\ba(u))$)  with $\BE_{I_1,I_2}(u)$ (resp.,\;$\BE(u)$) for simplicity.\;

			\begin{dfn}A total Breuil-Schraen $\sL$-invariant of monodromy type $S_0$ is a collection $\{\BW(u)\subseteq \BE(u)\}_{u\in \sW_n^{S_0}}$ with $\BW(u)$ a Breuil-Schraen $\sL$-invariant in $\BE(u)$ (in the sense of Definition \ref{BDlinvgeneral}).\;
			\end{dfn}
				
				Let $\mathcal{BS}(u)$ (resp.,\;$\mathcal{BS}_{S_0}$) be the moduli space of Breuil-Schraen $\sL$-invariants inside $\BE(u)$ (resp.,\;total Breuil-Schraen $\sL$-invariant of monodromy type $S_0$).\;Then we have isomorphisms
				\begin{equation}
					\begin{aligned}
						\mathcal{BS}(u)\xrightarrow{\sim} \fl_{S_0(u)}\cap \fn_{\emptyset},\;\BW(u)\mapsto  (\sL_{\alpha}(u))_{\alpha\in \Psi_{S_0(u)}^+},
					\end{aligned}
				\end{equation}
				and 
				\begin{equation}
					\begin{aligned}
					\mathcal{BS}_{S_0}\xrightarrow{\sim} \prod_{u\in \sW_n^{S_0}}\fl_{S_0(u)}\cap \fn_{\emptyset}.\;
					\end{aligned}
				\end{equation}
			
			\begin{pro}Let $\Dpik$ be a non-critical  semistable $(\varphi,\Gamma)$-module $\Dpik$ and  keep the notation in Sections \ref{Omegafil} and \ref{philoforParameter}.\;For any $u\in \sW_n^{S_0}$,\;
				$\{\underline{\sL}(E_{u,i})\}_{1\leq i\leq f_u}$ are captured by certain Breuil-Schraen $\sL$-invariants $\BW_{\Dpik}(u)\subseteq \BE(u)$.\;In particular,\;the collection $\{\BW_{\Dpik}(u)\}_{u\in \sW_n^{S_0}}$ determines $p_{\mathrm{ST}}(\underline{\sL}(\Dpik))$.\;
			\end{pro}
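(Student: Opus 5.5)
The plan is to construct $\BW_{\Dpik}(u)$ block by block and then invoke the tensor decomposition of Theorem \ref{decompforextgps} to assemble it.

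Fix $u\in\sW_n^{S_0}$ and the parabolic filtration $\Dpik=[E_{u,1}-\cdots-E_{u,f_u}]$ of (\ref{parafils0u}). Each $E_{u,i}$ is a non-critical Steinberg $(\varphi,\Gamma)$-module of rank $r_{u,i}$. After twisting by $\unr(a_i(u))$, its Frobenius eigenvalues match the smooth Steinberg character of $\bL_{S_0(u),i}(\bQ_p)\cong\GLN_{r_{u,i}}(\bQ_p)$. This is exactly why $\ba(u)$ was chosen so that $\unr(\underline{\phi}^u)\eta|_{\bL_{S_0(u),i}}=\unr(a_i(u))\otimes 1$. By Qian's main result \cite{wholeLINV} (the Breuil--Schraen $\sL$-invariant attached to a Steinberg $(\varphi,\Gamma)$-module), there is a Breuil--Schraen $\sL$-invariant $\BW(E_{u,i})\subseteq\BE_{\Delta_{S_0(u),i},\emptyset}(\ba(u))_i$. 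It is characterized by the property that its coordinates $(\sL_\alpha)_{\alpha\in\Phi^+_{\Delta_{S_0(u),i}}}$ under \cite[Theorem 8.1.4]{wholeLINV} correspond bijectively to $[\underline{\sL}(E_{u,i})]\in\Phi_{\mathrm{nc},\Delta_{S_0(u),i}}(\Delta_{S_0(u),i})$. The twist by $|\det|^{a_i(u)}$ does not affect anything, since Theorem \ref{decompforextgps} and \cite[Lemma 3.3.2]{wholeLINV} identify $M^\bullet_{I_1,I_2}(\ba)$ with the untwisted $M^\bullet_{I_1,I_2}$.

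Next I define $\BW_{\Dpik}(u)\subseteq\BE(u)=\bigotimes_{i}\BE_{\Delta_{S_0(u),i},\emptyset}(u)_i$, using the isomorphism of Theorem \ref{decompforextgps}(2). I take it to be the kernel of the tensor product of the linear forms $\BE_{\Delta_{S_0(u),i},\emptyset}(u)_i\twoheadrightarrow\BE_{\Delta_{S_0(u),i},\emptyset}(u)_i/\BW(E_{u,i})$. This kernel is a hyperplane because each factor is a line. I then check the axioms of Definition \ref{BDlinvgeneral}. For $I_2\subseteq I_1\subseteq S_0(u)$, both $\widetilde{\BE}_{I_1,I_2}(u)$ and the cup products decompose compatibly with the tensor decomposition, because cup products of classes supported on disconnected blocks are isomorphisms by Theorem \ref{decompforextgps}(1). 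Conditions (2) and (3) therefore reduce factorwise to the same conditions for each $\BW(E_{u,i})$, which hold by \cite{wholeLINV}. Conversely, the argument shows that any Breuil--Schraen $\sL$-invariant in $\BE(u)$ is of this product form. This is compatible with $\mathcal{BS}(u)\cong\fl_{S_0(u)}\cap\fn_\emptyset=\prod_i(\fl_{\Delta_{S_0(u),i}}\cap\fn_\emptyset)$, and so $\BW_{\Dpik}(u)$ determines each $\BW(E_{u,i})$ and hence each $[\underline{\sL}(E_{u,i})]$.

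The "in particular" then follows directly from the definition (\ref{dfnofpreftoLevi}). The map $p_{\mathrm{ST}}$ sends $[\underline{\sL}(\Dpik)]$ to $\prod_u([\underline{\sL}(E_{u,1})],\ldots,[\underline{\sL}(E_{u,f_u})])$, and we have just shown that each factor is recovered from $\BW_{\Dpik}(u)$.

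I expect the main obstacle to be the factorwise verification. Specifically, one must show that $\widetilde{\BE}_{I_1,I_2}(u)$, which is defined using the $\BE^\infty$ lines of the whole $J$, coincides under the tensor isomorphism with $\bigotimes_i\widetilde{\BE}_{I_1\cap\Delta_i,I_2\cap\Delta_i}(u)_i$. This amounts to checking that $\BE^\infty$ is multiplicative across blocks. I would first prove it by choosing the chain $J_0\subseteq\cdots\subseteq J_t$ to exhaust one block at a time, as in the proof of Theorem \ref{decompforextgps}, and using the independence of the chain that is asserted in the definition of $\BE^\infty$.
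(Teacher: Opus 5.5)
Your proposal is correct and is essentially the argument the paper intends; the paper in fact gives no separate proof. The intended route is to reduce to the Steinberg blocks via Theorem \ref{decompforextgps} and the identification $\mathcal{BS}(u)\cong\fl_{S_0(u)}\cap\fn_{\emptyset}=\prod_{i}(\fl_{\Delta_{S_0(u),i}}\cap\fn_{\emptyset})$, apply Qian's matching to each $E_{u,i}$, and read off the ``in particular'' from the definition (\ref{dfnofpreftoLevi}) of $p_{\mathrm{ST}}$; your hyperplane $\ker(\otimes_i\lambda_i)$ is exactly the point of $\mathcal{BS}(u)$ corresponding to $([\underline{\sL}(E_{u,i})])_{1\leq i\leq f_u}$.

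The one step to justify more carefully is that the cup products and the subspaces $\widetilde{\BE}_{I_1,I_2}(u)$ are compatible with the tensor decomposition. This does not follow merely from the cup product being an isomorphism in Theorem \ref{decompforextgps}(1): rearranging products of blockwise classes needs an interchange (graded-commutativity) property for classes supported on different blocks. The chain-independence of $\BE^{\infty}$ gives this only for the smooth classes, so the interchange property, rather than the multiplicativity of $\BE^{\infty}$, is the real input you must take from Qian's results.
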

			\begin{rmk}
				Via the $p$-adic local Langlands correspondence,\;we expect that the conjectural locally analytic representation $\pi_{\ana}(\Dpik)$ has an ``extension structure'' that 
				comes from locally analytic parabolic inductions of locally analytic generalized Steinberg representations;\;see the two branchs of (\ref{struGL3ST2}) and its constructions for a $\GLN_{3}(\bQ_p)$-example.\;This is the key reason to consider $\ext^{\bullet}$-gruops between such representations.\;
			\end{rmk}

			\section{Locally analytic representations for semistable case}

				Throughout this section,\;fix a non-critical  semistable $(\varphi,\Gamma)$-module $\Dpik$ with Hodge-Tate weights $\bh=(\hpi_{1}>\hpi_{2}>\cdots>\hpi_{n})$ and  keep the notation in Section \ref{Omegafil}.\;Recall the two subsets $S_0\subseteq I_0\subseteq \Delta$.\;Write $\bL_{I_0}:=\bL_{I_0,1}\times \bL_{I_0,2} \times\cdots\times\bL_{I_0,m}$.\;For $1\leq j\leq m$,\;recall that $\Delta_{I_0,j}$ is the set of simple roots of $\bL_{I_0,j}$,\;so that  $I_0:=\sqcup_{1\leq j\leq m}\Delta_{I_0,j}$.\;For $1\leq j\leq m$,\;put $S_{0,j}:=\Delta_{I_0,j}\cap S_0$.\;


			For $I\subseteq \Delta$ and $u\in \sW_n^{S_0}$,\;consider the smooth principal series of $\bL_{I}(\bQ_p)$:
			\begin{equation}
				\begin{aligned}
					\mathrm{PS}^{\infty}_{I,u}(\underline{\phi})&:=\left(\ind^{\bL_I(\bQ_p)}_{\ob(\bQ_p)\cap \bL_I(\bQ_p)}\unr(\underline{\phi}^u)\eta\right)^{\infty}.
				\end{aligned}
			\end{equation}
			In particular,\;if $I=\{i\}$ for $i\in\Delta$,\;we rewrite the subscript $\{i\}$ with $i$ for simplicity.\;If $I=\Delta$,\;we drop the symbol $\Delta$.\;The irreducible constituents of $\mathrm{PS}^{\infty}_{1}(\underline{\phi})$ (equivalently,\;$\mathrm{PS}^{\infty}_{u}(\underline{\phi})$) are described as follows.\;Put $I=I_0$ and $u=1$,\;then
			\[\mathrm{PS}^{\infty}_{I_0,1}(\underline{\phi})\cong \boxtimes_{j=1}^m\Big(\ind^{\bL_{I_0,j}(\bQ_p)}_{\ob(\bQ_p)\cap \bL_{I_0,j}(\bQ_p)}\big(\unr(\underline{\phi})\eta\big)|_{\bT(\bQ_p)\cap \bL_{I_0,j}(\bQ_p)}\Big)^{\infty}.\]
			Note that $\big(\unr(\underline{\phi})\eta\big)|_{\bT(\bQ_p)\cap \bL_{I_0,j}(\bQ_p)}=\unr(\alpha_j)\otimes \mathrm{1}_{\bT(\bQ_p)\cap \bL_{I_0,j}(\bQ_p)}$ for some $\alpha_j\in E^{\times}$ by definition of $I_0$.\;For $J\subseteq I_0\subseteq I$,\;put 
			\[v^{\infty}_{J,I_0}(\underline{\phi}):=\boxtimes_{j=1}^m\unr(\alpha_j)\otimes_Ev^{\infty}_{J\cap \Delta_{I_0,j},\Delta_{I_0,j}},\;v^{\infty}_{J,I}(\underline{\phi}):=\left(\ind^{\bL_{I}(\bQ_p)}_{\op_{I_0}(\bQ_p)\cap \bL_{I}(\bQ_p) }v^{\infty}_{J,I_0}(\underline{\phi})\right)^{\infty}.\]
			\begin{pro}
				\begin{itemize}
					\item[(1)] For $J\subseteq I_0$,\;$v^{\infty}_{J,\Delta}(\underline{\phi})$ and $v^{\infty}_{J,I_0}(\underline{\phi})$ are  irreducible.\;The Jordan--Hölder factors of $\mathrm{PS}^{\infty}_{1}(\underline{\phi})$ (resp.,\;$\mathrm{PS}^{\infty}_{I_0,1}(\underline{\phi})$) are $\{v^{\infty}_{J,\Delta}(\underline{\phi})\}_{J\subseteq I_0}$ (resp.,\;$\{v^{\infty}_{J,I_0}(\underline{\phi})\}_{J\subseteq I_0}$).\;
					\item[(2)] For any $I_1,I_2\subseteq I_0 \subseteq I$,\;there exists a unique multiplicity-free finite-length representation $Q^{\diamond}_{I}(I_1,I_2)$ which has simple socle $v^{\infty}_{I_1,I}(\underline{\phi})$ and simple cosocle $v^{\infty}_{I_2,I}(\underline{\phi})$.\;
				\end{itemize} 
			\end{pro}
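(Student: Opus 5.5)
The plan is to reduce everything to the Levi $\bL_{I_0}$ and then to the classical theory of smooth generalized Steinberg representations of each block $\GLN_{n_j}(\bQ_p)$. First, by transitivity of smooth parabolic induction, $\mathrm{PS}^{\infty}_{1}(\underline{\phi})\cong\big(\ind^{G}_{\op_{I_0}(\bQ_p)}\mathrm{PS}^{\infty}_{I_0,1}(\underline{\phi})\big)^{\infty}$, and $\mathrm{PS}^{\infty}_{I_0,1}(\underline{\phi})$ is the external tensor product of the $m$ principal series $\big(\ind^{\bL_{I_0,j}(\bQ_p)}_{\ob\cap\bL_{I_0,j}}\unr(\alpha_j)\otimes 1\big)^{\infty}$. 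Each factor is an unramified twist of $i^{\infty}_{\emptyset,\Delta_{I_0,j}}$, whose constituents are the irreducible, pairwise distinct $v^{\infty}_{J',\Delta_{I_0,j}}$, $J'\subseteq\Delta_{I_0,j}$, by the classical results of Casselman, Borel–Zelevinsky, or Schneider–Stuhler as recalled in \cite[Section 2.1.2]{2019DINGSimple}. Since a tensor product of irreducible smooth admissible representations of the factors is irreducible for the product group, this gives the statement of (1) for $\mathrm{PS}^{\infty}_{I_0,1}(\underline{\phi})$, with constituents $v^{\infty}_{J,I_0}(\underline{\phi})$ for $J=\sqcup_j(J\cap\Delta_{I_0,j})\subseteq I_0$, each occurring once.

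For $v^{\infty}_{J,\Delta}(\underline{\phi})$, I would use the Zelevinsky classification. In Zelevinsky's language, $v^{\infty}_{J,I_0}(\underline{\phi})$ on block $j$ is a Langlands quotient or Zelevinsky module attached to a multisegment built from the characters $\unr(\alpha_jp^{k})$. Inducing to $G$ is irreducible exactly when no segment from block $j$ is linked with a segment from block $j'\neq j$. By the definition of $I_0$, the consecutive characters $\phi_i,\phi_{i+1}$ with $i\notin I_0$ do not satisfy $\phi_{i+1}=\phi_ip$. The ordering convention on the $\alpha_i$, together with the genericity hypothesis $\phi_i\neq\phi_j$, should likewise exclude $\phi_{i'}=\phi_ip^{\pm1}$ across different blocks. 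I would then check, segment by segment, that the supercuspidal supports of different blocks never form a linked pair, so that irreducibility of the induction follows from \cite[Theorem 9.7]{Zelevinsky} or its Bernstein–Zelevinsky analogue. Exactness of induction and a count of Jordan–Hölder factors (both sides have $2^{|I_0|}$ constituents) then give the Jordan–Hölder statement for $\mathrm{PS}^{\infty}_{1}(\underline{\phi})$.

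The main obstacle is this no-linking check. The ordering convention on the $\alpha_i$ only controls $\alpha_j=\alpha_ip^{l_i}$, so I must verify carefully that no other cross-block relation $\phi_{i'}=p^{\pm1}\phi_i$ can occur. If such a relation occurred, $I_0$ would be larger, so I would argue by contradiction using the definition of $I_0$ as the set of all $i$ with $\phi_{i+1}=p\phi_i$ in the fixed ordering. This may require first reordering blocks within the $\sW$-orbit, which is harmless since $\mathrm{PS}^{\infty}_{u}$ and $\mathrm{PS}^{\infty}_{1}$ share their constituents.

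For (2), I would first work on $\bL_{I_0}$, where the category generated by the constituents is a product of the corresponding blocks. For a single $\GLN_{n_j}$, the classical result (e.g. \cite[Section 2.1.2]{2019DINGSimple}, after Orlik's computation of Ext between generalized Steinbergs) says that for $I_1',I_2'$ there is a unique multiplicity-free subquotient of $i^{\infty}_{I_1'\cap I_2',\Delta}/\sum i^{\infty}_{K,\Delta}$ with socle $v_{I_1'}$ and cosocle $v_{I_2'}$: its constituents are $v_K$ for $K$ between $I_1'\cap I_2'$ and $I_1'\cup I_2'$, suitably oriented. Taking the external tensor product over $j$ gives $Q^{\diamond}_{I_0}(I_1,I_2)$. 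Parabolic induction $\ind_{\op_{I_0}\cap\bL_I}^{\bL_I}$ is exact, and by (1) it sends irreducibles to irreducibles injectively on the constituents. It should preserve simple socle and cosocle by Frobenius reciprocity together with the vanishing of $\homo$ between distinct constituents. Uniqueness would follow from a multiplicity-one argument on $\ext^1$ between adjacent constituents, transported by the same adjunction.
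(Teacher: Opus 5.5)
Your proposal follows essentially the paper's own proof: (1) is Zelevinsky's irreducibility criterion \cite[Theorem 9.7]{av1980induced2} applied to induction from $\op_{I_0}$, and (2) is obtained by tensoring the block-wise objects of \cite[Lemma 2.2.1]{BQ24} and inducing, with uniqueness argued as in loc.\ cit. Two small remarks: first, your ``main obstacle'' resolves directly, because distinctness of the $\phi_i$ and the ordering convention force any relation $\phi_{i'}=p\phi_i$ to have $i'=i+1$, so $i\in I_0$ and no reordering is needed; second, for $I_2'\not\subseteq I_1'$ the block object is not a subquotient of $i^{\infty}_{I_1'\cap I_2',\Delta}$ (there larger $K$ lie lower, so such a subquotient has socle $v^{\infty}_{I_1'\cup I_2',\Delta}$), but of a principal series in a different Weyl ordering, which is exactly what \cite[Lemma 2.2.1]{BQ24} supplies.
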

			\begin{proof}$(1)$ follows from \cite[Theorem 9.7]{av1980induced2}.\;It suffices to prove $(2)$.\;For $1\leq j\leq m$ and $I'_j,J'_j\subseteq
				\Delta_{I_0,j}$,\;by \cite[Lemma 2.2.1]{BQ24},\;there exists a unique multiplicity free finite length representation $Q_{\Delta_{I_0,j}}(I'_j,J'_j)$ which has simple socle $v^{\infty}_{I'_j,\Delta_{I_0,j}}$ and simple cosocle $v^{\infty}_{J'_j,\Delta_{I_0,j}}$.\;We put 
				\begin{equation*}
					\begin{aligned}
						&Q^{\diamond}_{I_0}(I_1,I_2):=\boxtimes_{j=1}^m\unr(\alpha_j)\otimes_EQ_{\Delta_{I_0,j}}(I_1\cap \Delta_{I_0,j},I_2\cap \Delta_{I_0,j}),\;\\
						&Q^{\diamond}_{I}(I_1,I_2):=\Big(\ind^{\bL_I(\bQ_p)}_{\overline{\bP}_{I_0}(\bQ_p)\cap \bL_I(\bQ_p)}Q^{\diamond}_{I_0}(I_1,I_2)\Big)^{\infty}.\;
					\end{aligned}
				\end{equation*}
				The uniqueness argument is similar to the proof of \cite[Lemma 2.2.1]{BQ24}.\;
			\end{proof}
			In particular,\;if $I_1=I_2=I'$,\;we write $Q^{\diamond}_{I}(I'):=Q^{\diamond}_{I}(I',I')$ for simplicity.\;Note that $\mathrm{PS}^{\infty}_{1}(\underline{\phi})=Q^{\diamond}_{\Delta}(I_0,\emptyset)$ and $\mathrm{PS}^{\infty}_{w_0^{(S_0)}}(\underline{\phi})=Q^{\diamond}_{\Delta}(\emptyset,I_0)$,\;where $w_0^{(S_0)}=j_{S_0}(w_{0,s})$,\;see the notation in the proof of Proposition  \ref{infinitefernforsemi}.\;The unique irreducible generic constituent of 
			$\mathrm{PS}^{\infty}_{1}(\underline{\phi})$ is equal to $Q^{\diamond}_{\Delta}(\emptyset)$.\;

			Let $\theta=(0,-1,\cdots,1-n)\in X_{\Delta}^+$.\;Put $\lambda:=\lambda_{\Dpik}=\bh_{\Dpik}-\theta\in X_{\Delta}^+$.\;For any $I_1,I_2\subseteq I_0$,\;put $Q_{\Delta}^{\diamond}(I_1,I_2,\lambda):=Q_{\Delta}^{\diamond}(I_1,I_2)\otimes_EL(\lambda)$,\;which is a locally $\bQ_p$-algebraic representation of $G$.\;In the sequel,\;write
			\[\pi_{\natural}^{\lalg}(\underline{\phi},\bh)=Q_{\Delta}^{\diamond}(I_0,\emptyset,\lambda),\quad\pi_1^{\lalg}(\underline{\phi},\bh)=Q_{\Delta}^{\diamond}(\emptyset,I_0,\lambda).\]
			For any $u\in \sW_n^{S_0}$ and $j\in \Delta$,\;put $\widehat{j}=\Delta\backslash\{j\}$.\;Let $\pi_{\widehat{j},u}(\underline{\phi})$ be the unique generic irreducible constituent of $\mathrm{PS}^{\infty}_{\widehat{j},u}(\underline{\phi})$.\;Consider the locally $\bQ_p$-analytic representation of $G$ (see \cite[The main theorem]{orlik2015jordan}): 
			\[C_{j,u}:=\cF^{G}_{\op_{\widehat{j}}}\Big(\overline{L}(-s_{j}\cdot{\lambda}),\pi_{\widehat{j},u}(\underline{\phi})\Big).\]
			\begin{lem}For $u,u'\in \sW_n^{S_0}$,\;$C_{j,u}\cong C_{j,u'}$ if and only if $\{u^{-1}(1),\cdots,u^{-1}(j)\}=\{(u')^{-1}(1),\cdots,(u')^{-1}(j)\}$. In particular,\;$C_{j,u}$ only depends on the coset $\sW_{\widehat{j}}u^{-1}$.\;
			\end{lem}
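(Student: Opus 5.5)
The plan is to reduce the comparison of the Orlik--Strauch representations $C_{j,u}$ to a comparison of their smooth inputs $\pi_{\widehat{j},u}(\underline{\phi})$, and then to compare those through cuspidal supports.

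\textbf{Step 1: reduction to the smooth input.} I will use the irreducibility and non-isomorphism criterion in \cite[The main theorem]{orlik2015jordan}. Let $M$ be a simple object of $\mathcal{O}^{\overline{\fp}}$, and let $\overline{\bP}$ be maximal for $M$, meaning that $\overline{\bP}$ is the largest standard parabolic for which $M$ lies in $\mathcal{O}^{\overline{\fp}}$. Then for an irreducible smooth $\pi$, the representation $\cF^G_{\overline{\bP}}(M,\pi)$ is topologically irreducible, and two such representations are isomorphic if and only if the pairs $(M,\pi)$ are isomorphic. Here $M=\overline{L}(-s_j\cdot\lambda)$ is the same for all $u$.

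Since $\lambda=\bh-\theta$ is dominant, the weight $s_j\cdot\lambda$ is $\widehat{j}$-dominant but not $j$-dominant. Hence $\op_{\widehat{j}}$ is exactly the maximal parabolic for $M$, because the only strictly larger parabolic is $G$ itself. The criterion therefore applies and gives
\[
C_{j,u}\cong C_{j,u'} \iff \pi_{\widehat{j},u}(\underline{\phi})\cong \pi_{\widehat{j},u'}(\underline{\phi})
\]
as representations of $\bL_{\widehat{j}}(\bQ_p)\cong \GLN_j(\bQ_p)\times\GLN_{n-j}(\bQ_p)$.

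\textbf{Step 2: identifying the generic constituent.} The representation $\mathrm{PS}^{\infty}_{\widehat{j},u}(\underline{\phi})$ is an outer tensor product of two smooth principal series, one for each Levi factor. I will write $\eta|_{\bT(\bQ_p)}=\eta_{\widehat{j}}^{L}\cdot\chi_j$. In this decomposition, $\eta_{\widehat{j}}^{L}$ is the square root of the modulus character of the Borel subgroup of $\bL_{\widehat{j}}$. The factor $\chi_j=|\det_{\GLN_j}|^{c_1}\boxtimes|\det_{\GLN_{n-j}}|^{c_2}$ is a character depending only on $j$, not on $u$.

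By Bernstein--Zelevinsky, a normalized principal series of $\GLN_m(\bQ_p)$ has a unique generic constituent, and this constituent depends only on the multiset of characters (its cuspidal support), not on their ordering. Consequently $\pi_{\widehat{j},u}(\underline{\phi})$ is determined up to isomorphism by two multisets. The first is $\{\unr(\phi_{u^{-1}(i)})|\cdot|^{c_1}\}_{1\leq i\leq j}$ and the second is $\{\unr(\phi_{u^{-1}(i)})|\cdot|^{c_2}\}_{j<i\leq n}$. Conversely, the cuspidal support is an isomorphism invariant of $\pi_{\widehat{j},u}(\underline{\phi})$, so it recovers these two multisets.

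\textbf{Step 3: conclusion.} The $\phi_i$ are pairwise distinct, so the first multiset is equivalent to the set $\{u^{-1}(1),\cdots,u^{-1}(j)\}$. The second multiset is then determined by the first, as its complement. This proves the equivalence. The last assertion follows because $\{u^{-1}(1),\cdots,u^{-1}(j)\}$ is invariant under $u\mapsto wu$ for $w\in\sW_{\widehat{j}}$. Such $w$ permute $\{1,\cdots,j\}$ and $\{j+1,\cdots,n\}$ separately, so the set depends only on the coset $\sW_{\widehat{j}}u$, equivalently on $u^{-1}\sW_{\widehat{j}}$.

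\textbf{Main obstacle.} The delicate point is Step 1, namely checking that $\op_{\widehat{j}}$ is genuinely maximal for $\overline{L}(-s_j\cdot\lambda)$ so that the Orlik--Strauch non-isomorphism statement applies. I would verify this directly from the dot action: $\langle s_j\cdot\lambda,\alpha_j^\vee\rangle=-\langle\lambda,\alpha_j^\vee\rangle-2<0$, so $s_j\cdot\lambda$ fails to be $j$-dominant. A secondary check in Step 2 is that the twist $\chi_j$ comparing $\eta$ with the Levi modulus character is indeed independent of $u$.
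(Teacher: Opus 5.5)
The paper states this lemma without proof, so there is no argument of its own to compare with. Your proof is correct and is the natural one. Your computation $\langle s_j\cdot\lambda,\alpha_j^\vee\rangle=-\langle\lambda,\alpha_j^\vee\rangle-2<0$, together with $\widehat{j}$-dominance, correctly shows that $\op_{\widehat{j}}$ is maximal for $\overline{L}(-s_j\cdot\lambda)$. Orlik--Strauch then reduces everything to $\pi_{\widehat{j},u}(\underline{\phi})$. Its isomorphism class is governed by the cuspidal supports on the two $\GLN$-factors, because the twist coming from $\eta$ is independent of $u$ and the $\phi_i$ are pairwise distinct.

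Two small remarks.

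First, the ``only if'' direction genuinely needs the non-isomorphism statement for $\cF^G_{\op_{\widehat{j}}}(\overline{L}(-s_j\cdot\lambda),-)$. This is a separate input from topological irreducibility, so make sure the reference you invoke actually contains it.

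Second, your form of the last assertion is dependence only on $\sW_{\widehat{j}}u$, equivalently on $u^{-1}\sW_{\widehat{j}}$. This is the version that follows from the first assertion, and it agrees with the introduction's definition $[u]_j\in\sW_{\widehat{j}}u$. The coset ``$\sW_{\widehat{j}}u^{-1}$'' in the statement should be read this way. For instance, take $n=3$, $S_0=\emptyset$, $j=1$, $u=s_1$, $u'=s_1s_2$. Then $\sW_{\widehat{1}}u^{-1}=\sW_{\widehat{1}}(u')^{-1}$, while $u^{-1}(1)=2\neq 3=(u')^{-1}(1)$.
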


			For $j\in \Delta$ and $u\in \sW_n^{S_0}$, let $[u]_j$ be the minimal length representative in $\sW_{\widehat{j}}u^{-1}$. We put
			\[\cI_j^{\sharp}:=\big\{[u]_j: u\in \sW_n^{S_0}\big\}.\]
		By \cite[Lemma 2.1.31]{BQ24},\;we have
			\begin{lem}For $u\in \cI_j^{\sharp}$ and $j\in \Delta$,\;$\big(\ind^G_{\op_{\widehat{j}}(\bQ_p)}\pi_{\widehat{j},u}(\underline{\phi})\big)^{\infty}=Q^{\diamond}_{\Delta}(I_u^+,I_u^-)$ for $I_u^+:=D_R(w_{\widehat{j}}u^{-1})\cap I_0=D_L(uw_{\widehat{j}})\cap I_0$ and $I_u^-:=D_R(u^{-1})\cap I_0=D_L(u)\cap I_0$.\;
			\end{lem}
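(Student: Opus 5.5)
The plan is to reduce the statement to a computation of Jacquet modules (exponents), and then to use the uniqueness part of the preceding Proposition (2): the multiplicity-free representation with prescribed simple socle and cosocle is unique. Fix $j\in\Delta$ and $u\in\cI_j^{\sharp}$, and write $\bL_{\widehat{j}}\cong\GLN_j\times\GLN_{n-j}$.

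\emph{Step 1: describe $\pi_{\widehat{j},u}(\underline{\phi})$.} Applying the preceding Proposition to the Levi $\bL_{\widehat{j}}$, the representation $\mathrm{PS}^{\infty}_{\widehat{j},u}(\underline{\phi})$ is an exterior tensor product of two principal series. Their characters are $\unr(\phi_{u^{-1}(1)}),\cdots,\unr(\phi_{u^{-1}(j)})$ and $\unr(\phi_{u^{-1}(j+1)}),\cdots,\unr(\phi_{u^{-1}(n)})$ (twisted by $\eta$). Since $u$ is minimal in its coset, each block lists its eigenvalues in increasing order along the segments determined by $I_0$. Hence the generic constituent $\pi_{\widehat{j},u}(\underline{\phi})$ is $\boxtimes_{k=1,2}\big(\ind\,v^{\infty}_{\emptyset,\cdot}\big)$, a product of smooth Steinberg-type inductions over the segments cut out by $I_0$ inside each block. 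In the notation of the Proposition, $\pi_{\widehat{j},u}(\underline{\phi})$ is the constituent $v^{\infty}_{\emptyset,\widehat{j}}$ for the Levi $\bL_{\widehat{j}}$, computed with respect to $u$.

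\emph{Step 2: constituents and exactness.} By exactness and transitivity of smooth parabolic induction, $\big(\ind^G_{\op_{\widehat{j}}}\pi_{\widehat{j},u}(\underline{\phi})\big)^{\infty}$ is a subquotient of $\mathrm{PS}^{\infty}_{u}(\underline{\phi})$, so it is multiplicity-free with Jordan--H\"older factors among $\{v^{\infty}_{J,\Delta}(\underline{\phi})\}_{J\subseteq I_0}$. To determine which $J$ occur, I would compare Jacquet modules. The $\bT$-exponents of the induced representation are given by the geometric lemma: they are the $\sW_{\widehat{j}}$-shuffles of the exponents of $\pi_{\widehat{j},u}(\underline{\phi})$. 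The exponents of $v^{\infty}_{J,\Delta}(\underline{\phi})$ are indexed by permutations whose descent set meets $I_0$ exactly in $J$ (Zelevinsky). Matching these two descriptions shows that the factors are exactly those $J$ with $I_u^-\subseteq J\subseteq I_u^+$.

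\emph{Step 3: socle and cosocle.} By Frobenius reciprocity (and second adjointness for the cosocle), $\homo_G\big(v^{\infty}_{J,\Delta}(\underline{\phi}),\ind\,\pi_{\widehat{j},u}\big)$ is the space of homomorphisms from the Jacquet module of $v^{\infty}_{J,\Delta}(\underline{\phi})$ along $\bP_{\widehat{j}}$ to $\pi_{\widehat{j},u}$. This is nonzero only for one extremal $J$, which gives a simple socle; dually, one gets a simple cosocle. Tracking the descent sets of $w_{\widehat{j}}u^{-1}$ and $u^{-1}$, these extremal sets are $I_u^+=D_R(w_{\widehat{j}}u^{-1})\cap I_0$ and $I_u^-=D_R(u^{-1})\cap I_0$. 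The equalities $D_R(u^{-1})=D_L(u)$ and $D_R(w_{\widehat{j}}u^{-1})=D_L(uw_{\widehat{j}})$ hold formally. The uniqueness in the preceding Proposition (2) then gives $\big(\ind^G_{\op_{\widehat{j}}(\bQ_p)}\pi_{\widehat{j},u}(\underline{\phi})\big)^{\infty}\cong Q^{\diamond}_{\Delta}(I_u^+,I_u^-)$.

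The main obstacle is the combinatorics of Step 3: proving that the socle is simple, i.e.\ that only one $J$ admits a nonzero map. This requires exploiting that the exponents of $\pi_{\widehat{j},u}$ have multiplicity one and that $\phi_i\neq\phi_{i'}$. I would first settle the case $I_0=\Delta_{I_0,1}$ (a single block, where this is \cite[Lemma 2.1.31]{BQ24}), and then reduce the general case to it. The reduction would use the factorization through $\op_{I_0}$ employed in the definition of $Q^{\diamond}_{I}(I_1,I_2)$, together with the fact that the segments for different $\Delta_{I_0,k}$ are unlinked.
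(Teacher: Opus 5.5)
\textbf{Overall strategy.} Your architecture is sound: you get multiplicity-freeness from $\mathrm{PS}^{\infty}_u(\underline{\phi})$, then show the socle and cosocle are simple, then conclude by the uniqueness in part (2) of the preceding Proposition. This is more explicit than the paper, which simply invokes \cite[Lemma 2.1.31]{BQ24}.

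\textbf{The gap is in Step 2, and Step 3 rests on it.} The Jordan--H\"older factors are \emph{not} the $v^{\infty}_{J,\Delta}(\underline{\phi})$ with $I_u^-\subseteq J\subseteq I_u^+$.

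Call $i\in I_0$ \emph{separated} if $\phi_i$ and $\phi_{i+1}$ land in different blocks of $\bL_{\widehat{j}}\cong\GLN_j\times\GLN_{n-j}$, and let $I_{\rm sep}$ be the set of such $i$. Since $\pi_{\widehat{j},u}(\underline{\phi})$ is generic, every exponent of the induced representation has each non-separated linked pair in Steinberg order. Separated pairs are unconstrained. So your own exponent matching gives as factors exactly the $v^{\infty}_{J,\Delta}(\underline{\phi})$ with $J\subseteq I_{\rm sep}$.

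For the lemma to hold, $I_u^-$ must be the set of separated pairs inverted by $u$ (the cosocle), and $I_u^+$ the set of separated pairs not inverted (the socle). These two sets are disjoint with union $I_{\rm sep}$. The factors therefore form the interval $\{J: I_u^+\cap I_u^-\subseteq J\subseteq I_u^+\cup I_u^-\}$ in the sense of Lemma \ref{parameterlineextgp}.

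\textbf{Counterexample.} Take $n=2$, $S_0=\emptyset$, $I_0=\{1\}$, $j=1$, $u=s_1\in\cI_1^{\sharp}$. The induced representation is $\mathrm{PS}^{\infty}_{s_1}(\underline{\phi})$, whose socle is $v^{\infty}_{\emptyset,\Delta}$ and whose cosocle is $v^{\infty}_{\{1\},\Delta}$. So $I_u^+=\emptyset$ and $I_u^-=\{1\}$, and your description leaves no factors at all.

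Thus socle and cosocle are a complementary pair inside the Boolean lattice of subsets of $I_{\rm sep}$, not its minimum and maximum. The ``extremal $J$'' step in Step 3 therefore identifies nothing. Relatedly, for $u=1$ the only factors are $v^{\infty}_{\emptyset,\Delta}$ and $v^{\infty}_{\{j\}\cap I_0,\Delta}$. This shows that $w_{\widehat{j}}$ cannot be the longest element of $\sW_{\widehat{j}}$, so the descent-set bookkeeping (including $D_R(w_{\widehat{j}}u^{-1})=D_L(uw_{\widehat{j}})$) is not purely formal.

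\textbf{Repairing the argument.} What you call the main obstacle is easy, and handling it directly fixes the proof without any single-block reduction.

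By minimality of $u$ in its coset, no linked pair inside a block of $\bL_{\widehat{j}}$ is inverted. Hence $\pi_{\widehat{j},u}(\underline{\phi})$ is the cosocle of $\mathrm{PS}^{\infty}_{\widehat{j},u}(\underline{\phi})$. It is also the socle of the principal series with the order of the characters reversed inside each block, namely $\mathrm{PS}^{\infty}_{\widehat{j},w_{\widehat{j},0}u}(\underline{\phi})$.

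Inducing, $\big(\ind^G_{\op_{\widehat{j}}(\bQ_p)}\pi_{\widehat{j},u}(\underline{\phi})\big)^{\infty}$ is a quotient of $\mathrm{PS}^{\infty}_{u}(\underline{\phi})$ and a subrepresentation of $\mathrm{PS}^{\infty}_{w_{\widehat{j},0}u}(\underline{\phi})$.

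A principal series with regular character (the $\phi_i$ are distinct) has simple socle and simple cosocle. Its $n!$ exponents are distinct, so by Frobenius reciprocity exactly one constituent admits a nonzero map into it, with one-dimensional Hom; dually for quotients. Note that the Jacquet module here is taken along $\overline{\bN}$, not $\bN$, since you induce from the opposite parabolic.

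Reading off the socle of $\mathrm{PS}^{\infty}_{w_{\widehat{j},0}u}(\underline{\phi})$ and the cosocle of $\mathrm{PS}^{\infty}_{u}(\underline{\phi})$ gives the separated non-inverted and separated inverted pairs respectively. The uniqueness in part (2) of the preceding Proposition then finishes the proof.
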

			
			\subsection{Preliminaries on extension groups}
			
			\begin{lem}\label{parameterline} There is an isomorphism of $(n+1)$-dimensional $E$-vector spaces:
				\begin{equation}
					\begin{aligned}
						&\homo_{\sm}(\bT(\bQ_p),E)\oplus_{\homo_{\sm}(\bQ_p^{\times},E)}\homo(\bQ_p^{\times},E)\xrightarrow{\sim} \ext^1_{G}\left(\pi_{\natural}^{\lalg}(\underline{\phi},\bh),\pi_1^{\lalg}(\underline{\phi},\bh)\right).
					\end{aligned}
				\end{equation}
			\end{lem}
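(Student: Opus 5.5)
Let $\pi_{\natural}:=\pi_{\natural}^{\lalg}(\underline{\phi},\bh)=Q^{\diamond}_{\Delta}(I_0,\emptyset,\lambda)$ and $\pi_1:=\pi_1^{\lalg}(\underline{\phi},\bh)=Q^{\diamond}_{\Delta}(\emptyset,I_0,\lambda)$. The plan is to follow the argument of \cite[Lemma 2.13]{ParaDing2024} (and its Steinberg analogue in \cite{2019DINGSimple}), realizing both sides through principal series. First I will construct the map. For $\psi\in\homo(\bT(\bQ_p),E)$, I will form the deformation $\left(\ind^G_{\ob(\bQ_p)}\unr(\underline{\phi})\eta\chi_\lambda(1+\psi\epsilon)\right)^{\ana}$ of $\mathrm{PS}_1(\underline{\phi},\bh)$. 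I will then pull it back along $\pi_{\natural}=\mathrm{PS}_1^{\lalg}(\underline{\phi},\bh)\hookrightarrow\mathrm{PS}_1(\underline{\phi},\bh)$ and push it forward to $\pi_1$, using the fact that $\pi_1$ is the unique quotient of $\mathrm{PS}_1^{\lalg}$ with socle $\mathrm{ST}^{\lalg}$ appearing in $\mathrm{PS}_1$. For smooth $\psi$ the result is locally algebraic, which gives $\homo_{\sm}(\bT(\bQ_p),E)\to\ext^1_G(\pi_\natural,\pi_1)$. To extend this to the amalgamated sum, I will use $\homo(\bQ_p^\times,E)$ acting through $\det$ (twisting by $1+\psi\circ\det\,\epsilon$), and check that on $\homo_{\sm}(\bQ_p^\times,E)$ the two constructions agree. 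This is clear, since $\psi\circ\det$ restricted to $\bT$ is the diagonal embedding.

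For injectivity and the dimension count I will reduce to the locally algebraic and smooth setting. I will split $\ext^1_G(\pi_\natural,\pi_1)$ using the exact sequence relating $\ext^1$ to $\ext^1$ of the underlying $U(\fg)$-modules with fixed algebraic weight. The non-locally-algebraic part factors through $\ext^1_{\fg}(L(\lambda),L(\lambda))$ twisted by $\homo_{\sm}$, and this only sees the center. That identifies it with the one extra non-smooth direction coming from $\homo(\bQ_p^\times,E)/\homo_{\sm}(\bQ_p^\times,E)$, which is $1$-dimensional via $\log_p\circ\det$. The locally algebraic part is $\ext^1_{G,\infty}(Q^{\diamond}_\Delta(I_0,\emptyset),Q^{\diamond}_\Delta(\emptyset,I_0))$ in the smooth category (after $\otimes L(\lambda)$, by \cite{Emerton2007summary}-type arguments). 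I will compute it to be $\homo_{\sm}(\bT(\bQ_p),E)$, which is $n$-dimensional. Combining the two pieces gives $n+1$ in total.

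The main obstacle is this smooth $\ext^1$ computation for the non-generic (but $I_0$-blockwise) principal series. I would handle it by writing $Q^{\diamond}_\Delta(I_1,I_2)$ as a parabolic induction from $\op_{I_0}$ of $\boxtimes_j Q_{\Delta_{I_0,j}}$ and applying the second adjunction (Bernstein) together with the Jacquet module computation. That reduces the problem to $\ext^1$ over $\bL_{I_0}(\bQ_p)$ and then to each $\GLN$-block with Steinberg-type multiplicity-free representations, where \cite[Lemma 2.2.1]{BQ24} and the known computation (as in \cite{2019DINGSimple}) give $\ext^1_{\bL_{I_0,j}}(Q(\Delta_{I_0,j},\emptyset),Q(\emptyset,\Delta_{I_0,j}))\cong\homo_{\sm}(\bT\cap\bL_{I_0,j},E)$. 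The delicate point is controlling contributions from the other Weyl-orbit constituents in the geometric lemma. Genericity across different blocks ($\phi_i\phi_j^{-1}\neq p$ outside $I_0$) should kill them, and surjectivity of the constructed map then follows by comparing dimensions.
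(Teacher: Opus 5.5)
Your construction of the map fails as soon as $I_0\neq\emptyset$, which includes every non-crystalline case since $S_0\subseteq I_0$.

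\textbf{The construction.} The ``fact'' you invoke is false. The representation $\pi_{\natural}=Q^{\diamond}_{\Delta}(I_0,\emptyset,\lambda)$ is multiplicity free with simple \emph{cosocle} $Q^{\diamond}_{\Delta}(\emptyset,\lambda)$. Hence its only quotient with socle $Q^{\diamond}_{\Delta}(\emptyset,\lambda)$ is $Q^{\diamond}_{\Delta}(\emptyset,\lambda)=\mathrm{ST}^{\lalg}_1(\underline{\phi},\bh)$ itself. By contrast, $\pi_1=Q^{\diamond}_{\Delta}(\emptyset,I_0,\lambda)$ has $Q^{\diamond}_{\Delta}(\emptyset,\lambda)$ as its \emph{socle}.

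Nor can you push forward from $\mathrm{PS}_1(\underline{\phi},\bh)$. A nonzero map $\mathrm{PS}_1(\underline{\phi},\bh)\to\pi_1$ would have locally algebraic image with socle $Q^{\diamond}_{\Delta}(\emptyset,\lambda)$, so it would factor through $\mathrm{ST}_1(\underline{\phi},\bh)$. Since that socle is simple and occurs once, the map would embed all of $\mathrm{ST}_1(\underline{\phi},\bh)$, which is not locally algebraic.

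The only nonzero map $\pi_{\natural}\to\pi_1$ is the composite $\pi_{\natural}\twoheadrightarrow Q^{\diamond}_{\Delta}(\emptyset,\lambda)\hookrightarrow\pi_1$. You have transplanted the generic crystalline construction, where $\pi_{\natural}=\pi_1$ is irreducible. The paper instead uses that $\pi_1$ is itself a principal series, $\pi_1=\mathrm{PS}^{\infty}_{w_0^{(S_0)}}(\underline{\phi})\otimes_E L(\lambda)$. It deforms it via $\psi\mapsto(\ind^G_{\ob(\bQ_p)}\unr(\underline{\phi}^{w_0^{(S_0)}})\eta(1+\psi\epsilon))^{\infty}\otimes_E L(\lambda)\in\ext^1_G(\pi_1,\pi_1)$ and then \emph{pulls back} along that composite. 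Equivalently, you could push forward your self-extension of $\pi_{\natural}$ along the same composite.

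\textbf{Injectivity.} In either version the map passes through a non-isomorphism, so injectivity is not formal. Your plan never connects the dimension you compute to the image of your map, and ``surjectivity by comparing dimensions'' presupposes injectivity.

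The missing step is supplied by smooth Frobenius reciprocity. Write $\pi_1^{\infty}=(\ind^G_{\ob(\bQ_p)}\chi')^{\infty}$ and let $J$ be the smooth Jacquet module for $\ob$. Then $\ext^1(\pi^{\infty}_{\natural},\pi^{\infty}_1)\cong\ext^1_{\bT(\bQ_p)}(J(\pi^{\infty}_{\natural}),\chi')$. Because the $\phi_i$ are pairwise distinct, $J(\pi^{\infty}_{\natural})$ is a direct sum of $n!$ distinct characters, each occurring once. So the right side is $\ext^1_{\bT(\bQ_p)}(\chi',\chi')=\homo_{\sm}(\bT(\bQ_p),E)$. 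Tracing the adjunction shows the corrected map sends $\psi$ to $\psi$, up to a Weyl relabelling, so it is an isomorphism onto the smooth part.

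This makes your $\bL_{I_0}$-block reduction via the geometric lemma unnecessary. Note also that \cite[Lemma 2.2.1]{BQ24} only gives existence and uniqueness of the $Q$'s, not any $\ext^1$ computation.

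\textbf{Comparison with the paper.} With this repair, your count is a valid alternative route. It gives $n$ smooth dimensions plus at most one more from $\homo_G(\pi_{\natural},\pi_1)\otimes_E\hH^1(\fg,\mathrm{End}_E(L(\lambda)))$, and that extra dimension is realised by $\log_p\circ\det$. The paper proceeds by d\'evissage instead. It deduces $\ext^1_G(Q^{\diamond}_{\Delta}(\emptyset,\lambda),\pi_1)\xrightarrow{\sim}\ext^1_G(\pi_{\natural},\pi_1)$ from $\ext^1_{G,Z}(Q',\pi_1)=0$. It then splits $\pi_1=[Q^{\diamond}_{\Delta}(\emptyset,\lambda)-Q'']$ and uses $\ext^1_G(Q^{\diamond}_{\Delta}(\emptyset,\lambda),Q^{\diamond}_{\Delta}(\emptyset,\lambda))\cong\homo(\bQ_p^{\times},E)$.
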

			
			\begin{proof}Write $\pi_{\sharp}^{\lalg}(\underline{\phi},\bh):=\pi_{\natural}^{\lalg}(\underline{x},\bh)$ and $\pi^{\lalg}_1:=\pi_1^{\lalg}(\underline{x},\bh)$ for simplicity. We define the map and prove that $\dim_E\ext^1_{G}\left(\pi_{\natural}^{\lalg},\pi_1^{\lalg}\right)=n+1$. We define a canonical injection:
				\begin{equation}\label{lalginj1}
					\begin{aligned}
						\homo_{\sm}(\bT(\bQ_p),E)&\rightarrow\ext^1_{G}\big(\pi_1^{\lalg},\pi_1^{\lalg}\big), \\
						\psi&\mapsto \Big(\ind^G_{\ob(\bQ_p)}\unr(\underline{\phi}^{w_0^{(S_0)}})\eta\otimes_E\big(1+\psi\epsilon\big)\Big)^{\mathrm{sm}}\otimes_EL(\lambda).
					\end{aligned}
				\end{equation}
				Composed with the pull-back map for the natural maps $\pi_{\natural}^{\lalg}\twoheadrightarrow Q_{\Delta}^{\diamond}(\emptyset,\lambda)\hookrightarrow \pi_1^{\lalg}$, we get a map
				\[\homo_{\sm}(\bT(\bQ_p),E)\rightarrow \ext^1_{G}\big(Q_{\Delta}^{\diamond}(\emptyset,\lambda),\pi_1^{\lalg}\big)\rightarrow \ext^1_{G}\left(\pi_{\natural}^{\lalg},\pi_1^{\lalg}\right).\]
				Let $Q'=\ker (\pi_{\natural}^{\lalg}\twoheadrightarrow Q_{\Delta}^{\diamond}(\emptyset,\lambda))$. We have the following two short exact sequences:
				\begin{equation}\label{compuextseqforalg}
					\begin{aligned}
						&0\rightarrow \ext^1_{G}\left(Q_{\Delta}^{\diamond}(\emptyset,\lambda),\pi_1^{\lalg}\right) \rightarrow \ext^1_{G}\left(\pi_{\natural}^{\lalg},\pi_1^{\lalg}\right) \rightarrow  \ext^1_{G}\left(Q',\pi_1^{\lalg}\right), \\
						&0\rightarrow \ext^1_{G,Z}\left(Q_{\Delta}^{\diamond}(\emptyset,\lambda),\pi_1^{\lalg}\right) \rightarrow \ext^1_{G,Z}\left(\pi_{\natural}^{\lalg},\pi_1^{\lalg}\right) \rightarrow \ext^1_{G,Z}\left(Q',\pi_1^{\lalg}\right).
					\end{aligned}
				\end{equation}
				Note that $\ext^1_{G,Z}\left(Q',\pi_1^{\lalg}\right)\cong \ext^1_{G}\left(Q',\pi_1^{\lalg}\right)$ follows from \cite[Lemma 3.2]{Dilogarithm}. By \cite[Proposition 4.7]{schraen2011GL3} and the adjunction formula between the (smooth) Jacquet functor and parabolic induction, we see that $\ext^1_{G,Z}\left(Q',\pi_1^{\lalg}\right)\cong \ext^{1,\lalg}_{G,Z}\left(Q',\pi_1^{\lalg}\right)=0$. Thus, $\ext^1_{G}\left(Q_{\Delta}^{\diamond}(\emptyset,\lambda),\pi_1^{\lalg}\right) \xrightarrow{\sim} \ext^1_{G}\left(\pi_{\natural}^{\lalg},\pi_1^{\lalg}\right)$. On the other hand, we have \[\ext^1_{G,Z}\left(Q_{\Delta}^{\diamond}(\emptyset,\lambda),\pi_1^{\lalg}\right)\cong \ext^{1,\lalg}_{G,Z}\left(Q_{\Delta}^{\diamond}(\emptyset,\lambda),\pi_1^{\lalg}\right)\] which is $s-1$-dimensional by the (smooth) adjunction formula. Therefore, the restriction of (\ref{lalginj1}) to the subspace $\homo_{\sm}(\overline{\bT}(\bQ_p),E)$ is an injection onto $\ext^1_{G,Z}\left(\pi_{\natural}^{\lalg},\pi_1^{\lalg}\right)$, where $\overline{\bT}:=\bT/\bZ$. We define a second canonical injection:
				\begin{equation}
					\homo(\bQ_p^{\times},E)\rightarrow\ext^1_{G}\big(\pi_1^{\lalg},\pi_1^{\lalg}\big),
					\psi\mapsto \pi_1^{\lalg}\otimes_E\big(1+\big(\psi\circ\mathrm{det}_{\GLN_n}\big)\epsilon\big).
				\end{equation}
				Composed with the pull-back map for the natural maps $\pi_{\natural}^{\lalg}\twoheadrightarrow Q_{\Delta}^{\diamond}(\emptyset,\lambda)\hookrightarrow \pi_1^{\lalg}$, we get a map (and this composition is also injective):
				\begin{equation}\label{lalginj2}
					\homo(\bQ_p^{\times},E)\rightarrow \ext^1_{G}\big(Q_{\Delta}^{\diamond}(\emptyset,\lambda),\pi_1^{\lalg}\big)\rightarrow \ext^1_{G}\left(\pi_{\natural}^{\lalg},\pi_1^{\lalg}\right).
				\end{equation}
				The two injections (\ref{lalginj1}) and (\ref{lalginj2}) coincide on $\homo_{\sm}(L^{\times},E)$. It remains to prove that
				\[\dim_E\ext^1_{G}\left(\pi_{\natural}^{\lalg},\pi_1^{\lalg}\right)=n+1.\]
				If these identities hold, then we get the isomorphism in the lemma. Write $\pi_1^{\lalg}=[Q_{\Delta}^{\diamond}(\emptyset,\lambda)-Q'']$. We have the following two exact sequences:
				\begin{equation}
					\begin{aligned}
						0\rightarrow \ext^1_{G}\left(Q_{\Delta}^{\diamond}(\emptyset,\lambda),Q_{\Delta}^{\diamond}(\emptyset,\lambda)\right) \rightarrow \ext^1_{G}&\big(Q_{\Delta}^{\diamond}(\emptyset,\lambda),\pi_1^{\lalg}\big)\rightarrow \ext^1_{G,Z}\left(Q_{\Delta}^{\diamond}(\emptyset,\lambda),Q''\right),\\
						0\rightarrow \ext^1_{G,Z}\left(Q_{\Delta}^{\diamond}(\emptyset,\lambda),Q_{\Delta}^{\diamond}(\emptyset,\lambda)\right) \rightarrow \ext^1_{G,Z}&\big(Q_{\Delta}^{\diamond}(\emptyset,\lambda),\pi_1^{\lalg}\big)\rightarrow \ext^1_{G,Z}\left(Q_{\Delta}^{\diamond}(\emptyset,\lambda),Q''\right)\rightarrow 0,
					\end{aligned}
				\end{equation}
				since $\ext^i_{G,Z}\left(Q_{\Delta}^{\diamond}(\emptyset,\lambda),Q_{\Delta}^{\diamond}(\emptyset,\lambda)\right)=0$ for $i=1,2$. Thus, we get from the second exact sequence that $\ext^1_{G,Z}\left(Q_{\Delta}^{\diamond}(\emptyset,\lambda),Q''\right)\cong \ext^{1,\lalg}_{G,Z}\left(Q_{\Delta}^{\diamond}(\emptyset,\lambda),Q''\right)\cong \ext^1_{G,Z}\left(Q_{\Delta}^{\diamond}(\emptyset,\lambda),\pi_1^{\lalg}\right)$ and that the last map in the first exact sequence is surjective. By the same argument as in the proof of $(2)$ \cite[Lemma 3.17]{HigherLinvariantsGL3(Qp)}, we see that $\ext^1_{G}\left(Q_{\Delta}^{\diamond}(\emptyset,\lambda),Q_{\Delta}^{\diamond}(\emptyset,\lambda)\right)\cong \homo(\bQ_p^{\times},E)$. We get the desired dimension.
			\end{proof}

			\begin{lem}\label{parameterlineextgp}
				Let $j\in \Delta$ and $u\in \cI^{\sharp}_{j}$. For $I\in [I_u^+,I_u^-]:=\{I\subseteq \Delta\;|\;I_u^+\cap I_u^-\subseteq I\subseteq I_u^+\cup I_u^-\}$, we have
				\begin{equation}
					\begin{aligned}
						\dim_E\ext^1_G\left(C_{j,u},Q_{\Delta}^{\diamond}(I,I_u^{-},\lambda)\right)=1,
						\dim_E\ext^1_G\left(Q_{\Delta}^{\diamond}(I_u^+,I,\lambda),C_{j,u}\right)=1,
					\end{aligned}
				\end{equation}
				which leads to a unique representation of the form $[Q_{\Delta}^{\diamond}(I,I_u^{-},\lambda)-C_{j,u}]$ (resp., $[C_{j,u}-Q_{\Delta}^{\diamond}(I_u^{+},I,\lambda)]$). The same holds when $Q_{\Delta}^{\diamond}(I,I_u^{-},\lambda)$ (resp., $Q_{\Delta}^{\diamond}(I_u^+,I,\lambda)$) is replaced by $\pi_1^{\lalg}(\underline{\phi},\bh)$ (resp., $\pi_{\natural}^{\lalg}(\underline{\phi},\bh)$).
			\end{lem}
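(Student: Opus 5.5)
Since $C_{j,u}$ is an Orlik--Strauch representation $\cF^{G}_{\op_{\widehat{j}}}\big(\overline{L}(-s_{j}\cdot\lambda),\pi_{\widehat{j},u}(\underline{\phi})\big)$, I will compute both $\ext^1$ groups through the standard spectral sequences for locally analytic parabolic induction, following \cite[Lemma 3.17]{HigherLinvariantsGL3(Qp)}, \cite{2019DINGSimple} and the proof of Lemma \ref{parameterline}. First I reduce to the case where $Q$ is a single constituent. Every locally algebraic constituent of $Q_{\Delta}^{\diamond}(I,I_u^-,\lambda)$ has the form $v^{\infty}_{I',\Delta}(\underline{\phi})\otimes_E L(\lambda)$ with $I'\subseteq I_0$. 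For such a constituent, $\ext^1_G\big(C_{j,u},v^{\infty}_{I',\Delta}(\underline{\phi})\otimes L(\lambda)\big)$ is computed as follows:
\begin{itemize}
\item realize $C_{j,u}$ as a quotient of $\big(\ind^G_{\op_{\widehat{j}}}\overline{L}(-s_j\cdot\lambda)^{\vee}\otimes\pi_{\widehat{j},u}(\underline{\phi})\big)^{\ana}$, with kernel the locally algebraic induction $(\ind^G_{\op_{\widehat{j}}}\pi_{\widehat{j},u}(\underline{\phi}))^{\infty}\otimes L(\lambda)=Q^{\diamond}_{\Delta}(I_u^+,I_u^-,\lambda)$ (by the lemma quoted from \cite[Lemma 2.1.31]{BQ24});
\item apply the adjunction of Emerton/Schraen (\cite[Proposition 4.7]{schraen2011GL3} and its $\ext$-version);
\item use the smooth Jacquet module of $v^{\infty}_{I',\Delta}(\underline{\phi})$ to find the $\bL_{\widehat{j}}(\bQ_p)$-constituents that match $\pi_{\widehat{j},u}(\underline{\phi})$.
\end{itemize}
Because the Lie-algebra part is given by $\overline{L}(-s_j\cdot\lambda)$, only the weight-$s_j\cdot\lambda$ piece of $H^1(\fn_{\widehat{j}},L(\lambda))$ contributes, and it is one-dimensional. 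The conclusion I expect is that $\ext^1_G\big(C_{j,u},v^{\infty}_{I',\Delta}\otimes L(\lambda)\big)$ is one-dimensional exactly when $I'=I_u^-$, so that $v^{\infty}_{I',\Delta}$ is the generic constituent of the smooth induction matched by the Jacquet module, and vanishes for the other $I'$ in the interval $[I_u^+,I_u^-]$. Dually, $\ext^1_G\big(v^{\infty}_{I',\Delta}\otimes L(\lambda),C_{j,u}\big)$ is nonzero only for $I'=I_u^+$.

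Second, I run the long exact sequences. The socle of $Q_{\Delta}^{\diamond}(I,I_u^-,\lambda)$ is $v^{\infty}_{I,\Delta}\otimes L(\lambda)$ and its cosocle is $v^{\infty}_{I_u^-,\Delta}\otimes L(\lambda)$. By the vanishing above, every constituent except $I'=I_u^-$ contributes nothing to $\ext^1$. I also need $\homo_G(C_{j,u},-)=0$ and the relevant $\ext^2$ vanishing, so that the dimension is exactly $1$. This $\ext^2$ vanishing is the main obstacle: controlling $\ext^2_G\big(C_{j,u},v^{\infty}_{I',\Delta}\otimes L(\lambda)\big)$ for $I'\neq I_u^-$. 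I would handle it with the same adjunction spectral sequence, which reduces it to $\ext^{\le 2}$ of smooth $\bL_{\widehat{j}}$-representations with distinct cuspidal supports/central characters together with $H^{\bullet}(\fn_{\widehat{j}},L(\lambda))$ weights different from $s_j\cdot\lambda$. Both vanish by genericity of $\underline{\phi}$ away from $I_0$ and by the regularity of $\lambda$. If the full vanishing is too hard, it suffices to show that the connecting map is injective on the one-dimensional piece coming from $I_u^-$, which only uses that the extension $[Q^{\diamond}_\Delta(I_u^+,I_u^-,\lambda)-C_{j,u}]$ is non-split.

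Third, uniqueness follows from one-dimensionality. The same argument applies to $\pi_1^{\lalg}(\underline{\phi},\bh)=Q^{\diamond}_\Delta(\emptyset,I_0,\lambda)$: it contains $Q^{\diamond}_\Delta(I,I_u^-,\lambda)$ for suitable $I$, and the only constituent that can pair nontrivially with $C_{j,u}$ is $I_u^-$, which occurs with multiplicity one. Dually, in $\pi_{\natural}^{\lalg}(\underline{\phi},\bh)$ the only such constituent is $I_u^+$. So the restriction maps on $\ext^1$ are isomorphisms, and both $\ext^1$ groups with $\pi_1^{\lalg}$ and $\pi_{\natural}^{\lalg}$ are again one-dimensional.
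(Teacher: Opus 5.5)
Your last step is the paper's argument verbatim. Once you know that, among the locally algebraic constituents, only $v^{\infty}_{I_u^-,\Delta}(\underline{\phi})\otimes_E L(\lambda)$ has non-zero $\ext^1_G(C_{j,u},-)$ (resp.\ only $v^{\infty}_{I_u^+,\Delta}(\underline{\phi})\otimes_E L(\lambda)$ has non-zero $\ext^1_G(-,C_{j,u})$), the maps induced by $Q^{\diamond}_{\Delta}(\emptyset,I_u^-,\lambda)\hookrightarrow\pi_1^{\lalg}(\underline{\phi},\bh)$ and $\pi_{\natural}^{\lalg}(\underline{\phi},\bh)\twoheadrightarrow Q^{\diamond}_{\Delta}(I_u^+,\emptyset,\lambda)$ are isomorphisms.

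The gap is that input itself. The paper does not prove it but takes it from \cite[Proposition 5.1.14]{BQ24}, which controls $\ext^1$ between $C_{j,u}$ and locally algebraic representations through $\homo_G$ with $Q^{\diamond}_{\Delta}(I_u^+,I_u^-,\lambda)=(\ind^G_{\op_{\widehat{j}}}\pi_{\widehat{j},u}(\underline{\phi}))^{\infty}\otimes_E L(\lambda)$. In your proposal it is only ``expected'', and your sketch would not establish it. There are two problems with the sketch.
\begin{itemize}
\item The induction you need is $R:=(\ind^G_{\op_{\widehat{j}}}\pi_{\widehat{j},u}(\underline{\phi})\otimes_E L(\lambda)_{\widehat{j}})^{\ana}=\cF^G_{\op_{\widehat{j}}}(\overline{M}_{\widehat{j}}(-\lambda),\pi_{\widehat{j},u}(\underline{\phi}))$. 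The isomorphism $R/Q^{\diamond}_{\Delta}(I_u^+,I_u^-,\lambda)\cong C_{j,u}$ holds only when $\overline{M}_{\widehat{j}}(-\lambda)$ has length $2$. Already for $n=4$, $j=2$ it has a third constituent $\overline{L}(-w\cdot\lambda)$ with $\ell(w)\geq 2$, and then $C_{j,u}$ is merely the socle of that quotient.
\item More seriously, the tool that computes higher $\ext$ groups in this paper is Schraen's spectral sequence \cite[(4.37)]{schraen2011GL3}. It has the parabolic induction in the \emph{second} variable. In $\ext^1_G(C_{j,u},V)$ the Orlik--Strauch representation sits in the first variable, where Emerton's Jacquet-module adjunction only computes $\homo$.
\end{itemize}
So the dichotomy ``one-dimensional iff $I'=I_u^-$'' needs a different device, for example embedding $V$ into a locally analytic induction and controlling $\ext^{\bullet}_G(C_{j,u},-)$ of it. This is what \cite{BQ24} supplies.

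The lower bound is not secured either, for three reasons.
\begin{itemize}
\item The $\ext^2$-vanishing you propose is unjustified. The $\bL_{\widehat{j}}(\bQ_p)$-representations that occur are constituents of unramified principal series. Those with the cuspidal support of $\pi_{\widehat{j},u}(\underline{\phi})$ include the other constituents of the same principal series, which is in general reducible because of the relations $\phi_{i+1}=p\phi_i$ for $i\in I_0$. Smooth $\ext^1$ between such constituents can be non-zero, and genericity of $\underline{\phi}$ away from $I_0$ does not rule this out.
\item Your fallback is misstated. With $Q'$ the radical of $Q^{\diamond}_{\Delta}(I,I_u^-,\lambda)$, you need the connecting map $\ext^1_G(C_{j,u},v^{\infty}_{I_u^-,\Delta}(\underline{\phi})\otimes_E L(\lambda))\to\ext^2_G(C_{j,u},Q')$ to be \emph{zero}; if it were injective, the group would vanish. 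For the first group this can be repaired. Push out the non-split subrepresentation $[Q^{\diamond}_{\Delta}(I_u^+,I_u^-,\lambda)-C_{j,u}]$ of $R$ along the surjection onto $Q^{\diamond}_{\Delta}(I,I_u^-,\lambda)$. The pushout stays non-split by your vanishing for the radical.
\item For $\ext^1_G(Q^{\diamond}_{\Delta}(I_u^+,I,\lambda),C_{j,u})$ there is no such repair. $R$ only contains extensions with the locally algebraic part below $C_{j,u}$, so nothing in your argument produces a non-zero class there, and you only get $\dim\leq 1$.
\end{itemize}
Invoking \cite[Proposition 5.1.14]{BQ24} for both directions, as the paper does, closes these gaps. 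After that, your d\'evissage is exactly the paper's proof.
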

			\begin{proof}The first assertion follows from \cite[Proposition 5.1.14]{BQ24}.\;We prove the existence of unique representations of the forms $[\pi_1^{\lalg}-C_{j,u}]$ and $[C_{j,u}-\pi_{\natural}^{\lalg}]$.\;Indeed, for any irreducible constituent $V$ appearing in $\pi_1^{\lalg}/Q_{\Delta}^{\diamond}(\emptyset,I_u^{-},\lambda)$, we also get from \cite[Proposition 5.1.14]{BQ24} that $\ext^1_G\left(C_{j,u},V\right)=0$ since $\homo(Q_{\Delta}^{\diamond}(I_u^+,I_{u}^-,\lambda),V)=0$, so that the injection $Q_{\Delta}^{\diamond}(\emptyset,I_u^{-},\lambda)\hookrightarrow\pi_1^{\lalg}$ induces an isomorphism
				$\ext^1_G\big(C_{j,u},Q_{\Delta}^{\diamond}(\emptyset,I_u^{-},\lambda)\big)\xrightarrow{\sim}\ext^1_G\big(C_{j,u},\pi_1^{\lalg}\big)$.\;Similarly, the map $\ext^1_G\big(Q_{\Delta}^{\diamond}(I_u^{+},\emptyset,\lambda),C_{j,u}\big)\rightarrow\ext^1_G\big(\pi_{\natural}^{\lalg},C_{j,u}\big)$ induced by the surjection $\pi_{\natural}^{\lalg}\twoheadrightarrow Q_{\Delta}^{\diamond}(I_u^{+},\emptyset,\lambda)$ is also an isomorphism.
			\end{proof}
			
			For $u\in \sW_n^{S_0}$ and $i\in \Delta$, fix a locally analytic representation $V_{i,u}$ of $G$, which is isomorphic to a non-split extension of $C_{i,u}$ by $\pi_1^{\lalg}(\underline{\phi},\bh)$ (see Lemma \ref{parameterlineextgp}; $V_{i,u}$ is unique up to a scalar), i.e.,
			\[V_{i,u}:=[\pi_1^{\lalg}(\underline{\phi},\bh)-C_{i,u}],\]
			and fix an injection $\iota_{i,u}:\pi_1^{\lalg}(\underline{\phi},\bh)\hookrightarrow V_{i,u}$. We have $C_{i,u}\cong V_{i,u}/\pi_1^{\lalg}(\underline{\phi},\bh)$ and $\bL_{\widehat{i}}\cong \GLN_i\times\GLN_{n-i}$.

			\begin{pro}\label{reinterforextgp} For $i\in \Delta$, there is a canonical isomorphism of $(n+2)$-dimensional $E$-vector spaces associated to $(V_{i,u},\iota_{i,u})$:
			\begin{equation}\label{isoforfullextgp}
				\begin{aligned}
					\homo_{\sm}(\bT(\bQ_p),E)\oplus_{\homo_{\sm}(\bL_{\widehat{i}}(\bQ_p),E)}\homo(\bL_{\widehat{i}}(\bQ_p),E)\xrightarrow{\sim}\ext^1_{G}\left(\pi_{\natural}^{\lalg}(\underline{\phi},\bh),V_{i,u}\right).
				\end{aligned}
			\end{equation}
			The restriction of (\ref{isoforfullextgp}) to $\homo(\bL_{\widehat{i}}(\bQ_p),E)$ depends only on $(V_{i,u},\iota_{i,u})$. Moreover, there is a short exact sequence:
			\begin{equation}
				\begin{aligned}
					0\rightarrow \ext^1_{G}\Big(\pi_{\natural}^{\lalg},\pi_{1}^{\lalg}\Big) \xrightarrow{\iota_{i,u}} \ext^1_{G}\Big(\pi_{\natural}^{\lalg},V_{i,u}\Big) \rightarrow \ext^1_{G}\Big(\pi_{\natural}^{\lalg},V_{i,u}/\pi_1^{\lalg}(\underline{\phi},\bh)\Big)\rightarrow 0.
				\end{aligned}
			\end{equation}
			\end{pro}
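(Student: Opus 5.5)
The plan is to apply $\homo_G(\pi_{\natural}^{\lalg},-)$ to the defining sequence $0\to\pi_1^{\lalg}\xrightarrow{\iota_{i,u}}V_{i,u}\to C_{i,u}\to 0$ and compute the three terms, writing $\pi_{\natural}^{\lalg},\pi_1^{\lalg}$ for $\pi_{\natural}^{\lalg}(\underline{\phi},\bh),\pi_1^{\lalg}(\underline{\phi},\bh)$. This gives
\[\homo_G(\pi_{\natural}^{\lalg},C_{i,u})\to\ext^1_G(\pi_{\natural}^{\lalg},\pi_1^{\lalg})\to\ext^1_G(\pi_{\natural}^{\lalg},V_{i,u})\to\ext^1_G(\pi_{\natural}^{\lalg},C_{i,u})\to\ext^2_G(\pi_{\natural}^{\lalg},\pi_1^{\lalg}).\]
The first term vanishes because $C_{i,u}$ has no locally algebraic constituents, while $\pi_{\natural}^{\lalg}$ is locally algebraic. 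So injectivity on the left is automatic. By Lemma \ref{parameterlineextgp}, whose second part identifies $\ext^1_G(\pi_{\natural}^{\lalg},C_{i,u})$ with $\ext^1_G(Q^{\diamond}_\Delta(I_u^+,\emptyset,\lambda),C_{i,u})$, the right-hand term is one-dimensional. By Lemma \ref{parameterline}, the left term has dimension $n+1$. Hence the dimension count $n+2$ holds exactly when the map $\ext^1_G(\pi_{\natural}^{\lalg},V_{i,u})\to\ext^1_G(\pi_{\natural}^{\lalg},C_{i,u})$ is surjective. This surjectivity is the short exact sequence to be proved.

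To obtain surjectivity and the explicit isomorphism together, I will construct the map (\ref{isoforfullextgp}) directly.

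On $\homo_{\sm}(\bT(\bQ_p),E)$, I compose the map (\ref{lalginj1}) with $\iota_{i,u}$. On $\homo(\bL_{\widehat{i}}(\bQ_p),E)$, I use the Orlik--Strauch description $C_{i,u}=\cF^G_{\op_{\widehat{i}}}(\overline{L}(-s_i\cdot\lambda),\pi_{\widehat{i},u}(\underline{\phi}))$. The extension $V_{i,u}$ should be a subquotient of a locally analytic parabolic induction $(\ind^G_{\op_{\widehat{i}}}\pi^{\lalg}_{\widehat{i},u}\otimes\chi)^{\ana}$ from $\op_{\widehat{i}}$, of the form $[\pi_1^{\lalg}\text{-part}-C_{i,u}]$. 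Following \cite[Lemma 3.17]{HigherLinvariantsGL3(Qp)} and the analogous constructions in \cite{ParaDing2024}, I deform the Levi representation by $1+\psi\epsilon$ for $\psi\in\homo(\bL_{\widehat{i}}(\bQ_p),E)$. I then induce, pull back along $\pi_{\natural}^{\lalg}\to Q^{\diamond}_\Delta(I_u^+,\emptyset,\lambda)\to(\ind\ldots)^{\infty}\otimes L(\lambda)$, and push forward to $V_{i,u}$ via the uniqueness statement of Lemma \ref{parameterlineextgp}.

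The two constructions agree on $\homo_{\sm}(\bL_{\widehat{i}}(\bQ_p),E)$, because smooth deformations of the induction stay locally algebraic and land in $\iota_{i,u}(\ext^1_G(\pi_{\natural}^{\lalg},\pi_1^{\lalg}))$ via transitivity of induction $\ob\subseteq\op_{\widehat{i}}$. So I get a map from the amalgamated sum. The source has dimension $(n+1)+\dim\homo(\bL_{\widehat{i}},E)-\dim\homo_{\sm}(\bL_{\widehat{i}},E)=(n+1)+4-3=n+2$. The construction depends only on the pair $(V_{i,u},\iota_{i,u})$, since rescaling $\iota_{i,u}$ rescales both pieces compatibly.

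The main obstacle is to show that a non-smooth $\psi$, for instance $\log_p\circ\det$ on the $\GLN_i$ factor minus the one on $\GLN_{n-i}$, yields a class whose image in $\ext^1_G(\pi_{\natural}^{\lalg},C_{i,u})$ is nonzero. This simultaneously gives injectivity of the map and surjectivity onto the quotient. For this I would compute with the Orlik--Strauch Jordan--Hölder structure of the deformed parabolic induction. The derivative in $\epsilon$ of the non-smooth deformation acts through $\fl_{\widehat{i}}$ and produces the constituent $\cF^G_{\op_{\widehat{i}}}(\overline{L}(-s_i\cdot\lambda),\ldots)$, exactly as in the $\GLN_2$-Colmez style argument. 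I would reduce to that computation via the Levi $\bL_{\widehat{i}}$, using \cite[Proposition 5.1.14]{BQ24} to ensure that no other constituent interferes. Given this, the map is injective by the dimension count, hence an isomorphism, and the exact sequence follows.
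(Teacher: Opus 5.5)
Your proposal follows the paper's proof essentially step for step. Both arguments use the same ingredients: the upper bound $n+2$ from Lemma \ref{parameterline} and Lemma \ref{parameterlineextgp}; the map on $\homo(\bL_{\widehat{i}}(\bQ_p),E)$ obtained by deforming the $\op_{\widehat{i}}$-induction $R_u=\cF^G_{\op_{\widehat{i}}}(\overline{M}_{\widehat{i}}(-\lambda),\pi^{\infty}_{\widehat{i},u}(\underline{\phi}))$, pulling back along $\pi_{\natural}^{\lalg}\twoheadrightarrow Q^{\diamond}_{\Delta}(I_u^+,\emptyset,\lambda)\hookrightarrow Q^{\diamond}_{\Delta}(I_u^+,I_u^-,\lambda)$ and pushing forward to $V_{i,u}$; and surjectivity onto the one-dimensional quotient. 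That last step is asserted in one line in the paper, and you rightly single it out as the crux.

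There is one bookkeeping slip. $\homo_{\sm}(\bT(\bQ_p),E)$ is $n$-dimensional and $\homo_{\sm}(\bL_{\widehat{i}}(\bQ_p),E)$ is $2$-dimensional, so the source has dimension $n+4-2$. Your count $(n+1)+4-3$ is really the dimension of $\iota_{i,u}(\ext^1_G(\pi_{\natural}^{\lalg},\pi_1^{\lalg}))$ plus the image of $\homo(\bL_{\widehat{i}}(\bQ_p),E)$. Their intersection is spanned by $\homo_{\sm}(\bL_{\widehat{i}}(\bQ_p),E)$ and $\log_p\circ\det$. This version of the count, together with the fact that $\log_p\circ\det$ recovers the class from (\ref{lalginj2}), is what you need to conclude that the image is all of $\iota_{i,u}(\ext^1_G(\pi_{\natural}^{\lalg},\pi_1^{\lalg}))$ plus one further class.
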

			\begin{proof}It follows from Lemma \ref{parameterline} and  Lemma \ref{parameterlineextgp} and the exact sequence
			$0\rightarrow \ext^1_{G}\Big(\pi_{\natural}^{\lalg},\pi_{1}^{\lalg}\Big) \rightarrow \ext^1_{G}\Big(\pi_{\natural}^{\lalg},V_{i,u}\Big) \rightarrow \ext^1_{G}\Big(\pi_{\natural}^{\lalg},V_{i,u}/\pi_1^{\lalg}\Big)$ that $\dim_E\ext^1_{G}\Big(\pi_{\natural}^{\lalg}(\underline{x},\bh),V_{i,u}\Big)\leq n+2$. We construct the desired map and show that the last map in this exact sequence is surjective. We first define the injection (by Lemma \ref{parameterline}):
			\begin{equation}
				\homo_{\sm}(\bT(\bQ_p),E)\hookrightarrow \ext^1_{G}\Big(\pi_{\natural}^{\lalg},\pi_{1}^{\lalg}\Big)\hookrightarrow \ext^1_{G}\Big(\pi_{\natural}^{\lalg},V_{i,u}\Big).
			\end{equation}
			We next construct a canonical injection $\homo(\bL_{\widehat{i}}(\bQ_p),E)\hookrightarrow \ext^1_{G}\left(\pi_{\natural}^{\lalg}(\underline{x},\bh),V_{i,u}\right)$. Put
			\begin{equation}
				R_{u}:=\cF^{G}_{\op_{\widehat{i}}}\Big(\overline{M}_{\widehat{i}}(-\lambda),\pi^{\infty}_{\widehat{i},u}(\underline{\phi})\Big).
			\end{equation}
			Let $R_u'$ be the unique quotient of $R_{u}$ with socle $Q_{\Delta}^{\diamond}(\emptyset,I_u^-,\lambda)$. Consider the following injection:
			\begin{equation}
				\begin{aligned}
					\homo(\bL_{\widehat{i}}(\bQ_p),E)&\hookrightarrow \ext^1_{G}\left(R_{u},R_{u}\right)\\
					\psi&\mapsto \left(\ind^G_{\op_{\widehat{i}}(\bQ_p)}\pi^{\infty}_{\widehat{i},u}(\underline{\phi})\otimes L_{\widehat{i}}(\lambda_{\sigma})\otimes(1+\psi\epsilon)\right)^{\ana}.
				\end{aligned}
			\end{equation}
			Composed with the pull-back map for the natural map $\pi_{\natural}^{\lalg}\twoheadrightarrow Q_{\Delta}^{\diamond}(I_u^+,\emptyset,\lambda)\hookrightarrow Q_{\Delta}^{\diamond}(I_u^+,I_u^-,\lambda)\hookrightarrow R_{u}$ and the push-forward map via $R_{u}\twoheadrightarrow R_u'$, we get a map
			\begin{equation}
				\begin{aligned}
					f_u:&\;\homo(\bL_{\widehat{i}}(\bQ_p),E)\hookrightarrow \ext^1_{G}\big(\pi_{\natural}^{\lalg},R_{u}\big)\\
					&\rightarrow \ext^1_{G}\big(\pi_{\natural}^{\lalg},R'_{u}\big)\cong \ext^1_{G}\Big(\pi_{\natural}^{\lalg},[Q_{\Delta}^{\diamond}(\emptyset,I_u^-,\lambda)-C_{i,u}]\Big)\rightarrow \ext^1_{G}\big(\pi_{\natural}^{\lalg},V_{i,u}\big),
				\end{aligned}
			\end{equation}
			which is still an injection, and we can prove that it depends only on $(V_{i,u},\iota_{i,u})$. The map $f_u$ induces an injection
			\[\homo_{\sm}(\bL_{\widehat{i}}(\bQ_p),E)\rightarrow \ext^{1}_{G}\big(\pi_{\natural}^{\lalg},\pi_1^{\lalg}\big),\]
			which is compatible with the injection $\homo_{\sm}(\bT(\bQ_p),E)\rightarrow\ext^1_{G}\big(\pi_{\natural}^{\lalg},\pi_1^{\lalg}\big)$. Thus, we get the desired map. On the other hand, by the second isomorphism in Lemma \ref{reinterforextgp}, we see that the image of $\homo(\bL_{\widehat{i}}(\bQ_p),E)$ in $\ext^1_{G}\big(\pi_{\natural}^{\lalg},V_{i,u}\big)$ is not contained in $\ext^1_{G}\left(\pi_{\natural}^{\lalg}(\underline{x},\bh),\pi_1^{\lalg}(\underline{x},\bh)\right)$, so that the second map in (\ref{isoforfullextgp}) is a surjection and thus an isomorphism by comparing the dimensions.
			\end{proof}
			
			\begin{pro}
				We have a perfect pairing of $1$-dimensional $E$-vector spaces:
				\begin{equation}\label{dualpairing}
					\ext^1_{G}\left(\pi_{\natural}^{\lalg}(\underline{x},\bh),C_{i,u}\right)\times\ext^1_{G}\left(C_{i,u},\pi_1^{\lalg}(\underline{x},\bh)\right)\xrightarrow{\sim}\homo(\GLN_{n-i}(\bZ_p),E)\xleftarrow{\sim }E,
				\end{equation}
				where the second canonical isomorphism is given by $E\xrightarrow{\sim }\homo(\GLN_{n-i}(\bZ_p),E)$, $1\mapsto \log_{p}\circ \det$.
			\end{pro}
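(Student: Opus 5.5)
We want a perfect pairing of lines $\ext^1_G(\pi_{\natural}^{\lalg},C_{i,u})\times\ext^1_G(C_{i,u},\pi_1^{\lalg})\to\homo(\GLN_{n-i}(\bZ_p),E)$. The plan is to realize it as the Yoneda cup product into $\ext^2_G(\pi_{\natural}^{\lalg},\pi_1^{\lalg})$ and then identify the relevant part of that $\ext^2$ with a space of additive characters through a connecting homomorphism.

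\textbf{Step 1: the two factors are lines.} By Lemma \ref{parameterlineextgp}, both $\ext^1_G(C_{i,u},\pi_1^{\lalg})$ and $\ext^1_G(\pi_{\natural}^{\lalg},C_{i,u})$ are $1$-dimensional. They are spanned by the classes of $V_{i,u}=[\pi_1^{\lalg}-C_{i,u}]$ and of the unique extension $[C_{i,u}-\pi_{\natural}^{\lalg}]$.

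\textbf{Step 2: build the pairing via the long exact sequence.} Apply $\ext^\bullet_G(\pi_{\natural}^{\lalg},-)$ to $0\to\pi_1^{\lalg}\to V_{i,u}\to C_{i,u}\to 0$. The connecting map $\delta:\ext^1_G(\pi_{\natural}^{\lalg},C_{i,u})\to\ext^2_G(\pi_{\natural}^{\lalg},\pi_1^{\lalg})$ is exactly cup product with the class of $V_{i,u}$. Proposition \ref{reinterforextgp} asserts that $\ext^1_G(\pi_{\natural}^{\lalg},V_{i,u})\to\ext^1_G(\pi_{\natural}^{\lalg},C_{i,u})$ is surjective, so as stated it would force $\delta=0$. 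The nonvanishing must therefore be seen elsewhere. I would pass to the parabolic model used in the proof of Proposition \ref{reinterforextgp} and work with $R_u=\cF^G_{\op_{\widehat{i}}}(\overline{M}_{\widehat{i}}(-\lambda),\pi^\infty_{\widehat{i},u})$ and its quotient $R_u'$.

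\textbf{Step 3: construct the map into characters.} By Orlik--Strauch adjunction and Emerton's Jacquet-module adjunction, extensions involving $C_{i,u}$ and locally algebraic pieces reduce to $\ext^\bullet_{\bL_{\widehat{i}}(\bQ_p)}$ between $\pi^\infty_{\widehat{i},u}\otimes$ (algebraic) and itself, twisted by the $\fn_{\widehat{i}}$-cohomology. In degree one this gives $\homo(\bL_{\widehat{i}}(\bQ_p),E)$, as in the map $f_u$. In this picture:
\begin{itemize}
\item the class of $V_{i,u}$ corresponds to the $\GLN_i$-side deformation direction;
\item the class of $[C_{i,u}-\pi_{\natural}^{\lalg}]$ corresponds to the $\GLN_{n-i}$-side direction.
\end{itemize}
Their product lands in the image of $\homo(\bL_{\widehat{i}}(\bQ_p),E)/\homo_{\sm}(\bL_{\widehat{i}}(\bQ_p),E)$ restricted to the second Levi factor. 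This quotient is $\homo(\GLN_{n-i}(\bZ_p),E)$ modulo nothing smooth, hence the line $E\log_p\circ\det$. So I would define the pairing by composing the Yoneda product with the identification of Proposition \ref{reinterforextgp}, extracting the $\GLN_{n-i}(\bZ_p)$-component.

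\textbf{Step 4: perfectness, the main obstacle.} Since all spaces are lines, it suffices to show that the product of the two generators is nonzero. I would argue by contradiction. If the product vanished, the tautological extension would allow a representation $[\pi_1^{\lalg}-C_{i,u}-\pi_{\natural}^{\lalg}]$ whose $\GLN_{n-i}$-analytic deformation is trivial. Via the Jacquet module $J_{\overline{\bP}_{\widehat{i}}}$, this would give a nonsplit analytic self-extension of $\pi^\infty_{\widehat{i},u}\otimes L_{\widehat{i}}(\lambda)$ along $\log_p\circ\det_{\GLN_{n-i}}$ that becomes split. That contradicts the injectivity of $f_u$ modulo smooth characters.

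Making this Jacquet-module computation rigorous is the hardest part, in particular controlling the $\fn$-cohomology shift between $\overline{L}(-s_i\cdot\lambda)$ and $L(\lambda)$. I would first try to follow the corresponding crystalline argument of Ding (the pairing lemma in \cite{ParaDing2024}), which uses \cite[Proposition 5.1.14]{BQ24}. The final isomorphism $E\xrightarrow{\sim}\homo(\GLN_{n-i}(\bZ_p),E)$, $1\mapsto\log_p\circ\det$, is then simply the normalization of the line.
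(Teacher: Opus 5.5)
There is a genuine gap: your proposal never actually defines a nonzero pairing. In Step 2 you correctly observe that, because Proposition \ref{reinterforextgp} makes $\ext^1_G(\pi_{\natural}^{\lalg},V_{i,u})\to\ext^1_G(\pi_{\natural}^{\lalg},C_{i,u})$ surjective, the connecting map vanishes. That connecting map is the Yoneda product with $[V_{i,u}]$, and $[V_{i,u}]$ spans $\ext^1_G(C_{i,u},\pi_1^{\lalg})$. Hence the whole Yoneda pairing into $\ext^2_G(\pi_{\natural}^{\lalg},\pi_1^{\lalg})$ is identically zero.

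Step 3 nevertheless defines the pairing by ``composing the Yoneda product'' with an identification, which gives $0$. Step 4 then tries to prove perfectness by contradiction, assuming the product vanishes and an extension $[\pi_1^{\lalg}-C_{i,u}-\pi_{\natural}^{\lalg}]$ exists. This cannot work: the product does vanish, and such an extension does exist, since its existence is exactly the surjectivity statement of Proposition \ref{reinterforextgp}. The heuristic that $[V_{i,u}]$ is a ``$\GLN_i$-side'' direction and $[C_{i,u}-\pi_{\natural}^{\lalg}]$ a ``$\GLN_{n-i}$-side'' direction does not correspond to any actual map.

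The missing idea is that the pairing is not a cup product at all. The isomorphism of Proposition \ref{reinterforextgp} (through the map $f_u$) depends only on the pair $(V,\iota)$. The identification $\kappa:V/\pi_1^{\lalg}\xrightarrow{\sim}C_{i,u}$ is what changes when the extension class is rescaled. The paper takes the tautological extension $V$ of $C_{i,u}\otimes_E\ext^1_G(C_{i,u},\pi_1^{\lalg})$ by $\pi_1^{\lalg}$ (trivial $G$-action on the second factor) and applies Proposition \ref{reinterforextgp} to it. This gives
\[\homo(\GLN_{n-i}(\bZ_p),E)\xrightarrow{\sim}\homo(\bL_{\widehat{i}}(\bZ_p),E)/\homo(\bZ_p^{\times},E)\xrightarrow{\sim}\ext^1_G\big(\pi_{\natural}^{\lalg},V/\pi_1^{\lalg}\big)\cong\ext^1_G(\pi_{\natural}^{\lalg},C_{i,u})\otimes_E\ext^1_G(C_{i,u},\pi_1^{\lalg}).\]

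Equivalently, each $c\neq 0$, realized by $(V_c,\iota_c,\kappa_c)$, gives an isomorphism $\Phi_c$ from the line $\homo(\GLN_{n-i}(\bZ_p),E)$ onto $\ext^1_G(\pi_{\natural}^{\lalg},C_{i,u})$. Replacing $c$ by $ac$ amounts to replacing $\kappa_c$ by $a^{-1}\kappa_c$ while keeping $(V_c,\iota_c)$, so $\Phi_{ac}=a^{-1}\Phi_c$. Thus $(d,c)\mapsto\Phi_c^{-1}(d)$ is bilinear. Perfectness is automatic because each $\Phi_c$ is an isomorphism, so no Jacquet-module nondegeneracy argument is needed beyond Proposition \ref{reinterforextgp} itself.

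Your normalization in Step 3 is also off. The line must be $\homo(\bL_{\widehat{i}}(\bZ_p),E)$ modulo $\homo(\bZ_p^\times,E)$ embedded via $\det$, because the twist direction $\psi\circ\det$ lies in $\ext^1_G(\pi_{\natural}^{\lalg},\pi_1^{\lalg})$ by Lemma \ref{parameterline}. Simply restricting to the second Levi factor is the wrong identification, since it does not kill $\log_p\circ\det_{\GLN_n}$.
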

			\begin{proof}
				Similar to the proof of \cite[Proposition 2.1.10]{BDcritical25}, we have the following (formal) isomorphism of $1$-dimensional $E$-vector spaces:
				\[\ext^1_{G}\left(C_{i,u},\pi_1^{\lalg}\right)\otimes_E\ext^1_{G}\left(\pi_{\sharp}^{\lalg}(\underline{\phi},\bh),C_{i,u}\right)^{\vee}\xrightarrow{\sim}\ext^1_{G}\left(C_{i,u}\otimes\ext^1_{G}\left(\pi_{\sharp}^{\lalg}(\underline{\phi},\bh),C_{i,u}\right),\pi_1^{\lalg}\right),\]
				where the $G$-representation $C_{i,u}\otimes\ext^1_{G}\left(\pi_{\sharp}^{\lalg}(\underline{\phi},\bh),C_{i,u}\right)$ has the trivial action of $G$ on the term $\ext^1_{G}\left(\pi_{\sharp}^{\lalg}(\underline{\phi},\bh),C_{i,u}\right)$. Any representative $V'_{i,u}$ of the image of the canonical vector of the left-hand side is isomorphic to a non-split extension of $C_{i,u}$ by $\pi_1^{\lalg}$. Then $V'_{i,u}$ corresponds to an injection $\iota:\pi_1^{\lalg}\hookrightarrow V'_{i,u}$ and an isomorphism $\kappa:V'_{i,u}/\pi_1^{\lalg}\xrightarrow{\sim}C_{i,u}\otimes_E\ext^1_{G}\left(\pi_{\sharp}^{\lalg}(\underline{\phi},\bh),C_{i,u}\right)$. By the same strategy as in \cite[(37)]{BDcritical25}, we have a canonical isomorphism of $1$-dimensional $E$-vector spaces (where the embedding $\homo(\bZ_p^{\times},E)\hookrightarrow \homo(\bL_{\widehat{i}}(\bZ_p^{\times}),E)$ is induced by $\det:\bL_{\widehat{i}}(\bZ_p^{\times})\rightarrow \bZ_p^{\times}$):
				\[\homo(\GLN_{n-i}(\bZ_p),E)\xrightarrow{\sim}\homo(\bL_{\widehat{i}}(\bZ_p^{\times}),E)/\homo(\bZ_p^{\times},E).\]
				By the proof of Proposition \ref{reinterforextgp} (similar to \cite[(34)]{BDcritical25}), the latter space is isomorphic to $\ext^1_{G}\left(\pi_{\natural}^{\lalg},V'_{i,u}/\pi_1^{\lalg}\right)$ and thus isomorphic to (via $\kappa$) $\ext^1_{G}\left(\pi_{\natural}^{\lalg},C_{i,u}\otimes_E\ext^1_{G}\left(\pi_{\sharp}^{\lalg}(\underline{\phi},\bh),C_{i,u}\right)\right)\xleftarrow{\sim} \ext^1_{G}\left(\pi_{\natural}^{\lalg},C_{i,u}\right)\otimes_E\ext^1_{G}\left(\pi_{\sharp}^{\lalg}(\underline{\phi},\bh),C_{i,u}\right)$, the result follows.
			\end{proof}

			\subsection{Constructions of locally analytic representations}\label{secforconsoflocana}
			
			For $u\in\sW_n^{S_0}$,\;consider the locally $\bQ_p$-analytic parabolic induction
			\begin{equation}\label{locaparabolicind}
				\mathrm{PS}_{u}(\underline{\phi},\bh):=\Big(\ind^G_{\ob(\bQ_p)}\unr(\underline{\phi}^u)\eta\chi_{\lambda}\Big)^{\bQ_p-\ana}.
			\end{equation}
			Note that the locally algebraic vectors in $\mathrm{PS}_{u}(\underline{\phi},\bh)$ are equal to
d			\[\mathrm{PS}^{\lalg}_{u}(\underline{\phi},\bh)=Q_{\Delta}^{\diamond}(I_0\backslash I_{u}^-,I_{u}^-,\lambda).\]
				Let $\mathrm{ST}_{u}(\underline{\phi},\bh)$  be the unique maximal quotient of $\mathrm{PS}_{u}(\underline{\phi},\bh)$ (see (\ref{locaparabolicind})) with socle $Q_{\Delta}^{\diamond}(\emptyset,\lambda)$.\;Note that the locally algebraic vectors $\mathrm{ST}^{\lalg}_{u}(\underline{\phi},\bh)$   in $\mathrm{ST}_{u}(\underline{\phi},\bh)$ are equal to $Q_{\Delta}^{\diamond}(\emptyset,I_{u}^-,\lambda)$.\;Let $\mathrm{ST}_{u,1}(\underline{\phi},\bh)$ be  the ``first two layers" of the socle filtration of 
				$\mathrm{ST}_u(\underline{\phi},\bh)$,\;i.e.,\;the unique subrepresentation of $\mathrm{ST}_u(\underline{\phi},\bh)$ satisfying the form:
				\[\big[\mathrm{ST}^{\lalg}_u(\underline{\phi},\bh)-\soc_G\mathrm{ST}_u(\underline{\phi},\bh)/\mathrm{ST}^{\lalg}_u(\underline{\phi},\bh)\big].\]
				For $u\in\sW_{n}$,\;recall that $[u]_j\in \cI^{\sharp}_j$ is the minimal length representative in $\sW_{\widehat{j}}u$.\;
				
%
%
%

			\begin{lem}\label{stexttogenericfactor}For $u\in \sW_s$ and  $j\in \Delta$,\;there is a short exact sequence of locally $\bQ_p$-analytic representations
				\begin{equation}\label{firstwholeforpatialu}
					0\rightarrow Q_{\Delta}^{\diamond}(\emptyset,I_{u}^-,\lambda) \rightarrow \mathrm{ST}_{u,1}(\underline{\phi},\bh)\rightarrow \oplus_{j\in \Delta}C_{j,[u]_j}\rightarrow 0.
				\end{equation}
			\end{lem}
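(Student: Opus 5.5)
The plan is to compute the socle filtration of $\mathrm{PS}_u(\underline{\phi},\bh)$ via the Orlik--Strauch functor, then pass to the quotient $\mathrm{ST}_u(\underline{\phi},\bh)$. Write
\[
\mathrm{PS}_{u}(\underline{\phi},\bh)=\cF^G_{\ob}\big(\overline{M}(-\lambda),\unr(\underline{\phi}^u)\eta\big).
\]
The Verma module $\overline{M}(-\lambda)$ has socle/radical layers indexed by the Bruhat order: the top is $\overline{L}(-\lambda)$, and the second layer is $\bigoplus_{j\in\Delta}\overline{L}(-s_j\cdot\lambda)$. Using the exactness of $\cF^G_{\ob}$ in the first variable and its irreducibility criterion (\cite[The main theorem]{orlik2015jordan}), we get a filtration of $\mathrm{PS}_u(\underline{\phi},\bh)$. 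Its bottom piece is the locally algebraic part
\[
\mathrm{PS}^{\lalg}_u(\underline{\phi},\bh)=\cF^G_{\ob}\big(\overline{L}(-\lambda),\unr(\underline{\phi}^u)\eta\big).
\]
The next graded pieces are $\cF^G_{\ob}\big(\overline{L}(-s_j\cdot\lambda),\unr(\underline{\phi}^u)\eta\big)$. Since $\overline{L}(-s_j\cdot\lambda)$ is $\op_{\widehat{j}}$-integrable, each of these equals $\cF^G_{\op_{\widehat{j}}}\big(\overline{L}(-s_j\cdot\lambda),\mathrm{PS}^{\infty}_{\widehat{j},u}(\underline{\phi})\big)$. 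Its irreducible constituents are the $\cF^G_{\op_{\widehat{j}}}(\overline{L}(-s_j\cdot\lambda),\pi)$, for $\pi$ running over the constituents of the smooth principal series $\mathrm{PS}^{\infty}_{\widehat{j},u}(\underline{\phi})$ of $\bL_{\widehat{j}}(\bQ_p)$.

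Next pass to $\mathrm{ST}_u(\underline{\phi},\bh)$, the maximal quotient with socle $Q^{\diamond}_\Delta(\emptyset,\lambda)$. On locally algebraic vectors this quotient cuts $Q^{\diamond}_\Delta(I_0\backslash I_u^-,I_u^-,\lambda)$ down to $Q^{\diamond}_\Delta(\emptyset,I_u^-,\lambda)$, which matches the statement of the definition. For each $j$, I would show that exactly one constituent of the $j$-th piece survives in the second socle layer, namely the one attached to the generic constituent $\pi_{\widehat{j},u}(\underline{\phi})$, i.e.\ $C_{j,u}$. By the Lemma on $C_{j,u}$ this depends only on the coset $\sW_{\widehat{j}}u^{-1}$, so it equals $C_{j,[u]_j}$. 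The non-generic constituents $\cF(\overline{L}(-s_j\cdot\lambda),\pi)$ are killed by the socle condition: they have no nonzero extension with the generic locally algebraic socle $Q^{\diamond}_\Delta(\emptyset,\lambda)$ sitting below them. For this I would use \cite[Proposition 5.1.14]{BQ24}, as in the proof of Lemma \ref{parameterlineextgp}. The key input is that $\ext^1_G(C,V)=0$ unless $\homo(Q^{\diamond}_\Delta(I_u^+,I_u^-,\lambda),V)\neq 0$, together with the analogous vanishing for non-generic $\pi$.

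Conversely, I must check that each $C_{j,[u]_j}$ really lies in $\soc_G\big(\mathrm{ST}_u/\mathrm{ST}^{\lalg}_u\big)$ and extends non-trivially over $Q^{\diamond}_\Delta(\emptyset,I_u^-,\lambda)$. Lemma \ref{parameterlineextgp} gives $\dim_E\ext^1_G\big(C_{j,u},Q^{\diamond}_\Delta(I,I_u^-,\lambda)\big)=1$. The non-splitness of the extension inside $\mathrm{PS}_u$ comes from the non-split Verma extension $[\overline{L}(-s_j\cdot\lambda)-\overline{L}(-\lambda)]$ in $\overline{M}(-\lambda)$, via exactness of $\cF$. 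Constituents coming from deeper layers of $\overline{M}(-\lambda)$ (Bruhat length $\geq 2$) sit strictly higher in the socle filtration. This follows from the standard result that $\cF$ preserves socle layers for Verma modules with these smooth twists, or alternatively by a direct check that their $\ext^1$ with $Q^{\diamond}_\Delta(\emptyset,I_u^-,\lambda)$ vanishes. Hence the second layer is exactly $\oplus_{j\in\Delta}C_{j,[u]_j}$, and the sequence (\ref{firstwholeforpatialu}) follows.

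The main obstacle is the second step: showing precisely which constituents of $\cF^G_{\op_{\widehat{j}}}\big(\overline{L}(-s_j\cdot\lambda),\mathrm{PS}^{\infty}_{\widehat{j},u}(\underline{\phi})\big)$ survive in the maximal quotient with generic socle. This is delicate in the non-generic situation $I_0\neq S_0$, where $\mathrm{PS}^{\infty}_{\widehat{j},u}(\underline{\phi})$ is reducible with several constituents. I would try first to reduce it to the $\ext^1$-vanishing computations of \cite[Section 5.1]{BQ24}, applied Levi-block by Levi-block via the $Q^{\diamond}$ description of the smooth inductions.
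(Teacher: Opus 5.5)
Your proposal follows the paper's proof. You exclude the non-generic constituents $\cF^G_{\op_{\widehat{j}}}(\overline{L}(-s_j\cdot\lambda),V)$ from $\soc_G(\mathrm{ST}_u/\mathrm{ST}^{\lalg}_u)$ via \cite[Proposition 5.1.14]{BQ24}, using that $\homo\big((\ind^G_{\op_{\widehat j}(\bQ_p)}V)^{\infty}\otimes_E L(\lambda),Q^{\diamond}_{\Delta}(\emptyset,I_u^-,\lambda)\big)=0$ because the target has generic socle, and you keep $C_{j,[u]_j}$ by Lemma \ref{parameterlineextgp}; your extra treatment of the deeper Bruhat layers and of how the non-split extension is realized fills in points the paper leaves implicit, but for the deeper layers rely on the $\ext^1$-vanishing against $Q^{\diamond}_{\Delta}(\emptyset,I_u^-,\lambda)$, not on a general claim that $\cF$ preserves socle layers, which is not available when the smooth inputs are reducible.
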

			\begin{proof}We have
				$\ext^1_G\left(C_{j,[u]_j},Q_{\Delta}^{\diamond}(\emptyset,I_{u}^-,\lambda)\right)\neq 0$ by Lemma \ref{parameterlineextgp}.\;For any other non-generic irreducible constituent $V$ of $\mathrm{PS}^{\infty}_{\widehat{j},u}(\underline{\phi})$,\;since
				\[\homo\big(\big(\ind^G_{\op_{\widehat{i}}(\bQ_p)}V\big)^{\infty},Q_{\Delta}^{\diamond}(\emptyset,I_{u}^-,\lambda)\big)=0,\]
				we obtain that $\ext^1_G\big(\cF^{G}_{\op_{\widehat{i}}}\big(\overline{L}(-s_{i}\cdot{\lambda}),V\big),Q_{\Delta}^{\diamond}(\emptyset,I_{u}^-,\lambda)\big)=0$ by \cite[Proposition 5.1.14]{BQ24}.\;Therefore,\;only the non-split extension $[Q_{\Delta}^{\diamond}(\emptyset,I_{u}^-,\lambda)-C_{j,[u]_j}]$ lies in $\mathrm{ST}_{u,1}(\underline{\phi},\bh)$.\;The result follows.\;
			\end{proof}

			Consider the amalgamated sums:
			\[\pi^{\flat}_{1}(\underline{\phi},\bh)_u:=\bigoplus_{\pi_1^{\lalg}(\underline{\phi},\bh)}^{j\in \Delta\backslash S_0(u)}V_{j,[u]_j}\hookrightarrow \pi^{\sharp}_{1}(\underline{\phi},\bh)_u:=\bigoplus_{\pi_1^{\lalg}(\underline{\phi},\bh)}^{j\in \Delta}V_{j,[u]_j}.\;\]
			We have the following short exact sequences of locally analytic representations:
			\begin{equation}\label{firstwholeforu}
				\begin{aligned}
					&0\rightarrow \pi^{\lalg}_{1}(\underline{\phi},\bh) \rightarrow \pi^{\flat}_{1}(\underline{\phi},\bh)_u\rightarrow \oplus_{j\in \Delta\backslash S_0(u)}C_{j,[u]_j}
					\rightarrow 0,\\
					&0\rightarrow \pi^{\lalg}_{1}(\underline{\phi},\bh) \rightarrow \pi^{\sharp}_{1}(\underline{\phi},\bh)_u\rightarrow \oplus_{j\in \Delta}C_{j,[u]_j}
					\rightarrow 0.
				\end{aligned}
			\end{equation}
			For $i\in \Delta$,\;put 
			\[\cI^{\flat}_{i}:=\{u\in \cI^{\sharp}_{i}:i\in \Delta\backslash S_0(u)\}\subseteq \cI^{\sharp}_{i}.\;\]
			For $\star\in\{\sharp,\flat\}$,\;consider the amalgamated sums:
			\[\pi^{\star}_{1}(\underline{\phi},\bh):=\bigoplus_{\pi_1^{\lalg}(\underline{\phi},\bh)}^{j\in \Delta,u\in\cI^{\star}_{j}}V_{j,u}.\]
			We have the following short exact sequence of locally analytic representations:
			\begin{equation}\label{firstwhole}
				\begin{aligned}
					&0\rightarrow \pi^{\lalg}_{1}(\underline{\phi},\bh) \rightarrow \pi^{\star}_{1}(\underline{\phi},\bh)\rightarrow \oplus_{j\in \Delta,u\in\cI^{\star}_{j}}C_{j,u}\rightarrow 0.
				\end{aligned}
			\end{equation}
			Roughly speaking,\;$\pi^{\sharp}_{1}(\underline{\phi},\bh)$ is the unique quotient of  amalgamated sum $\bigoplus_{\pi_1^{\lalg}(\underline{\phi},\bh)}^{u\in\sW_n^{S_0}}\pi^{\star}_{1}(\underline{\phi},\bh)_u$ with socle $Q_{\Delta}^{\diamond}(\emptyset,\lambda)$,\;and is also the unique quotient of
			certain "amalgamated sum" of $\{\mathrm{ST}_{u,1}(\underline{\phi},\bh)\}_{u\in \sW_n^{S_0}}$ with socle $Q_{\Delta}^{\diamond}(\emptyset,\lambda)$.\;In particular,\;we have injections $\mathrm{ST}_{u,1}(\underline{\phi},\bh)\hookrightarrow \pi_{1}^{\sharp}(\underline{\phi},\bh)_u\hookrightarrow \pi_{1}^{\sharp}(\underline{\phi},\bh)$ and $\pi^{\flat}_{1}(\underline{\phi},\bh)\hookrightarrow \pi^{\sharp}_{1}(\underline{\phi},\bh)$ of $G$-representations.\;

			For $u\in \sW_n^{S_0}$,\;we consider the natural map
			\begin{equation}
				\begin{aligned}
					\homo(\bT(\bQ_p),E)&\rightarrow\ext^1_{G}\big(\mathrm{PS}_{u}(\underline{\phi},\bh),\mathrm{PS}_{u}(\underline{\phi},\bh)\big),\;\\
					\psi&\mapsto \Big(\ind^G_{\ob(\bQ_p)}\unr(\underline{\phi}^u)\eta\chi_{\lambda}\otimes_E(1+\psi\epsilon)\Big)^{\bQ_p-\ana}.
				\end{aligned}
			\end{equation}
			In particular,\;by Schraen's spectral sequence \cite[(4.37)]{schraen2011GL3},\;this map induces  an isomorphism
			\begin{equation}
				\begin{aligned}
					\homo(\bT(\bQ_p),E)&\xrightarrow{\sim}\ext^1_{G}\Big(\mathrm{PS}^{\lalg}_{u}(\underline{\phi},\bh),\mathrm{PS}_{u}(\underline{\phi},\bh)\Big).
				\end{aligned}
			\end{equation}
			Then the natural quotient map $\mathrm{PS}_{u}(\underline{\phi},\bh)\twoheadrightarrow \mathrm{ST}_{u}(\underline{\phi},\bh)$ induces a map 
			\[\homo(\bT(\bQ_p),E)\rightarrow\ext^1_{G}\left(\mathrm{PS}^{\lalg}_{u}(\underline{\phi},\bh),\mathrm{ST}_{u}(\underline{\phi},\bh)\right).\;\]
			By \cite[Lemma 2.26]{2019DINGSimple},\;this map factors through (induced by the injection $\mathrm{ST}_{u,1}(\underline{\phi},\bh)\hookrightarrow \mathrm{ST}_{u}(\underline{\phi},\bh)$)
			\[\ext^1_{G}\left(\pi_{\natural}^{\lalg}(\underline{\phi},\bh),\mathrm{ST}_{u,1}(\underline{\phi},\bh)\right)\rightarrow \ext^1_{G}\left(\pi_{\natural}^{\lalg}(\underline{\phi},\bh),\mathrm{ST}_{u}(\underline{\phi},\bh)\right),\]
			then we obtain a map $\homo(\bT(\bQ_p),E)\rightarrow\ext^1_{G}\left( \mathrm{PS}^{\lalg}_{u}(\underline{\phi},\bh),\mathrm{ST}_{u,1}(\underline{\phi},\bh)\right)$.\;Composed with the pull-back map for the natural map
			\[p_u:\pi_{\natural}^{\lalg}(\underline{\phi},\bh)=Q_{\Delta}^{\diamond}(I_0,\emptyset,\lambda)\twoheadrightarrow Q_{\Delta}^{\diamond}(I_0\backslash I_u^-,\emptyset,\lambda)\hookrightarrow Q_{\Delta}^{\diamond}(I_0\backslash I_u^-,I_u^-,\lambda)=\mathrm{PS}_{u}^{\lalg}(\underline{\phi},\bh),\] 
			we get a map:
			\begin{equation}\label{stextalg}
				\homo(\bT(\bQ_p),E)\rightarrow\ext^1_{G}\left( \pi_{\natural}^{\lalg}(\underline{\phi},\bh),\mathrm{ST}_{u,1}(\underline{\phi},\bh)\right).\;
			\end{equation}
			Composing with the push-forward map via the injection $\mathrm{ST}_{u,1}(\underline{\phi},\bh)\hookrightarrow \pi^{\sharp}_1(\underline{\phi},\bh)$,\;we actually obtain a map:
			\[\zeta_{u}:\homo(\bT(\bQ_p),E)\rightarrow\ext^1_{G}\left(\pi_{\natural}^{\lalg}(\underline{\phi},\bh),\pi^{\sharp}_1(\underline{\phi},\bh)\right).\;\]
			
			\begin{pro}\label{extpi1repn}
				\begin{itemize}
					\item[(1)] We have $\dim_E\ext^1_{G}\left(\pi_{\natural}^{\lalg}(\underline{\phi},\bh),\pi_1^{\sharp}(\underline{\phi},\bh)_u\right)=2n$.\;There is a short exact sequence:
					\begin{equation}\label{extenforPSw}
						\begin{aligned}
							0\rightarrow \ext^1_{G}\left(\pi_{\natural}^{\lalg}(\underline{\phi},\bh),\pi^{\lalg}_{1}(\underline{\phi},\bh)\right)\rightarrow \;\ext^1_{G}&\left(\pi_{\natural}^{\lalg}(\underline{\phi},\bh),\pi^{\sharp}_{1}(\underline{\phi},\bh)_u\right)\\
							&\rightarrow \oplus_{j\in \Delta}  \ext^1_{G}\left(\pi_{\natural}^{\lalg}(\underline{\phi},\bh),C_{j,[u]_j}\right)\rightarrow 0 .
						\end{aligned}
					\end{equation}
					Thus,\;we get an isomorphism
					$\zeta_u:\homo(\bT(\bQ_p),E)\xrightarrow{\sim}\ext^1_G\left(\pi_{\natural}^{\lalg}(\underline{\phi},\bh),\pi^{\sharp}_1(\underline{\phi},\bh)_u\right)$.\;
					\item[(2)] For $?\in \{\flat,\sharp\}$,\;we have the following exact sequence
					\begin{equation*}
						\begin{aligned}
							0\rightarrow \ext^1_{G}\left(\pi_{\natural}^{\lalg}(\underline{\phi},\bh),\pi^{\lalg}_{1}(\underline{\phi},\bh)\right)\rightarrow \;\ext^1_{G}&\left(\pi_{\natural}^{\lalg}(\underline{\phi},\bh),\pi^{?}_1(\underline{\phi},\bh)\right)\\
							&\rightarrow \oplus_{j\in \Delta,u\in \cI^{?}_{j}}  \ext^1_{G}\left(\pi_{\natural}^{\lalg}(\underline{\phi},\bh),C_{j,u}\right)\rightarrow 0 .
						\end{aligned}
					\end{equation*}
					In particular,\; $\dim_E\ext^1_{G}\left(\pi_{\natural}^{\lalg}(\underline{\phi},\bh),\pi^{\sharp}_1(\underline{\phi},\bh)\right)=n+1+\sum_{j=1}^{n-1}|\cI^{?}_{j}|$.\;
				\end{itemize}
			\end{pro}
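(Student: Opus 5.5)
The plan is to treat each $V_{j,u}$ separately and then glue, using the amalgamated-sum structure of $\pi^{\sharp}_1(\underline{\phi},\bh)_u$ and $\pi^{?}_1(\underline{\phi},\bh)$.

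For (1), apply $\ext^\bullet_G(\pi_{\natural}^{\lalg},-)$ to the first row of (\ref{firstwholeforu}), i.e. $0\to\pi_1^{\lalg}\to\pi^{\sharp}_1(\underline{\phi},\bh)_u\to\oplus_{j\in\Delta}C_{j,[u]_j}\to0$. This gives an exact sequence
\[\homo_G(\pi_{\natural}^{\lalg},\oplus_j C_{j,[u]_j})\to \ext^1_G(\pi_{\natural}^{\lalg},\pi_1^{\lalg})\to\ext^1_G(\pi_{\natural}^{\lalg},\pi^{\sharp}_1(\underline{\phi},\bh)_u)\to\oplus_j\ext^1_G(\pi_{\natural}^{\lalg},C_{j,[u]_j}).\]
The first term vanishes, since $\pi_{\natural}^{\lalg}$ is locally algebraic and the $C_{j,u}$ have no locally algebraic constituents (Orlik--Strauch irreducibility). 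This gives the left injectivity.

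For surjectivity on the right, work one $j$ at a time. Proposition \ref{reinterforextgp} already says that for each fixed $j$ the map $\ext^1_G(\pi_{\natural}^{\lalg},V_{j,[u]_j})\to\ext^1_G(\pi_{\natural}^{\lalg},C_{j,[u]_j})$ is surjective. The amalgamated sum receives each $V_{j,[u]_j}$ compatibly with $\pi_1^{\lalg}$, so we push forward along $V_{j,[u]_j}\hookrightarrow\pi^{\sharp}_1(\underline{\phi},\bh)_u$. A class lifting an element of the $j$-th summand then maps to that element in the $j$-th factor and to $0$ in the others, because the composite $V_{j,[u]_j}\to\pi^{\sharp}_1\to C_{j',[u]_{j'}}$ is zero for $j'\neq j$. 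Summing these lifts gives surjectivity.

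The dimension count follows. Lemma \ref{parameterline} gives $\dim\ext^1_G(\pi_{\natural}^{\lalg},\pi_1^{\lalg})=n+1$, and each $\ext^1_G(\pi_{\natural}^{\lalg},C_{j,u})$ is $1$-dimensional by Lemma \ref{parameterlineextgp} (equivalently by the perfect pairing (\ref{dualpairing})). Hence $\dim=n+1+(n-1)=2n$.

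For the isomorphism $\zeta_u$, compare dimensions: $\dim\homo(\bT(\bQ_p),E)=2n$. It then suffices to prove injectivity of $\zeta_u$ as defined through $\mathrm{ST}_{u,1}(\underline{\phi},\bh)\hookrightarrow\pi^{\sharp}_1(\underline{\phi},\bh)_u$, using Lemma \ref{stexttogenericfactor} to identify $\mathrm{ST}_{u,1}$ with the part of the amalgam indexed by $C_{j,[u]_j}$. Take $\psi$ with $\zeta_u(\psi)=0$. Its image in each $\ext^1(\pi_{\natural}^{\lalg},C_{j,[u]_j})$ is the ``$j$-th coordinate of the non-smooth part'' of $\psi$, namely the $\log_p$-component of $\psi_j-\psi_{j+1}$ paired via (\ref{dualpairing}), following \cite[Lemma 2.26]{2019DINGSimple}. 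Its vanishing for all $j$ forces $\psi\in\homo_{\sm}(\bT(\bQ_p),E)+\homo(\bQ_p^\times,E)\circ\det$. On that subspace $\zeta_u$ agrees with the injections (\ref{lalginj1}) and (\ref{lalginj2}) of Lemma \ref{parameterline}, which are injective.

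The step I expect to be the main obstacle is the identification of the $C_{j,[u]_j}$-component of $\zeta_u(\psi)$ with the derivative of $\psi$ along $\alpha_j$. Doing this requires Schraen's spectral sequence and tracking the Orlik--Strauch functor, exactly as in \cite[Lemma 2.26]{2019DINGSimple} and \cite[Prop. 2.1.10]{BDcritical25}. I would follow that argument verbatim, replacing the smooth generic constituent there by $\pi_{\widehat{j},u}(\underline{\phi})$.

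For (2), repeat the same argument with the sequence (\ref{firstwhole}). Injectivity again comes from the vanishing of $\homo_G(\pi_{\natural}^{\lalg},C_{j,u})$. Surjectivity comes summand by summand, via $V_{j,u}\hookrightarrow\pi^{?}_1(\underline{\phi},\bh)$ and Proposition \ref{reinterforextgp}, using that distinct $(j,u)$ give non-isomorphic $C_{j,u}$ (the lemma on when $C_{j,u}\cong C_{j,u'}$) so the projections decouple. The dimension is then $n+1+\sum_j|\cI^{?}_j|$.
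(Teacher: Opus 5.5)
The paper states Proposition~\ref{extpi1repn} without proof, so there is no argument to compare against. The preceding results (Lemma~\ref{parameterline}, Lemma~\ref{parameterlineextgp}, Proposition~\ref{reinterforextgp}) are set up for exactly the d\'evissage you give, and that part of your proposal is correct:
\begin{itemize}
\item left exactness follows from $\homo_G(\pi_{\natural}^{\lalg},C_{j,u})=0$;
\item right exactness follows by lifting through each $V_{j,u}$ via Proposition~\ref{reinterforextgp} and pushing into the amalgam;
\item the counts $(n+1)+(n-1)=2n$ and $n+1+\sum_j|\cI^{?}_j|$ follow, the last formula being correctly read with $\pi^{?}_1$.
\end{itemize}
Non-isomorphism of the $C_{j,u}$ is not needed. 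The decoupling already follows from $\pi^{?}_1(\underline{\phi},\bh)/\pi_1^{\lalg}(\underline{\phi},\bh)\cong\bigoplus C_{j,u}$, with $V_{j,u}/\pi_1^{\lalg}$ mapping onto the $(j,u)$-summand.

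For $\zeta_u$, reducing to injectivity and isolating the $C_{j,[u]_j}$-components is right. Only the non-vanishing half of that step needs the computation of \cite[Lemma 2.26]{2019DINGSimple}. The vanishing on $\homo_{\sm}(\bT(\bQ_p),E)+\homo(\bQ_p^{\times},E)\circ\det$ is formal. The deformed locally algebraic vectors (resp.\ $\mathrm{PS}^{\lalg}_u(\underline{\phi},\bh)\otimes_E(1+(\chi\circ\det)\epsilon)$) give a $G$-stable lift of $\mathrm{PS}^{\lalg}_u(\underline{\phi},\bh)$, which splits the pushforward to $\mathrm{PS}_u(\underline{\phi},\bh)/\mathrm{PS}^{\lalg}_u(\underline{\phi},\bh)$.

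The step that does not work as written is your last one. The map (\ref{lalginj1}) is built from $\mathrm{PS}^{\infty}_{w_0^{(S_0)}}(\underline{\phi})$ and a pullback through $Q^{\diamond}_{\Delta}(\emptyset,\lambda)$. By contrast, $\zeta_u$ is built from $\mathrm{PS}^{\infty}_u(\underline{\phi})$ and $p_u$. When $I_0\neq\emptyset$ these principal series are in general not isomorphic (e.g.\ $u=1$), so no intertwining isomorphism identifies the two constructions. ``$\zeta_u$ agrees with (\ref{lalginj1})'' is therefore an unproved compatibility, true at best up to permuting the coordinates of $\psi$ (the automorphic analogue of Lemma~\ref{smhomoindepent}).

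On the $\det$-part you can prove the agreement honestly. By naturality of twisting, $\zeta_u(\chi\circ\det)=F^*\big[\pi_1^{\lalg}\otimes_E(1+(\chi\circ\det)\epsilon)\big]$, where $F$ is the composite $\pi_{\natural}^{\lalg}\xrightarrow{p_u}\mathrm{PS}^{\lalg}_u\to\mathrm{ST}^{\lalg}_u\hookrightarrow\pi_1^{\lalg}$. This $F$ is nonzero, because the generic constituent has multiplicity one and survives. Since $\dim_E\homo_G(\pi_{\natural}^{\lalg},\pi_1^{\lalg})=1$, $F$ is a nonzero multiple of the map used in (\ref{lalginj2}).

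On the smooth part you need a separate argument, for instance via the Bernstein centre. Under $\widehat{\cH}_{\underline{\phi}}\cong R_{\unr(\underline{\phi}^u)z^{\bh},g}$ (this uses $\phi_i\neq\phi_j$), an element $z$ of the maximal ideal acts on the extension $\mathcal{E}=\zeta_u(\psi)$ as the value of the tangent vector $\psi$ at $z$ times $\mathcal{E}\twoheadrightarrow\pi_{\natural}^{\lalg}\xrightarrow{F}\pi_1^{\lalg}\hookrightarrow\mathcal{E}$. On a split extension, $z$ acts by $0$. Run the same argument with the centre $Z(\bQ_p)$ first, to force the $\chi\circ\det$ component to be smooth. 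Together these give injectivity of $\zeta_u$ on $\homo_{\sm}(\bT(\bQ_p),E)+\homo(\bQ_p^{\times},E)\circ\det$, which is exactly what your dimension argument requires.
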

			
			For each Steinberg $(\varphi,\Gamma)$-module $E_{u,i}$,\;by \cite{2019DINGSimple},\;we associate to $E_{u,i}$ the Fontaine-Mazur simple $\sL$-invariants $\sL^{\Delta}(E_{u,i}):=\{\sL^{\Delta}(E_{u,i})_j\}_{j\in \Delta_{S_0(u),i}}$ and a locally analytic representation $\pi^{\Delta}_{\ana}(E_{u,i})$ that determines $\sL^{\Delta}(E_{u,i})$,\;where $\sL^{\Delta}(E_{u,i})_j=E(\val_p+\cL^u_{i,j}\log_p)\subseteq\homo(\bQ_p^{\times},E)$ is an $E$-line (recall (\ref{dfnforimagehomou})).\;More precisely,\;	\[\pi^{\Delta}_{\ana}(E_{u,i})=\Big[\pi^{\sharp}_1(\underline{\phi}_{E_{u,i}},\bh_{E_{u,i}})-\oplus_{j\in \Delta_{S_0(u),i}}Q_{\Delta_{S_0(u),i}}(j,\lambda_{E_{u,i}})\Big],\]
			such that the non-split extension $\pi^{\sharp}_1(\underline{\phi}_{E_{u,i}},\bh_{E_{u,i}})-Q_{\Delta_{S_0(u),i}}(j,\lambda_{E_{u,i}})$ encodes exactly the information of $\sL^{\Delta}(E_{u,i})_j$,\;where $\lambda_{E_{u,i}}=\bh_{E_{u,i}}-\theta|_{\bT(\bQ_p)\cap \bL_{S_0(u),i}(\bQ_p)}=\lambda|_{\bT(\bQ_p)\cap \bL_{S_0(u),i}(\bQ_p)}$.\;Consider the locally $\bQ_p$-analytic parabolic induction 
			\begin{equation*}\label{locaparabolicindblock}
				\mathrm{PS}^{\Delta}_{\cF_{u}
				}(\underline{\phi},\bh):=\left(\ind^G_{\op_{S_0(u)}(\bQ_p)}\pi^{\Delta}_{\ana}(\underline{\phi}^u)\eta_{S_0(u)}\right)^{\ana},\;\pi^{\Delta}_{\ana}(\underline{\phi}^u):=\boxtimes_{j=1}^{f_u}\pi^{\Delta}_{\ana}(E_{u,j}).\;
			\end{equation*}
			Let  $\mathrm{ST}^{\Delta}_{\cF_{u}}(\underline{\phi},\bh)$  be the unique maximal quotient of $\mathrm{PS}^{\Delta}_{\cF_{u}}(\underline{\phi},\bh)$ with socle $Q_{\Delta}^{\diamond}(\emptyset,\lambda)$.\;We have $\mathrm{PS}_{u}(\underline{\phi},\bh)\hookrightarrow\mathrm{PS}^{\Delta}_{\cF_{u}}(\underline{\phi},\bh)$ and $\mathrm{ST}_{u}(\underline{\phi},\bh)\hookrightarrow\mathrm{ST}^{\Delta}_{\cF_{u}}(\underline{\phi},\bh)$.\;By taking  ``amalgamated sum" of $\{\mathrm{ST}^{\Delta}_{\cF_{u}}(\underline{\phi},\bh)\}_{u\in \sW_n^{S_0}}$, we obtain a locally analytic representation $\pi_{1}^{\Delta}(\underline{\phi},\bh)$ of socle $Q_{\Delta}^{\diamond}(\emptyset,\lambda)$ such that
			$\pi^{\sharp}_{1}(\underline{\phi},\bh)\hookrightarrow \pi^{\Delta}_{1}(\underline{\phi},\bh)$.\;We do not study the interior structure of  $\mathrm{ST}^{\Delta}_{\cF_{u}}(\underline{\phi},\bh)$ (resp.,\;$\pi^{\Delta}_{1}(\underline{\phi},\bh)$) here.\;Note that $\pi^{\Delta}_{1}(\underline{\phi},\bh)$ admits a unique  subrepresentation of the following form :
			\begin{equation}
				\begin{aligned}
					&\pi^{\Delta,-}_{1}(\underline{\phi},\bh):=\pi^{\flat}_{1}(\underline{\phi},\bh)\otimes_{\pi_1^{\lalg}(\underline{\phi},\bh)}\bigoplus^{i\in \Delta,u\in\cI_i^{\sharp}\backslash\cI_i^{\flat}}_{\pi_1^{\lalg}(\underline{\phi},\bh)}[\pi_1^{\lalg}(\underline{\phi},\bh)-C_{i,u}-Q(I_u^-,\lambda)].\;
				\end{aligned}
			\end{equation}
			Note that the existence of each representation $[\pi_1^{\lalg}(\underline{\phi},\bh)-C_{i,u}-Q(I_u^-,\lambda)]$ is a direct consequence of Proposition \ref{reinterforextgp} and the proof of  Lemma  \ref{parameterlineextgp}),\;and the corresponding extension classes in $\Big\{\ext^1(Q(I_u^-,\lambda),[\pi_1^{\lalg}(\underline{\phi},\bh)-C_{i,u}])\Big\}_{i\in \Delta,u\in\cI_i^{\sharp}\backslash\cI_i^{\flat}}$
		already carry exactly the information of 
			\[\Big\{\sL^{\Delta}(E_{u,i})\Big\}_{u\in \sW_n^{S_0},1\leq i\leq f_u}\]
			 or equivalently $p^{\Delta}_{\mathrm{ST}}([\underline{\sL}(\Dpik)])$.\;
			
			\begin{rmk}\label{rmkforextensionsquare}
				Using the discussion in  \cite{HigherLinvariantsGL3(Qp)} and the $\ext^{\bullet}$ groups in \cite{BQ24},\;for any $u\in \cI_i^{\sharp}\backslash\cI^{\flat}_i$,\;we can show that $\mathrm{ST}^{\Delta}_{\cF_{u}}(\underline{\phi},\bh)/(\mathrm{ST}^{\Delta}_{\cF_{u}}(\underline{\phi},\bh))^{\lalg}$ admits a subrepresentation  $C^{\Delta}_{i,u}\supseteq[C_{i,u}-Q(I_u^-,\lambda)]$ (the so-called extension square of locally analytic representations) of the form:
				\begin{equation}
					\xymatrix{
						& C_{s_{i-1}s_i,\ast} \ar@{-}[dl] \ar@{-}[dr]  &    \\
						C_{i,u} \ar@{-}[dr] \ar@{-}[r]  & Q^{\diamond}_{\Delta}(I_u^-,I_{[s_iu]_i}^+,\lambda) \ar@{-}[r] & C_{i,[s_iu]_i} \ar@{-}[dl]\\ 
						&  C_{s_{i+1}s_i,\ast'}  & }
				\end{equation}
				where $C_{s_{i-1}s_i,\ast}=\cF^{G}_{\op_{\widehat{i-1}}}\Big(\overline{L}(-s_{i-1}s_i\cdot{\lambda}),\ast\Big)$ (resp.,\;$C_{s_{i+1}s_i,\ast'}=\cF^{G}_{\op_{\widehat{i+1}}}\Big(\overline{L}(-s_{i+1}s_i\cdot{\lambda}),\ast'\Big)$)  for some irreducible non-generic smooth representation $\ast$ (resp.,\;$\ast'$) of $\bL_{\widehat{i-1}}(\bQ_p)$ (resp.,\;$\bL_{\widehat{i+1}}(\bQ_p)$).\;More precisely,\;let $\pi^{\infty}_{i,u}$ be the unique generic irreducible constituent of $\mathrm{PS}^{\infty}_{i,u}(\underline{\phi})$.\;Recall the locally analytic representations in (\ref{exampleforGl2}) (apply to the  Steinberg  $(\varphi,\Gamma)$-module $R_{u,i}^{i+1}$ of rank-$2$):
				\[\pi_{\min}^-(\underline{\phi}_{R_{u,i}^{i+1}},\bh_{R_{u,i}^{i+1}})\hookrightarrow \pi_{\min}(\underline{\phi}_{R_{u,i}^{i+1}},\bh_{R_{u,i}^{i+1}}).\]
				For $\ast\in\{-,\emptyset\}$,\;let $\pi^{\ast}_{i,u}(\underline{\phi},\bh):=\Big(\pi^{\infty}_{i,u}\otimes_EL_{i}(\lambda)\Big)\times_{\pi_1^{\lalg}(\underline{\phi}_{R_{u,i}^{i+1}},\bh_{R_{u,i}^{i+1}})}\pi^{\ast}_{\min}(\underline{\phi}_{R_{u,i}^{i+1}},\bh_{R_{u,i}^{i+1}})$.\;Put
				\begin{equation}
					R^{\ast}_{i,u}:=\left(\ind^G_{\op_{i}(\bQ_p)}\pi^{\ast}_{i,u}(\underline{\phi},\bh)\eta_{i}\right)^{\ana}.\;
				\end{equation}
                Note that $R_{i,u}/R^{-}_{i,u}\cong \cF^G_{\ob}\big(\overline{M}(-s_i\cdot\lambda),\unr(\underline{\phi}^{s_iu})\eta\big)$,\;and $C_{i,[s_iu]_i}$ appears as an irreducible consisitent of the latter term.\;We see that $\dim_E\ext^1_G\big(C^{\Delta}_{i,u},\pi^{\lalg}_1(\underline{\phi},\bh)\big)=2$.\;
                \end{rmk}


%

%

%
%

			\subsection{Main results}\label{mainthmI}

			This section follows the route in the recent work \cite[Section 2.4]{BDcritical25} of Breuil-Ding.\;See Section \ref{reinterfor33} for the spaces $\ext^{1,\flat}_{\gr}({\Dpik},{\Dpik})$ and $\homo_{\fil}(D,D)$.\;Recall that we have fixed a basis $\{e_{1,j}\}_{1\leq j\leq r_1},\cdots,\{e_{s,j}\}_{1\leq j\leq r_s}$ (and we relabel them with $e_{1},\cdots,e_{n}$ by using the lexicographical ordering) of $D$.\;
			

			For $J\subseteq \{1,\cdots,n\}$,\;put
			\[e_J:=\wedge_{j\in J}e_{j}\in \bigwedge_E\nolimits^{\!|J|}\!D.\]
			For $i\in\Delta$,\;recall $\cI^{\flat}_{i}\subseteq \cI^{\sharp}_{i}$.\;In the sequel,\;$\star\in\{\sharp,\flat\}$.\;For $u\in \cI^{\star}_{i}$,\;let $I_i(u)=\{u^{-1}(1),\cdots,u^{-1}(i)\}$  and $I_i(u)^c=\{1,\cdots,n\}\backslash I_i(u)$.\;For each $u\in \cI^{\star}_{i}$,\;we fix an
			isomorphism of $1$-dimensional $E$-vector spaces (as in \cite[(43)]{BDcritical25}):
			\begin{equation}
				\epsilon_{i,u}:\ext^1_{G}(C_{i,u},\pi_1^{\lalg}(\underline{\phi},\bh))\xrightarrow{\sim}Ee_{I_i(u)^c}\in \bigwedge\nolimits_E^{\!n-i}\!D.
			\end{equation}
			Then we get for each $i\in\Delta$ a map:
			\begin{equation}\label{dfnforeplisonistar}
				\begin{aligned}
					&\epsilon^{\star}_{i}:=\bigoplus_{u\in \cI^{\star}_{i}}\epsilon_{i,u}:\ext^1_{G}\Big(\bigoplus_{u\in \cI^{\star}_{i}}C_{i,u},\pi_1^{\lalg}(\underline{\phi},\bh)\Big)\rightarrow\bigwedge\nolimits_E^{\!n-i}\!D.
				\end{aligned}
			\end{equation}
			Let $\big(\bigwedge_{E}^{\!n-i}\!D_{\sigma}\big)^{\star}$ be the image of $\epsilon_{i}$.\;Let
			\[\big(\bigwedge\nolimits_{E}^{\!n-i}\!D_{\sigma}\big)^{\star,c}=E\big\langle e_J:\;J\subseteq \{1,\cdots,n\},|J|=n-i,e_J\neq \be_{I_i(u)^c},\;\forall u\in \cI_{i}^{\star}\big\rangle.\]
			Then $\bigwedge_E^{n-i}D_{\sigma}=\big(\bigwedge\nolimits_{E}^{\!n-i}\!D_{\sigma}\big)^{\star}\oplus\big(\bigwedge\nolimits_{E}^{\!n-i}\!D_{\sigma}\big)^{\star,c}$ and  $\dim_E\big(\bigwedge_{E}^{\!n-i}\!D_{\sigma}\big)^{\star}=|\cI_{i}^{\star}|$.\;
			\begin{rmk}
				Since $\bigoplus_{u\in \cI^{\star}_{i}}C_{i,u}$ is a subrepresentation of $\bigoplus_{u\in \sW_n^{\widehat{i}}}C_{i,u}$,\;thus the left hand of (\ref{dfnforeplisonistar}) should be viewed as a quotient of $\ext^1_{G}\big(\bigoplus_{u\in \sW_n^{\widehat{i}}}C_{i,u},\pi_1^{\lalg}(\underline{\phi},\bh)\big)$ (the left hand of \cite[(44)]{BQ24}).\;In the sequel,\;we identity $\big(\bigwedge_{E}^{\!n-i}\!D\big)^{\star}$ with $\bigwedge_{E}^{\!n-i}\!D/\big(\bigwedge_{E}^{\!n-i}\!D\big)^{\star,c}$.\;Therefore,\;$\homo_E\Big(\big(\bigwedge\nolimits_{E}^{\!n-i}\!D\big)^{\star},\fil_i^{\max}(D)^{\star}\Big)$ is a subspace of $\homo_E\Big(\bigwedge\nolimits_{E}^{\!n-i}\!D,\fil_i^{\max}(D)\Big)$ of vectors that vanish on $\big(\bigwedge\nolimits_{E}^{\!n-i}\!D\big)^{\star,c}$.\;
			\end{rmk}

			For $\star\in\{\sharp,\flat\}$,\;sending $e_J\not\in \{e_{I_i(u)^c}:u\in \cI^{\star}_i\}$ to zero induces an isomorphism
			\begin{equation}
				\pr^{\star}:\fil_i^{\max}(D)\xrightarrow{\sim}\fil_i^{\max}(D)^{\star},
			\end{equation}
			i.e.,\;$\fil_i^{\max}(D)^{\star}$ is the image of $\fil_i^{\max}(D)\hookrightarrow \bigwedge_E^{n-i}D\twoheadrightarrow \bigwedge_E^{n-i}D/\big(\bigwedge_{E}^{\!n-i}\!D\big)^{\star
				,c}\cong \big(\bigwedge_{E}^{\!n-i}\!D\big)^{\star}$.\;For $1\leq i\leq n$,\;we define the following $E$-line of $\bigwedge_E^{n-i}D$:
			\begin{equation}
				\begin{aligned}
					\fil_i^{\max}(D)&:= \fil^{-\bh_{n}}(D)\wedge \fil^{-\bh_{n-1}}(D)\wedge\cdots\wedge \fil^{-\bh_{i+1}}(D)\xrightarrow{\sim}\bigwedge\nolimits_E^{\!n-i}\!\fil^{-\bh_{i+1}}(D)\subseteq\bigwedge\nolimits_E^{\!n-i}\!D .\;
				\end{aligned}
			\end{equation}
			The non-critical assumption implies that the coefficient of $e_{I_i(u)^c}$ (for each $u\in \cI^{\star}_i $) in 	$\fil_i^{\max}(D)$ is non-zero.\;Similar to \cite[(46)]{BDcritical25},\;we consider the morphisms of $E$-vector spaces:
			\begin{equation}\label{compositiondfnpisi}
				\begin{aligned}
					&\ext^1_{G}\Big(\bigoplus_{u\in \cI_i^{\star}}C_{i,u},\pi_1^{\lalg}(\underline{\phi},\bh)\Big)\otimes \ext^1_{G}\Big(\bigoplus_{u\in \cI_i^{\star}}C_{i,u},\pi_1^{\lalg}(\underline{\phi},\bh)\Big)^{\vee}\\
					\xrightarrow{\sim}&\;\ext^1_{G}\Big(\Big(\bigoplus_{u\in \cI_i^{\star}}C_{i,u}\Big)\otimes\ext^1_{G}\Big(\bigoplus_{u\in \cI_i^{\star}}C_{i,u},\pi_1^{\lalg}(\underline{\phi},\bh)\Big) ,\pi_1^{\lalg}(\underline{\phi},\bh)\Big)\\
					\xrightarrow{\sim}&\;\ext^1_{G}\Big(\Big(\bigoplus_{u\in \cI_i^{\star}}C_{i,u}\Big)\otimes_E\fil_i^{\max}(D)^{\flat},\pi_1^{\lalg}(\underline{\phi},\bh)\Big),
				\end{aligned}
			\end{equation}
			where the second morphism is the push-forward induced by the composition
			\[\fil_i^{\max}(D)^{\star}\hookrightarrow \big(\bigwedge\nolimits_{E}^{\!n-i}\!D\big)^{\star}\xrightarrow{\epsilon_i^{-1}}\ext^1_{G}\Big(\bigoplus_{u\in \cI_i^{\star}}C_{i,u},\pi_1^{\lalg}(\underline{\phi},\bh)\Big).\] 
			Let $\pi^{\star}_{s_i}(\underline{\phi},\bh)$  be a representative of the image for the canonical vector of the first term of (\ref{compositiondfnpisi}) by the composition (\ref{compositiondfnpisi}),\;which lies in the extension group 
			\[\ext^1_{G}\Big(\Big(\bigoplus_{u\in \cI^{\star}_{i}}C_{i,u}\Big)\otimes_E\fil_i^{\max}(D)^{\star},\pi_1^{\lalg}(\underline{\phi},\bh)\Big).\]
			In a similar way,\;we also have a representative $\pi_{s_i,u}(\underline{\phi},\bh)$ that lies in $\ext^1_{G}\Big(C_{i,u}\otimes_E\fil_i^{\max}(D)^{\star},\pi_1^{\lalg}(\underline{\phi},\bh)\Big)$.\;We have injections of $G$-representations $\pi_1^{\lalg}(\underline{\phi},\bh)\hookrightarrow \pi_{s_i,u}(\underline{\phi},\bh) \hookrightarrow \pi^{\star}_{s_i}(\underline{\phi},\bh)$ and a canonical isomorphism
			\[\bigoplus^{u\in \cI^{\star}_i}_{\pi_1^{\lalg}(\underline{\phi},\bh)}\pi_{s_i,u}(\underline{\phi},\bh)\xrightarrow{\sim }\pi^{\star}_{s_i}(\underline{\phi},\bh).\;\]
			For $S\subseteq \Delta$,\;put
			\[\pi^{\star}_{S}(\underline{\phi},\bh):=\bigoplus_{i\in S}^{\pi_{1}^{\lalg}(\underline{\phi},\bh)}\pi^{\star}_{s_i}(\underline{\phi},\bh).\]
			Recall the composition of the surjections in \cite[(68) \& (69)]{BDcritical25} and the (inverse of) the isomorphism \cite[(69)]{BDcritical25}:
			\begin{equation}
				\begin{aligned}
					g_{i}:&\;\homo_E\Big(\bigwedge\nolimits_E^{\!n-i}\!D,\fil_i^{\max}(D)\Big)=\homo_E\Big(\bigwedge\nolimits_E^{\!n-i}\!D,\bigwedge\nolimits_E^{\!n-i}\!\fil_H^{-\bh_{i+1}}(D)\Big)\\&\twoheadrightarrow \homo_E\Big(\bigwedge\nolimits_E^{\!n-i-1}\!\fil_H^{-\bh_{i+1}}(D)\wedge D ,\bigwedge\nolimits_E^{\!n-i}\!\fil_H^{-\bh_{i+1}}(D)\Big)\\
					&\xrightarrow{\sim}\left\{f\in\homo_E\big(D,\fil_H^{-\bh_{i+1}}(D)\big):f|_{\fil_H^{-\bh_{i+1}}(D)} \text{\;scalar}\right\}\hookrightarrow \homo_{\fil}\big(D,D\big),
				\end{aligned}
			\end{equation}
			the third inclusion is given by \cite[Lemma 2.2.5 (ii)]{BDcritical25}.\;By \cite[Lemma 2.2.5 (ii)]{BDcritical25},\;we have a surjection
			\[\bigoplus_{i=1}^n\left\{f\in\homo_E\big(D,\fil_H^{-\bh_{i}}(D)\big):f|_{\fil_H^{-\bh_{i}}(D)} \text{\;scalar}\right\}\twoheadrightarrow \homo_{\fil}(D,D).\]

			\begin{thm}\label{construext1}
				For $\star\in\{\sharp,\flat\}$,\;put
				\begin{equation}
					\homo^{\star}_{\fil}(D,D):=\sum_{i\in \Delta}g_{i}\Big(\homo_E\Big(\big(\bigwedge\nolimits_{E}^{\!n-i}\!D\big)^{\star},\fil_i^{\max}(D)^{\star}\Big)\Big)\hookrightarrow \homo_{\fil}(D,D).
				\end{equation}
				There  is a surjection of finite dimensional $E$-vector spaces which only depends on the $(\epsilon_{i,u})_{i\in \Delta,u\in \cI^{\star}_i}$ and  on a choice of $\log_p(p)\in E$:
				\[t^{\star}_{D}:\ext^1_{G}\left(\pi_{\natural}^{\lalg}(\underline{\phi},\bh),\pi^{\star}_{1}(\underline{\phi},\bh)\right)\twoheadrightarrow \ext^{1,\flat}_{\gr}({\Dpik},{\Dpik})\oplus\homo^{\star}_{\fil}(D,D).\]
			\end{thm}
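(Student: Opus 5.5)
Following \cite[Section 2.4]{BDcritical25}, I would build $t^{\star}_D$ from the two graded pieces of the short exact sequence in Proposition~\ref{extpi1repn}(2),
\[0\rightarrow \ext^1_{G}(\pi_{\natural}^{\lalg},\pi^{\lalg}_{1})\rightarrow \ext^1_{G}(\pi_{\natural}^{\lalg},\pi^{\star}_1)\rightarrow \oplus_{j\in \Delta,u\in \cI^{\star}_{j}}\ext^1_{G}(\pi_{\natural}^{\lalg},C_{j,u})\rightarrow 0,\]
and glue them with a splitting fixed by $\log_p(p)$.

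\textbf{Sub part.} By Lemma~\ref{parameterline}, $\ext^1_{G}(\pi_{\natural}^{\lalg},\pi^{\lalg}_{1})\cong \homo_{\sm}(\bT(\bQ_p),E)\oplus_{\homo_{\sm}(\bQ_p^\times,E)}\homo(\bQ_p^\times,E)$, of dimension $n+1$. I map the $\homo_{\sm}(\bT(\bQ_p),E)$ factor onto $\homo_{\sm}(\bZ_{S_0}(\bQ_p),E)\cong\ext^{1,\flat}_{\gr}(\Dpik,\Dpik)$ by restricting along the blocks $E_1,\dots,E_s$; this uses (\ref{extgrmap}) and $\ext^1_g(E_i[1/t],E_i[1/t])\cong\homo_{\sm}(\bQ_p^\times,E)$. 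The extra $\log_p\circ\det$ direction I send to the scalar endomorphisms $E\cdot\mathrm{id}\subseteq\homo_{\fil}(D,D)$. To split $\homo(\bQ_p^\times,E)=\homo_{\sm}\oplus E\log_p$ I choose $\log_p(p)$, so this is where that dependence enters.

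\textbf{Quotient part.} For each $i$ and $u\in\cI^\star_i$, the pairing (\ref{dualpairing}) identifies $\ext^1_G(\pi_{\natural}^{\lalg},C_{i,u})$ with the dual of $\ext^1_G(C_{i,u},\pi_1^{\lalg})$. Via $\epsilon_{i,u}$, the latter is $Ee_{I_i(u)^c}$. Summing over $u\in\cI^\star_i$ gives
\[\oplus_{u\in\cI^\star_i}\ext^1_G(\pi_{\natural}^{\lalg},C_{i,u})\cong\big((\bigwedge\nolimits^{n-i}_E D)^\star\big)^\vee.\]
Tensoring with the line $\fil_i^{\max}(D)^\star\cong\fil_i^{\max}(D)$ (via $\pr^\star$) gives $\homo_E\big((\bigwedge^{n-i}D)^\star,\fil_i^{\max}(D)^\star\big)$. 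This is legitimate because the tautological extensions $\pi^\star_{s_i}$ in (\ref{compositiondfnpisi}) are already twisted by $\fil_i^{\max}(D)^\star$, and that twist is what makes the resulting map independent of the individual $\epsilon_{i,u}$. I then apply $g_i$ and sum over $i\in\Delta$; by definition the image is $\homo^\star_{\fil}(D,D)$.

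\textbf{Gluing.} To assemble a map on the middle term, I split the sequence using the explicit lifts from Proposition~\ref{reinterforextgp}: $\homo(\bL_{\widehat i}(\bQ_p),E)\hookrightarrow\ext^1_G(\pi_{\natural}^{\lalg},V_{i,u})$, modulo $\homo_{\sm}$ and with $\log_p(p)$ fixed, gives a canonical section over each $C_{i,u}$. The main obstacle is checking that the sum of the two pieces is well defined. The lifts over different $(i,u)$ overlap in $\ext^1_G(\pi_{\natural}^{\lalg},\pi_1^{\lalg})$, namely in the scalar and $\homo_{\sm}(\bL_{\widehat i})$ directions. Their images there must agree with the scalar component coming from $g_i$, which sends a map restricting to a scalar on $\fil_H^{-\bh_{i+1}}$ to that scalar. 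I would check this compatibility exactly as in \cite[Lemma 2.2.5, Proposition 2.4.4]{BDcritical25}, where the Levi $\bL_{\widehat i}\cong\GLN_i\times\GLN_{n-i}$ determinant characters match the scalar parts on $\fil^{-\bh_{i+1}}_H(D)$ and on $D/\fil^{-\bh_{i+1}}_H(D)$.

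\textbf{Surjectivity.} The map is surjective on the sub part by construction. On the quotient it is surjective because each $g_i$ is surjective onto its image by \cite[(68)--(69)]{BDcritical25}, and $\homo^\star_{\fil}(D,D)$ is by definition the sum of these images.
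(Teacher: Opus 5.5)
Your proposal is correct and follows the paper's proof. As in the paper, $t^{\star}_{D}$ is defined on $\ext^1_{G}\big(\pi_{\natural}^{\lalg}(\underline{\phi},\bh),\pi_1^{\lalg}(\underline{\phi},\bh)\big)$ as in Step 2 of \cite[Proposition 2.2.4]{BDcritical25}, with the smooth torus part sent to $\ext^{1,\flat}_{\gr}(\Dpik,\Dpik)$. On the quotient pieces it is defined through the chain of isomorphisms using (\ref{dualpairing}), $\epsilon_{i,u}$ and the twist by $\fil_i^{\max}(D)^{\star}$, followed by $g_i$, and the two parts are glued as in Step 3 of loc.\ cit.; you are more explicit than the paper about the gluing and surjectivity. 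One caveat, which is a slip in the statement rather than in your argument: every map in $g_i(\cdots)$ with $i\in\Delta$ has image in $\fil_H^{-\bh_{i+1}}(D)\subseteq\fil_H^{-\bh_{2}}(D)$, so the scalars $E\cdot\mathrm{id}$ (where you send $\log_p\circ\det$) are not in $\homo^{\star}_{\fil}(D,D)$ as literally defined. The codomain should therefore be read as $\ext^{1,\flat}_{\gr}(\Dpik,\Dpik)\oplus\big(E\cdot\mathrm{id}+\homo^{\star}_{\fil}(D,D)\big)$, which is how Proposition \ref{imageofhomostar} implicitly treats it.
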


			\begin{proof}We define $t^{\star}_{D}$ in restriction to $\ext^1\Big(\pi_{\natural}^{\lalg}(\underline{\phi},\bh),\pi^{\lalg}(\underline{\phi},\bh))$ firstly.\;This is essentially the Step 2 in the proof of \cite[Proposition 2.2.4]{BDcritical25}.\;It suffices to use \cite[(63)]{BDcritical25} and replace \cite[(64) \& (65)]{BDcritical25} with 
				\begin{equation}
					\begin{aligned}
						t^{\star}_{D}|_{\homo_{\mathrm{sm}}(\bT(\bQ_p),E)}&\xrightarrow{\sim }\ext^{1,\flat}_{\gr}({\Dpik},{\Dpik})= \prod_{i=1}^s\ext^{1}_g(E_i[1/t],E_i[1/t])\\
						(\psi_1,\cdots,\psi_s)&\mapsto (E_{s-i}[1/t]\otimes_{\cR_{E,L}}\cR_{E[\epsilon]/\epsilon^2,L}(1+\psi_{s-i}\epsilon)_{1\leq i\leq s}.
					\end{aligned}
				\end{equation}
				The argument Step 3 in \cite[Proposition 2.2.4]{BDcritical25} need more argument.\;Choose $i\in \Delta$.\;We have isomorphisms 
				\begin{equation}
					\begin{aligned}
						\ext^1_{G}\Big(\pi_{\natural}^{\lalg}(\underline{\phi},\bh),\pi^{\star}_{s_i}(D)/\pi^{\lalg}(\underline{\phi},\bh)\Big)&\xrightarrow{\sim}\ext^1_{G}\Big(\pi_{\natural}^{\lalg}(\underline{\phi},\bh),\Big(\bigoplus_{u\in \cI^{\star}_{i}}C_{i,u}\Big)\otimes_E\fil_i^{\max}(D)^{\star}\Big)\\
						&\xleftarrow{\sim}\ext^1_{G}\Big(\pi_{\natural}^{\lalg}(\underline{\phi},\bh),\bigoplus_{u\in\cI^{\star}_{i}}C_{i,u}\Big)\otimes_E\fil_i^{\max}(D)^{\star}\\
						&\xrightarrow[(\ref{dualpairing})]{\sim}\ext^1_{G}\Big(\bigoplus_{u\in \cI^{\star}_{i}}C_{i,u},\pi^{\lalg}(\underline{\phi},\bh)\Big)^{\vee}\otimes_E\fil_i^{\max}(D)^{\star}\\
						&\xleftarrow{\sim}\homo_E\Big(\big(\bigwedge\nolimits_{E}^{\!n-i}\!D\big)^{\star},\fil_i^{\max}(D)^{\star}\Big).
					\end{aligned}
				\end{equation}
				By definition,\;we get the desired surjection $t^{\star}_{D}$.\;This completes the proof.\;
			\end{proof}
			
			We next describe the image $\homo^{\star}_{\fil}(D,D)$ explicity.\;For $i\in \Delta$ and $u\in \cI_i^{\star}$,\;we  define $\homo_{\fil,\bF_u}^i(D,D)=$
			\begin{equation}\label{dfnforhomofilmaxilevi}
				\begin{aligned}
					\left\{f\in \homo_{\fil,\bF_u}(D,D):\exists\;a,b\in E\; \text{such that\;} f(e_{u^{-1}(j)})=ae_{u^{-1}(j)},\right.\\ \left.\forall\;1\leq j\leq i,f(e_{u^{-1}(j)})-be_{u^{-1}(j)}\in \oplus_{l=1}^{i}Ee_{u^{-1}(l)}, \forall\;i+1\leq j\leq n\right\}.
				\end{aligned}
			\end{equation}
			Keep the notation in Section \ref{reinterfor33}.\;Then we have 
			\begin{equation}
				\begin{aligned}
					&\homo^i_{\fil,\bF_u}(D,D)\cong \mathrm{Ad}_{u^{-1}}(\tau_{\widehat{i}})\cap \mathrm{Ad}_{g}(\fb).\;
				\end{aligned}
			\end{equation}
			As in \cite[(134),(137)]{BDcritical25},\;sending $f$ to $(a\log_{p},b\log_{p})$ ($a,b$ are given in the definition (\ref{dfnforhomofilmaxilevi})) induces a canonical map 
			\[f_{i}:\homo^i_{\fil,\bF_u}(D,D)\rightarrow \homo(\bL_{\widehat{i}}(\bZ_p),E),\]
			which is an isomorphism by comparing dimensions.\;Indeed,\;by Remark \ref{explainHomasLiealg},\;we have $\mathrm{Ad}_{u^{-1}}(\tau_{\widehat{i}})\cap \mathrm{Ad}_{g}(\fb)=\mathrm{Ad}_{ub(u)}(\fz_{\widehat{i}})$ and thus thus $\dim_E\homo^i_{\fil,\bF_u}(D,D)=2$.\;
			Consider the following composition of isomorphisms (similar to \cite[(142)-(144)]{BDcritical25}):
			\begin{equation}\label{tDmapforiusigma}
				\begin{aligned}
					\ext^1_{G}\left(\pi_{\natural}^{\lalg}(\underline{\phi},\bh),V
					_{i,u}\right)\xleftarrow{\sim}&\;\homo_{\sm}(\bT(\bQ_p),E)\oplus_{\homo_{\sm}(\bL_{\widehat{i}}(\bQ_p),E)}\homo(\bL_{\widehat{i}}(\bQ_p),E)\\\xleftarrow{\sim}&\;\homo_{\sm}(\bT(\bQ_p),E)\oplus\homo(\bL_{\widehat{i}}(\bZ_p),E)\\
					\xrightarrow[f_{i}^{-1}]{\sim}&\;\ext^{1,\flat}_{\gr}({\Dpik},{\Dpik})\oplus\homo^i_{\fil,\bF_u}(D,D)\\
					\hookrightarrow&\; \ext^{1,\flat}_{\gr}({\Dpik},{\Dpik})\oplus\homo_{\fil}(D,D).\;
				\end{aligned}
			\end{equation}
			
				\begin{pro}\label{imageofhomostar}
				We have $\homo^{\flat}_{\fil}(D,D)\hookrightarrow \homo^{\sharp}_{\fil}(D,D)\hookrightarrow \homo_{\fil}(D,D)$,\;which have the forms 
				\begin{equation}
					\begin{aligned}
						&\homo^{\flat}_{\fil}(D,D)\cong\sum\nolimits_{u\in \sW_n^{S_0}}\mathrm{Ad}_{u^{-1}}(\tau_{S_0(u)})\cap \mathrm{Ad}_{g}(\fb),\\
						&\homo^{\sharp}_{\fil}(D,D)\cong\sum\nolimits_{u\in \sW_n^{S_0}}\mathrm{Ad}_{u^{-1}}(\fb)\cap \mathrm{Ad}_{g}(\fb).
					\end{aligned}
				\end{equation}
			\end{pro}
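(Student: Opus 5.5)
The approach is to compute $\homo^{\star}_{\fil}(D,D)$ one summand at a time, using the identification of each $g_i$-image with a Lie subalgebra that comes from the explicit composition (\ref{tDmapforiusigma}).

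\emph{Step 1: each summand.} Fix $i\in\Delta$ and $u\in\cI^{\star}_i$. By definition, $\homo_E\big((\bigwedge^{n-i}_ED)^{\star},\fil_i^{\max}(D)^{\star}\big)$ is the direct sum over $u\in\cI^{\star}_i$ of the lines $\homo_E(Ee_{I_i(u)^c},\fil_i^{\max}(D))$, viewed inside $\homo_E(\bigwedge^{n-i}_ED,\fil_i^{\max}(D))$ as maps vanishing off $e_{I_i(u)^c}$.

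I would show that $g_i$ maps the line attached to $u$ into $\homo^i_{\fil,\bF_u}(D,D)$. In \cite[Lemma 2.2.5]{BDcritical25}, a functional supported on the single wedge $e_{I_i(u)^c}$ gives, after the identification with $\{f\in\homo_E(D,\fil_H^{-\bh_{i+1}}(D)): f|_{\fil_H^{-\bh_{i+1}}(D)}\ \text{scalar}\}$, an endomorphism with two properties:
\begin{itemize}
\item[(a)] it kills $\oplus_{l\le i}Ee_{u^{-1}(l)}$ modulo scalars;
\item[(b)] it preserves the $u$-flag.
\end{itemize}
This is exactly condition (\ref{dfnforhomofilmaxilevi}). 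Together with the scalar part $\fz$, which comes from the $\homo(\bL_{\widehat i}(\bZ_p),E)$-summand in (\ref{tDmapforiusigma}), the $u$-contribution spans $\homo^i_{\fil,\bF_u}(D,D)\cong\mathrm{Ad}_{u^{-1}}(\tau_{\widehat i})\cap\mathrm{Ad}_g(\fb)$.

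Hence $\homo^{\star}_{\fil}(D,D)=\sum_{i\in\Delta}\sum_{u\in\cI^{\star}_i}\mathrm{Ad}_{u^{-1}}(\tau_{\widehat i})\cap\mathrm{Ad}_g(\fb)$. The non-criticality statement (nonvanishing coefficient of $e_{I_i(u)^c}$ in $\fil_i^{\max}(D)$) guarantees that each $u$-summand is nonzero and equals the whole $2$-dimensional space $\mathrm{Ad}_{u^{-1}b_u}(\fz_{\widehat i})$, by Remark \ref{explainHomasLiealg}.

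\emph{Step 2: regroup by $u\in\sW_n^{S_0}$.} Since $C_{i,u}$ depends only on $[u]_i$, and $\mathrm{Ad}_{u^{-1}}(\tau_{\widehat i})\cap\mathrm{Ad}_g(\fb)$ depends only on the partial flag $\{u^{-1}(1),\dots,u^{-1}(i)\}$, the double sum can be rewritten as $\sum_{u\in\sW_n^{S_0}}\sum_{i\in\Delta^{\star}(u)}\mathrm{Ad}_{u^{-1}}(\tau_{\widehat i})\cap\mathrm{Ad}_g(\fb)$. Here $\Delta^{\sharp}(u)=\Delta$ and $\Delta^{\flat}(u)=\Delta\backslash S_0(u)$, which matches the definitions of $\cI^{\sharp}_i$ and $\cI^{\flat}_i$.

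For fixed $u$, conjugate by $u^{-1}b_u$, using $\mathrm{Ad}_{u^{-1}}(\fb)\cap\mathrm{Ad}_g(\fb)=\mathrm{Ad}_{u^{-1}b_u}(\ft)$. The inner sum becomes $\mathrm{Ad}_{u^{-1}b_u}\big(\sum_{i\in\Delta^{\star}(u)}\fz_{\widehat i}\big)$. Since $\sum_{i\in\Delta}\fz_{\widehat i}=\ft$ and $\sum_{i\notin S_0(u)}\fz_{\widehat i}=\fz_{S_0(u)}$ (both are elementary torus computations), this gives two cases:
\begin{itemize}
\item for $\star=\sharp$, the inner sum is $\mathrm{Ad}_{u^{-1}}(\fb)\cap\mathrm{Ad}_g(\fb)$;
\item for $\star=\flat$, it is $\mathrm{Ad}_{u^{-1}b_u}(\fz_{S_0(u)})$.
\end{itemize}

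The latter must then be identified with $\mathrm{Ad}_{u^{-1}}(\tau_{S_0(u)})\cap\mathrm{Ad}_g(\fb)$. One inclusion is clear. For equality, note that $\mathrm{Ad}_{u^{-1}}(\tau_{S_0(u)})\subseteq\mathrm{Ad}_{u^{-1}}(\fb)$, so intersecting with $\mathrm{Ad}_g(\fb)$ lands in $\mathrm{Ad}_{u^{-1}b_u}(\ft)$. An element $\mathrm{Ad}_{u^{-1}b_u}(t)$ lies in $\mathrm{Ad}_{u^{-1}}(\fz_{S_0(u)}+\fn_{S_0(u)})$ exactly when $t\in\fz_{S_0(u)}$, because $b_u$ normalizes $\fp_{S_0(u)}$ and $\fl_{S_0(u)}$-components are read modulo $\fn_{S_0(u)}$.

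\emph{Step 3: the inclusions.} The inclusion $\homo^{\flat}_{\fil}\hookrightarrow\homo^{\sharp}_{\fil}$ follows from $\cI^{\flat}_i\subseteq\cI^{\sharp}_i$ (equivalently $\tau_{S_0(u)}\subseteq\fb$). The inclusion into $\homo_{\fil}$ holds because each $g_i$ lands in $\homo_{\fil}(D,D)$.

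\emph{Main obstacle.} I expect the hardest step to be Step 1: checking precisely that $g_i$, restricted to the coordinate line $e_{I_i(u)^c}$, lands in $\homo^i_{\fil,\bF_u}$ and fills it. This requires an explicit computation with the wedge isomorphism of \cite[(69)]{BDcritical25}, tracking how the $u$-flag interacts with $\fil_H^{-\bh_{i+1}}(D)$. I would first verify it in the basis where $g=u^{-1}b_uw_0$, where both flags become standard.
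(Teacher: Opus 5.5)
Your Step 2 has a gap for $\star=\flat$: rewriting $\sum_{i\in\Delta}\sum_{v\in\cI^{\flat}_i}$ as $\sum_{u\in\sW_n^{S_0}}\sum_{i\in\Delta\backslash S_0(u)}$ does not ``match the definition'' of $\cI^{\flat}_i$. The set $\cI^{\flat}_i$ imposes $i\notin S_0(\cdot)$ on the minimal representative $[u]_i$ itself. The summand $\mathrm{Ad}_{u^{-1}}(\tau_{\widehat i})\cap\mathrm{Ad}_g(\fb)$, however, depends only on the $i$-dimensional step $\oplus_{l\le i}Ee_{u^{-1}(l)}$ of $\cF_u$. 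Whether $i\in S_0(u)$ depends on the individual vectors $e_{u^{-1}(i)},e_{u^{-1}(i+1)}$, so it is not constant on the fibres of $u\mapsto[u]_i$. Case (i) of the paper's proof makes the same identification, so you are not deviating from it, but the step is invalid.

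The paper's example $n=4$, $S_0=\{1,3\}$ shows this.
\begin{itemize}
\item[(1)] The element $u=s_2$ has flag $Ee_1\subset Ee_1\oplus Ee_3\subset\cdots$ and $S_0(s_2)=\emptyset$, so $[s_2]_1=1$. But $1\in S_0(1)$, and $\cI^{\flat}_1=\{s_2s_1\}$ has first step $Ee_3$.
\item[(2)] Your regrouped sum therefore contains $\mathrm{Ad}_{s_2^{-1}}(\tau_{\emptyset})\cap\mathrm{Ad}_g(\fb)=\mathrm{Ad}_{s_2^{-1}}(\fb)\cap\mathrm{Ad}_g(\fb)$. In particular it contains the projection $p$ of $D$ onto $\fil_H^{-\bh_2}(D)$ along $Ee_1$.
\item[(3)] By your own Step 1, $\homo^{\flat}_{\fil}(D,D)+E\,\mathrm{id}$ is spanned by $\mathrm{id}$, the projection $p'$ onto $\fil_H^{-\bh_2}(D)$ along $Ee_3$, and endomorphisms with image in $\fil_H^{-\bh_3}(D)$.
\item[(4)] Suppose $p=a\,\mathrm{id}+b\,p'+q$ with $q$ of the last type. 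Reducing modulo $\fil_H^{-\bh_2}(D)$ gives $a=0$. Restricting to $\fil_H^{-\bh_2}(D)$ modulo $\fil_H^{-\bh_3}(D)$ gives $b=1$. Then $p-p'=q$ would have image in $\fil_H^{-\bh_3}(D)$.
\item[(5)] But the image of $p-p'$ is a nonzero line in $\fil_H^{-\bh_2}(D)\cap(Ee_1\oplus Ee_3)$. Non-criticality for $u=s_2$ gives $\fil_H^{-\bh_3}(D)\cap(Ee_1\oplus Ee_3)=0$, a contradiction.
\end{itemize}
So with $\cI^{\flat}_i$ as defined and as used in the paper's example, the $\flat$ identity is false. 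Your argument goes through verbatim only if you redefine $\cI^{\flat}_i:=\{[u]_i:\ u\in\sW_n^{S_0},\ i\notin S_0(u)\}$.

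The rest is sound. For $\star=\sharp$ there is no condition on $i$, so the regrouping is legitimate, and your Steps 2--3 coincide with case (ii) of the paper's proof and its inclusions. Your Step 1 takes a different route from the paper. The paper does not compute $g_i$; it quotes \cite[Proposition 2.5.4]{BDcritical25} to identify $t^{\star}_D$ on $\ext^1_G(\pi_{\natural}^{\lalg}(\underline{\phi},\bh),V_{i,u})$ with (\ref{tDmapforiusigma}), whose $\homo_{\fil}$-part is $\homo^i_{\fil,\bF_u}(D,D)$. Your direct route works, and the step you call the main obstacle is short:
\begin{itemize}
\item[(1)] Put $W:=\oplus_{l\le i}Ee_{u^{-1}(l)}$, and let $p$ be the projection onto $\fil_H^{-\bh_{i+1}}(D)$ along $W$, which exists by non-criticality.
\item[(2)] Then $\bigwedge^{n-i}p$ is a nonzero multiple of the coordinate functional at $e_{I_i(u)^c}$.
\item[(3)] Since $\omega\wedge p(x)=(\bigwedge^{n-i}p)(\omega\wedge x)$ for $\omega\in\bigwedge^{n-i-1}\fil_H^{-\bh_{i+1}}(D)$, this functional corresponds to $f=p$, with $(a,b)=(0,1)$ in (\ref{dfnforhomofilmaxilevi}). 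Hence $\homo^i_{\fil,\bF_u}(D,D)=Ep\oplus E\,\mathrm{id}$.
\end{itemize}
This also confirms that the scalar line must be added by hand, as you do. Every $g_i$ with $i\in\Delta$ lands in endomorphisms with image in $\fil_H^{-\bh_2}(D)\subsetneq D$, so $\mathrm{id}\notin\sum_{i\in\Delta}g_i(\cdots)$. The identity comes from the $\log_p\circ\det$ direction of $\ext^1_G(\pi_{\natural}^{\lalg}(\underline{\phi},\bh),\pi_1^{\lalg}(\underline{\phi},\bh))$, which is how it enters automatically in the paper's route via the image of $t^{\star}_D$.
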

				\begin{proof}
				Let $i\in \Delta$ and $u\in \cI_i^{\ast}$.\;By \cite[Proposition 2.5.4]{BDcritical25},\;the composition (given in (\ref{tDmapforiusigma}))
				\begin{equation}
					\begin{aligned}
						\homo_{\sm}(\bT(\bQ_p),E)\;\oplus&\;\homo(\bL_{\widehat{i}}(\bZ_p),E)\xrightarrow[f_{i}^{-1}]{\sim}\ext^{1,\flat}_{\gr}({\Dpik},{\Dpik})\oplus\homo_{\fil,\bF_u}(D,D)
					\end{aligned}
				\end{equation}
				coincides with the composition
				\begin{equation}
					\begin{aligned}
						\homo_{\sm}(\bT(\bQ_p),E)\oplus\homo(\bL_{\widehat{i}}(\bZ_p),E)\xrightarrow{\sim}&\;\ext^1_{G}\left(\pi_{\natural}^{\lalg}(\underline{\phi},\bh),V_{i,u}\right)\\&\xrightarrow{t^{\star}_{D}}\ext^{1,\flat}_{\gr}({\Dpik},{\Dpik})\oplus\homo^{\star}_{\fil}(D,D).\;
					\end{aligned}
				\end{equation}
				Thus $\homo^{\star}_{\fil}(D,D)$ equals  to the envelope $\sum_{i\in\Delta,u\in \cI^{\star}_i}\mathrm{Ad}_{u^{-1}}(\tau_{\widehat{i}})\cap \mathrm{Ad}_{g}(\fb)$ in $\mathrm{Ad}_{g}(\fb)$.\;Then
				\begin{itemize}
					\item[(i)] $\star=\flat$.\;The envelope equals to 
					$\sum_{u\in \sW_n^{S_0}}\sum_{j\in \Delta\backslash S_0(u)}\mathrm{Ad}_{u^{-1}}(\tau_{\widehat{j}})\cap \mathrm{Ad}_{g}(\fb)$.\;Note that $\mathrm{Ad}_{u^{-1}}(\tau_{\widehat{j}})\cap \mathrm{Ad}_{g}(\fb)=\mathrm{Ad}_{ub(u)}(\fz_{\widehat{j}})$
					for some $b(u)\in \bB(E)$.\;Therefore,\;we deduce that 
					\begin{equation*}
						\begin{aligned}
							\sum_{j\in \Delta\backslash S_0(u)}\mathrm{Ad}_{u^{-1}}(\tau_{\widehat{j}})\cap \mathrm{Ad}_{g}(\fb)=\sum_{j\in \Delta\backslash S_0(u)}\mathrm{Ad}_{ub(u)}(\fz_{\widehat{j}})=\mathrm{Ad}_{ub(u)}(\fz_{S_0(u)})=\mathrm{Ad}_{u^{-1}}(\tau_{S_0(u)})\cap \mathrm{Ad}_{g}(\fb).\;
						\end{aligned}
					\end{equation*}
					\item[(ii)] $\star=\sharp$.\;We see that $\sum_{j\in \Delta}\mathrm{Ad}_{u^{-1}}(\tau_{\widehat{j}})\cap \mathrm{Ad}_{g}(\fb)=\mathrm{Ad}_{u^{-1}}(\fb)\cap \mathrm{Ad}_{g}(\fb)$.\;
				\end{itemize}
				This completes the proof.\;
			\end{proof}
			\begin{rmk}
			\begin{itemize}
				\item[(1)]Similar to \cite[Proposition 2.2.7]{BDcritical25},\;$t^{\star}_{D}$ does not depends on the choices of $(\epsilon_{i,u})_{i\in \Delta,u\in \cI^{\star}_i}$.\;
				\item[(2)]By Theorem \ref{construext1},\;we get the dimension of  $\ker(t^{\star}_{D})$.\;Moreover,\;$\ker(t^{\flat}_{D})$ and $\ker(t^{\sharp}_{D})$ determines  some  ``crystalline" Hodge parameters between the Steinberg blocks in $\cC_{\mathrm{ST}}(\Dpik)$.\;
			\end{itemize}
				
			\end{rmk}
			
			
			For $S\subseteq \Delta$,\;put
			\[\ker(t^{\star}_{D})_S:=\ker(t^{\star}_{D}|_{\ext^1_{G,\mathrm{inf}}(\pi_{\natural}^{\lalg}(\underline{\phi},\bh),\pi^{\star}_{S}(\underline{\phi},\bh))}),\quad \ker(t^{\star}_{D}):=\ker(t^{\star}_{D})_{\Delta}\]
			for simplicity.\;Note that $\ker(t^{\star}_{D})_{S_1}\subseteq \ker(t^{\star}_{D})_{S_2}\subseteq \ker(t^{\star}_{D})$ if $S_1\subseteq S_2$.\;The remainder this section describe what information of $p$-adic Hodge parameters is determined by $\ker(t^{\star}_{D})_S$.\;
			
			For $i\in \Delta$,\;define $D^{\star,(i)}:=E\langle e_t:t\notin  I_{i}(u)^c,\forall u\in \cI^{\star}_i\rangle$ and $D^{\star}_{(i)}=E\langle e_{t}:t\in I_{i}(u)^c\text{\;for some\;}u\in \cI^{\star}_i\rangle\subseteq D$.\;Thus $D=D^{\star,(i)}\oplus D^{\star}_{(i)}$.\;Let  $\bd^{\star}_{i}:=\dim_ED^{\star,(i)}\leq i$.\;By definition,\;we see that
			\begin{itemize}
				\item[(1)] For $1\leq i\leq n$,\;$\bd^{\sharp}_{i}\leq \bd^{\flat}_{i}$.\;
				\item[(2)] $D^{\sharp}_{(n-1)}\subseteq D^{\sharp}_{(n-2)}\subseteq\cdots \subseteq D^{\sharp}_{(1)}=D$ and  $0=\bd^{\sharp}_{1}\leq \bd^{\sharp}_{2}\leq \cdots\leq \bd^{\sharp}_{n}$.\;
			\end{itemize}
			Similar to \cite[Theorem 4.20]{HEparaforsemitable},\;we obtain			
			\begin{thm}\label{Hodgeparadeter}$\ker(t^{\star}_{D})_S$ determines (recall that $S:=\{i_{1}<i_{2}<\cdots<i_{{|S|}}\}$)
				\begin{equation}
					\begin{aligned}
						\Big\{\fil_H^{-\bh_{{\max\{i_{{j}},\bd^{\star}_{i_{t}}\}}+1}}(D)\oplus D/\fil_H^{-\bh_{\bd^{\star}_{i_{t}}}}(D)\Big\}_{1\leq j\leq |S|}.
					\end{aligned}
				\end{equation}	
				In particular,\;if $\bd^{\star}_{i_{t}}=0$,\;then $\ker(t^{\star}_{D})_S$ determines $\{\fil_H^{-\bh_{i_{{j}}+1}}(D)\}_{1\leq j\leq |S|}$.\;
			\end{thm}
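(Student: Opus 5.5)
The plan follows the strategy of \cite[Theorem 4.20]{HEparaforsemitable} and \cite[Section 2.3]{BDcritical25}: translate the kernel $\ker(t^{\star}_{D})_S$ into linear-algebraic data about $\fil_H^{\bullet}(D)$ through the explicit construction of $\pi^{\star}_{s_i}(\underline{\phi},\bh)$ and Theorem \ref{construext1}, and then work one index $i\in S$ at a time. Since $\pi^{\star}_S(\underline{\phi},\bh)$ is the amalgamated sum of the $\pi^{\star}_{s_i}(\underline{\phi},\bh)$ over $\pi_1^{\lalg}(\underline{\phi},\bh)$, Proposition \ref{extpi1repn}(2) gives that $\ext^1_G(\pi_{\natural}^{\lalg},\pi^{\star}_S)$ is the fibre product over $\ext^1_G(\pi_{\natural}^{\lalg},\pi_1^{\lalg})$ of the groups $\ext^1_G(\pi_{\natural}^{\lalg},\pi^{\star}_{s_i})$. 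Hence $\ker(t^{\star}_D)_S$ is assembled from the pieces $\ker(t^{\star}_D)_{\{i\}}$, $i\in S$, glued along the common part coming from $\ext^1_G(\pi_{\natural}^{\lalg},\pi_1^{\lalg})$. On that common part, $t^{\star}_D$ is the explicit isomorphism onto $\ext^{1,\flat}_{\gr}(\Dpik,\Dpik)\oplus E$ from Step 2 of the proof of Theorem \ref{construext1}, and it is independent of the Hodge filtration. So it suffices to show that each $\ker(t^{\star}_D)_{\{i\}}$ determines the corresponding filtration datum, and to check that these data are compatible.

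For a fixed $i$, the quotient part of $t^{\star}_D$ is the chain of isomorphisms in the proof of Theorem \ref{construext1}. It identifies $\ext^1_G(\pi_{\natural}^{\lalg},\pi^{\star}_{s_i}/\pi_1^{\lalg})$ with $\homo_E\big((\bigwedge^{n-i}_E D)^{\star},\fil_i^{\max}(D)^{\star}\big)$, and this space is then pushed into $\homo_{\fil}(D,D)$ by $g_i$. The point is that $\pi^{\star}_{s_i}$ is built out of the line $\fil_i^{\max}(D)^{\star}=\pr^{\star}(\bigwedge^{n-i}\fil_H^{-\bh_{i+1}}(D))$. Consequently $\ker(t^{\star}_D)_{\{i\}}$, viewed as a subspace of $\ext^1_G(\pi_{\natural}^{\lalg},\pi_1^{\lalg})\oplus\bigoplus_{u\in\cI^{\star}_i}\ext^1_G(\pi_{\natural}^{\lalg},C_{i,u})$, is the graph of a linear map. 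From that graph one reads off the ratios of the coefficients of $e_{I_i(u)^c}$, $u\in\cI_i^{\star}$, in $\fil_i^{\max}(D)$; these coefficients are nonzero by non-criticality. In this way, $\ker(t^{\star}_D)_{\{i\}}$ recovers the projective point $\pr^{\star}(\fil_i^{\max}(D))$ in $\BP\big((\bigwedge^{n-i}_ED)^{\star}\big)$.

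The main step is the Plücker-type linear algebra. Here $(\bigwedge^{n-i}D)^{\star}$ is spanned by the $e_J$ with $J=I_i(u)^c$. Every such $J$ contains $\{1,\dots,n\}\setminus\bigcup_u I_i(u)$ and is contained in the index set of $D^{\star}_{(i)}$. So the projection $\pr^{\star}$ only sees the $(n-i)$-plane $\fil_H^{-\bh_{i+1}}(D)$ modulo the coordinate directions in $D^{\star,(i)}$, of dimension $\bd^{\star}_i$. Using the non-critical assumption as in Theorem \ref{twoblocksPare1}, I will perform column operations on the matrix $\underline{\sL}(\Dpik)$ to put $\fil_H^{-\bh_{i+1}}(D)$ in echelon form with respect to these coordinates. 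I will then show that the surviving Plücker coordinates pin down the image of $\fil_H^{-\bh_{\max\{i,\bd^{\star}_i\}+1}}(D)$ in $D/\fil_H^{-\bh_{\bd^{\star}_i}}(D)$, which gives the stated datum. When $\bd^{\star}_i=0$, all coordinates survive, and Plücker injectivity gives $\fil_H^{-\bh_{i+1}}(D)$ directly.

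I expect this last step to be the main obstacle. The difficulty is justifying that the set $\{I_i(u)^c: u\in\cI^{\star}_i\}$ is rich enough, meaning that it contains all the subsets obtained from a fixed one by exchanging a single index within $D^{\star}_{(i)}$. That richness is what lets the partial Plücker coordinates determine the subspace modulo the missing directions. I will try to prove it using the explicit description of $\cI^{\sharp}_i$ as the minimal coset representatives coming from $\sW_n^{S_0}$, together with the $\flat$ restriction $i\notin S_0(u)$, and then conclude as in \cite[Theorem 4.20]{HEparaforsemitable}.
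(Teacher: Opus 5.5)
\paragraph{Gap: the reduction to single indices $i\in S$.}
$\ext^1_G(\pi_{\natural}^{\lalg},\pi^{\star}_S)$ is the amalgamated sum (a pushout, not a fibre product) of the spaces $\ext^1_G(\pi_{\natural}^{\lalg},\pi^{\star}_{s_i})$, $i\in S$, along $\ext^1_G(\pi_{\natural}^{\lalg},\pi_1^{\lalg})$. Intersecting does recover each $\ker(t^{\star}_D)_{\{i\}}$. However, $\ker(t^{\star}_D)_S$ is in general strictly larger than $\sum_{i\in S}\ker(t^{\star}_D)_{\{i\}}$, because the images of the $g_i$ overlap inside $\homo_{\fil}(D,D)$. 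In the crystalline case, for instance, every $f\in\homo_E\big(D/\fil_H^{-\bh_{i+1}}(D),\fil_H^{-\bh_{i'+1}}(D)\big)$ with $i<i'$ lies in $\mathrm{Im}(g_i)\cap\mathrm{Im}(g_{i'})$. Such an $f$ produces a kernel element with non-zero image in both quotients. These mixed elements are exactly where the relative position of different steps of the Hodge flag is recorded, and your plan never looks at them.

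A concrete check: take $n=3$, $S_0=\emptyset$ (generic crystalline), $\star=\sharp$, $S=\Delta$. Then $\bd^{\sharp}_1=\bd^{\sharp}_2=0$, so the statement asserts that $\ker(t^{\sharp}_D)$ determines the Hodge flag modulo $\bT$, a one-parameter family.
\begin{itemize}
\item For each $i$, $\dim_E\ext^1_G(\pi_{\natural}^{\lalg},\pi^{\sharp}_{s_i})=4+3=7$. By the construction of $t^{\sharp}_D$, this space maps onto $\ext^{1,\flat}_{\gr}(\Dpik,\Dpik)\oplus E\,\mathrm{id}_D\oplus\mathrm{Im}(g_i)$, which also has dimension $3+1+3=7$. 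Hence $\ker(t^{\sharp}_D)_{\{1\}}=\ker(t^{\sharp}_D)_{\{2\}}=0$.
\item On the other hand, $\ker(t^{\sharp}_D)$ is a line, namely the one attached to the overlap $\homo_E\big(D/\fil_H^{-\bh_2}(D),\fil_H^{-\bh_3}(D)\big)$.
\end{itemize}
So all the information sits in a mixed element. Checking that per-index data are compatible cannot recover it.

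\paragraph{The per-index recovery claim is false.}
You claim that $\ker(t^{\star}_D)_{\{i\}}$ recovers $\pr^{\star}(\fil_i^{\max}(D))$, i.e.\ ratios of its Pl\"ucker coordinates. This cannot hold, for a structural reason.
\begin{itemize}
\item $\bZ_{S_0}(E)$ acts by automorphisms of the $(\varphi,N)$-module $D$, since it is scalar on each $N$-chain of eigenlines. Replacing $\fil_H^{\bullet}(D)$ by $z\cdot\fil_H^{\bullet}(D)$ therefore gives an isomorphic $\Dpik$ and the same $\ker(t^{\star}_D)$, because the construction only uses the lines $Ee_J$. The Pl\"ucker ratios, however, change.
\item $\ker(t^{\star}_D)_{\{i\}}$ depends only on $\fil_H^{-\bh_{i+1}}(D)$, through $\fil^{\max}_i(D)^{\star}$ and $g_i$. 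Hence the family $(\ker(t^{\star}_D)_{\{i\}})_{i\in S}$ is invariant under letting $\bZ_{S_0}(E)$ act independently on each step. It can at best give each step modulo $\bZ_{S_0}$, never the partial flag modulo the diagonal action.
\end{itemize}

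\paragraph{What is missing.}
You need an argument built on the mixed kernel elements. Use the explicit formula for $g_i$ and the rescaling by the coefficients of $e_{I_i(u)^c}$ that is built into $\pi^{\star}_{s_i}$. Compute the images of the elements coming from $\homo_E\big(D/\fil_H^{-\bh_{i+1}}(D),\fil_H^{-\bh_{i'+1}}(D)\big)$ in the $D$-independent coordinates given by the $\epsilon_{i,u}$, and induct along $S$. The loss measured by $\bd^{\star}$ then comes from the $e_J$ missing from $(\bigwedge^{n-i}_ED)^{\star}$. The paper itself gives no argument beyond the reference to \cite[Theorem 4.20]{HEparaforsemitable}, so this is the content you would have to supply.

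\paragraph{Your final target is vacuous as stated.}
The datum you aim for, the image of $\fil_H^{-\bh_{\max\{i,\bd^{\star}_i\}+1}}(D)$ in $D/\fil_H^{-\bh_{\bd^{\star}_i}}(D)$, is zero whenever $\bd^{\star}_i\geq 1$, because the Hodge filtration is decreasing. That step must be reformulated before it says anything, for example in terms of the filtration induced on $D/D^{\star,(i)}$.
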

			\begin{cor} \label{Hodgeparadetercor}For $1\leq t\leq n-1$,\;$\ker(t^{\star}_{D})$ determines
				\begin{equation}
					\begin{aligned}
						\Big\{\fil_H^{-\bh_{{\max\{j,\bd^{\star}_{t}\}}+1}}(D)\oplus D/\fil_H^{-\bh_{\bd^{\star}_{t}}}(D)\Big\}_{1\leq j\leq t}.
					\end{aligned}
				\end{equation}	
				In particular,\;if $\bd^{\star}_{t}=0$,\;then $\ker(t^{\star}_{D})$ determines $\{\fil_H^{-\bh_{j+1}}(D)\}_{1\leq j\leq t}$.\;
			\end{cor}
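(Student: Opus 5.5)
We will deduce Corollary \ref{Hodgeparadetercor} from Theorem \ref{Hodgeparadeter} by specializing $S$, and then use the monotonicity of $\ker(t^{\star}_{D})_S$ in $S$. Fix $1\leq t\leq n-1$ and take $S:=\{1,2,\cdots,t\}\subseteq\Delta$, so $i_j=j$ for $1\leq j\leq |S|=t$ and the largest index is $i_{|S|}=t$. The inclusion $\ker(t^{\star}_{D})_{S}\subseteq \ker(t^{\star}_{D})_{\Delta}=\ker(t^{\star}_{D})$ recorded before Theorem \ref{Hodgeparadeter} shows that anything determined by $\ker(t^{\star}_{D})_S$ is determined by $\ker(t^{\star}_{D})$. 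Concretely, $\ker(t^{\star}_{D})_S$ is recovered as $\ker(t^{\star}_{D})\cap \ext^1_{G,\mathrm{inf}}(\pi_{\natural}^{\lalg}(\underline{\phi},\bh),\pi^{\star}_{S}(\underline{\phi},\bh))$. Here $\pi^{\star}_{S}(\underline{\phi},\bh)\hookrightarrow\pi^{\star}_{1}(\underline{\phi},\bh)$ is a canonical subrepresentation of the amalgamated sum, and the induced map on $\ext^1_G(\pi_{\natural}^{\lalg}(\underline{\phi},\bh),-)$ is injective by the exact sequences in Proposition \ref{extpi1repn}.

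Next we substitute into the statement of Theorem \ref{Hodgeparadeter}. The index $\bd^{\star}_{i_t}$ appearing there should be read as the invariant attached to the largest element of $S$; for our $S$ this is $\bd^{\star}_{t}$. With $i_j=j$, the determined data become exactly
\[\Big\{\fil_H^{-\bh_{\max\{j,\bd^{\star}_{t}\}+1}}(D)\oplus D/\fil_H^{-\bh_{\bd^{\star}_{t}}}(D)\Big\}_{1\leq j\leq t}.\]
When $\bd^{\star}_{t}=0$, we have $\max\{j,0\}=j$ and the quotient term is trivial, so this collection reduces to $\{\fil_H^{-\bh_{j+1}}(D)\}_{1\leq j\leq t}$.

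The main point to check carefully is the role of $\bd^{\star}_{i_t}$. We need to confirm, from the proof of Theorem \ref{Hodgeparadeter} (following \cite[Theorem 4.20]{HEparaforsemitable}), that the bound uses the dimension $\bd^{\star}$ at the largest index of $S$, and not a maximum over all $i\in S$. Since $\bd^{\sharp}_i$ is nondecreasing in $i$ by item (2) of the preceding list, the two readings agree for $\star=\sharp$.

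For $\star=\flat$ this monotonicity is not stated, so we argue directly. Each $\fil_H^{-\bh_{j+1}}(D)$ is read off from the $j$-th component, namely the $\homo_E((\bigwedge^{n-j}D)^{\star},\fil_j^{\max}(D)^{\star})$-part of $t^{\star}_D$. This component sees $\fil_j^{\max}(D)$ only modulo the coordinates spanned by $D^{\star,(j)}$. Because the flags are nested, knowing the top-index projection, modulo $D^{\star,(t)}$, suffices for all $j\leq t$. The precise meaning of ``$\oplus D/\fil_H^{-\bh_{\bd^{\star}_{t}}}(D)$'' is also taken directly from Theorem \ref{Hodgeparadeter} as stated there.
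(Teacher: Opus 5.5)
Your proposal is correct and matches the paper's implicit derivation: specialize Theorem \ref{Hodgeparadeter} to $S=\{1,\dots,t\}$ (where the index $i_t$ there must mean the largest element $i_{|S|}=t$, as the form of the corollary forces) and use $\ker(t^{\star}_{D})_S\subseteq\ker(t^{\star}_{D})$. Your last paragraph on $\star=\flat$ is not needed and should be dropped, because its ``nested flags'' reasoning does not hold up on its own: the spaces $D^{\flat,(j)}$ need not be nested in $j$ (the paper's $\GLN_4$ example has $\bd^{\flat}_1=1>\bd^{\flat}_2=0$), and knowing the smaller space $\fil_H^{-\bh_{t+1}}(D)$ does not recover the larger spaces $\fil_H^{-\bh_{j+1}}(D)$ for $j<t$.
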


		\begin{exmp}
			A typical example is $\bL_{S_0}\cong \GL_{2}\times \GLN_2$ and $S_0=\{1,3\}$.\;As $E$-vector spaces,\;$D=D_1\oplus D_2$ with $\dim_ED_1=2$  and $\dim_ED_2=2$.\;In this case,\;
			\[\sW_4^{S_0}=\{1,s_2,s_2s_1,s_2s_3,s_2s_1s_3,s_2s_1s_3s_2\}.\]
			Then $\cI_1^{\sharp}=\{1,s_2s_1\}$,\;$\cI_3^{\sharp}=\{1,s_2s_3\}$ and $\cI_2^{\sharp}=\{1,s_2,s_2s_1s_3s_2\}$.\;We have $S_0(1)=\{1,3\}$, $S_0(s_2)=\emptyset$,\;$S_0(s_2s_1)=\{2\}$,\;$S_0(s_2s_3)=\{2\}$,\;$S_0(s_2s_1s_3)=\emptyset$ and $S_0(s_2s_1s_3s_2)=\{1,3\}$.\;Then we have $\cI_1^{\flat}=\{s_2s_1\}$,\;$\cI_3^{\flat}=\{s_2s_3\}$ and $\cI_2^{\flat}=\{1,s_2,s_2s_1s_3s_2\}$.\;Therefore,\;
			\begin{equation*}
				D^{\flat}_{(i)}=\left\{
				\begin{array}{ll}
					E\langle e_1,e_2,e_4\rangle ,\;&i=1\\
					D,\;& i=2\\
					E\langle e_2\rangle      & i=3
				\end{array}
				\right.,\;D^{\sharp}_{(i)}=\left\{
				\begin{array}{ll}
					D,\;&i=1\\
					D,\;& i=2\\
					E\langle e_2,e_4\rangle      & i=3.
				\end{array}
				\right.
			\end{equation*}
			\end{exmp}


		Consider the following composition
		\begin{equation}\label{dfnforgammaD}
			\begin{aligned}
				\gamma_{\Dpik}:\bigoplus_{u\in\sW_n^{S_0}}\overline{\ext}^{1}_{u}(\Dpik,\Dpik)
				\xrightarrow{\oplus_{u}\kappa_{u}} \bigoplus_{u\in\sW_n^{S_0}}\homo_u(\bT(\bQ_p),E)\xrightarrow{\oplus_{u}\zeta_{u}} \ext^1_{G}\Big(\pi_{\natural}^{\lalg}(\underline{\phi},\bh),\pi^{\sharp}_{1}(\underline{\phi},\bh)\Big).\;
			\end{aligned}
		\end{equation}
		The same strategy as in   \cite[Proposition 2.5.5,\;Corollary 2.5.6]{BDcritical25} and proof of  Propositions \ref{imageofhomostar} gives:
		\begin{thm}\label{descibeimagetD}
			Under the splitting in Proposition \ref{splitingforext1gps},\;for any $u\in \sW_n^{S_0}$,\;
			\[t^{\sharp}_{D}\circ\gamma_{\Dpik}|_{\overline{\ext}^{1}_{u}(\Dpik,\Dpik)}=f_{\Dpik}\circ\iota_u,\;\quad t^{\flat}_{D}\circ\gamma_{\Dpik}|_{\overline{\ext}^{1,\flat}_{S_0(u)}(\Dpik,\Dpik)}=f_{\Dpik}\circ\iota_u|_{\overline{\ext}^{1,\flat}_{S_0(u)}(\Dpik,\Dpik)},\]
			where $\iota_u:\overline{\ext}^{1}_{u}(\Dpik,\Dpik)\hookrightarrow \overline{\ext}^{1}(\Dpik,\Dpik)$ is the natural inclusion.\;Consider the map
			\begin{equation}\label{natmorphismforGDpikfernoverline}
				\overline{g}_{\Dpik}:\bigoplus_{u\in \sW_n^{S_0}}\overline{\ext}^{1}_{u}(\Dpik,\Dpik)\rightarrow \overline{\ext}^1(\Dpik,\Dpik).\;
			\end{equation}
			Then $f_{\Dpik}\circ \overline{g}_{\Dpik}=t^{\sharp}_{D}\circ\gamma_{\Dpik}$ and $f_{\Dpik}\circ \overline{g}^{\flat}_{\Dpik}=t^{\flat}_{D}\circ\gamma^{\flat}_{\Dpik}$,\;where   $\overline{g}_{\Dpik}^{\flat}:=\overline{g}_{\Dpik}|_{\bigoplus_{u\in \sW_n^{S_0}}\overline{\ext}^{1,\flat}_{S_0(u)}(\Dpik,\Dpik)}
			$.\;
			
		\end{thm}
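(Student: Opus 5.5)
The identity is checked one summand $u\in\sW_n^{S_0}$ at a time. Once $t^{\sharp}_{D}\circ\gamma_{\Dpik}|_{\overline{\ext}^{1}_{u}(\Dpik,\Dpik)}=f_{\Dpik}\circ\iota_u$ is known for every $u$, the global identity $f_{\Dpik}\circ\overline{g}_{\Dpik}=t^{\sharp}_{D}\circ\gamma_{\Dpik}$ follows by linearity, since $\overline{g}_{\Dpik}=\sum_u\iota_u$. The $\flat$-statement is obtained by restricting to $\overline{\ext}^{1,\flat}_{S_0(u)}(\Dpik,\Dpik)$.

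Fix $u$. By definition, $\gamma_{\Dpik}|_{\overline{\ext}^1_u}=\zeta_u\circ\kappa_u$. By Proposition \ref{extpi1repn}(1), $\zeta_u$ identifies $\homo(\bT(\bQ_p),E)$ with $\ext^1_G(\pi_{\natural}^{\lalg},\pi^{\sharp}_1(\underline{\phi},\bh)_u)$. The key step is to further split $\homo(\bT(\bQ_p),E)$ as
\[\homo_{\sm}(\bT(\bQ_p),E)+\sum_{i\in\Delta}\homo(\bL_{\widehat{i}}(\bQ_p),E).\]
The image under $\zeta_u$ of the $i$-th piece is then the image of $\ext^1_G(\pi_{\natural}^{\lalg},V_{i,[u]_i})$ from Proposition \ref{reinterforextgp}. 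To see this, compare the parabolic-induction deformation $\mathrm{PS}_u\otimes(1+\psi\epsilon)$ for $\psi$ factoring through $\bL_{\widehat{i}}$ with the $R_u$-construction used to define $f_u$ in that proof. This works because $\mathrm{PS}_u$ embeds into $(\ind_{\op_{\widehat{i}}}\mathrm{PS}^{\infty}_{\widehat{i},u}\otimes\cdots)^{\ana}$ and the deformations are compatible under this embedding.

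With this decomposition, $t^{\sharp}_D\circ\zeta_u$ on each piece is given by the composition (\ref{tDmapforiusigma}). That is, it is $f_i^{-1}$ followed by the inclusion of $\homo^i_{\fil,\bF_{[u]_i}}(D,D)\cong\mathrm{Ad}_{u^{-1}}(\tau_{\widehat{i}})\cap\mathrm{Ad}_g(\fb)$, as used in the proof of Proposition \ref{imageofhomostar}.

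On the Galois side, the same $\psi$ must be traced through $\kappa_u^{-1}$ and then $f_{\Dpik}$ (Proposition \ref{splitingforext1gps}). The smooth part goes to $\ext^{1,\flat}_{\gr}$, exactly as in the definition of $t^\star_D$ on $\homo_{\sm}(\bT(\bQ_p),E)$. For the $\homo(\bL_{\widehat{i}},E)$-part, take a trianguline deformation $\widetilde{\Dpik}$ whose parameter $\psi$ is constant, equal to $a$ on the first $i$ graded pieces and to $b$ on the last $n-i$. Its image under $\nu$ in $\homo_{\fil,\bF_u}(D,D)$ is computed via almost de Rham theory (the Colmez--Greenberg--Stevens type formula, cf. Lemma \ref{desofimage}). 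This image is the element acting by the scalar $a$ (resp. $b$) modulo the $\bF_u$-filtration step, paired with $\log_p$. It therefore coincides with $f_i^{-1}(a\log_p,b\log_p)$.

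This matching of the $p$-adic normalisations is the main obstacle. Both sides depend on the fixed $\log_p(p)$, on the pairing (\ref{dualpairing}), and on the choice of $\epsilon_{i,u}$. To handle it I would reduce to the Levi block $\bL_{\widehat{i}}\cong\GLN_i\times\GLN_{n-i}$ and argue verbatim as in \cite[Proposition 2.5.5, Corollary 2.5.6]{BDcritical25}. That reduction uses only the rank-$i$ and rank-$(n-i)$ quotients $\fil_i^{\bF_u}$ and $\Dpik/\fil_i^{\bF_u}$, and the semistable (non-crystalline) structure there enters only through $\ext^{1,\flat}_{\gr}$, which is already matched. Once the smooth part and each $\homo(\bL_{\widehat{i}},E)$-part agree, the two maps agree on all of $\overline{\ext}^1_u$, since these pieces span $\homo_u(\bT(\bQ_p),E)\supseteq\kappa_u(\overline{\ext}^1_u)$.
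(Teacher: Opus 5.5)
Your outline follows the route the paper intends: its proof is only a pointer to Breuil--Ding's Proposition 2.5.5 and Corollary 2.5.6 and to the proof of Proposition \ref{imageofhomostar}. However, the step where you make the reduction explicit fails in the semistable setting.

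\emph{Where the reduction breaks.} In Breuil--Ding's generic crystalline case, $\kappa_u$ identifies $\overline{\ext}^1_u(\Dpik,\Dpik)$ with all of $\homo(\bT(\bQ_p),E)$. So the Galois side can be evaluated on every $\psi$, in particular on $\homo_{\sm}(\bT(\bQ_p),E)$ and on each $\homo(\bL_{\widehat{i}}(\bQ_p),E)$. Here, by Proposition \ref{profordimofdefor}, the image is only $\homo_u(\bT(\bQ_p),E)$, cut out by the conditions $\psi_l-\psi_k\in\sL^u_{k,l}$ for $(k,l)\in R^+_u$. Consequently:
\begin{itemize}
\item a character $(a,\ldots,a,b,\ldots,b)$ factoring through $\bL_{\widehat{i}}$ lies in $\homo_u(\bT(\bQ_p),E)$ only if $b-a\in\sL^u_{k,l}$ for every $(k,l)\in R^+_u$ with $k\leq i<l$;
\item a smooth $\psi$ lies in it only if $\psi_l=\psi_k$ for every $(k,l)\in R^+_u$ with $u^{-1}(k)\in S_0$, since then $\sL^u_{k,l}\cap E\val_p=0$.
\end{itemize}
So the trianguline deformation ``equal to $a$ on the first $i$ graded pieces and to $b$ on the last $n-i$'' does not exist for general $(a,b)$. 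Your closing claim is also wrong: the pieces span $\homo(\bT(\bQ_p),E)$ but are not contained in $\homo_u(\bT(\bQ_p),E)$, and their intersections with it need not span it.

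\emph{Counterexample.} Take $n=4$, $S_0=I_0=\{1,3\}$, and $u$ with $\cF_u$ given by the ordering $(e_1,e_3,e_2,e_4)$. Then $R^+_u=\{(1,3),(2,4)\}$, with $u^{-1}(k)\in S_0$ for both pairs. Both pairs straddle $i=2$, so if $\sL^u_{1,3}\neq\sL^u_{2,4}$, the Levi piece for $i=2$ meets $\homo_u(\bT(\bQ_p),E)$ only in the characters $a\circ\det$. Every element of $\homo_{\sm,u}(\bT(\bQ_p),E)+\sum_i\big(\homo(\bL_{\widehat{i}}(\bQ_p),E)\cap\homo_u(\bT(\bQ_p),E)\big)$ then has $\log_p$-coefficients satisfying $x_2=x_3$. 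By contrast, $\homo_u(\bT(\bQ_p),E)$ contains elements with arbitrary $\log_p$-coefficients: because $\sL^u_{1,3},\sL^u_{2,4}\neq E\val_p$, you can always adjust the $\val_p$-parts to meet the conditions. So part of $\overline{\ext}^1_u(\Dpik,\Dpik)$ is never tested by your argument.

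\emph{What is needed.} You must compute $f_{\Dpik}\circ\iota_u$ on an \emph{arbitrary} $\cF_u$-trianguline deformation, as a formula linear in $\psi$ that makes sense on all of $\homo(\bT(\bQ_p),E)$. Only then can you compare it with $t^{\sharp}_D\circ\zeta_u$ on the Levi pieces, where only the automorphic side (\ref{tDmapforiusigma}) is involved.
\begin{itemize}
\item For the $\homo_{\fil}(D,D)$-component this is harmless. For every $\psi\in\homo_u(\bT(\bQ_p),E)$, not only block-constant ones, $\nu(\widetilde{\Dpik})$ is the element of $\homo_{\fil,\bF_u}(D,D)\cong\mathrm{Ad}_{u^{-1}b_u}(\ft)$ acting on $\gr^{\bF_u}_j(D)$ by $\wt(\psi_j)$, up to sign convention.
\item For the $\ext^{1,\flat}_{\gr}(\Dpik,\Dpik)$-component, you must determine how the $\log_p(p)$-dependent section of Proposition \ref{splitingforext1gps} treats deformations whose $\val_p$-parts are tied to their $\log_p$-parts through the $\sL^u_{k,l}$. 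You then have to match this with $t^{\sharp}_D$ on $\zeta_u(\homo_{\sm}(\bT(\bQ_p),E))$.
\end{itemize}
Contrary to your remark, the semistable structure therefore enters through these constraints and not only through $\ext^{1,\flat}_{\gr}(\Dpik,\Dpik)$. This part does not reduce verbatim to Breuil--Ding.

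(Minor: it is $(\ind^G_{\op_{\widehat{i}}(\bQ_p)}\mathrm{PS}^{\infty}_{\widehat{i},u}(\underline{\phi})\otimes_E L(\lambda)_{\widehat{i}})^{\ana}$ that embeds into $\mathrm{PS}_u(\underline{\phi},\bh)$, not the other way round. The compatibility of $\psi$-deformations you need is unaffected.)
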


		Let $\pi^{\star}_{\min}(\Dpik)$ be the unique quotient of the tautological extension of $\ker(t^{\star}_{D})\otimes_E\pi_{\natural}^{\lalg}(\underline{\phi},\bh)$ by $\pi^{\star}_{1}(\underline{\phi},\bh)$ with socle $Q_{\Delta}^{\diamond}(\emptyset,\lambda)$.\;Similar to \cite[Corollary 2.2.13]{BDcritical25},\;we can show that  $\pi^{\star}_{\min}(\Dpik)$ does not depends on any choices of $(\epsilon_{i,u})_{i\in \Delta,u\in \cI_i}$ and $\log_p(p)$.\;Finally,\;put
	\[\pi^{\star,\Delta,-}_{\min}(\Dpik):=\pi^{\star}_{\min}(\Dpik)\oplus_{\pi^{\star}_{1}(\underline{\phi},\bh)}\pi^{\Delta,-}_{1}(\underline{\phi},\bh).\]

		\begin{cor}\label{thmforamosttwo}
			$p^{\Delta}_{\mathrm{ST}}([\underline{\sL}(\Dpik)])$ and  $\Dpik^{\mathrm{cr}}$ are determined  by $\pi^{\star,\Delta,-}_{\min}(\Dpik)$.\;In particular,\;if  $\max_{1\leq i\leq s}l_i\leq 2$,\;$\pi^{\star,\Delta,-}_{\min}(\Dpik)$ determines $\Dpik$.\;
		\end{cor}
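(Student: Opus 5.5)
The proof splits the data carried by $\pi^{\star,\Delta,-}_{\min}(\Dpik)=\pi^{\star}_{\min}(\Dpik)\oplus_{\pi^{\star}_{1}(\underline{\phi},\bh)}\pi^{\Delta,-}_{1}(\underline{\phi},\bh)$ into its two amalgamated pieces. From $\pi^{\star,\Delta,-}_{\min}(\Dpik)$ we can read off both pieces intrinsically. The subrepresentation $\pi^{\Delta,-}_{1}(\underline{\phi},\bh)$ is the part generated over $\pi^{\star}_{1}(\underline{\phi},\bh)$ by the constituents $Q(I_u^-,\lambda)$. The subrepresentation $\pi^{\star}_{\min}(\Dpik)$ is the part of the form $[\pi^{\star}_1-(\text{copies of }\pi_{\natural}^{\lalg})]$. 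From the latter we recover $\ker(t^{\star}_D)$ as the image of the tautological map $\homo_G(\pi_\natural^{\lalg},\pi^\star_{\min}/\pi^\star_1)\to \ext^1_G(\pi_\natural^{\lalg},\pi^\star_1)$. This step uses the socle condition together with the exact sequence in Proposition \ref{extpi1repn}(2), which shows that the extensions obtained this way are exactly those in $\ker(t^\star_D)$.

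For $p^{\Delta}_{\mathrm{ST}}$, we use the discussion preceding Remark \ref{rmkforextensionsquare}. The extension classes in $\ext^1(Q(I_u^-,\lambda),[\pi_1^{\lalg}-C_{i,u}])$ occurring in $\pi^{\Delta,-}_1$ encode $\{\sL^\Delta(E_{u,i})\}$, hence encode $p^\Delta_{\mathrm{ST}}([\underline{\sL}(\Dpik)])$. The only point to check is that these classes are intrinsic, i.e. independent of the auxiliary choices $V_{i,u},\iota_{i,u}$. Each such class is determined up to scalar and up to the canonical $1$-dimensionality given by Lemma \ref{parameterlineextgp}, and the line $E(\val_p+\cL\log_p)$ is invariant under scaling, so the classes are intrinsic.

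For $\Dpik^{\mathrm{cr}}$, we extract from $\ker(t^\star_D)$ the maps $\iota^{r',q'}_{\Dpik^{\mathrm{cr}}}$, which by Theorem \ref{higherdetercry} determine $\Dpik^{\mathrm{cr}}$. Concretely, by Theorem \ref{descibeimagetD} and Proposition \ref{imageofhomostar}, $\ker(t^\star_D)$ is identified with $\ker(g^\circ_{S_0})$, or its $\sharp$-analogue, inside $\bigoplus_u \mathrm{Ad}_{u^{-1}}(\tau_{S_0(u)})\cap\mathrm{Ad}_g(\fb)$. Restricting to the elements $u\in j_{S_0}(\sW_s)$ from the proof of Proposition \ref{infinitefernforsemi}, whose Levi pieces are full Steinberg blocks, gives the block version of Ding's crystalline setting \cite{ParaDing2024}: there the kernel of the corresponding map $\bigoplus_{w\in\sW_s}\mathrm{Ad}_{w^{-1}}(\ft_s)\cap\mathrm{Ad}_{g'}(\fb_s)\to\mathrm{Ad}_{g'}(\fb_s)$ determines $[g']\in\Phi^{\mathrm{cr}}_{\mathrm{nc},s}$. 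We then show that $g'$ is precisely $p_{\mathrm{cr}}([g\bB])$, by comparing the centers $\fz_{S_0(u)}$ with the torus of $\GLN_s$ via the block-wise projection onto the last basis vector $e_{t_r}$ of each block.

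Alternatively, Corollary \ref{Hodgeparadetercor} shows that $\ker(t^\star_D)$ determines the graded pieces $\fil_H^{-\bh_{j+1}}(D)$ modulo $D/\fil_H^{-\bh_{\bd^\star_t}}$. Taking the quotient onto the span of $e_{t_1},\dots,e_{t_s}$ should give the filtration of $D_{\mathrm{cris}}(\Dpik^{\mathrm{cr}})$, whose Hodge–Tate weights are $\bh_{n-s+1},\dots,\bh_n$. \textbf{This identification is the main obstacle.} The bound $\bd^\star_t$ must be $\le n-s$ for the relevant indices, and this would need checking case by case via the sets $\cI^\star_t$. I would first attempt the Lie-algebraic route above and fall back on this one only if needed.

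For the final assertion, suppose $\max_i l_i\le 2$. Corollary \ref{captureparameters2} gives $p^\Delta_{\mathrm{ST}}=p_{\mathrm{ST}}$, so the collection $\cC_{\mathrm{ST}}(\Dpik)$ is recovered. Theorem \ref{capturetype1}, equivalently Theorem \ref{capturetype1flag}, then shows that $p_{\mathrm{ST}}([\underline{\sL}(\Dpik)])$ together with $\Dpik^{\mathrm{cr}}$ determines $[\underline{\sL}(\Dpik)]$, and hence determines $\Dpik$.
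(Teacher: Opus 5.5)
Your treatment of $p^{\Delta}_{\mathrm{ST}}$ matches the paper: you use the extension classes built into $\pi^{\Delta,-}_{1}(\underline{\phi},\bh)$. So does your treatment of the last assertion, via Corollary \ref{captureparameters2} together with Theorem \ref{capturetype1flag}. The gap is in the one substantive step, the determination of $\Dpik^{\mathrm{cr}}$. Your primary, Lie-algebraic route breaks at the comparison with $\GLN_s$.

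Put $W:=E\langle e_j: j\notin\{t_1,\dots,t_s\}\rangle$, so that $D_{\mathrm{cris}}(\Dpik^{\mathrm{cr}})=D/W$. For $u=j_{S_0}(w)$, the elements of $\mathrm{Ad}_{u^{-1}}(\tau_{S_0(u)})\cap\mathrm{Ad}_{g}(\fb)$ do not preserve $W$ in general. Hence they induce nothing on $D/W$, and no ``block-wise projection onto $e_{t_r}$'' relates them to $\mathrm{Ad}_{w^{-1}}(\ft_s)\cap\mathrm{Ad}_{g'}(\fb_s)$.

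This already fails for $n=3$, $S_0=\{1\}$ (so $W=Ee_1$) and the block swap $u=j_{S_0}(s_1)$, with flag $Ee_3\subset Ee_3\oplus Ee_1\subset D$ and $S_0(u)=\{2\}$. There the space consists of the $f$ acting by $c_1$ on $Ee_3$ and by $c_2$ on $\fil_H^{-\bh_2}(D)$. Write $e_1=ae_3+v$ with $v\in\fil_H^{-\bh_2}(D)$. Then $a\neq 0$ because $e_1\notin\fil_H^{-\bh_2}(D)$, so $f(e_1)=c_2e_1+(c_1-c_2)ae_3\notin Ee_1$ whenever $c_1\neq c_2$.

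Also, Theorem \ref{descibeimagetD} and Proposition \ref{imageofhomostar} do not identify $\ker(t^{\star}_{D})$ with $\ker(g^{\circ}_{S_0})$. The source of $t^{\star}_{D}$ is glued from the $V_{i,u}$ with $u\in\cI^{\star}_i$, each carrying $\mathrm{Ad}_{u^{-1}}(\tau_{\widehat{i}})\cap\mathrm{Ad}_{g}(\fb)$, not from $\tau_{S_0(u)}$-pieces. So the claimed equality $g'=p_{\mathrm{cr}}([g\bB])$ has no argument behind it.

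The triangulations that see $\Dpik^{\mathrm{cr}}$ are those refining $\Dpik=[\Dpik_0-\Dpik^{\mathrm{cr}}]$, like the $u_0$ used to define $p_{\mathrm{cr}}$. This is what the paper's proof points to: it writes $\Dpik=[\Dpik_0-\Dpik^{\mathrm{cr}}]$ and reads $\Dpik^{\mathrm{cr}}$ off $\pi^{\star}_{\min}(\Dpik)$, i.e.\ off $\ker(t^{\star}_{D})$. That is your fallback, and the check you postponed is not case by case.

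For $w\in\sW_s$, list the $e_j$ with $j\notin\{t_1,\dots,t_s\}$ (blockwise increasing), then $e_{t_{w^{-1}(1)}},\dots,e_{t_{w^{-1}(s)}}$. This gives a $(\varphi,N)$-stable flag. For $n-s<t<n$, its set $I_t(u)$ is the index set of $W$ together with an arbitrary $(t-n+s)$-subset of $\{t_1,\dots,t_s\}$. Intersecting over these gives $D^{\sharp,(t)}\subseteq W$, in particular $\bd^{\sharp}_t\le n-s$.

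With your reading of Corollary \ref{Hodgeparadetercor}, $\ker(t^{\sharp}_{D})$ then gives the images of $\fil_H^{-\bh_{t+1}}(D)$ in $D/W$ for $n-s<t<n$. That is precisely the Hodge filtration of $D_{\mathrm{cris}}(\Dpik^{\mathrm{cr}})$, whose weights are $\bh_{n-s+1},\dots,\bh_n$.

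For $\star=\flat$ this is not automatic. If $l_s\geq 2$, the only class with $I_{n-1}=\{1,\dots,n-1\}$ is $u=1$, and $n-1\in S_0(1)$, so it is removed from $\cI^{\flat}_{n-1}$. Then $e_n\in D^{\flat,(n-1)}$ and $\bd^{\flat}_{n-1}=n-s+1$, so some further input is needed there.

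These same $u$, not the elements of $j_{S_0}(\sW_s)$, are also the ones to use in a Lie-algebraic version. Their pieces $\mathrm{Ad}_{u^{-1}}(\tau_{\widehat{t}})\cap\mathrm{Ad}_{g}(\fb)$ with $n-s<t<n$ preserve $W$ and induce Ding's pieces on $D/W$.
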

		\begin{proof}Write $\Dpik=[\Dpik_0-\Dpik^{\mathrm{cr}}]$,\;  $\Dpik^{\mathrm{cr}}$ are determined by $\pi^{\star}_{\min}(\Dpik)$.\;On the other hand,\;for $u\in\sW_n^{S_0}$,\; $p^{\Delta}_{\mathrm{ST},u}([\underline{\sL}(\Dpik)])$ are already determined by  $\pi^{\Delta,-}_{1}(\underline{\phi},\bh)$.\;The last assertion follows from Corollary \ref{captureparameters2}.\;
		\end{proof}

		\subsection{Further comments}\label{inductiveCons}

		This section gives a possible way to construct a locally analytic representation $\pi_{\min}(\Dpik)$ that determines $\Dpik$,\;which gives  another ``evidence" (but still a conjecture) for the main theorems in Section \ref{mainthmforcapturePara},\;and gives a complete version of the discussion in Section \ref{mainthmI}.\;For any Steinberg $(\varphi,\Gamma)$-module $\bM$,\;assume that we have associated an explicit locally analytic representation $\pi_{\min}(\bM)$ that determines $\bM$ (which are  known for the case $\rk(\bM)\leq 3$).\;
		
		For $u\in\sW_n^{S_0}$,\;recall the $\bP_{S_0(u)}$-parabolic filtration $\cF_{S_0(u)}=[E_{u,1}-E_{u,2}-\cdots-E_{u,f_u}]$ (see (\ref{parafils0u})) associated to the triangulation $\cF_{u}$.\;Consider the locally $\bQ_p$-analytic parabolic induction 
		\begin{equation*}\label{locaparabolicindblockfull}
			\mathrm{PS}_{\cF_{u}
			}(\underline{\phi},\bh):=\left(\ind^G_{\bP_{S_0(u)}(\bQ_p)}\pi_{\ana}(\cF_u)\eta_{S_0(u)}\right)^{\ana},\;\pi_{\ana}(\cF_u):=\boxtimes_{j=1}^s\pi_{\min}(E_{u,j}).\;
		\end{equation*}
		Let  $\mathrm{ST}_{\cF_{u}}(\underline{\phi},\bh)$  be the unique maximal quotient of $\mathrm{PS}_{\cF_{u}}(\underline{\phi},\bh)$ with socle $Q_{\Delta}^{\diamond}(\emptyset,\lambda)$.\;By definition,\;we have $\mathrm{PS}_{S_0(u)}(\underline{\phi},\bh)\hookrightarrow\mathrm{PS}_{\cF_{u}}(\underline{\phi},\bh)$ and $\mathrm{ST}_{S_0(u)}(\underline{\phi},\bh)\hookrightarrow\mathrm{ST}_{\cF_{u}}(\underline{\phi},\bh)$.\;It is possible to take  ``amalgamated sum" of $\{\mathrm{ST}_{\cF_{u}}(\underline{\phi},\bh)\}_{u\in \sW_n^{S_0}}$, and obtain a locally analytic representation $\pi_{1}^+(\underline{\phi},\bh)$ such that
		$\pi^{\sharp}_{1}(\underline{\phi},\bh)\hookrightarrow \pi^+_{1}(\underline{\phi},\bh)$.\;

		Let $u\in \cI^{\sharp}_i$.\;Suppose that $i$ is the $j$-th simple root of $\Delta_{S_0(u),l}$ for some $1 \leq l\leq f_u$.\;Using the conjectural $j$-th branch of $\pi_{\ana}(E_{u,i})$ in  \cite[Conjecture 1.1]{breuil2019ext1},\;the locally analytic $\ext^1$-conjecture predicates a locally analytic representation $\pi^+_{i,u}$,\;and a isomorphism of $\binom{r_{u,l}}{j}$-dimensional $E$-vector spaces (as in \cite[(43)]{BDcritical25}):
		\begin{equation}
			\begin{aligned}
				\epsilon_{i,u}:&\;\ext^1_{G}\big(\pi^+_{i,u},\pi_1^{\lalg}(\underline{\phi},\bh)\big)\xrightarrow{\sim}\Big(\bigwedge\nolimits_E^{\!r_{u,l}-j}\!D_{\dr}(E_{u,l})\Big)\wedge \Big(\bigwedge\nolimits_E^{\!n-i-(r_{u,l}-j)}\!D_{\dr}(E^{(l)}_{u})\Big)\subseteq  \bigwedge\nolimits_E^{\!n-i}\!D.
			\end{aligned}
		\end{equation}
		where $E^{(l)}_{u}:=[E_{u,l+1}-E_{u,l+2}-\cdots-E_{u,f_u}]$ is the unique quotient of $\Dpik$.\;Note that  $n-i-(r_{u,l}-j)=\rk(E^{(l)}_{u})$ and  $\bigwedge\nolimits_E^{\!n-i-(r_{u,l}-j)}\!D_{\dr}(E^{(l)}_{u})$ is an $E$-line.\;Then for each $i\in\Delta$,\;we thus  get a surjection:
		\begin{equation}
			\begin{aligned}
				\epsilon_{i}:=\bigoplus_{u\in \cI^{\sharp}_{i}}\epsilon_{i,u}:\ext^1_{G}\Big(\bigoplus_{u\in \cI^{\sharp}_{i}}\pi^+_{i,u},\pi_1^{\lalg}(\underline{\phi},\bh)\Big)\rightarrow\bigwedge\nolimits_E^{\!n-i}\!D.
			\end{aligned}
		\end{equation}
		Finally,\;we can define a surjection of finite dimensional $E$-vector spaces which only depends on the isomorphisms $(\epsilon_{i,u})_{i\in \Delta,u\in \cI^{\flat}_i}$ and  on a choice of $\log_p(p)\in E$:
		\[t^+_{D}:\ext^1_{G}\left(\pi_{\natural}^{\lalg}(\underline{\phi},\bh),\pi^+_{1}(\underline{\phi},\bh)\right)\twoheadrightarrow\ext^{1,\flat}_{\gr}({\Dpik},{\Dpik})\oplus\homo_{\fil}(D,D).\]
		Such map is also suggested by  \cite[(6)]{BDcritical25} for general de Rham $(\varphi,\Gamma)$-module $\Dpik$ over $\cR_{E}$.\;Moreover,\;we can further extend  (\ref{dfnforgammaD}) to
		\begin{equation}
			\begin{aligned}
				\gamma^+_{\Dpik}:&\bigoplus_{u\in\sW_n^{S_0}}\overline{\ext}^{1}_{\cF_{S_0(u)}}(\Dpik,\Dpik)
				\xrightarrow{\oplus_{u\in\sW_n^{S_0}}\kappa_{\cF_{S_0(u)}}} \bigoplus_{u\in\sW_n^{S_0}}\prod_{i=1}^{f_u}\overline{\ext}^1\big(E_{u,i},E_{u,i}\big)\\
				&\xrightarrow{\sim} \bigoplus_{u\in\sW_n^{S_0}}\prod_{i=1}^{f_u}\ext^1_{\GLN_{r_{u,i}}(\bQ_p)}\big(\pi_1^{\lalg}(\underline{\phi}_{E_{u,i}},\bh_{E_{u,i}}),\pi_{\ana}(E_{u,i})\big)\rightarrow \ext^1_{G}\left(\pi_{\natural}^{\lalg}(\underline{\phi},\bh),\pi^+_{1}(\underline{\phi},\bh)\right).\;
			\end{aligned}
		\end{equation}
		Let $u\in\sW_n^{S_0}$ and  $\iota_{S_0(u)}:\overline{\ext}^{1}_{S_0(u)}(\Dpik,\Dpik)\hookrightarrow \overline{\ext}^{1}(\Dpik,\Dpik)$ be the natural inclusion.\;Recall the surjection $g^+_{\Dpik}:\bigoplus_{u\in\sW_n^{S_0}}\overline{\ext}^{1}_{S_0(u)}(\Dpik,\Dpik)\rightarrow \overline{\ext}^{1}(\Dpik,\Dpik)$ in Corollary \ref{surforgD}.\;Suggested by Theorem \ref{capturetype1diamond}),\;we expect
		\begin{expect}\label{descibeimagetDGL2}  $\ker(t^+_{D})$ determines $\fil_{H}^{\bullet}(D)$.\;
		\end{expect}
	 \begin{rmk}Under the splitting in Proposition \ref{splitingforext1gps} (see Corollary \ref{surforgD} for $\overline{g}^+_{\Dpik}$),\;we expect that  $f_{\Dpik}\circ \overline{g}^{+}_{\Dpik}=t^+_{D}\circ\gamma^+_{\Dpik}$.\;
%
%
%
%
	 \end{rmk}
		Let $\pi_{\min}(\Dpik)$ be the unique quotient of the tautological extension of $\ker(t^{+}_{D})\otimes_E\pi_{\natural}^{\lalg}(\underline{\phi},\bh)$ by $\pi^{+}_{1}(\underline{\phi},\bh)$ with socle $\mathrm{ST}^{\lalg}(\underline{\phi},\bh)$.\;It gives a possible way to construct a locally analytic representation which determines $\Dpik$.

			
		Recall the discussion after Proposition \ref{extpi1repn}.\;The remiander of this section gives  a surjection of finite dimensional $E$-vector spaces:
		\[t^{\Delta}_{D}:\ext^1_{G}\left(\pi_{\natural}^{\lalg}(\underline{\phi},\bh),\pi^{\Delta}_{1}(\underline{\phi},\bh)\right)\rightarrow\ext^{1,\flat}_{\gr}({\Dpik},{\Dpik})\oplus\homo^{\Delta}_{\fil}(D,D),\]
		where $\homo^{\Delta}_{\fil}(D,D)\cong \mathrm{Ad}_{g}(\fb)^{\Delta}_{/S_0}$ (see (\ref{dfnforDeltaS0})).\;Recall Remark \ref{rmkforextensionsquare}.\;Fix $V^{\Delta}_{i,u}$ a locally analytic representation of $G$,\;which is isomorphic to a non-split extension of $C^{\Delta}_{i,u}$ by $\pi_1^{\lalg}(\underline{\phi},\bh)$,\;i.e.,\;
		\[V^{\Delta}_{i,u}:=[\pi_1^{\lalg}(\underline{\phi},\bh)-C^{\Delta}_{i,u}],\]
		By (\ref{isoforfullextgp}),\;the representation $V^{\Delta}_{i,u}$ already gives a choice of a non-split extension in $\ext^1_{G}\big(\pi_{\natural}^{\lalg}(\underline{\phi},\bh),V_{i,u}\big)$. Note that $V_{i,u}\hookrightarrow V^{\Delta}_{i,u}$,\;and thus fix an injection $\iota^{\Delta}_{i,u}:\pi_1^{\lalg}(\underline{\phi},\bh)\hookrightarrow V_{i,u}\hookrightarrow V^{\Delta}_{i,u}$.\;We have   $C^{\Delta}_{i,u}\cong V^{\Delta}_{i,u}/\pi_1^{\lalg}(\underline{\phi},\bh)$.\;Recall the discussion in Remark \ref{rmkforextensionsquare} and the discussion in \cite[Lemma 3.42]{HigherLinvariantsGL3(Qp)}.\;Let $\mathrm{ST}^{\ast}_{i,u}$ be the unique quotient of $R^{\ast}_{i,u}$  with socle $Q_{\Delta}^{\Delta}(\emptyset,\lambda)$.\;By Schraen's spectral sequence (see \cite[Corollary 4.5]{schraen2011GL3}),\;there is a bijection
		\begin{equation}\label{pi1vBsecond}
			\begin{aligned}
				\ext^1_{G}\left(\pi_{\natural}^{\lalg}(\underline{\phi},\bh),R_{i,u}\right)\cong \ext^1_{\bL_{i}(\bQ_p)}\left(\hH_0\big(\overline{\bN}_{i}(\bQ_p),\pi_{\natural}^{\lalg}(\underline{\phi},\bh)\big),\pi_{i,u}(\underline{\phi},\bh)\right).\;
			\end{aligned}
		\end{equation}
		In particular,\;we get an inclusion $\ext^1_{\bL_{i}(\bQ_p)}\left(\pi^{\infty}_{i,u}\otimes_EL_{i}(\lambda),\pi_{i,u}(\underline{\phi},\bh)\right)\hookrightarrow \ext^1_{G}\left(\pi_{\natural}^{\lalg}(\underline{\phi},\bh),R_{i,u}\right)$.\;By composing with the push-forward map via the map $R_{i,u}\twoheadrightarrow \mathrm{ST}_{i,u}\hookrightarrow \pi^{\Delta}_{1}(\underline{\phi},\bh)$,\;we actually obtain a map
		\[\zeta_{i,u}:\ext^1_{\bL_{i}(\bQ_p)}\left(\pi^{\infty}_{i,u}\otimes_EL_{i}(\lambda),\pi_{i,u}(\underline{\phi},\bh)\right)\rightarrow \ext^1_{G}\left(\pi_{\natural}^{\lalg}(\underline{\phi},\bh),\pi^{\Delta}_{1}(\underline{\phi},\bh)\right).\]The unique non-split extension in  $\ext^1_{\bL_{i}(\bQ_p)}\left(\pi^{\infty}_{i,u}\otimes_EL_{i}(\lambda),\pi_{i,u}(\underline{\phi},\bh)/\big(\pi^{\infty}_{i,u}\otimes_EL_{i}(\lambda)\big)\right)$ gives a non-split extension of $\pi_{\natural}^{\lalg}(\underline{\phi},\bh)$ by $V^{\Delta}
		_{i,u}$,\;which has the following form:
		\begin{equation}
			\xymatrix{
				& C_{s_{i-1}s_i,\ast} \ar@{-}[dl] \ar@{-}[dr]  &   & \\
				C_{i,u} \ar@{-}[dr] \ar@{-}[r]  & Q^{\Delta}_{\Delta}(I_u^-,I_{[s_iu]_i}^+,\lambda) \ar@{-}[r] & C_{i,[s_iu]_i} \ar@{-}[dl] \ar@{-}[r] & \pi_{\natural}^{\lalg}(\underline{\phi},\bh)\\ 
				&  C_{s_{i+1}s_i,\ast'}  &  &}.
		\end{equation}
		By  Lemma \ref{parameterline},\;we see that $\dim_E\ext^1_{G}\left(\pi_{\natural}^{\lalg}(\underline{\phi},\bh),V^{\Delta}_{i,u}\right)=n+2$.\;For any $u\in \cI^{\sharp}_i$,\;we put $I_i(u)_{[s_i]}^c:=\big(I_i(u)^c\backslash\{u^{-1}(j+1)\}\big)\cup\{u^{-1}(j)\}$ and  fix the following
		isomorphisms of $1$-dimensional $E$-vector spaces:
		\begin{equation}
			\begin{aligned}
				\epsilon_{i,[s_iu]_i}:&\;\ext^1_{G}(C_{i,[s_iu]_i},\pi_1^{\lalg}(\underline{\phi},\bh))\xrightarrow{\sim}Ee_{I_i(u)_{[s_i]}^c}\in \bigwedge\nolimits_E^{\!n-i}\!D,\\
				\epsilon^{\Delta}_{i,u}:&\;\ext^1_{G}(C^{\Delta}_{i,u},\pi_1^{\lalg}(\underline{\phi},\bh))\xrightarrow{\sim}Ee_{I_i(u)^c}\oplus Ee_{I_i(u)_{[s_i]}^c}\in \bigwedge\nolimits_E^{\!n-i}\!D,
			\end{aligned}
		\end{equation}
		Then we get for each $i\in\Delta$ a map:
		\begin{equation}
			\begin{aligned}
				\epsilon^{\Delta}_{i}:=\bigoplus_{u\in \cI^{\flat}_{i}}\epsilon^{\Delta}_{i,u}:\ext^1_{G}\Big(\bigoplus_{u\in \cI^{\flat}_{i}}C^{\Delta}_{i,u},\pi_1^{\lalg}(\underline{\phi},\bh)\Big)\rightarrow\bigwedge\nolimits_E^{\!n-i}\!D.
			\end{aligned}
		\end{equation}
		Let $\big(\bigwedge_{E}^{\!n-i}\!D\big)^{\Delta}$ be the image of $\epsilon^{\Delta}_{i}$ and let $\big(\bigwedge_{E}^{\!n-i}\!D\big)^{\Delta,c}$  be its complement in $\bigwedge_E^{n-i}D$.\;Let $\fil_i^{\max}(D)^{\Delta}$ be the image of $\fil_i^{\max}(D)\hookrightarrow \bigwedge_E^{n-i}D\twoheadrightarrow \bigwedge_E^{n-i}D/\big(\bigwedge_{E}^{\!n-i}\!D\big)^{\Delta,c}\cong \big(\bigwedge_{E}^{\!n-i}\!D\big)^{\Delta}$.\;Denote
		\begin{equation}
			\homo^{\Delta}_{\fil}(D,D):=\sum_{i\in \Delta}g_{i}\Big(\homo_E\Big(\big(\bigwedge\nolimits_{E}^{\!n-i}\!D\big)^{\Delta},\fil_i^{\max}(D)^{\Delta}\Big)\Big)\hookrightarrow \homo_{\fil}(D,D).
		\end{equation}
		Similar to the argument in 	Theorem \ref{construext1},\;we obtain the desired map $t^{\Delta}_{D}$,\;which only depends on the $(\epsilon^{\Delta}_{u,i})_{i\in \Delta,u\in \cI_i}$ and  on a choice of $\log_p(p)\in E$.\;On the other hand,\;there exists a natural map (recall $\overline{\ext}^{1}_{u,[i]}(\Dpik,\Dpik)$ after (\ref{dfnforkappau})):
		\begin{equation}\label{dfnforgammaDDiamond}
			\begin{aligned}
				\gamma^{\Delta}_{\Dpik}:&\bigoplus_{\substack{u\in\sW_n^{S_0}\\i\in S_0(u)}}\overline{\ext}^{1}_{u,[i]}(\Dpik,\Dpik)
				\rightarrow \bigoplus_{\substack{u\in\sW_n^{S_0}\\i\in S_0(u)}} \overline{\ext}^1(R_{u,i}^{i+1},R_{u,i}^{i+1})\times\prod_{j\neq i,i+1}\ext^1(R_{u,j},R_{u,j})\\
				&\rightarrow \bigoplus_{\substack{u\in\sW_n^{S_0}\\i\in S_0(u)}}\ext^1_{\bL_{i}(\bQ_p)}\left(\pi^{\infty}_{i,u}\otimes_EL_{i}(\lambda),\pi_{i,u}(\underline{\phi},\bh)\right)\xrightarrow{\bigoplus_{u,i}\zeta_{i,u}} \ext^1_{G}\left(\pi_{\natural}^{\lalg}(\underline{\phi},\bh),\pi^{\Delta}_{1}(\underline{\phi},\bh)\right).\;
			\end{aligned}
		\end{equation}
		For  $u\in\sW_n^{S_0}$ and $i\in \Delta$,\;let $\iota_{u,[i]}:\overline{\ext}^{1}_{u,[i]}(\Dpik,\Dpik)\hookrightarrow \overline{\ext}^{1}(\Dpik,\Dpik)$ be the natural inclusion.\;For $i\in \Delta$,\;define $D^{\Delta,(i)}:=E\langle e_t:t\notin  I_{i}(u)^c\cup\{u^{-1}(i)\},\forall\;u\in \cI^{\sharp}_i\rangle$ and $D^{\Delta}_{(i)}=E\langle e_{t}:t\in I_{i}(u)^c\cup\{u^{-1}(i)\}\text{\;for some\;}u\in \cI^{\sharp}_i\rangle\subseteq D$.\;Put 
		$\bd^{\Delta}_{i}=\dim_ED^{\Delta,(i)}$ (it is obvious that $\bd^{\Delta}_{i}\leq \bd^{\sharp}_{i}\leq \bd^{\flat}_{i}$).\;For $1\leq t\leq n-1$,\;it should be true that
		$\ker(t^{\Delta}_{D})$ determines
		\begin{equation}
			\begin{aligned}
				\Big\{\fil_H^{-\bh_{{\max\{j,\bd^{\Delta}_{t}\}}+1}}(D)\oplus D/\fil_H^{-\bh_{\bd^{\Delta}_{t}}}(D)\Big\}_{1\leq j\leq t}
			\end{aligned}
		\end{equation}	
		If $\bd^{\Delta}_{t}=0$,\;then $\ker(t^{\Delta}_{D})$ determines $\{\fil_H^{-\bh_{j+1}}(D)\}_{1\leq j\leq t}$.\;

				

		\subsection{Universal extensions}\label{sectionforunverext}

		Let $R_{\Dpik}$ be the universal deformation ring of deformations of $\Dpik$ over $\Art_E$.\;For $u\in \sW_n^{S_0}$,\;let $R_{\Dpik,u}$ be the universal deformation ring of $\cF_u$-trianguline deformations of $\Dpik$ over $\Art_E$,\;and let $R_{\Dpik,g}$ be the universal deformation ring of de Rham deformations.\;Let $I_u$ (resp.,\;$I_{g}$) be the kernel of $R_{\Dpik}\rightarrow R_{\Dpik,u}$ (resp.,\;$R_{\Dpik}\rightarrow R_g$).\;Then $R_{\Dpik,u,g}\cong R_{\Dpik}/(I_u+I_g)$ is the universal deformation ring of de Rham $\cF_u$-trianguline deformations of $\Dpik$ over $\Art_E$.\;We have natural surjections $R_{\Dpik}\twoheadrightarrow R_{\Dpik,u}\rightarrow  R_{\Dpik,u,g}$.\;For a continuous character $\delta$ of $\bT(\bQ_p)$,\;denote by the $R_{\delta}\cong E[[U,T]]$ (resp.\;$R_{\delta,g}$) the universal deformations ring (resp.,\;the universal deformations ring of locally algebraic deformations) of $\delta$ over $\Art_E$.\;We have natural surjection $R_{\delta}\twoheadrightarrow R_{\delta,g}$.\;For a complete local Noetherian $E$-algebra $R$,\;we use $\fm_R$ (or use $\fm$ for simplicity when it does not cause confusion) to denote its maximal ideal.\;Then for $\ast\in\{\emptyset,g,u\}$,\;we have $(\fm_{R_{\Dpik,\ast}}/\fm^2_{R_{\Dpik,\ast}})^{\vee}\cong \ext^{1}_{\ast}(\Dpik,\Dpik)$.\;Moreover,\;$(\fm_{R_\delta}/\fm^2_{R_\delta})^{\vee}\cong \homo(\bT(\bQ_p),E)$ and $(\fm_{R_{\delta,g}}/\fm^2_{R_{\delta,g}})^{\vee}\cong \homo_{\mathrm{sm}}(\bT(\bQ_p),E)$.\;

		Let $\cH$ be the Bernstein centre over $E$ associated to $\mathrm{PS}^{\infty}_{1}(\underline{\phi})$,\;and let $\widehat{\cH}_{\underline{\phi}}$ be the completion of $\cH$ at $\mathrm{PS}^{\infty}_{1}(\underline{\phi})$.\;We have isomorphism
		$\widehat{\cH}_{\underline{\phi}}\xrightarrow{\sim}R_{\unr(\underline{\phi}^u)z^{\bh},g}$ by sending a smooth deformation $\chi$ of $\unr(\underline{\phi}^u)z^{\bh}$ to $\big(\ind^G_{\ob(\bQ_p)}\eta\chi z^{-\bh}\big)^{\infty}$.\;Moreover,\;for $u\in \sW_n^{S_0}$,\;we have the following commutative Cartesian diagram over $\Art_E$:
		\begin{equation}
			\xymatrix{
				R_{\unr(\underline{\phi}^u)z^{\bh}}/\fm^2 \ar[r] \ar[d]^{\kappa_u} & R_{\unr(\underline{\phi}^u)z^{\bh},g}/\fm^2 \ar[d]^{\kappa_u} 
				\\
				R_{\Dpik,u}/\fm^2 \ar[r]	&  R_{\Dpik,u,g}/\fm^2,}
		\end{equation}

		Let $A_{\Dpik}:=R_{\Dpik}/\fm^2\times_{R_{\Dpik,g}} A_0$ with $A_0:=R_{\unr(\underline{\phi})z^{\bh},g}/\fm^2\hookrightarrow R_{\Dpik,g}/\fm^2$,\;and $A_{\Dpik,u}:=R_{\Dpik,u}/\fm^2\times_{R_{\Dpik,u,g}} A_{0,u}$ with $A_{0,u}:=R_{\unr(\underline{\phi}^u)z^{\bh},g}/\fm^2\hookrightarrow R_{\Dpik,u,g}/\fm^2$.\;The tangent space of $A_{\Dpik}$ (resp.,\;$A_{\Dpik,u}$) is naturally isomorphism to $\overline{\ext}^1(\Dpik,\Dpik)$ (resp.,\;$\overline{\ext}^1_u(\Dpik,\Dpik)\cong \homo_u(\bT(\bQ_p),E)$).\;We denote the kernel of $A_{\Dpik}\rightarrow A_{\Dpik,u}$ with $\cI_u$.\;Let $\cI_{\Dpik}$ be the kernel of the natural morphism $A_{\Dpik}\rightarrow \prod_{u\in \sW_n^{S_0}}A_{\Dpik,u}$.\;Note that $\cI_{\Dpik}=\cI_1$ (resp.,\; $I_{\Dpik}=0$) if $\Dpik$ is Steinberg (resp.,\;crystalline).\;
		
		Similarity,\;let $\pi_{1}(\underline{\phi},\bh)^{\univ,-}$ (resp.,\;$\pi_{1}(\underline{\phi},\bh)_u^{\univ,-}$) be the universal extension of $\mathrm{Im}(\gamma_{\Dpik})\otimes \pi_{\natural}^{\lalg}(\underline{\phi},\bh)$ (resp.,\;$\zeta_u(\homo_u(\bT(\bQ_p),E))\otimes \pi_{\natural}^{\lalg}(\underline{\phi},\bh)$) by $\pi_{1}(\underline{\phi},\bh)$ (see (\ref{dfnforgammaD}) for the map $\mathrm{Im}(\gamma_{\Dpik})$).\;Morevoer,\;let $\delta_u:=\unr(\underline{\phi}^u)\eta\chi_{\lambda}$ and let $\widetilde{\delta}_u^{\univ,-}$ be the universal extension of $\homo_u(\bT(\bQ_p),E)\otimes_E\delta_u$ by $\delta_u$,\;so that 
		the induced representation $I^G_{\ob(\bQ_p)}\widetilde{\delta}_u^{\univ,-}$ is the universal extension of $\zeta_u(\homo_u(\bT(\bQ_p),E))\otimes \mathrm{PS}^{\lalg}_{u}(\underline{\phi},\bh)$ by $\mathrm{PS}_{u,1}(\underline{\phi},\bh)$.\;Composed the above universal extensions with the pull-back map for the natural map
		$p_u:\pi_{\natural}^{\lalg}(\underline{\phi},\bh)\rightarrow \mathrm{PS}_{u}^{\lalg}(\underline{\phi},\bh)$,\;we get a locally analytic representation $(I^G_{\ob(\bQ_p)}\widetilde{\delta}_u^{\univ,-})^{\flat}$ which is the universal extension of $\zeta_u(\homo_u(\bT(\bQ_p),E))\otimes \pi_{\natural}^{\lalg}(\underline{\phi},\bh)$ by $\mathrm{PS}_{u,1}(\underline{\phi},\bh)$.\;

		Let $U_u$ be the intersection of $I^G_{\ob(\bQ_p)}\widetilde{\delta}_u^{\univ}$ with the  kernel of the projection $\mathrm{PS}_{u}(\underline{\phi},\bh)\twoheadrightarrow \mathrm{ST}_{u}(\underline{\phi},\bh)$,\;then we have hence an isomorphisms of $G$-representations
		\begin{equation}
			\begin{aligned}
				\left((I^G_{\ob(\bQ_p)}\widetilde{\delta}_u^{\univ,-})^{\flat}/U_u\right)\oplus_{\mathrm{ST}_{u,1}(\underline{\phi},\bh)}\pi_{1}(\underline{\phi},\bh)\xrightarrow{\sim }\pi_{1}(\underline{\phi},\bh)_u^{\univ,-}.\;
			\end{aligned}
		\end{equation}
		We have a natural action of $A_{\Dpik,u}\hookrightarrow R_{\unr(\underline{\phi}^u)z^{\bh}}/\fm^2$ on $\widetilde{\delta}_u^{\univ,-}$,\;where $x\in A_{\Dpik,u}\cong \homo_u(\bT(\bQ_p),E)^{\vee}$ acts via $x:\widetilde{\delta}_u^{\univ,-}\twoheadrightarrow \homo_u(\bT(\bQ_p),E)\otimes_E\delta_u\xrightarrow{x}\delta_u\hookrightarrow\widetilde{\delta}_u^{\univ,-}$.\;Similarity,\;$\pi_{1}(\underline{\phi},\bh)_u^{\univ,-}$ admits an $A_{\Dpik,u}$-action,\;which is given by 
		\begin{equation}
			\begin{aligned}
				x:\pi_{1}(\underline{\phi},\bh)_u^{\univ,-}&\rightarrow \zeta_u(\homo_u(\bT(\bQ_p),E))\otimes \pi_{\natural}^{\lalg}(\underline{\phi},\bh)\xrightarrow{x}\pi_{\natural}^{\lalg}(\underline{\phi},\bh)\twoheadrightarrow Q_{\Delta}^{\diamond}(\emptyset,\lambda)\hookrightarrow \pi_{1}(\underline{\phi},\bh)_u^{\univ,-},
			\end{aligned}
		\end{equation}
		where $x\in \fm_{A_{\Dpik,u}}/\fm^2_{A_{\Dpik,u}}\cong \homo_u(\bT(\bQ_p),E)^{\vee}\xrightarrow{\zeta_u}(\zeta_u(\homo_u(\bT(\bQ_p),E))^{\vee}$.\;Moreover,\; $I^G_{\ob(\bQ_p)}\widetilde{\delta}_u^{\univ,-}/U_u\hookrightarrow \pi_{1}(\underline{\phi},\bh)_u^{\univ,-}$ is $A_{\Dpik,u}$-equivalent.\;
		
		\begin{pro}\label{actionADonuniversal}
			There is a unique $A_{\Dpik}$-action on $\pi_{1}(\underline{\phi},\bh)^{\univ,-}$ such that for any $u\in \sW_n^{S_0}$,\;we have an $A_{\Dpik,u}\times G$-equivariant injection $\pi_{1}(\underline{\phi},\bh)_u^{\univ,-}\hookrightarrow \pi_{1}(\underline{\phi},\bh)^{\univ,-}[\cI_u]$.\;
		\end{pro}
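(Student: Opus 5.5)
The construction follows \cite[Proposition 2.5.8]{BDcritical25} (and its crystabelline analogue in \cite{ParaDing2024}). The underlying mechanism is that $A_{\Dpik}$ is a square-zero local ring with tangent space $\overline{\ext}^1(\Dpik,\Dpik)$. Hence an $A_{\Dpik}$-action on a universal extension of the form $[\pi_1(\underline{\phi},\bh)-\mathrm{Im}(\gamma_{\Dpik})\otimes\pi_{\natural}^{\lalg}(\underline{\phi},\bh)]$ amounts to a linear map
\[\fm_{A_{\Dpik}}/\fm^2_{A_{\Dpik}}=\overline{\ext}^1(\Dpik,\Dpik)^{\vee}\longrightarrow \mathrm{Im}(\gamma_{\Dpik})^{\vee}.\]
An element $x$ then acts by the composite
\[\pi_{1}(\underline{\phi},\bh)^{\univ,-}\twoheadrightarrow \mathrm{Im}(\gamma_{\Dpik})\otimes\pi_{\natural}^{\lalg}(\underline{\phi},\bh)\xrightarrow{x}\pi_{\natural}^{\lalg}(\underline{\phi},\bh)\twoheadrightarrow Q_{\Delta}^{\diamond}(\emptyset,\lambda)\hookrightarrow\pi_{1}(\underline{\phi},\bh)^{\univ,-},\]
exactly as in the description of the $A_{\Dpik,u}$-action on $\pi_{1}(\underline{\phi},\bh)_u^{\univ,-}$. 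So the plan is to construct this linear map and check the compatibility with each $u$. The compatibility condition is forced: the required equivariant injection $\pi_{1}(\underline{\phi},\bh)^{\univ,-}_u\hookrightarrow\pi_{1}(\underline{\phi},\bh)^{\univ,-}$ is induced by the inclusion $\zeta_u(\homo_u(\bT(\bQ_p),E))\subseteq \mathrm{Im}(\gamma_{\Dpik})$. Dually, the action of $\fm_{A_{\Dpik}}$ must restrict on this subspace to the action of $\fm_{A_{\Dpik,u}}$ through $A_{\Dpik}\twoheadrightarrow A_{\Dpik,u}$.

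To construct the map, I dualize $\gamma_{\Dpik}$ and use Theorem \ref{descibeimagetD}, which gives $t^{\sharp}_D\circ\gamma_{\Dpik}=f_{\Dpik}\circ\overline{g}_{\Dpik}$. The key claim is that $\ker(\gamma_{\Dpik})=\ker(\overline{g}_{\Dpik})$, so that $\gamma_{\Dpik}$ and $\overline{g}_{\Dpik}$ have canonically isomorphic images. One inclusion, $\ker(\gamma_{\Dpik})\subseteq\ker(\overline{g}_{\Dpik})$, is immediate from the displayed identity, since $f_{\Dpik}$ is an isomorphism. For the reverse inclusion, I would show that $t^{\sharp}_D$ is injective on $\mathrm{Im}(\gamma_{\Dpik})$. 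I intend to obtain this from a dimension count: Proposition \ref{extpi1repn} gives $\zeta_u$ as an isomorphism onto $\ext^1_G(\pi_{\natural}^{\lalg},\pi^{\sharp}_1(\underline{\phi},\bh)_u)$, and Theorem \ref{construext1} computes the target of $t^{\sharp}_D$. Alternatively, one can argue as in \cite[Lemma 2.5.7]{BDcritical25}: an element of $\mathrm{Im}(\gamma_{\Dpik})\cap\ker(t^{\sharp}_D)$ is a sum of classes $\zeta_u(\psi_u)$, and their images in $\oplus_{j,u}\ext^1_G(\pi_{\natural}^{\lalg},C_{j,u})$ are read off through the $\epsilon_{i,u}$.

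Once the images are identified, the inclusion $\mathrm{Im}(\overline{g}_{\Dpik})\subseteq\overline{\ext}^1(\Dpik,\Dpik)$ dualizes to a surjection $\overline{\ext}^1(\Dpik,\Dpik)^{\vee}\twoheadrightarrow\mathrm{Im}(\gamma_{\Dpik})^{\vee}$. This surjection defines the action. For each $u$, the restriction of the dual of $\overline{g}_{\Dpik}$ to the $u$-summand is dual to $\iota_u$, i.e.\ it is the tangent map of $A_{\Dpik}\to A_{\Dpik,u}$. By the $u$-component of Theorem \ref{descibeimagetD} it matches $\zeta_u\circ\kappa_u$. This yields the $A_{\Dpik,u}\times G$-equivariance of $\pi_{1}(\underline{\phi},\bh)_u^{\univ,-}\hookrightarrow\pi_{1}(\underline{\phi},\bh)^{\univ,-}$. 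Since $\cI_u$ maps to zero in $A_{\Dpik,u}$, the image of this injection is killed by $\cI_u$, i.e.\ it lands in $\pi_{1}(\underline{\phi},\bh)^{\univ,-}[\cI_u]$.

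Uniqueness follows because the subspaces $\zeta_u(\homo_u(\bT(\bQ_p),E))$ span $\mathrm{Im}(\gamma_{\Dpik})$ by definition. Hence the subrepresentations $\pi_{1}(\underline{\phi},\bh)_u^{\univ,-}$ generate $\pi_{1}(\underline{\phi},\bh)^{\univ,-}$ as an amalgam over $\pi_1(\underline{\phi},\bh)$, and the action on each piece is prescribed. I expect the main obstacle to be the kernel identification $\ker(\gamma_{\Dpik})=\ker(\overline{g}_{\Dpik})$, specifically the injectivity of $t^{\sharp}_D$ on $\mathrm{Im}(\gamma_{\Dpik})$. This requires controlling the non-generic case ($I_0\neq S_0$), where $\homo_u(\bT(\bQ_p),E)$ is cut out by the simple $\sL$-invariants. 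My first approach would be to reduce it to the dimension formula of Theorem \ref{infinitePoten} combined with Corollary \ref{dimforext1g}.
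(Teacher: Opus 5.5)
Your overall architecture is the paper's: let $\fm_{A_{\Dpik}}=\overline{\ext}^1(\Dpik,\Dpik)^{\vee}$ act through a linear map to $\mathrm{Im}(\gamma_{\Dpik})^{\vee}$ built from Theorem \ref{descibeimagetD}, check compatibility on each $u$-piece, and get uniqueness because the $\pi_1(\underline{\phi},\bh)^{\univ,-}_u$ generate. The problem is the step you call the key claim, $\ker(\gamma_{\Dpik})=\ker(\overline{g}_{\Dpik})$, i.e.\ injectivity of $t^{\sharp}_D$ on $\mathrm{Im}(\gamma_{\Dpik})$. It is false in general, and the dimension count you propose refutes it rather than proves it.

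Take the generic crystalline case $S_0=I_0=\emptyset$, which is allowed in the paper's setting. Then $R^+_u=\emptyset$, so $\homo_u(\bT(\bQ_p),E)=\homo(\bT(\bQ_p),E)$. By Proposition \ref{extpi1repn}, $\mathrm{Im}(\gamma_{\Dpik})=\sum_u\zeta_u(\homo(\bT(\bQ_p),E))$ is all of $\ext^1_G(\pi_{\natural}^{\lalg}(\underline{\phi},\bh),\pi^{\sharp}_1(\underline{\phi},\bh))$, of dimension $n+1+\sum_j\binom{n}{j}=2^n+n-1$. The target of $t^\sharp_D$ has dimension $n+\frac{n(n+1)}{2}$, since $\homo^\sharp_{\fil}(D,D)=\mathrm{Ad}_g(\fb)$ by Proposition \ref{imageofhomostar}. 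For $n=3$ this gives $10>9$, so $t^\sharp_D$ cannot be injective on $\mathrm{Im}(\gamma_{\Dpik})$.

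Conceptually, $\ker(t^\sharp_D)\cap\mathrm{Im}(\gamma_{\Dpik})\neq 0$ is exactly what makes $\pi_1(\underline{\phi},\bh)^{\univ,-}[\fm_{A_{\Dpik}}]$ in Theorem \ref{mainthmglobal} strictly larger than $\pi^\sharp_1(\underline{\phi},\bh)$. If your claim held, $\pi^\sharp_{\min}(\Dpik)$ would carry no extra information. So the dual map $\overline{\ext}^1(\Dpik,\Dpik)^\vee\to\mathrm{Im}(\gamma_{\Dpik})^\vee$ is not surjective in general, and the plan to identify the two images and then dualize the inclusion fails as stated.

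The fix is that you never need the reverse inclusion. The inclusion $\ker(\gamma_{\Dpik})\subseteq\ker(\overline{g}_{\Dpik})$, which you correctly derive from Theorem \ref{descibeimagetD}, already gives a well-defined linear map
\[\theta:=f_{\Dpik}^{-1}\circ t^{\sharp}_D|_{\mathrm{Im}(\gamma_{\Dpik})}:\mathrm{Im}(\gamma_{\Dpik})\rightarrow\overline{\ext}^1(\Dpik,\Dpik),\qquad \theta\circ\gamma_{\Dpik}=\overline{g}_{\Dpik}.\]
It lands in $f_{\Dpik}(\overline{\ext}^1(\Dpik,\Dpik))$ by that same identity. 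Let $x\in\fm_{A_{\Dpik}}$ act through $x\circ\theta\in\mathrm{Im}(\gamma_{\Dpik})^\vee$ via the composite you wrote; this need not be surjective.

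For $e\in\overline{\ext}^1_u(\Dpik,\Dpik)$ one gets $x(\theta(\zeta_u\kappa_u(e)))=x(\iota_u(e))$. This is precisely how the image $x\circ\iota_u$ of $x$ in $\fm_{A_{\Dpik,u}}$ acts on $\pi_1(\underline{\phi},\bh)^{\univ,-}_u$. That yields the $A_{\Dpik,u}\times G$-equivariance and the $\cI_u$-torsion statement, and your uniqueness argument then goes through unchanged. This is what the paper's one-line proof does: it defines the map $\overline{\ext}^1(\Dpik,\Dpik)^{\vee}\to\mathrm{Im}(\gamma_{\Dpik})^\vee$ from Theorem \ref{descibeimagetD} without claiming surjectivity.
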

		\begin{proof}
			By Theorem \ref{descibeimagetD},\;we define an $A_{\Dpik}$-action on $\pi_{1}(\underline{\phi},\bh)^{\univ,-}$ by letting $x\in \fm_{A_{\Dpik}}/\fm^2_{A_{\Dpik}}\cong \overline{\ext}^1(\Dpik,\Dpik)\rightarrow \mathrm{Im}(\gamma_{\Dpik})^{\vee}$ act via $x:\pi_{1}(\underline{\phi},\bh)^{\univ,-}\rightarrow \mathrm{Im}(\gamma_{\Dpik})^{\vee}\otimes_E\pi_{\natural}^{\lalg}(\underline{\phi},\bh)\rightarrow \pi_{\natural}^{\lalg}(\underline{\phi},\bh)\rightarrow \pi_{1}(\underline{\phi},\bh)^{\univ,-}$.\;This action satisfies the property in the theorem.\;The uniqueness follows from the fact that $\pi_{1}(\underline{\phi},\bh)^{\univ,-}$ is generated by $\pi_{1}(\underline{\phi},\bh)_u^{\univ,-}$.\;
		\end{proof}

		\section{Local-global compatibility}

		Under the Taylor--Wiles hypothesis,\;we show the local and global compatibility results for the patched setting.\;	
		\begin{hypothesis}[Taylor--Wiles hypothesis]\label{TWhypo}
			\begin{itemize}
				\item[(1)] $p>2$;
				\item[(2)] the field $F$ is unramified over $F^+$,\;$F$ does not contain a non-trivial root $\sqrt[p]{1}$ of $1$ and $G$ is quasi-split at all finite places of $F^+$;
				\item[(3)] $U_{v}$ is hyperspecial when the finite place $v$ of $F^+$ is inert in $F$;
				\item[(4)] $\overline{\rho}$ is absolutely irreducible and $\overline{\rho}(\gal_{F(\sqrt[p]{1})})$ is adequate.\;
			\end{itemize}
		\end{hypothesis}
		
		\subsection{Patched eigenvariety}

		We use the following patched setting and follow the notation of \cite[Section 4.1.1]{2019DINGSimple} and \cite[Section 2]{PATCHING2016}.\;We have a patched Galois deformation ring $R_{\infty}=R_{\infty}^{\fp}\widehat{\otimes} R_{\overline{r}}^{\square}$,\;where $\fp$ is a $p$-adic place over a totally real subfield $F^+$,\;$L:=F^+_{\fp}$,\;and $\overline{r}: \gal_L \rightarrow \GLN_n(k_E)$ is a continuous representation and  $R_{\overline{r}}^{\square}$ is  the maximal reduced and $p$-torsion free quotient of the universal $\co_E$-lifting ring of $\overline{r}$.\;We have a $R_{\infty}$-admissible unitary representation $\Pi_{\infty}$ (i.e.,\;the so-called patched Banach representation) of $G$ over $E$.\;We refer to \textit{loc.cit} for detail.\;Let $\Pi_\infty^{R_\infty-\ana}$ be the subrepresentation of $G$ of locally $R_\infty$-analytic vectors of $\Pi_\infty$ (see \cite[Section 3.1]{breuil2017interpretation}).\;

		We briefly recall the following version of patched eigenvariety,\;given in \cite[Section 4.1.1]{2019DINGSimple}.\;Put $\FX_{\infty}:=(\Spf R_{\infty})^{\rig}$,\;$\FX_{\infty}^{{\fp}}:=(\Spf R^{\fp}_{\infty})^{\rig}$ and  $\mathfrak{X}_{\overline{r}}=(\spf\;R_{\overline{r}}^{\square})^{\rig}$.\;We have thus
		$\FX_{\infty}:=\FX_{\infty}^{{\fp}}\times \mathfrak{X}_{\overline{r}}$.\;By \cite[Section 4.1.2]{2019DINGSimple},\;we see that $J_{\bB(\bQ_p)}(\Pi_\infty^{R_\infty-\ana})^\vee$ is a coadmissible module over $\cO(\FX_{\infty}\times\cT)$, which corresponds to a coherent sheaf $\cM_{\infty}$ over $\FX_{\infty}\times\cT$ such that
			\[\Gamma\Big(\FX_{\infty}\times\cT,\cM_{\infty}\Big)\cong J_{\bB(\bQ_p)}(\Pi_\infty^{R_\infty-\ana})^\vee.\]
			The so-called patched  eigenvariety $X_{\fp}(\overline{\rho})\hookrightarrow\FX_{\infty}\times\cT$ is 
the Zariski-closed support of $\cM_{\infty}$.\;By \cite[Theorem 4.1]{2019DINGSimple},\;we have
		\begin{pro}\label{basicpropertyeigen}
			\begin{itemize}
				\item[(1)]  For $x=(\fm_x,\chi_x)\in \FX_{\infty}\times\cT$,\;$x\in X_{\fp}(\overline{\rho})$ if and only if
				$J_{\bB(\bQ_p)}(\Pi_\infty^{R_\infty-\ana} )[\fm_x,\bT(\bQ_p)=\chi_x]\neq0$.
				\item[(2)] The rigid space $X_{\fp}(\overline{\rho})$ is reduced and equidimensional of dimension
				$g+n+n^2(|S_p|+1)+[F^+:\BQ]\frac{n(n-1)}{2}$.\;The set of very classical non-critical generic points is Zariski-dense in $X_{\fp}(\overline{\rho})$ and is an accumulation set.\;The set of very classical non-critical generic points accumulates at point $x=(\fm_x, \chi_x)$ with $\chi_x$ locally algebraic.\;
				\item[(3)] The coherent sheaf $\cM_{\infty}$ is Cohen-Macaulay over $X_{\fp}(\overline{\rho})$.\;
			\end{itemize}
		\end{pro}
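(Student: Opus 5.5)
The statement is the patched-eigenvariety package of \cite[Theorem 4.1]{2019DINGSimple}. The plan is to deduce it from there, checking only that nothing in its proof used hypotheses on $\rho_p$ beyond the Taylor--Wiles Hypothesis \ref{TWhypo}. The eigenvariety $X_{\fp}(\overline{\rho})$ and the sheaf $\cM_\infty$ are built exactly as in loc.\ cit.\ from $\Pi_\infty$. That construction depends only on $\overline{r}$ and the patching datum of \cite{PATCHING2016}, not on the point $\rho_p$ or its monodromy type $S_0$. The three assertions should therefore transfer verbatim, and I will argue them in the order (1), (3), (2).

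For (1), I start from the identification $\Gamma(\FX_\infty\times\cT,\cM_\infty)\cong J_{\bB(\bQ_p)}(\Pi_\infty^{R_\infty-\ana})^\vee$, which makes $\cM_\infty$ a coherent sheaf associated to a coadmissible module over the Stein space $\FX_\infty\times\cT$. For a point $x=(\fm_x,\chi_x)$, being in the support amounts to $\cM_\infty\otimes k(x)\neq 0$. By Nakayama's lemma on a Stein affinoid cover, together with duality between coadmissible modules and admissible locally analytic representations, this is the same as the non-vanishing of the eigenspace $J_{\bB(\bQ_p)}(\Pi_\infty^{R_\infty-\ana})[\fm_x,\bT(\bQ_p)=\chi_x]$.

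For (3), and then the dimension count and reducedness in (2), I will follow the Breuil--Hellmann--Schraen argument. First, $\Pi_\infty$ is a direct summand of a finite free module over $\co_E[[\GLN_n(\bZ_p)]]$ after patching variables, i.e.\ it is projective as an $S_\infty[[K]]$-module. Emerton's Jacquet module then makes $\cM_\infty$ locally, over weight space $\widehat{\bT}_0\times(\Spf S_\infty)^{\rig}$, a finite projective module over a relative affinoid. This yields Cohen--Macaulayness of $\cM_\infty$ and equidimensionality of its support of dimension $\dim S_\infty+\dim\widehat{\bT}_0$. One then computes
\[g+n+n^2(|S_p|+1)+[F^+:\BQ]\tfrac{n(n-1)}{2}.\]
Reducedness follows because a Cohen--Macaulay sheaf with generically reduced support has reduced support. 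Generic reducedness holds because at very classical, non-critical, generic points the fibre is controlled by classical automorphic forms, and the corresponding points of the Galois deformation space are smooth.

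The main obstacle is the density and accumulation assertion in (2), especially that very classical non-critical generic points accumulate at any $x$ with $\chi_x$ locally algebraic. For this I will use the classicality criterion for small slope relative to weight, which shows that every irreducible component contains such points in every affinoid neighbourhood of a point of locally algebraic weight. Density then follows because the weight map is locally finite and open on each irreducible component, a consequence of the Cohen--Macaulay property established for (3). I expect to cite \cite[Theorem 4.1]{2019DINGSimple} directly here rather than re-derive it, noting only that the semistable non-generic setting of this paper does not enter the proof.
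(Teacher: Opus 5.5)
Your proposal is correct and matches the paper, which proves this proposition only by invoking \cite[Theorem 4.1]{2019DINGSimple}. That result is a Breuil--Hellmann--Schraen-type statement depending only on $\overline{r}$ and the patching datum, not on $\rho_p$, and your outline of (1)--(3) reflects its proof faithfully, apart from one slip in your sketch of (3): it is the dual $\Pi_\infty^\vee$ of a unit ball of $\Pi_\infty$, not $\Pi_\infty$ itself, that is finite projective over $S_\infty[[K]]$.
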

		The patched eigenvariety $X_{\fp}(\overline{\rho})$ is related to the trianguline variety $X_{\mathrm{tri}}(\overline{r})$ in \cite{breuil2017interpretation}  as follows.\;Let $\iota_{\fp}:\cT\rightarrow \cT$ be the automorphism defined by 
		\[\iota_{\fp}(\delta_1,\cdots,\delta_n):=\delta_{\bB}\cdot(\delta_1,\delta_2\unr(p^{-1}),\cdots,\delta_n\unr(p^{-(n-1)})).\]
		Note that $\delta_{\bB}=\eta^{-2}$ is  the modulus character of $\bB$ and  $\iota_{\fp}(\delta_1,\cdots,\delta_n)=(\delta_1,\cdots,\delta_n)\cdot \zeta$,\;where
		\[\zeta:=\big(\unr(p^{1-n}),\cdots,\unr(p^{i-n})z^{i-1},\cdots,\;z^{n-1}\big).\;\]
		Then $\mathrm{id}\times \iota_{\fp}$ induces an isomorphism of rigid spaces $\mathrm{id}\times \iota_{\fp}:\mathfrak{X}_{\overline{r}}\times\cT
		\xrightarrow{\sim} \mathfrak{X}_{\overline{r}}\times\cT$.\;Let $\iota_{\fp}\big(X_{\mathrm{tri}}(\overline{r})\big)$ be the image of $X_{\mathrm{tri}}(\overline{r})$ via this automorphism.\;Then the natural embedding 
		\[X_{\fp}(\overline{\rho})\hookrightarrow\FX_{\infty}\times\cT\cong (\Spf R_{\infty})^{\rig}\times\cT\cong \FX_{\overline{\rho}^{\fp}}\times \BU^g\times \mathfrak{X}_{\overline{r}}\times\cT\]
		factors through $X_{\fp}(\overline{\rho})\hookrightarrow\FX_{\overline{\rho}^{\fp}}\times \BU^g\times \iota_{\fp}\big(X_{\mathrm{tri}}(\overline{r})\big)$.\;Therefore,\;$\iota_\fp$ induces morphisms
		\begin{equation}\label{injpatchtotri1}
			\iota_\fp^{-1}:Y(U^{\fp},\overline{\rho})\hookrightarrow X_{\fp}(\overline{\rho})\rightarrow X_{\mathrm{tri}}(\overline{r}).
		\end{equation}
		For each irreducible component $\FX^{\fp}$ of  $\FX_{\overline{\rho}^{\fp}}$,\;there is a (possibly empty) union $X^{\FX^{\fp}-\mathrm{aut}}_{\mathrm{tri}}(\overline{r})$ of irreducible components of $X_{\mathrm{tri}}(\overline{r})$ such that we have an isomorphism of closed analytic subsets of $\FX_{\infty}\times\cT$:
		\begin{equation}\label{comparepatchedandtrivar}
			X_{\fp}(\overline{\rho})\cong\bigcup_{\FX^{\fp}}\FX^{\fp}\times \iota_{\fp}\big(X^{\FX^{\fp}-\mathrm{aut}}_{\mathrm{tri}}(\overline{r})\big)\times \BU^g.
		\end{equation}
		For any point $y\in X_{\fp}(\overline{\rho})\hookrightarrow\FX_{\infty}\times\cT\cong \FX_{\infty}^{{\fp}}\times \mathfrak{X}_{\overline{r}}\times\cT$,\;denote by $r_{y}$ (resp.,\;$\fm_{r_y}$) its image in $\FX_{\infty}$ (resp.,\;the corresponding maximal ideal of $R_{\infty}[1/p]$).\;Denote by $r_{y,\fp}$ (resp.,\;$r_y^{\fp}$) its image in $\mathfrak{X}_{\overline{r}}$ (resp.,\;$\FX^{\fp}_{\infty}$),\;and by $\underline{\epsilon}_y$ its image in $\cT$ (so $y=(r_y^{\fp},r_{y,\fp},\underline{\epsilon}_y)=(r_y,\underline{\epsilon}_y)$).\;

		\subsection{Main theorem}
		
		This section follows the route of \cite[Section 4.1]{ParaDing2024}.\;We fix a Galois representation $\rho\in \FX_{\infty}$.\;We make the following Hypothesis.\;
		\begin{hypothesis}\label{hyongaloisrep1}
			\begin{itemize}
				\item[(a)] $\rho_p:=\rho|_{\gal_{F^+_{\fp}} }$ is a generic semistable  $p$-adic Galois representation with regular Hodge-Tate weights $\bh=(\hpi_{1}>\hpi_{2}>\cdots>\hpi_{n})$.\;
				\item[(b)]  Let $\Dpik:=D_{\rig}(\rho_p)$.\;For any $u\in \sW_n^{S_0}$,\;$\Dpik$ admits a non-critical triangulation $\cF_u$ with parameters $\delta_{\underline{\phi},u}$ (keep the notation in  Section \ref{Omegafil}).\;
				\item[(c)] Put $\lambda_{\bh}:=\bh-\theta\in X_{\Delta}^+$ and  $y_{u}:=(\rho,\delta_{\bB}\unr(\underline{\phi}^u)\chi_{{\lambda_{\bh}}})\in \FX_{\overline{\rho},U^{\fp}}\times \cT$,\;then $y_u\in X_{\fp}(\overline{\rho})$.
			\end{itemize}
		\end{hypothesis}
		Note that $x_{u}:=\iota^{-1}_{\fp}(y_u)=(\rho_p,\delta_{\underline{\phi},u})\in X_{\mathrm{tri}}(\overline{r})$.\;Similar to \cite[Proposition 4.3,\;Corollary 4.4 and Lemma 4.6]{2019DINGSimple},\;the non-critical assumption and \cite[Theorem 4.35]{PATCHING2016} show that
		\begin{lem}
			\begin{itemize}
				\item[(1)] $x_u$ is a smooth point of $X_{\mathrm{tri}}(\overline{r})$ and  $y_{u}$ is a non-critical classical smooth point of $X_{\fp}(\overline{\rho})$.\;Moreover,\;$\cM_{\infty}$ is locally free of rank $1$ at  $y_{u}$.\;
				\item[(2)] $y_{u}$ does not admit companion points of non-dominant weight.\;
			\end{itemize}
			
		\end{lem}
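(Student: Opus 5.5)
The plan is to transfer known properties of the trianguline variety to $x_u$ via the embedding (\ref{injpatchtotri1}) and the local description (\ref{comparepatchedandtrivar}). This follows the model of \cite[Proposition 4.3, Corollary 4.4, Lemma 4.6]{2019DINGSimple}.

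\textbf{Smoothness of $x_u$.} By Hypothesis \ref{hyongaloisrep1}, the triangulation $\cF_u$ of $\Dpik$ is non-critical with parameter $\delta_{\underline{\phi},u}$. The weights of $\delta_{\underline{\phi},u}$ are strictly dominant, namely $\bh_1>\cdots>\bh_n$. Non-criticality means the Hodge filtration is in general position with respect to $\cF_u$; equivalently, the relative position $w_x$ of the Hodge flag and the $\cF_u$-flag is $w_0$.

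By \cite[Theorem 4.35]{PATCHING2016}, together with the local model theory of the trianguline variety, a point with $w_x=w_0$ and dominant weight is smooth on $X_{\mathrm{tri}}(\overline{r})$. Concretely, the local model at such a point is the smooth piece of the Grothendieck--Springer type variety attached to the open Bruhat cell.

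One subtlety must be checked. $\delta_{\underline{\phi},u}$ need not be generic in the sense $\delta_i\delta_j^{-1}\neq z^{\pm k}$, $\epsilon z^{k}$: for $i\in I_0$ we have $\phi_{i+1}=p\phi_i$. I would handle this as in \cite[Proposition 4.3]{2019DINGSimple}. By Proposition \ref{profordimofdefor}, $\dim_E\ext^1_u(\Dpik,\Dpik)=1+\frac{n(n+1)}{2}$. Since $\hH^2$ of $\EndO_{\cF_u}(\Dpik)$ vanishes, the trianguline deformation functor at $(\Dpik,\cF_u)$ is formally smooth of this dimension. This equals $\dim X_{\mathrm{tri}}(\overline{r})-n^2+1$, and it gives smoothness via the standard comparison between $\widehat{\cO}_{X_{\mathrm{tri}},x_u}$ and the framed trianguline deformation ring.

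\textbf{Transfer to $y_u$.} Through (\ref{comparepatchedandtrivar}), $X_{\fp}(\overline{\rho})$ is locally a product $\FX^{\fp}\times\iota_{\fp}(X_{\mathrm{tri}}^{\FX^{\fp}-\mathrm{aut}})\times\BU^g$. The point $\rho$ lies on a smooth point of $\FX^{\fp}$ under the Taylor--Wiles hypotheses, since the unramified/potentially diagonalizable local factors are smooth. Hence $y_u$ is smooth on $X_{\fp}(\overline{\rho})$. Classicality is automatic because $\chi_{\lambda_{\bh}}$ is dominant and $y_u$ is non-critical.

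\textbf{Local freeness of $\cM_\infty$.} $\cM_\infty$ is Cohen--Macaulay by Proposition \ref{basicpropertyeigen}(3), and $y_u$ is a smooth point, hence regular. By Auslander--Buchsbaum, $\cM_\infty$ is therefore locally free near $y_u$. Its rank equals the generic rank, which is $1$ at very classical non-critical generic points by multiplicity one in the patched setting. Such points accumulate at $y_u$ by Proposition \ref{basicpropertyeigen}(2), so the rank at $y_u$ is $1$.

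\textbf{No companion points of non-dominant weight.} A companion point $(\rho,\delta_{\bB}\unr(\underline{\phi}^u)\chi_{w\cdot\lambda})$ with $w\neq1$ would, by the Breuil--Hellmann--Schraen description \cite{PATCHING2016}, force $w\preceq w_x$ in a way requiring $w_x\neq w_0$, i.e.\ a critical relative position. Alternatively, its image in $X_{\mathrm{tri}}$ would produce a triangulation of $\Dpik$ with parameter $\unr(\underline{\phi}^u)z^{w(\bh)}$. That would give a saturated rank-one submodule of $\Dpik$ with Hodge--Tate weight different from $\bh_1$ and Frobenius $\phi_{u^{-1}(1)}$, contradicting non-criticality of all $\cF_{u'}$, since every $(\varphi,N)$-stable flag is some $\cF_{u'}$.

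\textbf{Main obstacle.} I expect the main obstacle to be the smoothness in the non-generic situation $I_0\neq\emptyset$. There the standard generic hypotheses of \cite{PATCHING2016} fail, and one must rely on the explicit dimension count of Proposition \ref{profordimofdefor} instead.
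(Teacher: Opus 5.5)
Your route is the one the paper intends: its proof only defers to Ding's Proposition 4.3, Corollary 4.4 and Lemma 4.6. Your handling of the smoothness of $x_u$ despite non-genericity is the right substitute for the generic results. It uses formal smoothness of the trianguline deformation functor (from $\hH^2_{(\varphi,\Gamma)}(\EndO_{\cF_u}(\Dpik))=0$), the count $\dim_E\ext^1_u(\Dpik,\Dpik)=1+\frac{n(n+1)}{2}=\dim X_{\mathrm{tri}}(\overline r)-n^2+1$, and a closed immersion of the completed local ring into the framed trianguline deformation space.

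The transfer to $y_u$, however, is not justified. Smoothness of $y_u$ on $X_{\fp}(\overline{\rho})$, and with it your Auslander--Buchsbaum step for $\cM_\infty$, requires $\rho^{\fp}$ to be a smooth point of $\FX^{\fp}_\infty$. The Taylor--Wiles hypotheses only constrain the image of $\overline{\rho}$, so they say nothing about this. The local factors of $R^{\fp}_\infty$ at places $v\nmid p$ where $\overline{\rho}$ ramifies (and similarly at $p$-adic places $v\neq\fp$) have generic fibres that are singular in general. Potential diagonalizability is irrelevant here. You need an extra input: for instance, that $\rho$ is automorphic, so that each $\rho_{\tilde v}$ with $v\nmid p$ is generic by local--global compatibility and $\hH^2(\gal_{F_{\tilde v}},\mathrm{ad}\,\rho_{\tilde v})=0$. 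Alternatively, assume smoothness of $\rho^{\fp}$ explicitly.

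The companion-point step has a genuine gap. The Bruhat-order description $w\preceq w_x$ comes from the local model and is established only under genericity assumptions. But $\delta_{\underline{\phi},u}$ is non-generic exactly when $I_0\neq\emptyset$, the very issue you raised yourself.

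Your alternative argument misreads what a point $(\rho_p,\delta')\in X_{\mathrm{tri}}(\overline r)$ provides. Global triangulation by itself gives only a triangulation of $\Dpik[1/t]$ with graded pieces $\cR_E(\delta'_i)[1/t]$. Since $\cR_E(\delta'_i)[1/t]=\cR_E(\unr(\phi_{u^{-1}(i)}))[1/t]$ forgets the weight, this is just $\cF_u[1/t]$, whose saturation is $\cF_u$ with parameter $\delta_{\underline{\phi},u}$. So no saturated submodule of the wrong Hodge--Tate weight appears, whatever $w$ is. In any case, ``weight different from $\bh_1$'' would miss every $w$ with $w(1)=1$.

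What is missing is a one-sided constraint between $\delta'$ and the saturated parameter. One such constraint:
\begin{itemize}
\item $\hH^0_{(\varphi,\Gamma)}(\Dpik(\delta_1'^{-1}))\neq 0$ at every point of $X_{\mathrm{tri}}(\overline r)$, by upper semicontinuity from the Zariski-dense set of points that are trianguline with the given parameter.
\item The resulting map $\cR_E(\delta'_1)\hookrightarrow\Dpik$ has saturation $t^{-k}\cR_E(\delta'_1)$ with $k\geq 0$.
\item Non-criticality identifies this saturation with $\fil_1^{\cF_u}\Dpik$, so $\mathrm{wt}(\delta'_1)=\bh_1+k$. Since all weights are at most $\bh_1$, this forces $k=0$ and $\mathrm{wt}(\delta'_1)=\bh_1$.
\end{itemize}
An inductive version of this along the flag is then needed to conclude $w=1$.

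Finally, classicality of $y_u$ does not follow from dominance alone. It is deduced from (2) by the usual adjunction/socle argument, so (2) must be proved first.
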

		Recall that the completion of $R_{\overline{r}}^{\square}[1/p]$ at $\rho_p$ is natural to $R^{\Box}_{\rho_p}\cong R^{\Box}_{\Dpik}$,\;where $R^{\Box}_{\rho_p}$ is the framed universal deformation ring of $\rho$ (over $\Art_E$).\;Let $R^{\Box}_{\Dpik}:=R_{\Dpik}\otimes_{R_{\rho_p}}R^{\Box}_{\rho_p}$.\;Put $R_{\Dpik,u}^{\Box}:=R_{\Dpik}^{\Box}\widehat{\otimes}_ER_{\Dpik,u}$ for $u\in \sW_n^{S_0}$.\;As $x_u$ in non-critical,\;the completion of $X_{\mathrm{tri}}(\overline{r})$ at $x_u$ is naturally isomorphic to $R_{\Dpik,u}^{\Box}$.\;Keep the notation in Section \ref{sectionforunverext} and the discussion below \cite[(4.3)]{ParaDing2024}.\;
		
		Let $\fa\supseteq\fm^2_{R^{\Box}_{\Dpik}}$ be an ideal of $R^{\Box}_{\Dpik}$ satisfies that $\fa/\fm^2_{R^{\Box}_{\Dpik}}\oplus \fm_{A_{\Dpik}}/\fm^2_{A_{\Dpik}}\xrightarrow{\sim } \fm_{R^{\Box}_{\Dpik}}/\fm^2_{R^{\Box}_{\Dpik}}$.\;Then the composition $A_{\Dpik}\hookrightarrow R^{\Box}_{\Dpik}/\fm^2_{R^{\Box}_{\Dpik}}\twoheadrightarrow R^{\Box}_{\Dpik}/\fa$ is an isomorphism (i.e.,\;$\fa$ delete the information comes from framing).\;In particular,\;we get $\fa/\fm^2_{R^{\Box}_{\Dpik,u}}\oplus \fm_{A_{\Dpik,u}}/\fm^2_{A_{\Dpik,u}}\xrightarrow{\sim } \fm_{R^{\Box}_{\Dpik,u}}/\fm^2_{R^{\Box}_{\Dpik,u}}$ and $\fa/\fm^2_{R^{\Box}_{\Dpik,u,g}}\oplus \fm_{A_{\Dpik,g}}/\fm^2_{A_{\Dpik,g}}\xrightarrow{\sim } \fm_{R^{\Box}_{\Dpik,g}}/\fm^2_{R^{\Box}_{\Dpik,g}}$,\;so that 
		$A_{\Dpik,u}\hookrightarrow R^{\Box}_{\Dpik,u}/\fm^2_{R^{\Box}_{\Dpik,u}}\twoheadrightarrow R^{\Box}_{\Dpik,u}/\fa$ and $A_{\Dpik,u,g}\hookrightarrow R^{\Box}_{\Dpik,g}/\fm^2_{R^{\Box}_{\Dpik,g}}\twoheadrightarrow R^{\Box}_{\Dpik,g}/\fa$ are isomorphisms too.\;We use $\fa\subseteq R_{\overline{r}}^{\square}[1/p]$ denote the preimage of $\fa\subseteq R_{\overline{r}}^{\square}$.\;Let $\fm_x:=(\fm_{\rho},\fm^{\fp})\subseteq\fa_x:=(\fa,\fm^{\fp})\subseteq R_{\infty}[1/p]$.\;
		
		By \cite[Lemma 4.2]{ParaDing2024},\;we have 
		$\Pi_\infty^{R_\infty-\ana}[\fm_x]=\Pi_\infty^{R_\infty-\ana}[\fa_x][\fm_{A_{\Dpik}}]$.\;By replacing the reference \cite{PATCHING2016} in the proof of \cite[Lemma 4.2 (2)]{ParaDing2024} with \cite[Lemma 5.2,\;Theorem 5.6 and Proposition 5.4]{BSCONJ},\;we also get that
		\[\homo_G(\pi^{\lalg}_{1}\big(\underline{\phi},\bh),\Pi_\infty^{R_\infty-\ana}[\fa_x]\big)\cong \homo_G\big(\pi^{\lalg}_{1}(\underline{\phi},\bh),\Pi_\infty^{R_\infty-\ana}[\fm_x]\big)\cong E.\]
		Let $U_u=U_{\fp,u}\times U^{\fp}_u\subseteq  \iota_{\fp}\big(X_{\mathrm{tri}}(\overline{r}))\times\FX_{\overline{\rho}^{\fp}}^{\Box}$ be a smooth affinoid neighourhood of $y_u$ such $y_{u'}\not\in U_{\fp,u}$ for $u'\neq u$.\;Let $\fm_{y_{u,\fp}}$ be the maximal ideal of $\cO(U_{\fp,w})$ at $y_{u,\fp}$ and $\fa\supset \fm^2_{y_{u,\fp}}$ be the closed ideal generated by the above ideal $\fa\subseteq R_{\overline{r}}^{\square}[1/p]$.\;Consider $\widetilde{\cM}_{y_u}:=\cM_{\infty}/(\fa+\fm^{\fp})$.\;There are natural $\bT(\bQ_p)\times R_{\infty}$-equivariant injections:
		\[{\cM}_{y_u}\hookrightarrow \widetilde{\cM}_{y_u}\hookrightarrow J_{\bB(\bQ_p)}(\Pi_\infty^{R_\infty-\ana})[\fa_x].\]
		Since $\cM_{\infty}$ is locally free of rank $1$ at all $y_{u}$,\;we obtain that $\widetilde{\cM}_{y_u}\cong R^{\Box}_{\Dpik,u}/\fa$,\;so that $\dim_E\widetilde{\cM}_{y_u}=1+d_u$,\;where $d_u:=\dim_E\homo_u(\bT(\bQ_p),E))$.\;In this case,\;we have a $\bT(\bQ_p)\times A_{\Dpik,u}$-equivariant isomorphism $\widetilde{\cM}^{\vee}_{y_u}\cong \widetilde{\delta}_u^{\univ,-}\delta_{\bB}$.\;Similar to the proof of \cite[Lemma 4.11]{2019DINGSimple},\;the maps ${\cM}_{y_u}\hookrightarrow  \Pi_\infty^{R_\infty-\ana}[\fm_x]$ and $ \widetilde{\cM}_{y_u}\hookrightarrow \Pi_\infty^{R_\infty-\ana}[\fa_x]$ are balanced by using the non-critical assumption),\;hence induces a $G\times R_{\infty}$-equivariant injection:
		\begin{equation}
			\begin{aligned}
				&\iota_u:I^G_{\ob(\bQ_p)}\widetilde{\delta}_u^{\univ,-}\hookrightarrow \Pi_\infty^{R_\infty-\ana}[\fa_x].\;
			\end{aligned}
		\end{equation}
		Recall that $U_u$ is  the intersection of $I^G_{\ob(\bQ_p)}\widetilde{\delta}_u^{\univ,-}$  with  $\ker(\mathrm{PS}_{u}(\underline{\phi},\bh)\twoheadrightarrow \mathrm{ST}_{u}(\underline{\phi},\bh))$.\;Similar to the argument in the proof of \cite[Proposition 4.7 (a)]{2019DINGSimple},\;we see that $\iota_u$  factors through the projection $I^G_{\ob(\bQ_p)}\widetilde{\delta}_u^{\univ,-}\twoheadrightarrow I^G_{\ob(\bQ_p)}\widetilde{\delta}_u^{\univ,-}/U_u$.\;Similar to \cite[Lemma 4.4]{ParaDing2024},\;we have (recall the map $p^{\Delta}_{\mathrm{ST},u}$ in (\ref{parametersmaininjtosimple}))
		\begin{pro}\label{imiotawmx}
			We have $\mathrm{ST}^{\Delta,-}_{\cF_{u}}(\underline{\phi},\bh)\hookrightarrow \mathrm{Im}(\iota_u)[\fm_x]$.\;In particular,\;$\mathrm{Im}(\iota_u)[\fm_x]$ determines $p^{\Delta}_{\mathrm{ST},u}([\underline{\sL}(\Dpik)])$.\; \end{pro}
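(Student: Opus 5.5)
We model the argument on \cite[Lemma 4.4]{ParaDing2024} and \cite[Proposition 4.7]{2019DINGSimple}. First we use the factorization already established, namely that $\iota_u$ descends to an injection
\[\overline{\iota}_u:\; I^G_{\ob(\bQ_p)}\widetilde{\delta}_u^{\univ,-}/U_u\hookrightarrow \Pi_\infty^{R_\infty-\ana}[\fa_x].\]
Next we take $\fm_x$-torsion. The quotient $I^G_{\ob(\bQ_p)}\widetilde{\delta}_u^{\univ,-}/U_u$ carries the $A_{\Dpik,u}$-action described in Section \ref{sectionforunverext}, and $\overline{\iota}_u$ is equivariant for it. By \cite[Lemma 4.2]{ParaDing2024} we have $\Pi_\infty^{R_\infty-\ana}[\fm_x]=\Pi_\infty^{R_\infty-\ana}[\fa_x][\fm_{A_{\Dpik}}]$, so $\mathrm{Im}(\iota_u)[\fm_x]$ is the image of the $\fm_{A_{\Dpik,u}}$-torsion of the source.

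The key point is to identify the source with the $\bL_{S_0(u)}$-level representation. We will show that the universal extension $\widetilde{\delta}_u^{\univ,-}$, with tangent directions constrained to $\homo_u(\bT(\bQ_p),E)$ (the simple $\sL$-invariants $\sL^u_{i,j}$ for $u^{-1}(i)\in S_0$), is exactly what appears when one forms $\big(\ind^G_{\op_{S_0(u)}}\pi^{\Delta}_{\ana}(\underline{\phi}^u)\eta_{S_0(u)}\big)^{\ana}$. Concretely, by transitivity of parabolic induction we pass through $\op_{S_0(u)}$. On each Levi block $\GLN_{r_{u,i}}$, the relevant part of $\widetilde{\delta}_u^{\univ,-}$ restricted along the simple root $j\in\Delta_{S_0(u),i}$ produces the extension
\[\pi^{\sharp}_1(\underline{\phi}_{E_{u,i}},\bh_{E_{u,i}})-Q_{\Delta_{S_0(u),i}}(j,\lambda_{E_{u,i}}),\]
which encodes $\sL^{\Delta}(E_{u,i})_j$ as in \cite{2019DINGSimple}. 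Here we use Proposition \ref{profordimofdefor}, namely that $\kappa_u$ surjects onto $\homo_u(\bT(\bQ_p),E)$, together with the identification $\widetilde{\cM}^{\vee}_{y_u}\cong\widetilde{\delta}_u^{\univ,-}\delta_{\bB}$.

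Taking the maximal quotient with socle $Q^\diamond_\Delta(\emptyset,\lambda)$ (which is what modding out by $U_u$ achieves) and restricting to the part killed by $\fm_{A_{\Dpik,u}}$ then yields $\mathrm{ST}^{\Delta,-}_{\cF_u}(\underline{\phi},\bh)\hookrightarrow \mathrm{Im}(\iota_u)[\fm_x]$. Since $\Pi_\infty[\fm_x]$ has a unique copy of $\pi_1^{\lalg}$ (the $\homo$ computation cited above), the embedding is compatible with the fixed socle.

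The main obstacle is this identification step. One must check that the $\fm$-torsion of the extension contains exactly the extension classes
\[\big[\pi_1^{\lalg}(\underline{\phi},\bh)-C_{i,u}-Q(I_u^-,\lambda)\big],\qquad i\in S_0(u).\]
It must not contain split ones, and the line in $\homo(\bQ_p^\times,E)$ they define must be $\sL^{\Delta}(E_{u,i})_j$. We would argue this rank-$2$ block by block, reducing via Schraen's spectral sequence (as in (\ref{pi1vBsecond})) to a $\GLN_2$ statement for $R^{i+1}_{u,i}$. That statement is Ding's simple $\sL$-invariant result \cite[Proposition 4.7]{2019DINGSimple}.

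For the final claim, distinct lines $\sL^{\Delta}(E_{u,i})_j$ give non-isomorphic extensions, by the $1$-dimensionality statements of Lemma \ref{parameterlineextgp} and Remark \ref{rmkforextensionsquare}. Hence $\mathrm{Im}(\iota_u)[\fm_x]$ recovers all $\sL^{\Delta}(E_{u,i})$, $1\le i\le f_u$, that is, $p^{\Delta}_{\mathrm{ST},u}([\underline{\sL}(\Dpik)])$.
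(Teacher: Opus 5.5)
Your proposal follows essentially the same route as the paper. The paper factors $\iota_u$ through $I^G_{\ob(\bQ_p)}\widetilde{\delta}_u^{\univ,-}/U_u$, passes to $\fm_x$-torsion via the $A_{\Dpik,u}$-action, and runs Ding's simple-$\sL$-invariant argument block by block to locate $\mathrm{ST}^{\Delta,-}_{\cF_{u}}(\underline{\phi},\bh)$ inside that torsion; for this it cites \cite[Theorem 4.10]{2019DINGSimple}, the argument around \cite[(96)]{2019DINGSimple}, and \cite[Lemma 4.4]{ParaDing2024}. One correction: the universal extension is not literally identified with $\mathrm{PS}^{\Delta}_{\cF_{u}}(\underline{\phi},\bh)$, and what your block-by-block step actually needs is only that $\mathrm{ST}^{\Delta,-}_{\cF_{u}}(\underline{\phi},\bh)$ occurs as a subrepresentation of the $\fm$-torsion of $I^G_{\ob(\bQ_p)}\widetilde{\delta}_u^{\univ,-}/U_u$.
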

		\begin{proof}
			The argument in the proof of \cite[Lemma 4.4]{ParaDing2024} show that $\mathrm{ST}_{u,1}(\underline{\phi},\bh)\hookrightarrow (\mathrm{Im}\iota_u)[\fm_x]$.\;We further show that $\mathrm{ST}^{\Delta,-}_{\cF_{u}}(\underline{\phi},\bh)\hookrightarrow (\mathrm{Im}\iota_u)[\fm_x]$,\;this assertion is essentially obtained by applying the proof in \cite[Theorem 4.10]{2019DINGSimple} to the Steinberg blocks $\{E_{u,i}\}_{1\leq i\leq f_u}$ of the $\bP_{S_0(u)}$-parabolic filtration $\cF_{S_0(u)}$.\;We see that the representation $\mathrm{ST}^{\Delta,-}_{\cF_{u}}(\underline{\phi},\bh)$ lives in $I^G_{\ob(\bQ_p)}\widetilde{\delta}_u^{\univ,-}$,\;by the argument around \cite[(96)]{2019DINGSimple} (and a similar strategy as in the first paragraph of \cite[Lemma 4.4]{ParaDing2024}),\;such representation is also killed by $\fm_x$.\;We complete the proof.\;
		\end{proof}
		
		Let $\widetilde{\pi}$ be the subrepresentation of $\Pi_\infty^{R_\infty-\ana}[\fa_x]$ generated by $\mathrm{Im}(\iota_u)$  for all $u\in \sW_n^{S_0}$.\;Note that $\widetilde{\pi}$ has an $A_{\Dpik}$-action via $A_{\Dpik}\xrightarrow{\sim}R^{\Box}_{\Dpik,u}/\fa \xrightarrow{\sim}R_{\infty}[1/p]/\fa_x$.\;
		\begin{thm}\label{mainthmglobal}There is an $A_{\Dpik}\times G$-equivariant isomorphism $\widetilde{\pi}\cong \pi_{1}(\underline{\phi},\bh)^{\univ,-}$.\;Moreover,\; $\pi^{\sharp,\Delta,-}_{\min}(\Dpik)\hookrightarrow \widetilde{\pi}[\fm_x]$.\;
		\end{thm}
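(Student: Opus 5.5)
We will follow the strategy of \cite[Theorem 4.5]{ParaDing2024} and \cite[Theorem 4.10]{2019DINGSimple}. The first step is to produce a $G$-equivariant map $\pi_{1}(\underline{\phi},\bh)^{\univ,-}\rightarrow\widetilde{\pi}$. For each $u\in\sW_n^{S_0}$, the injection $\iota_u$ factors through $I^G_{\ob(\bQ_p)}\widetilde{\delta}_u^{\univ,-}/U_u$, and we extend it by amalgamation along $\mathrm{ST}_{u,1}(\underline{\phi},\bh)$ to a map $\pi_{1}(\underline{\phi},\bh)^{\univ,-}_u\rightarrow\Pi_\infty^{R_\infty-\ana}[\fa_x]$. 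Well-definedness uses two facts. First, $\homo_G(\pi_1^{\lalg}(\underline{\phi},\bh),\Pi_\infty^{R_\infty-\ana}[\fa_x])\cong E$, so the locally algebraic socles coincide up to scalar. Second, each $C_{j,u}$ has $1$-dimensional extension group with $\pi_1^{\lalg}$ (Lemma \ref{parameterlineextgp}), and the $V_{j,u}$ lie in $\Pi_\infty[\fm_x]$ by Proposition \ref{imiotawmx}. Hence the subrepresentation generated by the $\mathrm{ST}_{u,1}$ is the amalgamated sum $\pi_1^{\sharp}(\underline{\phi},\bh)$, which injects into $\Pi_\infty^{R_\infty-\ana}[\fm_x]$. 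Normalising the scalars on $\pi_1^{\lalg}$, the maps from $\pi_{1}(\underline{\phi},\bh)_u^{\univ,-}$ glue to a map from the amalgamated sum over $\pi_1^{\sharp}(\underline{\phi},\bh)$. Its image is $\widetilde{\pi}$ by definition.

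The second step is to show that the glued map factors through $\pi_{1}(\underline{\phi},\bh)^{\univ,-}$ and is injective. For this we compare extension classes. The sum of the images of the $\zeta_u(\homo_u(\bT(\bQ_p),E))$ inside $\ext^1_G(\pi_{\natural}^{\lalg},\pi_1^{\sharp})$ is exactly $\mathrm{Im}(\gamma_{\Dpik})$, and $\widetilde{\pi}$ defines a point of $\ext^1_G(\pi_{\natural}^{\lalg},\pi_1^{\sharp})\otimes W^\vee$ for $W$ the space of extensions it realises. The key input is the $A_{\Dpik}$-equivariance. On $\Pi_\infty[\fa_x]$, the ring $A_{\Dpik}\cong R_\infty[1/p]/\fa_x$ acts, and each $\iota_u$ is $A_{\Dpik,u}$-equivariant via $A_{\Dpik}\twoheadrightarrow A_{\Dpik,u}$. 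By Theorem \ref{descibeimagetD}, the relations among the $\zeta_u\circ\kappa_u$ in $\ext^1_G$ are exactly the relations in $\bigoplus_u\overline{\ext}^1_u(\Dpik,\Dpik)\rightarrow\overline{\ext}^1(\Dpik,\Dpik)$. Thus the kernel of the glued map is precisely the kernel of $\bigoplus_u\pi^{\univ,-}_{1,u}\rightarrow\pi_{1}(\underline{\phi},\bh)^{\univ,-}$. Here we use the uniqueness statement in Proposition \ref{actionADonuniversal} together with $\mathrm{soc}_G\widetilde{\pi}=Q_\Delta^{\diamond}(\emptyset,\lambda)$, which prevents any extra collapsing. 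We then obtain an $A_{\Dpik}\times G$-isomorphism $\widetilde{\pi}\cong\pi_{1}(\underline{\phi},\bh)^{\univ,-}$.

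For the second assertion, we note that $\widetilde{\pi}[\fm_x]$ is the subspace killed by $\fm_{A_{\Dpik}}$. Under the isomorphism just obtained, this corresponds to the preimage of $\ker(t^{\sharp}_D)\otimes\pi_{\natural}^{\lalg}$, via the identification $t^{\sharp}_D\circ\gamma_{\Dpik}=f_{\Dpik}\circ\overline{g}_{\Dpik}$ of Theorem \ref{descibeimagetD}. Indeed, $x\in\fm_{A_{\Dpik}}/\fm^2\cong\overline{\ext}^1(\Dpik,\Dpik)^\vee$ kills exactly the classes in the kernel of the tangent map. This gives $\pi^{\sharp}_{\min}(\Dpik)\hookrightarrow\widetilde{\pi}[\fm_x]$. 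Combining with Proposition \ref{imiotawmx}, namely $\mathrm{ST}^{\Delta,-}_{\cF_u}\hookrightarrow\mathrm{Im}(\iota_u)[\fm_x]$ for all $u$, and amalgamating over $\pi^{\sharp}_1$, we get $\pi^{\sharp,\Delta,-}_{\min}(\Dpik)\hookrightarrow\widetilde{\pi}[\fm_x]$.

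The main obstacle is the injectivity and gluing in the second step. One must check that no additional extension classes of $\pi_{\natural}^{\lalg}$ by $\pi_1^{\sharp}$ are identified in $\Pi_\infty$ beyond those forced by $\overline{g}_{\Dpik}$. I would handle this through the dimension count: $\dim\widetilde{\pi}^{\lalg}$-multiplicity equals $\dim\mathrm{Im}(\gamma_{\Dpik})$, obtained from the local freeness of rank one of $\cM_\infty$ at each $y_u$ together with the $A_{\Dpik}$-action.
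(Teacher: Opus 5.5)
Your architecture matches the paper's, which follows Ding's Theorem 4.5: glue the $\iota_u$ along $\pi_1^{\sharp}(\underline{\phi},\bh)$, identify the result with $\pi_1(\underline{\phi},\bh)^{\univ,-}$, take $\fm_x$-torsion, then add Proposition \ref{imiotawmx}. The gap is in your second step, which rests on a false claim.

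Theorem \ref{descibeimagetD} gives $t^{\sharp}_D\circ\gamma_{\Dpik}=f_{\Dpik}\circ\overline{g}_{\Dpik}$ with $f_{\Dpik}$ an isomorphism. This yields only $\ker\gamma_{\Dpik}\subseteq\ker\overline{g}_{\Dpik}$, and more precisely $\gamma_{\Dpik}(\ker\overline{g}_{\Dpik})=\ker(t^{\sharp}_D)\cap\mathrm{Im}(\gamma_{\Dpik})$. Suppose the relations among the $\zeta_u\circ\kappa_u$ in $\ext^1_G$ were exactly those of $\bigoplus_u\overline{\ext}^1_u(\Dpik,\Dpik)\to\overline{\ext}^1(\Dpik,\Dpik)$. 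Then this space would vanish, and the map $\mathrm{Im}(\gamma_{\Dpik})\to\overline{\ext}^1(\Dpik,\Dpik)$ dual to the $A_{\Dpik}$-action would be injective. Consequently $\widetilde{\pi}[\fm_x]$ would carry no extension class beyond $\pi_1^{\sharp}(\underline{\phi},\bh)$, so your second step contradicts your third.

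Likewise, $A_{\Dpik}$-equivariance does not force the identifications realised in $\Pi_\infty$ to be those of $\overline{\ext}^1(\Dpik,\Dpik)$. It only shows they lie in $\ker\overline{g}_{\Dpik}$, which is what makes the action of Proposition \ref{actionADonuniversal} well defined. Your fallback dimension count does not help either. Rank-one local freeness of $\cM_\infty$ at $y_u$ gives $\dim_E\widetilde{\cM}_{y_u}=1+d_u$, which controls each $\mathrm{Im}(\iota_u)$ separately. It says nothing about how the images for different $u$ overlap inside their sum.

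What does the work in the paper is the socle condition you mention only in passing, and it suffices on its own. Since $\homo_G(\pi_1^{\lalg}(\underline{\phi},\bh),\Pi_\infty^{R_\infty-\ana}[\fa_x])\cong E$, the socle of $\widetilde{\pi}$ is that of $\pi^{\sharp}_1(\underline{\phi},\bh)$. Hence the classifying map of $\widetilde{\pi}$, viewed as an extension of copies of $\pi^{\lalg}_\natural(\underline{\phi},\bh)$ by $\pi^{\sharp}_1(\underline{\phi},\bh)$, is injective. So $\widetilde{\pi}$ embeds in the universal extension $\pi_1(\underline{\phi},\bh)^{\univ}$ of $\ext^1_G(\pi^{\lalg}_\natural,\pi^{\sharp}_1)\otimes_E\pi^{\lalg}_\natural$ by $\pi^{\sharp}_1$.

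Being generated by the pieces coming from the $I^G_{\ob(\bQ_p)}\widetilde{\delta}_u^{\univ,-}$, $\widetilde{\pi}$ is the sub-extension attached to $\sum_u\zeta_u(\homo_u(\bT(\bQ_p),E))=\mathrm{Im}(\gamma_{\Dpik})$, i.e.\ $\pi_1(\underline{\phi},\bh)^{\univ,-}$. Thus the identifications realised in $\Pi_\infty$ are exactly $\ker\gamma_{\Dpik}$, with no deformation-theoretic input. One then transports the $R_\infty$-action and identifies it with that of Proposition \ref{actionADonuniversal} by uniqueness.

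The $\fm_x$-torsion is then the sub-extension attached to $\gamma_{\Dpik}(\ker\overline{g}_{\Dpik})=\ker(t^{\sharp}_D)\cap\mathrm{Im}(\gamma_{\Dpik})$. The paper records this as $\pi^{\sharp}_{\min}(\Dpik)=\pi_1(\underline{\phi},\bh)^{\univ,-}[\fm_{A_{\Dpik}}]$. Replace your second step by this argument; the remainder then follows the paper's route.
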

		\begin{proof}
			Similar to the argument in the proof \cite[Theorem 4.5]{ParaDing2024},\;we see that $\widetilde{\pi}$ is a subrepresentation of $\pi_{1}(\underline{\phi},\bh)^{\univ}$ and contains all $I^G_{\ob(\bQ_p)}\widetilde{\delta}_u^{\univ,-}$ so that $\widetilde{\pi}\cong \pi_{1}(\underline{\phi},\bh)^{\univ,-}$.\;We equip $\pi_{1}(\underline{\phi},\bh)^{\univ,-}$ with an $A_{\Dpik}$-action by the $A_{\Dpik}$-action on $\widetilde{\pi}$ (induced from $R_{\infty}$) and  get a  $G\times A_{\Dpik}$-isomorphism $\widetilde{\pi}\cong \pi_{1}(\underline{\phi},\bh)^{\univ,-}$.\;We thus see that $\pi^{\sharp}_{\min}(\Dpik)= \pi_{1}(\underline{\phi},\bh)^{\univ,-}[\fm_{A_{\Dpik}}]\hookrightarrow \widetilde{\pi}[\fm_x]$.\;The last assertion follows.\;
		\end{proof}
		In particular,\;by Corollary \ref{thmforamosttwo},\;we obtain:
		\begin{cor}\label{thmforamosttwoLGC}
			Keep the situation.\;$\Pi_\infty^{R_\infty-\ana}[\fm_x]$ determines $\Dpik$ if $ \max_{1\leq l\leq s}l_i\leq 2$.\;
		\end{cor}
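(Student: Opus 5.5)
The plan is to deduce the corollary directly from Theorem \ref{mainthmglobal} and Corollary \ref{thmforamosttwo}. First I would check that Theorem \ref{mainthmglobal} applies in the present situation. Hypothesis \ref{hyongaloisrep1} ensures that every $y_u$ is a non-critical classical point of $X_{\fp}(\overline{\rho})$ at which $\cM_\infty$ is locally free of rank $1$. That theorem then yields an injection $\pi^{\sharp,\Delta,-}_{\min}(\Dpik)\hookrightarrow\widetilde{\pi}[\fm_x]$. Since $\widetilde{\pi}\subseteq \Pi_\infty^{R_\infty-\ana}[\fa_x]$ and $\fa_x\subseteq\fm_x$, we have $\widetilde{\pi}[\fm_x]\subseteq\Pi_\infty^{R_\infty-\ana}[\fm_x]$. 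Locally $R_\infty$-analytic vectors are in particular vectors of $\Pi_\infty$, so $\pi^{\sharp,\Delta,-}_{\min}(\Dpik)$ embeds $G$-equivariantly into $\Pi_\infty^{R_\infty-\ana}[\fm_x]\subseteq \Pi_\infty[\fm_x]$.

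Second, I would make the word ``determines'' precise. The socle of $\Pi_\infty^{R_\infty-\ana}[\fm_x]$ restricted to the relevant constituents has multiplicity one, since $\homo_G(\pi^{\lalg}_1(\underline{\phi},\bh),\Pi_\infty^{R_\infty-\ana}[\fm_x])\cong E$. It follows that the subrepresentation of $\Pi_\infty^{R_\infty-\ana}[\fm_x]$ generated by $\mathrm{ST}^{\lalg}$ together with the constituents $C_{j,u}$ and $Q(I_u^-,\lambda)$ is uniquely determined and isomorphic to $\pi^{\sharp,\Delta,-}_{\min}(\Dpik)$. To argue this, I would use the fact that $\ext^1_G(C_{j,u},\pi^{\lalg}_1)$ is one-dimensional (Lemma \ref{parameterlineextgp}) together with the maximality built into the definition of $\pi^{\sharp}_{\min}$ as a quotient of a tautological extension with socle $\mathrm{ST}^{\lalg}$. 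Hence the extension classes, i.e.\ $\ker(t^{\sharp}_D)$ and the extensions in $\pi^{\Delta,-}_1$, can be read off from $\Pi_\infty^{R_\infty-\ana}[\fm_x]$.

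Third, I would apply Corollary \ref{thmforamosttwo}. It says that $\pi^{\sharp,\Delta,-}_{\min}(\Dpik)$ determines $p^{\Delta}_{\mathrm{ST}}([\underline{\sL}(\Dpik)])$ and $\Dpik^{\mathrm{cr}}$. When $\max l_i\le 2$, Corollary \ref{captureparameters2} gives $p^{\Delta}_{\mathrm{ST}}=p_{\mathrm{ST}}$, and Theorem \ref{capturetype1flag} (equivalently Theorem \ref{capturetype1}) then recovers $[\underline{\sL}(\Dpik)]$. Together with $\underline{\phi}$ and $\bh$, which are visible from the locally algebraic socle, this recovers $\Dpik$.

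The main obstacle is the second step: one must exclude the possibility that $\Pi_\infty^{R_\infty-\ana}[\fm_x]$ contains several non-isomorphic copies of such a subrepresentation with different extension data. I would handle this first via multiplicity one of $\pi^{\lalg}_1$ in the socle, and then via the uniqueness of each $V_{i,u}$ and each $[\pi_1^{\lalg}-C_{i,u}-Q(I_u^-,\lambda)]$ given the socle.
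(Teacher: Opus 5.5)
Your Steps 1 and 3 reproduce the paper's one-line deduction exactly. Theorem \ref{mainthmglobal} gives $\pi^{\sharp,\Delta,-}_{\min}(\Dpik)\hookrightarrow\widetilde{\pi}[\fm_x]\subseteq\Pi_\infty^{R_\infty-\ana}[\fm_x]$. Corollary \ref{thmforamosttwo}, via Corollary \ref{captureparameters2} and Theorem \ref{capturetype1flag}, says this subrepresentation determines $\Dpik$ when $\max_i l_i\leq 2$. The paper stops there: ``determines'' is read as ``$\Pi_\infty^{R_\infty-\ana}[\fm_x]$ contains a subrepresentation which determines $\Dpik$''.

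The problem is Step 2, which you flag as the main obstacle and then resolve incorrectly.

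\begin{itemize}
\item Your tools only fix the parameter-free layer. You use multiplicity one of $\pi_1^{\lalg}(\underline{\phi},\bh)$ and the uniqueness of each $V_{i,u}$ (Lemma \ref{parameterlineextgp}). You would also need that no $C_{j,u}$ lies in the socle, which follows from the absence of companion points. Together these pin down only the layer $\pi^{\sharp}_1(\underline{\phi},\bh)$, and that layer carries no Hodge-theoretic information.
\item Your uniqueness claim for the $\Delta$-part is false. You assert that each $[\pi_1^{\lalg}-C_{i,u}-Q(I_u^-,\lambda)]$ is unique given the socle. The paper states explicitly that these classes in $\ext^1(Q(I_u^-,\lambda),[\pi_1^{\lalg}-C_{i,u}])$ carry exactly $\sL^{\Delta}(E_{u,i})$. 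Already for $\GLN_2(\bQ_p)$, the extension $[\pi_1(\underline{\phi},\bh)-L(\lambda)]$ encodes $\cL(\Dpik)$.
\item The top layer is not controlled either. The top layer of $\pi^{\sharp}_{\min}(\Dpik)$ is governed by the subspace $\ker(t^{\sharp}_D)$. The ``maximality'' in the definition of $\pi^{\sharp}_{\min}(\Dpik)$ is an internal property of that representation. It says nothing about which other extension classes might also occur inside $\Pi_\infty^{R_\infty-\ana}[\fm_x]$.
\end{itemize}

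If all these extensions were forced by the socle, $\pi^{\sharp,\Delta,-}_{\min}(\Dpik)$ could not determine any parameter at all, which contradicts Corollary \ref{thmforamosttwo}.

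To get a genuine uniqueness statement, you must rule out that $\Pi_\infty^{R_\infty-\ana}[\fm_x]$ contains the analogous representations for two different parameter values. Two possible routes:
\begin{itemize}
\item Glue the two copies along the parameter-free layer. Then show that a suitable combination of the two distinct classes comes from a locally algebraic extension. That would produce more locally algebraic vectors than classical local--global compatibility allows.
\item Alternatively, exploit the $A_{\Dpik}$-structure on $\widetilde{\pi}\cong\pi_1(\underline{\phi},\bh)^{\univ,-}$ coming from $R_\infty$ in Theorem \ref{mainthmglobal}.
\end{itemize}

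As written, Step 2 should either be dropped, which leaves exactly the paper's argument, or be replaced by an argument of this kind.
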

		
		\section[Appendix]{Appendix}\label{dfnandproofforFMLinv}
		
			\begin{lem}\label{lemdimforsimpleLinv}(The argument around \cite[(61)]{2019DINGSimple})
			Let $\delta:=\unr(p)z^{k}$ for some $k\in \BZ_{\geq 1}$. The cup product induces a perfect pairing:
			\[\hH^1_{(\varphi,\Gamma)}(\cR_E(\delta))\times \hH^1_{(\varphi,\Gamma)}(\cR_E)\xrightarrow{\cup }\hH^2_{(\varphi,\Gamma)}(\cR_E(\delta))\cong E.\]
			The local class field theory gives a natural isomorphism $\hH^1_{(\varphi,\Gamma)}(\cR_E)\cong \homo(\bQ_p^{\times},E)$, which is $2$-dimensional and generated by $\log_{p}$ and $\val_p$. We have $\dim_E\hH^1_{e}(\cR_E(\delta))=1$, $\dim_E\hH^1_{g}(\cR_E(\delta))=2$ and $\hH^1_{e}(\cR_E(\delta))^{\perp}=\homo_{\sm}(\bQ_p^{\times},E)=E\val_p$.
		\end{lem}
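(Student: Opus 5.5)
This is the rank-one computation behind the simple $\sL$-invariants, and I would do it entirely with Liu's Euler characteristic and Tate duality for $(\varphi,\Gamma)$-modules over $\cR_E$.

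First I would compute dimensions. Since $\delta=\unr(p)z^k$ with $k\geq 1$, neither $\delta$ nor $\epsilon\delta^{-1}=\unr(p^{-1})z^{1-k}\cdot x|x|$ is of the form $z^{-i}$ or $\epsilon z^{i}$ for $i\geq 0$ in the exceptional way. Here I take $\epsilon=x|x|$ as a character of $\bQ_p^\times$, so $\epsilon\delta^{-1}=\unr(p^{-1})\cdot x|x|\cdot z^{-k}=z^{1-k}$ up to the unramified twist. The precise check is that $\epsilon\delta^{-1}$ equals $z^{1-k}$, and since $1-k\leq 0$ this is $z^{-i}$ with $i=k-1\geq 0$.

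So $\hH^0(\cR_E(\delta))=0$, while $\hH^2(\cR_E(\delta))\cong\hH^0(\cR_E(\epsilon\delta^{-1}))^\vee\cong E$. The Euler characteristic formula $\dim\hH^0-\dim\hH^1+\dim\hH^2=-1$ then gives $\dim\hH^1(\cR_E(\delta))=2$. On the other side, $\hH^1(\cR_E)\cong\homo(\bQ_p^\times,E)$ by the standard identification with characters (local class field theory), and it has basis $\val_p,\log_p$.

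Next I would get perfectness. Tate duality pairs $\hH^1(\cR_E(\delta))$ with $\hH^1(\cR_E(\epsilon\delta^{-1}))$ into $\hH^2(\cR_E(\epsilon))\cong E$. The statement as written instead pairs $\cR_E(\delta)\otimes\cR_E\to\cR_E(\delta)$ with target $\hH^2(\cR_E(\delta))\cong E$. To reconcile the two, I would use the inclusion $t^{k-1}:\cR_E(z^{1-k})\hookrightarrow\cR_E$, or equivalently $\cR_E(\delta)\cong\cR_E(\epsilon z^{k-1})$, to reduce to the duality between $\cR_E$ and $\cR_E(\epsilon)$ twisted by $t$-powers. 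The induced map on $\hH^1$ is an isomorphism because the cokernel of $t^{k-1}$ is torsion with $\hH^0=\hH^1=0$ at the relevant weights. Nondegeneracy then follows from Tate duality. I expect this twisting bookkeeping to be the main obstacle, and it is exactly the content of the argument around \cite[(61)]{2019DINGSimple}.

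Finally, the Bloch–Kato subspaces. Since $\delta$ has positive Hodge–Tate weight $k$, every extension is de Rham, so $\hH^1_g=\hH^1$ and $\dim\hH^1_g=2$. The Bloch–Kato formula gives
\[
\dim\hH^1_e=\dim\hH^1_f-\dim D_{\mathrm{cris}}^{\varphi=1}=\dim(D_{\dr}/\fil^0)+\dim\hH^0-\dim D_{\mathrm{cris}}^{\varphi=1}.
\]
Here $D_{\dr}/\fil^0$ is one-dimensional and $\varphi=p^{-1}\neq1$, so $\dim\hH^1_e=1$; the same count shows $\hH^1_f=\hH^1_e$ is a line.

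Under duality, the exact sequence identifies $\hH^1_e(\cR_E(\delta))^\perp$ with $\hH^1_g$ of the dual, transported back to $\cR_E$ via the $t$-twist. That space is one-dimensional and consists of the crystalline (unramified) classes of $\cR_E$, namely $\homo_{\sm}(\bQ_p^\times,E)=E\val_p$. Equivalently, one can check directly that the cup product of an $\hH^1_e$-class with $\val_p$ vanishes, because both are crystalline and $\hH^1_f\cup\hH^1_f=0$.
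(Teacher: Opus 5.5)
Your strategy is the standard one: the Euler characteristic, Tate duality transported along a $t$-power inclusion, the Bloch--Kato dimension formulas, and $\hH^1_e(\cR_E(\delta))^{\perp}=\hH^1_g(\cR_E(\epsilon\delta^{-1}))$. The paper gives no proof of its own and only defers to Ding. However, the character bookkeeping you dismiss with ``up to the unramified twist'' is exactly where the content lies, and as written it is wrong.

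With the paper's conventions ($\unr(\alpha)(p)=\alpha$, $\epsilon=x|x|$, hence $|x|=\unr(p^{-1})$), taking $\delta=\unr(p)z^k$ gives $\epsilon\delta^{-1}=\unr(p^{-2})z^{1-k}$. This is \emph{not} of the form $z^{-i}$. Hence $\hH^2(\cR_E(\unr(p)z^k))=0$ and $\dim_E\hH^1=1$, so no perfect pairing with the $2$-dimensional $\homo(\bQ_p^\times,E)$ can exist.

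So the statement as printed only holds for $\delta=\unr(p^{-1})z^k=\epsilon z^{k-1}$. This is also the character the paper actually needs: the class of $[\cR_E(\unr(\phi)z^{h_1})-\cR_E(\unr(\phi p)z^{h_2})]$ lives in $\hH^1(\cR_E(\unr(p^{-1})z^{h_1-h_2}))$.

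Your later steps already use this corrected $\delta$ without saying so. Writing $\cR_E(\delta)\cong\cR_E(\epsilon z^{k-1})$ is literally the equality $\delta=\epsilon z^{k-1}$. The eigenvalue $\varphi=p^{-1}$ on $D_{\mathrm{cris}}$ is that of $\unr(p^{-1})z^k$; for $\unr(p)z^k$ it would be $p$. You should state the correction explicitly rather than assert $\epsilon\delta^{-1}=z^{1-k}$.

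Your opening sentence is also self-contradictory. The point is that $\delta$ is not of the form $z^{-i}$, so $\hH^0=0$, whereas $\epsilon\delta^{-1}=z^{-(k-1)}$ is, so $\hH^2\cong E$.

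Once $\delta=\epsilon z^{k-1}$, the argument goes through, with one repair. The inclusion goes $\cR_E\hookrightarrow\cR_E(z^{1-k})$, with image $t^{k-1}\cR_E(z^{1-k})$, and not the reverse. Indeed $\homo(\cR_E(z^{1-k}),\cR_E)=\hH^0(\cR_E(z^{k-1}))=0$ for $k\geq 2$.

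The cokernel is $t$-torsion with graded pieces of nonzero weight, so all its cohomology vanishes and $\hH^1(\cR_E)\xrightarrow{\sim}\hH^1(\cR_E(z^{1-k}))$. Cup products are compatible with $\cR_E(\delta)=t^{k-1}\cR_E(\epsilon)\hookrightarrow\cR_E(\epsilon)$. Hence nondegeneracy of the pairing into the line $\hH^2(\cR_E(\delta))$ follows from Tate duality for $\cR_E(\delta)$ and $\cR_E(z^{1-k})$.

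Your counts $\dim\hH^1_e=\dim\hH^1_f=1$ and $\dim\hH^1_g=1+\dim D_{\mathrm{cris}}(\cR_E(z^{1-k}))^{\varphi=1}=2$ are then correct. Finally, $\hH^1_e(\cR_E(\delta))^\perp=\hH^1_g(\cR_E(z^{1-k}))$ is a line containing the image of the crystalline class $\val_p$, so it equals $E\val_p$.
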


				\subsection[$I$: explicit examples for $\GLN_2(\bQ_p)$ and $\GLN_3(\bQ_p)$]{Appendix:\;explicit examples for $\GLN_{2}(\bQ_p)$ and $\GLN_3(\bQ_p)$}\label{exampleGL23}
				
				In this section,\;we give some explicit examples of locally analytic representations $\pi_{\min}(\Dpik)$ that determine $\Dpik$.\;Such representations $\pi_{\min}(\Dpik)$ are expected to be locally analytic subrepresentations of the conjectural locally analytic representation $\pi^{\ana}(\rho_p)$ via the $p$-adic local Langlands correspondence.
				
				\subsubsection[$\GLN_{2}(\bQ_p)$]{$\GLN_{2}(\bQ_p)$}
				
				Suppose that $\underline{\phi}=(\phi,\phi p)$ for some $\phi\in E^{\times}$.\;Let $\Dpik=[\cR_E(\unr(\phi)z^{h_1})-\cR_E(\unr(\phi p)z^{h_2})]$ (a non-split extension) be a non-critical semistable $(\varphi,\Gamma)$-module.\;Recall the following perfect cup product (see the argument around \cite[(61)]{2019DINGSimple}):
				\begin{equation}
					\begin{aligned}
						\ext^1_{(\varphi,\Gamma)}(\cR_E(\unr(\phi p)z^{h_2}),\cR_E(\unr(\phi)z^{h_1}))\times&\; \ext^1_{(\varphi,\Gamma)}(\cR_E(\unr(\phi_1)z^{h_2}),\cR_E(\unr(\phi)z^{h_1}))\\
						&\xrightarrow{\cup }\ext^2_{(\varphi,\Gamma)}(\cR_E(\unr(\phi p)z^{h_2}),\cR_E(\unr(\phi)z^{h_1}))\cong E
					\end{aligned}
				\end{equation}
				Note that $\ext^1_{(\varphi,\Gamma)}(\cR_E(\unr(\phi)z^{h_2}),\cR_E(\unr(\phi)z^{h_1}))\cong \homo(\bQ_p^{\times},E)$.\;Let $\sL(\Dpik)=(E[\Dpik])^{\perp}$ via such perfect pairing.\;We have $I_0=\Delta=\{1\}$ and $\sW_2=S_2=\langle s_1\rangle$,\;so that $Q^{\diamond}_{\{1\}}(\emptyset,\lambda)=\st_2^{\infty}(\lambda)$ and $Q^{\diamond}_{\{1\}}(\{1\},\lambda)=L(\lambda)$.\;Put 
				\begin{equation*}
					\begin{aligned}
						&I(\underline{\phi},\bh)=\cF^{\GLN_{2}}_{\overline{B}}\left(L(-s_1\cdot\lambda),\unr(\phi)\otimes\unr(\phi)\right),\\
						&\widetilde{I}(\underline{\phi},\bh)=\cF^{\GLN_{2}}_{\overline{B}}\left(L(-s_1\cdot\lambda),|\cdot|^{-1}\unr(\phi)\otimes |\cdot|\unr(\phi)\right).\;
					\end{aligned}
				\end{equation*}
				Suppose that $\Dpik$ is crystalline.\;In this case,\;$\sL(\Dpik)=E\val_p$ and $\pi_{\min}(\Dpik)$ has the form 
				\begin{equation}\label{struGL2ST}
					\begindc{\commdiag}[300]
					\obj(1,3)[b]{$\st^{\infty}_2(\lambda)$}
					\obj(3,3)[e]{$L(\lambda)$}
					\obj(3,2)[f]{$I(\underline{\phi},\bh)$}
					\obj(5,3)[f1]{$\widetilde{I}(\underline{\phi},\bh)$}
					\mor{b}{e}{}[+1,\solidline]
					\mor{f}{b}{}[+1,\solidline]
					\mor{f1}{e}{}[+1,\solidline]
					\enddc
				\end{equation}
				Suppose that $\Dpik$ is semistable non-crystalline,\;then $\sL(\Dpik)=E(\val_p+\cL(\Dpik)\log_p)$ for some $\cL(\Dpik)\in E^{\times}$.\;
				In this case,\;$\pi_1^{\lalg}(\underline{\phi},\bh):=\st_2^{\infty}(\lambda)$ and $\pi_1(\underline{\phi},\bh):=\st_2^{\infty}(\lambda)-I(\underline{\phi},\bh)$.\;Put $\pi_c(\underline{\phi},\bh)^-:=I(\underline{\phi},\bh)-L(\lambda)$ and  $\pi_c(\underline{\phi},\bh):=I(\underline{\phi},\bh)-L(\lambda)-\widetilde{I}(\underline{\phi},\bh)$.\;Then 
				\begin{eqnarray}\label{exampleforGl2}
					\pi^{-}_{\min}(\Dpik):=\pi^{-,\sharp}_{\min}(\Dpik)=[\pi^{\lalg}(\psi,\bh)-\pi^-_c(\underline{\phi},\bh)]\hookrightarrow \pi_{\min}(\Dpik)=[\pi^{\lalg}(\psi,\bh)-\pi_c(\underline{\phi},\bh)].
				\end{eqnarray}
				Moreover,\;the non-split extension $[\pi^{\lalg}(\underline{\phi},\bh)-\pi_c(\underline{\phi},\bh)]$ or $[\pi_1(\underline{\phi},\bh)-L(\lambda)]$ encodes exactly the information of $\cL(\Dpik)$.\;

				\subsubsection[$\GLN_{3}(\bQ_p)$]{$\GLN_{3}(\bQ_p)$}
				
				Suppose that $\Dpik=[\cR_E(\unr(\phi_1)z^{h_1})-\cR_E(\unr(\phi_2)z^{h_2})-\cR_E(\unr(\phi_3)z^{h_3})]$ with $\phi_i=\alpha p^{i-1}$ for $1\leq i\leq 3$ and some $\alpha\in E^{\times}$ (so that $I_0(\Dpik)=\{1,2\}$).\;Let $\st^{\infty}_3(\lambda)$,\;$v^{\infty}_{P_1}(\lambda)$,\;$v^{\infty}_{P_2}(\lambda)$ and $L(\lambda)$ be the four locally algebraic generalized Steinberg representations of $\GLN_3(\bQ_p)$,\;and we refer to \cite[Introduction]{Dilogarithm} for the same notation of Orlik-Strauch locally analytic representations $C^2_{s_2,1}$,\;$C_{s_2,s_2}$,\;$C^2_{s_2,s_2s_1}$ and $C^2_{s_1,1}$,\;$C_{s_1,s_1}$,\;$C^2_{s_1,s_1s_2}$.\;
				
				If $\Dpik$ is Steinberg (i.e.,\;$S_0(\Dpik)=\{1,2\}$),\;the locally analytic representation $\pi_{\min}(\Dpik)$ is already discussed in \cite{breuil2019ext1},\;\cite{HigherLinvariantsGL3(Qp)}.\;If $\Dpik$ is crystalline (i.e.,\;$S_0(\Dpik)=\emptyset$),\;then $\pi_{\min}(\Dpik)$ has the form (see \cite[Introduction]{HEparaforsemitable}):
				\begin{equation}\label{struGL3CRYS}
					\begindc{\commdiag}[300]
					
					\obj(4,0)[d]{$C^2_{s_2,1}$}
					\obj(5,1)[d1]{$C_{s_2,s_2}$}
					\obj(5,2)[d2]{$C^2_{s_1,s_1s_2}$}
					
					\obj(3,2)[f]{$v^{\infty}_{P_2}(\lambda)$}
					\obj(8,5)[f1]{$v^{\infty}_{P_1}(\lambda)$}
					
					\obj(1,3)[b]{$\st^{\infty}_3(\lambda)$}
					\obj(4,6)[e]{$C^2_{s_1,1}$}
					\obj(5,5)[e1]{$C_{s_1,s_1}$}
					\obj(5,4)[e2]{$C^2_{s_2,s_2s_1}$}
					
					\obj(3,4)[g]{$v^{\infty}_{P_1}(\lambda)$}
					\obj(8,1
					)[g1]{$v^{\infty}_{P_2}(\lambda)$}
					\obj(10,3)[b1]{$\st^{\infty}_3(\lambda)$}
					\mor{b}{d}{}[+1,\solidline]
					\mor{b}{e}{}[+1,\solidline]
					\mor{b}{f}{}[+1,\solidline]
					\mor{b}{g}{}[+1,\solidline]
					\mor{f}{d1}{}[+1,\solidline]
					\mor{f}{d2}{}[+1,\solidline]
					\mor{b1}{d2}{}[+1,\solidline]
					\mor{f1}{d1}{}[+1,\solidline]
					\mor{f1}{e}{}[+1,\solidline]
					\mor{g}{e1}{}[+1,\solidline]
					\mor{g1}{d}{}[+1,\solidline]
					\mor{g}{e2}{}[+1,\solidline]
					\mor{b1}{e2}{}[+1,\solidline]
					\mor{g1}{e1}{}[+1,\solidline]
					\mor{f1}{b1}{}[+1,\solidline]
					\mor{g1}{b1}{}[+1,\solidline]
					\enddc
				\end{equation}
			 It remains to consider the case $S_0(\Dpik)=\{1\}$ needs more discussion (if $S_0(\Dpik)=\{2\}$,\;we consider the dual $\Dpik^{\vee}$).\;The desired locally analytic representation $\pi_{\min}(\Dpik)$ has the following form (which is also an example of the representation $\pi_{\min}^{-,?}(\underline{\phi},\bh)$ for  $?\in \{+,\Delta\}$):
				\begin{equation}\label{struGL3ST2}
					\begindc{\commdiag}[300]
					
					\obj(3,0)[d]{$C^2_{s_2,1}$}
					\obj(5,1)[d1]{$C_{s_2,s_2}$}
					\obj(5,3)[d2]{$C^2_{s_1,s_1s_2}$}
					
					\obj(3,2)[f]{$v^{\infty}_{P_2}(\lambda)$}
					\obj(9,5)[f1]{$v^{\infty}_{P_2}(\lambda)$}
					\obj(9,3)[g2]{$L(\lambda)$}
					\obj(1,3)[b]{$\st^{\infty}_3(\lambda)$}
					\obj(3,5)[e]{$C^2_{s_1,1}$}
					\obj(7,5)[e1]{$C_{s_1,s_1}$}
					\obj(9,1)[e2]{$C^2_{s_2,s_2s_1}$}
					\obj(5,6)[e3]{$C^1_{s_2s_1,1}$}
					\obj(7,0)[e4]{$C^1_{s_1s_2,s_1s_2}$}
					\obj(5,4)[g]{$v^{\infty}_{P_1}(\lambda)$}
					\obj(7,2)[g1]{$v^{\infty}_{P_1}(\lambda)$}
					\mor{b}{d}{}[+1,\solidline]
					\mor{b}{f}{}[+1,\solidline]
					\mor{b}{e}{}[+1,\solidline]
					\mor{f}{d1}{}[+1,\solidline]
					\mor{f}{d2}{}[+1,\solidline]
					\mor{e3}{e}{}[+1,\solidline]
					\mor{e3}{e1}{}[+1,\solidline]
					\mor{e4}{e2}{}[+1,\solidline]
					\mor{e4}{d1}{}[+1,\solidline]
					\mor{g}{e}{``$\cL_{21}$"}[+1,\solidline]
					\mor{g}{e1}{}[+1,\solidline]
					\mor{g1}{g2}{}[+1,\solidline]
					\mor{f1}{e1}{}[+1,\solidline]
					\mor{g1}{d1}{``$\cL_{02}$"}[+1,\solidline]
					\mor{g1}{e2}{}[+1,\solidline]
					\mor{f1}{d}{}[+1,\solidline]
					\enddc
				\end{equation}
				In this case (recall (\ref{fil1})),\;the simple Hodge parameter $\cL_{21}$ is encoded in $\BW(1)\subseteq \BE(1)$,\;which can be seen in the first branch of $\pi_{1}(\Dpik)$.\;The higher parameter $\cL_{02}$ is encoded in $\BW(s_1s_2)\subseteq \BE(s_1s_2)$,\;which can be seen in the second branch of $\pi_{1}(\Dpik)$.\;$\pi_{\min}(\Dpik)$ is obtained by considering several parabolic inductions.\;
				
				We first use the triangulation $\cF_1$.\;Put $D_1:=[\cR_E(\unr(\phi_1)z^{h_1})-\cR_E(\unr(\phi_2)z^{h_2})]\hookrightarrow \Dpik$ and $\Dpik\twoheadrightarrow C_1':=[\cR_E(\unr(\phi_2)z^{h_2})-\cR_E(\unr(\phi_3)z^{h_3})]$.\;By \cite[Remark (1)]{HigherLinvariantsGL3(Qp)},\;we obtain two branches that correspond to the Steinberg $(\varphi,\Gamma)$-module $D_1$ and the special crystalline $(\varphi,\Gamma)$-module $C_1'$,\;i.e.,\;
				\begin{equation}
					\begindc{\commdiag}[300]
					
					\obj(3,2)[d]{$C^2_{s_2,1}$}
					\obj(5,3)[d1]{$C_{s_2,s_2}$}
					
					\obj(3,3)[f]{$v^{\infty}_{P_2}(\lambda)$}
					
					\obj(1,3)[b]{$\st^{\infty}_3(\lambda)$}
					\obj(3,5)[e]{$C^2_{s_1,1}$}
					\obj(7,5)[e1]{$C_{s_1,s_1}$}
					\obj(5,6)[e3]{$C^1_{s_2s_1,1}$}
					\obj(5,4)[g]{$v^{\infty}_{P_1}(\lambda)$}
					\mor{b}{d}{}[+1,\solidline]
					\mor{b}{f}{}[+1,\solidline]
					\mor{b}{e}{}[+1,\solidline]
					\mor{f}{d1}{}[+1,\solidline]
					\mor{e3}{e}{}[+1,\solidline]
					\mor{e3}{e1}{}[+1,\solidline]
					\mor{g}{e}{}[+1,\solidline]
					\mor{g}{e1}{}[+1,\solidline]
					\enddc
				\end{equation}
				By using the triangulation $\cF_{s_1s_2}$ and $\Dpik\twoheadrightarrow C_1:=[\cR_E(\unr(\phi_1)z^{h_2})-\cR_E(\unr(\phi_2)z^{h_3})]$,\;we obtain the second locally analytic representation
				\begin{equation}
					\begindc{\commdiag}[300]
					
					\obj(5,1)[d1]{$C_{s_2,s_2}$}
					\obj(5,3)[d2]{$C^2_{s_1,s_1s_2}$}
					
					\obj(3,2)[f]{$v^{\infty}_{P_2}(\lambda)$}
					
					\obj(1,3)[b]{$\st^{\infty}_3(\lambda)$}
					\obj(3,4)[e]{$C^2_{s_1,1}$}
					\obj(9,1)[e2]{$C^2_{s_2,s_2s_1}$}
					\obj(7,0)[e4]{$C^1_{s_1s_2,s_1s_2}$}
					\obj(7,2)[g1]{$v^{\infty}_{P_1}(\lambda)$}
					\obj(9,3)[g2]{$L(\lambda)$}
					\mor{b}{f}{}[+1,\solidline]
					\mor{b}{e}{}[+1,\solidline]
					\mor{f}{d1}{}[+1,\solidline]
					\mor{f}{d2}{}[+1,\solidline]
					\mor{e4}{e2}{}[+1,\solidline]
					\mor{e4}{d1}{}[+1,\solidline]
					\mor{g1}{g2}{}[+1,\solidline]
					\mor{g1}{d1}{}[+1,\solidline]
					\mor{g1}{e2}{}[+1,\solidline]
					\enddc
				\end{equation}
				This representation encodes the information $\cL_{02}$.\;Combining these two representations,\;we obtain a locally analytic representation $\pi_{\min}^{-}(\Dpik)$ that encodes the information of $\cL_{02}$ and $\cL_{21}$:
				\begin{equation}
					\begindc{\commdiag}[300]
					
					\obj(3,0)[d]{$C^2_{s_2,1}$}
					\obj(5,1)[d1]{$C_{s_2,s_2}$}
					\obj(5,3)[d2]{$C^2_{s_1,s_1s_2}$}
					
					\obj(3,2)[f]{$v^{\infty}_{P_2}(\lambda)$}
					
					\obj(1,3)[b]{$\st^{\infty}_3(\lambda)$}
					\obj(3,5)[e]{$C^2_{s_1,1}$}
					\obj(7,5)[e1]{$C_{s_1,s_1}$}
					\obj(9,1)[e2]{$C^2_{s_2,s_2s_1}$}
					\obj(5,6)[e3]{$C^1_{s_2s_1,1}$}
					\obj(7,0)[e4]{$C^1_{s_1s_2,s_1s_2}$}
					\obj(5,4)[g]{$v^{\infty}_{P_1}(\lambda)$}
					\obj(7,2)[g1]{$v^{\infty}_{P_1}(\lambda)$}
					\obj(9,3)[g2]{$L(\lambda)$}
					\mor{b}{d}{}[+1,\solidline]
					\mor{b}{f}{}[+1,\solidline]
					\mor{b}{e}{}[+1,\solidline]
					\mor{f}{d1}{}[+1,\solidline]
					\mor{f}{d2}{}[+1,\solidline]
					\mor{e3}{e}{}[+1,\solidline]
					\mor{e3}{e1}{}[+1,\solidline]
					\mor{e4}{e2}{}[+1,\solidline]
					\mor{e4}{d1}{}[+1,\solidline]
					\mor{g}{e}{}[+1,\solidline]
					\mor{g}{e1}{}[+1,\solidline]
					\mor{g1}{g2}{}[+1,\solidline]
					\mor{g1}{d1}{}[+1,\solidline]
					\mor{g1}{e2}{}[+1,\solidline]
					\enddc
				\end{equation}
				This representation already determines $\Dpik$.\;We are able to add some extra locally algebraic constituents by using the methods in \cite{HigherLinvariantsGL3(Qp)}.\;We first apply \cite[Proposition 3.49]{HigherLinvariantsGL3(Qp)} to our case.\;Keep the notation in the proof of \cite[Proposition 3.49]{HigherLinvariantsGL3(Qp)}.\;In this case,\;we obtain that $\cL_{\mathrm{aut}}(\Dpik:D_1)\cap \ker(\mathrm{pr})$ is $1$-dimensional (as the extension $\cR_E(\unr(\psi_2)z^{h_2})-\cR_E(\unr(\psi_3)z^{h_3})$ is crystalline).\;Note that $\ker(\mathrm{pr}_1)+\ker(\mathrm{pr}_2)$ is equal to the whole extension group,\;and $\ker(\mathrm{pr}_1)\cap \ker(\mathrm{pr}_2)=\ker(\mathrm{pr})\subseteq \cL_{\mathrm{aut}}(\Dpik:D_1)$.\;We have to study the intersection of $\cL_{\mathrm{aut}}(\Dpik:D_1)\cap \ker(\mathrm{pr}_i)$ for $i=1,2$.\;If $\cL_{\mathrm{aut}}(\Dpik:D_1)\cap \ker(\mathrm{pr}_i)=\ker(\mathrm{pr})$ for $i=1$ or $2$,\;then $\cL_{\mathrm{aut}}(\Dpik:D_1)$ is canonical (i.e.,\;does not depend on any parameter),\;this is impossible.\;Therefore,\;we get $\big(\cL_{\mathrm{aut}}(\Dpik:D_1)/\ker(\mathrm{pr})\big)\cap \big(\ker(\mathrm{pr}_i)/\ker(\mathrm{pr})\big)=0$ for $i=1,2$.\;Then we obtain the following locally analytic representation:
				\begin{equation}
					\begindc{\commdiag}[300]
					
					\obj(3,2)[d]{$C^2_{s_2,1}$}
					
					\obj(3,3)[f]{$v^{\infty}_{P_2}(\lambda)$}
					\obj(9,4)[f1]{$v^{\infty}_{P_2}(\lambda)$}
					
					\obj(1,3)[b]{$\st^{\infty}_3(\lambda)$}
					\obj(3,5)[e]{$C^2_{s_1,1}$}
					\obj(7,5)[e1]{$C_{s_1,s_1}$}
					\obj(5,6)[e3]{$C^1_{s_2s_1,1}$}
					\obj(5,4)[g]{$v^{\infty}_{P_1}(\lambda)$}
					\mor{b}{d}{}[+1,\solidline]
					\mor{b}{f}{}[+1,\solidline]
					\mor{b}{e}{}[+1,\solidline]
					\mor{e3}{e}{}[+1,\solidline]
					\mor{e3}{e1}{}[+1,\solidline]
					\mor{g}{e}{}[+1,\solidline]
					\mor{g}{e1}{}[+1,\solidline]
					\mor{f1}{e1}{}[+1,\solidline]
					\mor{f1}{d}{}[+1,\solidline]
					\enddc
				\end{equation}
				On the other hand,\;we can also apply the proof of \cite[Proposition 3.49]{HigherLinvariantsGL3(Qp)} to $C_1$,\;and consider the higher $\cL$-invariant $\cL_{\mathrm{aut}}(\Dpik:C_1)$,\;we obtain the following representation:
				\begin{equation}
					\begindc{\commdiag}[300]
					
					\obj(3,2)[d]{$C^2_{s_2,1}$}
					\obj(5,3)[d1]{$C_{s_2,s_2}$}
					
					\obj(3,3)[f]{$v^{\infty}_{P_2}(\lambda)$}
					
					\obj(1,3)[b]{$\st^{\infty}_3(\lambda)$}
					\obj(3,4)[e]{$C^2_{s_1,1}$}
					\obj(5,4)[g]{$v^{\infty}_{P_1}(\lambda)$}
					\obj(7,3)[g1]{$v^{\infty}_{P_1}(\lambda)$}
					
					\mor{b}{d}{}[+1,\solidline]
					\mor{b}{f}{}[+1,\solidline]
					\mor{b}{e}{}[+1,\solidline]
					\mor{f}{d1}{}[+1,\solidline]
					
					\mor{g}{e}{}[+1,\solidline]
					
					\mor{g1}{d1}{}[+1,\solidline]
					
					\enddc
				\end{equation}
				We actually obtain the locally analytic representation $\pi_{\min}(\Dpik)$ that determines $\Dpik$.

				\section{$\mathrm{II}$: \texorpdfstring{$\sL$}{L}-invariants and Colmez-Greenberg-Stevens formula}

				\begin{pro}[Proof of the second assertion in Proposition \ref{profordimofdefor}]\label{imageofkappaproof} 
					$\mathrm{Im}(\kappa^{\circ}_{u})=\mathrm{Im}(\kappa^{0}_{u})=\homo_u(\bT(\bQ_p),E)$.
				\end{pro}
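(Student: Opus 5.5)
The plan is to prove $\mathrm{Im}(\kappa_u)=\homo_u(\bT(\bQ_p),E)$ by induction on $n$ along the triangulation $\cF_u$, imitating \cite[Proposition 3.6]{2019DINGSimple}. I first check the inclusion $\mathrm{Im}(\kappa_u)\subseteq\homo_u(\bT(\bQ_p),E)$, then compare dimensions.

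For the inclusion, take $\widetilde{\Dpik}=[\widetilde R_{u,1}-\cdots-\widetilde R_{u,n}]\in\ext^1_u(\Dpik,\Dpik)$ and $(i,j)\in R^+_u$, so $u^{-1}(j)=u^{-1}(i)+1$ and $u^{-1}(i)\in I_0$. Passing to the rank-$2$ subquotient indexed by $i<j$ (or, when $j>i+1$, to the subquotient $[\widetilde R_{u,i}-\cdots-\widetilde R_{u,j}]$ modulo the intermediate pieces, which carry no obstruction), I get a trianguline deformation of a rank-$2$ extension of $\cR_E(\unr(\phi_{u^{-1}(j)})z^{\bh_j})$ by $\cR_E(\unr(\phi_{u^{-1}(i)})z^{\bh_i})$. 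The ratio of the two characters is $\unr(p)z^{k}$ with $k\geq1$. This is where the inductive definition of $\sL^u_{i,j}$ enters. The extension class lives in an $\hH^1$ of such a character twisted by the chain of intermediate extensions, and it deforms along $(1+(\psi_i-\psi_j)\epsilon)$. The obstruction is the cup product
\[
\hH^1\times\hH^1_{(\varphi,\Gamma)}(\cR_E)\to\hH^2\cong E
\]
of Lemma \ref{lemdimforsimpleLinv}, so the deformation exists iff $\psi_j-\psi_i$ lies in the orthogonal of the class. That orthogonal is by definition $\sL^u_{i,j}$. It equals $E\val_p$ exactly when the class is crystalline ($u^{-1}(i)\in I_0\backslash S_0$, using $\hH^1_e{}^\perp=E\val_p$), and it is a different line otherwise. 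This gives $\mathrm{Im}(\kappa_u)\subseteq\homo_u$.

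For the reverse inclusion I compare dimensions. The proof of Proposition \ref{profordimofdefor} gives $\dim\ext^1_u=1+\frac{n(n+1)}{2}$, from the vanishing of $\hH^2$ of $\EndO_{\cF_u}(\Dpik)$ and an Euler characteristic count. Next I compute $\ker\kappa_u$, the deformations with all graded pieces constant. By d\'evissage over the rank-one pieces $\homo(R_{u,l},R_{u,m})$ with $l<m$, each of which has $\hH^0=\hH^2=0$ away from the $R^+_u$ pairs and $\hH^1$ of dimension $1$, I get $\dim\ker\kappa_u=1+\frac{n(n-1)}{2}-|R^+_u|$. Hence $\dim\mathrm{Im}(\kappa_u)=2n-|R^+_u|$. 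Each condition $\psi_j-\psi_i\in\sL^u_{i,j}$ cuts out a codimension-one subspace, and these conditions are independent since they involve distinct pairs of coordinates, so $\dim\homo_u=2n-|R^+_u|$ and equality follows.

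The main obstacle is the kernel count at the pairs in $R^+_u$. There $\hH^2$ of the relevant rank-one hom is nonzero, and one must show that the trianguline constraint kills exactly one dimension per pair rather than zero or two. Equivalently, one must show the cup-product pairing is nondegenerate on the actual non-split class. I would handle this by Tate duality, reducing to the non-splitness of the rank-$2$ subquotient of $\Dpik$. That non-splitness holds because $\Dpik$ is a non-split successive extension under the non-critical hypothesis, which I expect is where the real work lies.
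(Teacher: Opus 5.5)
\textbf{The inclusion step has a gap at non-adjacent pairs.} Your argument for $\mathrm{Im}\,\kappa_u\subseteq\homo_u(\bT(\bQ_p),E)$ only works when $j=i+1$. Pairs $(i,j)\in R^+_u$ with $j>i+1$ do occur. For example, take $n=3$, $S_0=I_0=\{1\}$ and the triangulation $[\cR_E(\unr(\phi_1)z^{\bh_1})-\cR_E(\unr(\phi_3)z^{\bh_2})-\cR_E(\unr(\phi_2)z^{\bh_3})]$; then $(1,3)\in R^+_u$.

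For such pairs there is no rank-$2$ subquotient with graded pieces $R_{u,i},R_{u,j}$. The middle pieces of $D''=[R_{u,i}-\cdots-R_{u,j}]$ are neither a sub nor a quotient of $D''$. Put $N_2'=[R_{u,i}-\cdots-R_{u,j-1}]$. Since $\homo(N_2',R_{u,i})=0$, you cannot push $[D'']\in\ext^1(R_{u,j},N_2')$ out to an extension of $R_{u,j}$ by $R_{u,i}$. So Lemma \ref{lemdimforsimpleLinv} does not apply.

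The actual obstruction is the pairing of $[D'']$ with the whole deformation $\widetilde{N_2'}\otimes(1-\psi_j\epsilon)$ under $\ext^1(R_{u,j},N_2')\times\ext^1(N_2',N_2')\to\ext^2(R_{u,j},N_2')\cong E$. A priori this involves the deformation of $N_3'=N_2'/R_{u,i}$ and the off-diagonal part of $\widetilde{N_2'}$, not just $\psi_i-\psi_j$. Your phrase ``which carry no obstruction'' skips exactly the reduction to a single hyperplane condition on $\psi_i-\psi_j$. That reduction is the core of the paper's proof, written with $(q,n)$ in place of $(i,j)$. It uses:
\begin{itemize}
\item the exact sequence $0\to l_{\mathrm{FM}}(D'':N_2')\to\cL_{\mathrm{FM}}(D'':N_2')\to\ext^1(N_2',N_3')\to0$;
\item the vanishing of $\eta$, which gives $\kappa_1(\ext^1(N_3',R_{u,i}))\subseteq l_{\mathrm{FM}}(D'':N_2')$;
\item the orthogonality of $\ext^1(R_{u,i},R_{u,i})$ to the $\ext^1(R_{u,j},N_3')$-component of $[D'']$;
\item the Colmez--Greenberg--Stevens criterion for deforming $D''=[N_2'-R_{u,j}]$.
\end{itemize}
Correspondingly, $\sL^u_{i,j}$ is \emph{defined} as $\kappa_1''(l_{\mathrm{FM}}(D'':N_2'))$, not as the orthogonal of a rank-$2$ class. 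Your crystalline/non-crystalline dichotomy therefore has no meaning as stated for these pairs.

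\textbf{The dimension count has an error and misplaces the difficulty.} With $\dim\ker\kappa_u=1+\frac{n(n-1)}{2}-|R^+_u|$ you would get $\dim\mathrm{Im}\,\kappa_u=n+|R^+_u|$, not $2n-|R^+_u|$. Already for $n=1$ your formula gives a one-dimensional kernel, whereas $\kappa_u$ is an isomorphism. The correct value is $1+\frac{n(n-1)}{2}-n+|R^+_u|$.

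More importantly, the lower bound needs no kernel computation and no nondegeneracy. Since $\hH^2(\EndO_{\cF_u}(\Dpik))=0$, the cokernel of $\hH^1(\EndO_{\cF_u}(\Dpik))\to\homo(\bT(\bQ_p),E)$ is $\hH^2(\mathcal N_u)$, where $\mathcal N_u\subseteq\EndO_{\cF_u}(\Dpik)$ is the nilpotent part. Right exactness of $\hH^2$ along the rank-one d\'evissage gives $\dim\hH^2(\mathcal N_u)\leq|R^+_u|$. Hence $\dim\mathrm{Im}\,\kappa_u\geq 2n-|R^+_u|$ automatically.

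So your inclusion-plus-dimension strategy is a legitimate alternative to the paper's two-sided criterion. But all its content sits in the inclusion $\mathrm{Im}\,\kappa_u\subseteq\homo_u(\bT(\bQ_p),E)$, with each $\sL^u_{i,j}$ a genuine hyperplane. That is exactly the step that is missing for non-adjacent pairs.
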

				
				\begin{proof}
					We prove this proposition by induction on $n$. Assume this proposition holds for $N_1:=[R_{u,1}-\cdots-E_{u,n-1}]$. Suppose that $(q,n)\in R^+_u$ for some unique $1\leq q\leq s-1$. We need to define $\cL_{q,n}^u$ and prove this proposition for $\Dpik$. Put
					\begin{equation}
						\begin{aligned}
							&N_2:=[R_{u,1}-\cdots-R_{u,q}]\hookrightarrow N_1,\quad &N_2':=[R_{u,q}-R_{u,q+1}-\cdots-R_{u,n-1}],\\
							&N_3':=[R_{u,q+1}-\cdots-R_{u,n-1}],\quad &D''=[R_{u,q}-R_{u,q+1}-\cdots-R_{u,n}]=[N_2'-R_{u,n}].
						\end{aligned}
					\end{equation}
					Note that $N_3'=N_2'/R_{u,q}$ and $N_1 \twoheadrightarrow N_2'$.\;These maps induce the following commutative diagram of cup products:
					\begin{equation}
						\xymatrix{
							\ext^1(R_{u,n},N'_2) \ar@{=}[d] &\times & \ext^1(N'_2,N'_3)\ar[r]^{\cup} & \ext^2(R_{u,n},N'_3)=0\\
							\ext^1(R_{u,n},N_2') \ar@{=}[d] &\times & \ext^1(N_2',N_2')\ar[r]^{\cup} \ar[u]^{\kappa_2} & \ext^2(R_{u,n},N_2') \ar@{=}[d] \ar[u]\\
							\ext^1(R_{u,n},N'_2) \ar[d]^{g_1}  &\times & \ext^1(N'_2,R_{u,q})\ar[r]^{\cup} \ar[u]^{\kappa_1'} & \ext^2(R_{u,n},R_{u,q}) \ar@{=}[d] \\
							\ext^1(R_{u,n},N'_3)  &\times & \ext^1(N'_3,R_{u,q})\ar[r]^{\cup} \ar[u]^{\kappa_1} & \ext^2(R_{u,n},R_{u,q}). }
					\end{equation}
					 Let $\cL_{\mathrm{FM}}(D'':N_2'):=(E[D''])^{\perp}$ (via the cup-product in the second row), and let $l_{\mathrm{FM}}(D'':N_2'):=(E[D''])^{\perp}\subseteq \ext^1(N_2',R_{u,q})$ (via the cup-product in the third row).
					
					We describe these subspaces explicitly. It is easy to see that $\ext^2(N'_2,R_{u,q})=0$ so that $\kappa_2$ is surjective. On the other hand, $\kappa_1'$ is injective since $\homo(N_2',N_2')\xrightarrow{\sim }\homo(N_2',N_3')$ is an isomorphism. Therefore, we have a short exact sequence of $E$-vector spaces:
					\[0\rightarrow l_{\mathrm{FM}}(D'':N_2')\rightarrow \cL_{\mathrm{FM}}(D'':N_2')\rightarrow \ext^1(N'_2,N'_3)\rightarrow 0.\]
					It remains to study $l_{\mathrm{FM}}(D'':N_2')$. Consider the following commutative diagram:
					\begin{equation}\label{N2N3ANDRANK1}
						\xymatrix{
							\ext^1(R_{u,n},R_{u,q}) \ar[d]^{g_1'} &\times & \ext^1(R_{u,q},R_{u,q})\ar[r]^{\cup}  & \ext^2(R_{u,n},R_{u,q}) \ar@{=}[d] \\
							\ext^1(R_{u,n},N'_2) \ar[d]^{g_1}  &\times & \ext^1(N'_2,R_{u,q})\ar[r]^{\cup} \ar[u]^{\kappa_1''} & \ext^2(R_{u,n},R_{u,q}) \ar@{=}[d] \\
							\ext^1(R_{u,n},N'_3)  &\times & \ext^1(N'_3,R_{u,q})\ar[r]^{\cup} \ar[u]^{\kappa_1} & \ext^2(R_{u,n},R_{u,q}). }
					\end{equation}
					We define
					\begin{equation}\label{dfoforimageLinvariants}
						\cL_{q,n}^u:=\kappa_1''(l_{\mathrm{FM}}(D'':N_2'))\hookrightarrow \ext^1(R_{u,q},R_{u,q})\cong \homo(\bQ_p^{\times},E)
					\end{equation}
			Since $\homo(N'_2,R_{u,q})=0$, we get that $\ker(\kappa_1)\cong \homo(R_{u,q},R_{u,q})\cong E$, so that $\dim_E\mathrm{Im}(\kappa_1)=r_q\rk(N_3')-1$. Since $\ext^2(N'_3,R_{u,q})=0$, $\kappa_1''$ is surjective. Since $\homo(R_{u,n},N'_3)=0$, $g_1'$ is injective. Observe the following commutative diagram:
					\begin{equation}
						\xymatrix{
							\ext^1(R_{u,n},N'_3) \ar@{=}[d]  &\times & \ext^1(N'_3,R_{u,n}(\epsilon))\ar[r]^{\cup} & \ext^2(R_{u,n},R_{u,n}(\epsilon)) \\
 							\ext^1(R_{u,n},N'_3)  &\times & \ext^1(N'_3,R_{u,q})\ar[r]^{\cup} \ar[u]^{\eta} & \ext^2(R_{u,n},R_{u,q})\ar[u]^{\eta'}_{\sim}, }
					\end{equation}
					where $\eta$ and $\eta'$ are induced by the map $R_{u,q}\hookrightarrow R_{u,n}(\epsilon)$ of $(\varphi,\Gamma)$-modules. The top cup-product is perfect by the Tate duality. On the other hand, by \cite[Lemma 5.1.1]{breuil2020probleme}, we have
					\begin{equation}
						\begin{aligned}
							& \homo((N'_3)^{\vee}\otimes_{\cR_E}R_{u,q},(N'_3)^{\vee}\otimes_{\cR_E}\cR_E(\delta\epsilon))\\
							\cong\;& \hH^0(\gal_{\bQ_p},W_{\dr}^+((N'_3)^{\vee}\otimes_{\cR_E}\cR_E(\delta\epsilon))/W_{\dr}^+((N'_3)^{\vee}\otimes_{\cR_E}R_{u,q})).
						\end{aligned}
					\end{equation}
					Then we obtain $\dim_E\ker(\eta)=\rk(N'_3)$ and thus $\eta=0$. Therefore,
					\[\ext^1(N'_3,R_{u,q})\subseteq (Eg_1([D'']))^{\perp}, \kappa_1(\ext^1(N'_3,R_{u,q}))\subseteq l_{\mathrm{FM}}(D'':N_2').\]
					By definition and compring dimensions, we have
					\[\cL_{q,n}^u= l_{\mathrm{FM}}(D'':N_2')/\kappa_1(\ext^1(N'_3,R_{u,q})). \]
					Since the bottom cup-product in
					(\ref{N2N3ANDRANK1}) is non-degenerate when replacing the middle term with the subspace $\ext^{1}(R_{u,q},R_{u,q})$, we see that $\ext^{1}(R_{u,q},R_{u,q})\subseteq \ext^1(R_{u,n},N'_3)^{\perp}$. On the other hand, $E[D'']=E(e_1+e_2)\in \ext^1(R_{u,n},N'_2)$ for some vectors
					$e_1\in \ext^1(R_{u,n},R_{u,q})$ and $e_2\in \ext^1(R_{u,n},N'_3)$.\;Recall that the Colmez--Greenberg--Stevens formula in \cite[Theorem 2.7 ]{HigherLinvariantsGL3(Qp)} implies: for the tuple $(\widetilde{N_2'},\psi_n)\in \ext^{1}(N'_2,N_2')\times \homo(\bQ_p^{\times},E)$, there exists a deformation $[\widetilde{N'_2}-R_{u,n}\otimes_{\cR_{E}}\cR_{E[\epsilon]/\epsilon^2}(1+\psi_n\epsilon))$ of $D''$ over $E[\epsilon]/\epsilon^2$ if and only if
					$\widetilde{N_2'}\otimes_{\cR_{E[\epsilon]/\epsilon^2}}(\cR_{E[\epsilon]/\epsilon^2}(1-\psi_n\epsilon))\in\cL_{\mathrm{FM}}(D'':N_2')$. Since all elements in $\ext^{1}(R_{u,q},R_{u,q})$ are orthogonal to $e_2$, we obtain that $\psi_q-\psi_{n}\in \cL_{q,n}^u$ by the definition $\cL_{q,n}^u:=\kappa_1''(l_{\mathrm{FM}}(D'':N_2'))\cap \ext^{1}(R_{u,q},R_{u,q})$.\;This completes the proof.
				\end{proof}


				\bibliographystyle{plain}
				
				\printindex
			\end{document}